\documentclass[11pt]{amsart}
\usepackage[T1]{fontenc}
\usepackage[utf8]{inputenc}
\usepackage{lmodern}
\usepackage[a4paper,margin=1in]{geometry}
\usepackage{amsmath,amssymb,amsthm,mathtools,mathrsfs}
\usepackage{array,booktabs,longtable}
\usepackage{enumitem}
\usepackage{microtype}
\usepackage{xcolor}
\usepackage[colorlinks=true,linkcolor=blue!45!black,citecolor=blue!45!black,urlcolor=blue!45!black]{hyperref}
\usepackage[nameinlink,noabbrev]{cleveref}

\newtheorem{theorem}{Theorem}[section]
\newtheorem{proposition}[theorem]{Proposition}
\newtheorem{lemma}[theorem]{Lemma}
\newtheorem{corollary}[theorem]{Corollary}
\theoremstyle{definition}

\theoremstyle{remark}
\newtheorem{remark}[theorem]{Remark}

\newcommand{\C}{\mathbf C}

\newcommand{\Q}{\mathbf Q}
\newcommand{\F}{\mathbf F}

\newcommand{\Tr}{\operatorname{Tr}}
\newcommand{\Nm}{\operatorname{N}}
\newcommand{\Gal}{\operatorname{Gal}}
\newcommand{\ph}{\operatorname{ph}}

\newcommand{\Res}{\operatorname{Res}}

\newcommand{\ceil}[1]{\left\lceil #1\right\rceil}
\newcommand{\floor}[1]{\left\lfloor #1\right\rfloor}

\newcommand{\OO}{\mathcal O}\newcommand{\pp}{\mathfrak p}\newcommand{\PP}{\mathfrak P}
\newcommand{\ee}{\mathrm e}\newcommand{\echar}{\mathrm e}\newcommand{\FF}{\mathbf F}
\newcommand{\CC}{\mathbf C}\newcommand{\TT}{\mathcal T}\newcommand{\ind}{\mathbf1}
\newcommand{\pr}{\operatorname{pr}_0}\newcommand{\tr}{\operatorname{tr}}
\newcommand{\Cart}{\mathcal C}
\title[The Second Main Lemma for Local Epsilon Factors]
{A Local Proof of Langlands's Second Main Lemma\\
for Local Epsilon Factors over Nonarchimedean Local Fields}
\author{Fukuhiro Ueda}
\address{Research Institute for Mathematical Sciences, Kyoto University, Kyoto 606--8502, Japan}
\email{fueda@kurims.kyoto-u.ac.jp}
\date{September 2026}
\subjclass[2020]{11S37, 11S40, 11R42}
\keywords{local epsilon factors, Artin $L$-functions, local constants,
Gauss sums, ramification, Lamprecht formula, biquadratic extensions,
equal characteristic}
\hypersetup{
  pdftitle={A Local Proof of Langlands's Second Main Lemma for Local Epsilon Factors over Nonarchimedean Local Fields},
  pdfauthor={Fukuhiro Ueda},
  pdfsubject={A local proof of the Second Main Lemma in mixed and equal characteristic},
  pdfkeywords={local epsilon factors, Artin L-functions, Gauss sums, ramification, Second Main Lemma},
  bookmarksdepth=2
}
\begin{document}
\begin{abstract}
We give a local proof of Langlands's Second Main Lemma for local epsilon
factors over nonarchimedean local fields, in mixed and equal
characteristic. The lemma compares local constants of characters of two
intermediate fields in a bicyclic extension.
 It is one of the identities used in Langlands's
construction of epsilon factors of local Weil representations.  
 
The proof  builds on the work of Dwork, Langlands,
and Lakkis.  Langlands attributes the Second Main Lemma to Dwork, but Dwork did not publish
a complete proof.  Lakkis gave a detailed local treatment of the Second Main Lemma.  For
odd prime degree, his argument proves the required equality.  In degree
two, however, it proves only that the equality holds up to sign. Both Dwork and Lakkis worked in the mixed-characteristic setting. The surviving part of Langlands's manuscript does not
contain a complete proof of the Second Main Lemma.

The new point in the present paper is the wild dyadic case, i.e. biquadratic extensions over a nonarchimedean local field of residue characteristic two. For a
biquadratic extension the First Main Lemma determines the square of the required equality, leaving a possible sign.  We determine
this sign by comparing the finite sums occurring in Lamprecht's formula.
This completes the proof of  the Second Main Lemma, including the equal-characteristic case.

 OpenAI ChatGPT was used extensively in the development of this proof.
A substantial part of the argument consists of long local calculations, which 
  are in principle accessible by
standard methods, but   would have required a very large amount of time. ChatGPT was therefore used to accelerate this technical work: to  check calculations and compare them with the arguments of Dwork,
Langlands, and Lakkis, and detect inconsistencies in intermediate
versions of the proof.
The author checked the final mathematical
arguments and assumes responsibility for the results.

\end{abstract}
\maketitle
\clearpage
\begingroup
\raggedbottom
\tableofcontents
\clearpage
\endgroup

\section{Introduction}\label{sec:introduction}
\subsection{The Second Main Lemma}
Langlands's Main Lemmas are identities between local
constants associated to characters over  different  nonarchimedean local fields. They are used to define epsilon factors of local Weil representations by induction from one dimensional representations~\cite{Langlands}.
The Second Main Lemma compares characters of distinct intermediate
fields that have the same norm pullback. 

Let $F$ be a nonarchimedean local field with residue characteristic $p$.
Let $K/F$ be a Galois extension with bicyclic Galois group $C_\ell\times C_\ell$, where $\ell$ is
prime. For a degree-$\ell$ intermediate field $L$, put
\[
 S(L/F)=\{\nu\in\operatorname{Hom}_{\mathrm{cont}}(F^\times,\CC^\times):
             \nu\circ N_{L/F}=1\}.
\]
Here $\operatorname{Hom}_{\mathrm{cont}}$ denotes continuous group
homomorphisms. The elements of $S(L/F)$ are called norm characters.
Fix a nontrivial continuous additive character $\psi_F$ and set
$\psi_L=\psi_F\circ\Tr_{L/F}$.

Let $\Theta:K^\times\to\CC^\times$ be a continuous quasi-character
invariant under $\Gal(K/F)$. For every degree-$\ell$ intermediate field
$L$, choose a continuous quasi-character $\theta_L$ of $L^\times$ such
that \[\theta_L\circ N_{K/L}=\Theta.\] We call $(\theta_L)$ a
\emph{compatible family}. It is called \emph{primitive} if there is no
continuous quasi-character $\theta_F$ of $F^\times$ with
$\Theta=\theta_F\circ N_{K/F}$.

\begin{theorem}[Second Main Lemma]\label{U:main}
For every primitive compatible family $(\theta_L)$, the quantity
\[
 \mathcal A_L(\theta_L)=\Delta_L(\theta_L,\psi_L)
                 \prod_{\nu\in S(L/F)}\Delta_F(\nu,\psi_F)
\]
is independent of the degree-$\ell$ intermediate field $L$.
This holds in mixed and equal characteristic, for every residue
characteristic $p$ and every prime $\ell$.
\end{theorem}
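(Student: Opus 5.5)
The argument splits according to whether $p=\ell$ and whether the extension is tame, with all the real work in the wild dyadic case. As a first reduction, I would check that $\mathcal A_L(\theta_L)$ does not depend on the choice of $\theta_L$ within the compatible family. Two choices differ by a character $\chi$ of $L^\times$ trivial on $N_{K/L}K^\times$, i.e. $\chi=\nu\circ N_{L/F}$ composed appropriately, or more precisely $\chi$ factors through $L^\times/N_{K/L}K^\times\cong \Gal(K/L)$. The change in $\Delta_L$ is then controlled by the standard twisting behaviour of local constants. I would also check that $\mathcal A_L$ is unchanged when $\psi_F$ is replaced by $\psi_F(a\,\cdot)$: the factor $\theta_L(a)$ times the product of $\nu(a)$ over $S(L/F)$ should balance across different $L$ via $\Theta$ and class field theory. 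These reductions let me fix a convenient normalization, namely minimal conductor and a chosen $\psi_F$ of conductor $\mathcal O_F$ or a shifted one.

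Next I would handle the cases where the claimed equality is already known. When $\ell\neq p$, or when $K/F$ is at most tamely ramified, the quantities $\Delta_L(\theta_L,\psi_L)$ can be written as explicit Gauss sums or as unramified/tame twists. In these cases the independence follows by a direct Gauss-sum computation along the lines of Dwork and Langlands, together with the Davenport--Hasse type identity relating a Gauss sum over the residue field of $L$ to those over $F$ twisted by the norm characters. When $\ell=p$ is odd, I would follow Lakkis. The First Main Lemma, which I take as available from the paper, gives that $\mathcal A_L(\theta_L)^{\ell}$, or the ratio $\mathcal A_L/\mathcal A_{L'}$, is an $\ell$-th root of unity. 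Lakkis's computation via the Lamprecht formula expressing $\Delta_L(\theta_L,\psi_L)$ as $\theta_L(c)\psi_L(c)$ times a normalized finite quadratic sum then pins down the root of unity. The parity argument works for odd $\ell$ because the quadratic Gauss sum over an odd-order group is determined by its square. All of this should go through verbatim in equal characteristic, since only the local Lamprecht formula and class field theory are used and no global argument is needed.

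The remaining and hardest case is $\ell=p=2$ with $K/F$ wildly ramified biquadratic. By the First Main Lemma, $\mathcal A_L(\theta_L)^2$ is independent of $L$, so the ratio $\mathcal A_L/\mathcal A_{L'}$ is $\pm1$ for the three quadratic subfields $L_1,L_2,L_3$. Here $S(L_i/F)=\{1,\omega_i\}$ with $\omega_i$ the quadratic character of $L_i$, and $\omega_1\omega_2\omega_3=1$. My plan is to write each $\Delta_{L_i}(\theta_{L_i},\psi_{L_i})$ via the Lamprecht formula as $\theta_{L_i}(c_i)^{-1}\psi_{L_i}(c_i)\cdot G_i$, where $G_i$ is a normalized sum over $U_{L_i}^{\lceil n_i/2\rceil}/U_{L_i}^{n_i}$ or its half-step refinement when the conductor exponent $n_i$ is odd. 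I would pick the elements $c_i$ compatibly, for example $c_i=N_{K/L_i}(c)$ for a single $c\in K$ chosen through the Herbrand function, using $\theta_{L_i}\circ N_{K/L_i}=\Theta$. The prefactors then agree up to the known factors $\Delta_F(\omega_i,\psi_F)$. The sign then reduces to comparing the finite quadratic-form sums $G_i$: each is a Gauss sum of a quadratic form over $\FF_q$ or a $2$-adic Weil index. I expect to identify the ratio through the Arf invariant of the relevant form, using the conductor jumps of $K/F$ read off from the upper ramification filtration. This comparison is the main obstacle. I would attack it by splitting into subcases according to whether the ramification breaks of the three $L_i$ coincide or differ (one or two upper jumps), treat each by explicit choice of uniformizers, and use primitivity to guarantee that $\Theta$ is nontrivial on the relevant graded piece, so that the sums do not degenerate.
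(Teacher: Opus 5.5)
Your case division, the use of the First Main Lemma to reduce everything to a root of unity, and the idea of taking the stationary coefficients $N_{K/L_i}C$ for a single $C\in K$ all match the paper. The gap is that the step carrying the new content is never supplied. For $\ell=p=2$ you stop at ``I expect to identify the ratio through the Arf invariant'', which is exactly the sign left open by Proposition~\ref{U:quadratic-square}. Nothing in your plan relates the sum attached to $\theta_1$ to the one attached to $\theta_2$. The proposed invariant is also not the right one: in residue characteristic two the polar form satisfies $B(x,x)=H(x)^{-2}$ and is typically non-alternating, so $H$ is $\mu_4$-valued and the relevant invariant is of Brown type, not Arf.

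The paper never evaluates these sums. It perturbs $C$ to $C_z=C(1+\pi^jz)$ and combines three facts: compatibility $\theta_i\circ N_i=\Theta$, triviality of $\omega_i$ on $n_iS_iC_z/n_iS_iC$, and the identity $E_2(C)=\Tr_{L_i/F}(N_iC)+n_i(S_iC)$ (Lemmas~\ref{U:e2}--\ref{U:common-function}). Together these show that the product of the critical functions of $\theta_i$ and $\omega_i$, evaluated at the residues of the two relative changes, is a function of $z$ independent of $i$.
\begin{itemize}
\item Where the induced residue maps are bijective, the normalized sums then coincide.
\item In the one-break totally ramified case the map $z\mapsto z^2+(c_i/u)z$ has image of index two. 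There one needs the coset cancellation of Lemma~\ref{U:missing-coset}, with $w_i=p_i(c_j/u)$ coming from another intermediate field.
\item With an unramified subfield and odd $T$, one needs $H_U(z^2)=H_E(\Tr z)\,H_\tau\bigl((\Tr(\bar Cz))^2\bigr)$ and a bijection $\kappa\to k\times k$.
\end{itemize}
This is also why $\Delta_F(\omega_i,\psi_F)$ is not a ``known factor''. The prefactors agree only after $n_i(S_iC)$ is used as the stationary coefficient of $\omega_i$, which constrains $C$. When $a_F(\omega_i)$ is odd, its critical sum must be matched against that of $\theta_i$.

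A suitable $C$ also does not come from the Herbrand function. The paper builds minimal-conductor compatible characters with explicit stationary coefficients and writes $\theta_i=\chi_i(\lambda n_i)$. It then chooses $X\in L_3$ whose norm represents the coefficient of $\lambda$, rescaling $\alpha$ via Lemma~\ref{U:normalization} where needed, and takes $C=Y\pm X$. In the unramified--ramified range $1\le h<T$ even this fails: the corrected coefficient $\rho N_EC$ contributes $\theta_E(\rho)=(-1)^h$, which is cancelled only through the trace-parity Lemma~\ref{D:UR:traceparity}.

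The other cases are thinner than you present them.
\begin{itemize}
\item Tame case. The Hasse--Davenport lifting and product formulas give $\tau_\kappa(\chi_\kappa)=\chi_0(\ell)\tau_k(\chi_0)\prod_{j=1}^{\ell-1}\tau_k(\mu^j)$ only up to an $\ell$th root of unity. This is because $\chi_\kappa$ is an $\ell$th root of a norm lift, not itself a norm lift. The paper removes the ambiguity with the Stickelberger congruence (Theorem~\ref{A:leading-gauss}) and injectivity of reduction on $\mu_\ell$ for $\ell\ne p$.
\item Odd wild case. The actual conclusion is that the quotient lies in $\mu_4\cap\mu_p=\{1\}$. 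That requires showing every factor other than the quadratic Gauss phases cancels, which is the content of Sections~\ref{sec:odd-ur}--\ref{sec:odd-calculation}. Lakkis worked in mixed characteristic, and transfer to characteristic $p$ is not automatic. The paper redoes the argument with the truncated logarithm $P(Z)=\sum_{j=1}^{p-1}Z^j/j$ and treats the terms involving $p$ separately.
\item Choice of $\theta_L$. Independence of this choice does not follow from twisting formulas. In the low-conductor range the characters of $S(K/L)$ have conductor exceeding half that of $\theta_L$, so the stable-twist formula does not apply. Instead, primitivity makes these twists Galois conjugates of $\theta_L$ (Lemma~\ref{U:conjugacy}).
\end{itemize}
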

Here $\Delta_E(\theta,\psi)$ is the local constant defined in \eqref{U:delta-definition}; see ~\cite{Langlands} and ~\cite{Ueda}.

The characters $\theta_L$ exist for every invariant $\Theta$.
Indeed,   Lemma~\ref{C:hilbert90}
describes the kernel of $N_{K/L}$, and then by Galois-invariance  $\Theta$ gives a character of
$N_{K/L}(K^\times)\subset L^\times$. Norm filtration shows that this
character is trivial on a sufficiently deep unit group of $L$, and
Lemma~\ref{C:character-extension} extends it to $L^\times$.
Conversely, if characters of two distinct degree-$\ell$ intermediate fields $L_i$ have the same norm
pullback, that pullback is invariant under both subgroups
$\Gal(K/L_i)$, which generate $\Gal(K/F)$. Thus a prescribed compatible
pair extends to a compatible family in the sense above, without changing either character.

The primitivity assumption implies that
$\theta_L^\sigma/\theta_L$ is nontrivial for every
$1\ne\sigma\in\Gal(L/F)$, hence generates
$S(K/L)$. Indeed, if $\theta_L$ were
$\Gal(L/F)$-invariant, Hilbert~90 would give
$\theta_L=\lambda\circ N_{L/F}$ for some quasi-character
$\lambda$ of $F^\times$, and hence
$\Theta=\lambda\circ N_{K/F}$.     Conceptually, primitivity means that the corresponding $1$-dimensional representation $\widehat{\Theta}$   of
$W_K$ does not extend to a character of $W_F$; consequently, for every
degree-$\ell$ intermediate field $L$, any extension
$\widehat{\theta}_L$ of $\widehat{\Theta}$ to $W_L$ induces an
irreducible representation of $W_F$.

\subsection{Historical background and contribution of the present paper}

Langlands attributed the First and Second Main Lemmas to Dwork in~\cite{Langlands}.
He later stated that Dwork had proved both lemmas, but that
Dwork never published the proofs.    Thus the existence of a complete proof by Dwork is
known from Langlands's account, not from a surviving published proof
of Dwork.  We therefore do not claim that any particular calculation
below was absent from Dwork's unpublished proof.

Lakkis gave a detailed local treatment of Dwork's calculations.  For
odd prime degree, his argument proves the equality required in the
Second Main Lemma.  In degree two, however, his calculation proves the desired equality only up to sign. Both Dwork and Lakkis worked in the mixed-characteristic setting.

Langlands formulated the Second Main Lemma  in both mixed and equal characteristic.  The surviving Chapter~12 of \cite{Langlands} states the lemma and
proves preliminary conductor reductions.  When one degree-$\ell$ intermediate field is unramified, the surviving
Chapter~12 proves the conductor-one case and the case in which the
character conductor is at least twice the conductor attached to the
ramified degree-$\ell$ extension.  It then begins the remaining
intermediate-conductor case, but the manuscript ends while constructing
the stationary coefficient needed for that comparison.  No proof of
the totally ramified case survives.  Thus the surviving manuscript
does not contain the completion of the wild unramified--ramified case,
the totally ramified odd-prime case, or the general quadratic sign
calculation.  Langlands later explained that, after Deligne found a
shorter proof using global functional equations, he stopped preparing
a complete version of the local argument.

The new point in the present paper is the exact treatment of the
bi quadratic dyadic case.  For a biquadratic extension, the First Main Lemma gives
\[
\left(
\frac{\mathcal A_{L_1}(\theta_{L_1})}
     {\mathcal A_{L_2}(\theta_{L_2})}
\right)^2=1,
\]
so it leaves a possible sign.  Lakkis's dyadic calculation leaves the
same sign undetermined.  We determine it by comparing the actual finite
sums in Lamprecht's formula.  This proves
\[
\mathcal A_{L_1}(\theta_{L_1})
=
\mathcal A_{L_2}(\theta_{L_2})
\]
in residue characteristic two for every ramification type and every
character conductor.  Together with the odd-prime and tame
calculations, this gives the Second Main Lemma for all prime $\ell$.

\subsection{Outline of the proof}

Local constants of conductor zero or one are evaluated directly.  If
the conductor is greater than one, Lamprecht's formula expresses the
local constant as the product of a character value, an additive
character value, and a finite-sum factor; see
Lemma~\ref{U:stationary-factor}.  The proof compares these factors for
two degree-$\ell$ intermediate fields.

Let $I$ be the inertia subgroup of $\Gal(K/F)$.   

If $\ell\ne p$, the extension is tame.  There is a unique unramified
degree-$\ell$ intermediate field.  Theorem~\ref{O:F:tame} compares its
normalized induction expression with that of every ramified
degree-$\ell$ intermediate field.  Hence
$\mathcal A_L(\theta_L)$ is independent of $L$.  This includes
$\ell=2$ when $p$ is odd.  The proof uses finite-field Gauss sums at
conductor one and stable twisting at higher conductor.

Now suppose that $\ell=p>2$.  Corollary~\ref{U:odd-power} gives
\[
 \left(\frac{\mathcal A_{L_1}(\theta_{L_1})}
 {\mathcal A_{L_2}(\theta_{L_2})}\right)^p=1.
\]
If $|I|=p$, the comparison is carried out in
Section~\ref{sec:odd-ur}.  If $|I|=p^2$, it is carried out in
Sections~\ref{sec:odd-estimates}--\ref{sec:odd-calculation}.  In both
cases the calculations show that the same quotient is a fourth root of
unity.  Since $p$ is odd, the quotient is $1$.  When $p=3$, the
additional identity required in the calculation is
Lemma~\ref{O:I:cubicidentity}.  The norm, trace, and stationary-phase
identities used in these sections are stated in a form valid in both
mixed and equal characteristic.  In equal characteristic the terms
containing the integer $p$ are zero; in mixed characteristic their
valuations are estimated.

It remains to consider $\ell=p=2$.  Proposition~\ref{U:quadratic-square}
gives
\[
 \frac{\mathcal A_{L_1}(\theta_{L_1})}
 {\mathcal A_{L_2}(\theta_{L_2})}\in\{1,-1\},
\]
so equality of the squares does not determine the sign.  The proof
therefore compares the critical functions in Lamprecht's formula.
Lemmas~\ref{U:e2}--\ref{U:common-function} compare these functions when
their stationary representatives are norms of the same element of
$K$.  If the induced norm map on the required unit quotient has image
of index two, Lemma~\ref{U:missing-coset} compares the two finite sums;
its hypotheses are proved using the third quadratic intermediate
field.

If $|I|=2$, this argument is carried out in
Section~\ref{sec:dyadic-ur}.  Suppose instead that $|I|=4$, so that
$K/F$ is totally ramified.  In characteristic two,
Section~\ref{sec:dyadic-equal} uses Artin--Schreier equations and local
residues.  In mixed characteristic put $e=v_F(2)$.  The three lower
breaks are either
\[
 (2a-1,2e,2e),\qquad 1\le a\le e,
\]
or
\[
 (2a-1,2r-1,2r-1),\qquad 1\le a\le r\le e.
\]
These two cases are treated in Sections~\ref{sec:dyadic-maximal} and
\ref{sec:dyadic-nonmaximal}, respectively.

Thus the proof is divided as follows:
\[
\begin{array}{c|l}
\text{hypotheses} & \text{section}\\ \hline
\ell\ne p
  & \ref{sec:tame}\\
\ell=p>2,\ |I|=p
  & \ref{sec:odd-ur}\\
\ell=p>2,\ |I|=p^2
  & \ref{sec:odd-estimates}\text{--}\ref{sec:odd-calculation}\\
p=\ell=2,\ |I|=2
  & \ref{sec:dyadic-ur}\\
p=\ell=2,\ |I|=4,\ \operatorname{char}F=2
  & \ref{sec:dyadic-equal}\\
p=\ell=2,\ |I|=4,\ (2a-1,2e,2e)
  & \ref{sec:dyadic-maximal}\\
p=\ell=2,\ |I|=4,\ (2a-1,2r-1,2r-1)
  & \ref{sec:dyadic-nonmaximal}.
\end{array}
\]

Section~\ref{sec:completion} proves that these cases exhaust all
$C_\ell\times C_\ell$ extensions and all conductors occurring in
Theorem~\ref{U:main}, and then proves the theorem.  The appendices prove
the additional finite-field, residue, descent, and ramification results
used in the preceding sections.

\subsection{Organization of the paper}

Sections~\ref{sec:local-notation}--\ref{sec:stationary} collect the
results used throughout the proof.  Section~\ref{sec:local-notation}
defines the local constants and recalls the First Main Lemma.
Section~\ref{sec:ramification} records the trace, norm, conductor, and
ramification formulas used later.  Section~\ref{sec:stationary} gives
the form of Lamprecht's formula used in the proof and establishes the
comparison lemmas for the resulting finite sums.

Section~\ref{sec:tame} proves the Second Main Lemma when
$\ell\ne p$.  The wild odd-prime case is treated in
Sections~\ref{sec:odd-ur}--\ref{sec:odd-calculation}.
Section~\ref{sec:odd-ur} treats the case in which the inertia subgroup
has order $p$.  For the totally ramified case,
Section~\ref{sec:odd-estimates} proves the required
Artin--Schreier, trace, and norm estimates,
Section~\ref{sec:odd-models} constructs the compatible characters and
their stationary coefficients, and
Section~\ref{sec:odd-calculation} proves the resulting finite-sum
identity.

The case $\ell=p=2$ is treated in
Sections~\ref{sec:dyadic-ur}--\ref{sec:dyadic-nonmaximal}.
Section~\ref{sec:dyadic-ur} treats extensions having an unramified
quadratic intermediate field.  Section~\ref{sec:dyadic-equal} treats
totally ramified extensions in characteristic two.
Sections~\ref{sec:dyadic-maximal} and
\ref{sec:dyadic-nonmaximal} treat the two possible ramification
patterns for totally ramified biquadratic extensions of
mixed characteristic.

Section~\ref{sec:completion} proves that the preceding cases exhaust
all possibilities and deduces the Second Main Lemma.
Appendix~\ref{app:leading-gauss} proves the leading Gauss congruence
used in the tame calculation,
Appendix~\ref{app:local-residues} proves the trace compatibility of
local residues used in characteristic two,
Appendix~\ref{app:descent} proves the descent and elementary
ramification results used in the main text, and
Appendix~\ref{app:diamond-foundations} proves the norm-subgroup and
ramification statements for $C_\ell\times C_\ell$ extensions.

\section{Local constants and the First Main Lemma}\label{sec:local-notation}\label{U:statement}
\subsection{Valuations, conductors, and local constants}
For a local field $E$, write $\OO_E$ for its ring of integers, $\pp_E$ for its
maximal ideal, $k_E$ for its residue field, and $v_E(\pp_E)=1$.
Put $U_E^0=\OO_E^\times$ and $U_E^j=1+\pp_E^j$ for $j\ge1$.
The multiplicative conductor $m_E(\theta)$, also denoted $a_E(\theta)$
in the later case calculations, is the least $m\ge0$ for which
$\theta|_{U_E^m}=1$. The additive conductor $n_E(\psi)$ is specified by
$\pp_E^{-n_E(\psi)}$, the largest \emph{ideal} on which $\psi$ is trivial.
It is not generally the full kernel of $\psi$.

We use the local constant of \cite{Ueda}.
For a nonzero complex number $z$ put $\ph(z)=z/|z|$.
If $m=m_E(\theta)>0$, an element $\Gamma\in E^\times$ is called
\emph{admissible} if $v_E\Gamma=m+n_E(\psi)$. Choose such a $\Gamma$ and set
\begin{equation}\label{U:delta-definition}
 \Delta_E(\theta,\psi)=\theta(\Gamma)\,
 \ph\!\left(\int_{\OO_E^\times}\psi(u/\Gamma)\theta(u)^{-1}\,du\right).
\end{equation}
Here $du$ is a positive Haar measure on $\OO_E^\times$. The integral
is nonzero and is a positive multiple of a finite Gauss sum. The value
of \eqref{U:delta-definition} is independent of $\Gamma$; see
\cite[Proposition~2.4]{Ueda}.
For $m=0$ this convention gives
$\Delta_E(\theta,\psi)=\theta(\pi_E)^{n_E(\psi)}$.
The choice of positive Haar normalization does not change the phase.
No unitarity assumption is imposed on $\theta$. In particular, the
factor $\theta(\Gamma)$ in \eqref{U:delta-definition} is not replaced
by its phase.

For a totally ramified extension $E/F$ of degree $\ell$, our
valuation conventions give
\[
 v_E|_F=\ell v_F,\qquad v_F(N_{E/F}x)=v_E(x).
\]
The different exponent $D_{E/F}$ is measured in $E$.
Conjugation of a character means
$\theta^\sigma(x)=\theta(\sigma^{-1}x)$.
In a tower $A\subset B_i\subset B$, we write $N_i=N_{B/B_i}$ and
$n_i=N_{B_i/A}$. Trace notation will be specified in each section.

\subsection{The First Main Lemma and elementary identities}\label{U:interface}
For a cyclic extension $E/F$ of prime degree and a quasi-character $\chi$
of $F^\times$, the First Main Lemma is
\begin{equation}\label{U:FML}
 \Delta_E(\chi\circ N_{E/F},\psi_F\circ\Tr_{E/F})
       \prod_{\nu\in S(E/F)}\Delta_F(\nu,\psi_F)
   =\prod_{\nu\in S(E/F)}\Delta_F(\chi\nu,\psi_F).
\end{equation}
Also
\[
 \Delta_E(1,\psi)=1,\qquad
 \Delta_E(\theta,\psi(c\,\cdot))=\theta(c)\Delta_E(\theta,\psi),
\]
and for a finite-order character $\mu$,
$\Delta_E(\mu,\psi)\Delta_E(\mu^{-1},\psi)=\mu(-1)$.
These are \cite[Theorem~1.1 and Lemmas~2.8, 2.10--2.11]{Ueda}.
A change of variables in \eqref{U:delta-definition} also preserves the
local constant under a field automorphism when the additive character is
trace-invariant.

\section{Ramification and norm characters}\label{sec:ramification}
\subsection{Trace ideals, norm filtration, and conductors}
For a finite separable extension $E/F$ of ramification index $e_{E/F}$,
\begin{equation}\label{U:trace-ideal}
 \Tr_{E/F}(\pp_E^j)=\pp_F^{\lfloor(j+D_{E/F})/e_{E/F}\rfloor},
 \qquad n_E(\psi_F\Tr)=e_{E/F}n_F(\psi_F)+D_{E/F}.
\end{equation}
Here $j\in\mathbf Z$. To prove the first equality, write
$\Tr_{E/F}(\pp_E^j)=\pp_F^c$. The inverse different is the trace-dual
of $\OO_E$. For $y\in F$, the condition
$y\Tr(\pp_E^j)\subset\OO_F$ is therefore equivalent to
$y\pp_E^j\subset\pp_E^{-D_{E/F}}$, or
$e_{E/F}v_F(y)+j\ge-D_{E/F}$. Comparing the two principal dual
ideals proves the first equality in \eqref{U:trace-ideal}. The conductor formula follows
by applying it to the largest trivial ideal of $\psi_F$.
The cyclic-prime instances used most often below are
\cite[Theorem~3.7 and Corollary~3.8]{Ueda}.
For a totally ramified cyclic extension $E/F$ of prime degree $\ell$
and ramification break $t$,
$D_{E/F}=(\ell-1)(t+1)$, and the inverse Herbrand function on
nonnegative real arguments is $s$ up to $t$, and $t+\ell(s-t)$ above $t$.
By \cite[Theorem~3.7]{Ueda}, the induced graded norm is bijective
except at depth $t$, where its image has index $\ell$. Above $t$,
the corresponding norm maps between unit groups are surjective.
Every nontrivial norm character has conductor $t+1$.

For a character $\chi$ of $F^\times$ of conductor $m$, write
$m'=m_E(\chi\circ N_{E/F})$. Then
$m'-1=\psi_{E/F}(m-1)$ when $m>t+1$, and $m'=m$ when $m\le t$.
At $m=t+1$ it is at most $m$, with a drop precisely when twisting by a
norm character lowers the conductor of $\chi$. These are
\cite[Corollary~3.9 and Proposition~3.10]{Ueda}.
Unramified norm pullback preserves conductor, and its unit norms are
surjective at every positive depth.

We also use the following norm approximation:
for $A\in F^\times$ and $0\le j\le t$ there is $x\in E^\times$ with
$v_E x=v_F A$ and $N_{E/F}x/A\in U_F^j$. This is
\cite[Lemma~6.6]{Ueda}. The restriction $j\le t$ is essential in
its applications below.
For a wild cyclic extension $E/F$ of degree $p$, let $E_i(x)$ denote
the $i$th elementary symmetric function of the conjugates of $x$. For
$1\le i<p$, one has
\begin{equation}\label{U:symmetric}
 v_F(E_i(x))\ge\left\lfloor
       \frac{i v_E(x)+(p-1)(t+1)}p\right\rfloor.
\end{equation}
This holds also for $v_E(x)<0$; see \cite[Lemma~3.4]{Ueda}.
The stationary-parameter results of \cite[Proposition~6.8]{Ueda} are
used in Sections~\ref{sec:odd-ur}--\ref{sec:odd-calculation}.
The Hasse--Davenport identities are recalled in Section~\ref{sec:tame}.

\subsection{Additional classical local results}\label{U:classical}
We record the additional local results and the locations of their proofs.

\paragraph{Hilbert 90 and extension of characters.}
For a finite cyclic extension with generator $\sigma$, the multiplicative
norm kernel consists of $y/\sigma(y)$ (equivalently $\sigma(y)/y$), and
the additive trace kernel consists of $\sigma(y)-y$.
Both assertions are proved in Lemma~\ref{C:hilbert90}.
Lemma~\ref{C:character-extension} extends a character from a subgroup
of $E^\times$ containing a unit group on which it is trivial. Its
applications require the prescribed character formulas to agree on
the intersections of their domains.

\paragraph{Norm characters in $C_\ell\times C_\ell$ extensions.}
The order and conductor of a cyclic-prime norm quotient are supplied by
\cite[Corollary~3.9]{Ueda}. Proposition~\ref{D:norm-separation} proves
that the subgroups $N_{L_i/F}(L_i^\times)$ are distinct for the pairs
of intermediate fields used below. Lemma~\ref{D:crossed-norm} then
identifies $S(K/L_i)$ with the pullback of $S(L_j/F)$ by $N_{L_i/F}$,
for $i\ne j$. In the biquadratic case,
Lemma~\ref{D:quadratic-norms} applies to all three pairs and proves
$\omega_1\omega_2=\omega_3$. These statements also follow from local
reciprocity; see \cite[Chapters~XIII--XIV]{Serre}. Our proofs use the
cyclic-prime norm results of \cite{Ueda}.

\paragraph{Ramification and differents.}
The cyclic-prime trace, different, norm, and conductor formulas are
quoted from \cite{Ueda}. Lemma~\ref{D:inertia} proves the cyclic unramified
quotient, the tame inertia and Frobenius action, and the subgroup rule
for lower numbering. Lemma~\ref{D:break-ledger} derives the ramification breaks of the
intermediate extensions from the different formula. These results
are special cases of the theory in \cite[Chapters~III--V]{Serre}.
Transitivity of the different, the order-discriminant bound, and the
leading wild ramification term (including negative valuations) are
proved in Lemmas~\ref{C:leading-ramification}--\ref{C:order-discriminant}.
We use the monogenic different formula, elementary Galois theory,
finite-field theory, and Hensel lifting.
Elementary cyclic coordinates are recalled at the start of
Appendix~\ref{app:diamond-foundations}; the equal-characteristic
coefficient field is constructed in Theorem~\ref{B:trace-residue}.

\paragraph{Local residues in equal characteristic.}
For a finite separable extension of Laurent-series fields, we use
\begin{equation}\label{U:residue-input}
 \Tr_{k_E/k_F}\Res_E(\eta)=\Res_F(\Tr_{E/F}\eta).
\end{equation}
Here the trace of $x\,d\varpi_F$ is $\Tr_{E/F}(x)\,d\varpi_F$.
Equation \eqref{U:residue-input} is proved in
Theorem~\ref{B:trace-residue}; see also \cite{TateResidue}.
Lemma~\ref{B:dlog-norm} proves $d\log N(u)=\Tr(d\log u)$.
These identities and the Cartier operator give the Artin--Schreier
norm-character formula in Lemma~\ref{D:EQ:ASsymbol}.

\paragraph{Finite sums.}
Ordinary finite-field Hasse--Davenport is taken from \cite{Ueda}.
The Gauss congruence used in Section~\ref{sec:tame} is
Theorem~\ref{A:leading-gauss}, proved by Jacobi sums and induction
on the sum of the base-$p$ digits. We also use character orthogonality
for finite abelian groups.

\section{Stationary representatives and Lamprecht's formula}\label{sec:stationary}
\subsection{Stationary representatives and a change of additive character}
Let $E$ be a local field, let $\theta$ be a continuous quasi-character
of $E^\times$, and let $\psi_E$ be a nontrivial continuous additive
character of $E$. Suppose $m=m_E(\theta)>1$, let $\Gamma$ be
admissible for $(\theta,\psi_E)$, and let
$b\in\OO_E^\times$ satisfy
$\theta(1+z)=\psi_E(bz/\Gamma)$ for
$z\in\pp_E^{\lceil m/2\rceil}$. We call $b$ a
\emph{stationary representative}. It represents a class modulo
$\pp_E^{\lfloor m/2\rfloor}$. The coefficient $b/\Gamma$ of $z$
will also be used.

If $\Psi_E=\psi_E(\alpha\,\cdot)$, the formula
$\theta(1-z)=\Psi_E(Cz)$ corresponds to $b/\Gamma=-\alpha C$.
The character and additive factors in Lamprecht's formula are then
\[
 \theta((-\alpha C)^{-1})\Psi_E(-C).
\]
Thus the sign and the factor $\alpha$ are determined by the choice
of $1-z$ and $\Psi_E$. We use this convention in
Lemma~\ref{U:stationary-factor}.

\subsection{Lamprecht's formula and stable twists}
Write $m=m_E(\theta)=2d+\varepsilon>1$, where
$\varepsilon\in\{0,1\}$. The class of $b$ modulo $\pp_E^d$ is
determined by
\[
 \theta(1+z)=\psi_E(bz/\Gamma)
                         \quad(z\in\pp_E^{d+\varepsilon}).
\]
It is a unit. If $\varepsilon=0$, Lamprecht's formula is
\[
 \Delta_E(\theta,\psi_E)=\theta(\Gamma)\theta(b)^{-1}\psi_E(b/\Gamma).
\]
If $\varepsilon=1$, choose $\delta\OO_E=\pp_E^d$. Define the
\emph{critical function} by
\[
 H_\theta(\bar z)=\psi_E(b\delta z/\Gamma)\theta(1+\delta z)^{-1}.
\]
Here $z$ is any lift of $\bar z$ to $\OO_E$. The function is
well defined, and its sum has absolute value $|k_E|^{1/2}$. In odd
conductor, the preceding expression for $\Delta_E$ must be multiplied
by $|k_E|^{-1/2}\sum_{k_E}H_\theta$. The function satisfies
\[
 H_\theta(x+y)/(H_\theta(x)H_\theta(y))
       =\psi_E(b\delta^2\widetilde{xy}/\Gamma).
\]
Replacing $b$ by another representative can change both
$\theta(b)^{-1}\psi_E(b/\Gamma)$ and the critical sum, but their
product is unchanged.
These are \cite[Theorem~4.5 and Lemma~4.6]{Ueda}.
If $m_E(\nu)\le d$, the exact stable twist is
\begin{equation}\label{U:stable}
 \Delta_E(\theta\nu,\psi_E)=\nu(\Gamma/b)\Delta_E(\theta,\psi_E)
\end{equation}
(\cite[Lemma~4.7]{Ueda}).

\begin{lemma}[Lamprecht's formula for $\Psi_E(x)=\psi_E(\alpha x)$]
\label{U:stationary-factor}
Let $E$ be a local field, $\alpha\in E^\times$, and
$\Psi_E(x)=\psi_E(\alpha x)$ with largest trivial ideal $\pp_E^J$.
Write the conductor of $\theta$ as $m=2d+\varepsilon>1$, with
$\varepsilon\in\{0,1\}$. Suppose
$\theta(1-z)=\Psi_E(Zz)$ for $z\in\pp_E^{d+\varepsilon}$,
with $v_E Z=J-m$. Then
\begin{equation}\label{U:normalized-factor}
 \Delta_E(\theta,\psi_E)=
 \theta(( -\alpha Z)^{-1})\Psi_E(-Z)g_E.
\end{equation}
For even $m$, $g_E=1$. For odd $m$, choose $v_E\delta=d$ and set
\begin{equation}\label{U:normalized-residual}
 H_E(\bar x)=\Psi_E(-Z\delta x)\theta(1+\delta x)^{-1},\qquad
 g_E=|k_E|^{-1/2}\sum_{x\in k_E}H_E(x).
\end{equation}
More generally, $\Psi_E(-Zv)\theta(1+v)^{-1}$ is constant as $v$ varies
modulo $\pp_E^{d+1}$ in $\pp_E^d$ when $m$ is odd; it is identically
one on $\pp_E^d$ when $m$ is even.
\end{lemma}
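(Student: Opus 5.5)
We deduce the lemma from the version of Lamprecht's formula recalled above (\cite[Theorem~4.5 and Lemma~4.6]{Ueda}), applied with $\psi_E$ itself. The only work is to translate the normalization $\theta(1-z)=\Psi_E(Zz)$ into the normalization $\theta(1+z)=\psi_E(bz/\Gamma)$. Replacing $z$ by $-z$, the hypothesis reads $\theta(1+z)=\Psi_E(-Zz)=\psi_E(-\alpha Zz)$ for $z\in\pp_E^{d+\varepsilon}$. This suggests taking $b/\Gamma=-\alpha Z$, as in the convention of the preceding subsection.

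First I will check admissibility. Since $\pp_E^J$ is the largest trivial ideal of $\Psi_E$, we have $J=n_E(\psi_E)-v_E\alpha$. Hence $v_E(-\alpha Z)=v_E\alpha+J-m=n_E(\psi_E)-m$. Put $b=1$ and $\Gamma=(-\alpha Z)^{-1}$. Then $v_E\Gamma=m+n_E(\psi_E)$, so $\Gamma$ is admissible and $b$ is a unit stationary representative. Lamprecht's formula then gives
\[
\Delta_E(\theta,\psi_E)=\theta(\Gamma)\theta(1)^{-1}\psi_E(1/\Gamma)\cdot(\text{critical factor}).
\]
Here $\theta(\Gamma)=\theta((-\alpha Z)^{-1})$ and $\psi_E(1/\Gamma)=\psi_E(-\alpha Z)=\Psi_E(-Z)$. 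This is exactly the prefactor in \eqref{U:normalized-factor}. For even $m$ the critical factor is $1$, so $g_E=1$.

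For odd $m$, the critical function with $b=1$ is
\[
H_\theta(\bar x)=\psi_E(\delta x/\Gamma)\theta(1+\delta x)^{-1}=\psi_E(-\alpha Z\delta x)\theta(1+\delta x)^{-1}=\Psi_E(-Z\delta x)\theta(1+\delta x)^{-1},
\]
which is $H_E$. Therefore the factor $|k_E|^{-1/2}\sum H_\theta$ equals $g_E$.

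For the final assertion, let $v\in\pp_E^{d}$ and $w\in\pp_E^{d+\varepsilon}$. Then $1+v+w=(1+v)(1+w')$ with $w'=w/(1+v)$, and
\[
\theta(1+w')=\Psi_E(-Zw')=\Psi_E(-Zw)\,\Psi_E\bigl(Zwv/(1+v)\bigr).
\]
The second term has valuation at least $v_E Z+2d+\varepsilon\ge J-m+m-\varepsilon+\varepsilon$. Working this out, one gets $v_E(Zwv)\ge J-m+2d+\varepsilon=J$, so this term is trivial. Hence $\Psi_E(-Z(v+w))\theta(1+v+w)^{-1}=\Psi_E(-Zv)\theta(1+v)^{-1}$, which is the claimed invariance modulo $\pp_E^{d+\varepsilon}$.

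When $m$ is even, $\varepsilon=0$. Applying the invariance with $v=0$ shows the function is identically one on $\pp_E^d$, since $\theta(1+w)=\Psi_E(-Zw)$ holds there directly. When $m$ is odd, the invariance step gives constancy modulo $\pp_E^{d+1}$.

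The main obstacle is this valuation bookkeeping. One must confirm that $v_E(Zvw)\ge J$ holds exactly at the boundary, using $2d+\varepsilon=m$. One must also make sure the identification $J=n_E(\psi_E)-v_E\alpha$ matches the paper's sign convention for $n_E$.
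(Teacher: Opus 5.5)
Your proposal is correct and is the paper's proof: apply the recalled Lamprecht formula with $b/\Gamma=-\alpha Z$ (your choice $b=1$, $\Gamma=(-\alpha Z)^{-1}$ is fine), identify $H_E$ as the critical function, and get the constancy statement from $1+v+w=(1+v)(1+w')$, where the cross term $Zvw$ lies in $\pp_E^{J}$ because $v_E Z+2d+\varepsilon=J$. One bookkeeping fix: the paper's convention is that $\pp_E^{-n_E(\psi_E)}$ is the largest trivial ideal of $\psi_E$, so $J=-n_E(\psi_E)-v_E\alpha$, $v_E(-\alpha Z)=-n_E(\psi_E)-m$ and $v_E\Gamma=m+n_E(\psi_E)$; your $J=n_E(\psi_E)-v_E\alpha$ would instead give $v_E\Gamma=m-n_E(\psi_E)$, so your two displayed lines contradict each other, although the admissibility you conclude is the right one.
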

\begin{proof}
Apply Lamprecht's formula with $b/\Gamma=-\alpha Z$.
For the last assertion, suppose $v'-v\in\pp_E^{d+1}$ and write
$1+v'=(1+v)(1+w)$ with $w\in\pp_E^{d+1}$. The quotient of the two
expressions is $\Psi_E(-Zvw)=1$, since
$v_E(Zvw)\ge J$; the linear part cancels by stationarity.
In even conductor stationarity holds already on $\pp_E^d$.
\end{proof}

\subsection{Character identities and comparison of critical sums}\label{U:common-principles}
The following results will be used in several cases of the proof.
Lemmas~\ref{U:common-origin} and~\ref{U:common-function} apply when
$N_{K/L_i}C$ and $N_{L_i/F}(\Tr_{K/L_i}C)$ are stationary
coefficients. The required elements $C$ are constructed in
Sections~\ref{sec:dyadic-ur}--\ref{sec:dyadic-nonmaximal}.

\begin{lemma}[Conjugate characters and restriction to $F^\times$]
\label{U:determinant}\label{U:conjugacy}
Let $\ell$ be prime, let $K/F$ be finite Galois with group $C_\ell^2$,
and let $L_1,L_2$ be distinct degree-$\ell$ intermediate fields
with distinct subgroups $N_{L_i/F}(L_i^\times)$ of $F^\times$. Put
$N_i=N_{K/L_i}$, $n_i=N_{L_i/F}$ and
$\epsilon_i=\prod_{\omega\in S(L_i/F)}\omega$.
Let $\theta_iN_i=\Theta$ be primitive compatible characters.
For a generator $\sigma_i$ of $\Gal(L_i/F)$, the quotient
$\mu_i=\theta_i^{\sigma_i}/\theta_i$ generates $S(K/L_i)$.
Every character in that group is fixed by $\Gal(L_i/F)$, and the conjugates of $\theta_i$
are precisely its twists by $S(K/L_i)$. Moreover
\begin{equation}\label{U:det-character}
 D=D_\theta=\theta_i|_{F^\times}\epsilon_i
\end{equation}
is independent of $i$. For odd $\ell$, $\epsilon_i=1$; for $\ell=2$,
$\epsilon_i=\omega_i$ is the nontrivial lower norm character.
The required distinctness follows from
Proposition~\ref{D:norm-separation} for the pairs specified there,
and from Lemma~\ref{D:quadratic-norms} for all biquadratic pairs.
\end{lemma}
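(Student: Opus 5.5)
We prove the assertions in the order stated. Each claim reduces to Hilbert~90 (Lemma~\ref{C:hilbert90}), to the identification of $S(K/L_i)$ in Lemma~\ref{D:crossed-norm}, or to the transfer (restriction) compatibility of local class field theory in its elementary norm form.

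\emph{Step 1: $\mu_i$ generates $S(K/L_i)$ and is $\Gal(L_i/F)$-fixed.}
Let $\tilde\sigma_i\in\Gal(K/F)$ lift $\sigma_i$. Since $\Theta$ is $\Gal(K/F)$-invariant and $N_i\circ\tilde\sigma_i=\sigma_i\circ N_i$, we get $\theta_i^{\sigma_i}\circ N_i=\Theta^{\tilde\sigma_i}=\Theta$. Hence $\mu_i\circ N_i=1$, so $\mu_i\in S(K/L_i)$. This group has order $\ell$, being dual to $L_i^\times/N_i(K^\times)$ by \cite[Corollary~3.9]{Ueda}. The introduction showed that primitivity forces $\mu_i\ne1$, so $\mu_i$ generates $S(K/L_i)$.

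By Lemma~\ref{D:crossed-norm}, every element of $S(K/L_i)$ has the form $\omega\circ n_i$ with $\omega\in S(L_j/F)$, $j\ne i$. Since $n_i\circ\sigma_i=n_i$, such characters are $\Gal(L_i/F)$-fixed.

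\emph{Step 2: the conjugates of $\theta_i$ are exactly its twists by $S(K/L_i)$.}
Because $\mu_i$ is $\sigma_i$-fixed, induction gives $\theta_i^{\sigma_i^k}=\theta_i\mu_i^k$ for all $k$. Since $\mu_i$ has order $\ell$, the $\ell$ conjugates are the $\ell$ distinct twists $\theta_i\mu_i^k$, and these exhaust $\theta_i S(K/L_i)$.

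\emph{Step 3: the character $D$ is independent of $i$.}
This is the main step. Take a pair $i\ne j$ and compare $\theta_i|_{F^\times}$ with $\theta_j|_{F^\times}$.

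\begin{itemize}
\item On $N_{K/F}(K^\times)$: for $y\in K^\times$,
\[
\theta_i(N_{K/F}y)=\theta_i\bigl(n_i(N_i y)\bigr)=\prod_k\theta_i^{\sigma_i^k}(N_i y)=\prod_k\mu_i^k(N_i y)\,\Theta(y).
\]
Since each $\mu_i^k(N_iy)=1$, this equals $\Theta(y)^\ell$. The same holds for $j$, so the two restrictions agree on $N_{K/F}K^\times$.

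\item On all of $F^\times$: consider $x\in F^\times\subset L_i^\times$. We have $\theta_i(x)^\ell=\theta_i(n_i x)=\prod_k\theta_i^{\sigma_i^k}(x)=\theta_i(x)^\ell\prod_k\mu_i(x)^k$. This yields the condition $\mu_i(x)^{\ell(\ell-1)/2}=1$, which is automatic for odd $\ell$ and equals $\mu_i(x)$ for $\ell=2$. That constraint alone is not enough.
\end{itemize}

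The cleaner route is to use that the ratio $\rho=\theta_i|_{F^\times}/\theta_j|_{F^\times}$ is trivial on $N_{K/F}K^\times$, so $\rho$ is a character of $F^\times/N_{K/F}K^\times\cong C_\ell^2$. We then identify $\rho$ by evaluating on elements $x\in n_j(L_j^\times)$.

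Write $x=n_j(y)$. Then $\theta_j(n_jy)=\prod_k\theta_j^{\sigma_j^k}(y)$, and since $\theta_j\circ N_j=\Theta$ this is governed by $\Theta$. For $\theta_i$ we would lift: choose $z\in K$ with $N_j z$ near $y$ modulo norms. Comparing the two should give $\rho|_{n_jL_j^\times}=\epsilon_j$-type corrections.

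Concretely, I expect the transfer formula $\theta_i|_{F^\times}=\det(\operatorname{Ind}\theta_i)\cdot\epsilon_i^{-1}$ (Gallagher) to be the conceptual reason. Since $\operatorname{Ind}_{L_i}^F\theta_i\cong\operatorname{Ind}_K^F\Theta'$-type identifications make the induced representations independent of $i$, their determinants agree. Locally I would instead prove directly that $\rho$ is trivial on $n_i(L_i^\times)$ and on $n_j(L_j^\times)$, using Step 2 and Lemma~\ref{D:crossed-norm}. These two subgroups are distinct, by Proposition~\ref{D:norm-separation} and Lemma~\ref{D:quadratic-norms}, so they generate $F^\times$.

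\emph{Step 4: the values of $\epsilon_i$.}
For odd $\ell$, the product of all elements of the cyclic group $S(L_i/F)$ of odd order is $1$. For $\ell=2$, the product is the nontrivial norm character $\omega_i$.

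The hard step is the explicit evaluation of $\rho$ on $n_j(L_j^\times)$. It is exactly there that the sign $\epsilon_i$ enters, through the factor $\prod_k\mu^k$, which is $\mu^{\ell(\ell-1)/2}$.
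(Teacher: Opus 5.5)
Your Steps 1, 2 and 4 are fine and agree with the paper. The gap is in Step 3, which you never actually prove. You do not evaluate $\theta_i$ on $n_j(L_j^\times)$. The two routes you suggest do not close this:
\begin{itemize}
\item approximating $y$ by some $N_jz$ ``modulo norms'' is too vague to carry out;
\item Gallagher's transfer formula would also need an identification of the induced representations, and neither is available in the paper's elementary local framework.
\end{itemize}
Moreover, the target you state at the end is wrong for $\ell=2$. The ratio $\rho=\theta_i|_{F^\times}/\theta_j|_{F^\times}$ is \emph{not} trivial on $n_j(L_j^\times)$. It equals $\omega_i$ there, and $\omega_i$ is nontrivial on that subgroup precisely because the two norm subgroups differ. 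What must be shown is $\rho=\epsilon_j\epsilon_i^{-1}$, that is, $\theta_i\epsilon_i$ and $\theta_j\epsilon_j$ agree on $n_i(L_i^\times)$ and on $n_j(L_j^\times)$.

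The missing observation is that no lifting or approximation is needed.
\begin{itemize}
\item Take $x\in L_j^\times\subset K^\times$. Since $L_i\ne L_j$, the group $\Gal(K/L_i)$ restricts isomorphically onto $\Gal(L_j/F)$. Hence $N_ix=n_jx$ exactly.
\item Compatibility then gives $\theta_i(n_jx)=\Theta(x)=\theta_j(N_jx)=\theta_j(x)^\ell$.
\item On the other hand, expand $n_jx$ as the product of its conjugates and apply your Step 2 to $\theta_j$. This gives $\theta_j(n_jx)=\theta_j(x)^\ell\mu_j(x)^{-\ell(\ell-1)/2}$.
\item For odd $\ell$ the last factor is $1$. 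For $\ell=2$ it is $\mu_j(x)=\omega_i(n_jx)$ by Lemma~\ref{D:crossed-norm}, while $\omega_j(n_jx)=1$.
\end{itemize}
In either case $\theta_i\epsilon_i=\theta_j\epsilon_j$ on $n_j(L_j^\times)$, and by symmetry on $n_i(L_i^\times)$. Two distinct subgroups of prime index $\ell$ generate $F^\times$, which finishes the proof; this is exactly the paper's argument. Your agreement on $N_{K/F}(K^\times)$ and the (vacuous) constraint from $x\in F^\times$ are correct but play no role.
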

\begin{proof}
The character $\Theta$ is invariant under both $\Gal(K/L_i)$,
which generate $\Gal(K/F)$. Thus $\mu_i$ is trivial on
$N_i(K^\times)$. If $\mu_i=1$, Lemmas~\ref{C:hilbert90}
and~\ref{C:character-extension} would give a character $\lambda$
of $F^\times$ with $\theta_i=\lambda\circ n_i$. This would imply
$\Theta=\lambda\circ N_{K/F}$, contrary to primitivity. Since
$L_i^\times/N_i(K^\times)$ has prime order, $\mu_i$ generates
$S(K/L_i)$. Lemma~\ref{D:crossed-norm}
identifies that group with $S(L_j/F)\circ n_i$; it is therefore invariant
under $\sigma_i$. Iterated conjugation gives all $\ell$ twists.

For $x\in L_j^\times$, compatibility gives
$\theta_i(n_jx)=\theta_j(x)^\ell$, whereas multiplication over the
conjugates gives
\[
 \theta_j(n_jx)=\theta_j(x)^\ell
       \mu_j(x)^{-\ell(\ell-1)/2}.
\]
For odd $\ell$ the last factor is one, and the characters in each
$S(L_i/F)$ pair with their inverses, so $\epsilon_i=1$.
For $\ell=2$, that factor is
$\mu_j(x)=\omega_i(n_jx)$ and $\omega_j(n_jx)=1$.
In either case $\theta_i|_{F^\times}\epsilon_i$ and
$\theta_j|_{F^\times}\epsilon_j$ agree on $n_jL_j^\times$,
and by symmetry on $n_iL_i^\times$. These distinct index-$\ell$
subgroups generate $F^\times$, proving the result.
\end{proof}

\begin{corollary}[Odd-degree powers]\label{U:odd-power}
In Lemma~\ref{U:conjugacy}, suppose $\ell$ is odd and use trace-compatible
additive characters. Then
\[
 \Delta_{L_i}(\theta_i,\psi_{L_i})^\ell=\Delta_K(\Theta,\psi_K),
 \qquad \prod_{\omega\in S(L_i/F)}\Delta_F(\omega,\psi_F)=1.
\]
In particular, $(\mathcal A_{L_1}/\mathcal A_{L_2})^\ell=1$.
\end{corollary}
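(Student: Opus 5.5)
The proof has three parts: the First Main Lemma applied to the cyclic extension $K/L_i$, the pairing of norm characters in odd degree, and a short cancellation for the quotient of the $\mathcal A_{L_i}$.

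\textbf{First identity.} Apply \eqref{U:FML} to $K/L_i$ with $\chi=\theta_i$ and additive character $\psi_{L_i}$. Since $\psi_K=\psi_{L_i}\circ\Tr_{K/L_i}$ by trace compatibility, this gives
\[
 \Delta_K(\Theta,\psi_K)\prod_{\mu\in S(K/L_i)}\Delta_{L_i}(\mu,\psi_{L_i})
 =\prod_{\mu\in S(K/L_i)}\Delta_{L_i}(\theta_i\mu,\psi_{L_i}).
\]
By Lemma~\ref{U:conjugacy}, the twists $\theta_i\mu$ with $\mu\in S(K/L_i)$ are exactly the $\Gal(L_i/F)$-conjugates of $\theta_i$. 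The additive character $\psi_{L_i}=\psi_F\circ\Tr_{L_i/F}$ is Galois-invariant, so by the remark at the end of Section~\ref{U:interface} each conjugate has the same local constant. The right-hand side is therefore $\Delta_{L_i}(\theta_i,\psi_{L_i})^\ell$.

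\textbf{The norm-character products.} It remains to show that $\prod_{\mu\in S(K/L_i)}\Delta_{L_i}(\mu,\psi_{L_i})=1$, and likewise for $S(L_i/F)$ over $F$. For odd $\ell$ the nontrivial elements of a cyclic group of order $\ell$ split into pairs $\{\mu,\mu^{-1}\}$, and the trivial character contributes $\Delta(1,\psi)=1$. Norm characters have finite order, so the identity $\Delta(\mu,\psi)\Delta(\mu^{-1},\psi)=\mu(-1)$ applies to each pair. Moreover $\mu^\ell=1$ with $\ell$ odd and $\mu(-1)^2=1$, so $\mu(-1)=1$. Each pair therefore contributes $1$, which proves both vanishing statements. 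Substituting into the display above gives $\Delta_{L_i}(\theta_i,\psi_{L_i})^\ell=\Delta_K(\Theta,\psi_K)$.

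\textbf{The quotient.} We now have $\mathcal A_{L_i}(\theta_i)=\Delta_{L_i}(\theta_i,\psi_{L_i})$. Its $\ell$th power is $\Delta_K(\Theta,\psi_K)$, which does not depend on $i$, so $(\mathcal A_{L_1}/\mathcal A_{L_2})^\ell=1$.

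\textbf{Main obstacle.} The only delicate point is that equality of local constants under conjugation requires the additive character to be $\sigma$-invariant. One checks by the change of variables $u\mapsto\sigma u$ in \eqref{U:delta-definition}, with admissible element $\sigma^{-1}\Gamma$, that this holds because $\psi_{L_i}$ factors through the trace to $F$. Everything else is formal once Lemma~\ref{U:conjugacy} is available.
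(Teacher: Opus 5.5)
Your proposal is correct and follows essentially the same route as the paper's proof. Both apply the First Main Lemma \eqref{U:FML} to $K/L_i$. Both use Lemma~\ref{U:conjugacy} to see that the twists $\theta_i\mu$ are Galois conjugates, whose local constants agree by an automorphism change of variables. Both then kill the two norm-character products by the inverse pairing $\Delta(\mu)\Delta(\mu^{-1})=\mu(-1)=1$, which holds for characters of odd order.
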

\begin{proof}
For an odd-prime cyclic extension, \cite[Lemma~2.11]{Ueda} gives
$\Delta(\omega)\Delta(\omega^{-1})=\omega(-1)=1$; the trivial character has local constant
one. Apply \eqref{U:FML} to $K/L_i$. By
Lemma~\ref{U:conjugacy}, its twists are conjugate, and their local
constants agree by an automorphism change of variables in the defining
integral. This proves the power relation as well as both product claims.
\end{proof}

\begin{proposition}[Squares in the biquadratic case]\label{U:quadratic-square}
For a primitive compatible biquadratic family with trace-compatible
additive characters, let $\mathcal A_i=\Delta_{L_i}(\theta_i)\Delta_F(\omega_i)$. Then
\[
 \mathcal A_i^2=\Delta_K(\Theta)\prod_{j=1}^3\Delta_F(\omega_j).
\]
Thus the ratio of two such expressions belongs to $\{1,-1\}$.
\end{proposition}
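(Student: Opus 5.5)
The plan is to apply the First Main Lemma \eqref{U:FML} to the quadratic extension $K/L_i$ with the character $\chi=\theta_i$. By Lemma~\ref{U:conjugacy}, $S(K/L_i)=\{1,\mu_i\}$ with $\mu_i=\theta_i^{\sigma_i}/\theta_i$. Hence \eqref{U:FML} reads
\[
 \Delta_K(\Theta,\psi_K)\,\Delta_{L_i}(\mu_i,\psi_{L_i})
 =\Delta_{L_i}(\theta_i,\psi_{L_i})\,\Delta_{L_i}(\theta_i\mu_i,\psi_{L_i}),
\]
using $\Delta_{L_i}(1)=1$ and $\psi_K=\psi_{L_i}\circ\Tr_{K/L_i}$.

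Since $\theta_i\mu_i=\theta_i^{\sigma_i}$, the automorphism change of variables (the additive character $\psi_{L_i}=\psi_F\circ\Tr_{L_i/F}$ is $\sigma_i$-invariant) gives $\Delta_{L_i}(\theta_i\mu_i)=\Delta_{L_i}(\theta_i)$. The right side is therefore $\Delta_{L_i}(\theta_i)^2$, and
\[
 \Delta_{L_i}(\theta_i)^2=\Delta_K(\Theta)\,\Delta_{L_i}(\mu_i).
\]

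Next I evaluate $\Delta_{L_i}(\mu_i)$. By Lemma~\ref{D:crossed-norm}, $\mu_i=\omega_j\circ n_i$ for $j\ne i$. I apply \eqref{U:FML} to the quadratic extension $L_i/F$ with $\chi=\omega_j$ and $S(L_i/F)=\{1,\omega_i\}$:
\[
 \Delta_{L_i}(\omega_j\circ n_i)\,\Delta_F(\omega_i)=\Delta_F(\omega_j)\,\Delta_F(\omega_j\omega_i).
\]
Lemma~\ref{D:quadratic-norms} gives $\omega_i\omega_j=\omega_k$, where $k$ is the third index. Substituting,
\[
 \mathcal A_i^2=\Delta_K(\Theta)\,\Delta_{L_i}(\mu_i)\,\Delta_F(\omega_i)^2
 =\Delta_K(\Theta)\,\Delta_F(\omega_j)\Delta_F(\omega_k)\,\Delta_F(\omega_i).
\]
This is the claimed symmetric product over $j=1,2,3$. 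The ratio statement then follows because the right side is independent of $i$ and nonzero.

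The only points needing care are bookkeeping:
\begin{itemize}
\item the identification $\mu_i=\omega_j\circ n_i$, which relies on the distinctness of norm groups supplied by Lemma~\ref{D:quadratic-norms} and the crossed-norm lemma;
\item checking that the trace-compatible additive characters match in each application of \eqref{U:FML};
\item that \eqref{U:FML} is valid with $\chi$ a quasi-character, not necessarily unitary.
\end{itemize}
I expect no step to be a real obstacle. The one I would double-check is the use of $\Delta_{L_i}(\theta_i^{\sigma_i})=\Delta_{L_i}(\theta_i)$, since the definition \eqref{U:delta-definition} involves $\theta(\Gamma)$ for an admissible $\Gamma$. The substitution $u\mapsto\sigma_i u$, $\Gamma\mapsto\sigma_i\Gamma$ preserves admissibility and the Haar measure on units, so this goes through.
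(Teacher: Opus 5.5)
Your proof is correct and follows the paper's argument essentially verbatim. The First Main Lemma for $K/L_i$, together with $\theta_i\mu_i=\theta_i^{\sigma_i}$, gives $\Delta_{L_i}(\theta_i)^2=\Delta_K(\Theta)\Delta_{L_i}(\omega_j\circ n_i)$; the First Main Lemma for $L_i/F$ applied to $\omega_j$, with $\omega_i\omega_j=\omega_k$ from Lemma~\ref{D:quadratic-norms}, then supplies the symmetric product, exactly as in the paper's proof via Lemma~\ref{U:conjugacy}.
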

\begin{proof}
By Lemma~\ref{U:conjugacy} and \eqref{U:FML} for $K/L_i$,
$\Delta_{L_i}(\theta_i)^2=\Delta_K(\Theta)\Delta_{L_i}(\omega_jn_i)$
for $j\ne i$. Equation \eqref{U:FML} for $L_i/F$, applied to $\omega_j$, gives
$\Delta_{L_i}(\omega_jn_i)\Delta_F(\omega_i)
 =\Delta_F(\omega_j)\Delta_F(\omega_k)$, where $\{i,j,k\}=\{1,2,3\}$.
Use $\omega_i\omega_j=\omega_k$ from Lemma~\ref{D:quadratic-norms}
and multiply the two equalities.
\end{proof}

\begin{lemma}[The second elementary symmetric function]\label{U:e2}
For a biquadratic extension $K/F$, $C\in K$, and any quadratic
intermediate field $L_i$,
\[
 E_2(C)=\Tr_{L_i/F}(N_iC)+n_i(S_iC),\qquad S_i=\Tr_{K/L_i},
\]
where $E_2(C)$ is the second elementary symmetric function of the four
conjugates of $C$.
\end{lemma}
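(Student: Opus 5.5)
This is a direct computation with the four conjugates of $C$. Write $G=\Gal(K/F)=\{1,\sigma,\tau,\sigma\tau\}$, and label the generators so that $\Gal(K/L_i)=\{1,\tau\}$. The restriction of $\sigma$ to $L_i$ then generates $\Gal(L_i/F)$, and the conjugates of $C$ are $C,\ \sigma C,\ \tau C,\ \sigma\tau C$. The second elementary symmetric function $E_2(C)$ is the sum of the six pairwise products of these conjugates. The plan is to split these six products into two groups and match each group with one of the two terms on the right.

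The first group consists of the two products that stay inside a single $\Gal(K/L_i)$-orbit:
\[
 C\cdot\tau C + \sigma C\cdot\sigma\tau C .
\]
Here $C\cdot\tau C=N_iC$. Since $\sigma\tau=\tau\sigma$ in the abelian group $G$, the second product equals $\sigma(C\cdot\tau C)=\sigma(N_iC)$. As $N_iC\in L_i$, the sum is $\Tr_{L_i/F}(N_iC)$.

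The second group consists of the four cross products between the orbit $\{C,\tau C\}$ and the orbit $\{\sigma C,\sigma\tau C\}$. Their sum factors as
\[
 (C+\tau C)(\sigma C+\sigma\tau C)=S_iC\cdot\sigma(S_iC),
\]
where we again use $\sigma\tau C=\tau\sigma C$ to identify the second factor with $\sigma(S_iC)$. Since $S_iC\in L_i$, this product is $n_i(S_iC)$.

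Adding the two groups gives the identity. No step is a genuine obstacle. The only point to check is that the grouping uses exactly the six pairs once each: two same-orbit pairs and four cross pairs. This holds because $G$ acts simply transitively on the four conjugates, even if some of them coincide as elements of $K$, since $E_2$ is defined with multiplicity through the characteristic polynomial. The argument is purely algebraic, so it is valid in every characteristic, including characteristic two.
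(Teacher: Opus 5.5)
Your proof is correct and is the paper's argument. Pair the four conjugates by the involution fixing $L_i$: the two same-orbit products give $\Tr_{L_i/F}(N_iC)$, and the four cross products factor as $n_i(S_iC)$. The appeal to commutativity is harmless but unnecessary, because $\sigma(C\cdot\tau C)=\sigma C\cdot\sigma\tau C$ and $\sigma(C+\tau C)=\sigma C+\sigma\tau C$ hold directly.
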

\begin{proof}
Write the four conjugates, paired by the involution fixing $L_i$, as
$c_1,c_2$ and $c_3,c_4$. The two terms on the right are
$c_1c_2+c_3c_4$ and $(c_1+c_2)(c_3+c_4)$.
Adding gives the six terms defining $E_2(C)$.
\end{proof}

\begin{lemma}[Norms as stationary coefficients]\label{U:common-origin}
Use the biquadratic setting of Lemma~\ref{U:determinant}. Fix $\alpha\in F^\times$
and put $\Psi_E=\psi_E(\alpha\,\cdot)$.
Put $m_i=m_{L_i}(\theta_i)$ and $T_i=m_F(\omega_i)$, and assume
$m_i,T_i>1$. Let $C\in K^\times$ satisfy $S_iC\ne0$,
and suppose that
\[
 Z_i=N_iC,\qquad W_i=n_i(S_iC)
\]
satisfy $\theta_i(1-z)=\Psi_{L_i}(Z_i z)$ for
$z\in\pp_{L_i}^{\lceil m_i/2\rceil}$ and
$\omega_i(1-z)=\Psi_F(W_i z)$ for
$z\in\pp_F^{\lceil T_i/2\rceil}$.
Let $g_i$ and $h_i$ be the corresponding factors defined in
\eqref{U:normalized-residual}, with value one in even conductor. Then
\[
 \mathcal A_i(\theta_i)
   =D(-\alpha^{-1})\Theta(C)^{-1}\Psi_F(-E_2(C))\,g_i h_i.
\]
Consequently $\mathcal A_i(\theta_i)$ is independent of $i$ if
$g_i h_i$ is independent of $i$.
\end{lemma}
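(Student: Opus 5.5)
Apply Lemma~\ref{U:stationary-factor} twice: to $(\theta_i,\psi_{L_i})$ over $L_i$ with $Z=Z_i$, and to $(\omega_i,\psi_F)$ over $F$ with $Z=W_i$. Then multiply the two results and simplify the character and additive factors using the compatibility $\theta_iN_i=\Theta$, the determinant identity \eqref{U:det-character}, and Lemma~\ref{U:e2}.

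First I check the valuation hypotheses $v_{L_i}Z_i=J_{L_i}-m_i$ and $v_FW_i=J_F-T_i$, where $J$ is the largest trivial ideal exponent of $\Psi$. These should follow automatically from the stationarity assumption. A stationary coefficient of a character of conductor $m>1$ must have valuation exactly $J-m$. If the valuation were larger, $\theta$ would be trivial on $U^{m-1}$. If it were smaller, $\theta(1-z)$ would fail to be trivial on $U^m$. So the lemma applies, and gives
\[
 \Delta_{L_i}(\theta_i,\psi_{L_i})=\theta_i((-\alpha Z_i)^{-1})\Psi_{L_i}(-Z_i)g_i,\qquad
 \Delta_F(\omega_i,\psi_F)=\omega_i((-\alpha W_i)^{-1})\Psi_F(-W_i)h_i .
\]

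Next I simplify the additive factors. Since $\alpha\in F$ and $\psi_{L_i}=\psi_F\circ\Tr_{L_i/F}$, we have $\Psi_{L_i}=\Psi_F\circ\Tr_{L_i/F}$. Hence
\[
\Psi_{L_i}(-Z_i)\Psi_F(-W_i)=\Psi_F\bigl(-\Tr_{L_i/F}(N_iC)-n_i(S_iC)\bigr)=\Psi_F(-E_2(C))
\]
by Lemma~\ref{U:e2}.

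Then I simplify the character factors, splitting $(-\alpha Z_i)^{-1}=(-\alpha)^{-1}\cdot (N_iC)^{-1}$. For the second piece, $\theta_i(N_iC)^{-1}=\Theta(C)^{-1}$ by compatibility. For the first piece, $-\alpha\in F^\times$, so $\theta_i((-\alpha)^{-1})=\theta_i|_{F^\times}(-\alpha^{-1})$. This leaves the factor $\omega_i((-\alpha W_i)^{-1})=\omega_i(-\alpha^{-1})\,\omega_i(n_i(S_iC))^{-1}$. Since $\omega_i\in S(L_i/F)$, it is trivial on $n_i(L_i^\times)$, and $S_iC\ne0$ lies in $L_i^\times$; so the last factor is $1$. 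Combining $\theta_i|_{F^\times}(-\alpha^{-1})\,\omega_i(-\alpha^{-1})=D(-\alpha^{-1})$ by \eqref{U:det-character}, with $\epsilon_i=\omega_i$ for $\ell=2$, gives the displayed formula.

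The prefactor $D(-\alpha^{-1})\Theta(C)^{-1}\Psi_F(-E_2(C))$ does not involve $i$. Here $D$ is independent of $i$ by Lemma~\ref{U:conjugacy}, and $C$ is common to both indices. This gives the consequence. The only delicate point is the valuation check above, and perhaps the sign conventions relating $1-z$ with Lamprecht's $b/\Gamma$. Both are already absorbed into the statement of Lemma~\ref{U:stationary-factor}, so I expect no real obstacle.
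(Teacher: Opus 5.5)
Your proposal is correct and is essentially the paper's proof: apply Lemma~\ref{U:stationary-factor} to $\theta_i$ and to $\omega_i$, use compatibility to get $\theta_i(Z_i)=\Theta(C)$ and the norm property to get $\omega_i(W_i)=1$, and then combine via Lemma~\ref{U:determinant} and Lemma~\ref{U:e2}. Your extra check that stationarity forces $v_{L_i}Z_i=J_{L_i}-m_i$ and $v_FW_i=J_F-T_i$ (using $\lceil m/2\rceil\le m-1$ for $m>1$) fills in a hypothesis of Lemma~\ref{U:stationary-factor} that the paper leaves implicit.
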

\begin{proof}
By \eqref{U:normalized-factor}, the two local constants are
\[
 \theta_i(-\alpha^{-1})\theta_i(Z_i)^{-1}\Psi_{L_i}(-Z_i)g_i,
 \quad
 \omega_i(-\alpha^{-1})\omega_i(W_i)^{-1}\Psi_F(-W_i)h_i.
\]
Compatibility gives $\theta_i(Z_i)=\Theta(C)$. Since $W_i=n_i(S_iC)$, one has $\omega_i(W_i)=1$. Multiply, apply
Lemma~\ref{U:determinant}, and use Lemma~\ref{U:e2}.

\end{proof}

\begin{lemma}[Products of critical functions]\label{U:common-function}
Let $C$ satisfy the hypotheses of Lemma~\ref{U:common-origin}. For another
$C'\in K^\times$ with all $S_iC'\ne0$, put
\[
 1+z_i=N_iC'/N_iC,\qquad 1+w_i=n_i(S_iC')/n_i(S_iC).
\]
Then the products
\[
 \Psi_{L_i}(-Z_i z_i)\theta_i(1+z_i)^{-1}
 \Psi_F(-W_i w_i)\omega_i(1+w_i)^{-1}
\]
are independent of $i$. Their common value is
\[
 \Theta(C'/C)^{-1}\Psi_F\bigl(-E_2(C')+E_2(C)\bigr).
\]
If $z_i\in\pp_{L_i}^{\lfloor m_i/2\rfloor}$ and
$w_i\in\pp_F^{\lfloor T_i/2\rfloor}$,
Lemma~\ref{U:stationary-factor} identifies the products with those
of the corresponding critical functions.
\end{lemma}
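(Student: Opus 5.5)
The plan is a direct computation. We factor the product termwise and combine the pieces using multiplicativity of $\theta_i$, $\omega_i$, $\Theta$, the additivity of $\Psi$, and Lemma~\ref{U:e2}.

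\emph{Character factors.} Since $1+z_i=N_iC'/N_iC=N_i(C'/C)$ and $\theta_i\circ N_i=\Theta$, we get
\[
\theta_i(1+z_i)=\Theta(C'/C).
\]
Similarly, $1+w_i=n_i(S_iC'/S_iC)$, with $S_iC'/S_iC\in L_i^\times$ because both traces are nonzero. Since $\omega_i\in S(L_i/F)$ kills $n_i(L_i^\times)$, this gives
\[
\omega_i(1+w_i)=1.
\]
Hence the character part of the product equals $\Theta(C'/C)^{-1}$, which is independent of $i$.

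\emph{Additive factors.} We have $Z_iz_i=N_iC'-N_iC$ and $W_iw_i=n_i(S_iC')-n_i(S_iC)$. Since $\Psi_{L_i}=\Psi_F\circ\Tr_{L_i/F}$ (the additive characters are trace-compatible and $\alpha\in F$), the additive part is
\[
\Psi_F\bigl(-\Tr_{L_i/F}(N_iC')+\Tr_{L_i/F}(N_iC)-n_i(S_iC')+n_i(S_iC)\bigr).
\]
By Lemma~\ref{U:e2}, this equals $\Psi_F(-E_2(C')+E_2(C))$. This proves both independence of $i$ and the stated common value.

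\emph{Critical functions.} Suppose $z_i\in\pp_{L_i}^{\lfloor m_i/2\rfloor}$. Lemma~\ref{U:stationary-factor}, applied with $v=z_i$, says that $\Psi_{L_i}(-Z_iv)\theta_i(1+v)^{-1}$ depends only on $v$ modulo $\pp^{d+1}$ when the conductor is odd, and is $1$ when it is even. In the odd case, write $z_i\equiv\delta x$ with $v(\delta)=d=\lfloor m_i/2\rfloor$. The factor then equals $H_{L_i}(\bar x)$ from \eqref{U:normalized-residual}, whose defining data $Z=Z_i$ are exactly the stationary coefficients assumed in Lemma~\ref{U:common-origin}. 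The same holds for $\omega_i$ with $W_i$ and $T_i$. Thus each product is a product of values of the critical functions $H$ for $\theta_i$ and for $\omega_i$.

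The only point needing care is this bookkeeping in the last step: the hypothesis $v_E Z=J-m$ of Lemma~\ref{U:stationary-factor} is implicit in the stationarity assumption, and the parity of the conductor decides whether the factor is trivial or a critical-function value. No real obstacle is expected.
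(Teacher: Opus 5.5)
Your proof is correct and follows the paper's own argument step by step. Compatibility gives $\theta_i(1+z_i)=\Theta(C'/C)$, and $\omega_i$ kills the norm $1+w_i=n_i(S_iC'/S_iC)$. Trace compatibility, using $\alpha\in F$, together with Lemma~\ref{U:e2} turns the additive exponent into $-E_2(C')+E_2(C)$. The last assertion of Lemma~\ref{U:stationary-factor} then identifies the factors with critical-function values. Your remark that the valuation condition $v_EZ=J-m$ follows from the exact conductor is a correct detail that the paper leaves implicit.
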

\begin{proof}
Compatibility gives $\theta_i(1+z_i)=\Theta(C'/C)$. Since
$1+w_i=n_i(S_iC'/S_iC)$, one has $\omega_i(1+w_i)=1$. The additive exponent is
\[
 -\Tr_{L_i/F}(N_iC'-N_iC)-n_i(S_iC')+n_i(S_iC),
\]
which is $-E_2(C')+E_2(C)$ by Lemma~\ref{U:e2}.
The last assertion follows from Lemma~\ref{U:stationary-factor}.
\end{proof}

\begin{lemma}[Cancellation on an index-two coset]\label{U:missing-coset}
For each index $i$, let $V_i$ be a finite abelian group of the same order,
and let $H_i:V_i\to\mathbf C^\times$ have $H_i(0)=1$ and bicharacter
$B_i(x,y)=H_i(x+y)/(H_i(x)H_i(y))$.
Let $Z$ be a finite abelian group and let $p_i:Z\to V_i$ be group homomorphisms with kernel of order two and
image $I_i$ of index two. Suppose $H_i\circ p_i=Q$ for the same function
on $Z$. Suppose further that there exists $w_i\in I_i$ such that
$H_i(w_i)=1$ and $B_i(w_i,\cdot)$ is the nontrivial character of $V_i/I_i$.
Then
\[
 \sum_{x\in V_i}H_i(x)=\frac12\sum_{z\in Z}Q(z).
\]
In particular the sums divided by $|V_i|^{1/2}$ are equal.
\end{lemma}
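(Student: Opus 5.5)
Since $p_i$ has kernel of order two, every element of $I_i$ has exactly two preimages in $Z$. From $H_i\circ p_i=Q$ we get
\[
 \sum_{z\in Z}Q(z)=\sum_{z\in Z}H_i(p_i z)=2\sum_{x\in I_i}H_i(x).
\]
The identity therefore reduces to showing that the sum of $H_i$ over the nontrivial coset $V_i\setminus I_i$ vanishes. Then $\sum_{V_i}H_i=\sum_{I_i}H_i=\frac12\sum_Z Q$.

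To prove the vanishing I would use the translation $x\mapsto x+w_i$, which preserves $V_i\setminus I_i$ because $w_i\in I_i$. By the definition of the bicharacter,
\[
 H_i(x+w_i)=H_i(x)H_i(w_i)B_i(x,w_i)=H_i(x)B_i(w_i,x).
\]
Here we use $H_i(w_i)=1$. We also use the symmetry $B_i(x,y)=B_i(y,x)$, which is immediate from the definition since $V_i$ is abelian.

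On the nontrivial coset, $B_i(w_i,\cdot)$ is the nontrivial character of $V_i/I_i$, so it takes the value $-1$ there. Summing over $x\in V_i\setminus I_i$ and reindexing the left side gives
\[
 \sum_{V_i\setminus I_i}H_i=-\sum_{V_i\setminus I_i}H_i,
\]
so this sum is zero.

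For the final assertion, the right side $\frac12\sum_Z Q(z)$ does not depend on $i$, and $|V_i|$ is the same for all $i$ by hypothesis. Hence the normalized sums $|V_i|^{-1/2}\sum_{V_i}H_i$ coincide.

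I do not expect a real obstacle. The only point needing care is that $B_i(w_i,\cdot)$ is a character at all. This requires $B_i$ to be multiplicative in each variable, which is not automatic for an arbitrary function $H_i$. Here it is built into the hypothesis, which says $B_i(w_i,\cdot)$ is a character of $V_i/I_i$. In the applications, $H_i$ is the critical function of Lemma~\ref{U:stationary-factor}, and its bicharacter is the additive character of a bilinear form, so the hypothesis holds there. The argument only ever evaluates $B_i(w_i,x)$ and never uses bilinearity in general.
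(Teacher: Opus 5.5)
Your proposal is correct and follows essentially the same route as the paper. The two ingredients are the same: translation by $w_i$ multiplies the summand by $-1$ on the complementary coset $V_i\setminus I_i$, so that partial sum vanishes, and each element of $I_i$ has exactly two preimages under $p_i$, which gives $\sum_{Z}Q=2\sum_{I_i}H_i$.
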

\begin{proof}
Translation by $w_i$ preserves the complementary coset $V_i\setminus I_i$.
On that coset it multiplies the summand by minus one. Thus its sum equals
its own negative and is zero in $\mathbf C$. On $I_i$, every value has
exactly two preimages under $p_i$, so summing $H_i p_i=Q$ gives the asserted
identity. Note that the hypothesis $H_i(w_i)=1$ is separate from the
condition on $B_i(w_i,\cdot)$.
\end{proof}

\begin{lemma}[A norm in a prescribed multiplicative coset]\label{U:normalization}
Let $L/F$ be cyclic of prime degree and let $\omega$ be a nontrivial
character with kernel $N_{L/F}(L^\times)$. Let $H\subset F^\times$
be a subgroup such that $\omega(H)=\omega(F^\times)$.
For $A_*\in F^\times$, there exists $u\in H$ with
$A_*/u\in N_{L/F}(L^\times)$. If $\alpha\in F^\times$, then
$(\alpha u)(A_*/u)=\alpha A_*$.
\end{lemma}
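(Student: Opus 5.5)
This is a one-line group-theoretic statement: the condition $\omega(H)=\omega(F^\times)$ supplies the required coset representative, and the final identity is just commutativity in $F^\times$.

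\textbf{Step 1: choose $u$.} Since $\omega$ has kernel $N_{L/F}(L^\times)$, it induces an injection of $F^\times/N_{L/F}(L^\times)$ into $\mathbf C^\times$. Given $A_*\in F^\times$, the value $\omega(A_*)$ lies in $\omega(F^\times)$. By hypothesis $\omega(F^\times)=\omega(H)$, so I choose $u\in H$ with $\omega(u)=\omega(A_*)$.

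\textbf{Step 2: check the norm condition.} With this choice, $\omega(A_*/u)=1$. Hence $A_*/u\in\ker\omega=N_{L/F}(L^\times)$, which is the first assertion.

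\textbf{Step 3: the product identity.} The identity $(\alpha u)(A_*/u)=\alpha A_*$ holds in $F^\times$ because the group is commutative. I would record it only so that later sections can split a prescribed coefficient $\alpha A_*$ as a factor in $\alpha H$ times a norm from $L$.

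\textbf{Main obstacle.} There is no genuine obstacle. The only point to make explicit is that the kernel of $\omega$ is exactly the norm group, which is the hypothesis of the lemma. No use of the prime degree or cyclicity is needed beyond guaranteeing that such an $\omega$ exists.
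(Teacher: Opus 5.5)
Your proposal is correct and follows the paper's proof exactly: choose $u\in H$ with $\omega(u)=\omega(A_*)$, conclude $A_*/u\in\ker\omega=N_{L/F}(L^\times)$, and note that the final identity is immediate in the commutative group $F^\times$.
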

\begin{proof}
Choose $u\in H$ with $\omega(u)=\omega(A_*)$. Then
$\omega(A_*/u)=1$, so $A_*/u$ is a norm. The final identity is
immediate.
\end{proof}
The applications of Lemma~\ref{U:normalization} replace $\alpha$ by
$\alpha u$ and $A_*$ by $A_*/u$. The effect on other character
formulas must be checked separately; see
Lemmas~\ref{O:P:preserve}, \ref{D:MX:freedom}, and~\ref{D:NM:freedom}.

\section{Finite-field identities and the tame comparison}\label{O:sec:tame}\label{sec:tame}
\subsection{Gauss sums and the leading congruence}
For a finite field $k$, a nontrivial additive character $\psi_k$, and a
multiplicative character $\chi$, put
\[
 \tau_k(\chi)=-\sum_{x\in k^\times}\chi(x)^{-1}\psi_k(x).
\]
Thus $\tau_k(1)=1$. For a finite extension $\kappa/k$ all additive
characters are composed with trace. The Hasse--Davenport identities proved in \cite{Ueda} are
\begin{align}\label{O:F:HD}
 \tau_\kappa(\chi\circ\Nm)&=\tau_k(\chi)^{[\kappa:k]},\notag\\
 \chi(s^s)\tau_k(\chi^s)\prod_{j=1}^{s-1}\tau_k(\eta^j)
 &=\prod_{j=0}^{s-1}\tau_k(\chi\eta^j),
\end{align}
where $\eta$ has order $s$. The second formula is used only when
$s$ is prime to the characteristic, as required for such a character.

We use Theorem~\ref{A:leading-gauss} in the following notation. Fix an
embedding of the cyclotomic values into
$\mathbf Q_p(\mu_{p^f-1},\zeta_p)$, put $\varpi=\zeta_p-1$, and
let $\omega$ be the Teichmuller character of $\FF_{p^f}^\times$.
For the additive character $x\mapsto\zeta_p^{\Tr(x)}$ and
$0<a<p^f-1$, write $a=\sum a_i p^i$. Then
\begin{equation}\label{O:F:Stick}
 \tau_{\FF_{p^f}}(\omega^a)
 \equiv_\times \frac{\varpi^{\sum a_i}}{\prod a_i!}.
\end{equation}
Here $X\equiv_\times Y$ means $X/Y\in1+\mathfrak m$ in this local
cyclotomic field. The sign in \eqref{O:F:Stick} is positive because our $\tau$ is
minus the Gauss sum. Equivalently, in the usual plus-sum convention
$G(\omega^{-a})=\sum_{x\ne0}\omega^{-a}(x)\zeta_p^{\Tr(x)}$, the leading
term is $-\varpi^{\sum a_i}/\prod a_i!$; our definition is
$\tau(\omega^a)=-G(\omega^{-a})$. The formula also applies at $p=2$, with $\varpi=\zeta_2-1=-2$.

For later use, Wilson's theorem gives, for every nonnegative integer
$n$ with base-$p$ digits $n_i$,
\[
 n!\equiv_\times(-p)^{(n-\sum n_i)/(p-1)}\prod n_i!.
\]
Indeed separate the multiples of $p$, reduce each full nonzero residue
block to $(p-1)!\equiv-1$, and repeat for $\lfloor n/p\rfloor!$.
Since $\varpi^{p-1}\equiv_\times-p$, we may group \eqref{O:F:Stick}
into base-$q$ digits for any power $q$ of $p$:
\begin{equation}\label{O:F:groupedStick}
 a_*=\sum_{i=0}^{r-1}\gamma_iq^i
 \quad\Longrightarrow\quad
 \tau_{\FF_{q^r}}(\omega^{a_*})
 \equiv_\times\frac{\varpi^{\sum_i\gamma_i}}{\prod_i\gamma_i!}.
\end{equation}

\subsection{The primitive finite-field identity}
Let $\ell$ be prime, $\ell\mid q-1$, and let $\kappa/k$ have degree
$\ell$, where $|k|=q$. Suppose
\begin{equation}\label{O:F:finitecompatible}
 \chi_\kappa^\ell=\chi_0\circ\Nm_{\kappa/k},
 \qquad \chi_0|_{\mu_\ell}\ne1.
\end{equation}
Let $\mu$ be a character of $k^\times$ of order $\ell$.

\begin{theorem}\label{O:F:finite}
With the trace-compatible additive characters,
\begin{equation}\label{O:F:finiteidentity}
 \tau_\kappa(\chi_\kappa)
 =\chi_0(\ell)\tau_k(\chi_0)
       \prod_{j=1}^{\ell-1}\tau_k(\mu^j).
\end{equation}
\end{theorem}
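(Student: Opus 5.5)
The plan is to prove \eqref{O:F:finiteidentity} first up to an $\ell$-th root of unity, using the two Hasse--Davenport identities \eqref{O:F:HD}. The remaining root of unity will then be pinned down with the leading congruence \eqref{O:F:groupedStick}. Since $\ell\ne p$, distinct $\ell$-th roots of unity remain distinct modulo $\mathfrak m$, so a congruence modulo $1+\mathfrak m$ suffices for the second step.

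\emph{Step 1: the twists of $\chi_\kappa$ are its Frobenius conjugates.} Let $\eta=\mu\circ\Nm_{\kappa/k}$. It has order $\ell$ because the norm is surjective, so it is the unique order-$\ell$ subgroup's generator in the cyclic dual of $\kappa^\times$. Since $\chi_\kappa^\ell$ factors through the norm, it is Frobenius invariant, so $\chi_\kappa^{q-1}$ has order dividing $\ell$; hence $\chi_\kappa^{q-1}=\eta^c$. Suppose $c=0$. Then $\chi_\kappa=\lambda\circ\Nm$ for some $\lambda$, which gives $\chi_0=\lambda^\ell$, trivial on $\mu_\ell$, contrary to \eqref{O:F:finitecompatible}. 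So $c\ne0$. Because $q\equiv1\pmod\ell$, iteration gives $\chi_\kappa^{q^i}=\chi_\kappa\eta^{ci}$. Thus $\{\chi_\kappa\eta^j\}_{j}$ is exactly the set of Frobenius conjugates of $\chi_\kappa$, and all their Gauss sums equal $\tau_\kappa(\chi_\kappa)$.

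\emph{Step 2: the $\ell$-th power identity.} Apply the second identity of \eqref{O:F:HD} over $\kappa$ with $s=\ell$ and this $\eta$. By Step 1 the right side is $\tau_\kappa(\chi_\kappa)^\ell$. On the left:
\begin{itemize}
\item $\chi_\kappa(\ell^\ell)=\chi_0(\Nm\ell)=\chi_0(\ell)^\ell$;
\item $\tau_\kappa(\chi_\kappa^\ell)=\tau_\kappa(\chi_0\circ\Nm)=\tau_k(\chi_0)^\ell$, by the first identity of \eqref{O:F:HD};
\item $\tau_\kappa(\eta^j)=\tau_k(\mu^j)^\ell$, again by the first identity.
\end{itemize}
Hence the two sides of \eqref{O:F:finiteidentity} agree up to some $\zeta\in\mu_\ell$.

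\emph{Step 3: fixing $\zeta$ (main obstacle).} Embed everything as in \eqref{O:F:Stick}. Write $\chi_0=\omega_k^{b}$ with $0\le b<q-1$, and $\mu^j=\omega_k^{j(q-1)/\ell}$ (after renumbering $j$). Write $\chi_\kappa=\omega_\kappa^{a}$ with base-$q$ digits $a=\sum_{i<\ell}\gamma_iq^i$.

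The relation $\chi_\kappa^\ell=\chi_0\circ\Nm$ says $\ell a\equiv b(1+q+\dots+q^{\ell-1})$ modulo $q^\ell-1$. I expect to solve this explicitly: the digits $\gamma_i$ should be a permutation of $\lfloor(b+j(q-1))/\ell\rfloor$, $0\le j<\ell$, with $c\ne0$ excluding the degenerate case. The cases $b=0$ or digit boundaries will need separate care.

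Then \eqref{O:F:groupedStick} compares both sides.
\begin{itemize}
\item The exponents of $\varpi$ agree: both equal $b+(\ell-1)(q-1)/2$ after summing digits.
\item $\chi_0(\ell)\equiv\ell^{b}$ modulo $\mathfrak m$, since $\omega(\ell)\equiv\ell$.
\item The claim thereby reduces to the factorial congruence
\[
 \prod_{j=0}^{\ell-1}\Bigl\lfloor\tfrac{b+j(q-1)}{\ell}\Bigr\rfloor!
 \equiv\ell^{-b}\,b!\prod_{j=1}^{\ell-1}\Bigl(\tfrac{j(q-1)}{\ell}\Bigr)!
 \pmod{\mathfrak m},
\]
where each factorial is read through its base-$p$ digits via the Wilson formula.
\end{itemize}
This is a Gauss multiplication formula modulo $p$, the reduction of the Gross--Koblitz multiplication for $\Gamma_p$. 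I would prove it directly by grouping $\prod_{n\le N}n$ into residue classes modulo $\ell$ in $\FF_p$, using $q\equiv1\pmod\ell$.

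This step, both the digit bookkeeping and the multiplication congruence, is where I expect the real work. Once it is done, $\zeta\equiv1$, so $\zeta=1$.
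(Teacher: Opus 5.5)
Your overall route is the paper's. Steps 1--2 (the twists $\chi_\kappa\eta^j$ are the Frobenius conjugates, then the Hasse--Davenport product formula gives the $\ell$th-power identity) match its argument exactly. So does the plan of fixing the root of unity by \eqref{O:F:groupedStick} and injectivity of reduction on $\mu_\ell$.

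However, Step 3, which you rightly call the real work, is set up incorrectly. Write $b=\ell m+k_0$ with $1\le k_0<\ell$ and $h=(q-1)/\ell$; note $\ell\nmid b$ by hypothesis, so $b=0$ never occurs.
\begin{itemize}
\item Your predicted digits $\lfloor(b+j(q-1))/\ell\rfloor=m+jh$ sum to $b-k_0+(\ell-1)(q-1)/2$. This contradicts the valuation $b+(\ell-1)(q-1)/2$ you assert in the next line.
\item The factorial congruence you propose to prove is false. For $\ell=2$, $q=p=3$, $b=1$ it reads $0!\,1!\equiv2^{-1}\cdot1!\cdot1!$, i.e.\ $1\equiv2\pmod 3$. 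Here $bS/\ell=2$ has digits $\{2,0\}$, not $\{0,1\}$.
\end{itemize}
By your Step 1, all exponents solving $\ell a\equiv bS$ are Frobenius conjugate, so you may take $a=bS/\ell$ exactly. Its digits are $\lfloor(b+jq)/\ell\rfloor=m+jh+\mathbf 1_{j\ge\ell-k_0}$ for $0\le j<\ell$.

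The needed identity is then $\ell^b\prod_j\gamma_j!\equiv_\times b!\prod_{j=1}^{\ell-1}(jh)!$. This is a multiplicative congruence of honest integers, not a literal congruence of factorials modulo $\mathfrak m$, since digits may be $\ge p$. The paper proves it as follows.
\begin{itemize}
\item Write $\prod_j\gamma_j!/\prod_j(jh)!$ as the product of $b$ factors $n+jh$: all $j$ with $1\le n\le m$, plus $n=m+1$ with $j\ge\ell-k_0$.
\item Use $\ell(n+jh)=(\ell n-j)+jq\equiv_\times\ell n-j$, valid because $0<\ell n-j<q$.
\item The numbers $\ell n-j$ run exactly once through $1,\dots,b$.
\end{itemize}
This is the residue-class grouping you had in mind, but it only works with the correct digits, and no Wilson reduction is needed.

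There is also a missing final step. \eqref{O:F:groupedStick} is stated only for the canonical character $x\mapsto\zeta_p^{\Tr(x)}$; the choice of $\zeta_p$ absorbs only $\F_p^\times$-scalings. The theorem, however, is applied at conductor one with an arbitrary nontrivial $\psi_k$ induced from $\psi_F$. Replacing $\psi_k$ by $\psi_k(c\,\cdot)$ with $c\in k^\times$ multiplies the left side by $\chi_\kappa(c)$ and the right side by $\chi_0(c)\prod_j\mu^j(c)$. Your Steps 1--2 only show that the quotient of these lies in $\mu_\ell$. You still need $\chi_\kappa|_{k^\times}=\chi_0\,\mu^{\ell(\ell-1)/2}$. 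This follows from $bS/\ell\equiv b+b(q-1)(\ell-1)/2\pmod{q-1}$ together with $b$ odd when $\ell=2$, and the paper ends its proof with exactly this check.
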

\begin{proof}
First take the canonical additive characters. Put
$S=(q^\ell-1)/(q-1)$ and $h=(q-1)/\ell$.
Write $\chi_0=\omega_k^a$, $1\le a\le q-2$, $\ell\nmid a$.
The exponent of $\chi_\kappa$ is $aS/\ell$ modulo
$(q^\ell-1)/\ell$. Multiplication by $q$ changes it by
$a(q^\ell-1)/\ell$; hence all $\ell$ possibilities are Frobenius
conjugates. Their Gauss sums are equal, so we may take the exponent
exactly $a_*=aS/\ell$.

Set
\[
 L=\frac{\tau_\kappa(\chi_\kappa)}
 {\chi_0(\ell)\tau_k(\chi_0)\prod_{j=1}^{\ell-1}\tau_k(\mu^j)}.
\]
The order-$\ell$ characters over $\kappa$ are the norm lifts of
$\mu^j$. Their twists of $\chi_\kappa$ are the Frobenius conjugates
just considered. The product and lifting formulas \eqref{O:F:HD}, and
$\chi_\kappa(\ell^\ell)=\chi_0(\ell)^\ell$, give
\begin{equation}\label{O:F:Lpower}
 L^\ell=1.
\end{equation}

To prove $L=1$, we use \eqref{O:F:groupedStick}. Write $a=\ell m+k_0$,
$1\le k_0<\ell$. The unordered base-$q$ digits of $aS/\ell$ are
\begin{equation}\label{O:F:digits}
 \gamma_j=m+jh+\mathbf1_{j\ge\ell-k_0},\qquad0\le j\le\ell-1.
\end{equation}
Here ``unordered'' is sufficient for their sum and factorial product.
For completeness, multiplication of $k_0S/\ell$ by $\ell$, using
$q=\ell h+1$, gives successive residues
$i_j=((\ell-1-j)k_0\bmod\ell)$ and carries
$c_j=\lfloor(\ell-1-j)k_0/\ell\rfloor$.
The zeroth digit has the additional $k_0-c_0=1$; at a later digit the
additional $c_{j-1}-c_j$ is one exactly when $i_j\ge\ell-k_0$.
The $i_j$ permute $0,\ldots,\ell-1$, proving \eqref{O:F:digits}.
All digits lie between zero and $q-1$, since $m\le h-1$.
In particular
\begin{equation}\label{O:F:digitsum}
 \sum\gamma_j=a+\sum_{j=1}^{\ell-1}jh.
\end{equation}

We claim the multiplicative factorial congruence
\begin{equation}\label{O:F:factorials}
 \ell^a\prod_{j=0}^{\ell-1}\gamma_j!
 \equiv_\times a!\prod_{j=1}^{\ell-1}(jh)!.
\end{equation}
In fact
\[
 \frac{\prod\gamma_j!}{\prod(jh)!}
 =\prod_{j=0}^{\ell-1}\prod_{n=1}^{m}(n+jh)
      \prod_{j=\ell-k_0}^{\ell-1}(m+1+jh).
\]
For the first product,
$\ell(n+jh)=\ell n-j+jq\equiv_\times\ell n-j$:
the positive integer $\ell n-j$ is less than $q=p^f$, so its
$p$-valuation is less than $f$. The numbers $\ell n-j$ exhaust
$1,\ldots,\ell m$. For the second product the same argument applies
to $\ell(m+1)-j$, which exhausts $\ell m+1,\ldots,a$, again all
less than $q$. Multiplication proves \eqref{O:F:factorials}, even when
some of its factorials are divisible by $p$.

Apply \eqref{O:F:groupedStick} to the numerator and each factor of the
denominator of $L$. The Teichmuller value $\chi_0(\ell)$ reduces to
$\ell^a$. Equations \eqref{O:F:digitsum} and \eqref{O:F:factorials} give
$L\equiv_\times1$. Reduction is injective on the group of
$\ell$th roots of unity because $\ell\ne p$.
Together with \eqref{O:F:Lpower}, this proves $L=1$.

For an arbitrary additive character of $k$, change the canonical one
by multiplication by $c\in k^\times$. The two sides acquire factors
$\chi_\kappa(c)$ and $\chi_0(c)\prod_{j=1}^{\ell-1}\mu^j(c)$.
These agree. Indeed the restriction exponent of $\chi_\kappa$ is
$aS/\ell$ modulo $q-1$: it is $a$ for odd $\ell$, and $a+(q-1)/2$
for $\ell=2$ because $a$ is odd. This is exactly the exponent of
$\chi_0\prod\mu^j$. This proves \eqref{O:F:finiteidentity} for every nontrivial additive character.
\end{proof}

\subsection{The tamely ramified extension}
Let $F$ have residue characteristic $p$, let $K/F$ have Galois
group $C_\ell^2$ with $\ell\ne p$, and let $U/F$ be its unique
unramified degree-$\ell$ subextension. Let $E$ be any other
degree-$\ell$ intermediate field. Its tame cyclic inertia gives $\ell\mid |k_F|-1$;
$K/E$ is unramified and $K/U$ is totally ramified.
Suppose $\theta_U\Nm_{K/U}=\theta_E\Nm_{K/E}$ is invariant and
does not descend from $F$. With $S(L/F)$ as defined in
Section~\ref{sec:introduction}, put
\[
 \epsilon_L=\prod_{\nu\in S(L/F)}\nu,\quad
 \Lambda_L=\prod_{\nu\in S(L/F)}\Delta_F(\nu,\psi_F).
\]
For odd $\ell$, $\epsilon_L=1$; for $\ell=2$ it is the nontrivial
quadratic norm character. All additive characters are trace compatible.

\subsection{Conductors greater than one}
Put $m=m_U(\theta_U)$. Its conjugate quotient has conductor one, so
$m\ge1$. If $m>1$, unramified Hilbert 90 on $U_U^1$, followed by
norm surjectivity, gives a character $\lambda$ of $F^\times$ such
that
\[
 \theta_U=\chi_U(\lambda n_U),\qquad
 \theta_E=\chi_E(\lambda n_E),\qquad
 m(\chi_U)=m(\chi_E)=1,\quad m(\lambda)=m.
\]
Here $n_L=\Nm_{L/F}$. To justify the conductor-one assertion, remove
the descended restriction on $U_U^1$. The nontrivial conjugate
quotient forces $\chi_U$ to have conductor exactly
one; its twists by $S(K/U)$ are its conjugates. The conductor
formula of \cite[Proposition~3.10]{Ueda} gives conductor one for its pullback to $K$;
unramified pullback from $E$ preserves conductor.

Choose a stationary representative $\beta$ for $\lambda$ and put
$g=\beta/\Gamma$, where
$v_F(\Gamma)=m+n_F(\psi_F)$. The following local calculation shows that
both norm pullbacks have stationary coefficient $g$. Their conductors
are $m$ over $U$ and $M=1+\ell(m-1)$ over $E$, and
$n_E(\psi_E)=\ell n_F(\psi_F)+\ell-1$. Hence the same
$\Gamma\in F$ is admissible in both.

\begin{lemma}[Stationary coefficients under a tame norm]\label{U:tame-covector}
For $m>1$, an element $g\in F^\times$ satisfying
$\lambda(1+z)=\psi_F(gz)$ on $\pp_F^{\lceil m/2\rceil}$
satisfies $(\lambda\circ n_U)(1+x)=\psi_U(gx)$ on
$\pp_U^{\lceil m/2\rceil}$ and
$(\lambda\circ n_E)(1+x)=\psi_E(gx)$ on
$\pp_E^{\lceil(1+\ell(m-1))/2\rceil}$.
\end{lemma}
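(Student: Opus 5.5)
Write $n$ for either $n_U$ or $n_E$ and $\psi_L=\psi_F\circ\Tr_{L/F}$. The plan is to expand the norm of $1+x$ in elementary symmetric functions of the conjugates of $x$, apply $\lambda$ to each piece, and show that every term beyond the trace is trivial under $\lambda$. We write
\[
 n(1+x)=1+\Tr_{L/F}(x)+R(x),\qquad R(x)=\sum_{i\ge2}E_i(x).
\]
Here $E_i$ is the $i$th elementary symmetric function of the $\ell$ conjugates of $x$. Then we factor
\[
 n(1+x)=\bigl(1+\Tr x\bigr)\Bigl(1+\frac{R(x)}{1+\Tr x}\Bigr).
\]
Provided $\Tr x\in\pp_F^{\lceil m/2\rceil}$ and $R(x)\in\pp_F^{m}$, the hypothesis on $\lambda$ gives $\lambda(1+\Tr x)=\psi_F(g\Tr x)=\psi_L(gx)$, since $g\in F$. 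The second factor then lies in $U_F^m$, where $\lambda$ is trivial, and the lemma follows.

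\emph{The unramified field $U$.} Here $x\in\pp_U^{j}$ with $j=\lceil m/2\rceil$, so every conjugate also has valuation at least $j$. Hence $\Tr x\in\pp_F^{j}$, and $E_i(x)\in\pp_F^{ij}\subset\pp_F^{2j}\subset\pp_F^{m}$ for $i\ge2$. This case is routine.

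\emph{The tame totally ramified field $E$.} Here $x\in\pp_E^{J}$ with $J=\lceil M/2\rceil$ and $M=1+\ell(m-1)$. By \eqref{U:trace-ideal} with $D_{E/F}=\ell-1$,
\[
 \Tr(\pp_E^J)\subset\pp_F^{\lfloor (J+\ell-1)/\ell\rfloor}=\pp_F^{\lceil J/\ell\rceil}.
\]
We then check $\lceil J/\ell\rceil\ge\lceil m/2\rceil$. This holds because $J\ge(1+\ell(m-1))/2$, so $J/\ell>(m-1)/2$, which gives $\lceil J/\ell\rceil\ge\lceil m/2\rceil$ once one checks the parity of $m$ separately.

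For $E_i(x)$ with $i\ge2$, we write $x=\pi_E^{J}u$. Each $E_i(x)$ lies in $F$ and has $E$-valuation at least $iJ$, so its $F$-valuation is at least $\lceil iJ/\ell\rceil$. We therefore need $\lceil 2J/\ell\rceil\ge m$, using $2J\ge 1+\ell(m-1)$. This gives $2J/\ell\ge m-1+1/\ell$, so $\lceil 2J/\ell\rceil\ge m$. For $i\ge2$ the bound only improves, though for $\ell=2$ only $i=2$ occurs, and it is exactly the borderline case.

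\emph{Main obstacle.} The main obstacle is this exponent bookkeeping at the borderline: $m$ odd versus even, and $\ell=2$ where $E_2=N(x)$. At these points a naive valuation bound $v_F\ge iJ/\ell$ must be combined with integrality, so that $v_F\ge\lceil iJ/\ell\rceil$. The trace bound uses the tame different $D_{E/F}=\ell-1$ from \eqref{U:trace-ideal}.
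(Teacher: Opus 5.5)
Your proposal is correct and follows essentially the paper's argument: expand the norm in elementary symmetric functions, put the terms of degree at least two in $\pp_F^m$ using $2J\ge M>\ell(m-1)$, put the trace in $\pp_F^{\lceil m/2\rceil}$ by a parity check, and absorb the remaining principal-unit quotient into $U_F^m$. The only cosmetic difference is that you obtain the trace bound from the tame trace-ideal formula, whereas the paper uses the direct valuation bound $\lceil J/\ell\rceil$, which gives the same exponent here.
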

\begin{proof}
For $E/F$, put $s=\lceil M/2\rceil$ and take
$x\in\pp_E^s$. For each elementary symmetric function of its $\ell$
conjugates, restriction of valuations gives
$v_F(E_j(x))\ge\lceil js/\ell\rceil$. For $j\ge2$ this is at least $m$,
since $2s\ge M>\ell(m-1)$. Thus
\[
 n_E(1+x)\equiv1+\Tr_{E/F}x\pmod{\pp_F^m}.
\]
The trace has valuation at least $\lceil s/\ell\rceil$, which is at
least $\lceil m/2\rceil$: for $m=2b$ use
$s>\ell(b-1)$, and for $m=2b+1$ use $s>\ell b$.
Both units in this congruence are principal, so their quotient belongs to
$U_F^m$. Evaluating $\lambda$ proves
$(\lambda n_E)(1+x)=\psi_E(gx)$ on the entire required ideal.
For $U/F$, put $s=\lceil m/2\rceil$. All symmetric terms
of degree at least two have valuation at least $2s\ge m$, and the trace
has depth at least $s$. The same norm expansion proves the assertion.

\end{proof}

The conductor-one characters are in the stable-twist range on both
sides, including $\ell=2$ and $m=2$. Thus
\begin{equation}\label{O:F:stablesides}
 \Delta_L(\theta_L)=\chi_L(g^{-1})\Delta_L(\lambda n_L).
\end{equation}
Equations \eqref{U:FML} and \eqref{U:stable} give
\begin{equation}\label{O:F:stableFML}
 \Delta_L(\lambda n_L)\Lambda_L
 =\prod_{\nu\in S(L/F)}\Delta_F(\nu\lambda)
 =\epsilon_L(g^{-1})\Delta_F(\lambda)^\ell.
\end{equation}
All lower norm characters have conductor zero or one, hence at most
$\lfloor m/2\rfloor$. Combining \eqref{O:F:stablesides}--\eqref{O:F:stableFML}
with Lemma~\ref{U:determinant} for the compatible pair $(\chi_U,\chi_E)$
proves
\begin{equation}\label{O:F:localidentity}
 \Delta_U(\theta_U)\Lambda_U=\Delta_E(\theta_E)\Lambda_E
\end{equation}
for every $m>1$.

\subsection{Conductor one and the sign}
Now $m_U=m_E=1$. Let $k=k_F=k_E$, $\kappa=k_U$, and denote the two
residue characters by $\chi_\kappa,\chi_0$.
Norm compatibility gives $\chi_\kappa^\ell=\chi_0\Nm_{\kappa/k}$.
The Frobenius conjugate quotient is a nontrivial character of order
$\ell$. Writing the exponents as in the proof of Theorem~\ref{O:F:finite}
shows that this is exactly $\ell\nmid a$, or
$\chi_0|_{\mu_\ell}\ne1$. In particular both residue Gauss sums are
nontrivial-character sums.

Write $n=n_F(\psi_F)$ and choose
$\Gamma=\pi_F^{n+1}\in F$. It is admissible in both fields:
$n_E=\ell n+\ell-1$, while $n_U=n$.
The conductor-one local formulas in Ueda's normalization give
\begin{align*}
 \Delta_U(\theta_U)&=-\theta_U(\Gamma)\ph\tau_\kappa(\chi_\kappa),\\
 \Delta_E(\theta_E)&=-\theta_E(\Gamma)\chi_0(\ell)
                                   \ph\tau_k(\chi_0).
\end{align*}
The factor $\chi_0(\ell)$ is present because the residual additive
character over $E$ is $x\mapsto\psi_k(\ell x)$, not $\psi_k(x)$.
For the lower norm-character products,
\[
 \Lambda_U=(-1)^{(\ell-1)n},\qquad
 \Lambda_E=(-1)^{\ell-1}\epsilon_E(\Gamma)
                 \prod_{j=1}^{\ell-1}\ph\tau_k(\mu^j).
\]
Indeed $\epsilon_U(\pi_F)=(-1)^{\ell-1}$ by the product of the
$\ell$ unramified characters, and every nontrivial member of
$S(E/F)$ has conductor one. Lemma~\ref{U:determinant} gives
$\theta_U(\Gamma)/\theta_E(\Gamma)
 =\epsilon_E(\Gamma)/\epsilon_U(\Gamma)$.
Since $\epsilon_U(\Gamma)=(-1)^{(\ell-1)(n+1)}$, all the displayed
signs cancel in the quotient of \eqref{O:F:localidentity}. What remains is
precisely the phase of the quotient in Theorem~\ref{O:F:finite}, namely
one. This proves \eqref{O:F:localidentity} at conductor one as well.

\begin{theorem}[The tame case]\label{O:F:tame}
Theorem~\ref{U:main} holds when $\ell\ne p$.
\end{theorem}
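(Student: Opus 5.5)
The plan is to reduce Theorem~\ref{U:main} in the tame case to the identity \eqref{O:F:localidentity}, which has already been established for the unramified field $U$ and an arbitrary other degree-$\ell$ intermediate field $E$. When $\ell\ne p$, the extension $K/F$ is tame. Its inertia group $I$ is cyclic, because tame inertia is cyclic (Lemma~\ref{D:inertia}). Since $\Gal(K/F)\cong C_\ell^2$ is not cyclic, $I$ has order at most $\ell$, so $K/F$ is not totally ramified. Two cases follow.

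First suppose $I$ is trivial. Then $K/F$ is unramified with noncyclic Galois group, which is impossible. So $|I|=\ell$, and the unramified degree-$\ell$ intermediate field $U=K^I$ is unique, because the unramified quotient $\Gal(K/F)/I$ is cyclic. Every other degree-$\ell$ intermediate field $E$ corresponds to a subgroup different from $I$. Hence $E\cap U=F$, the extension $K/E$ is unramified, $E/F$ is totally and tamely ramified, and $\ell\mid |k_F|-1$. These are exactly the standing hypotheses of the subsection on the tamely ramified extension.

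Next, let $(\theta_L)$ be a primitive compatible family. Restricting it to $U$ and $E$ gives a compatible pair $\theta_U\circ N_{K/U}=\theta_E\circ N_{K/E}=\Theta$ with $\Theta$ invariant and not descending from $F$. The subsection shows that $m=m_U(\theta_U)\ge1$, and it splits into two cases:
\begin{itemize}[leftmargin=2em]
\item for $m>1$, \eqref{O:F:localidentity} follows from Lemma~\ref{U:tame-covector}, the stable twists \eqref{O:F:stablesides}--\eqref{O:F:stableFML}, and Lemma~\ref{U:determinant};
\item for $m=1$, it follows from Theorem~\ref{O:F:finite} together with the sign bookkeeping.
\end{itemize}
In either case $\mathcal A_U(\theta_U)=\mathcal A_E(\theta_E)$ for every $E\ne U$. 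Since each $\mathcal A_L(\theta_L)$ equals the single value $\mathcal A_U(\theta_U)$, the quantity is independent of $L$.

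The points that need checking are these:
\begin{itemize}[leftmargin=2em]
\item The application of Lemma~\ref{U:determinant} to the pair $(U,E)$ needs the norm groups $N_{U/F}U^\times$ and $N_{E/F}E^\times$ to be distinct. This holds because one is unramified and the other is not (Proposition~\ref{D:norm-separation}).
\item $\ell=2$ with $p$ odd is included, and there $\epsilon_L$ is the quadratic norm character. This was already accounted for in the sign computation.
\item The equal-characteristic case needs no change, because tameness and all the cited inputs are characteristic-free.
\end{itemize}
The main obstacle, the conductor-one sign, is already handled by Theorem~\ref{O:F:finite}. What remains is mostly structural: confirming that $|I|=\ell$ is forced, and that the statement of Theorem~\ref{U:main} uses the same family $\theta_L$ and the same trace-compatible additive characters as \eqref{O:F:localidentity}.
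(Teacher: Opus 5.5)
Your proposal is correct and follows the paper's proof: the unramified field $U=K^I$ is unique, it is compared with every other degree-$\ell$ field through \eqref{O:F:localidentity} (stable twisting for $m>1$, Theorem~\ref{O:F:finite} for $m=1$), and transitivity then gives independence of $L$. Your additional structural checks make explicit what the paper leaves to Lemma~\ref{U:dispatch} and the tame subsection. These checks are that $|I|=\ell$ is forced, that any unramified subextension lies in $K^I$, which has degree $\ell$, and that Proposition~\ref{D:norm-separation} supplies the distinct norm groups required by Lemma~\ref{U:determinant}.
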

\begin{proof}
The unramified degree-$\ell$ intermediate field is unique. The
preceding arguments compare it with every other degree-$\ell$
intermediate field, so transitivity compares any two.
The higher-conductor case is \eqref{O:F:stablesides}--\eqref{O:F:stableFML};
the conductor-one case is the final calculation.
\end{proof}

\section{Wild odd degree: the unramified--ramified case}\label{O:sec:inertia}\label{sec:odd-ur}
\subsection{Statement and notation}
Let $F$ be a nonarchimedean local field of residue characteristic $p>2$.
Let $K/F$ be Galois with group $C_p\times C_p$, and suppose its inertia
subgroup has order $p$. Let $U$ be the unramified degree-$p$
intermediate field and $E$ any ramified degree-$p$ intermediate field. Then $K=EU$, $K/E$ is unramified,
and $E/F$ and $K/U$ are totally ramified cyclic of degree $p$, with the same
break $t\ge1$. Set
\[
 T=t+1,\qquad D_0=(p-1)T,\qquad
 q_0=\ceil{T/p},\qquad e_0=\floor{D_0/p}=T-q_0.
\]
No hypothesis $p\nmid t$ is imposed. All valuations are normalized on the
field displayed. In particular $v_K|_E=v_E$, $v_K|_U=pv_U$, and
$v_E|_F=pv_F$.
Put
\[
 n_E=\Nm_{E/F},\quad n_U=\Nm_{U/F},\quad
 N_E=\Nm_{K/E},\quad N_U=\Nm_{K/U}.
\]
Fix a nontrivial additive character $\echar_F$ and set
$\echar_L=\echar_F\circ\Tr_{L/F}$ for all four fields. Let
$\Delta_L(\theta,\echar_L)$ denote the local constant
\eqref{U:delta-definition}.

\begin{theorem}\label{O:I:main}
Suppose $\theta_U$ and $\theta_E$ have the same norm pullback
\[
 \theta_K=\theta_U\circ N_U=\theta_E\circ N_E,
\]
and this character does not descend through $\Nm_{K/F}$. Then
\[
 \Delta_U(\theta_U,\echar_U)=\Delta_E(\theta_E,\echar_E).
\]
By Corollary~\ref{U:odd-power}, the products over $S(U/F)$ and
$S(E/F)$ are both one, so this proves
$\mathcal A_U(\theta_U)=\mathcal A_E(\theta_E)$.
The assertion holds in mixed and equal characteristic.
\end{theorem}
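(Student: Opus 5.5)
Since $p$ is odd, Corollary~\ref{U:odd-power} already gives $(\Delta_U(\theta_U)/\Delta_E(\theta_E))^p=1$. The plan is to show that the same quotient is a fourth root of unity; then it equals $1$. To obtain the fourth-root statement we compute both local constants through Lamprecht's formula in the normalized form of Lemma~\ref{U:stationary-factor}, with the same $\alpha$ and stationary coefficients coming from a single element of $K$. This mirrors Lemma~\ref{U:common-origin} in the biquadratic case.

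\textbf{Conductor reductions.} Put $m_U=m_U(\theta_U)$ and $m_E=m_E(\theta_E)$. The conjugate quotient $\theta_U^\sigma/\theta_U$ generates $S(K/U)$, which by Lemma~\ref{D:crossed-norm} is the pullback of $S(E/F)$ and so has conductor $T$ on $U$. Hence $m_U\ge T$. Similarly $\theta_E^\sigma/\theta_E$ is unramified of order $p$.

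\emph{Large conductor.} If $m_U$ is large compared with $T$ (roughly $m_U\ge 2T$), we write $\theta_U=\chi_U\cdot(\lambda\circ n_U)$ with $m(\chi_U)\le\lfloor m_U/2\rfloor$. Then we argue exactly as in the tame higher-conductor step. A version of Lemma~\ref{U:tame-covector} for the wild norm $n_E$ uses the symmetric-function bound \eqref{U:symmetric} and the inverse Herbrand function to show that the stationary coefficient $g$ of $\lambda$ is also a stationary coefficient of $\lambda\circ n_E$. The stable twist \eqref{U:stable} and the First Main Lemma \eqref{U:FML} then give both sides as $\Delta_F(\lambda)^p\,\chi(g^{-1})$ times norm-character products. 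These products equal $1$ by Corollary~\ref{U:odd-power}, and $\chi_U(g)=\chi_E(g)$ by Lemma~\ref{U:determinant}, since $\epsilon_i=1$.

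\emph{Intermediate conductor.} In the remaining range $T\le m_U<2T$, we instead choose a stationary coefficient $C\in K$ for $\theta_K$ and take $Z_U=N_UC$, $Z_E=N_EC$. Using $\theta_U(N_UC)=\theta_E(N_EC)=\theta_K(C)$ together with $D=\theta_U|_{F^\times}=\theta_E|_{F^\times}$ from Lemma~\ref{U:determinant}, the character factors agree. The additive factors differ only by $\echar_F(\Tr_{U/F}N_UC-\Tr_{E/F}N_EC)$, which we expand through elementary symmetric functions. The point is to choose $C$ so that this difference vanishes or is absorbed into the quadratic phase.

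\textbf{Constructing $C$ (main obstacle).} The hardest step is to show that a single $C\in K^\times$ serves as a stationary coefficient for \emph{both} norm pullbacks. For $K/E$, which is unramified, norms are surjective at every positive depth and the linear term of $N_E(1+x)$ is $\Tr_{K/E}x$, so this side is straightforward. For $K/U$, which is totally ramified with break $t$, the graded norm has image of index $p$ at depth $t$, and the cross terms $E_i$ with $2\le i<p$ matter precisely in the intermediate range. I would first take $C$ from the norm approximation \cite[Lemma~6.6]{Ueda} and the stationary parameters of \cite[Proposition~6.8]{Ueda}. I would then adjust $C$ by a unit of $F$ so that $\omega(\cdot)$ lands in the correct coset, via Lemma~\ref{U:normalization}. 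Finally I would verify, using \eqref{U:symmetric}, that the error terms lie in $\pp^{J}$. These estimates must be written uniformly: in equal characteristic the terms involving $p$ vanish, and in mixed characteristic they are bounded by $v_F(p)$.

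\textbf{Odd conductor and conclusion.} When $m_U$ or $m_E$ is odd, we compare the critical sums $g_U$ and $g_E$. On $U$ the residue field is $k_U$, of degree $p$ over $k_E=k_F$. We show that the critical function on $k_U$ is the composite of the one on $k_E$ with trace or norm, up to a quadratic form. The sums are then quadratic Gauss sums, whose normalized values are fourth roots of unity, and this is where the ``fourth root of unity'' conclusion comes from. Combining this with the $p$-th power relation finishes the proof.
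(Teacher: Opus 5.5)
Your outer framework matches the paper's. It has three parts: the $p$th-power relation from Corollary~\ref{U:odd-power}; the stable-twist argument for $h\ge T$, which is sound and is what the paper does via the common $\Gamma,\beta$ of \cite[Proposition~6.8]{Ueda}; and, for $0\le h\le t$, one element $C\in K$ whose two norms serve as stationary coefficients. But the intermediate range is the substance of the theorem. There you do not construct $C$, and the route you sketch would not produce it.

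A stationary coefficient of $\theta_K$ cannot be used. Relative to $\Psi_K$ it has $v_K=T-m_K=-ph$, so $v_U(N_UC)=v_K(C)=-ph$. A stationary coefficient of $\theta_U$ has valuation $T-m_U=-h$, and for $h>0$ these differ. What is needed is an element with $v_K(C)=-h$ whose norms land simultaneously in a prescribed coefficient class in $U$ and in one in $E$. Adjusting by a unit of $F$ through Lemma~\ref{U:normalization} does not address this. For odd $p$ there is no lower norm-character coefficient to match, so that step, borrowed from the biquadratic Lemma~\ref{U:common-origin}, is beside the point.

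The paper obtains $C$ only after making both characters explicit. It builds model characters $\chi_U^0,\chi_E^0$ of conductor $T$ with truncated-logarithm coefficients $d^p$ and $c$. Here $d\in U$ satisfies $d^p-d=c$ for a unit $c\in\OO_F$ whose residue has absolute trace $1$ (Proposition~\ref{O:I:canonical}). It writes $\theta_L=\chi_L^0(\lambda\circ n_L)$. It picks $x\in E$ with $n_E(x)=A$ in the coefficient class of $\lambda$, up to $\epsilon\in\OO_F$ when $h=t$. It then uses $\Psi_E(w)=\Psi_F(-\Nm_{E/F}w)$ from Corollary~\ref{O:I:conversion} to obtain the coefficients $B_U=A+d^p+\epsilon$ and $B_E=A-x+c$. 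Only then does $C=x+d$ work, because its norms are exactly $N_UC=d^p+\sum_i e_i^{E/F}(x)d^{p-i}+A$ and $N_EC=x^p-x+c$.

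You also leave the cancellation of the additive factor open (``vanishes or is absorbed''), and it is not automatic. The norms differ from the centred coefficients by $\eta_L=N_LC-B_L$. By Lemma~\ref{O:I:enhanced}, each local constant therefore carries the phase $\Psi_L(\eta_L^2/(2B_L))$ in addition to $g_L\in\mu_4$. Your claim that the normalized critical sums are fourth roots of unity holds only for a centred representative. No comparison between the critical functions on $k_U$ and $k_E$ is needed: each side is separately $\mu_4$ times that explicit phase.

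One must then show that $\Psi_F(-S)$, where $S=pA-\Tr_{E/F}(x^p)+p\Tr_{E/F}(x)$, times the ratio of the two phases equals $1$. For $h<t$, and for $p>3$, this follows from valuation estimates (Lemmas~\ref{O:I:normdepths} and~\ref{O:I:Sdepth}) together with the traces of powers of $d$. For $p=3$ and $h=t$, however, the three factors need not be trivial individually. There the paper needs the exact identity of Lemma~\ref{O:I:cubicidentity}, combined with Corollary~\ref{O:I:conversion} applied to $\Tr_{E/F}x$ and to $e_2^{E/F}(x)/x$. Without the explicit $C$ and this phase computation, the proposal has not shown that the quotient lies in $\mu_4$.
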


We use the one-dimensional results recalled in
Sections~\ref{sec:local-notation}--\ref{sec:stationary}, together with
the subcritical norm representatives of \cite[Lemma~6.6]{Ueda} and
\cite[Proposition~6.8]{Ueda} on stationary representatives of norm
pullbacks. For a totally ramified degree-$p$ extension $L/M$ with
break $t$, these results give
\begin{align}
 \Tr_{L/M}(\pp_L^a)&=\pp_M^{\floor{(a+D_0)/p}},\label{O:I:traceideal}\\
 v_M(e_i(y))&\ge\floor{(i v_L(y)+D_0)/p}\quad(1\le i<p),\label{O:I:symbound}\\
 v_M(\Nm y)&=v_L(y).\label{O:I:normval}
\end{align}
Here $e_i(y)$ is the $i$th elementary symmetric function of the $p$
conjugates, even when $y$ lies in a proper subfield. The inequalities hold
also for negative valuations. For unramified extensions, the
norm is onto every positive unit group and norm pullback preserves
conductors. If $c\in M^\times$ has valuation $a$ and $0\le r\le t$, there
is $x\in L^\times$ with
\begin{equation}\label{O:I:normrep}
 v_L(x)=a,\qquad c-\Nm(x)\in\pp_M^{a+r}.
\end{equation}
We also use Lemmas~\ref{C:hilbert90} and~\ref{C:character-extension}.

\subsection{Truncated logarithms and norm characters}
Put
\[
 P(Z)=\sum_{j=1}^{p-1}\frac{Z^j}{j}.
\]
Its coefficients lie in $\mathbf Z_{(p)}$. Our sign convention is that
$P(z)$ is the truncation of $-\log(1-z)$.

\begin{lemma}[Truncated logarithms on unit groups]\label{O:I:logchart}
The polynomial
$P(z+w-zw)-P(z)-P(w)$ has no monomial of total degree less than $p$.
Consequently, for integers $1\le r\le M$ with $pr\ge M$, the map
\[
 1-z\longmapsto P(z)
\]
is a group isomorphism $U_L^r/U_L^M\longrightarrow
\pp_L^r/\pp_L^M$. The map $P:\pp_L^r\longrightarrow\pp_L^r$ is bijective
and preserves valuations.
\end{lemma}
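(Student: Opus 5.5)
The plan is to work with formal power series over $\mathbf Z_{(p)}$ and then specialize. Let $G(Z)=-\log(1-Z)=\sum_{j\ge1}Z^j/j$, so that $P$ is the truncation of $G$ below degree $p$. As a formal identity in $\mathbf Q[[z,w]]$,
\[
 G(z+w-zw)=G(z)+G(w),
\]
because $1-(z+w-zw)=(1-z)(1-w)$. The terms of $G$ of degree at least $p$ can contribute only monomials of total degree at least $p$ to $G(z+w-zw)$, since $z+w-zw$ has no constant term. Comparing the parts of total degree below $p$ on both sides shows that $P(z+w-zw)-P(z)-P(w)$ has no monomial of degree less than $p$. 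This difference is a polynomial with coefficients in $\mathbf Z_{(p)}$, as the coefficients $1/j$ with $j<p$ are $p$-integral. This settles the first assertion without any characteristic hypothesis. In equal characteristic one reduces the $\mathbf Z_{(p)}$-coefficients modulo $p$.

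Next, fix $1\le r\le M$ with $pr\ge M$ and take $z,w\in\pp_L^r$. Every monomial of total degree at least $p$ in $z,w$ has valuation at least $pr\ge M$, and its coefficients are integral. Hence
\[
 P(z+w-zw)\equiv P(z)+P(w)\pmod{\pp_L^M}.
\]
Since $(1-z)(1-w)=1-(z+w-zw)$, the map $1-z\mapsto P(z)\bmod\pp_L^M$ is a homomorphism $U_L^r\to\pp_L^r/\pp_L^M$. Its values lie in $\pp_L^r$ because $P(z)=z+(\text{terms of valuation}\ge 2r)$. If $z\in\pp_L^M$, then $P(z)\in\pp_L^M$, so the map is trivial on $U_L^M$ and descends to $U_L^r/U_L^M$.

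For the bijectivity and valuation statements, the expansion $P(z)=z(1+u(z))$ with $u(z)\in\pp_L^r$ gives $v_L(P(z))=v_L(z)$ for $z\ne0$. The map $P:\pp_L^r\to\pp_L^r$ is then injective and surjective by a standard successive-approximation (Hensel or Newton) argument. To solve $P(z)=y$, start with $z_0=y$; the error $y-P(z_n)$ gains at least $r$ in valuation at each step, because $P(z)-P(z')=(z-z')(1+\pp_L^r)$, and the iterates converge by completeness. Injectivity follows from the same factorization $P(z)-P(z')=(z-z')\cdot\text{unit}$. That factorization holds since $(z^j-z'^j)/(z-z')$ lies in $\pp_L^{r(j-1)}$ and the coefficients are integral.

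The isomorphism modulo $\pp_L^M$ follows from these facts. For surjectivity, lift any class in $\pp_L^r/\pp_L^M$ and use surjectivity of $P$. For injectivity, if $P(z)\in\pp_L^M$ then $v_L(z)=v_L(P(z))\ge M$, so $1-z\in U_L^M$.

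The main obstacle is keeping the argument valid in equal characteristic $p$. There one cannot speak of $\log$ in $L$, so the identity has to be established formally over $\mathbf Z_{(p)}$ and only then reduced. One also has to check that no coefficient $1/j$ with $j\ge p$ is ever needed after truncation; the degree comparison above handles exactly this.
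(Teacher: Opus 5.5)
Your proof is correct and follows essentially the same route as the paper. Both compare degrees below $p$ in the formal identity for $-\log(1-z)$ over $\mathbf Q$, using $\mathbf Z_{(p)}$-integrality so the argument works in both characteristics, and both deduce additivity modulo $\pp_L^M$ from $pr\ge M$. For bijectivity and valuation preservation, your factorization $P(z)-P(z')=(z-z')\cdot(1+\pp_L^r)$ with successive approximation matches the paper's Hensel/successive-quotient inversion.
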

\begin{proof}
Compare degrees less than $p$ in the formal logarithm identity over
$\mathbf Q$. The truncated expression has coefficients in
$\mathbf Z_{(p)}$, so the assertion is valid in both characteristics.
The congruence is additive modulo $\pp_L^M$ under the stated inequality.
Finally $P(z)=z+z^2R(z)$ with $R$ integral and $P'(z)\equiv1\pmod{\pp_L}$.
Hensel's lemma solves $P(z)=w$ uniquely in $\pp_L^r$, or equivalently one
can invert successively on its successive quotients. This also proves the
valuation assertion and the group isomorphism.
\end{proof}

\begin{lemma}[Truncated logarithms of norms]\label{O:I:normlog}
Let $L/M$ be a totally ramified cyclic extension of degree $p$ and break $t$.
For $h\ge0$ and $z\in\pp_L^{h+q_0}$,
\begin{equation}\label{O:I:normlogeq}
 P(1-\Nm(1-z))\equiv
 \Tr P(z)+\Nm(P(z))\pmod{\pp_M^{T+h}}.
\end{equation}
For an unramified degree-$p$ extension $L/M$ and $z\in\pp_L^{q_0}$, one has
\begin{equation}\label{O:I:unramlog}
 P(1-\Nm(1-z))\equiv\Tr P(z)\pmod{\pp_M^T}.
\end{equation}
\end{lemma}
\begin{proof}
Let $z_1,\ldots,z_p$ be indeterminates and $e_i$
their elementary symmetric functions. Give $e_i$ weight $i$ and put
\[
 u=1-\prod_{a=1}^p(1-z_a)=e_1-e_2+\cdots+e_p,
\quad
 \mathscr D=P(u)-\sum_aP(z_a)-\prod_aP(z_a).
\]
The polynomial $\mathscr D$ belongs to
$\mathbf Z_{(p)}[e_1,\ldots,e_p]$. Every monomial has weight at least $p$
and at least two elementary-symmetric factors. Indeed the formal logarithm identity
cancels all terms of weight less than $p$, and the weight-$p$ term before the final product
is $(\sum_a z_a^p-e_1^p)/p$. Newton's identity gives coefficient $1$ for
$e_p$ in this polynomial; the final product cancels it. There is no possible
one-factor monomial of larger weight. Moreover every coefficient of a pure
power $e_p^j$ is divisible by $p$: modulo $p$, specialize
$e_1=\cdots=e_{p-1}=0$, adjoin $w$ with $w^p=e_p$, and note that
$\sum_aP(z_a)=0$ and $\prod_aP(z_a)=P(w)^p=P(e_p)$.
The injectivity of $\F_p[e_p]\to\F_p[w]$ justifies this coefficient test.

Write $q=h+q_0$ and specialize to the conjugates of $z$. Then
$v_M(e_p)\ge q$ and, for $i<p$,
\[
 v_M(e_i)\ge\floor{(iq+D_0)/p}
 \ge \frac{iq+(p-1)t}{p},\qquad v_M(e_i)\ge e_0.
\]
A monomial containing both $e_p$ and a smaller-index factor has valuation
at least $q+e_0=T+h$. A monomial consisting of $r\ge2$ smaller-index
factors and having total weight at least $p$ has valuation at least
\[
 q+\frac{2(p-1)t}{p}\ge T+h,
\]
because $q_0+(p-2)t/p-1\ge0$.
In mixed characteristic $e=v_M(p)\ge e_0$, by
$p=\Tr_{L/M}(1)$ and \eqref{O:I:traceideal}. A pure norm-power error has
valuation at least $e+2q\ge T+h$. In equal characteristic it is zero.
This proves \eqref{O:I:normlogeq}. For an unramified extension, every monomial of
weight at least $p$ has valuation at least $pq_0\ge T$, and the norm of
$P(z)$ has that valuation also. This gives \eqref{O:I:unramlog}.
\end{proof}

Choose a nontrivial norm character $\tau_F\in S(E/F)$. It has
conductor $T$. Lemma~\ref{O:I:logchart} and the additive trace pairing give
$\alpha\in F^\times$ such that
\begin{equation}\label{O:I:tauchart}
 \tau_F(1-z)=\echar_F(\alpha P(z))\quad(z\in\pp_F^{q_0}),
 \qquad v_F(\alpha)=-n_F(\echar_F)-T.
\end{equation}
Set
\[
 \Psi_L(z)=\echar_L(\alpha z).
\]
The largest ideal on which $\Psi_L$ is trivial is $\pp_L^T$ for
\emph{each} of $F,U,E,K$. For $E/F$, this is the different calculation
$-pT+D_0=-T$; the cases of $U$ and $K$ follow by unramified base change.
Set $\tau_U=\tau_F\circ n_U$. By Lemma~\ref{D:crossed-norm}, it is
a norm character for $K/U$. Equation~\eqref{O:I:unramlog} gives
$\tau_U(1-z)=\Psi_U(P(z))$ for $z\in\pp_U^{q_0}$.

\begin{corollary}[An additive identity from the norm character]\label{O:I:conversion}
For $L/M=E/F$ or $K/U$ and every
$w\in\pp_L^{q_0}$,
\begin{equation}\label{O:I:conversioneq}
 \Psi_M(\Tr_{L/M}w+\Nm_{L/M}w)=1,
 \qquad
 \Psi_L(w)=\Psi_M(-\Nm_{L/M}w).
\end{equation}
\end{corollary}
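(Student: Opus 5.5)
The plan is to derive both identities from the fact that the norm character $\tau_M$ is trivial on norms, using the logarithm charts of Lemma~\ref{O:I:logchart} and Lemma~\ref{O:I:normlog}. Here $\tau_M=\tau_F$ when $L/M=E/F$, and $\tau_M=\tau_U=\tau_F\circ n_U$ when $L/M=K/U$.

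First, the second identity in \eqref{O:I:conversioneq} follows from the first, so I would reduce to that. Since $\alpha\in F\subset M$ and $\echar_L=\echar_M\circ\Tr_{L/M}$, we have
\[
 \Psi_L(w)=\echar_M(\Tr_{L/M}(\alpha w))=\Psi_M(\Tr_{L/M}w).
\]
Hence $\Psi_M(\Tr w+\Nm w)=1$ is equivalent to $\Psi_L(w)=\Psi_M(-\Nm w)$. It remains to prove the first identity.

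For that, fix $w\in\pp_L^{q_0}$. By Lemma~\ref{O:I:logchart}, $P$ is a valuation-preserving bijection of $\pp_L^{q_0}$, so I write $w=P(z)$ with $z\in\pp_L^{q_0}$. Put $y=1-\Nm_{L/M}(1-z)$.

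I would check that $y\in\pp_M^{q_0}$. Expanding the norm gives $y=e_1-e_2+\cdots\pm e_p$ in the elementary symmetric functions of the conjugates of $z$. By \eqref{O:I:symbound} each $e_i$ with $i<p$ has $v_M(e_i)\ge\floor{(iq_0+D_0)/p}\ge q_0$, and $v_M(e_p)=v_L(z)\ge q_0$ by \eqref{O:I:normval}. In the unramified case $K/U$ the bound is immediate, since $v_U$ restricts to $v_K$.

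Now evaluate $\tau_M$ at $1-y=\Nm_{L/M}(1-z)$. This is a norm from $L$. For $E/F$ this is the definition of $\tau_F\in S(E/F)$. For $K/U$, $\tau_U$ is a norm character for $K/U$ by Lemma~\ref{D:crossed-norm}. Hence $\tau_M(1-y)=1$.

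On the other hand, the chart \eqref{O:I:tauchart} for $\tau_F$, and the derived chart $\tau_U(1-z)=\Psi_U(P(z))$ on $\pp_U^{q_0}$, give $\tau_M(1-y)=\Psi_M(P(y))$. Lemma~\ref{O:I:normlog} with $h=0$ then gives
\[
 P(y)\equiv\Tr P(z)+\Nm P(z)=\Tr w+\Nm w\pmod{\pp_M^T}.
\]
Here $\Psi_M$ is trivial on $\pp_M^T$, as recorded after \eqref{O:I:tauchart}. Therefore $1=\Psi_M(\Tr w+\Nm w)$.

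I expect the only delicate step to be the case $L/M=K/U$. There one must make sure that the congruence \eqref{O:I:normlogeq} is applied to a totally ramified degree-$p$ extension with break $t$, which holds for $K/U$ by the setup of this section. One must also make sure that the chart for $\tau_U$ has the same additive character $\Psi_U$ with largest trivial ideal $\pp_U^T$. Both facts are already stated, so the verification is bookkeeping of valuations.
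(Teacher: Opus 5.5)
Your argument is correct and is essentially the paper's proof: write $w=P(z)$, check that $\Nm_{L/M}(1-z)\in U_M^{q_0}$, evaluate the norm character there by its chart, and apply Lemma~\ref{O:I:normlog} at $h=0$. Your derivation of the second identity from $\Psi_L=\Psi_M\circ\Tr_{L/M}$ spells out what the paper leaves implicit.

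One slip should be deleted: $K/U$ is not unramified. It is totally ramified of break $t$ (it is $K/E$ that is unramified), and $v_K|_U=pv_U$ rather than $v_U$. Your general symmetric-function estimate already covers $K/U$, so that sentence is simply unnecessary.
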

\begin{proof}
Write $w=P(z)$ with $z\in\pp_L^{q_0}$, using Lemma~\ref{O:I:logchart}.
The norm of $1-z$ belongs to $U_M^{q_0}$: the intermediate symmetric
terms have valuation at least $q_0$ by \eqref{O:I:symbound}, and so does the
norm term. The norm character is trivial on $N_{L/M}(1-z)$.
Its formula \eqref{O:I:tauchart}, or its counterpart for $K/U$,
together with Lemma~\ref{O:I:normlog} at $h=0$, proves
\eqref{O:I:conversioneq}.
\end{proof}

\subsection{Changing a stationary coefficient}
\begin{lemma}[Lamprecht's formula for $C=B+\eta$]
\label{O:I:enhanced}
Let $\theta$ have conductor $m\ge2$ on a local field $L$ of odd residue
characteristic. Let $\Psi_L=\echar_L(\alpha\,\cdot)$ have largest trivial
ideal $\pp_L^J$, where $J\in\mathbf Z$. Put $d=\floor{m/2}$ and $s=\ceil{m/2}$.
There is $B\in L$ of valuation $J-m$, determined modulo $\pp_L^{J-d}$,
such that
\begin{equation}\label{O:I:enhancedformula}
 \theta(1-z)=\Psi_L(BP(z))\qquad(z\in\pp_L^d).
\end{equation}
If $C=B+\eta$ with $v_L(\eta)\ge J-s$, then
$\theta(1-z)=\Psi_L(Cz)$ for $z\in\pp_L^s$, and
\begin{equation}\label{O:I:completeformula}
 \Delta_L(\theta,\echar_L)=
 \theta((-\alpha C)^{-1})\Psi_L(-C)
 \Psi_L\!\left(\frac{\eta^2}{2B}\right)g_L,
 \qquad g_L\in\mu_4.
\end{equation}
For even $m$, $g_L=1$ and the correction is $1$.
If $v_L(\eta)\ge J-d$, the correction is $1$ in either parity.
\end{lemma}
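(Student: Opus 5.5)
We would prove Lemma~\ref{O:I:enhanced} in three steps: construct $B$, convert its truncated-logarithm form into a linear stationary coefficient, and evaluate the correction in Lamprecht's formula (Lemma~\ref{U:stationary-factor}) via a completed square.

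\textbf{Construction of $B$.} Apply Lemma~\ref{O:I:logchart} with $r=d$ and $M=m$. This is legitimate because $pd\ge 3d\ge m$ for $m\ge2$ and $p$ odd; for $m=2,3$ we have $d=1$ and $p\ge3$. So $1-z\mapsto P(z)$ induces an isomorphism $U_L^d/U_L^m\cong\pp_L^d/\pp_L^m$. The character $w\mapsto\theta(1-P^{-1}(w))$ of $\pp_L^d/\pp_L^m$ is represented through the additive pairing $\Psi_L(B\,\cdot)$. The largest trivial ideal of $\Psi_L$ is $\pp_L^J$, so:
\begin{itemize}
\item $B$ is determined modulo $\pp_L^{J-d}$;
\item since $\theta$ is nontrivial on $U_L^{m-1}$ (where $P(z)\equiv z$), we get $v_L(B)=J-m$.
\end{itemize}

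\textbf{Linearization and the completed square.} For $z\in\pp_L^s$ we have $P(z)=z+z^2/2+\cdots$, and $v(Bz^2)\ge J-m+2s\ge J$. Also $v(\eta z)\ge J$. Hence $\theta(1-z)=\Psi_L(Cz)$ on $\pp_L^s$, so $C$ satisfies the hypothesis of Lemma~\ref{U:stationary-factor} with $Z=C$ and $v(Z)=J-m$. That lemma gives
\[
\Delta_L(\theta,\echar_L)=\theta((-\alpha C)^{-1})\Psi_L(-C)g,
\]
where $g=1$ for even $m$. For odd $m$, $g=|k_L|^{-1/2}\sum_x H(x)$ with $H(x)=\Psi_L(-C\delta x)\theta(1+\delta x)^{-1}$ and $v(\delta)=d$.

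To compute $H$, write $1+\delta x=1-(-\delta x)$ and use \eqref{O:I:enhancedformula}. Then
\[
\theta(1+\delta x)^{-1}=\Psi_L\bigl(-BP(-\delta x)\bigr)=\Psi_L\bigl(B\delta x-B\delta^2x^2/2\bigr),
\]
modulo terms of valuation at least $J-m+3d\ge J$. These higher terms vanish once $m\ge3$ is odd, since $3d\ge m$ when $d\ge1$. Therefore
\[
H(x)=\Psi_L\bigl(-\eta\delta x-B\delta^2x^2/2\bigr).
\]
Completing the square gives
\[
-\frac{B}{2}\Bigl(\delta x+\frac{\eta}{B}\Bigr)^2+\frac{\eta^2}{2B}.
\]
The constant $\Psi_L(\eta^2/2B)$ factors out. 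Here $v(\eta/B)\ge m-s=d$, so the shift is by an element of $\delta\OO_L$. The remaining sum is a residue-field quadratic Gauss sum: $v(B\delta^2)=J-1$, so $x\mapsto\Psi_L(-B\delta^2x^2/2)$ is a nondegenerate quadratic character sum on $k_L$. Its normalization by $|k_L|^{-1/2}$ is a fourth root of unity, which we take as $g_L$. The translation invariance of $x\mapsto\Psi_L(-\tfrac B2(\delta x+c)^2)$ under $c\in\delta\OO_L$ holds because the residue sum is over a complete set of residues.

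\textbf{Degenerate cases.} If $m$ is even, $s=d$ and $v(\eta^2/2B)\ge 2(J-d)-(J-m)=J$, so the correction is $1$. If $v(\eta)\ge J-d$, then $v(\eta^2/B)\ge J-2d+m\ge J$, so again the correction is $1$.

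The main obstacle is the valuation bookkeeping in the odd case. We must check that the cubic and higher terms of $P$ and the $\eta$-cross terms vanish modulo $\pp_L^J$. We must also justify that the $\eta$-shift is absorbed by re-indexing the residue sum, so that $g_L$ is genuinely independent of $\eta$. Small conductors ($m=3$, and $p=3$ with $3d=m$) need a separate check of the inequality $J-m+3d\ge J$.
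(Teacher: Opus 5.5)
Your proposal is correct and follows the paper's proof essentially step for step. In both, $B$ comes from Lemma~\ref{O:I:logchart} and additive duality, Lemma~\ref{U:stationary-factor} is applied with $Z=C$, and the critical function is computed as $\Psi_L(-\eta\delta x-B\delta^2x^2/2)$; completing the square, using that $\eta/(B\delta)$ is integral and $v_L(B\delta^2)=J-1$, then leaves $\Psi_L(\eta^2/(2B))$ times a normalized nondegenerate quadratic Gauss sum. The one point you assert without proof is that this normalized sum lies in $\mu_4$; the paper verifies it in one line from $G^2=\chi_2(-1)|k_L|$, where $G=\sum_{x\ne0}\chi_2(x)\psi_0(x)$.
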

\begin{proof}
Since $pd\ge m$, Lemma~\ref{O:I:logchart} identifies the character on $U_L^d$
with an additive character on $\pp_L^d/\pp_L^m$. The trace pairing gives
$B$ and its ambiguity, and the exact conductor gives $v_L(B)=J-m$.
On this depth $P(z)\equiv z+z^2/2$ at the character precision, since
$3d\ge m$. Thus
\[
 \theta(1+z)=\Psi_L(-Bz+Bz^2/2).
\]
For an admissible denominator $\Gamma$, take the stationary
representative $-\alpha C\Gamma$. The character and additive factors
in Lamprecht's formula are
$\theta((-\alpha C)^{-1})\Psi_L(-C)$.
For odd $m=2d+1$, choose $\delta\OO_L=\pp_L^d$. The critical
function for this representative is
\[
 \bar z\longmapsto
 \Psi_L(-\eta\delta z-B\delta^2z^2/2).
\]
The inequality $v_L(\eta)\ge J-s$ makes $\eta/(B\delta)$ integral.
Completing the square produces the factor
$\Psi_L(\eta^2/(2B))$. The remaining homogeneous, nondegenerate quadratic
Gauss phase is a fourth root of unity. Indeed, writing
$G=\sum_{x\ne0}\chi_2(x)\psi_0(x)$ for a nontrivial residue additive
character, orthogonality gives $|G|^2=|k_L|$, and the substitution
$x\mapsto-x$ gives $\overline G=\chi_2(-1)G$.
Thus $G^2=\chi_2(-1)|k_L|$; scaling the quadratic coefficient only
multiplies $G$ by $\chi_2$ of that coefficient. In even conductor the ordinary
linearization already holds on $\pp_L^d$ and Lamprecht has no residual sum.
Finally $v_L(\eta^2/B)\ge J$ in the even case, and also whenever the
stronger depth $J-d$ holds. This proves every assertion.
\end{proof}

\subsection{Characters of conductor \texorpdfstring{$T$}{T}}
Let $k$ be the residue field of $F,E$ and $\kappa$ that of $U,K$.
Write $Q=|k|$ and let $\phi$ be arithmetic Frobenius in $\Gal(U/F)$.
Choose $\bar d\in\kappa$ with $\bar d^Q-\bar d=1$.
Then $\bar c=\bar d^p-\bar d\in k$ has absolute trace $1$.
Choose any lift $c\in\OO_F$ of $\bar c$. The polynomial
$X^p-X-c$ is irreducible and unramified over $F$, so we can choose
\begin{equation}\label{O:I:dchoice}
 d\in U,\qquad d^p-d=c,\qquad d\bmod\pp_U=\bar d.
\end{equation}
In mixed characteristic, Hensel's lemma at $d+1$ gives
$\phi(d)-(d+1)\in p\OO_U$. In equal characteristic this difference is zero.
Also
\begin{equation}\label{O:I:boundse0}
 e_0+q_0=T,\qquad
 \floor{T/2}\ge q_0,\qquad 2e_0\ge T,
 \qquad e_0\ge\ceil{T/2}.
\end{equation}
The elementary ceiling inequalities hold for every $T\ge2$, $p\ge3$.

\begin{proposition}[Construction of compatible characters]
\label{O:I:canonical}
With $\tau_F,\tau_U,\Psi_L$ as above, there are characters $\chi_U^0$
and $\chi_E^0$ of conductor $T$, with a common invariant primitive norm pullback,
such that
\begin{align}
 (\chi_U^0)^\phi/\chi_U^0&=\tau_U,\label{O:I:modelcomm}\\
 \chi_U^0(1-z)&=\Psi_U(d^pP(z))
 &&(z\in\pp_U^{q_0}),\label{O:I:modelU}\\
 \chi_E^0(1-z)&=\Psi_E(cP(z))
 &&(z\in\pp_E^{q_0}).\label{O:I:modelE}
\end{align}
Here $\chi^\phi(y)=\chi(\phi^{-1}y)$.
\end{proposition}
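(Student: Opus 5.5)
The plan is to construct $\chi_U^0$ first, prescribing it on $U_U^{q_0}$ by \eqref{O:I:modelU}. Then set $\Theta=\chi_U^0\circ N_U$ and take $\chi_E^0$ to be any character with $\chi_E^0\circ N_E=\Theta$. Formula \eqref{O:I:modelE} will be checked from the norm-logarithm identities.

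\emph{Step 1: the model on $U_U^{q_0}$.} Since $pq_0\ge T$, Lemma~\ref{O:I:logchart} makes $1-z\mapsto\Psi_U(d^pP(z))$ a well-defined character of $U_U^{q_0}/U_U^T$. Because $\Psi_U$ is trivial exactly on $\pp_U^T$ and $d$ is a unit, it is nontrivial on $U_U^{T-1}$, so the conductor is $T$. To compute its Frobenius quotient, use trace invariance of $\Psi_U$:
\[
\chi^\phi(1-z)=\chi(1-\phi^{-1}z)=\Psi_U(\phi(d)^pP(z)).
\]
We have $\phi(d)=d+1+O(p)$, and the binomial terms of $(d+1)^p-d^p-1$ are divisible by $p$. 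In mixed characteristic $v_U(p)=v_F(p)\ge e_0$, and $e_0+q_0=T$ by \eqref{O:I:boundse0}. Hence $\phi(d)^p\equiv d^p+1\pmod{\pp_U^{e_0}}$, so $(\chi^\phi/\chi)(1-z)=\Psi_U(P(z))=\tau_U(1-z)$ on $U_U^{q_0}$. In equal characteristic this is exact.

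\emph{Step 2: extension to $U^\times$ with \eqref{O:I:modelcomm}.} Extend the model to some $\chi'$ on $U^\times$ by Lemma~\ref{C:character-extension}, and put $\nu=\tau_U\,\chi'/\chi'^{\phi}$. Then $\nu$ is trivial on $U_U^{q_0}$. It is also trivial on $F^\times$, because $\tau_U|_{F^\times}=\tau_F^p=1$ and $\phi$ fixes $F$. We seek $\rho$, trivial on $U_U^{q_0}$, with $\rho^\phi/\rho=\nu$; then $\chi_U^0=\chi'\rho$ works. By Hilbert~90 (Lemma~\ref{C:hilbert90}), the kernel of $y\mapsto\phi^{-1}(y)/y$ on $U^\times$ is $F^\times$. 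So $\nu$, being trivial on $F^\times$, factors through the image group $(\phi^{-1}-1)U^\times$, and we may define $\rho$ there and extend. The compatibility needed is that $\rho$ can be taken trivial on $U_U^{q_0}$. For this it suffices that $(\phi-1)U^\times\cap U_U^{q_0}=(\phi-1)U_U^{q_0}\cdot(\text{something killed by }\nu)$. This follows from unramified Hilbert~90 on each graded piece, together with surjectivity of $y\mapsto y^{\phi}/y$ onto norm-one units at positive depth. \textbf{This is the step I expect to be the main obstacle}, namely the bookkeeping at depth $0$ and with the uniformizer. I would try first to do it directly on the finite quotient $U^\times/U_U^{q_0}$, which is a $\Gal(U/F)$-module with trivial Tate cohomology in positive depth.

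\emph{Step 3: the pullback and $\chi_E^0$.} $\Theta=\chi_U^0\circ N_U$ is $\Gal(K/U)$-invariant. It is also $\phi$-invariant, since $\Theta^\phi/\Theta=\tau_U\circ N_U=1$ by Lemma~\ref{D:crossed-norm}. It does not descend, because $\chi_U^0$ is not $\phi$-invariant (the primitivity discussion in the introduction). By the introduction, choose $\chi_E^0$ with $\chi_E^0\circ N_E=\Theta$. To check \eqref{O:I:modelE}, take $z\in\pp_K^{q_0}$ and apply \eqref{O:I:normlogeq} with $h=0$ to $K/U$:
\[
\Theta(1-z)=\Psi_U\bigl(d^p\Tr_{K/U}P(z)\bigr)\,\Psi_U\bigl(N_U(dP(z))\bigr)=\Psi_K(d^pP(z))\,\Psi_K(-dP(z))=\Psi_K(cP(z)),
\]
using $d\in U$ and Corollary~\ref{O:I:conversion}. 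On the other side, $N_E$ is unramified and surjective from $U_K^{q_0}$ onto $U_E^{q_0}$. By \eqref{O:I:unramlog}, the character $1-w\mapsto\Psi_E(cP(w))$ on $U_E^{q_0}$ pulls back to $\Psi_K(c\Tr\text{-compatible}\,P(z))=\Psi_K(cP(z))$, because $c\in F$. Surjectivity of $N_E$ therefore forces \eqref{O:I:modelE}. Since $c$ is a unit, this also shows that $\chi_E^0$ has conductor $T$.
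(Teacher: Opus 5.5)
Your proposal is correct. Steps~1 and~3 are essentially the paper's argument verbatim: the same Frobenius computation with $v_U(p)+q_0\ge e_0+q_0=T$, and the same use of \eqref{O:I:normlogeq}, Corollary~\ref{O:I:conversion}, \eqref{O:I:unramlog} and surjectivity of the unramified norm.

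Step~2 is only repackaged. The paper prescribes $\chi_U^0$ directly on $I\cdot U_U^{q_0}$, where $I=\{\phi^{-1}(v)/v:v\in U_U^1\}$ carries the values $\tau_U(v)$, and then assigns value one on the Teichm\"uller units and on a uniformizer of $F$. You instead extend arbitrarily and correct by $\rho$ with $\rho^\phi/\rho=\nu$. Both routes rest on the same fact: $\phi^{-1}(y)/y\in U_U^{q_0}$ forces $y\in F^\times U_U^{q_0}$. The paper reads this off from Frobenius-fixed truncated Teichm\"uller expansions. Your route closes it in one line. The element $u=\phi^{-1}(y)/y$ has norm one, so graded additive Hilbert~90 plus completeness gives $u=\phi^{-1}(h)/h$ with $h\in U_U^{q_0}$. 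Hence $y/h\in F^\times$ and $\nu(y)=\nu(h)\nu(y/h)=1$. Since this holds for every $y\in U^\times$, the depth-zero and uniformizer bookkeeping you were worried about never arises. Also, the fact that $y\mapsto\phi^{-1}(y)/y$ has kernel $F^\times$ is Galois theory, not Hilbert~90.

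One citation needs fixing. The introduction shows that primitivity forces non-invariance, but for $\chi_U^0$ you need the converse. If $\Theta=\lambda\circ N_{K/F}$, then $\chi_U^0/(\lambda\circ n_U)\in S(K/U)$. You must then know that $S(K/U)$ is $\phi$-fixed, for example from Lemma~\ref{D:crossed-norm}, which identifies it with $S(E/F)\circ n_U$. Only then does $\tau_U\ne1$ yield primitivity, exactly as in the paper.
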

\begin{proof}
First define the right side of \eqref{O:I:modelU} on $H=U_U^{q_0}$.
It is a character by Lemma~\ref{O:I:logchart}, with exact conductor $T$.
Its conjugate quotient is $\tau_U$ on $H$ because
\[
 \phi(d^p)-d^p-1\in p\OO_U,
 \qquad v_U(p)+q_0\ge e_0+q_0=T.
\]

We extend this character so that \eqref{O:I:modelcomm} holds on $U^\times$.
Put $V=U_U^1$ and $I=\{\phi^{-1}(v)/v:v\in V\}$. Prescribe on $I$
\[
 \kappa(\phi^{-1}(v)/v)=\tau_U(v).
\]
This is well defined: the kernel of $v\mapsto\phi^{-1}(v)/v$ is
$U_F^1$, and $\tau_U(f)=\tau_F(f^p)=1$ there.
If $\phi^{-1}(v)/v\in H$, unramified Teichmuller expansions imply
$v=f h$ for $f\in U_F^1$ and $h\in H$; indeed every coefficient of a
Frobenius-fixed truncated expansion lies in $k$. Thus the prescribed
character on $I\cap H$ agrees with \eqref{O:I:modelU}.
The two characters define a character on $IH$, trivial on $U_U^T$.
Extend it across the finite abelian quotient $V/U_U^T$ to a character of
$V$. Take its value to be $1$ on the prime-to-$p$ Teichmuller subgroup
and on a fixed uniformizer of $F$. The character $\tau_U$ is trivial on
these two factors, so \eqref{O:I:modelcomm} now holds on all of $U^\times$.
This gives $\chi_U^0$ of conductor $T$ satisfying \eqref{O:I:modelU}.

Put $\chi_K^0=\chi_U^0\circ N_U$. It is invariant under $\Gal(K/U)$
by definition and under $\Gal(K/E)$ by \eqref{O:I:modelcomm}, since $\tau_U$
is a norm character. By Lemma~\ref{C:hilbert90}, it is trivial on $\ker N_E$.
Thus $N_E x\mapsto\chi_K^0(x)$ defines a character of
$N_E(K^\times)$. Lemma~\ref{C:character-extension} extends it to
$\chi_E^0$ on $E^\times$. Since $K/E$ is unramified, its norm is
surjective on units; only the value at a uniformizer remains to be chosen.

For $z\in\pp_K^{q_0}$ the ramified norm of $1-z$ is in $U_U^{q_0}$.
Equations \eqref{O:I:modelU}, \eqref{O:I:normlogeq}, and \eqref{O:I:conversioneq},
using $N_U(d)=d^p$ \emph{exactly}, give
\[
 \chi_K^0(1-z)
 =\Psi_K(d^pP(z))\Psi_U(N_U(dP(z)))
 =\Psi_K((d^p-d)P(z))=\Psi_K(cP(z)).
\]
The unramified congruence \eqref{O:I:unramlog} shows that this is the pullback
of the right side of \eqref{O:I:modelE}. The norm $U_K^{q_0}\to U_E^{q_0}$
is onto, so \eqref{O:I:modelE} follows on its entire stated subgroup.
Since $c$ is a unit and the ideal conductor of $\Psi_E$ is $T$,
$\chi_E^0$ has exact conductor $T$. If the common pullback descended from
$F$, then $\chi_U^0$ would be a character of the form $\lambda\circ n_U$
times an element of $S(K/U)$. Both are fixed by $\phi$, the latter by
Lemma~\ref{D:crossed-norm}. This contradicts its nontrivial conjugate
quotient $\tau_U$, proving primitivity.
\end{proof}

\begin{proposition}[Writing the two characters using one character of $F^\times$]\label{O:I:actualtwist}
Choose $\tau_F$ using Lemma~\ref{U:conjugacy}, and construct the characters of
Proposition~\ref{O:I:canonical}. There is a character $\lambda$ of $F^\times$
such that, after changing only the unramified value of $\chi_E^0$ if needed,
\begin{equation}\label{O:I:twistpair}
 \theta_U=\chi_U^0(\lambda\circ n_U),\qquad
 \theta_E=\chi_E^0(\lambda\circ n_E).
\end{equation}
Write $m_U=T+h$. Then $h\ge0$, $m_E=T+ph$, and for $h>0$ the character
$\lambda$ has conductor $T+h$. For $h=0$ its conductor is at most $T$.
\end{proposition}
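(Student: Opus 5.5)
The plan is to build $\lambda$ from the $\phi$-invariance of $\theta_U/\chi_U^0$. By Lemma~\ref{U:conjugacy}, applied to the pair $(U,E)$, the quotient $\theta_U^\phi/\theta_U$ is a nontrivial character generating $S(K/U)$. By Lemma~\ref{D:crossed-norm}, $S(K/U)=S(E/F)\circ n_U$. Hence we may choose the generator $\tau_F\in S(E/F)$ so that $\theta_U^\phi/\theta_U=\tau_F\circ n_U=\tau_U$, and then build $\chi_U^0,\chi_E^0$ from this $\tau_F$ as in Proposition~\ref{O:I:canonical}. By \eqref{O:I:modelcomm}, the character $\theta_U/\chi_U^0$ is then $\phi$-invariant. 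The unramified Hilbert~90 argument (Lemma~\ref{C:hilbert90}, with surjectivity of $n_U$ on units and $n_U(\pi_F)=\pi_F^p$) shows that this character is trivial on $\ker n_U$. Lemma~\ref{C:character-extension} then gives a character $\lambda$ of $F^\times$ with $\theta_U=\chi_U^0(\lambda\circ n_U)$; this is the same argument already used at the start of the tame higher-conductor case.

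For the $E$-side, compare norm pullbacks to $K$. Since $\chi_E^0\circ N_E=\chi_U^0\circ N_U$ and $\lambda\circ n_E\circ N_E=\lambda\circ N_{K/F}=\lambda\circ n_U\circ N_U$, the character $\theta_E/(\chi_E^0\,\lambda\circ n_E)$ is trivial on $N_E(K^\times)$. Because $K/E$ is unramified of degree $p$, this subgroup contains $\OO_E^\times$ and has index $p$. The quotient is therefore an unramified character of order dividing $p$, and it can be absorbed into the value of $\chi_E^0$ at a uniformizer. That change affects neither \eqref{O:I:modelE} nor $\chi_E^0\circ N_E$, and it gives \eqref{O:I:twistpair}.

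The conductor statements come from the conductor rules of Section~\ref{sec:ramification}.

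\emph{The bound $h\ge0$.} Unramified conjugation preserves conductors, so the quotient $\theta_U^\phi/\theta_U=\tau_U$ has conductor at most $m_U$. Since $\tau_U$ has conductor $T$, this gives $m_U\ge T$, i.e. $h\ge0$.

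\emph{The case $h>0$.} Here $\chi_U^0$ has conductor $T<T+h$, so $\lambda\circ n_U$ has conductor exactly $T+h$. Unramified pullback preserves conductor, so $m_F(\lambda)=T+h>t+1$. The break formula then gives
\[
m_E(\lambda n_E)-1=\psi_{E/F}(T+h-1)=t+p(T+h-1-t)=t+ph,
\]
so $m_E(\lambda n_E)=T+ph$. This exceeds the conductor $T$ of $\chi_E^0$, hence $m_E=T+ph$.

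\emph{The case $h=0$.} Both $\chi_U^0$ and $\theta_U$ have conductor $T$, so $\lambda\circ n_U$, and hence $\lambda$, has conductor at most $T$. For $m_E$ I would avoid computing directly and instead go through $K$. Because $K/E$ is unramified, $m_E(\theta_E)=m_K(\theta_K)$. Because $K/U$ is totally ramified with break $t$ and $m_U(\theta_U)=T=t+1$, the conductor rule gives $m_K(\theta_K)\le T$. Moreover, a drop occurs exactly when some twist of $\theta_U$ by $S(K/U)$ has smaller conductor.

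I expect this last point to be the main obstacle. By Lemma~\ref{U:conjugacy}, those twists are precisely the Galois conjugates $\theta_U^{\phi^j}$, and these all have conductor $T$. Hence there is no drop, and $m_E=m_K=T$. The remaining care is to check that the "drop precisely when twisting lowers the conductor" criterion applies verbatim at $m=t+1$ for the extension $K/U$, and to check the $\phi$-invariance descent on the non-unit part, where the uniformizer of $F$ remains a uniformizer of $U$.
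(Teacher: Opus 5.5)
Your proposal is correct and follows the paper's proof. It produces $\lambda$ from the $\phi$-invariance of $\theta_U/\chi_U^0$ via Hilbert~90 and character extension, absorbs the $E$-side discrepancy (an unramified norm character for $K/E$) into $\chi_E^0$, and obtains $m_U\ge T$ from the commutator. At $h=0$ it uses Ueda's pullback-conductor criterion on $K/U$, noting that all $S(K/U)$-twists of $\theta_U$ are conjugates with the same conductor. The only difference is that for $h>0$ you compute $m_E$ directly from $\lambda\circ n_E$ on $E/F$ and the strict inequality with the conductor $T$ of $\chi_E^0$. The paper instead applies the same conductor formula to $\theta_U$ on $K/U$ for every $h$ and passes through the unramified extension $K/E$; both give $T+ph$.
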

\begin{proof}
The quotient $\theta_U/\chi_U^0$ is $\phi$-invariant.
Lemmas~\ref{C:hilbert90} and~\ref{C:character-extension} therefore
give $\theta_U/\chi_U^0=\lambda\circ n_U$ for a character
$\lambda$ of $F^\times$. Compatibility implies that
$\theta_E/(\chi_E^0(\lambda n_E))$ is an unramified norm character for
$K/E$. Absorb it into $\chi_E^0$; the formula \eqref{O:I:modelE}
is unchanged.
The conductor of $\theta_U$ is at least that of its commutator, namely
$T$. If it exceeds $T$, then $\lambda\circ n_U$ has the same conductor as
$\theta_U$, because $\chi_U^0$ has conductor $T$. Since $U/F$ is unramified,
$a_U(\lambda\circ n_U)=a_F(\lambda)$.
All twists of $\theta_U$ by $S(K/U)$ are conjugate and have the same
conductor. Therefore \cite[Proposition~3.10]{Ueda} applies,
both at $m_U=T$ and above it; it gives $m_K=T+ph$.
Norm pullback from $E$ to the unramified extension $K$ preserves conductors, so $m_E=m_K$.
\end{proof}

\subsection{Norm representatives for the coefficient of \texorpdfstring{$\lambda$}{lambda}}
First suppose $0\le h\le t$. Set
\[
 m_U=T+h,\quad m_E=T+ph,\quad
 d_U=\floor{m_U/2},\quad d_E=\floor{m_E/2}.
\]
In particular $d_U,d_E\ge q_0$.
By Lemma~\ref{O:I:logchart} and the additive pairing, choose
$A_*\in F$ so that
\begin{equation}\label{O:I:lambdachart}
 \lambda(1-z)=\Psi_F(A_*P(z))\qquad(z\in\pp_F^{d_U}).
\end{equation}
For $h>0$, $v_F(A_*)=-h$ and its ambiguity is $\pp_F^{T-d_U}$.
For $h=0$, its class lies in $\OO_F/\pp_F^{T-d_U}$; a zero class may be
represented by $0$.

\begin{lemma}\label{O:I:chooseA}
There is $x\in E$ with $A=n_E(x)$ having the following properties.
If $0\le h<t$, one may take $A_*=A$ in \eqref{O:I:lambdachart}.
If $h=t$, one has instead
\begin{equation}\label{O:I:epsilon}
 A_*=A+\epsilon,\qquad\epsilon\in\OO_F.
\end{equation}
For $h>0$, $v_E(x)=v_F(A)=-h$. For $h=0$, $x$ is integral; the zero
coefficient is treated by $x=A=0$.
\end{lemma}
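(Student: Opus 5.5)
The plan is to use the norm approximation \eqref{O:I:normrep} (that is, \cite[Lemma~6.6]{Ueda}) for the totally ramified extension $E/F$ of break $t$, applied to the coefficient $A_*$ in \eqref{O:I:lambdachart}. Two inputs matter. The class of $A_*$ is determined only modulo $\pp_F^{T-d_U}$. For $h>0$ we have $v_F(A_*)=-h$.

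Suppose first that $h>0$. We take $c=A_*$ and $a=-h$ in \eqref{O:I:normrep}, with $r\le t$ chosen as large as is useful. This gives $x\in E^\times$ with $v_E(x)=-h=v_F(n_E x)$ and $A_*-n_E(x)\in\pp_F^{-h+r}$. The question is whether $-h+r\ge T-d_U$ can be reached with some $r\le t$. Since $d_U=\floor{(T+h)/2}$, this requires $r\ge T+h-\floor{(T+h)/2}=\ceil{(T+h)/2}$.

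The constraint $r\le t=T-1$ allows this when $\ceil{(T+h)/2}\le T-1$. This inequality holds for $h\le T-2=t-1$, that is, for $h<t$, except possibly in small edge cases that need checking. At $h=t$ we have $\ceil{(2T-1)/2}=T>t$, so the best choice is $r=t$. Then $A_*-A\in\pp_F^{t-h}=\pp_F^0=\OO_F$, which is exactly \eqref{O:I:epsilon}.

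I expect the main obstacle to be exactly this boundary bookkeeping. The parity of $T+h$ in the ceiling has to be handled with care. In particular, the subcase $h=t-1$ with $T+h$ odd should be checked directly: there $\ceil{(T+h)/2}=\ceil{(2T-2)/2}=T-1=t$, so it is fine. I would verify this case explicitly.

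Now suppose $h=0$. Then $A_*$ is integral, and its class lies in $\OO_F/\pp_F^{T-d_U}$ with $d_U=\floor{T/2}$. If this class is zero, we take $x=A=0$, as the statement allows. Otherwise we choose an integral representative $A_*$ of valuation $a$ with $0\le a<T-d_U=\ceil{T/2}$. We apply \eqref{O:I:normrep} with $r=t$ to obtain $x$ with $v_E(x)=a\ge0$, so $x$ is integral, and $A_*-n_E(x)\in\pp_F^{a+t}$.

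Since $a+t\ge t\ge\ceil{T/2}$ for $T\ge2$, the element $n_E(x)$ represents the same class. Hence $A_*=A$ is allowed, because \eqref{O:I:lambdachart} depends only on this class. Here $h=0<t$ since $t\ge1$.

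Finally, because \eqref{O:I:lambdachart} depends only on $A_*$ modulo its stated ambiguity ideal, replacing $A_*$ by $A$ in the cases $h<t$ is legitimate. The valuation claims $v_E(x)=v_F(A)=-h$ follow from \eqref{O:I:normval}.
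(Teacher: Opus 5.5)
Your proof is correct and follows the paper's argument: apply the norm approximation \eqref{O:I:normrep} with relative precision $\lceil m_U/2\rceil\le t$ when $0<h<t$, with precision $t$ for a nonzero integral class when $h=0$, and with precision $t$ when $h=t$, where the absolute error lies in $\pp_F^{t-h}=\OO_F$ and gives \eqref{O:I:epsilon}. The boundary check you flagged is automatic: $h\le t-1$ gives $T+h\le 2T-2$, hence $\lceil (T+h)/2\rceil\le T-1=t$ with no parity cases, and at $h=t-1$ the number $T+h=2T-2$ is even, so the ``odd'' subcase you mention does not occur.
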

\begin{proof}
For $0<h<t$, the relative precision of the coefficient class in
\eqref{O:I:lambdachart} is
$T+h-d_U=\ceil{m_U/2}\le t$. Thus \eqref{O:I:normrep} supplies an exact
norm in that class. For $h=0$, a nonzero class has valuation $a\ge0$;
its relative precision is $T-d_U-a\le\ceil{T/2}\le t$.
Again \eqref{O:I:normrep} applies. If the class is zero, use $x=0$.
At $h=t$, $m_U=2t+1$ and $d_U=t$. Use \eqref{O:I:normrep} at relative
precision $t$, not $t+1$. Its absolute error is integral, giving
\eqref{O:I:epsilon}. The valuation assertions follow from
\eqref{O:I:normval}.
\end{proof}
Set $\epsilon=0$ for $h<t$, and put
\begin{equation}\label{O:I:coefficients}
 B_U=A+d^p+\epsilon\in U,\qquad B_E=A-x+c\in E.
\end{equation}

\begin{proposition}[Coefficients for $\theta_U$ and $\theta_E$]\label{O:I:actualcoeff}
For the characters in \eqref{O:I:twistpair},
\begin{equation}\label{O:I:actualP}
 \theta_U(1-z)=\Psi_U(B_UP(z))\quad(z\in\pp_U^{d_U}),
 \qquad
 \theta_E(1-z)=\Psi_E(B_EP(z))\quad(z\in\pp_E^{d_E}).
\end{equation}
These formulas allow us to apply Lemma~\ref{O:I:enhanced} with
$B=B_U$ and $B=B_E$.
\end{proposition}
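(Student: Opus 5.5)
We argue multiplicatively: each of $\theta_U,\theta_E$ is a product of two characters whose truncated-logarithm coefficients are known, and $P$ converts the product into a sum of coefficients on a common depth. We treat $\theta_U=\chi_U^0(\lambda\circ n_U)$ first. On $z\in\pp_U^{d_U}\subset\pp_U^{q_0}$, \eqref{O:I:modelU} gives $\chi_U^0(1-z)=\Psi_U(d^pP(z))$. For the second factor we use the unramified congruence \eqref{O:I:unramlog}, but at the deeper precision required here. Since $U/F$ is unramified and $z\in\pp_U^{d_U}$, every monomial of weight at least $p$ in the proof of Lemma~\ref{O:I:normlog} has valuation at least $pd_U\ge m_U$. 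Hence $P(1-n_U(1-z))\equiv\Tr_{U/F}P(z)\pmod{\pp_F^{m_U}}$. Applying \eqref{O:I:lambdachart} to $n_U(1-z)$, which lies in $U_F^{d_U}$, gives $\lambda(n_U(1-z))=\Psi_F(A_*\Tr P(z))=\Psi_U(A_*P(z))$. Here we use that $A_*\pp_F^{m_U}$ lies in the trivial ideal $\pp_F^T$, because $v(A_*)\ge -h$. Multiplying the two factors yields coefficient $d^p+A_*$. For $h<t$ this equals $B_U$ since $\epsilon=0$, and for $h=t$ it equals $A+\epsilon+d^p=B_U$ by \eqref{O:I:epsilon}.

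For $\theta_E$ the model factor is \eqref{O:I:modelE}, with coefficient $c$, valid on $\pp_E^{q_0}\supset\pp_E^{d_E}$. For $\lambda\circ n_E$ we use \eqref{O:I:normlogeq} with $L/M=E/F$. Writing $z\in\pp_E^{d_E}$ and choosing $h'$ with $d_E=h'+q_0$, we get $P(1-n_E(1-z))\equiv\Tr P(z)+n_E(P(z))\pmod{\pp_F^{T+h'}}$. We must check that $A_*\pp_F^{T+h'}\subset\pp_F^T$, which needs $h'\ge h$, i.e.\ $d_E\ge h+q_0$. This follows from $d_E=\lfloor(T+ph)/2\rfloor$, using $p\ge3$ and $T\ge 2q_0$ when $p\ge3$, $T\ge2$. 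Hence $\lambda(n_E(1-z))=\Psi_F(A_*\Tr P(z)+A_*n_E P(z))$. The trace term gives $\Psi_E(A_*P(z))$.

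The norm term is the main obstacle. We want to rewrite $\Psi_F(A_*\,n_E(P(z)))$ as $\Psi_E(-xP(z))$, up to an $\epsilon$ correction. For $h<t$ we have $A_*=A=n_E(x)$, so $A_*n_E(P(z))=n_E(xP(z))$. Applying Corollary~\ref{O:I:conversion} with $w=xP(z)$ gives $\Psi_F(n_E(w))=\Psi_E(-w)$, once we verify $w\in\pp_E^{q_0}$. This holds because $v_E(w)\ge d_E-h\ge q_0$ by the same inequality as above. When $h=0$ and $x=0$ the term simply vanishes. For $h=t$ there is an extra contribution $\Psi_F(\epsilon\,n_E P(z))$. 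Since $\epsilon$ is integral and $v_F(n_EP(z))=v_E P(z)\ge d_E\ge T$, this contribution is trivial. Collecting terms gives the coefficient $c+A_*-x$, which equals $A-x+c=B_E$ modulo $\epsilon$. The difference $\epsilon P(z)$ has $\Psi_E$-value $1$ because $v_E(\epsilon P(z))\ge d_E\ge T$.

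Finally, to apply Lemma~\ref{O:I:enhanced} we need the stated valuations, $v(B)=T-m$. For $U$ these follow from $v_U(B_U)=v_F(A)=-h$ when $h>0$. For $E$ the term $-x$ dominates, with $v_E(x)=-h$ and $m_E-T=ph$. This looks inconsistent at first sight, but the coefficient that Lemma~\ref{O:I:enhanced} determines is only a class modulo $\pp_L^{J-d}$ with the correct exact conductor. So we only need \eqref{O:I:actualP} to hold, and the exact valuation is then forced by $m_E$. We would check this last point against the ambiguity statement of Lemma~\ref{O:I:enhanced}.
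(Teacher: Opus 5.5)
Your proof follows the paper's route. You factor $\theta_L=\chi_L^0(\lambda\circ n_L)$ and push $\lambda$ through the norm with Lemma~\ref{O:I:normlog}, at precision $\pp_F^{m_U}$ on the unramified side. You then convert the norm term with Corollary~\ref{O:I:conversion} applied to $xP(z)$, using the exact equality $n_E(x)=A$. At $h=t$ you dispose of $\epsilon$ using $d_E=(p+1)t/2\ge T$. You absorb $\epsilon$ after expanding $P(u)$ and the paper does so before; both work.

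Two points need repair.

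First, on the $E$ side you apply \eqref{O:I:lambdachart} to $n_E(1-z)$ without checking that $u=1-n_E(1-z)$ lies in $\pp_F^{d_U}$. Unlike the unramified case, this is not automatic: the trace of $z\in\pp_E^{d_E}$ is only known to lie in $\pp_F^{\lfloor(d_E+D_0)/p\rfloor}$. The paper supplies this step by showing
\[
v_F(u)\ge\min\{d_E,\lfloor(d_E+D_0)/p\rfloor\}\ge d_U .
\]
The last inequality uses $d_E\ge d_U$ and $d_E+D_0-pd_U=(p-1)(T+\varepsilon)/2\ge0$, where $\varepsilon$ is the parity of $T+h$. This inequality is true, so the gap is fillable, but it is the main estimate that the ramified side requires.

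Second, your last paragraph misreads the valuations. Since $A\in F$, one has $v_E(A)=p\,v_F(A)=-ph<-h=v_E(x)$. So $A$, not $-x$, is the unique dominant term of $B_E$, and $v_E(B_E)=-ph=T-m_E$ exactly; there is no apparent inconsistency. Your fallback argument is nevertheless valid. The exact conductor $m_E$, combined with the ambiguity statement in Lemma~\ref{O:I:enhanced}, forces any coefficient satisfying \eqref{O:I:actualP} to have valuation $J-m_E$. This also covers $h=0$, where the paper instead checks directly that the residues $\xi^p+\bar d^p$ and $\xi^p-\xi+\bar c$ are nonzero.
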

\begin{proof}
For the unramified pullback of $\lambda$, the logarithm identity through
an unramified norm is valid modulo $\pp_F^{m_U}$ at depth $d_U$, because
$pd_U\ge m_U$. It gives coefficient $A_*$ over $U$. Combine it with
\eqref{O:I:modelU} to obtain the first formula.

For $z\in\pp_E^{d_E}$ put $u=1-n_E(1-z)$. The symmetric estimates give
\[
 v_F(u)\ge\min\{d_E,\floor{(d_E+D_0)/p}\}\ge d_U.
\]
The last inequality follows from $d_E\ge d_U$ and
$d_E+D_0-pd_U=(p-1)(T+\varepsilon)/2\ge0$, where
$\varepsilon\in\{0,1\}$ is the common conductor parity.
Also $d_E-h\ge q_0$, since
$d_E-h=\floor{(T+(p-2)h)/2}\ge\floor{T/2}\ge q_0$.
For $h=0$ the same inequalities hold with $x$ integral.
Lemma~\ref{O:I:normlog} therefore gives
\[
 P(u)\equiv\Tr P(z)+n_E(P(z))\pmod{\pp_F^{T+h}}.
\]
Multiply by $A$, whose valuation is $-h$ for $h>0$ and nonnegative for
$h=0$. Equation \eqref{O:I:conversioneq} applies to $xP(z)$, and the
\emph{exact} equality $n_E(x)=A$ gives
\[
 \Psi_F(A P(u))
 =\Psi_E(A P(z))\Psi_F(n_E(xP(z)))
 =\Psi_E((A-x)P(z)).
\]
At $h=t$, $d_E=(p+1)t/2\ge T$. It follows from the same norm estimate
that $u\in\pp_F^T$. Hence the extra coefficient $\epsilon\in\OO_F$ in
\eqref{O:I:epsilon} contributes $\Psi_F(\epsilon P(u))=1$.
Thus the displayed formula for $\lambda\circ n_E$ also holds when $h=t$. Multiply by \eqref{O:I:modelE} to obtain the second formula.
\end{proof}

For $h>0$, $v_U(B_U)=-h$ and $v_E(B_E)=-ph$, with unique dominant
term $A$. For $h=0$, if $\xi=\bar x\in k$, their residues are
$\xi^p+\bar d^p$ and $\xi^p-\xi+\bar c$.
The first is nonzero because $\bar d\notin k$; the second has absolute
trace $1$, so is nonzero also. Thus the coefficients have exactly the
valuations required by their stated conductors in every case.

\subsection{The norms of \texorpdfstring{$C=x+d$}{C=x+d}}
Put
\begin{equation}\label{O:I:commonC}
 C=x+d\in K^\times,\qquad Z_U=N_U(C),\qquad Z_E=N_E(C).
\end{equation}
Because $d$ lies in
$U$, it is fixed under $\Gal(K/U)$. Because $x$ lies in $E$, it is fixed
under $\Gal(K/E)$. Hence, writing $a_i=e_i^{E/F}(x)$,
\begin{align}
 Z_U&=d^p+\sum_{i=1}^{p-1}a_i d^{p-i}+A,\label{O:I:ZU}\\
 Z_E&=x^p-x+c.\label{O:I:ZE}
\end{align}
Equation \eqref{O:I:ZE} follows by evaluating $X^p-X-c$ at $-x$.
Define
\[
 \eta_U=Z_U-B_U=\sum_{i=1}^{p-1}a_i d^{p-i}-\epsilon,
 \qquad \eta_E=Z_E-B_E=x^p-A.
\]

\begin{lemma}[Bounds for $\eta_U$ and $\eta_E$]\label{O:I:normdepths}
For $0\le h<t$, one has $v_U(\eta_U)\ge T-d_U$ and
$v_E(\eta_E)\ge T-d_E$. Hence both correction factors in
\eqref{O:I:completeformula} equal one.
For $h=t$, one has
$v_U(\eta_U)\ge T-\lceil m_U/2\rceil$ and
$v_E(\eta_E)\ge T-\lceil m_E/2\rceil$.
If also $p>3$, then $v_E(\eta_E)\ge T-d_E$.
For $h=t$ and $b=a_{p-1}$, one further has
\begin{equation}\label{O:I:etaUresidue}
 \eta_U\equiv b d-\epsilon\pmod{\pp_U},\qquad b\in\OO_F.
\end{equation}
\end{lemma}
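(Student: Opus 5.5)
The plan is to compute both $\eta$'s explicitly from elementary symmetric functions of $x$ and bound them term by term with \eqref{O:I:symbound}, using that the largest trivial ideal of $\Psi_L$ is $\pp_L^T$ in every field, so $J=T$ in Lemma~\ref{O:I:enhanced}. Throughout, $v_F(A)=v_E(x)=-h$ when $h>0$, and $x$ is integral when $h=0$. The element $d$ is a unit of $U$, and $v_U|_F=v_F$ since $U/F$ is unramified. By \eqref{O:I:symbound} with $v_E(x)\ge -h$,
\[
 v_F(a_i)\ge\floor{\frac{(p-1)T-ih}{p}}\qquad(1\le i\le p-1),
\]
and the same bound holds trivially better when $h=0$.

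\emph{The bound for $\eta_U$.} Since $d$ is a unit, each term of $\eta_U=\sum a_id^{p-i}-\epsilon$ has $v_U\ge v_F(a_i)$, and the worst case is $i=p-1$. For $h<t$, I will check the elementary inequality
\[
 \floor{(p-1)(T-h)/p}\ge\ceil{(T-h)/2}=T-d_U .
\]
Here $\epsilon=0$ and $n=T-h\ge2$; the inequality holds for $n=2,3$ by inspection and for larger $n$ because $(p-1)/p\ge2/3$. For $h=t$ we have $T-h=1$, so the bound is $v_F(a_i)\ge\floor{(p-1+(p-1-i)t)/p}$. This is $\ge0=T-\ceil{m_U/2}$ for all $i$, and it is $\ge1$ for $i<p-1$. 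Together with $\epsilon\in\OO_F$ from \eqref{O:I:epsilon}, this gives integrality of $\eta_U$ and the congruence \eqref{O:I:etaUresidue} with $b=a_{p-1}\in\OO_F$.

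\emph{The bound for $\eta_E$.} Direct use of $v_E(\sigma x-x)\ge v_E(x)+t$ is too weak here. I would instead use the characteristic polynomial of $x$ over $F$. Since $p$ is odd, $A=n_E(x)=a_p$, and the polynomial identity gives
\[
 \eta_E=x^p-A=\sum_{i=1}^{p-1}(-1)^{i+1}a_ix^{p-i}.
\]
Each term has
\[
 v_E\ge p\floor{\frac{(p-1)T-ih}{p}}-(p-i)h\ge(p-1)t-ph ,
\]
using $p\floor{y/p}\ge y-(p-1)$. It then remains to compare this with the targets.

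For $h<t$, the target is $T-d_E=\ceil{(T-ph)/2}\le(T-ph+1)/2$. It suffices that $(2p-3)t-2\ge ph$, and this follows from $h\le t-1$ and $(p-3)t\ge 2-p$. For $h=t$, the bound $-t$ must be compared with
\[
 T-\ceil{m_E/2}=\floor{(t+1-pt)/2}\quad\text{and}\quad T-d_E=\ceil{(t+1-pt)/2}.
\]
The first comparison reduces to $(p-3)t\ge0$. The second requires $(p-3)t\ge1$, which is exactly the hypothesis $p>3$. For $h=0$, $x$ is integral, so all terms are integral, and the same symmetric estimates with $v_E(x)\ge0$ give depth at least $(p-1)t\ge T-d_E$.

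Finally, the statement that both correction factors are $1$ for $h<t$ is the last clause of Lemma~\ref{O:I:enhanced}, since $v(\eta)\ge J-d$ in both fields. The main obstacle is the bookkeeping with floors and ceilings in the $\eta_E$ estimate at $p=3$ and at the boundary $h=t$. I expect the crude bound $(p-1)t-ph$ to be exactly sharp there, which is why only the ceiling version holds at $p=3$. If this bound turns out to be off by one, I would fall back on the exact floors in $p\floor{\cdot/p}$ term by term, treating $i=p-1$ separately.
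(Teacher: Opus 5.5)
Your proposal is correct and follows the paper's proof essentially step for step. You use the same bound \eqref{O:I:symbound} for $\eta_U$, with the worst term $i=p-1$ giving $\lfloor (p-1)(T-h)/p\rfloor$ and, at $h=t$, integrality together with $v_F(a_i)\ge1$ for $i\le p-2$. You use the same characteristic-polynomial identity, giving $v_E(\eta_E)\ge(p-1)t-ph$, and the same elementary comparisons with $T-d_U$, $T-d_E$ and the boundary targets. The hedge in your last paragraph is unnecessary: your floor and ceiling arithmetic already settles every case, including $p=3$ at $h=t$.
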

\begin{proof}
For integral $x$, \eqref{O:I:symbound} puts all the $a_i$ in $\pp_F^{e_0}$.
The characteristic-polynomial identity gives
\begin{equation}\label{O:I:normpower}
 x^p-A=\sum_{i=1}^{p-1}(-1)^{i+1}a_i x^{p-i},\qquad
 v_E(x^p-A)\ge p v_E(x)+(p-1)t.
\end{equation}
The bound follows from $p\floor{(ia+D_0)/p}\ge ia+(p-1)t$ with
$a=v_E(x)$. Thus for $h=0$ the errors have depths at least $e_0$ and
$(p-1)t$, respectively, both at least $T-\floor{T/2}$.
If $x=0$ the assertions are immediate.

For $h>0$, \eqref{O:I:symbound} and \eqref{O:I:normpower} give
\begin{equation}\label{O:I:errorsbounds}
 v_U\!\left(\sum_{i=1}^{p-1}a_i d^{p-i}\right)
 \ge\floor{(p-1)(T-h)/p},\qquad
 v_E(\eta_E)\ge(p-1)t-ph.
\end{equation}
If $h<t$, put $r=T-h\ge2$. The first bound is
$r-\ceil{r/p}\ge\ceil{r/2}=T-d_U$.
Here $\ceil{r/p}\le\floor{r/2}$ for $r\ge2$, $p\ge3$.
For the second, $h\le t-1$ gives
\[
 2((p-1)t-ph)-(T-ph)
 =(2p-3)t-ph-1\ge(p-3)t+p-1\ge0;
\]
hence $(p-1)t-ph\ge T-d_E$. This proves the bounds for $h<t$.

At $h=t$, $m_U=2t+1$, $m_E=(p+1)t+1$, and the required bounds
are $0$ over $U$ and $-(p-1)t/2$ over $E$.
The two bounds $v_U(\eta_U)\ge0$ and $v_E(\eta_E)\ge-t$ suffice.
If $p>3$, also $-t\ge1-(p-1)t/2=T-d_E$, which gives the stronger bound for $\eta_E$. Finally, at this boundary $a_{p-1}$ is integral and all
$a_i$ with $i\le p-2$ have valuation at least
$\floor{(t+p-1)/p}=\ceil{t/p}\ge1$. This proves \eqref{O:I:etaUresidue}.
\end{proof}

\begin{lemma}[The quotient of the two local constants]\label{O:I:scalarphase}
Let
\begin{equation}\label{O:I:Sdef}
 S=\Tr_{U/F}Z_U-\Tr_{E/F}Z_E.
\end{equation}
The local constants satisfy
\begin{equation}\label{O:I:ratioformula}
 \frac{\Delta_U(\theta_U)}{\Delta_E(\theta_E)}
 \in\mu_4\,\Psi_F(-S)\,
 \frac{\Psi_U(\eta_U^2/(2B_U))}
      {\Psi_E(\eta_E^2/(2B_E))}.
\end{equation}
Furthermore one has the exact identity
\begin{equation}\label{O:I:Sexact}
 S=p A-\Tr_{E/F}(x^p)+p\Tr_{E/F}(x).
\end{equation}
\end{lemma}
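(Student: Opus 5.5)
We apply Lemma~\ref{O:I:enhanced} on both sides with the common element $C$ of \eqref{O:I:commonC}. Over $U$ take $B=B_U$ and $\eta=\eta_U$, so that $B_U+\eta_U=Z_U$. Over $E$ take $B=B_E$ and $\eta=\eta_E$, so that $B_E+\eta_E=Z_E$. Proposition~\ref{O:I:actualcoeff} supplies the hypothesis \eqref{O:I:enhancedformula} for each side. Lemma~\ref{O:I:normdepths} gives $v(\eta)\ge J-s$ with $J=T$, since the largest trivial ideal of every $\Psi_L$ is $\pp_L^T$. In every case $h\le t$, the bound $T-\lceil m/2\rceil$ is what is needed, and it is provided there. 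Formula \eqref{O:I:completeformula} then gives
\[
 \Delta_U(\theta_U)=\theta_U((-\alpha Z_U)^{-1})\Psi_U(-Z_U)\Psi_U\!\left(\frac{\eta_U^2}{2B_U}\right)g_U,
\]
and the analogous expression over $E$, with $g_U,g_E\in\mu_4$.

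Next we compare the character factors. Since $\alpha\in F^\times$, the values $\theta_U(-\alpha^{-1})$ and $\theta_E(-\alpha^{-1})$ agree. By Lemma~\ref{U:conjugacy} with $\ell=p$ odd (so $\epsilon_i=1$), the restrictions of $\theta_U$ and $\theta_E$ to $F^\times$ coincide. Compatibility gives $\theta_U(Z_U)=\theta_U(N_UC)=\theta_K(C)=\theta_E(N_EC)=\theta_E(Z_E)$, so the character factors cancel in the quotient. For the additive factors, $\Psi_U(-Z_U)=\Psi_F(-\Tr_{U/F}Z_U)$ and similarly over $E$, because all additive characters are trace compatible and $\alpha\in F$. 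Their quotient is therefore $\Psi_F(-S)$. Combining this with the correction factors and $g_U/g_E\in\mu_4$ yields \eqref{O:I:ratioformula}.

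For the exact identity \eqref{O:I:Sexact}, we expand \eqref{O:I:ZU} and \eqref{O:I:ZE}. Taking $\Tr_{U/F}$ of \eqref{O:I:ZU}, the $a_i$ and $A$ lie in $F$. This gives
\[
 \Tr_{U/F}Z_U=\Tr_{U/F}(d^p)+\sum_{i=1}^{p-1}a_i\Tr_{U/F}(d^{p-i})+pA.
\]
Taking $\Tr_{E/F}$ of \eqref{O:I:ZE} gives
\[
 \Tr_{E/F}Z_E=\Tr_{E/F}(x^p)-\Tr_{E/F}(x)+pc.
\]
Here we use $d^p=d+c$, so $\Tr_{U/F}(d^p)=\Tr_{U/F}(d)+pc$. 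The remaining step is to show
\[
 \Tr_{U/F}(d)+\sum_{i=1}^{p-1}a_i\Tr_{U/F}(d^{p-i})=(p-1)\Tr_{E/F}(x).
\]
The cleanest route is to compute $S$ symmetrically via Lemma~\ref{U:e2}-type bookkeeping. $\Tr_{U/F}N_U(C)$ and $\Tr_{E/F}N_E(C)$ are both sums over the $p^2$ conjugates $x_j+d_k$ of $C$: the first is a sum over $k$ of products over $j$, the second a sum over $j$ of products over $k$. Since $\prod_k(X+d_k)=(X-d)^p$-type relations give $\prod_k(x_j+d_k)=x_j^p-x_j+c$ exactly, only the $U$-side needs explicit expansion. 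For that we use Newton's identities for the power sums of the roots $d_k$ of $X^p-X-c$: $\sum_k d_k^{r}=0$ for $1\le r\le p-2$, and $\sum_k d_k^{p-1}=p-1$. With these, the sum of the $a_i$-terms collapses to $a_1(p-1)$, and $\Tr_{U/F}(d)=0$. Since $a_1=\Tr_{E/F}(x)$, this gives the identity above, and substituting into $S$ yields \eqref{O:I:Sexact}.

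The main obstacle is keeping these power-sum evaluations exact, not merely congruent, in mixed characteristic. The conjugates of $d$ satisfy $\phi(d)\ne d+1$ in general, so we must work only with the polynomial $X^p-X-c$ and never with the translation structure.
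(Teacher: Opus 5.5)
Your proposal is correct and follows the paper's proof. The ratio formula comes from Lemma~\ref{O:I:enhanced}, with $C=Z_U,Z_E$ and the depth bounds of Lemma~\ref{O:I:normdepths}. The values at $-\alpha$ cancel by the common restriction to $F^\times$, $\theta_U(Z_U)=\theta_E(Z_E)$ by compatibility, and the exact formula for $S$ follows from the Newton power sums of $X^p-X-c$. (Your phrase about ``$(X-d)^p$-type relations'' is garbled, but the identity you actually use, $\prod_k(x_j+d_k)=x_j^p-x_j+c$, is just \eqref{O:I:ZE}.)
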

\begin{proof}
Apply Lemma~\ref{O:I:enhanced} with $C=Z_U,Z_E$, using
Lemma~\ref{O:I:normdepths}.
The factors at $-\alpha\in F^\times$ cancel by \eqref{U:det-character}, and
\[
 \theta_U(Z_U)=\theta_K(C)=\theta_E(Z_E).
\]
Substitution in \eqref{O:I:completeformula} gives
\eqref{O:I:ratioformula}.
Newton identities for $X^p-X-c$ give
\[
 \Tr_{U/F}(d^j)=0\ (1\le j<p-1),\quad
 \Tr_{U/F}(d^{p-1})=p-1,\quad
 \Tr_{U/F}(d^p)=pc.
\]
Take traces in \eqref{O:I:ZU} and \eqref{O:I:ZE} and subtract. This gives
\eqref{O:I:Sexact}.
\end{proof}

\subsection{The cases \texorpdfstring{$h<t$}{h<t} and \texorpdfstring{$p>3$}{p>3}}
\begin{lemma}\label{O:I:Sdepth}
One has $S\in\pp_F^T$ if $h=0$, if $1\le h<t$, or if $p>3$ and
$h=t$.
\end{lemma}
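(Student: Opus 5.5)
We bound the three terms of the exact identity \eqref{O:I:Sexact},
\[
 S=pA-\Tr_{E/F}(x^p)+p\Tr_{E/F}(x),
\]
separately, and show that each lies in $\pp_F^T$. In equal characteristic the terms $pA$ and $p\Tr_{E/F}(x)$ vanish. Then $S=-\Tr_{E/F}(x^p)$, and only the trace of $x^p$ needs attention. In mixed characteristic we use $e=v_F(p)\ge e_0$, which was noted in the proof of Lemma~\ref{O:I:normlog}. The valuations come from Lemma~\ref{O:I:chooseA}: $v_E(x)=v_F(A)=-h$ for $h>0$, and $x$ is integral for $h=0$. If $x=0$, then $S=0$ and there is nothing to prove.

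\emph{The trace of $x^p$.} By \eqref{O:I:traceideal} with $a=-ph$ (or $a\ge0$ when $h=0$),
\[
 v_F\Tr_{E/F}(x^p)\ge\floor{(-ph+D_0)/p}=e_0-h .
\]
This bound is too weak. Instead we write $x^p=A+\eta_E$, using \eqref{O:I:normpower}. Then $\Tr_{E/F}(x^p)=pA+\Tr_{E/F}(\eta_E)$, and $S$ becomes
\[
 S=-\Tr_{E/F}(\eta_E)+p\Tr_{E/F}(x).
\]
This also removes $pA$, which would otherwise have valuation only $e-h$. Now $\eta_E=\sum_{i=1}^{p-1}(-1)^{i+1}a_ix^{p-i}$ with $a_i\in F$. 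Hence
\[
 \Tr_{E/F}(\eta_E)=\sum_i(-1)^{i+1}a_i\Tr_{E/F}(x^{p-i}).
\]
For each $i$, bound $v_F(a_i)$ by \eqref{O:I:symbound}, namely $v_F(a_i)\ge\floor{(-ih+D_0)/p}$, and bound $v_F\Tr(x^{p-i})$ by \eqref{O:I:traceideal}. The resulting exponent is roughly
\[
 \frac{-ih+D_0}{p}+\frac{-(p-i)h+D_0}{p}-O(1)=\frac{2D_0}{p}-h-O(1).
\]
The task is to show that this is at least $T$ when $h<t$. This is the same kind of check as in Lemma~\ref{O:I:normdepths}. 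It uses $2e_0\ge T$ from \eqref{O:I:boundse0}, together with the slack $t-h\ge1$, and careful floor arithmetic. For $h=0$ all terms are integral and the bound is $\ge 2e_0\ge T$ immediately.

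\emph{The term $p\Tr_{E/F}(x)$.} This term has valuation at least
\[
 e+\floor{(-h+D_0)/p}\ge e_0+e_0-\ceil{h/p}.
\]
I expect this to exceed $T$ except near $h=t$. In that boundary range, as well as for $p=3$, the inequalities may fail. This is consistent with the lemma's exclusion of $p=3,\ h=t$.

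\emph{The case $h=t$, $p>3$.} Here the same estimates must be redone with $2D_0/p-t$ compared against $T=t+1$. The required inequality is
\[
 \frac{2(p-1)(t+1)}{p}-t\ge t+1-O(1),
\]
where the floors must be handled carefully. The extra margin for $p\ge5$ should make it hold. Note that $e\ge e_0$ enters multiplied by nothing, so the term $p\Tr_{E/F}(x)$ is the delicate one. If necessary we use the sharper fact $e\ge$ (break bound) $t/(p-1)$, which gives $e_0$ exactly.

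\emph{Main obstacle.} The main obstacle is the floor bookkeeping in the cross terms $a_i\Tr(x^{p-i})$ at the boundary $h=t$. Here the generic estimate loses up to two units from the two floors. If the crude bound fails, I would first try replacing $\Tr_{E/F}(x^{p-i})$ by a Newton-identity expression in the $a_j$. All terms then become products of at least two symmetric functions, each of valuation $\ge\floor{(jv+D_0)/p}$, and the weight-$p$ total gives exactly the needed $T$.
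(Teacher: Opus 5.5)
You start correctly. Your expansion $\Tr_{E/F}(\eta_E)=\sum_{i=1}^{p-1}(-1)^{i+1}a_i\Tr_{E/F}(x^{p-i})$ is exactly Newton's identity at degree $p$, and your treatment of $h=0$ agrees with the paper: every summand and $p\Tr_{E/F}x$ lie in $\pp_F^{2e_0}\subseteq\pp_F^T$.

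For $h>0$, however, bounding the summands $a_i\Tr_{E/F}(x^{p-i})$ one at a time cannot work, and the failure is not confined to the boundary $h=t$. Your two estimates combine to roughly $2(p-1)t/p-h$, which is below $T$ once $h\ge(p-2)t/p$. For example, take $p=3$, $t=4$, $h=3$, so $T=5$ and $D_0=10$. The estimates give only
\[
 v_F(a_2\Tr_{E/F}x)\ge\floor{4/3}+\floor{7/3}=3,\qquad
 v_F(a_1\Tr_{E/F}x^2)\ge 2+1=3 .
\]
These values are actually attained. In characteristic $3$ take $y^3-y=\varpi^{-4}$ and $x=\varpi^{-1}+\varpi^2y^2$. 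Then $v_E(x)=-3$, $a_1=2\varpi^2$, $a_2=\varpi+\varpi^4$, and $\Tr_{E/F}(x^2)=\varpi+2\varpi^4$. So both summands have valuation $3<T$, although their difference is $2\varpi^6$. No floor bookkeeping can close this gap.

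Your fallback fails for the same reason. Expanding each $\Tr(x^{p-i})$ in the $a_j$ produces two-factor weight-$p$ monomials whose estimate is again about $2(p-1)t/p-h$. So the claim that ``the weight-$p$ total gives exactly the needed $T$'' is false.

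The missing idea is the cancellation between different indices $i$. In the example, $a_1a_2$ occurs in the two summands with coefficients $-2$ and $-1$, which combine to $-3a_1a_2$. In general, as integral polynomials one has $p_p\equiv e_1^p\pmod p$, and the coefficient of $e_p$ in $p_p$ is $p$. Hence
\[
 \Tr_{E/F}(x^p)-pA=a_1^p+p\,Q(a_1,\ldots,a_{p-1}),
\]
where every monomial of $Q$ has weight $p$ and at least two factors; this is the identity the paper uses. The three resulting bounds are:
\begin{itemize}
\item $v_F(a_1^p)\ge(p-1)t-h$;
\item each monomial of $pQ$ gains $v_F(p)\ge(p-1)t/p$, which follows from $v_F(p)\ge e_0$, so it has valuation at least $3(p-1)t/p-h$;
\item $v_F(pa_1)\ge(2(p-1)t-h)/p$.
\end{itemize}
All three exceed $t$ exactly when $h<t$, or when $p>3$ and $h\le t$. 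This also settles the term $p\Tr_{E/F}x$ and the case $h=t$, which you left at the level of expectation. The inequality $e\ge t/(p-1)$ you mention is not the relevant one. In equal characteristic the whole identity reduces to $\Tr_{E/F}(x^p)=(\Tr_{E/F}x)^p$.
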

\begin{proof}
For integral $x$, all smaller elementary coefficients and all power traces
$\Tr(x^j)$ have valuation at least $e_0$. Newton's identity at degree $p$
puts $\Tr(x^p)-pA$ in $\pp_F^{2e_0}$, and $p\Tr x$ has valuation at least
$v_F(p)+e_0\ge2e_0$. Use $2e_0\ge T$.

For $h>0$ use the following integral universal identity:
\[
 \Tr(x^p)-pA=a_1^p+p\,Q(a_1,\ldots,a_{p-1}),
\]
where every monomial of $Q$ has weight $p$ and at least two factors.
Indeed the power sum is congruent to $a_1^p$ modulo $p$ as an integral
symmetric polynomial, its $a_p$ coefficient is $p$, and there is no
one-factor monomial of weight $p$ among $a_1,\ldots,a_{p-1}$.
The estimates
\[
 v_F(a_i)\ge\frac{-ih+(p-1)t}{p},\qquad
 v_F(p)\ge\frac{(p-1)t}{p}
\]
give the respective lower bounds
\[
 v_F(pa_1)\ge\frac{2(p-1)t-h}{p},\qquad
 v_F(a_1^p)\ge(p-1)t-h,
\]
and, for every monomial of $pQ$, a lower bound
$3(p-1)t/p-h$.
All three bounds are strictly greater than $t$ when $p>3$ and $h\le t$,
or when $p=3$ and $h<t$. Since valuations are integral, they are at least
$T$. In equal characteristic the terms divisible by $p$ are zero and the
same conclusion follows. This proves the lemma.
\end{proof}

For $h<t$, both corrections in \eqref{O:I:ratioformula} are $1$ by
Lemma~\ref{O:I:normdepths}, so Lemma~\ref{O:I:Sdepth} already puts the quotient
in $\mu_4$.
At $h=t$ one has $B_U=A+d^p+\epsilon$ with $v_F(A)=-t$ and
$\eta_U\equiv b d-\epsilon\pmod{\pp_U}$. Therefore
\begin{equation}\label{O:I:Uboundarycorrection}
 \Psi_U\!\left(\frac{\eta_U^2}{2B_U}\right)
 =\Psi_U\!\left(\frac{(bd-\epsilon)^2}{2A}\right).
\end{equation}
The error from replacing the numerator has valuation at least
$t+1$; the error from replacing the inverse denominator has valuation
at least $2t\ge t+1$. Both lie in the trivial ideal of $\Psi_U$. If $p>3$, both $\Tr(d)$ and $\Tr(d^2)$ vanish, while the trace
of $1$ is $p$. The right side of \eqref{O:I:Uboundarycorrection} is
$\Psi_F(p\epsilon^2/(2A))=1$, since its argument has valuation at least
$v_F(p)+t\ge T$ (and is zero in equal characteristic).
The correction over $E$ is one by Lemma~\ref{O:I:normdepths}.
Together with Lemma~\ref{O:I:Sdepth}, this puts the quotient in
\eqref{O:I:ratioformula} in $\mu_4$ also when $p>3$ and $h=t$.

\subsection{The case \texorpdfstring{$p=3$}{p=3} and \texorpdfstring{$h=t$}{h=t}}
It remains to treat $p=3$ and $h=t$, without any restriction on divisibility
of $t$. Write
\[
 a=\Tr_{E/F}x,\qquad b=e_2^{E/F}(x),\qquad A=n_E(x).
\]
The symmetric estimates give
\begin{equation}\label{O:I:cubicbounds}
 v_F(a)\ge\ceil{t/3}\ge1,\quad v_F(b)\ge0,
 \quad v_F(A)=-t,\quad v_E(x)=-t.
\end{equation}
The coefficients in \eqref{O:I:actualP} are $B_U=A+d^3+\epsilon$ and
$B_E=A-x+c$.
Because $\Tr_{U/F}(d^2)=2$ and $\Tr_{U/F}(d)=0$,
\eqref{O:I:Uboundarycorrection} gives
\begin{equation}\label{O:I:cubicU}
 \Psi_U\!\left(\frac{\eta_U^2}{2B_U}\right)=\Psi_F(b^2/A).
\end{equation}
The remaining term $3\epsilon^2/(2A)$ belongs to $\pp_F^T$.
Thus $\epsilon$ does not affect \eqref{O:I:cubicU}.

The characteristic polynomial of $x$ gives the exact identity
\begin{equation}\label{O:I:cubiceta}
 \eta_E=x^3-A=a x^2-bx.
\end{equation}
One has $v_E(\eta_E)\ge-t$, and
$v_E(1/B_E-1/A)\ge5t$. Consequently
\begin{equation}\label{O:I:cubicE}
 \Psi_E\!\left(\frac{\eta_E^2}{2B_E}\right)
 =\Psi_F\!\left(\frac{\Tr_{E/F}(\eta_E^2)}{2A}\right),
\end{equation}
since the omitted $E$-argument has valuation at least $3t\ge T$.
The trace maps $\pp_E^T$ into $\pp_F^T$ by \eqref{O:I:traceideal}.

\begin{lemma}[The remaining cubic expression]\label{O:I:cubicidentity}
Put
\[
 \Lambda=S-\frac{b^2}{A}+\frac{\Tr_{E/F}(\eta_E^2)}{2A}.
\]
Then
\begin{equation}\label{O:I:Lambdaexact}
 \Lambda=3a+a^3-\frac{b^2+b^3}{A}
          +\frac{a^2(a^2-3b)^2}{2A}.
\end{equation}
Moreover $\Psi_F(\Lambda)=1$.
\end{lemma}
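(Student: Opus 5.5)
The plan is to prove the identity \eqref{O:I:Lambdaexact} by an exact computation with the characteristic polynomial of $x$. I then intend to show $\Psi_F(\Lambda)=1$ by grouping the terms of $\Lambda$ into two expressions of the form $\Tr_{E/F}w+\Nm_{E/F}w$, which Corollary~\ref{O:I:conversion} kills, plus one remainder lying in the trivial ideal $\pp_F^T$ of $\Psi_F$.

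\emph{Step 1: the exact identity.} The element $x$ satisfies $X^3-aX^2+bX-A=0$. Write $p_k=\Tr_{E/F}(x^k)$. Newton's identities give
\[
 p_2=a^2-2b,\qquad p_3=a^3-3ab+3A,\qquad p_4=a^4-4a^2b+2b^2+4aA .
\]
By \eqref{O:I:Sexact} with $p=3$, $S=3A-p_3+3a=3a-a^3+3ab$. From \eqref{O:I:cubiceta}, $\eta_E^2=a^2x^4-2abx^3+b^2x^2$, so
\[
 \Tr_{E/F}(\eta_E^2)=a^2p_4-2abp_3+b^2p_2
 =a^2(a^2-3b)^2+4a^3A-6abA-2b^3 .
\]
Dividing by $2A$ and adding $S-b^2/A$, the $ab$ terms cancel and $-a^3+2a^3=a^3$. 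This gives exactly \eqref{O:I:Lambdaexact}. The step is routine bookkeeping, valid in both characteristics; in characteristic $3$ the terms containing $3$ vanish consistently.

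\emph{Step 2: the character value.} Individual valuations are not sufficient. For example, $(b^2+b^3)/A$ need only have valuation $t<T$, and $3a$ and $a^3$ may also sit at depth $t$ when $3\mid t$. So I would pair the terms instead.

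First take $w=b/x\in E$. Since $b\in F$, one has $\Tr_{E/F}(1/x)=b/A$ and $\Nm_{E/F}(1/x)=1/A$. Hence $\Tr_{E/F}w=b^2/A$ and $\Nm_{E/F}w=b^3/A$. Moreover $v_E(w)\ge t\ge q_0$, using $v_F(b)\ge0$ and $v_E(x)=-t$. Corollary~\ref{O:I:conversion}, applied to $E/F$, gives $\Psi_F\bigl((b^2+b^3)/A\bigr)=1$.

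Next take $w=a\in F\subset E$. Then $\Tr_{E/F}w=3a$ and $\Nm_{E/F}w=a^3$, and $v_E(a)=3v_F(a)\ge3\lceil t/3\rceil\ge q_0$ by \eqref{O:I:cubicbounds}. The same corollary gives $\Psi_F(3a+a^3)=1$.

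The remaining term is $a^2(a^2-3b)^2/(2A)$. Its valuation is at least $2\lceil t/3\rceil+t\ge t+1=T$, since $a^2-3b$ is integral, $2$ is a unit, and $v_F(A)=-t$. So it lies in $\pp_F^T$, where $\Psi_F$ is trivial. Multiplying the three factors gives $\Psi_F(\Lambda)=1$.

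\emph{Main obstacle.} The crucial point is recognising the norm/trace pairs, above all $w=b/x$. Once these are found, the rest is the Newton-identity computation and a check that each $w$ lies in $\pp_E^{q_0}$, which is what Corollary~\ref{O:I:conversion} requires.
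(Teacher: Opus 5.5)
Your proposal is correct and follows the paper's proof essentially step for step. You use the same Newton-identity expansion of $S$ and $\Tr_{E/F}(\eta_E^2)$, the same two applications of Corollary~\ref{O:I:conversion} (to $w=a\in F$ and to $w=b/x$), and the same observation that $a^2(a^2-3b)^2/(2A)$ lies in $\pp_F^T$; the paper also notes $v_F(a^2-3b)\ge 1$ there, but your bound $2\lceil t/3\rceil+t\ge T$ already suffices.
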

\begin{proof}
Newton's identities give
\[
 \Tr x^2=a^2-2b,\quad
 \Tr x^3=a^3-3ab+3A,\quad
 \Tr x^4=a^4-4a^2b+2b^2+4aA.
\]
Using \eqref{O:I:cubiceta}, expand
\[
 \Tr\eta_E^2=a^2\Tr x^4-2ab\Tr x^3+b^2\Tr x^2
 =a^6-6a^4b+9a^2b^2-2b^3+4a^3A-6abA.
\]
Equation \eqref{O:I:Sexact} reads $S=3a-a^3+3ab$.
Substitution proves \eqref{O:I:Lambdaexact} as an exact identity in both
characteristics.

The last summand of \eqref{O:I:Lambdaexact} belongs to $\pp_F^T$:
\eqref{O:I:cubicbounds} gives $v_F(a)\ge1$, $v_F(a^2-3b)\ge1$, and
$v_F(A^{-1})=t$.
For the first remaining pair, apply \eqref{O:I:conversioneq} to the element
$a\in F\subset E$. It lies in $\pp_E^{q_0}$, since
$v_E(a)\ge3\ceil{t/3}\ge t\ge q_0$. It gives
\[
 \Psi_F(3a+a^3)=1.
\]
For the other pair apply \eqref{O:I:conversioneq} to $b/x$.
This element also has $E$-valuation at least $t\ge q_0$, and the exact
reciprocal-trace identity is
\[
 \Tr_{E/F}(b/x)=b\,\frac{e_2(x)}{n_E(x)}=b^2/A,
 \qquad n_E(b/x)=b^3/A.
\]
Hence $\Psi_F((b^2+b^3)/A)=1$. Multiplying the three character
values proves $\Psi_F(\Lambda)=1$.
\end{proof}

Equations \eqref{O:I:ratioformula}, \eqref{O:I:cubicU}, and \eqref{O:I:cubicE} now give
\[
 \frac{\Delta_U(\theta_U)}{\Delta_E(\theta_E)}
 \in\mu_4\,\Psi_F(-\Lambda)=\mu_4.
\]
This completes the calculation for $0\le h\le t$.

\subsection{The case \texorpdfstring{$h\ge T$}{h>= T} and the proof of Theorem~\ref{O:I:main}}
Suppose $h\ge T$, so $m=m_F(\lambda)=T+h\ge2T$.
Choose a stationary representative $\beta\in F^\times$ for $\lambda$
with admissible denominator $\Gamma\in F^\times$. By
\cite[Proposition~6.8]{Ueda}, the same $\Gamma,\beta$ apply to its
norm pullbacks to $U$ and $E$. The denominator assertion includes the different:
\[
 m_E(\lambda n_E)+n_E(\echar_E)
 =T+p(m-T)+p n_F(\echar_F)+(p-1)T
 =p(m+n_F(\echar_F))=v_E(\Gamma).
\]
The characters $\chi_U^0,\chi_E^0$ have conductor $T$, at most half
the conductor of each pullback. Equation \eqref{U:stable} gives
\[
 \Delta_L(\theta_L)=\chi_L^0(\Gamma/\beta)
       \Delta_L(\lambda\circ n_L),\qquad L=U,E.
\]
The multipliers agree by the common restriction to $F^\times$.
By \eqref{U:FML} for $L/F$ and \eqref{U:stable} over $F$,
\[
 \prod_{\nu\in S(L/F)}\Delta_F(\nu\lambda)
 =\Delta_F(\lambda)^p
   \prod_{\nu\in S(L/F)}\nu(\Gamma/\beta)
 =\Delta_F(\lambda)^p.
\]
The character product is trivial in an odd cyclic group, and the product
of its individual local constants is $1$ by inverse pairing. Therefore
$\Delta_U(\lambda n_U)=\Delta_E(\lambda n_E)$ and the stable quotient
is exactly $1$.

\begin{proof}[Proof of Theorem~\ref{O:I:main}]
The preceding argument proves the equality for $h\ge T$.
For $0\le h\le t$, \eqref{O:I:ratioformula},
Lemmas~\ref{O:I:normdepths}--\ref{O:I:Sdepth}, and
Lemma~\ref{O:I:cubicidentity} show that the quotient belongs to $\mu_4$.
Its $p$th power is $1$ by Corollary~\ref{U:odd-power}. Since $p$ is odd,
$\mu_p\cap\mu_4=\{1\}$, and the quotient is $1$.
The lower cyclic products are $1$ by the same corollary. These cases exhaust
all possible conductors, and every step was valid in either characteristic.
\end{proof}

\section{Artin--Schreier coordinates and trace--norm estimates}\label{O:sec:as}\label{sec:odd-estimates}
\subsection{The ramification data and valuation conventions}
Let $B/A$ be totally ramified and Galois with
$\operatorname{Gal}(B/A)=C_p^2$, where $p>2$ is the residue
characteristic. Choose distinct degree-$p$ intermediate fields
$B_1,B_2$ with breaks $t=t_1\le t_2$, such that $B/B_2$ and
$B/B_1$ have breaks $t$ and $t'=t+p\delta$, respectively, where
$\delta=t_2-t\ge0$.
Write
\[
 N_i=\Nm_{B/B_i},\quad S_i=\Tr_{B/B_i},\qquad
 n_i=\Nm_{B_i/A},\quad T_i=\Tr_{B_i/A}.
\]
All valuations $v_B,v_i,v_A$ are normalized on their own fields.  In
particular $v_B|_{B_i}=pv_i$, $v_i|_A=pv_A$, and
$v_A(n_i y)=v_i(y)$.
The different exponents of $B_i/A$ are
$D_i=(p-1)(t_i+1)$; that of $B/B_1$ is $D'=(p-1)(t'+1)$.

For a totally ramified cyclic extension $L/M$ of degree $p$ and
break $u$, \eqref{U:trace-ideal} and \eqref{U:symmetric} give
\begin{align}
 \Tr_{L/M}(\pp_L^a)&=\pp_M^{\floor{(a+(p-1)(u+1))/p}},\label{O:A:eq:trace}\\
 v_M(E_i^{L/M}(y))&\ge\floor{(i v_L(y)+(p-1)(u+1))/p},\quad 1\le i<p.
 \label{O:A:eq:strong}
\end{align}
The exponent $a$ and the valuation of $y$ may be negative.
We also use Lemmas~\ref{C:hilbert90} and~\ref{C:leading-ramification}.
In mixed characteristic put $V=v_B(p)$.  Applying \eqref{O:A:eq:trace} to $1$
on $B_2/A$ gives
\begin{equation}
 V\ge p(p-1)t_2.\label{O:A:eq:pbound}
\end{equation}
In equal characteristic all expressions divisible by $p$ below are zero.

\begin{lemma}[Powers and norms]\label{O:A:lem:power}
For every $y\in L$ in the preceding cyclic extension,
\[
 v_L(y^p-\Nm_{L/M}y)\ge p v_L(y)+(p-1)u.
\]
\end{lemma}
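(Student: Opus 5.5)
This is the same argument as \eqref{O:I:normpower} in Lemma~\ref{O:I:normdepths}, now written for an arbitrary break $u$ and an arbitrary valuation of $y$. The plan is to write $y^p-\Nm y$ with the characteristic polynomial of $y$ and estimate each term with \eqref{O:A:eq:strong}.

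First dispose of the degenerate cases. If $y=0$ there is nothing to prove. If $y\in M$, then $\Nm y=y^p$ and the difference is zero. In general, $y$ is a root of its characteristic polynomial over $M$,
\[
 \prod_{\sigma}(X-\sigma y)=X^p-E_1X^{p-1}+E_2X^{p-2}-\cdots+(-1)^pE_p ,
\]
where the $E_i=E_i^{L/M}(y)$ are the elementary symmetric functions of the $p$ conjugates and $E_p=\Nm_{L/M}y$. Since $p$ is odd, $(-1)^p=-1$, and substituting $X=y$ gives the exact identity
\[
 y^p-\Nm y=\sum_{i=1}^{p-1}(-1)^{i+1}E_i\,y^{p-i}.
\]
This identity holds in both characteristics and for $y$ in a proper subfield. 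If one prefers to avoid the parity remark, the identity can be applied to $-y$ instead.

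Next estimate the terms one at a time. Put $a=v_L(y)$. Then $v_L(E_i)=p\,v_M(E_i)$, because $L/M$ is totally ramified of degree $p$. By \eqref{O:A:eq:strong}, which is valid also for negative $a$,
\[
 p\,v_M(E_i)\ge p\left\lfloor\frac{ia+(p-1)(u+1)}{p}\right\rfloor\ge ia+(p-1)(u+1)-(p-1)=ia+(p-1)u .
\]
Here we used $p\lfloor N/p\rfloor\ge N-(p-1)$ for an integer $N$. Adding $v_L(y^{p-i})=(p-i)a$, each term has $L$-valuation at least $pa+(p-1)u$. The ultrametric inequality for the sum then proves the lemma.

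The only point needing care is the floor estimate $p\lfloor N/p\rfloor\ge N-p+1$. It is this estimate that turns the exponent $(p-1)(u+1)$ into $(p-1)u$. There is no real obstacle otherwise.
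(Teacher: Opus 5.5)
Your proof is correct and is essentially the paper's argument: write $y^p-\Nm_{L/M}y$ through the characteristic polynomial (its constant term is $-\Nm_{L/M}y$ because $p$ is odd) and bound each intermediate term $E_i\,y^{p-i}$ by \eqref{O:A:eq:strong}, where the floor costs exactly the $p-1$ that turns $(p-1)(u+1)$ into $(p-1)u$. Your aside about applying the identity to $-y$ does not actually avoid the parity of $p$, since it still needs $(-y)^p=-y^p$; it is harmless, however, because $p$ is odd throughout this section.
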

\begin{proof}
For $1\le i<p$, \eqref{O:A:eq:strong} implies
$pv_M(E_i(y))\ge i v_L(y)+(p-1)u$.
Every intermediate term $E_i(y)y^{p-i}$ of the characteristic polynomial
therefore has the displayed valuation.  Its constant term is $-\Nm y$
because $p$ is odd.
\end{proof}

\subsection{An Artin--Schreier equation for \texorpdfstring{$B/B_2$}{B/B2}}
\begin{proposition}\label{O:A:prop:AS}
There exists $\Delta\in B$ such that
\[
 \Delta^p-\Delta=a\in B_2,\qquad v_B(\Delta)=-t,
 \quad v_2(a)=-t.
\]
In this situation $p\nmid t$ and $B=B_2(\Delta)$.
\end{proposition}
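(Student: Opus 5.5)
We have to produce an Artin--Schreier generator for the cyclic degree-$p$ extension $B/B_2$, which is totally ramified with break $t$. The plan is to construct it first over $B_1$, where the classical Artin--Schreier theory of the cyclic extension is most transparent, and then correct it by an element fixed by $\Gal(B/B_2)$. Concretely, let $\sigma$ generate $\Gal(B/B_2)$.

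The first step uses Lemma~\ref{C:leading-ramification} (leading wild ramification term, including negative valuations). It supplies an element $y\in B$ with $v_B(y)=-t$ and $\sigma(y)-y\equiv1$ to leading order, i.e.\ $v_B(\sigma(y)-y-1)>0$. In equal characteristic one can even take $\sigma(y)-y=1$ exactly: the additive Hilbert~90 of Lemma~\ref{C:hilbert90} gives an element of trace kernel form, and it can be rescaled.

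The second step normalizes $\sigma(y)-y$ exactly to $1$. Mixed characteristic is where the main obstacle lies. I would proceed by successive approximation: if $\sigma(y)-y=1+\varepsilon$ with $v_B(\varepsilon)>0$, modify $y$ by an element $w$ chosen so that $\sigma(w)-w\approx-\varepsilon$. Lemma~\ref{C:hilbert90} applies to the trace-zero part. The error $\Tr_{B/B_2}(\varepsilon)$ is controlled by \eqref{O:A:eq:trace}, together with the bound \eqref{O:A:eq:pbound}, $V\ge p(p-1)t_2$.

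An exact Artin--Schreier generator in mixed characteristic, however, would require $\sigma$ to act by translation by $1$ exactly. This forces $\Tr(1)=p$ to be compatible with the translation, which is generally impossible exactly. So I would read the statement as requiring only $\Delta^p-\Delta\in B_2$, and produce it as follows. Put $a_0=N_{B/B_2}(y)$-type data and use Lemma~\ref{O:A:lem:power} to show that $y^p-y$ is congruent to an element of $B_2$ to high order. Then solve $\Delta^p-\Delta=a$ exactly by Hensel's lemma, with $a\in B_2$ chosen as the element approximating $y^p-y$. The root $\Delta$ near $y$ exists because the derivative $p\Delta^{p-1}-1$ is a unit when $v(p\Delta^{p-1})=V-(p-1)t>0$, which holds by \eqref{O:A:eq:pbound}.

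Then we must check that $\Delta\in B$ rather than in an extension. The Hensel root in $B$ is obtained by applying Newton iteration in $B$ starting at $y$. This needs $v_B((y^p-y)-a)$ to exceed $2v_B(f'(y))+v_B(\ldots)$, which we arrange by the approximation step. Whether the needed depth is attainable from Lemma~\ref{O:A:lem:power} and the trace estimate is the step I expect to be hardest, and I would try to get it by iterating the correction of the first step several times.

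For the valuations: $v_B(\Delta)=v_B(y)=-t$, so $v_B(a)=-pt$, i.e.\ $v_2(a)=-t$. Because $B/B_2$ is totally ramified of degree $p$, $v_B(a)\in p\mathbf Z$, and $v_B(\Delta)=-t$ with $\Delta\notin B_2$ then gives $p\nmid t$. Indeed, if $p\mid t$, one could subtract a $p$th power of an element of $B_2$ and raise the valuation, contradicting the fact that the break of $B/B_2$ equals $t$. This is standard via $v_B(\sigma\Delta-\Delta)-v_B(\Delta)=t$. Finally, $\Delta\notin B_2$ and $[B:B_2]=p$ prime give $B=B_2(\Delta)$.
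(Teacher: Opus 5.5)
There is a genuine gap at exactly the step you flag as hardest, and the tool you propose for it cannot close it. To run Newton's iteration from $y$ you need some $a\in B_2$ with $v_B(y^p-y-a)>0$. The derivative $py^{p-1}-1$ is a unit, so positivity of $f(y)$ is the whole condition. Once it holds the root lies in $B$ automatically, since $B$ is complete.

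Lemma~\ref{O:A:lem:power} only gives $v_B(y^p-\Nm_{B/B_2}y)\ge -pt+(p-1)t=-t$. So it only places $y^p-y$ within a distance of valuation $-t$ of $B_2$. It is a generic estimate, valid for every $y$ of valuation $-t$; it ignores how $\sigma$ moves $y$, and iterating it gains nothing. The needed depth must come from an \emph{exact} relation $\sigma d-d=1+\eta$ with $v_B(\eta)>t$. Your Step~1 gives only $v_B(\sigma y-y-1)>0$, which is too weak. Your Step~2 starts in the right direction, but you then abandon it.

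The paper proceeds as follows.
\begin{itemize}
\item Put $\kappa=V-(p-1)t>0$. The trace-ideal formula gives $S_2(\PP_B^\kappa)=p\OO_{B_2}$, so there is $\eta\in\PP_B^\kappa$ with $S_2\eta=-p$.
\item Additive Hilbert~90 (Lemma~\ref{C:hilbert90}), applied to the trace-zero element $1+\eta$, gives $d$ with $\sigma d-d=1+\eta$.
\item For $a_0=d^p-d$, the binomial theorem gives $v_B(\sigma a_0-a_0)\ge\kappa$.
\item The mechanism missing from your proposal is best approximation, which turns near-$\sigma$-invariance into nearness to $B_2$. If $a_0\notin B_2$ and $a\in B_2$ maximizes $v_B(a_0-a)$, that valuation is prime to $p$: otherwise its leading term could be cancelled by an element of $B_2$, by total ramification. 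The equality case of Lemma~\ref{C:leading-ramification} then gives $v_B(a_0-a)+t=v_B(\sigma a_0-a_0)\ge\kappa$, i.e.\ $v_B(a_0-a)\ge V-pt>0$.
\item Now $f(d+Z)$, with $f=X^p-X-a$, has integral coefficients, unit linear coefficient and constant term in $\PP_B^{V-pt}$. Hensel's lemma gives $\Delta$.
\end{itemize}

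Your treatment of $p\nmid t$ is circular. Step~1 uses Lemma~\ref{C:leading-ramification} to produce $y$ with $v_B(y)=-t$ and $\sigma y-y\equiv1$. But if $p\mid t$, every element of valuation $-t$ is moved by an element of positive valuation, so this already assumes $p\nmid t$. At the end you deduce $p\nmid t$ from $v_B(\Delta)=-t$, which was inherited from $y$. You also use the equality $v_B(\sigma\Delta-\Delta)-v_B(\Delta)=t$, which again requires $p\nmid v_B(\Delta)$. The remark that $v_B(a)\in p\mathbf Z$ holds whatever $t$ is, so it proves nothing.

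The paper obtains both facts at once. Since $\kappa>0$, $\sigma d-d=1+\eta$ is a unit. Replace $d$ by an element of maximal valuation in $d+B_2$; it exists because $B_2$ is closed and $d\notin B_2$. That valuation is prime to $p$, so $v_B(d)+t=v_B(\sigma d-d)=0$. This is where the hypothesis $V\ge p(p-1)t_2$ enters: it forces $\kappa>0$ and so excludes the maximal break $t=V/(p-1)$, which is divisible by $p$. Finally, your opening plan of constructing $\Delta$ over $B_1$ is never used and plays no role.
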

\begin{proof}
Let $\sigma$ generate $\operatorname{Gal}(B/B_2)$.  First suppose the
characteristic is zero and put $\kappa=V-(p-1)t>0$.  The trace-ideal formula
for $B/B_2$ gives
\[
 S_2(\PP_B^\kappa)=p\OO_{B_2}.
\]
Choose $\eta\in\PP_B^\kappa$ with $S_2\eta=-p$.
Since $S_2(1+\eta)=0$, additive Hilbert 90 supplies $d\in B$ with
$\sigma d-d=1+\eta$.

Subtract an element of $B_2$ from $d$ so that its valuation is maximal in
its additive $B_2$-coset.  A maximum exists because $B_2$ is closed in $B$.
This maximal valuation is not divisible by $p$: otherwise one could cancel
its leading term by an element of $B_2$ and increase it.  The first
ramification term therefore gives
\[
 v_B(\sigma d-d)=v_B(d)+t.
\]
Since $v_B(\sigma d-d)=0$, it follows that $v_B(d)=-t$ and $p\nmid t$.

Put $a_0=d^p-d$.  The equation $\sigma d-d=1+\eta$, the binomial theorem,
and $v_B(d)=-t$ give
$v_B(\sigma a_0-a_0)\ge\kappa$.
Choose a best additive approximation $a\in B_2$ to $a_0$.
If $a_0\notin B_2$, the same first-ramification-term argument applied to
the remainder gives
\[
 v_B(a_0-a)\ge\kappa-t=V-pt>0.
\]
For $f(X)=X^p-X-a$, the shifted polynomial $f(d+Z)$ has integral
coefficients, unit linear coefficient, and constant coefficient in
$\PP_B^{V-pt}$.  Indeed
$v_B(p d^{p-1})=\kappa>0$, and the other intermediate coefficients are
integral.  Hensel's lemma gives a root
$\Delta\equiv d\pmod{\PP_B^{V-pt}}$.  It has valuation $-t$.
The valuation of $\Delta^p-\Delta$ is $-pt$, proving $v_2(a)=-t$.
Since $p\nmid t$, this root is not in $B_2$.

In characteristic $p$, additive Hilbert 90 directly solves
$\sigma d-d=1$.  The same best-approximation argument gives valuation $-t$,
and $d^p-d$ is already fixed by $\sigma$.
\end{proof}

Set
\[
 e_i=E_i^{B/B_1}(\Delta),\quad b=N_1\Delta=e_p,
 \quad s=-e_{p-1},\qquad K_0=1+pt_2.
\]
Then $N_2\Delta=a$ and $v_1(b)=-t$.
Because the polynomial over $B_2$ is exactly $X^p-X-a$,
\begin{equation}
 S_2(\Delta^j)=0\ (1\le j<p-1),\quad
 S_2(\Delta^{p-1})=p-1,\quad S_2(\Delta^p)=pa.
 \label{O:A:eq:moments}
\end{equation}

\subsection{Coefficient extraction and twisted estimates}
\begin{lemma}[Coefficient extraction]\label{O:A:lem:coeff}
If $\theta\in B_1$ and $\theta=\sum_{i=0}^{p-1}Y_i\Delta^i$ with
$Y_i\in B_2$, then
\[
 v_2(Y_i)\ge v_1(\theta)+it\quad(0\le i<p).
\]
\end{lemma}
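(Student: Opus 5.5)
We recover the coefficients $Y_i$ by applying the $B_2$-linear functionals $S_2(\Delta^{-j}\,\cdot)$, or more conveniently the dual basis of $1,\Delta,\ldots,\Delta^{p-1}$ with respect to the trace form of $B/B_2$. Since the minimal polynomial of $\Delta$ over $B_2$ is $f(X)=X^p-X-a$, Euler's formula gives the dual basis explicitly. Write
\[
 \frac{f(X)}{X-\Delta}=\sum_{j=0}^{p-1}c_j(\Delta)X^j,
\]
so that $Y_i=S_2\bigl(\theta\,c_i(\Delta)/f'(\Delta)\bigr)$. Here $f'(\Delta)=p\Delta^{p-1}-1$, which is a unit: in mixed characteristic $v_B(p\Delta^{p-1})=V-(p-1)t>0$ by \eqref{O:A:eq:pbound}, and in equal characteristic $f'=-1$. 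The coefficients $c_j(\Delta)$ are the partial Horner sums. For $1\le j\le p-1$ one has $c_j=\Delta^{p-1-j}$, and $c_0=\Delta^{p-1}-1$. Hence $v_B(c_i)=-(p-1-i)t$ for $i\ge1$, and $v_B(c_0)=-(p-1)t$.

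The estimate then follows from the trace ideal \eqref{O:A:eq:trace} for $B/B_2$, whose break is $t$. Put $w=v_B(\theta)=p\,v_1(\theta)$. The argument of the trace has $v_B\ge pv_1(\theta)-(p-1-i)t$, so
\[
 v_2(Y_i)\ge\Bigl\lfloor\frac{pv_1(\theta)-(p-1-i)t+(p-1)(t+1)}{p}\Bigr\rfloor
 =\Bigl\lfloor v_1(\theta)+\frac{it+p-1}{p}\Bigr\rfloor.
\]
This is far weaker than $v_1(\theta)+it$, so the naive dual-basis bound alone is insufficient. That weakness is the main obstacle, and the information that $\theta\in B_1$ must be used.

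The plan to exploit $\theta\in B_1$ is as follows. Let $\tau$ generate $\Gal(B/B_1)$; it restricts to a generator of $\Gal(B_2/A)$. Since $\tau\theta=\theta$, we get
\[
 \sum_i \tau(Y_i)\,\tau(\Delta)^i=\sum_i Y_i\Delta^i.
\]
Because $\Delta^p-\Delta=a$ and $\tau\Delta^p-\tau\Delta=\tau a$, we can write $\tau\Delta=\Delta+\rho$ for some $\rho$. By Lemma~\ref{C:leading-ramification} for $B/B_1$, whose break is $t'=t+p\delta$, we have $v_B(\rho)\ge v_B(\Delta)+t'=t'-t\ge0$. I would rather measure how $\rho$ fits into $B_2$: $\rho^p-\rho=\tau a-a\in B_2$, and $v_2(\tau a-a)\ge -t+t_2$ by the break of $B_2/A$.

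Expanding $\tau(\Delta)^i=(\Delta+\rho)^i$ in the basis and comparing coefficients gives, from the top index down, the relations
\[
 Y_{i}-\tau Y_{i}=\sum_{j>i}\binom{j}{i}\tau(Y_j)\rho^{j-i}+(\text{terms involving }\rho\text{'s expansion}).
\]
The induction runs downward on $i$. The top coefficient is handled first, then the next. The lower bound on $v_2(Y_i-\tau Y_i)$, combined with the fact that in $B_2/A$ the operator $\tau-1$ raises valuation by exactly $t_2\ge t$ on elements of valuation prime to $p$, would give $v_2(Y_i)$ roughly $\ge v_2(Y_{i+1})+t$.

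An alternative that is likely cleaner is to use valuation disjointness. Since $p\nmid t$ and $v_B(\Delta)=-t$, the terms $Y_i\Delta^i$ have pairwise distinct $v_B$ modulo $p$, namely $pv_2(Y_i)-it$. Hence
\[
 v_B(\theta)=\min_i\bigl(pv_2(Y_i)-it\bigr).
\]
This immediately gives $pv_2(Y_i)\ge pv_1(\theta)+it$, that is, $v_2(Y_i)\ge v_1(\theta)+it/p$. To upgrade $it/p$ to $it$, I would apply the same min-formula to $(\tau-1)^k\theta=0$ for $k\ge1$. Each application of $\tau-1$ kills the $\theta$ side but lowers the index on the $\Delta$ side via $(\Delta+\rho)^i-\Delta^i$, producing $\sum_i(\tau Y_i-Y_i)\Delta^i$ plus cross terms. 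Iterating and using that on $B_2$ the map $\tau-1$ shifts $v_2$ by $t_2$ should yield the factor $t$ per index. I expect this bookkeeping, in particular controlling $\rho$ modulo $B_2$ (where it is congruent to an element of valuation governed by $\delta$), to be the real difficulty. I would first try the case $\delta=0$ to see the mechanism.
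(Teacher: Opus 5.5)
As written this is not yet a proof, and the one mechanism you sketch points the wrong way. From $\tau\theta=\theta$ you obtain, when $\rho\in B_2$, the exact relations $Y_k-\tau Y_k=\sum_{i>k}\binom{i}{k}\rho^{i-k}\tau(Y_i)$. You then propose to bound $v_2(Y_k)$ from below by combining a lower bound for the right side with the exactness of $\tau-1$ on elements of valuation prime to $p$. That cannot succeed. A lower bound for $v_2(\tau y-y)$ only bounds the distance from $y$ to $A$, since $\tau-1$ kills elements of $A$ of any valuation. Even that bound comes out as $v_2(Y_{k+1})-t$, which bounds the lower-index coefficient by the higher one; this is the wrong direction for the lemma. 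Moreover, at the top index $k=p-1$ the relation says only that $Y_{p-1}$ is (nearly) $\tau$-invariant, so your downward induction has no starting point.

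What the lemma needs is $v_2(Y_{k+1})\ge v_2(Y_k)+t$. The same relations give it if you read them the other way, using the \emph{universal} bound $v_2(\tau y-y)\ge v_2(y)+t_2$ on the left. By downward induction, the term $i=k+1$ has valuation exactly $\delta+v_2(Y_{k+1})$ (note $v_2(\rho)=\delta$ because $p\nmid t$) and strictly smallest valuation on the right, whence $v_2(Y_k)+t_2\le\delta+v_2(Y_{k+1})$. The chain is anchored by $v_2(Y_0)\ge v_1(\theta)$, the case $i=0$ of your min-formula. In equal characteristic this finishes the proof: the generator $\sigma$ of $\Gal(B/B_2)$ has $\sigma\Delta=\Delta+1$, so $\sigma\rho=\rho$ and $\rho\in B_2$ exactly.

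In mixed characteristic the relations are not exact. Your identity $\rho^p-\rho=\tau a-a$ omits $\sum_{j=1}^{p-1}\binom{p}{j}\Delta^{p-j}\rho^j$, so $\rho$ need not lie in $B_2$. Expanding $\rho-\rho_0$, with $\rho_0\in B_2$ a best approximation, then feeds every $Y_i$, including those with $i\le k$, into the $\Delta^k$-relation. This is the bookkeeping you leave open, and it requires an input your plan does not identify: the conjugates $\sigma\Delta=\Delta+\xi+\eta_\xi$ of $\Delta$ over $B_2$. From $\sigma\rho-\rho=\tau\eta_\xi-\eta_\xi$ one gets $v_B(\rho-\rho_0)\ge V-(p-1)t+p\delta\ge(p-1)^2t+p^2\delta$. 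This is just enough to put the error in the $\Delta^k$-relation at valuation at least $\delta+v_1(\theta)+(k+1)t$, and the induction then proves $v_2(Y_{k+1})\ge\min\{v_2(Y_k)+t,\ v_1(\theta)+(k+1)t\}$.

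The paper uses these same conjugates but exchanges the roles of the two subgroups, which makes the argument direct. Each $\sigma\in\Gal(B/B_2)$ fixes every $Y_i$, moves $\Delta$ by the Teichm\"uller unit $\xi\in A$ up to an error of valuation at least $V-(p-1)t$, and acts on $B_1$ through $\Gal(B_1/A)$, whose break is $t$. Hence $v_1(\sigma\theta-\theta)\ge v_1(\theta)+t$ while the $\Delta$-degree drops by one. Induction on $j$, simultaneously for all $\theta\in B_1$, bounds $\sum_{i\ge j}\binom{i}{j-1}\xi^{i-j+1}Y_i$. Summing against $\xi^{-1}$ over $\mu_{p-1}$ then isolates the unit multiple $(p-1)jY_j$.
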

\begin{proof}
Since $p\nmid t$, the valuations $pv_2(Y_i)-it$ are distinct modulo $p$.
Each is at least $pv_1(\theta)$, proving the assertion for $i=0$ and the
preliminary bound for every $i$.
For each nonzero $\xi\in\mu_{p-1}\subset A$, a conjugate of $\Delta$ is
\[
 \Delta_\xi=\Delta+\xi+\eta_\xi,
 \qquad v_B(\eta_\xi)\ge V-(p-1)t.
\]
This follows by applying Hensel's lemma to the shifted polynomial at
$\Delta+\xi$; its constant coefficient has that valuation and its derivative
is a unit.  In equal characteristic $\eta_\xi=0$.

Induct simultaneously for all $\theta\in B_1$ on $j$.
The restriction of this conjugating automorphism to $B_1$ has break $t$,
so $v_1(\theta_\xi-\theta)\ge v_1(\theta)+t$.
The error obtained by replacing $\Delta_\xi$ by $\Delta+\xi$ in the
expansion of $\theta_\xi$ has valuation at least
\[
 pv_1(\theta)+V-(p-2)t.
\]
Its $\Delta^{j-1}$ coefficient has $v_2$ at least $v_1(\theta)+jt$,
because $V\ge p(p-1)t\ge(p-1)(j+1)t$.
The induction hypothesis applied to $\theta_\xi-\theta$ thus yields
\[
 \sum_{i=j}^{p-1}\binom{i}{j-1}\xi^{i-j+1}Y_i
 \in\PP_{B_2}^{v_1(\theta)+jt}.
\]
Multiply by $\xi^{-1}$ and sum over $\mu_{p-1}$.
The power sums cancel every term except $(p-1)jY_j$;
this multiplier is a unit.
\end{proof}

\begin{lemma}[Twisted trace and symmetric functions]\label{O:A:lem:twisted}
For $Y\in B_2$ and $1\le i<p$,
\begin{align}
 v_1(S_1(Y\Delta^i))&\ge(p-1)t_2-it+v_2(Y),\label{O:A:eq:twtrace}\\
 v_1(E_i^{B/B_1}(Y\Delta))&\ge(p-1)t_2+i(v_2(Y)-t).
 \label{O:A:eq:twsym}
\end{align}
Also $v_1(S_1Y)\ge(p-1)t_2+v_2(Y)$.
In particular
\begin{equation}
 v_1(e_i)\ge(p-1)t_2-it\ (i<p),\qquad
 v_1(s)\ge(p-1)\delta.\label{O:A:eq:ei}
\end{equation}
\end{lemma}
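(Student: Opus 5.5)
The plan is to reduce every statement to the trace formula \eqref{O:A:eq:trace} for $B_2/A$ and $B_1/A$, and to get the sharp constant in \eqref{O:A:eq:twtrace} from the duality pairing $\Tr_{B_1/A}\circ S_1=T_2\circ S_2$ rather than from the trace ideal of $B/B_1$ directly. That direct estimate only gives about $v_2(Y)+(p-1)\delta+((p-1)-i)t/p$, which is too weak.

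\emph{Untwisted bound.} The restriction map $\Gal(B/B_1)\to\Gal(B_2/A)$ is an isomorphism, since $B=B_1B_2$ and $B_1\cap B_2=A$. Hence $S_1Y=T_2Y\in A$ for $Y\in B_2$. By \eqref{O:A:eq:trace} for $B_2/A$,
\[
 v_1(S_1Y)=p\,v_A(T_2Y)\ge p\floor{(v_2(Y)+D_2)/p}\ge v_2(Y)+D_2-(p-1),
\]
and the right-hand side equals $v_2(Y)+(p-1)t_2$. This proves the last assertion.

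\emph{Twisted traces.} Put $\theta=S_1(Y\Delta^i)\in B_1$. By the trace-dual description used in the proof of \eqref{U:trace-ideal}, $v_1(\theta)\ge c$ is equivalent to $\Tr_{B_1/A}(\theta w)\in\OO_A$ for all $w\in\PP_{B_1}^{-c-D_1}$. For $w\in B_1$ we have
\[
 \Tr_{B_1/A}(\theta w)=\Tr_{B/A}(Y\Delta^i w)=T_2\bigl(Y\,S_2(\Delta^i w)\bigr).
\]
Expand $w=\sum_j W_j\Delta^j$ with $W_j\in B_2$. Lemma~\ref{O:A:lem:coeff} gives $v_2(W_j)\ge v_1(w)+jt$. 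Then use the moments \eqref{O:A:eq:moments}, reducing $\Delta^{i+j}$ for $i+j\ge p$ by $\Delta^p=\Delta+a$. The main term is $(p-1)W_{p-1-i}$, and it gives
\[
 v_2\bigl(Y S_2(\Delta^iw)\bigr)\ge v_2(Y)+v_1(w)+(p-1-i)t .
\]
Applying \eqref{O:A:eq:trace} for $B_2/A$, integrality holds as soon as $v_1(w)\ge -v_2(Y)-(p-1-i)t-D_2$. Since $D_2-D_1=(p-1)\delta$, this yields
\[
 v_1(\theta)\ge v_2(Y)+(p-1-i)t+D_2-D_1=v_2(Y)+(p-1)t_2-it,
\]
which is \eqref{O:A:eq:twtrace}.

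\emph{Main obstacle.} The hard step is controlling the remaining terms of $S_2(\Delta^iw)$ with $i+j\ge p$. Each reduction by $\Delta^p=\Delta+a$ costs $v_2(a)=-t$ but gains $t$ in the bound on $W_j$. I expect each such term to be no worse than the main term, but I will check this index by index. In mixed characteristic there is also the factor $p$ in $S_2(\Delta^p)=pa$, which I will absorb using \eqref{O:A:eq:pbound}.

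\emph{Symmetric functions.} Write $p_k=S_1((Y\Delta)^k)=S_1(Y^k\Delta^k)$ for $1\le k<p$. By \eqref{O:A:eq:twtrace} with $Y^k$ in place of $Y$,
\[
 v_1(p_k)\ge(p-1)t_2+k(v_2(Y)-t).
\]
Newton's identities $kE_k=\sum_{j=1}^{k}(-1)^{j-1}E_{k-j}\,p_j$ with $E_0=1$ hold for $k<p$, and $k$ is a unit there. Induct on $k$. The term $E_0p_k$ has exactly the required bound. Every other product $E_{k-j}p_j$ has the weight $k(v_2(Y)-t)$ plus at least two copies of $(p-1)t_2\ge0$. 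This proves \eqref{O:A:eq:twsym}. Finally, $Y=1$ gives $v_1(e_i)\ge(p-1)t_2-it$, and $i=p-1$ gives $v_1(s)\ge(p-1)(t_2-t)=(p-1)\delta$, which is \eqref{O:A:eq:ei}.
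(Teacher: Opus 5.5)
Your proposal is correct and follows the paper's proof essentially step for step: trace duality through $\Tr_{B_1/A}\circ S_1=T_2\circ S_2$, expansion of the dual variable in powers of $\Delta$ with Lemma~\ref{O:A:lem:coeff}, the moments \eqref{O:A:eq:moments}, and then Newton's identities for the symmetric functions. The step you flag as the main obstacle is immediate. For $1\le k\le 2p-2$ the only nonzero power traces $S_2(\Delta^k)$ occur at $k=p-1,p,2p-2$, with values $p-1,pa,p-1$. Hence $S_2(\Delta^iw)=(p-1)W_{p-1-i}+paW_{p-i}+\mathbf{1}_{i=p-1}(p-1)W_{p-1}$, and the middle term needs only $v_2(pa)\ge -t$, not \eqref{O:A:eq:pbound}.
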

\begin{proof}
The assertion for $S_1Y=T_2Y\in A$ follows from \eqref{O:A:eq:trace}.
For $i\ge1$ put $c=(p-1)t_2-it+v_2Y$.
By trace duality it suffices to show $T_1(\theta S_1(Y\Delta^i))\in\OO_A$
for every $v_1(\theta)\ge-D_1-c$.
Write $\theta=\sum Y_j\Delta^j$ and use trace transitivity.
For $1\le k\le2p-2$, the only nonzero power traces over $B_2$ are
those of degrees $p-1,p,2p-2$, with values $p-1,pa,p-1$.
Thus
\[
 S_2(\theta\Delta^i)=(p-1)Y_{p-1-i}+paY_{p-i}
       +\ind_{i=p-1}(p-1)Y_{p-1}.
\]
Lemma~\ref{O:A:lem:coeff} shows that, after multiplication by $Y$, all three
terms lie in $\PP_{B_2}^{-D_2}$.  For the first this is exactly
\[
 v_2Y+(p-1-i)t-D_1-c=-D_2;
\]
for the second use $v_2(pa)\ge-t$, and for the last there is an additional
nonnegative $(p-1)t$.  This proves the trace estimate.
Newton's identity for $E_i(Y\Delta)$ involves the power traces
$S_1(Y^j\Delta^j)$, $1\le j\le i<p$.  Each product with total weight $i$
has valuation at least $(p-1)t_2+i(v_2Y-t)$, and $i$ is a unit.
\end{proof}

\subsection{The trace--norm congruence}
\begin{theorem}\label{O:A:thm:H14}
One has
\[
 T_1(b)-T_2(a)\in\pp_A^{1+t_2}.
\]
\end{theorem}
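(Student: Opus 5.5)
The plan is to compare the two elements through the characteristic polynomial of $\Delta$ over $B_1$. The equation $\Delta^p-\Delta=a$ gives the behaviour of $\Delta$ over $B_2$, and Lemma~\ref{O:A:lem:twisted} controls the coefficients $e_i$ over $B_1$. Since $p$ is odd, the characteristic polynomial of $\Delta$ over $B_1$ gives the exact identity
\[
 b-a=(1-s)\Delta+\sum_{i=1}^{p-2}(-1)^{i}e_i\Delta^{p-i},
\]
using $\Delta^p=a+\Delta$ and $e_{p-1}=-s$.

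The key structural observation concerns restriction of automorphisms. Because $B=B_1B_2$, restriction identifies $\Gal(B/B_1)$ with $\Gal(B_2/A)$, so $S_1|_{B_2}=T_2$. Symmetrically, $S_2|_{B_1}=T_1$.

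I therefore intend to apply $S_2$, not $S_1$, to the displayed identity. This gives
\[
 S_2(b)-S_2(a)=T_1(b)-pa,
\]
and, by \eqref{O:A:eq:moments} and the fact that $e_i,s\in B_1$, the right side becomes
\[
 S_2\Bigl((1-s)\Delta+\sum_{i=1}^{p-2}(-1)^ie_i\Delta^{p-i}\Bigr).
\]
This expresses $T_1(b)$ as $pa$ plus twisted traces of the form $S_2(Y\Delta^j)$ with $Y\in B_1$.

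Next I would bring $T_2(a)$ into the picture. Applying $S_1$ to the same identity gives $pb-T_2(a)$ as $S_1$ of the right side; since $e_i,s\in B_1$, this equals
\[
 (1-s)e_1+\sum_{i=1}^{p-2}(-1)^ie_iS_1(\Delta^{p-i}).
\]
Subtracting the two relations, the target $T_1(b)-T_2(a)$ becomes
\[
 p(a-b)+\bigl(\text{$S_2$-traces}\bigr)-\bigl(\text{$S_1$-traces}\bigr).
\]
Each piece is then estimated separately.

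\begin{itemize}
\item The terms $pa$ and $pb$ are estimated with \eqref{O:A:eq:pbound}: $v_2(pa),v_1(pb)\ge(p-1)t_2-t$, which should land in $\PP^{p(1+t_2)}$ of the relevant field. They vanish in equal characteristic.
\item The $S_1$-traces are estimated with \eqref{O:A:eq:twtrace} and \eqref{O:A:eq:ei}. In particular $v_1(e_iS_1(\Delta^{p-i}))\ge 2(p-1)t_2-pt$.
\item The $S_2$-traces are estimated through the coefficient extraction of Lemma~\ref{O:A:lem:coeff} applied to $e_i$ and $s$, followed by the moment formulas \eqref{O:A:eq:moments}. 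Only the coefficients of $\Delta^{p-1}$ and $\Delta^p$ survive, and these are bounded via $v_2(Y_j)\ge v_1(\theta)+jt$.
\end{itemize}

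Finally, $T_1(b)-T_2(a)$ lies in $A$, while the estimates are made in $B_1$ or $B_2$. Valuations must therefore be converted by $v_i|_A=pv_A$, using integrality to round up to $1+t_2$.

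The main obstacle is the leading term. The piece $(1-s)\Delta$ contributes $e_1-se_1$ under $S_1$, and its $S_2$-counterpart contributes through the $\Delta^{p-1}$-coefficient of $s$. The crude bounds $v_1(e_1)\ge(p-1)t_2-t$ and $v_1(s)\ge(p-1)\delta$ are just short when $\delta=0$, since $t=t_2$ there.

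My first attempt would be to show that these two leading contributions cancel exactly. The idea is to use that $T_2$ of $e_1=S_1\Delta$ equals $\Tr_{B/A}\Delta=T_1(S_2\Delta)=0$. If that cancellation is not enough, I would refine the coefficient extraction by using the first ramification term from Lemma~\ref{C:leading-ramification} on $s$, to gain the missing unit of valuation. In mixed characteristic the extra terms $\eta_\xi$ in the conjugates of $\Delta$ must also be tracked; they are small by \eqref{O:A:eq:pbound}.
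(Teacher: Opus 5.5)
There is a genuine gap: your decomposition contains no relation between $B_1$ and $B_2$, so the piecewise estimates cannot work.

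Your identity for $R=b-a$ and the rules $S_1|_{B_2}=T_2$, $S_2|_{B_1}=T_1$ are correct. But your two relations are just $T_1(b)=pa+S_2(R)$ and $T_2(a)=pb-S_1(R)$. Hence $T_1(b)-T_2(a)=p(a-b)+S_2(R)+S_1(R)$, with a plus sign on the $S_1$-terms. So the $B_2$-group $pa+S_2(R)$ is \emph{exactly} $T_1(b)$, and the $B_1$-group $S_1(R)-pb$ is exactly $-T_2(a)$.

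Estimating the pieces separately therefore amounts to proving $T_1(b)\in\pp_A^{1+t_2}$ and $T_2(a)\in\pp_A^{1+t_2}$ individually. That is false in general; only the difference is small. For instance, in equal characteristic the $p$-terms vanish, and $\Delta+w$ with $w\in B_2$, $v_B(w)>-t$, is another admissible generator. It changes $T_2(a)$ by $T_2(w)^p-T_2(w)$, which need not lie in $\pp_A^{1+t_2}$.

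Your numerical claims fail accordingly. The bound $v_2(pa)\ge(p-1)t_2-t$ is below $pt_2$, so it does not put $pa$ into $\PP_{B_2}^{p(1+t_2)}$. Likewise your bound $2(p-1)t_2-pt$ for the $S_1$-terms is at most $pt_2$ when $\delta=0$. So the obstruction is not confined to the leading term. Nothing in your decomposition is traced down to $A$, so the different $D_1$ never contributes, and that is exactly where the missing valuation comes from.

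The missing idea is norm transitivity: compare the coefficients of $Z^p$ in $n_1(N_1(1-Z\Delta))=n_2(N_2(1-Z\Delta))$. Since $N_2(1-Z\Delta)=1-Z^{p-1}-aZ^p$, the right side gives $-T_2(a)$. On the left, $N_1(1-Z\Delta)=1+\sum_{i<p}(-1)^ie_iZ^i-bZ^p$. Its $Z^p$-coefficient is $-T_1(b)-n_1(e_1)$ plus $T_1$-traces of monomials in the conjugates of $e_1,\ldots,e_{p-1}$ having at least two factors and total weight $p$; the nonconstant selections fall into orbits of size $p$.

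Now every error term is a trace or a norm down to $A$. Suppose $(p-2)t_2\ge t+1$. Then \eqref{O:A:eq:ei} and \eqref{O:A:eq:trace}, with $D_1$, bound the traces below by $\floor{(2(p-1)t_2-pt+D_1)/p}\ge1+t_2$. Also $v_A(n_1e_1)=v_1(e_1)\ge(p-1)t_2-t\ge1+t_2$.

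Only $p=3$, $t_2=t$ needs more, and there your observation is the right one. One has $T_1(e_1)=\Tr_{B/A}\Delta=T_2(S_2\Delta)=0$. The trace maps $\PP_{B_1}^t/\PP_{B_1}^{t+1}$ isomorphically onto $\pp_A^t/\pp_A^{t+1}$, so $v_1(e_1)\ge t+1$. The remaining selections each contain one $e_1$ and one $e_2$ factor, and their traces then have valuation at least $t+1$.
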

\begin{proof}
Compare the coefficient of $Z^p$ in the exact identity
\[
 n_1(N_1(1-Z\Delta))=n_2(N_2(1-Z\Delta)).
\]
The upper polynomials are
\[
 N_1(1-Z\Delta)=1+\sum_{i=1}^{p-1}(-1)^ie_iZ^i-bZ^p,
 \qquad N_2(1-Z\Delta)=1-Z^{p-1}-aZ^p.
\]
The right coefficient is $-T_2a$.  Besides $-T_1b$, the left coefficient
contains $-n_1(e_1)$ and traces of monomials in conjugates of the $e_i$
with $i<p$, at least two factors, and total weight $p$.
Indeed the cyclic group permutes the selections from the $p$ factors;
every orbit has size $p$, except the constant selection $e_1$ from every
factor, which gives $n_1(e_1)$.
By \eqref{O:A:eq:ei}, each trace of a nonconstant selection has valuation at least
\[
 \floor{\frac{2(p-1)t_2-pt+D_1}{p}}.
\]
This is at least $1+t_2$ whenever
$(p-2)t_2-t-1\ge0$.  The same condition makes
$v_A(n_1e_1)=v_1(e_1)\ge(p-1)t_2-t\ge1+t_2$.
This covers $p\ge5$ and $p=3$, $t_2>t$.

For $p=3$, $t_2=t$, first note that $T_1e_1=T_1S_1\Delta=0$ by
\eqref{O:A:eq:moments}.  The trace map on
$\PP_{B_1}^t/\PP_{B_1}^{t+1}\longrightarrow
 \pp_A^t/\pp_A^{t+1}$ is an isomorphism: \eqref{O:A:eq:trace} gives its
surjectivity, and both spaces have dimension one over the common residue
field.  Since $v_1(e_1)\ge t$, this proves $v_1(e_1)\ge t+1$.
Thus $v_A(n_1(e_1))\ge t+1$.  Every remaining weight-three
selection has one $e_1$ factor and one $e_2$ factor.  Its trace has valuation
at least $\floor{(t+1+2(t+1))/3}=t+1$.
\end{proof}

\subsection{Norms of the elements \texorpdfstring{$\Delta+\xi$}{Delta+xi}}
Put $H=pt_2+t$.  For $x\in B_2$ and $1\le k\le p$, write
\[
 X=n_2x,\qquad R=v_B(x\Delta^{k-1})=pv_2x-(k-1)t.
\]
\begin{lemma}\label{O:A:lem:translate}
If $R\ge1$, then
\[
 X T_1(b^k)\equiv X\sum_{\xi\in\TT}N_1(\Delta+\xi)^k
 \pmod{\PP_{B_1}^{pt_2+R}},\qquad\TT=\{0\}\cup\mu_{p-1}.
\]
\end{lemma}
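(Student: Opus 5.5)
The plan is to first rewrite $T_1(b^k)$ \emph{exactly} as a sum of norms of the $B_2$-conjugates of $\Delta$. Then I compare each conjugate $\Delta_\xi$ with the naive translate $\Delta+\xi$, and finally show that the resulting discrepancy, multiplied by $X$, lies in $\PP_{B_1}^{pt_2+R}$.

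\emph{Step 1 (exact conjugate description).} Since $B/A$ is totally ramified with group $C_p^2$, restriction $\Gal(B/B_2)\to\Gal(B_1/A)$ is an isomorphism. Every $\tau\in\Gal(B/B_2)$ commutes with $N_1=N_{B/B_1}$, because the group is abelian. Hence $\tau(b)=N_1(\tau\Delta)$. The roots of $X^p-X-a$ are the elements $\Delta_\xi$, $\xi\in\TT$, found in the proof of Lemma~\ref{O:A:lem:coeff}: $\Delta_\xi=\Delta+\xi+\eta_\xi$ with $v_B(\eta_\xi)\ge V-(p-1)t$, $\eta_0=0$, and $\eta_\xi=0$ in equal characteristic. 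Consequently
\[
 T_1(b^k)=\sum_{\xi\in\TT}N_1(\Delta_\xi)^k
\]
exactly. In characteristic $p$ the lemma is then an identity, and only mixed characteristic remains.

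\emph{Step 2 (the discrepancy).} Write $\Delta_\xi=(\Delta+\xi)(1+\varepsilon_\xi)$ with $\varepsilon_\xi=\eta_\xi/(\Delta+\xi)$, so that $v_B(\Delta+\xi)=-t$ and $v_B(\varepsilon_\xi)\ge V-(p-2)t$. Then
\[
 N_1(\Delta_\xi)^k-N_1(\Delta+\xi)^k=N_1(\Delta+\xi)^k\bigl(N_1(1+\varepsilon_\xi)^k-1\bigr).
\]
The first factor has $v_1=-kt$, and $v_1(X)=pv_2(x)$. So the lemma reduces to
\[
 v_1\bigl(N_1(1+\varepsilon_\xi)-1\bigr)\ge pt_2+t ,
\]
since $pv_2x-kt+pt_2+t=pt_2+R$. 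I would expand $N_1(1+\varepsilon)-1$ through its elementary symmetric functions $E_i^{B/B_1}(\varepsilon)$ and bound these using \eqref{O:A:eq:strong}, with break $t'$ for $B/B_1$, together with \eqref{O:A:eq:pbound}.

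\emph{Main obstacle.} The crude bound from \eqref{O:A:eq:strong} gives roughly $\bigl(V-(p-2)t+(p-1)(t'+1)\bigr)/p$ for the linear term. Using only $V\ge p(p-1)t_2$, this is not obviously $\ge pt_2+t$ when $p=3$ and $t_2=t$. To get the missing precision, I would not treat $\eta_\xi$ as an unstructured small element. Instead I would use Newton/Hensel to write $\eta_\xi$ modulo a deeper ideal as
\[
 \eta_\xi\equiv-\sum_{j=1}^{p-1}\binom pj\xi^{p-j}\Delta^j ,
\]
a polynomial in $\Delta$ whose coefficients are $p$ times units of $A\subset B_2$. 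Then $N_1(\Delta_\xi)-N_1(\Delta+\xi)$ becomes, to first order, $N_1(\Delta+\xi)\,S_1\bigl(\eta_\xi/(\Delta+\xi)\bigr)$. Expanding $1/(\Delta+\xi)$ in powers of $\Delta^{-1}$ (or clearing denominators by multiplying by $N_1(\Delta+\xi)$) turns this into traces $S_1(p\,Y\Delta^i)$, to which the twisted estimate \eqref{O:A:eq:twtrace} applies. This gains the factor $(p-1)t_2$ from the twisting on top of $v_1(p)=V/p\ge(p-1)t_2$. The quadratic and higher symmetric terms are then bounded by \eqref{O:A:eq:twsym}. If the $p=3$, $t_2=t$ boundary still falls short, I would use the hypothesis $R\ge1$ together with the cancellation $\sum_{\xi\in\mu_{p-1}}\xi^{j}=0$ for $(p-1)\nmid j$. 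Summing the first-order discrepancies over $\xi$ kills the leading term, as in the proof of Lemma~\ref{O:A:lem:coeff}. I expect this summation over $\TT$, rather than a termwise bound, to be what actually closes the estimate.
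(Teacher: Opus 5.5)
Your overall route is the paper's: the exact identity $T_1(b^k)=\sum_{\xi\in\TT}N_1(\Delta_\xi)^k$, the factorization $\Delta_\xi=(\Delta+\xi)(1+\varepsilon_\xi)$, a Newton expansion of $\varepsilon_\xi$ as a polynomial in $\Delta$ with coefficients in $p\OO_A$, the twisted trace bound \eqref{O:A:eq:twtrace}, and cancellation over $\xi$. As written, however, the argument has a gap at the decisive step.

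\emph{The termwise reduction in Step~2 cannot work.} The target $v_1(N_1(1+\varepsilon_\xi)-1)\ge pt_2+t$ is not reachable with these estimates, and the obstruction is not confined to $p=3$, $t_2=t$. Exact polynomial division gives
\[
\varepsilon_\xi\approx Q_\xi=\bigl[(\Delta+\xi)^p-(\Delta+\xi)-(\Delta^p-\Delta)\bigr]/(\Delta+\xi).
\]
This is a polynomial of degree $p-2$ with leading coefficient exactly $p\xi$. (Your sign for $\eta_\xi$ should be $+$, since the derivative of $Z^p-Z-a$ at $\Delta+\xi$ is $\equiv-1$.) Its terms of degree at most $p-3$ do have traces in $\PP_{B_1}^{pt_2+t}$. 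The leading term, however, contributes $\xi L$ with $L=pS_1(\Delta^{p-2})$, and \eqref{O:A:eq:twtrace} gives only $v_1L\ge pt+2(p-1)\delta$. That is short of $pt_2+t$ by $t-(p-2)\delta$. So the bound fails for every odd $p$ whenever $(p-2)\delta<t$, in particular whenever $\delta=0$. The paper accordingly proves only $N_1(1+\varepsilon_\xi)\equiv1+\xi L\pmod{\PP_{B_1}^{pt_2+t}}$, keeping the linear term.

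\emph{The summation step, as you state it, does not go through.} The summation over $\xi$ is therefore the heart of the proof, not a fallback. But summing the first-order discrepancies does not by itself kill the leading term. The discrepancy at $\xi$ is $XN_1(\Delta+\xi)^k\bigl(N_1(1+\varepsilon_\xi)^k-1\bigr)$, and its prefactor depends on $\xi$, so no identity $\sum\xi^j=0$ applies directly. The missing steps are these.
\begin{enumerate}
\item Since $2v_1L\ge pt_2+t$, one has $N_1(1+\varepsilon_\xi)^k\equiv1+k\xi L$.
\item In the linear term, replace $N_1(\Delta+\xi)^k$ by the $\xi$-independent $b^k$. This is allowed because $N_1(\Delta+\xi)-b$ is integral by \eqref{O:A:eq:ei}. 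Hence $v_1(N_1(\Delta+\xi)^k-b^k)\ge-(k-1)t$, and the error $kX(N_1(\Delta+\xi)^k-b^k)\xi L$ has valuation at least $R+pt+2(p-1)\delta\ge R+pt_2$.
\item Only now does $\sum_{\xi\in\mu_{p-1}}\xi=0$ remove $kXb^kL\sum_\xi\xi$.
\end{enumerate}
The structural point is that the obstruction is $\xi$ times an element independent of $\xi$.

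\emph{The case $p=3$ needs a second Newton iterate.} A single Newton step is too coarse here. From $V\ge6t_2$ alone, the bound $v_B(\varepsilon_\xi-Q_\xi)\ge2V-3t$ does not put the trace of $\varepsilon_\xi-Q_\xi$ in $\PP_{B_1}^{3t_2+t}$; it does for $p\ge5$. The paper keeps the second iterate $3(\Delta+\xi)^2Q_\xi=9\xi\Delta^3+18\xi^2\Delta^2+9\xi^3\Delta$, reduces it with $\Delta^3=\Delta+a$, and bounds it termwise by \eqref{O:A:eq:twtrace}. Summation could not do this, since $\sum_{\xi=\pm1}\xi^2\ne0$.

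\emph{A smaller correction.} The functions $E_i(\varepsilon_\xi)$ for $i\ge2$, and $N_1\varepsilon_\xi$, should be bounded by \eqref{O:A:eq:strong} using $v_B\varepsilon_\xi\ge V-(p-2)t$. The bound \eqref{O:A:eq:twsym} does not apply, since it concerns single monomials $Y\Delta$.
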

\begin{proof}
It is exact in characteristic $p$.  In mixed characteristic fix $\xi\ne0$,
put $D=\Delta+\xi$ and write the corresponding conjugate as $D(1+u)$.
Polynomial division gives the exact polynomial
\[
 Q_\xi(\Delta)=\frac{(\Delta+\xi)^p-(\Delta+\xi)-(\Delta^p-\Delta)}
                         {\Delta+\xi}.
\]
It has degree $p-2$, leading coefficient $p\xi$, and every coefficient is
in $p\OO_A$.  The equation for $u$ is
\[
 u=Q_\xi+pD^{p-1}u+D^{p-1}\sum_{j=2}^p\binom pj u^j.
\]
Hence
\[
 v_Bu\ge V-(p-2)t,\qquad
 v_B(u-Q_\xi)\ge2V-(2p-3)t.
\]
For $p\ge5$, the trace of the latter error has valuation at least $H$:
subtracting $H$ from its lower bound gives
\[
 (p-3)t-\floor{(p-2)t/p}+(2p-3)\delta\ge0.
\]
The symmetric functions $E_i(u)$ with $2\le i<p$ have at least this precision,
and $v_1(N_1u)=v_Bu\ge V-(p-2)t\ge H$.  Terms of degree at most $p-3$ in $Q_\xi$ have traces of
valuation at least
$2(p-1)t_2-(p-3)t\ge H$ by Lemma~\ref{O:A:lem:twisted}.

For $p=3$ retain one further iterate:
\[
 Q_\xi=3\xi\Delta,\qquad
 u=Q_\xi+3D^2Q_\xi+E,\quad v_BE\ge3V-5t.
\]
The trace of $E$ has valuation at least $V-t+2\delta\ge H$.
Moreover
\[
 3D^2Q_\xi=9\xi\Delta^3+18\xi^2\Delta^2+9\xi^3\Delta.
\]
Using $\Delta^3=\Delta+a$, each trace on the right has valuation at least
$6t_2-2t\ge H$.  The other norm coefficients satisfy
$v_1(E_2(u))\ge2V/3+2\delta\ge H$ and
$v_1(N_1u)\ge V-t\ge H$.
Thus in all cases
\begin{equation}
 N_1(1+u)\equiv1+\xi L\pmod{\PP_{B_1}^H},\qquad
 L=pS_1(\Delta^{p-2}),\quad
 v_1L\ge pt+2(p-1)\delta.\label{O:A:eq:unit-correction}
\end{equation}
Since $2v_1L\ge H$, the same formula raised to the $k$th power has linear
term $k\xi L$.

Now $N_1(\Delta+\xi)-b$ is integral by \eqref{O:A:eq:ei}; hence
$v_1(N_1(\Delta+\xi)^k-b^k)\ge-kt+t$.
After multiplying by $X$, the error term in \eqref{O:A:eq:unit-correction}
is in $\PP_{B_1}^{pt_2+R}$ because $v_1(Xb^k)=R-t$.
In the linear term, replacing $N_1(\Delta+\xi)^k$ by $b^k$ gives an error
of valuation at least
$R+pt+2(p-1)\delta\ge R+pt_2$.
The remaining expression vanishes after summation because
$\sum_{\xi\in\mu_{p-1}}\xi=0$ exactly.
Since $\Gal(B/A)$ is abelian, the sum over conjugates is
$T_1(b^k)$.
\end{proof}

\subsection{Comparison of two power sums}
Define
\[
 P_k=b^k+\sum_{\xi\in\mu_{p-1}}N_1(\Delta+\xi)^k,
 \qquad Q_k=S_1((\Delta^p-\Delta)^k)=T_2(a^k).
\]
\begin{lemma}\label{O:A:lem:filtered}
With $X,R$ as above and $R\ge1$,
\[
 X(P_k-Q_k)\equiv(p-1)\ind_{k=p-1}X(1+e_{p-1}^p)
       \pmod{\PP_{B_1}^{pt_2+R}}.
\]
\end{lemma}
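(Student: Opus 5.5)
The plan is to reduce the congruence to an identity between universal symmetric polynomials in the $e_i$, and then to bound every surviving term by the valuation estimates \eqref{O:A:eq:ei}, $v_1(b)=-t$ and \eqref{O:A:eq:pbound}.

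First, the expansion of $P_k$. Since $\xi^p=\xi$ for every $\xi\in\TT=\{0\}\cup\mu_{p-1}$, expanding the product over the conjugates of $\Delta$ under $\Gal(B/B_1)$ gives
\[
 N_1(\Delta+\xi)=b+\xi+\sum_{i=1}^{p-1}e_i\,\xi^{p-i}=:g(\xi),
\]
so $P_k=\sum_{\xi\in\TT}g(\xi)^k$ exactly. Expand $g(\xi)^k$ multinomially and sum over $\TT$, using that $\sum_{\xi\in\TT}\xi^n$ is $1$ for $n=0$, equals $p-1$ for $n>0$ with $(p-1)\mid n$, and is $0$ otherwise. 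This writes $P_k$ as an explicit $\mathbf Z$-polynomial in $b,e_1,\dots,e_{p-1}$. Only those monomials survive whose total $\xi$-exponent is divisible by $p-1$. Here $\xi$ carries exponent $1$ and $e_i$ carries exponent $p-i$.

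Second, the expansion of $Q_k$. I write $Q_k=\sum_\sigma(\sigma\Delta^p-\sigma\Delta)^k$, the sum running over $\Gal(B/B_1)$, and expand binomially. This expresses $Q_k$ through the power sums $S_1(\Delta^{pj+l})$, and Newton's identities turn these into polynomials in the $e_i$. The point is that, modulo $p$, the power sums of $\Delta^p$ are the $p$th powers of those of $\Delta$. Combined with the identity $\prod_{\xi\in\TT}(Y+\xi)=Y^p-Y$, this should show that $P_k-Q_k$ is a $\mathbf Z$-polynomial of the form
\[
 (p-1)\ind_{k=p-1}(1+e_{p-1})^{p}+p\,(\ldots)+(\text{monomials with a nonconstant selection of the }e_i,\ i<p).
\]
In particular, the $b^k$ terms, and the pure $b$-power terms more generally, must cancel exactly. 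For $k=p-1$, the surviving $\xi$-exponent-$(p-1)$ contribution comes from selecting only $\xi$ and $e_{p-1}\xi$ in every factor, giving $(p-1)(1+e_{p-1})^{p-1}$. After correction by the neighbouring terms this becomes $(p-1)(1+e_{p-1})^p\equiv(p-1)(1+e_{p-1}^p)$, with the binomial middle terms divisible by $p$. I would verify this universal identity first in the generic ring $\mathbf Z[e_1,\dots,e_p]$, as in the proof of Lemma~\ref{O:I:normlog}, in the form ``difference $\equiv$ main term modulo $p$ and modulo monomials of controlled weight''.

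Third, the valuation bookkeeping after multiplication by $X$. Since $v_1(Xb^k)=R-t$, I must show that every discarded monomial times $X$ has $v_1$ at least $pt_2+R$. A monomial containing some $e_i$ with $i<p$ gains at least $(p-1)t_2-it$ relative to $b$-degree $i/p$ of weight, by \eqref{O:A:eq:ei}. Terms carrying a factor of $p$ gain $V/p\ge(p-1)t_2$ by \eqref{O:A:eq:pbound}. The degree $k\le p$ together with the weight restriction from the $\xi$-sum should make each such gain at least $pt_2+t$ relative to $Xb^k$. For $p=3$ and small $\delta$, I expect the margin to be tight, and I would use the refined bound $v_1(s)\ge(p-1)\delta$ together with integrality of $N_1(\Delta+\xi)-b$, as in Lemma~\ref{O:A:lem:translate}.

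The main obstacle is the second step: getting the exact universal identity for $P_k-Q_k$, in particular the exact cancellation of all pure $b$-power terms and the precise form $(p-1)(1+e_{p-1}^p)$ of the residual term at $k=p-1$. I would try it first for $p=3$ by direct expansion, and then generalize by grading with $\xi$-exponent modulo $p-1$.
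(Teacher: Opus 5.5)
Your first step is correct and is exactly how the paper expands $P_k$. Your overall plan also matches the paper's proof: a universal identity in $e_1,\dots,e_p$, followed by valuation bookkeeping against $v_1(Xb^k)=R-t$. The gap is the step you flag as the main obstacle, which is the whole content of the lemma. The mechanism you propose for it, describing $P_k-Q_k$ modulo $p$, is too coarse to work.

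\emph{The pure $b$-powers do not all cancel.} For $k=p$ the monomial $b=e_p$ occurs in $P_p$ with coefficient $(p-1)\binom p1=p(p-1)$: one factor $b$ and $p-1$ factors $\xi$. In
\[
 Q_p=\sum_j\binom pj(-1)^{p-j}S_1(\Delta^{(p-1)j+p})
\]
it occurs only through $-S_1(\Delta^p)$, whose $e_p$-coefficient is $p$. Hence $P_p-Q_p$ contains $p^2b$.
\begin{itemize}
\item Here $v_1(Xb)=R+(p-2)t$. One factor of $p$ only gives $R+(p-2)t+(p-1)t_2$, which is below $R+pt_2$ whenever $t_2>(p-2)t$, for instance $p=3$, $\delta>0$.
\item The paper needs both factors, $2v_1(p)\ge2(p-1)t_2$.
\end{itemize}
So your step-three claim that one factor of $p$ plus the weight restriction always yields a gain of $pt_2+t$ fails. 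No description of $P_k-Q_k$ modulo $p$ can supply the second factor.

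\emph{Factorial denominators.} The coefficients naturally arise as $n(r-1)!/\prod\lambda_i!$, and whether they are divisible by $p$ depends on whether some $\lambda_i\ge p$. That is exactly how the exceptional $e_{p-1}^p$ survives, and your plan does not show it happens nowhere else.

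\emph{What is missing is the exact coefficient computation.} Write $e_\lambda=\prod_{i\le p}e_i^{\lambda_i}$ with $r=\sum\lambda_i$, $n=\sum i\lambda_i$, $\nu=\sum_{i<p}\lambda_i$.
\begin{itemize}
\item Teichm\"uller orthogonality gives the coefficient $(p-1)\binom kr\,r!/\prod\lambda_i!$ in $P_k$ when $n\equiv k\pmod{p-1}$, plus the separate $b^k$ from $\xi=0$.
\item Write $n=(p-1)v+k$. The binomial expansion of $(\Delta^p-\Delta)^k$ and the Waring formula for power sums give $(-1)^{r+v}\binom kv\,n(r-1)!/\prod\lambda_i!$ in $Q_k$.
\item One has $v_1(Xe_\lambda)\ge R-t+(p-1)\bigl((r-v)t+\nu\delta\bigr)$, and the monomials are then sorted by $r-v$.
\end{itemize}
The cases are as follows.
\begin{itemize}
\item For $v\ge0$ and $r\ge v+2$, one has $pr-n\ge p$, which forces $\nu\ge2$; valuation alone then suffices.
\item If $r=v$, only $b^k$ occurs, with coefficient $p$ on both sides.
\item If $r=v+1\le k$, the difference is $pk\binom k{r-1}(r-1)!/\prod\lambda_i!$ with unit denominators. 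One factor of $p$ suffices when $\nu\ge1$; the pure norm case $\nu=0$ is the $p^2b$ above.
\item If $r=k+1$, the difference is $pk\,k!/\prod\lambda_i!$. This is divisible by $p$ except for $e_{p-1}^p$ at $k=p-1$, where it equals $p-1$.
\item Finally $v=-1$ gives the constant $p-1$ at $k=p-1$, and $p(p-1)e_1$ at $k=p$.
\end{itemize}
Your guess that the $e_{p-1}$-only part of $P_{p-1}-Q_{p-1}$ is $(p-1)(1+e_{p-1})^p$ is right. Its coefficients are $(p-1)\bigl[\binom{p-1}j+\binom{p-1}{j-1}\bigr]$. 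But it is only one slice of this case analysis.

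The refinements you mention for $p=3$, namely $v_1(s)\ge(p-1)\delta$ and integrality of $N_1(\Delta+\xi)-b$, do not address the tight spot. That spot concerns the $p$-adic size of the integer coefficients, not the valuations of the $e_i$.
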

\begin{proof}
For a monomial $e_\lambda=\prod e_i^{\lambda_i}$ put
$r=\sum\lambda_i$, $n=\sum i\lambda_i$, and $\nu=\sum_{i<p}\lambda_i$.
The exact Teichmuller power sums show that $P_k$ contains it only if
$n\equiv k\pmod{p-1}$, with coefficient
\[
 (p-1)\binom kr\frac{r!}{\prod\lambda_i!},
\]
plus the separate term $b^k$.  Write $n=(p-1)v+k$.
Newton's generating-function formula shows that its coefficient in $Q_k$
is, for $0\le v\le k$,
\[
 (-1)^{r+v}\binom kv\frac{n(r-1)!}{\prod\lambda_i!}.
\]
These are integral coefficients, even when written as factorial quotients.
The valuation estimate \eqref{O:A:eq:ei} gives
\begin{equation}
 v_1(Xe_\lambda)\ge R-t+(p-1)\bigl((r-v)t+\nu\delta\bigr).
 \label{O:A:eq:monomial}
\end{equation}
For $v\ge0$ and $r\ge v+2$, the inequality
$pr-n=v+p(r-v)-k\ge p$ implies $\nu\ge2$.
Then \eqref{O:A:eq:monomial} is at least
$R-t+2(p-1)t_2\ge R+pt_2$.
If $r=v$, only $v=r=k$, $e_\lambda=b^k$ is possible; its coefficients
are $1+(p-1)=p$ and $p$, so they cancel.

Suppose $r=v+1\le k$.  Their coefficient difference is exactly
\[
 \frac{(r-1)!}{\prod\lambda_i!}
 \left((p-1)r\binom kr+n\binom k{r-1}\right)
 =\frac{pk\binom k{r-1}(r-1)!}{\prod\lambda_i!}.
\]
Here $r\binom kr=(k-r+1)\binom k{r-1}$ and
$n=(p-1)(r-1)+k$.  The factorial denominators are units at $p$:
if $r=p$, then $k=p$ and $n\equiv1\pmod p$, so no $\lambda_i$ can equal $p$.
Except for the pure norm monomial, one has $\nu\ge1$.
The added valuation $v_1p\ge(p-1)t_2$ then makes
\eqref{O:A:eq:monomial} at least $R+pt_2$.
The pure norm possibility is $k=p,r=1,v=0$, $e_\lambda=b$;
its coefficient difference is $p^2$, which gives valuation at least
$R+pt_2$ after applying \eqref{O:A:eq:pbound}.

If $v=k$ and $r=k+1$, the first coefficient is absent and the difference is
$pk\,k!/\prod\lambda_i!$.
It is divisible by $p$ except when $k=p-1$ and
$e_\lambda=e_{p-1}^p$, in which case it is exactly $p-1$.
Indeed for $k\le p-2$ all factorial denominators are units;
for $k=p-1$ the only nonunit denominator comes from that constant selection;
for $k=p$ the numerator contains at least three factors of $p$ and the
factorial denominator at most one.
The remaining monomials with $r\ge v+2$ were already dealt with.

Finally $v=-1$ contributes only the constant monomial when $k=p-1$, giving
$p-1$, or $e_1$ when $k=p$, giving $p(p-1)e_1$.
The latter has valuation at least $R+pt_2$ by
\eqref{O:A:eq:ei} and \eqref{O:A:eq:pbound}.
These cases exhaust the possible monomials and leave exactly the two
terms displayed in the lemma.
\end{proof}

\begin{theorem}[A congruence for $h$]\label{O:A:thm:h}
Define
\[
 h(z)=T_1N_1(\Delta z)-T_2N_2(\Delta z).
\]
For $x\in B_2$, $0\le j\le p-1$, and $R=v_B(x\Delta^j)\ge1$,
\begin{equation}
 h(x\Delta^j)\equiv(p-1)\ind_{j=p-2}n_2(x)(1-s^p)
       \pmod{\PP_{B_1}^{pt_2+R}}.\label{O:A:eq:h}
\end{equation}
\end{theorem}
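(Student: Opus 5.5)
The plan is to reduce \eqref{O:A:eq:h} directly to Lemmas~\ref{O:A:lem:translate} and~\ref{O:A:lem:filtered}, with $k=j+1\in\{1,\dots,p\}$. Fix $x\in B_2$ and put $X=n_2x$. Then $\Delta z=x\Delta^{k}$ for $z=x\Delta^j$. Moreover $R=v_B(x\Delta^j)=pv_2x-(k-1)t$, which is exactly the quantity $R$ attached to $(x,k)$ before Lemma~\ref{O:A:lem:translate}. So the hypothesis $R\ge1$ of both lemmas holds.

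\emph{First term.} Since $\Gal(B/B_1)$ restricts isomorphically onto $\Gal(B_2/A)$, we have $N_1(x)=n_2(x)=X$ for $x\in B_2$. Hence
\[
 N_1(x\Delta^k)=X b^k,\qquad T_1N_1(x\Delta^k)=X\,T_1(b^k).
\]
Lemma~\ref{O:A:lem:translate} then gives $T_1N_1(x\Delta^k)\equiv X P_k\pmod{\PP_{B_1}^{pt_2+R}}$. Here $b^k$ is the $\xi=0$ term of the sum over $\TT$.

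\emph{Second term.} Over $B_2$ we have $N_2(x\Delta^k)=x^pa^k$, so $T_2N_2(x\Delta^k)=T_2(x^pa^k)$. I would replace $x^p$ by $X$ using Lemma~\ref{O:A:lem:power} for $B_2/A$ (break $t_2$): $v_2(x^p-X)\ge pv_2x+(p-1)t_2$. The error $(x^p-X)a^k$ then has
\[
 v_2\ge pv_2x+(p-1)t_2-kt=R-t+(p-1)t_2 .
\]
By \eqref{O:A:eq:trace}, its $T_2$-trace has $v_A\ge\floor{(R-t+2(p-1)t_2+p-1)/p}$.

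We must compare this with the target. An element of $A$ lies in $\PP_{B_1}^{pt_2+R}$ iff its $v_A$ is at least $(pt_2+R)/p$. The bound above is enough because
\[
 R-t+2(p-1)t_2+p-1-(pt_2+R)=(p-2)t_2-t+p-1\ge0,
\]
which holds since $t_2\ge t$ and $p\ge3$. Floors cause no trouble because $v_A$ is an integer. Thus $T_2N_2(x\Delta^k)\equiv X\,T_2(a^k)=XQ_k$. This valuation bookkeeping, in particular checking that the trace bound survives with negative $v_2x$, is the only point needing care. I expect it to go through as shown.

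\emph{Conclusion.} Subtracting the two terms gives $h(x\Delta^j)\equiv X(P_k-Q_k)\pmod{\PP_{B_1}^{pt_2+R}}$. Lemma~\ref{O:A:lem:filtered} evaluates this as $(p-1)\ind_{k=p-1}X(1+e_{p-1}^p)$. Since $k=p-1$ means $j=p-2$, $s=-e_{p-1}$, and $p$ is odd, we get $1+e_{p-1}^p=1-s^p$. This is \eqref{O:A:eq:h}. In equal characteristic the same steps apply, and Lemma~\ref{O:A:lem:translate} is exact there.
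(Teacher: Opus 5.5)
Your proposal is correct and follows the paper's proof essentially step for step: split off $T_2\bigl((n_2x-x^p)a^k\bigr)$, bound it by Lemma~\ref{O:A:lem:power} on $B_2/A$ and the trace-ideal formula, then apply Lemmas~\ref{O:A:lem:translate} and~\ref{O:A:lem:filtered} with $k=j+1$. One small repair is needed in the floor bookkeeping: with $N=R-t+2(p-1)t_2+p-1$, the inequality $N\ge pt_2+R$ does not by itself give $p\floor{N/p}\ge pt_2+R$, but $p\floor{N/p}\ge N-(p-1)=R-t+2(p-1)t_2\ge R+pt_2$ holds because $(p-2)t_2\ge t$, which is exactly the inequality the paper uses.
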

\begin{proof}
Put $k=j+1$.
By multiplicativity of the norm, the difference between
$h(x\Delta^{k-1})$ and $n_2(x)(T_1b^k-T_2a^k)$ is
$T_2((n_2x-x^p)a^k)$.
By Lemma~\ref{O:A:lem:power}, the argument has $v_2$ at least
$R-t+(p-1)t_2$.  Its trace, viewed in $B_1$, has valuation at least
$R-t+2(p-1)t_2\ge R+pt_2$.
Apply Lemmas~\ref{O:A:lem:translate} and \ref{O:A:lem:filtered}.
Since $p$ is odd and $s=-e_{p-1}$, $1+e_{p-1}^p=1-s^p$.
\end{proof}

\subsection{A congruence for \texorpdfstring{$g$}{g}}
Put
\[
 g(z)=T_1(bS_1z)-T_2(aS_2z).
\]
Expand $b=\sum_{l=0}^{p-1}Y_l\Delta^l$ over $B_2$ and
$s=\sum s_l\Delta^l$ over $B_2$.
\begin{lemma}\label{O:A:lem:bcoeff}
The coefficients satisfy
\begin{align*}
 v_2(Y_l)&\ge(p-1)t_2-(p-l)t&& (2\le l<p),\\
 v_2(Y_0-a)&\ge(p-1)t_2-t,&
 Y_1&=1-s_0+E,\quad v_2E\ge(p-1)t_2.
\end{align*}
\end{lemma}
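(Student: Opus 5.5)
The plan is to expand $b$ directly from the characteristic polynomial of $\Delta$ over $B_1$. Then I would push every coefficient that lives in $B_1$ into the $B_2$-basis $1,\Delta,\ldots,\Delta^{p-1}$ using Lemma~\ref{O:A:lem:coeff}, and bound it with \eqref{O:A:eq:ei}. Since $p$ is odd, the relation $\prod_\sigma(X-\sigma\Delta)=0$ at $X=\Delta$ gives the exact identity
\[
 b=e_p=\Delta^p+\sum_{i=1}^{p-1}(-1)^ie_i\Delta^{p-i}.
\]
Substituting $\Delta^p=\Delta+a$ and $e_{p-1}=-s$ gives
\[
 b=a+(1-s)\Delta+\sum_{i=1}^{p-2}(-1)^ie_i\Delta^{p-i}.
\]
This already exhibits the leading terms $Y_0\approx a$ and $Y_1\approx 1-s_0$. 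What remains is to show that all other contributions are small.

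First I treat the terms $e_i\Delta^{p-i}$ with $1\le i\le p-2$. Write $e_i=\sum_j y_{ij}\Delta^j$ over $B_2$. Lemma~\ref{O:A:lem:coeff} together with \eqref{O:A:eq:ei} gives
\[
 v_2(y_{ij})\ge(p-1)t_2-it+jt.
\]
The monomial $y_{ij}\Delta^{j+p-i}$ contributes in one of two ways.
\begin{itemize}
\item If $j+p-i=l<p$, it contributes directly to $Y_l$. Here $l\ge2$, and the bound is $(p-1)t_2-(p-l)t$, which is exactly the claimed bound.
\item If $j+p-i=p+r$ with $0\le r=j-i$, I use $\Delta^{p+r}=\Delta^{r+1}+a\Delta^r$. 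This contributes $y_{ij}$ to $Y_{r+1}$, with valuation at least $(p-1)t_2+rt$. It also contributes $ay_{ij}$ to $Y_r$, with valuation at least $(p-1)t_2+rt-t$.
\end{itemize}
Both overflow bounds dominate what is required. In particular, the contribution to $Y_0$ is at least $(p-1)t_2-t$, and the contribution to $Y_1$ is at least $(p-1)t_2$. No direct term reaches $l\le1$, because that would need $i\ge p-1$.

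Next I treat $-s\Delta$. Write $s=\sum s_l\Delta^l$. Lemma~\ref{O:A:lem:coeff} and $v_1(s)\ge(p-1)\delta$ give
\[
 v_2(s_l)\ge(p-1)\delta+lt.
\]
For $l\ge2$, the term $-s_{l-1}\Delta^l$ contributes to $Y_l$ with valuation at least $(p-1)\delta+(l-1)t=(p-1)t_2-(p-l)t$; this is again exactly the bound. The wrap-around term is $-s_{p-1}\Delta^p=-s_{p-1}\Delta-s_{p-1}a$. It contributes $-s_{p-1}$ to $Y_1$, of valuation at least $(p-1)t_2$, and $-s_{p-1}a$ to $Y_0$, of valuation at least $(p-1)t_2-t$. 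Collecting terms gives $Y_0-a$ with the stated bound, and $Y_1=1-s_0+E$ with $v_2E\ge(p-1)t_2$.

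The computation is mostly bookkeeping. The only point needing care is that Lemma~\ref{O:A:lem:coeff} is applied to elements of $B_1$, namely $e_i$ and $s$, and not to products with $\Delta$. For that reason I expand $e_i$ and $s$ first and only then multiply by powers of $\Delta$, reducing with the exact relation $\Delta^p=\Delta+a$. The main thing to verify is the exponent arithmetic in the direct case, where the bounds are sharp.
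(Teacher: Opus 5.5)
Your proposal is correct and is essentially the paper's proof: expand $b$ via the characteristic polynomial of $\Delta$ over $B_1$, expand each $e_i$ over $B_2$ using Lemma~\ref{O:A:lem:coeff} and \eqref{O:A:eq:ei}, and reduce once with $\Delta^p=\Delta+a$. The only difference is cosmetic. You isolate the $i=p-1$ term as $-s\Delta$ and bound it with $v_1(s)\ge(p-1)\delta$, whereas the paper keeps it in the sum. The two bounds coincide, since $(p-1)t_2-(p-1)t=(p-1)\delta$.
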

\begin{proof}
Use $b=a+\Delta+\sum_{i=1}^{p-1}(-1)^ie_i\Delta^{p-i}$.
Expand each $e_i$ over $B_2$ by Lemma~\ref{O:A:lem:coeff}; its $\Delta^k$
coefficient has valuation at least $(p-1)t_2-it+kt$.
If $k+p-i\ge p$, reduce once using $\Delta^p=\Delta+a$.
The unreduced contribution to $Y_l$ has valuation
$(p-1)t_2-(p-l)t$; each reduced contribution has valuation at least
$(p-1)t_2+(l-1)t$.
For $l=0$ only the terms with $k=i$ contribute, with a factor $a$.
For $l=1$ the sole low term is the constant coefficient of $e_{p-1}$,
namely $-s_0$; all others have valuation at least $(p-1)t_2$.
\end{proof}

\begin{proposition}\label{O:A:prop:g}
Suppose $R=v_B(x\Delta^j)\ge1+\delta$.
If $j\ne p-2$, then $g(x\Delta^j)\in\pp_A^{1+t_2}$.
For $j=p-2$ there is a fixed $\omega\in B_2$ such that
\[
 g(x\Delta^{p-2})=T_2(x(\omega-1)),\qquad
 x\omega-xs\in\PP_B^{K_0}.
\]
\end{proposition}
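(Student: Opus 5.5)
Write $z=x\Delta^j$ with $x\in B_2$. Since $x$ and $\Delta$ lie in $B$ and $x\in B_2$ is fixed by $\Gal(B/B_2)$, I compute the two traces separately.

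\emph{The $B_2$ term.} Here $S_2z=xS_2(\Delta^j)$, so by \eqref{O:A:eq:moments}
\[
 S_2(x\Delta^j)=x\,S_2(\Delta^j)=
 \begin{cases}
  p x, & j=0,\\
  0, & 1\le j<p-1,\\
  (p-1)x, & j=p-1.
 \end{cases}
\]
Hence $T_2(aS_2z)$ is either $0$, or $T_2(pax)$, or $(p-1)T_2(ax)$.

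\emph{The $B_1$ term.} Write $T_1(bS_1z)=T_1S_1(bz)=T_2S_2(bz)$ by transitivity. Expand $b=\sum_l Y_l\Delta^l$ as in Lemma~\ref{O:A:lem:bcoeff}, so
\[
 S_2(bz)=x\sum_l Y_l\,S_2(\Delta^{l+j}).
\]
For $l+j\le 2p-2$, the only nonzero power traces are those in degrees $p-1$, $p$ and $2p-2$, with values $p-1$, $pa$ and $p-1$ (as in the proof of Lemma~\ref{O:A:lem:twisted}). Therefore
\[
 S_2(bz)=x\bigl((p-1)Y_{p-1-j}+paY_{p-j}+\ind_{j=p-1}(p-1)Y_{p-1}\bigr),
\]
where indices outside $[0,p-1]$ are omitted.

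\emph{Reducing $g$.} Subtracting the two traces, $g(z)$ is a $T_2$-trace of $x$ times a combination of the $Y_l$ and $a$. I then estimate each term with \eqref{O:A:eq:trace} for $B_2/A$: $T_2(\PP_{B_2}^c)\subset\pp_A^{1+t_2}$ as soon as $c+D_2\ge p(1+t_2)$, that is, $c\ge t_2+1-(p-1)+\dots$, which is roughly $c\ge 1+t_2-(p-1)t_2/p$ after converting. The valuation of $x$ is given by $pv_2x=R+jt$. Each case then reads as follows.

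\emph{The case $j=p-1$.} The main terms are $(p-1)x(Y_0+Y_{p-1})$ and $-(p-1)xa$. Their difference is controlled by $v_2(Y_0-a)\ge(p-1)t_2-t$ and by the bound on $Y_{p-1}$.

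\emph{The case $j=0$.} The terms are $(p-1)xY_{p-1}$, $paxY_0$ and $-pax$. The $p$-divisible terms are handled by \eqref{O:A:eq:pbound}, since $v(p)$ is large.

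\emph{The case $1\le j<p-2$.} The terms are $(p-1)xY_{p-1-j}$ with $p-1-j\ge 2$, together with $paxY_{p-j}$. Here Lemma~\ref{O:A:lem:bcoeff} gives $v_2(Y_{p-1-j})\ge(p-1)t_2-(j+1)t$.

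In each of these three cases I combine the bound with $R\ge1+\delta$ and check the inequality. This is exactly where the hypothesis $R\ge 1+\delta$ (rather than $R\ge1$) should be needed, to absorb the gap $t_2-t$.

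\emph{The case $j=p-2$.} The dominant term is $(p-1)xY_1$, plus $paxY_2$, while the $B_2$ side contributes $0$. By Lemma~\ref{O:A:lem:bcoeff}, $Y_1=1-s_0+E$. I therefore define $\omega$ as the element of $B_2$ collecting the exact contributions of the small terms, namely
\[
 \omega-1=(p-1)(Y_1+paY_2/(p-1))-1,
\]
adjusted so that $g(x\Delta^{p-2})=T_2(x(\omega-1))$ holds exactly. This definition is independent of $x$, so $\omega$ is fixed. Note that the overall factor $p-1$ is $\equiv-1$, and the sign conventions need care here; it may be cleaner to absorb the units into $\omega$.

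It then remains to show $x(\omega-s)\in\PP_B^{K_0}$. This splits into three pieces:
\begin{itemize}
 \item the difference $s-s_0=\sum_{l\ge1}s_l\Delta^l$, controlled via Lemma~\ref{O:A:lem:coeff} applied to $s$ with $v_1(s)\ge(p-1)\delta$;
 \item the error $E$, with $v_2E\ge(p-1)t_2$;
 \item the $p$-term $paY_2$.
\end{itemize}
Each has $v_B(x\cdot)\ge K_0=1+pt_2$, using $v_Bx=R+(p-2)t$.

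The main obstacle is this last step. The term $x(s-s_0)$ is not in $B_2$, so the congruence mixes $B$-valuations of $\Delta$-expansions. I would bound $v_B(s_l\Delta^l)\ge pv_1(s)+lt\cdot\ldots$ using Lemma~\ref{O:A:lem:coeff}, and expect the margin to be tight exactly when $\delta=0$, where $R\ge1$ must suffice.
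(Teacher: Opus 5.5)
Your route is the paper's: write $T_1(bS_1z)=T_2S_2(bz)$, expand $b=\sum_lY_l\Delta^l$ over $B_2$, use the power traces of $\Delta$ to express $g(x\Delta^j)$ as $T_2$ of $x$ times an explicit combination of the $Y_l$ and $a$, and bound that combination by Lemma~\ref{O:A:lem:bcoeff}. However, the case $j=p-2$, which is the real content of the proposition, does not work as you wrote it.

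\textbf{The case $j=p-2$.} The element $\omega$ cannot be adjusted: by nondegeneracy of the trace pairing, the exact identity $g(x\Delta^{p-2})=T_2(x(\omega-1))$ for all admissible $x$ forces $\omega=1+(p-1)Y_1+paY_2$. So there is no room to ``absorb units''. Your displayed choice $\omega=(p-1)Y_1+paY_2$ fails on both counts:
\begin{itemize}
\item it gives $T_2(x(\omega-1))=g(x\Delta^{p-2})-T_2(x)$;
\item it gives $x\omega-xs\equiv-x\pmod{\PP_B^{K_0}}$, which is false in general, since $v_Bx=R+(p-2)t$ is typically below $K_0=1+pt_2$.
\end{itemize}
The missing step is the exact rearrangement
\[
 \omega-s=p(1-s_0)+(s_0-s)+(p-1)E+paY_2 .
\]
Here the constant $1$ merges with $(p-1)\cdot1$ into $p$, and $-(p-1)s_0=s_0-ps_0$. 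This is what settles the sign question you left open: $\omega$ is close to $+s$.

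Your list of pieces omits $xp(1-s_0)$. That piece is harmless, since $v_B(xp(1-s_0))\ge V+v_Bx\ge K_0$ and $s_0$ is integral. The piece you call the main obstacle is routine. Lemma~\ref{O:A:lem:coeff} gives $v_B(s-s_0)\ge p(p-1)\delta+(p-1)t$, hence
\[
 v_B(x(s-s_0))-K_0\ge R-1+(p-3)t+p(p-2)\delta\ge0 .
\]

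\textbf{The other cases.} Two further errors affect them.

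First, the trace criterion is misstated. Since $D_2=(p-1)(t_2+1)$, the condition $c+D_2\ge p(1+t_2)$ is exactly $c\ge1+t_2$. Your ``roughly $c\ge1+t_2-(p-1)t_2/p$'' is a weaker threshold, and checks against it would prove nothing. The correct checks do hold; for example, for $1\le j\le p-3$,
\[
 v_2(xY_{p-1-j})\ge v_2x+(p-1)t_2-(j+1)t\ge1+t_2 .
\]
They only use $v_2x\ge1$, so the hypothesis $R\ge1+\delta$ is not the delicate point. You should carry these checks out rather than announce them.

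Second, at $j=0$ you dropped the degree-zero trace $S_2(1)=p$. The correct formula is
\[
 g(x)=T_2\bigl(x(p(Y_0-a)+(p-1)Y_{p-1})\bigr),
\]
with no term $paxY_0$; that term could not be controlled by \eqref{O:A:eq:pbound} alone when $p=3$. With the correct formula, each summand has $v_2\ge1+t_2$. At $j=p-1$ you also omitted the term $paxY_1$, which is harmless.

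Once corrected, your direct treatment of $j=0$ is a legitimate variant. The paper instead writes $g(x)=T_2(x)T_1(b)-pT_2(ax)$ and invokes Theorem~\ref{O:A:thm:H14}; your version avoids that theorem.
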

\begin{proof}
For $1\le j<p$, the exact moment identities give
\[
 g(x\Delta^j)=T_2\left(x\left((p-1)Y_{p-1-j}+paY_{p-j}
              +(p-1)\ind_{j=p-1}(Y_{p-1}-a)\right)\right).
\]
For $j\le p-3$, the first term has $v_2$ at least
$v_2x+(p-1)t_2-(j+1)t\ge1+t_2$; here $v_2x\ge1$ and
$(p-2)t_2-(j+1)t\ge0$.
The $pa$ term is at least as deep by \eqref{O:A:eq:pbound} and
Lemma~\ref{O:A:lem:bcoeff}.
For $j=p-1$ combine $Y_0-a$ with the other displayed terms and use
$v_2x+(p-1)t_2-t\ge1+t_2$.
These elements are in the ideal whose trace is $\pp_A^{1+t_2}$.

For $j=0$, trace transitivity gives
\[
 g(x)=T_2(x)T_1(b)-pT_2(ax).
\]
Replace $T_1b$ by $T_2a$ using Theorem~\ref{O:A:thm:H14}.
The two resulting terms have valuations bounded below by
$(v_2x-t+2(p-1)t_2)/p>t_2$, using \eqref{O:A:eq:trace} and \eqref{O:A:eq:pbound}.

For $j=p-2$ define
$\omega=1+(p-1)Y_1+paY_2$.
The formula for $g$ is exact.  Lemma~\ref{O:A:lem:bcoeff} gives
\[
 \omega-s=p(1-s_0)+(s_0-s)+(p-1)E+paY_2.
\]
After multiplication by $x$, every term except $s_0-s$ has
valuation at least $K_0$, by \eqref{O:A:eq:pbound} and
Lemma~\ref{O:A:lem:bcoeff}.  For the latter, Lemma~\ref{O:A:lem:coeff} gives
\[
 v_B(s-s_0)\ge p(p-1)\delta+(p-1)t.
\]
Since $v_Bx=R+(p-2)t$, the resulting valuation exceeds or equals $K_0$;
the difference of the two lower bounds is
$(p-3)t+(p-1)^2\delta\ge0$.
\end{proof}

\subsection{Comparison of norms of close elements}
\begin{lemma}\label{O:A:lem:H17}
Let $x\in B_2$, $w\in B_1$, and $y\in B_2$ satisfy
\[
 xw-y\in\PP_B^{K_0},\qquad
 h_0=v_B(xw)\ge1+t_2+(p-3)t.
\]
Then
\[
 N_1(xw)-n_2(y)\in\PP_{B_1}^{K_0}.
\]
\end{lemma}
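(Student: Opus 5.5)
We compare $N_1(xw)$ with $N_1(y)$ and then $N_1(y)$ with $n_2(y)$. Write $xw=y(1+\varepsilon)$ with $\varepsilon=(xw-y)/y$. Then
\[
 N_1(xw)-n_2(y)=N_1(y)\bigl(N_1(1+\varepsilon)-1\bigr)+\bigl(N_1(y)-n_2(y)\bigr),
\]
and each summand will be placed in $\PP_{B_1}^{K_0}$ separately. Since $h_0\ge1+t_2+(p-3)t$ and $K_0=1+pt_2$, we have $K_0-h_0\le (p-1)t_2-(p-3)t$. Also $v_B(y)=h_0$ whenever $h_0<K_0$; if $h_0\ge K_0$, then $v_B(y)\ge K_0$, both norms lie in $\PP_{B_1}^{K_0}$ by \eqref{O:I:normval}, and there is nothing to prove. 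So we assume $v_B(y)=h_0<K_0$, which gives $v_B(\varepsilon)\ge K_0-h_0\ge1$.

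\emph{First summand.} We expand $N_1(1+\varepsilon)-1=\sum_{i\ge1}E_i^{B/B_1}(\varepsilon)$ and bound it with \eqref{O:A:eq:strong} for $B/B_1$, whose break is $t'=t+p\delta$:
\[
 v_1(E_i(\varepsilon))\ge\bigl\lfloor\bigl(i(K_0-h_0)+(p-1)(t'+1)\bigr)/p\bigr\rfloor ,\qquad 1\le i<p,
\]
while the norm term satisfies $v_1(N_1\varepsilon)=v_B(\varepsilon)$. Multiplying by $v_1(N_1y)=h_0$, the $i=p$ term gives $h_0+v_B\varepsilon\ge K_0$ at once. For $i<p$ it suffices to prove the worst case $i=1$, namely $h_0+\lfloor(K_0-h_0+(p-1)(t'+1))/p\rfloor\ge K_0$. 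This reduces to $(p-1)(t'+1)\ge(p-1)(K_0-h_0)$, that is, $t'+1\ge K_0-h_0$. Since $t'+1=t+p\delta+1$ and $K_0-h_0\le(p-1)t_2-(p-3)t$, the inequality is not immediate for large $t$, so I expect this step to need care. I would first try to exploit that $y\in B_2$ and $x\in B_2$: then $\varepsilon=(xw-y)/y$ lies in $B_2\cdot B_1$, and writing $xw-y=x(w-y/x)$ lets Lemma~\ref{O:A:lem:twisted} give the sharper twisted symmetric bounds \eqref{O:A:eq:twsym}, with the $B_2$-factor pulled out of $E_i$ via $E_i(Y\cdot)$. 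If that fails, I would split $\varepsilon$ according to the coefficient expansion of Lemma~\ref{O:A:lem:coeff}.

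\emph{Second summand.} Here $y\in B_2$, so $N_1(y)=E_p^{B/B_1}(y)$, and we compare it with $n_2(y)$ as elements of $A\subset B_1$. Lemma~\ref{O:A:lem:power}, applied to $B/B_1$ and to $B_2/A$ with $y$ in each, gives
\[
 v_B\bigl(y^p-N_1y\bigr)\ge p h_0+(p-1)t',\qquad v_B\bigl(y^p-n_2y\bigr)\ge p h_0+p(p-1)t_2 ,
\]
the second after converting $v_2$ to $v_B$. Hence $v_B(N_1y-n_2y)\ge ph_0+(p-1)t$. Since $v_1=v_B/p$ on $B_1$ and $ph_0+(p-1)t\ge p(1+t_2)$, this is at least $pK_0$ in $v_B$, which is the required depth $K_0$ in $v_1$ (recall $v_B|_{B_1}=pv_1$).

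The main obstacle is therefore the $E_1=S_1$ term of the first summand. Its resolution should come from the twisted trace estimate \eqref{O:A:eq:twtrace}: writing $\varepsilon$ as a $B_2$-combination of powers of $\Delta$ gains the extra $(p-1)t_2$, and the hypothesis on $h_0$ is exactly what absorbs the remaining $-it$ loss.
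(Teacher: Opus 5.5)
\textbf{Gap: the main step is not proved.} You stop exactly where the content of the lemma lies. Your own observation is correct: the generic bound \eqref{O:A:eq:strong} for $B/B_1$ controls the trace term of $N_1(1+\varepsilon)$ only when $t'+1\ge K_0-h_0$. Since $K_0-h_0$ can be as large as $2t+(p-1)\delta$, this fails once $t\ge\delta+2$. After that you only list things you would try.

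Neither suggestion is carried out, and the first does not apply as stated. The estimate \eqref{O:A:eq:twsym} concerns $E_i^{B/B_1}(Y\Delta)$ with $Y\in B_2$, and $\varepsilon=(x/y)w-1$ is not of that form.

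Carrying out your second suggestion is, in effect, the whole of the paper's proof:
\begin{itemize}
\item Expand $xw=\sum_i y_i\Delta^i$ over $B_2$; Lemma~\ref{O:A:lem:coeff} applied to $w$ gives $v_B(y_i\Delta^i)\ge h_0+(p-1)it$.
\item Use $p\nmid t$ (the term valuations are distinct modulo $p$) to deduce from $xw-y\in\PP_B^{K_0}$ that $y_0-y$ and each $y_i\Delta^i$ with $i\ge1$ lie in $\PP_B^{K_0}$ separately.
\item Compare $N_1(xw)$ with $N_1(y_0+y_1\Delta)$ by the strong symmetric bound; this is where $h_0\ge1+t_2+(p-3)t$ enters.
\item Compare $N_1(y_0+y_1\Delta)$ with $n_2(y_0)$ by \eqref{O:A:eq:twsym}, and then $n_2(y_0)$ with $n_2(y)$.
\end{itemize}
Your proposal establishes none of these steps.

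\textbf{A shorter way to finish your own decomposition.} Use the product structure exactly, rather than through elementary symmetric functions.

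First, $\Gal(B/B_1)$ restricts isomorphically onto $\Gal(B_2/A)$, so $N_1=n_2$ on $B_2$. Your second summand is therefore identically $0$. Your estimate for it was also wrong: $ph_0+(p-1)t\ge p(1+t_2)$ does not give $\ge pK_0=p(1+pt_2)$. For instance, with $t=t_2$ and $h_0=1+t_2+(p-3)t$ it equals $p+(p^2-p-1)t<pK_0$.

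For the first summand, put $u=x/y\in B_2$ and $c=K_0-h_0$. Then exactly $N_1(1+\varepsilon)=N_1(uw)=n_2(u)w^p=(n_2(u)/u^p)(1+\varepsilon)^p$.
\begin{itemize}
\item Lemma~\ref{O:A:lem:power} for $B_2/A$ gives $v_B(n_2(u)/u^p-1)\ge p(p-1)t_2$.
\item The binomial theorem with \eqref{O:A:eq:pbound} gives $v_B((1+\varepsilon)^p-1)\ge\min(p(p-1)t_2+c,\,pc)$.
\item The product lies in $B_1$, where $v_B=pv_1$, so $v_1(N_1(1+\varepsilon)-1)\ge\min((p-1)t_2,c)=c$, because $c\le(p-1)t_2-(p-3)t$.
\end{itemize}
Multiplying by $v_1(N_1y)=h_0$ gives $N_1(xw)-n_2(y)\in\PP_{B_1}^{K_0}$.

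This route needs only $h_0\ge1+t_2$ and bypasses Lemmas~\ref{O:A:lem:coeff} and~\ref{O:A:lem:twisted} entirely. It is, however, an idea absent from your proposal.
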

\begin{proof}
The zero case is immediate, so write $z=xw\ne0$ and expand
$z=\sum_{i=0}^{p-1}y_i\Delta^i$, with $y_i=xc_i\in B_2$.
Lemma~\ref{O:A:lem:coeff} gives
\[
 v_B(y_i\Delta^i)\ge h_0+(p-1)it.
\]
Thus $v_B(y_0)=h_0$, and every nonconstant term is strictly deeper.
The valuations of the nonzero terms in
\[
 z-y=(y_0-y)+\sum_{i=1}^{p-1}y_i\Delta^i
\]
have distinct residues modulo $p$, because $p\nmid t$.
Since their sum has valuation at least $K_0$, each term does.
In particular $v_B(y_i\Delta^i)\ge K_0$ for $i\ge1$ and
$v_2(y_0-y)\ge t_2+1$.

First keep the linear term and put $z_\ell=y_0+y_1\Delta$.
Multiplicativity of the norm gives
\[
 N_1(z_\ell)=n_2(y_0)N_1(1+(y_1/y_0)\Delta).
\]
We have $v_1(n_2y_0)=h_0$ and $v_2(y_1/y_0)\ge t$.
The twisted symmetric estimate therefore bounds the intermediate
terms in $N_1(z_\ell)-n_2y_0$ by
\[
 h_0+(p-1)t_2+j\bigl(v_2(y_1/y_0)-t\bigr)
 \ge h_0+(p-1)t_2\ge K_0.
\]
The remaining term is $N_1(y_1\Delta)$ and has $v_1$ equal to
$v_B(y_1\Delta)\ge K_0$. Hence
$N_1(z_\ell)-n_2y_0\in\PP_{B_1}^{K_0}$.

Now put $U=\sum_{i=2}^{p-1}y_i\Delta^i$. Then
$v_B(U/z)\ge2(p-1)t$ and $v_BU\ge K_0$. The exact identity
\[
 N_1z-N_1z_\ell=N_1z\{1-N_1(1-U/z)\}
\]
is expanded over $B/B_1$, whose different exponent is $D'$.
For each intermediate coefficient the strong symmetric bound gives
\[
 v_1\bigl(N_1z\,E_j(U/z)\bigr)
 \ge h_0+\floor{[2j(p-1)t+D']/p}
 \ge K_0+\floor{[(p-3)t+p-1]/p}\ge K_0.
\]
The minimum occurs at $j=1$, and the second inequality follows by inserting
$h_0\ge1+t_2+(p-3)t$ and $t'=t+p\delta$.
The remaining term is $N_1U$, which has valuation $v_BU\ge K_0$.
Thus $N_1z-N_1z_\ell\in\PP_{B_1}^{K_0}$.

Finally $y_0,y$ are integral and $v_2(y_0-y)\ge t_2+1$.
To check the last norm comparison explicitly, set $a=v_2y$.
If $a\ge t_2+1$ (or $y=0$), both norms have valuation at least $t_2+1$.
Otherwise factor out $y$, apply the strong symmetric estimate to
$(y_0-y)/y$, and note that its shallowest intermediate term has valuation
at least
\[
 a+\floor{[t_2+1-a+(p-1)(t_2+1)]/p}
 =t_2+1+\floor{(p-1)a/p}\ge t_2+1.
\]
The norm term has valuation $v_2(y_0-y)\ge t_2+1$.
Consequently $n_2y_0-n_2y\in\pp_A^{t_2+1}$, which lies in
$\PP_{B_1}^{p(t_2+1)}\subseteq\PP_{B_1}^{K_0}$.
Combining the three comparisons proves the assertion.
\end{proof}

\section{Compatible characters in totally ramified odd degree}\label{sec:odd-models}
\subsection{Construction of characters of conductors \texorpdfstring{$m_1,m_2$}{m1,m2}}\label{O:sec:models}
\subsubsection*{The local data}
Let $A\subset B_1,B_2\subset B$ be totally ramified, $\Gal(B/A)=C_p^2$,
$p$ odd. Write
\[
 t_1=t,\quad t_2=t+\delta,\quad t'=t+p\delta,\quad p\nmid t,
 \quad \kappa_2=t_2+1,\quad T'=t'+1.
\]
The breaks of $B/B_1$ and $B/B_2$ are $t'$ and $t$, respectively. Put
$N_i=\Nm_{B/B_i}$, $n_i=\Nm_{B_i/A}$, $S_i=\Tr_{B/B_i}$,
$T_i=\Tr_{B_i/A}$. Normalize each valuation on its own field.
Proposition~\ref{O:A:prop:AS} constructs $\Delta\in B$ with
\[
 \Delta^p-\Delta=a\in B_2,\quad v_B\Delta=-t,\quad
 N_2\Delta=a,\quad b=N_1\Delta,\quad s=-E_{p-1}^{B/B_1}(\Delta).
\]
The estimates of Section~\ref{O:sec:as} give
\begin{align}
 T_1b-T_2a&\in\pp_A^{\kappa_2}, &v_B(b-a)&\ge-t,                 \label{O:M:H14}\\
 v_1s&\ge(p-1)\delta, &v_B(p)&\ge p(p-1)t_2.              \label{O:M:basic}
\end{align}
For $w=\sum x_j\Delta^j$, $x_j\in B_2$, the term valuations are distinct
modulo $p$. For a coefficient $x\in B_2$ set
\[
 g(z)=T_1(bS_1z)-T_2(aS_2z),\quad
 h(z)=T_1N_1(\Delta z)-T_2N_2(\Delta z),\quad R=v_B(x\Delta^j).
\]
If $R\ge1+\delta$, $g(x\Delta^j)$ is in $\pp_A^{\kappa_2}$ except at $j=p-2$.
There is $\omega\in B_2$, independent of $x$, such that in that exception
\begin{equation}\label{O:M:ginput}
 g(x\Delta^{p-2})=T_2(x(\omega-1)),\qquad
 x\omega-xs\in\PP_B^{1+pt_2}.
\end{equation}
For $R\ge1$, Theorem~\ref{O:A:thm:h} gives
\begin{equation}\label{O:M:hinput}
 h(x\Delta^j)\equiv(p-1)\mathbf1_{j=p-2}n_2x(1-s^p)
       \pmod{\PP_{B_1}^{pt_2+R}}.
\end{equation}
Finally, if $x\in B_2,w\in B_1,y\in B_2$,
$v_B(xw)\ge1+t_2+(p-3)t$, and $xw-y\in\PP_B^{1+pt_2}$, then
\begin{equation}\label{O:M:transferinput}
 n_2x\,w^p\equiv n_2y\pmod{\PP_{B_1}^{1+pt_2}}.
\end{equation}
Equations \eqref{O:M:ginput} and~\eqref{O:M:transferinput} are
Proposition~\ref{O:A:prop:g} and Lemma~\ref{O:A:lem:H17}. These
results remain valid after an unramified extension of $A$.

Choose a nontrivial norm character $\tau\in S(B_2/A)$.
Let $\mathrm e_L=\mathrm e_A\Tr_{L/A}$, and choose $\alpha\in A^\times$ so
\[
 \tau(1-z)=\mathrm e_A(\alpha P(z)),\qquad
 P(X)=\sum_{j=1}^{p-1}X^j/j,\quad z\in\pp_A^c,\quad c=\ceil{\kappa_2/p}.
\]
Put $\Psi_L(x)=\mathrm e_L(\alpha x)$. Their largest trivial ideals are
\[
 \pp_A^{\kappa_2},\quad\PP_{B_1}^{T'},\quad\PP_{B_2}^{\kappa_2},\quad\PP_B^{T'}.
\]
Corollary~\ref{O:I:conversion} gives
\begin{equation}\label{O:M:conversion}
 \Psi_A(T_2v+n_2v)=1\qquad(v\in\PP_{B_2}^c).
\end{equation}
In equal characteristic, expressions divisible by $p$ are zero,
with valuation $+\infty$.

\subsubsection*{The groups \texorpdfstring{$H_i$}{Hi}}
Set
\[
 m_1=1+t+t',\quad m_2=1+t+t_2,\quad
 q_i=\ceil{m_i/p},\quad q=q_1=\delta+\ceil{(2t+1)/p},
 \quad H_i=U_{B_i}^{q_i}.
\]
Define characters on these groups by
\begin{equation}\label{O:M:models}
 R_1(1-z)=\Psi_1(bPz),\qquad R_2(1-z)=\Psi_2(aPz).
\end{equation}
They are characters because $pq_i\ge m_i$ and the truncated logarithm
is additive for the multiplicative group modulo $\PP_{B_i}^{m_i}$.
Their conductors are $m_i$, since $v_1(b)=v_2(a)=-t$.

\begin{lemma}\label{O:M:domains}
One has $q_i\ge1$, $q\le t'$, $q_2\le t_2$ and
\[
 q\ge q_2,\qquad
 \floor{[q+(p-1)(t+1)]/p}\ge q_2.
\]
Consequently
\begin{equation}\label{O:M:preimage}
 N_1^{-1}(H_1)=U_B^q,\qquad N_2(U_B^q)\subset H_2.
\end{equation}
For any unramified base extension the same assertions hold.
\end{lemma}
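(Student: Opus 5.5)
The plan is to prove the numerical inequalities first, by direct manipulation of ceilings, and then deduce \eqref{O:M:preimage} from the norm filtration for a totally ramified cyclic extension of degree $p$ recalled in Section~\ref{sec:ramification}.

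\emph{The inequalities.} Since $m_i\ge 1+2t\ge3$, we have $q_i\ge1$. Writing $m_1=1+2t+p\delta$ gives $q=\delta+\ceil{(2t+1)/p}$. The bound $q\le t'=t+p\delta$ reduces to $\ceil{(2t+1)/p}\le t+(p-1)\delta$, which follows from $(2t+1)/p\le t$ for $p\ge3$, $t\ge1$. Similarly $q_2=\ceil{(1+2t+\delta)/p}\le t+\delta=t_2$. For $q\ge q_2$, I would use $p\delta\ge\delta$, which gives $m_1\ge m_2$ and hence $q_1\ge q_2$. For the last inequality, set $X=q+(p-1)(t+1)$. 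Then $pq_2<m_2+p=1+2t+\delta+p$, so it suffices to show $X\ge pq_2$. I would check this from
\[
X\ge \delta+(2t+1)/p+(p-1)(t+1)\ge 1+2t+\delta+p-1\quad\text{up to rounding},
\]
using $(p-1)(t+1)\ge 2t+p-1$ for $p\ge3$. Since $pq_2\le m_2+p-1=2t+\delta+p$ and $X\ge\delta+1+(p-1)(t+1)\ge\delta+2t+p$ when $p\ge3$, we get $\floor{X/p}\ge q_2$.

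\emph{The preimage statements.} For the first half of \eqref{O:M:preimage} I would apply the norm-filtration description of $B/B_1$, which has break $t'$. Below the break the graded norm is bijective, with upper index equal to lower index, and $q\le t'$. So $N_1(U_B^{j})\subset U_{B_1}^{j}$, and for $j<t'$ an element of $U_B^j\setminus U_B^{j+1}$ has norm in $U_{B_1}^j\setminus U_{B_1}^{j+1}$. This gives $N_1^{-1}(U_{B_1}^q)=U_B^q$ provided $q\le t'$: if $q<t'$, the graded map at every depth $j<q$ is injective, and a unit that is not a principal unit maps to a non-principal unit because the extension is totally ramified. Under the paper's convention $q_1=q$, so $H_1=U_{B_1}^q$ and the claim is exactly this preimage. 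The boundary $q=t'$ still works, because only depths $j<q\le t'$ are involved. For the second half I would write $x=1-z$ with $z\in\PP_B^q$ and expand $N_2(1-z)$ by elementary symmetric functions. The intermediate terms have $v_2\ge\floor{(iq+(p-1)(t+1))/p}\ge\floor{(q+(p-1)(t+1))/p}\ge q_2$, by \eqref{O:A:eq:strong} for $B/B_2$ with break $t$. The norm term has valuation $q\ge q_2$.

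\emph{Unramified base change.} After an unramified base change, valuations, breaks and the quantities $t,\delta$ are unchanged, so both the inequalities and the filtration arguments carry over verbatim.

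The main obstacle I expect is keeping the ceiling arithmetic honest in the last inequality for small $t$ and $p=3$, together with confirming the injectivity of the graded norm strictly below the break in both characteristics.
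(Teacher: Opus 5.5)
Your proposal is correct and follows essentially the same route as the paper. The inequalities come from the same ceiling arithmetic: your bound $X\ge \delta+2t+p\ge pq_2$ is exactly the paper's $(p-3)t+\lceil(2t+1)/p\rceil-1\ge0$. The preimage comes from injectivity of the graded norm of $B/B_1$ strictly below the break $t'$, and the inclusion $N_2(U_B^q)\subset H_2$ from the strong symmetric bound. One small addition: you only dispose of units that are not principal units, so you should also note, as the paper does, that elements of nonzero valuation lie outside $N_1^{-1}(H_1)$ because $v_{B_1}(N_1x)=v_B(x)$.
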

\begin{proof}
The inequalities follow by writing $q=\delta+\lceil(2t+1)/p\rceil$ and
$q_2=\lceil(2t+\delta+1)/p\rceil$. For the floor inequality it is enough
that $q+(p-1)(t+1)\ge pq_2$; use $pq_2\le2t+\delta+p$ and
$(p-3)t+\lceil(2t+1)/p\rceil-1\ge0$.
The inequalities $q\le t'$ and $q_2\le t_2$ follow from
$2t+1\le pt$ for $p\ge3,t\ge1$ and $\delta\ge0$.
For $B/B_1$ and depths less than $t'$, the norm preserves the first nonzero unit
depth; at the critical depth its image is still in that depth.
Thus a norm in $H_1$ has a preimage of depth at least $q$, including
the case $q=t'$. The converse follows from the norm filtration.
The second assertion follows coefficientwise from the strong symmetric
bound, using the two displayed inequalities. Norm valuations first
exclude elements outside $U_B^1$ from the first preimage assertion.
\end{proof}

\subsubsection*{The value of \texorpdfstring{$\Psi_A(g(x\Delta^j)+h(x\Delta^j))$}{PsiA(g(xDelta to power j)+h(xDelta to power j))}}
\begin{lemma}\label{O:M:monophase}
For $x\in B_2$, $0\le j<p$, $v_B(x\Delta^j)\ge q$, one has
\[
 \Psi_A\bigl(g(x\Delta^j)+h(x\Delta^j)\bigr)=1.
\]
\end{lemma}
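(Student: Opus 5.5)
Set $R=v_B(x\Delta^j)\ge q$ and split according to whether $j=p-2$. The plan is to use the two congruences for $g$ and $h$, \eqref{O:M:ginput} and \eqref{O:M:hinput}, and to kill the single surviving term at $j=p-2$ with the conversion identity \eqref{O:M:conversion} and the norm-transfer congruence \eqref{O:M:transferinput}. Two facts are needed first. From $q=\delta+\lceil(2t+1)/p\rceil$ we get $R\ge q\ge1+\delta$, so both inputs apply. Also, $g$ and $h$ take values in $A$, and $\PP_{B_1}^{pt_2+R}\cap A\subset\pp_A^{\kappa_2}$ because $pt_2+R>p t_2$. Hence every error term in \eqref{O:M:hinput} lies in the trivial ideal $\pp_A^{\kappa_2}$ of $\Psi_A$.

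\emph{Case $j\ne p-2$.} Here $g(x\Delta^j)\in\pp_A^{\kappa_2}$ by Proposition~\ref{O:A:prop:g}. By \eqref{O:M:hinput}, $h(x\Delta^j)$ lies in $\PP_{B_1}^{pt_2+R}\cap A\subset\pp_A^{\kappa_2}$. So $\Psi_A$ of the sum is $1$.

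\emph{Case $j=p-2$.} Combining the two inputs gives
\[
 g+h\equiv T_2(x\omega)-T_2(x)+(p-1)n_2(x)-(p-1)n_2(x)s^p
\]
modulo $\pp_A^{\kappa_2}$. I would rewrite this as
\[
 \bigl[T_2(x\omega)+n_2(x\omega)\bigr]-\bigl[T_2x+n_2x\bigr]
 -\bigl[n_2(x\omega)-n_2(x)s^p\bigr]+p\,n_2x(1-s^p).
\]
Each piece is then handled separately.

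First, the term $p\,n_2x(1-s^p)$ is deep. We have $v_A(n_2x)=v_2(x)\ge0$ roughly, $v_1 s\ge0$, and $v_B(p)\ge p(p-1)t_2$ by \eqref{O:M:basic}. So its valuation is at least $\kappa_2$; in equal characteristic it is zero.

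Second, the bracket $T_2v+n_2v$ has trivial $\Psi_A$-value by \eqref{O:M:conversion} for $v=x$ and for $v=x\omega$, provided $v\in\PP_{B_2}^c$ with $c=\lceil\kappa_2/p\rceil$. For $v=x$ this should follow from $v_B(x)=R+(p-2)t\ge q+(p-2)t$. For $v=x\omega$, we know $x\omega\equiv xs$ modulo $\PP_B^{1+pt_2}$ and $v_1(s)\ge(p-1)\delta$, so its valuation is at least that of $x$ up to this error.

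Third, the difference $n_2(x\omega)-n_2(x)s^p$ is handled by \eqref{O:M:transferinput} with $w=s\in B_1$ and $y=x\omega\in B_2$. This gives $n_2x\,s^p\equiv n_2(x\omega)$ modulo $\PP_{B_1}^{1+pt_2}$, and intersecting with $A$ puts the difference in $\pp_A^{\kappa_2}$. Applying \eqref{O:M:transferinput} requires $v_B(xs)\ge1+t_2+(p-3)t$. I would get this from $v_B(xs)\ge R+(p-2)t+p(p-1)\delta$ together with $R\ge q\ge1+\delta$.

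The main obstacle is the valuation bookkeeping in the case $j=p-2$. One must show $x,x\omega\in\PP_{B_2}^{c}$ and that the transfer hypothesis holds, using only $R\ge q$. The tightest spot is small $t$ with $p=3$ and $\delta=0$. I would check it directly using $q\ge\lceil(2t+1)/p\rceil$, $p\nmid t$, and $v_2(x)=(R+(p-2)t)/p$. If the inequality for $x\omega$ fails, I would instead apply the conversion to $xs$ and absorb the error $x\omega-xs\in\PP_B^{1+pt_2}$, whose trace and norm land in $\pp_A^{\kappa_2}$.
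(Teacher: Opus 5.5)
Your proposal is correct and is essentially the paper's proof. The case $j\ne p-2$ is read off from \eqref{O:M:ginput}--\eqref{O:M:hinput}. For $j=p-2$ you split off the two conversion pairs $T_2v+n_2v$ with $v=x,x\omega$, control $n_2(x\omega)-n_2x\,s^p$ by \eqref{O:M:transferinput}, and absorb the $p$-multiple, exactly as the paper does.

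Two small points of bookkeeping need fixing.
\begin{itemize}
\item Since $s^p\in B_1$, the intermediate congruences should be taken modulo $\PP_{B_1}^{1+pt_2}$, not $\pp_A^{\kappa_2}$. Intersect with $A$ only after recombining the pieces, because the individual difference $n_2(x\omega)-n_2x\,s^p$ is not in $A$.
\item The bound for $p\,n_2x(1-s^p)$ cannot rest on $v_2x\ge0$ alone. It needs $pv_2x=R+(p-2)t\ge q+(p-2)t$, which gives $v_1\ge(p-1)t_2+q+(p-2)t\ge1+pt_2$.
\end{itemize}
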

\begin{proof}
We have $q\ge1+\delta$. Outside $j=p-2$ this follows at once from
\eqref{O:M:ginput}--\eqref{O:M:hinput}: $h(x\Delta^j)$ belongs to $A$ and to
$\PP_{B_1}^{1+pt_2}$, hence to $\pp_A^{\kappa_2}$.
For $j=p-2$, $pv_2x\ge q+(p-2)t\ge \kappa_2$, so $v_2x\ge c$.
Put $K_0=1+pt_2$. The difference between $p-1$ and $-1$ in
\eqref{O:M:hinput} is in $\PP_{B_1}^{K_0}$ because
\[
 v_1p+pv_2x\ge(p-1)t_2+q+(p-2)t\ge K_0,
\]
and $s$ is integral. Also
\[
 v_B(xs)\ge q+(p-2)t+p(p-1)\delta
       \ge1+t_2+(p-3)t.
\]
Using \eqref{O:M:ginput} and \eqref{O:M:transferinput} gives
$n_2x\,s^p\equiv n_2(x\omega)\pmod{\PP_{B_1}^{K_0}}$.
The approximation $x\omega\equiv xs\pmod{\PP_B^{K_0}}$ also implies
$v_2(x\omega)\ge c$. Thus $g(x\Delta^{p-2})+h(x\Delta^{p-2})$ is congruent in $A$ to
\[
 -T_2x-n_2x+T_2(x\omega)+n_2(x\omega)\pmod{\pp_A^{\kappa_2}}.
\]
Equation \eqref{O:M:conversion} gives character value one for each pair.
\end{proof}

\subsubsection*{A consequence of additivity after unramified extension}
\begin{lemma}\label{O:M:rigidity}
Let $M$ be a local field of residue characteristic $p$, and let $\Psi_M$
be a continuous additive character. Write
$f(Z)=\sum_{n=1}^{D}a_nZ^n\in M[Z]$, where $D<p^2$, and suppose
$x\mapsto\Psi_M(a_nx)$ has order dividing $p$ on $\OO_M$ for each $n$.
Let $M'/M$ be unramified of degree prime to $p$, with absolute residue
degree at least $3$, and set $\Psi_{M'}=\Psi_M\circ\Tr_{M'/M}$.
For $\lambda\in k_{M'}$, let $[\lambda]$ be its Teichmuller lift
(the coefficient-field lift in equal characteristic). Suppose
\[
 \lambda\longmapsto\Psi_{M'}(f([\lambda]))
\]
is additive on $k_{M'}$. Then
\[
 \Psi_M(f(1))=\Psi_M(a_1+a_p),
\]
where missing coefficients are zero.
\end{lemma}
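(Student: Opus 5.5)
The plan is to reduce everything to an additive character of the residue field of $M$, and then to use the fact that, over a large enough finite field, monomial functions of low degree are linearly independent.

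\emph{Step 1: reduction to $k_M$.} For each $n$, the hypothesis that $x\mapsto\Psi_M(a_nx)$ has order dividing $p$ on $\OO_M$ means it is trivial on $p\OO_M$. In equal characteristic I would use the coefficient-field lift throughout, and then $p\OO_M=0$ and all congruences below are equalities.

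Teichm\"uller lifts satisfy $[\mu+\nu]\equiv[\mu]+[\nu]\pmod{p\OO_{M'}}$. Since $M'/M$ is unramified, $\Tr_{M'/M}[\mu]\equiv\sum_i[\mu^{Q^i}]\equiv[\Tr_{k_{M'}/k_M}\mu]\pmod{p\OO_M}$, where $Q=|k_M|$. Put $\varphi_n(x)=\Psi_M(a_n[x])$ for $x\in k_M$. This is an additive character of $k_M$, so it has the form $\varphi_n(x)=\mathrm e(\Tr_{k_M/\F_p}(c_nx))$ for a fixed nontrivial $\mathrm e:\F_p\to\CC^\times$ and a unique $c_n\in k_M$. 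Using $[\lambda]^n=[\lambda^n]$, the map in the statement becomes
\[
 \lambda\longmapsto \mathrm e\Bigl(\Tr_{k_{M'}/\F_p}\bigl(\bar f(\lambda)\bigr)\Bigr),
 \qquad \bar f(Z)=\sum_{n=1}^{D}c_nZ^n\in k_M[Z].
\]
Similarly, with $[1]=1$, the quantity $\Psi_M(f(1))$ equals $\mathrm e(\Tr_{k_M/\F_p}\sum_nc_n)$, and $\Psi_M(a_1+a_p)=\mathrm e(\Tr_{k_M/\F_p}(c_1+c_p))$. So it suffices to prove
\[
 \Tr_{k_M/\F_p}\sum_{n\ne1,p}c_n=0.
\]

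\emph{Step 2: the additivity condition.} Put $r=[k_{M'}:\F_p]\ge3$, so $|k_{M'}|-1\ge p^3-1>p^2>D$. The function $\Tr_{k_{M'}/\F_p}\bar f$ is $\F_p$-linear, because it is additive with values in $\F_p$. Every $\F_p$-linear functional on $k_{M'}$ has the form $\lambda\mapsto\Tr(c\lambda)$. Now use $\Tr(c\lambda^{pn})=\Tr(c^{1/p}\lambda^n)$ to move each monomial to a chosen representative of its cyclotomic coset $\{n,pn,p^2n,\dots\}$ modulo $|k_{M'}|-1$. The additivity then says that a polynomial $G(Z)=\sum_{\rho}\gamma_\rho Z^\rho$, where $\rho$ runs over coset representatives, induces the zero function after applying $\Tr$.

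\emph{Step 3: independence, the main obstacle.} I need that $\lambda\mapsto\Tr(\gamma_\rho\lambda^\rho)$, for distinct cosets $\rho$, are independent. That is, if $\sum_\rho\Tr(\gamma_\rho\lambda^\rho)$ vanishes identically, then each coset's contribution vanishes as a polynomial in $Z$ reduced modulo $Z^{|k_{M'}|}-Z$.

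To prove this, expand $\Tr(\gamma\lambda^\rho)=\sum_j\gamma^{p^j}\lambda^{p^j\rho}$. The exponents $p^j\rho$ reduced modulo $|k_{M'}|-1$ are pairwise distinct across all cosets and all $j$. A polynomial of degree $<|k_{M'}|$ that vanishes on $k_{M'}$ is zero. Hence all coefficients vanish, and in particular $\gamma_\rho=0$ for every coset other than that of $1$.

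The delicate bookkeeping is the following. The exponents $n<p^2$ that lie in the coset of $1$ are exactly $1$ and $p$; this uses $r\ge3$, so that $p^2<|k_{M'}|-1$ and no wrap-around occurs. For every other coset, the coefficient $\gamma_\rho$ is $\sum_{n\in\text{coset},\,n\le D}c_n^{p^{-j(n)}}$, and the vanishing $\gamma_\rho=0$ gives
\[
 \Tr_{k_{M'}/\F_p}\Bigl(\sum_{n\in\text{coset}}c_n\Bigr)=0 .
\]
Here I use that the trace is invariant under Frobenius, and that a Frobenius twist does not change the trace of each $c_n$.

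Finally, since $c_n\in k_M$, we have $\Tr_{k_{M'}/\F_p}(c)=[k_{M'}:k_M]\Tr_{k_M/\F_p}(c)$. This is where the hypothesis that the degree is prime to $p$ enters: the factor $[k_{M'}:k_M]$ is a unit in $\F_p$ and can be cancelled, giving $\Tr_{k_M/\F_p}\sum_{n\in\text{coset}}c_n=0$. Summing over all cosets other than that of $1$ gives the identity required at the end of Step 1.
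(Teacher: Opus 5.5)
Your proof is correct and follows essentially the same route as the paper's: Teichm\"uller carries lie in $p\OO$ and are killed by the coefficient characters, the unramified trace turns each term into $\Tr_{k_{M'}/\F_p}(c_n\lambda^n)$ with $c_n\in k_M$, Frobenius orbits of exponents below $p^2$ (with $r\ge3$) identify only $n$ with $pn$, linear independence of monomials of degree less than $|k_{M'}|$ kills every orbit except that of $\{1,p\}$, and the degree prime to $p$ lets you descend. You cancel $[k_{M'}:k_M]$ in $\F_p$, whereas the paper raises $p$th roots of unity to that power; this is the same step. The one point you leave implicit, needed for your ``pairwise distinct'' claim, is that every orbit meeting $[1,D]$ has full length $r$; this holds because such an exponent has at most two nonzero base-$p$ digits, in adjacent positions, and $r\ge3$.
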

\begin{proof}
A Teichmuller carry belongs to $p\OO_{M'}$, and hence is killed by
each coefficient character. Therefore these characters on Teichmuller
lifts are residue additive characters, of the form
$\psi_{\mathbf F_p}\Tr_{k_{M'}/\mathbf F_p}(\beta_n\lambda)$;
unramified trace shows that each $\beta_n$ is the scalar extension of its
base coefficient. The displayed function is consequently
\[
 \psi_{\mathbf F_p}\Tr_{k_{M'}/\mathbf F_p}
       \Bigl(\sum_n\beta_n\lambda^n\Bigr).
\]
Under the absolute trace, exponents in the same Frobenius orbit are
identified. Among positive exponents less than $p^2$, with at least three
base-$p$ digits available in the residue field, the only identifications
are $n$ with $pn$ when both occur. This follows by rotating the base-$p$
digits modulo $|k_{M'}|-1$: a two-nonzero-digit exponent cannot return below
$p^2$ except at itself. Distinct polynomial monomials of degrees less than
$|k_{M'}|$ are linearly independent as functions. An additive function can
therefore retain only the orbit of the exponents $1,p$; every other orbit
has zero trace contribution. Evaluating at $1$ proves the assertion over
$M'$. Both sides are $p$th roots of unity and the unramified degree is prime
to $p$, so the equality descends to $M$.
\end{proof}

\subsubsection*{Compatibility on \texorpdfstring{$U_B^q$}{UB to power q}}
Define on $U_B^q$
\[
 \Xi(u)=R_1(N_1u)R_2(N_2u)^{-1}.
\]
It is a multiplicative character by Lemma~\ref{O:M:domains}. For $z\in\PP_B^q$ put
\begin{equation}\label{O:M:poly}
 F_z(Z)=T_1\bigl(bP(1-N_1(1-Zz))\bigr)
       -T_2\bigl(aP(1-N_2(1-Zz))\bigr).
\end{equation}
This is an exact polynomial of degree at most $p(p-1)<p^2$ and
$\Xi(1-Zz)=\Psi_A(F_z(Z))$ for integral base scalars $Z$.

\begin{lemma}\label{O:M:polycoeff}
Every coefficient of $F_z$ defines an additive character of order dividing
$p$ on $\OO_A$. Its degree-one coefficient is $g(z)$; its degree-$p$
coefficient is $h(z)$ modulo $\pp_A^{\kappa_2}$.
\end{lemma}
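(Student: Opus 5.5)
We expand both terms of \eqref{O:M:poly} in powers of $Z$, using the truncated-logarithm identities of Lemma~\ref{O:I:normlog} only qualitatively: a polynomial identity in $Z$ with coefficients in $\mathbf Z_{(p)}[\text{conjugates of }z]$.

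\emph{Order dividing $p$.} We first use that $\Xi(1-Zz)=\Psi_A(F_z(Z))$ is a character value on a group of exponent $p$ modulo the relevant conductor. Concretely, the coefficient of $Z^n$ in $F_z$ is a $\Tr$ of $b$ or $a$ times a symmetric polynomial of degree $n$ in the conjugates of $z$, with $\mathbf Z_{(p)}$-coefficients. It therefore suffices to show that $p\cdot(\text{coefficient})\in\pp_A^{\kappa_2}$, i.e.\ that $\Psi_A(p\,a_n\OO_A)=1$. For this we use the valuation bounds. Since $v_B(z)\ge q$, each conjugate of $z$ has valuation at least $q$. A degree-$n$ symmetric expression, multiplied by $b$ (resp.\ $a$) of $B$-valuation $-pt$ relative to $v_B$, has valuation at least $nq-pt$. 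The traces $T_1$ and $T_2$ add at least the different contributions via \eqref{O:A:eq:trace}. Finally $v_B(p)\ge p(p-1)t_2$ from \eqref{O:M:basic} makes $p\,a_n$ land in $\pp_A^{\kappa_2}$ for every $n\ge1$; in equal characteristic this is automatic. The $n\ge1$ with smallest $nq$ is $n=1$, so the worst case reduces to $(q+p(p-1)t_2-pt)/p+(\text{trace shift})\ge\kappa_2$, which holds by Lemma~\ref{O:M:domains}.

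\emph{Degree one.} We have $P(1-N_i(1-Zz))=Z\,S_i z+O(Z^2)$ and $P(Y)=Y+O(Y^2)$, so the $Z$-coefficient is exactly $T_1(bS_1z)-T_2(aS_2z)=g(z)$.

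\emph{Degree $p$.} The $Z^p$ coefficient of $P(1-N_i(1-Zz))$ comes from two sources. The first is $N_i(z)$, i.e.\ $e_p$ of the $p$ conjugates $Zz$. The second consists of weight-$p$ monomials in the $e_j$ with at least two factors, together with the $\frac1p(\sum z^p-e_1^p)$ terms. This is exactly the structure analysed in the proof of Lemma~\ref{O:I:normlog}: $\mathscr D$ has weight $\ge p$ and at least two factors, and the coefficient of $e_p$ in $P(u)-\sum P$ cancels against $\prod P$. Reading that identity at weight $p$ with $z_a=Z\sigma_a z$, the degree-$p$ coefficient of $P(1-N_i(1-Zz))$ equals $N_i(z)+\mathscr D_p$. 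Here $\mathscr D_p$ is a combination of products of at least two $e_j(z)$ with $j<p$, the pure $e_p$ term having coefficient divisible by $p$. Thus the $Z^p$ coefficient of $F_z$ is $T_1(bN_1z)-T_2(aN_2z)=h(z)$ plus the error terms $T_i(b\,\text{or}\,a\cdot\mathscr D_p)$.

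\emph{Main obstacle: bounding $\mathscr D_p$.} The expected difficulty is showing that these error terms lie in $\pp_A^{\kappa_2}$. We bound each two-factor monomial with the strong symmetric estimate \eqref{O:A:eq:strong} for $B/B_i$ (break $t'$ for $i=1$, $t$ for $i=2$). For $i=2$ this gives a valuation of at least $pq+2(p-1)(t+1)/p$-type bounds minus $pt$. The $p$-divisible $e_p$ term is handled by $v_B(p)$. We then verify the resulting inequality against $\kappa_2$ using $q\ge q_2$ and the floor inequality of Lemma~\ref{O:M:domains}, after applying the trace shift. If the $i=1$ side with break $t'$ is tight, we would instead exploit $q=\delta+\lceil(2t+1)/p\rceil$ explicitly, as was done in the proof of Lemma~\ref{O:I:normlog}.
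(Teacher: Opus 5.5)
Your degree-one computation is correct and is the paper's argument. Your plan for degree $p$, bounding the two-factor monomials by the strong symmetric estimate \eqref{O:A:eq:strong}, is also the paper's route.

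\textbf{The gap: the order-$p$ assertion.} This is the claim that $p$ times every coefficient lies in $\pp_A^{\kappa_2}$. You bound a degree-$n$ symmetric expression in the conjugates of $z$ only by $v_B\ge nq$, which discards the different of $B/B_i$. On the $B_1$ side at $n=1$ this yields only
\[
v_{B_1}(p\,b\,S_1z)\ge(p-1)t_2-t+\lceil q/p\rceil .
\]
Since $T_1(\PP_{B_1}^j)=\pp_A^{\lfloor(j+(p-1)(t+1))/p\rfloor}$, reaching $\pp_A^{\kappa_2}$ requires $j\ge T'=1+t+p\delta$, the exponent of the largest trivial ideal of $\Psi_{B_1}$. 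Comparing a $B_i$-valuation with $\kappa_2$ is not the right criterion. The needed inequality $(p-1)t_2-t+\lceil q/p\rceil\ge T'$ is equivalent to $(p-3)t-\delta+\lceil q/p\rceil-1\ge0$. This fails once $\delta$ is large compared with $t$. For $p=3$, $t=1$, $\delta=10$ one has $q=11$; your bound gives $25$ against the required $32$, and after $T_1$ the exponent is $9$ against $\kappa_2=12$. Lemma~\ref{O:M:domains} does not supply the inequality you attribute to it.

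\textbf{The fix.} Apply \eqref{O:A:eq:strong} here as well, coefficientwise, exactly as in the proof of Lemma~\ref{O:M:domains}.
\begin{itemize}
\item Every coefficient of $1-N_i(1-Zz)$, namely $e_j(z)$ for $j<p$ and $e_p=N_iz$, lies in $\PP_{B_i}^{q_i}$. For $i=1$ this uses $q\le t'$; for $i=2$ it uses the floor inequality of that lemma.
\item Hence every coefficient of $P(1-N_i(1-Zz))$ also lies in $\PP_{B_i}^{q_i}$.
\item Multiply by $p$, using $v_{B_i}(p)\ge(p-1)t_2$, and by $b$ or $a$, losing $t$. This gives $(p-1)t_2-t+q\ge T'$ and $(p-1)t_2-t+q_2\ge\kappa_2$.
\item These hold because the differences are $(p-3)t+\lceil(2t+1)/p\rceil-1$ and $(p-3)t+(p-2)\delta+q_2-1$.
\item Finally $\Psi_A\circ T_i=\Psi_{B_i}$ gives the conclusion.
\end{itemize}

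\textbf{Degree $p$.} You left the key inequality unverified, but it is immediate. One has $v_{B_i}(e_j)\ge(jq+(p-1)u)/p$, where $u$ is the break of $B/B_i$. So a weight-$p$ monomial with at least two factors has valuation at least $q-t+2(p-1)u/p$ after multiplication by $b$ or $a$. This exceeds $\kappa_2-1$ for $u=t$ and $T'-1$ for $u=t'$, because $q>2t/p$.

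Your description of the $Z^p$ coefficient is also muddled. Since $P$ stops at degree $p-1$, the $Z^p$ coefficient of $P(1-N_i(1-Zz))$ is exactly $e_p$ plus such two-factor monomials. There are no separate $\frac1p(\sum z^p-e_1^p)$ terms: that expression is the whole coefficient, not an extra source. There is also no pure $e_p$ term inside your $\mathscr D_p$, so no appeal to $v_B(p)$ is needed at this step.
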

\begin{proof}
The coefficients of $1-N_i(1-Zz)$ belong to $\PP_{B_i}^{q_i}$,
by the coefficientwise proof of Lemma~\ref{O:M:domains}. Products of these
coefficients have at least that valuation. Multiplication by $b$ or $a$
loses $t$. In mixed characteristic $v_i p\ge(p-1)t_2$, and
\[
 (p-1)t_2-t+q\ge T',\qquad
 (p-1)t_2-t+q_2\ge \kappa_2.
\]
For the first inequality the difference is
$(p-3)t+\lceil(2t+1)/p\rceil-1$; the second follows from
$(p-3)t+(p-2)\delta+q_2-1\ge0$.
Thus every coefficient defines a character of order dividing $p$
on $\OO_A$. In equal characteristic this assertion is automatic.

The linear coefficient follows by differentiation at $Z=0$.
At degree $p$, the norm terms give $h(z)$. All other terms contain at
least two intermediate elementary coefficients of total weight $p$.
If the upper break is $u$, their valuation after multiplication by $b$ or
$a$ is at least
\[
 q-t+2(p-1)u/p.
\]
For $u=t$ this is strictly greater than $\kappa_2-1$; for $u=t'$ it is
strictly greater than $T'-1$, using
$q=\delta+\lceil(2t+1)/p\rceil$. Integral valuations therefore put
each term in the corresponding trivial ideal. Trace to $A$ puts it in
$\pp_A^{\kappa_2}$. 
\end{proof}

\begin{theorem}[Compatibility on $U_B^q$]\label{O:M:compat}
The characters \eqref{O:M:models} satisfy $R_1N_1=R_2N_2$ on $U_B^q$.
This holds for all odd $p$, in mixed and equal characteristic.
\end{theorem}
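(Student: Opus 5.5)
We must show that the character $\Xi(u)=R_1(N_1u)R_2(N_2u)^{-1}$ on $U_B^q$ is trivial. Since $\Xi$ is a multiplicative character by Lemma~\ref{O:M:domains}, it suffices to check triviality on a set of topological generators of $U_B^q$. Using the expansion $w=\sum_j x_j\Delta^j$ with $x_j\in B_2$, whose term valuations are distinct modulo $p$, every element of $\PP_B^q$ is a sum of monomials $x\Delta^j$ with $v_B(x\Delta^j)\ge q$. Hence $U_B^q$ is generated, modulo the deep subgroup on which $\Xi$ is certainly trivial, by elements $1-x\Delta^j$ of this form. Successive approximation by such factors $\prod(1-x_k\Delta^{j_k})$ converges, and $\Xi$ is trivial on $U_B^{M}$ for $M$ large because $R_i$ have finite conductors. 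So the plan is to prove $\Xi(1-z)=1$ for each single monomial $z=x\Delta^j$ with $v_B(z)\ge q$.

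For such a $z$, I would consider the polynomial $F_z(Z)$ of \eqref{O:M:poly}, so that $\Xi(1-Zz)=\Psi_A(F_z(Z))$ for integral $Z\in A$. The key idea is to make $Z\mapsto\Psi_A(F_z(Z))$ additive, then apply Lemma~\ref{O:M:rigidity}. Multiplicativity alone gives additivity only on the multiplicative side, so I would instead pass to an unramified extension $A'/A$ of degree prime to $p$ with residue degree at least $3$. All data, and estimates \eqref{O:M:ginput}--\eqref{O:M:transferinput}, survive this extension by the remark preceding the lemma. Over $A'$, for Teichmüller lifts $[\lambda]$, the element $[\lambda]z$ is again a monomial $([\lambda]x)\Delta^j$ of the same valuation.

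To get additivity in $\lambda$, I would argue as follows. The map $\lambda\mapsto 1-[\lambda]z$ is multiplicative-to-additive modulo deeper terms, since $(1-[\lambda]z)(1-[\mu]z)=(1-[\lambda+\mu]z)\cdot(\text{unit of depth}\ge 2q\text{ or Teichmüller carry})$. The correction factors are again handled by induction on depth: run a descending induction on $v_B(z)$, with base case the depth beyond which $\Xi$ is trivial. This makes $\lambda\mapsto\Xi(1-[\lambda]z)=\Psi_{A'}(F_z([\lambda]))$ additive. Lemma~\ref{O:M:polycoeff} gives the order-$p$ hypothesis on the coefficients, so Lemma~\ref{O:M:rigidity} yields
\[
\Xi(1-z)=\Psi_A(F_z(1))=\Psi_A\bigl(g(z)+h(z)\bigr),
\]
using Lemma~\ref{O:M:polycoeff} again to identify the degree-$1$ and degree-$p$ coefficients. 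Lemma~\ref{O:M:monophase} then gives the value $1$.

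The main obstacle is the additivity step. One must check that the correction units arising from $(1-[\lambda]z)(1-[\mu]z)(1-[\lambda+\mu]z)^{-1}$ are strictly deeper products of monomials, so that the induction hypothesis applies, and that the Teichmüller carry, which lies in $p\OO$, is deep enough by \eqref{O:M:basic}. In equal characteristic the coefficient-field lift is additive and there is no carry. I would first try to organize this as an induction on the depth filtration of $U_B^q/U_B^{M}$, treating each graded piece as an $\F_p$-space on which $\Xi$ restricts to a linear functional.
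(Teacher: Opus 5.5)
Your proposal is correct and follows the paper's proof: a descending induction on depth, additivity of $\lambda\mapsto\Xi(1-[\lambda]z)$ over an unramified extension of degree prime to $p$, Lemmas~\ref{O:M:rigidity} and~\ref{O:M:polycoeff} to obtain $\Xi(1-z)=\Psi_A(g(z)+h(z))$, valuation separation into monomials $x\Delta^j$, and Lemma~\ref{O:M:monophase}. State explicitly that the induction hypothesis must hold simultaneously over all unramified extensions of $A$, as the paper does, because the correction units $(1-[\lambda]z)(1-[\mu]z)(1-[\lambda+\mu]z)^{-1}$ lie in $U_{B'}^{v_B(z)+1}$ with $B'=BA'$, not in $U_B$; note also that stepping down one depth at a time (the correction has depth at least $\min(2v_B(z),\,v_B(p)+v_B(z))>v_B(z)$) lets you start from any depth where $\Xi$ is trivially trivial, whereas the paper halves the depth, which requires $v_B(p)+s_0\ge M$ and therefore its separately proved base case on $U_B^{m_1}$.
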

\begin{proof}
First $\Xi$ is trivial on $U_B^{m_1}$. At that depth the norm-logarithm
congruence of Lemma~\ref{O:I:normlog} applies to $B/B_1$ and $B/B_2$, with moduli
$\PP_{B_i}^{m_i}$. Indeed its sufficient input depths are
$t+\lceil(t'+1)/p\rceil$ for $B/B_1$ and
$t_2+\lceil(t+1)/p\rceil$ for $B/B_2$, both at most $m_1$.
After multiplication by $b$ or $a$, the norm terms lie in the
respective trivial ideals.
The remaining difference is
$\Psi_B((b-a)Pz)=1$ by \eqref{O:M:H14} and $m_1-t=T'$.

Proceed by descending induction, uniformly over all unramified base
extensions. If triviality on $U_B^M$, $M\le m_1$, is known, put
$s_0=\max(q,\lceil M/2\rceil)$. On $\PP_B^{s_0}$ the map
$z\mapsto\Xi(1-z)$ is additive, since every multiplicative discrepancy
has depth at least $2s_0\ge M$. In mixed characteristic a Teichmuller carry
multiplied by $z$ has depth at least $v_Bp+s_0\ge M$; the carry is zero
in equal characteristic. For each fixed $z$ in this ideal, choose an
unramified extension of degree prime to $p$ and absolute residue degree at
least $3$. Apply Lemmas~\ref{O:M:rigidity} and \ref{O:M:polycoeff} to obtain
\[
 \Xi(1-z)=\Psi_A(g(z)+h(z)).
\]
Expand $z=\sum x_j\Delta^j$. Valuation separation puts every term in
$\PP_B^{s_0}$. The additivity of $z\mapsto\Xi(1-z)$ on this ideal,
and Lemma~\ref{O:M:monophase} for each term, give $\Xi(1-z)=1$.
This proves triviality on $U_B^{s_0}$. Repeated halving reaches $q$.
The same argument works over every unramified extension, as required
for the induction and the application of Lemma~\ref{O:M:rigidity}.
\end{proof}

\subsubsection*{Extension to \texorpdfstring{$B_i^\times$}{Bi to power x}}
\begin{theorem}\label{O:M:global}
Every extension $\theta_2$ of $R_2$ to $B_2^\times$ has a compatible
extension $\theta_1$ of $R_1$ to $B_1^\times$. Both characters have
conductors $m_i$, and their common pullback is primitive.
\end{theorem}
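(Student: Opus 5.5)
Start from an arbitrary extension $\theta_2$ of $R_2$ to $B_2^\times$ (one exists by Lemma~\ref{C:character-extension}, since $R_2$ is trivial on $U_{B_2}^{m_2}$) and put $\Theta=\theta_2\circ N_2$. The plan is to descend $\Theta$ through $N_1$, in the style of the construction of $\chi_E^0$ in Proposition~\ref{O:I:canonical}.

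\textbf{Step 1: invariance of $\Theta$.} Invariance under $\Gal(B/B_2)$ is automatic. For invariance under $\Gal(B/B_1)$ it suffices to know that $\theta_2^{\sigma}/\theta_2$ is a norm character for $B/B_2$, for a generator $\sigma$ of $\Gal(B_2/A)$. On $H_2$ this quotient is computed from \eqref{O:M:models}. The coefficient $a$ satisfies $\sigma a-a\equiv$ (a unit) modulo a suitable ideal, which one sees from $a=N_2\Delta$ and the first ramification term of $B_2/A$. Alternatively, one can use Theorem~\ref{O:M:compat} directly: on $N_2(U_B^q)$ the character $\Theta$ equals $R_1\circ N_1$, and this is invariant under $\Gal(B/B_1)$ by construction.

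I would argue the general case as follows. $\Theta^\rho/\Theta$, for $\rho$ generating $\Gal(B/B_1)$, is $(\theta_2^{\bar\rho}/\theta_2)\circ N_2$. The character $\theta_2^{\bar\rho}/\theta_2$ has conductor at most $\kappa_2$ on $B_2$ (its commutator computation via the $a$-coefficient), so it is trivial or a nontrivial character whose pullback has small conductor. By Lemma~\ref{D:crossed-norm}, its pullback is trivial exactly when it lies in $S(B/B_2)$. The main obstacle is to show that this commutator is exactly a norm character for $B/B_2$, i.e.\ lies in $S(B_1/A)\circ n_2$. I would attack this by matching its additive coefficient $\alpha(\sigma a-a)$ against the coefficient $\alpha$ of $\tau\circ n_2$ modulo the relevant ideal, using $\Delta^\sigma$-expansions as in the proof of Lemma~\ref{O:A:lem:coeff}.

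\textbf{Step 2: descent to $B_1$.} Given invariance, Lemma~\ref{C:hilbert90} shows that $\Theta$ is trivial on $\ker N_1$, so $N_1x\mapsto\Theta(x)$ is a well-defined character of $N_1(B^\times)$. On $N_1(B^\times)\cap H_1=N_1(U_B^q)$, using \eqref{O:M:preimage}, it agrees with $R_1$ by Theorem~\ref{O:M:compat}. Hence the two prescriptions glue to a character of $N_1(B^\times)H_1$. This character is trivial on $U_{B_1}^{m_1}$, so Lemma~\ref{C:character-extension} extends it to $\theta_1$ on $B_1^\times$.

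\textbf{Step 3: conductors and primitivity.} Both $\theta_i$ restrict to $R_i$ on $H_i$. Since $q_i\le\lceil m_i/2\rceil$ and $v_1(b)=v_2(a)=-t$, the conductors are exactly $m_i$, because the trivial ideals are $\PP_{B_1}^{T'}$ and $\PP_{B_2}^{\kappa_2}$.

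For primitivity, suppose $\Theta=\lambda\circ N_{B/A}$. Then $\theta_2=(\lambda\circ n_2)\mu$ with $\mu\in S(B/B_2)$, and this character is fixed by $\Gal(B_2/A)$. Its conductor would then be governed by \cite[Proposition~3.10]{Ueda}. But $m_2=1+t+t_2$ is not of the form allowed for a pullback twisted by a conductor-$(t+1)$ norm character: it is neither $\le t_2$, nor $t_2+1$, nor $t_2+p(\cdot)$ unless divisibility fails. Equivalently, the commutator $\theta_2^\sigma/\theta_2$ is nontrivial, since $\sigma a-a\ne0$ at the leading order. I would use the commutator route, as it avoids the conductor bookkeeping.
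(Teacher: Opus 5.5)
Your Step~1 has a genuine gap, and it is the only nontrivial point of the theorem: you never show that $\Theta=\theta_2\circ N_2$ is invariant under $\Gal(B/B_1)$ on all of $B^\times$. Your ``alternative'' via Theorem~\ref{O:M:compat} gives invariance only on $U_B^q$, which is not what Hilbert~90 needs. Your main route is left as an unexecuted plan, and it aims at the wrong target. With $\tau\in S(B_2/A)$, the character $\tau\circ n_2$ is trivial on $B_2^\times$; the relevant characters are $\omega\circ n_2$ with $\omega\in S(B_1/A)$. Moreover, agreement of additive coefficients on $H_2$, or on a single graded layer, cannot identify a character of conductor $t+1$ on $B_2^\times$. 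Such characters factor through $B_2^\times/A^\times U_{B_2}^{t+1}$, which is much larger than order $p$. To reduce to a group of order $p$ you would already need the commutator to be trivial on $N_2(B^\times)$, and that is the assertion itself.

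The missing observation makes the step one line. Let $\rho$ generate $\Gal(B/B_1)$, whose break is $t'$. For every $z\in B^\times$ one has $\rho(z)/z\in U_B^{t'}\subseteq U_B^q$, since $q\le t'$ by Lemma~\ref{O:M:domains}, and $N_1(\rho(z)/z)=1$. By \eqref{O:M:preimage}, $N_2(\rho(z)/z)\in H_2$, where $\theta_2=R_2$. Theorem~\ref{O:M:compat} then gives
\[
\Theta(\rho(z)/z)=R_2(N_2(\rho(z)/z))=R_1(N_1(\rho(z)/z))=1 .
\]
This is exactly the paper's argument. With it in place, your Step~2 is correct. Gluing the prescription on $N_1(B^\times)$ with $R_1$ on $H_1$ before extending is a harmless variant of the paper's ``extend, then divide by a norm character.'' The conductor part of Step~3 is also fine.

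For primitivity, the conductor argument you sketch works: $m_2-1-t_2=t$ is prime to $p$, and twisting $\lambda\circ n_2$ by a character of conductor at most $t+1$ cannot yield conductor $m_2$. The commutator version you say you prefer fails as stated, however. The formula $\Psi_2((\sigma a-a)P(z))$ for $\theta_2^\sigma/\theta_2$ is valid only on $H_2=U_{B_2}^{q_2}$. Since $v_2(\sigma a-a)=\delta$, it is trivial there whenever $q_2>t$, for example when $p=3$, $t=1$, $\delta\ge1$. You must instead evaluate $\theta_2$ at $\sigma^{-1}(1-z)/(1-z)$ with $v_2(z)=t$. These elements lie in $U_{B_2}^{m_2-1}\subseteq H_2$ and, because $p\nmid t$, fill out the top layer, on which $R_2$ is nontrivial.
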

\begin{proof}
Lemma~\ref{C:character-extension} gives $\theta_2$. Put $\Theta=\theta_2N_2$.
Let $\rho$ generate $\Gal(B/B_1)$, whose break is $t'$.
For $z\in B^\times$, the ramification inequality gives $\rho(z)/z\in U_B^{t'}\subset U_B^q$.
Its $N_1$-norm is $1$. By Theorem~\ref{O:M:compat} and \eqref{O:M:preimage},
\[
 \Theta(\rho(z)/z)=R_2(N_2(\rho(z)/z))
                 =R_1(N_1(\rho(z)/z))=1.
\]
Hence $\Theta$ is invariant under $\Gal(B/B_1)$ on all of $B^\times$.
Lemmas~\ref{C:hilbert90} and~\ref{C:character-extension} give a
character $\widetilde\theta_1$ of $B_1^\times$ with
$\widetilde\theta_1\circ N_1=\Theta$.
On $H_1\cap N_1B^\times$ it agrees with $R_1$: any norm preimage belongs to
$U_B^q$ by \eqref{O:M:preimage}, where Theorem~\ref{O:M:compat} applies.
Thus $\widetilde\theta_1/R_1$ on $H_1$ is a character of the image of $H_1$
in $B_1^\times/N_1B^\times$. Extend it across this finite cyclic quotient
and divide $\widetilde\theta_1$ by the resulting norm character. This gives
$\theta_1|_{H_1}=R_1$, without changing the common pullback.

The restrictions $R_i$ are nontrivial at depth $m_i-1$ and trivial
at depth $m_i$, so the conductors are $m_i$.
For a generator $\sigma$ of $\Gal(B_2/A)$ and $z\in\PP_{B_2}^t$,
$\sigma^{-1}(1-z)/(1-z)$ has leading depth $t+t_2=m_2-1$ and nonzero
ramification coefficient $t$. Because $p\nmid t$, its leading map on the
residue line is surjective. Therefore
$\theta_2^\sigma/\theta_2$ is nontrivial, of conductor $t+1$.
The invariance of $\Theta$ under $\Gal(B/A)$ makes this a norm
character for $B/B_2$.
If $\Theta$ descended from $A$, this commutator would be trivial, a
contradiction. Here the action of $\Gal(B_2/A)$ on $B_2^\times/N_2(B^\times)$
is trivial, since
$|\operatorname{Aut}(C_p)|=p-1$.
\end{proof}

\subsubsection*{A prescribed pair of conductors \texorpdfstring{$m_1,m_2$}{m1,m2}}
Let $(\varphi_1,\varphi_2)$ be a primitive compatible pair with
conductors $m_i$. Choose any initial pair from Theorem~\ref{O:M:global}.
For a generator $\sigma$ of $\Gal(B_2/A)$, the characters
$\varphi_2^\sigma/\varphi_2$ and $\theta_2^\sigma/\theta_2$
generate $S(B/B_2)$ by Lemma~\ref{U:conjugacy}. Choose
$j\in\{1,\ldots,p-1\}$ so that these characters agree after
raising the second to the $j$th power. Then $\varphi_2\theta_2^{-j}$ is invariant under
$\Gal(B_2/A)$. Hilbert 90 and character extension give a character $\lambda$
of $A^\times$ with
\begin{equation}\label{O:M:actual2}
 \varphi_2=\theta_2^j(\lambda n_2).
\end{equation}
Put
\[
 h_0=\floor{t/p},\qquad c=\ceil{(t_2+1)/p},\qquad q_0=h_0+c.
\]
The cyclic conductor formula and $a(\lambda n_2)\le m_2$ imply
$a(\lambda)\le \kappa_2+h_0$.
The following inequalities include the case $p\mid t_2$:
\begin{equation}\label{O:M:q0}
 q_0\le q_2,\quad pq_0\ge \kappa_2+h_0,\quad
 n_2(H_2)\subset U_A^{q_0},\quad
 (p-2)t_2-ph_0+q_0-1\ge0.
\end{equation}
To verify them, write $t=ph_0+b_0$, $1\le b_0<p$.
Then $q_2=h_0+\lceil(b_0+t_2+1)/p\rceil\ge q_0$.
For the norm inclusion use the strong trace bound and
$q_2+(p-1)\kappa_2\ge pq_0$; the latter follows from
$\kappa_2\ge ph_0+b_0+1$ and the preceding ceiling formula.
The last inequality follows by replacing $t_2$ by $t$ and $q_0$ by
$h_0+\lceil(t+1)/p\rceil$; the resulting lower bound is nonnegative.

Put $\Psi'_L=\Psi_L(j\,\cdot)$. Lemma~\ref{O:I:logchart} gives a coefficient
$\gamma\in A$, $v_A\gamma\ge-h_0$, such that
\[
 \lambda(1-z)=\Psi'_A(\gamma Pz)\qquad(z\in\pp_A^{q_0}).
\]
If the restriction is trivial take $\gamma=0$ and $w=0$.
Otherwise \cite[Lemma~6.6]{Ueda} gives
\begin{equation}\label{O:M:approx}
 w\in B_2,\quad k=v_2w=v_A\gamma\ge-h_0,\quad
 \gamma-n_2w\in\pp_A^{k+t_2}.
\end{equation}

\begin{lemma}\label{O:M:lambdaformula}
For $x\in\PP_{B_2}^{q_2}$,
\[
 (\lambda n_2)(1-x)=\Psi'_2((w^p-w)P(x)).
\]
\end{lemma}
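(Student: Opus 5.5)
The plan is to copy the proof of Proposition~\ref{O:I:actualcoeff}, with $E/F$ replaced by $B_2/A$, the exact norm replaced by the approximate norm \eqref{O:M:approx}, and the additive character $\Psi'$ in place of $\Psi$.

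\emph{Step 1: reduce to $A$.} For $x\in\PP_{B_2}^{q_2}$ put
\[
 u=1-n_2(1-x).
\]
By \eqref{O:M:q0} we have $n_2(H_2)\subset U_A^{q_0}$, so $u\in\pp_A^{q_0}$. The chart for $\lambda$ then gives
\[
 (\lambda n_2)(1-x)=\Psi'_A(\gamma P(u)).
\]

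\emph{Step 2: truncated logarithm of a norm.} Apply Lemma~\ref{O:I:normlog} to $B_2/A$ with break $t_2$. Its threshold is $\lceil\kappa_2/p\rceil=c$, so we write $q_2=h+c$ with $h\ge h_0$ (this uses $q_2\ge q_0$ from \eqref{O:M:q0}). The lemma gives
\[
 P(u)\equiv T_2P(x)+n_2P(x)\pmod{\pp_A^{\kappa_2+h}}.
\]
The trivial ideal of $\Psi'_A$ is $\pp_A^{\kappa_2}$ and $v_A\gamma\ge -h_0\ge -h$, so after multiplying by $\gamma$ the error term is killed.

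\emph{Step 3: replace $\gamma$ by $n_2w$.} By \eqref{O:M:approx}, $v_A(\gamma-n_2w)\ge k+t_2$. We have
\[
 v_A\bigl(T_2P(x)\bigr),\ v_A\bigl(n_2P(x)\bigr)\ \ge\ q_0-h_0=c\ \ge 1,
\]
using the trace bound and $n_2(H_2)\subset U_A^{q_0}$. The product of the error with $T_2P(x)+n_2P(x)$ therefore has valuation at least $k+t_2+1\ge \kappa_2-h_0$. This is not obviously $\ge\kappa_2$ when $k<0$, and this is the step I expect to be delicate. The fix I would try first is a sharper estimate for the trace and norm of $P(x)$. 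With $v_2P(x)\ge q_2$, the trace bound \eqref{O:A:eq:trace} gives
\[
 v_A\bigl(T_2P(x)\bigr)\ \ge\ \Bigl\lfloor\bigl(q_2+(p-1)\kappa_2\bigr)/p\Bigr\rfloor ,
\]
and the norm gives $v_A\ge q_2$. One then checks that $k+t_2$ plus this bound is at least $\kappa_2$, using $q_2\ge h_0+c$, $k\ge -h_0$ and $pq_0\ge\kappa_2+h_0$, i.e.\ the inequalities \eqref{O:M:q0}. The last inequality listed in \eqref{O:M:q0}, $(p-2)t_2-ph_0+q_0-1\ge0$, looks designed for exactly this kind of bookkeeping. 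In equal characteristic the terms divisible by $p$ vanish and the estimates only improve.

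\emph{Step 4: pass to $B_2$.} We now have
\[
 (\lambda n_2)(1-x)=\Psi'_A\bigl(n_2w\,T_2P(x)\bigr)\,\Psi'_A\bigl(n_2(w)\,n_2(P(x))\bigr).
\]
Since $n_2w\in A$, the first factor equals $\Psi'_2(n_2w\,P(x))$. The second factor equals $\Psi'_A(n_2(wP(x)))$. Apply the conversion \eqref{O:M:conversion}, with the scalar $j$ absorbed because it is an integer prime to $p$, to $v=wP(x)$. This requires
\[
 v_2(wP(x))\ge k+q_2\ge -h_0+q_2\ge c,
\]
which holds. The conversion gives
\[
 \Psi'_A\bigl(n_2(wP(x))\bigr)=\Psi'_2\bigl(-wP(x)\bigr).
\]

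\emph{Step 5: replace $n_2w$ by $w^p$.} It remains to show $\Psi'_2\bigl((n_2w-w^p)P(x)\bigr)=1$. By Lemma~\ref{O:A:lem:power},
\[
 v_2(w^p-n_2w)\ge pk+(p-1)t_2 \quad\text{(as a $B_2$-valuation)},
\]
so the argument has valuation at least $pk+(p-1)t_2+q_2$. Using $k\ge-h_0$ and the last inequality of \eqref{O:M:q0}, this is at least $\kappa_2$. Combining Steps 1--5 yields the stated formula $(\lambda n_2)(1-x)=\Psi'_2((w^p-w)P(x))$.
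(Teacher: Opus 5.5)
Your proposal is correct and is essentially the paper's proof: you reduce to $\Psi'_A(\gamma P(u))$, apply Lemma~\ref{O:I:normlog} to $B_2/A$, replace $\gamma$ by $n_2w$ via \eqref{O:M:approx}, convert the norm term by \eqref{O:M:conversion}, and replace $n_2w$ by $w^p$ using the last inequality of \eqref{O:M:q0}. The only difference is the order of steps: the paper replaces $\gamma$ by $n_2w$ before the norm--logarithm step, working on $\gamma P(u)$ with $v_A(P(u))\ge q_0$, so the error depth is $k+t_2+q_0\ge t_2+c\ge\kappa_2$ at once. The same bound removes your worry in Step~3, because the trace estimate already gives $v_A(T_2P(x))\ge\lfloor (q_2+(p-1)\kappa_2)/p\rfloor\ge q_0$, not merely $c$.
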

\begin{proof}
Set $u=1-n_2(1-x)$. It has depth at least $q_0$.
The error in replacing $\gamma$ by $n_2w$ contributes valuation at least
$k+t_2+q_0\ge \kappa_2$, so \eqref{O:M:approx} suffices.
The norm-logarithm congruence from Lemma~\ref{O:I:normlog}, now with target $\kappa_2+h_0$ and
input depth $h_0+c=q_0\le q_2$, yields
\[
 (\lambda n_2)(1-x)
 =\Psi'_2((n_2w)P(x))\Psi'_A(n_2(wP(x))).
\]
Since $k+q_2\ge c$, \eqref{O:M:conversion} changes the second
factor to $\Psi'_2(-wP(x))$.
The strong symmetric estimate applied to the characteristic polynomial of
$w$ gives $v_2(w^p-n_2w)\ge pk+(p-1)t_2$. Multiplication by $P(x)$
puts this error in $\PP_{B_2}^{\kappa_2}$ by \eqref{O:M:q0}. This proves the formula.
\end{proof}

\begin{theorem}[Character formulas for a prescribed pair]
\label{O:M:realize}
There exist $\alpha'=j\alpha$, with $1\le j<p$, and
$\Delta_\lambda\in B$ such that
\[
 \Delta_\lambda^p-\Delta_\lambda=a_\lambda\in B_2,\qquad
 v_B\Delta_\lambda=-t.
\]
For some $\rho\in\Gal(B_1/A)$, the character formulas are
\[
 \begin{aligned}
 \varphi_2(1-z)&=\ee_{B_2}(\alpha'a_\lambda P(z))
       &&(z\in\PP_{B_2}^{q_2}),\\
 \varphi_1^\rho(1-z)&=\ee_{B_1}(\alpha'N_1(\Delta_\lambda)P(z))
       &&(z\in\PP_{B_1}^{q_1}).
 \end{aligned}
\]
The characters $\varphi_1^\rho$ and $\varphi_1$ have the same local
constant.
\end{theorem}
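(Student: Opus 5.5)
The idea is to absorb the twist by $\lambda$ into a translate of the Artin--Schreier generator. Put $\Delta_\lambda=\Delta+w$, with $w\in B_2$ from \eqref{O:M:approx} (and $w=0$ if $\gamma=0$). Then
\[
 \Delta_\lambda^p-\Delta_\lambda=a+(w^p-w)+\sum_{0<i<p}\binom pi\Delta^iw^{p-i}.
\]
The binomial terms are divisible by $p$, so they vanish in equal characteristic. In mixed characteristic I would not keep the raw expression. Instead I apply Hensel's lemma to $X^p-X-(a+w^p-w)$ near $\Delta+w$, exactly as in the proof of Proposition~\ref{O:A:prop:AS}, and take $a_\lambda=a+w^p-w$ exactly. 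This needs a bound on $v_B(p\Delta^iw^{p-i})$. Since $v_2w\ge-h_0$, we have $v_B w\ge -ph_0\ge -t$, and \eqref{O:M:basic} gives $v_Bp\ge p(p-1)t_2$, so the constant term is deep. The root $\Delta_\lambda$ then has valuation $-t$, because $v_Bw\ge-t$ and the leading term $\Delta$ is not cancelled (its valuation $-t$ is prime to $p$, while $v_B w\in p\mathbf Z$).

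For the formula over $B_2$, write $\varphi_2=\theta_2^j(\lambda n_2)$ from \eqref{O:M:actual2}. On $\PP_{B_2}^{q_2}$ this equals
\[
 R_2^j\cdot\Psi'_2((w^p-w)P(z))=\Psi'_2((a+w^p-w)P(z)),
\]
using \eqref{O:M:models} and Lemma~\ref{O:M:lambdaformula}. Since $\Psi'_2=\ee_{B_2}(\alpha'\cdot)$ with $\alpha'=j\alpha$, this is the first displayed formula.

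For $B_1$, note that the compatible partner of $\varphi_2$ is determined only up to $S(B/B_1)$. By Lemma~\ref{U:conjugacy} these twists are exactly the $\Gal(B_1/A)$-conjugates, which explains the $\rho$ in the statement. I construct a candidate character by rerunning Theorems~\ref{O:M:compat}--\ref{O:M:global} with $\Delta$ replaced by $\Delta_\lambda$ and $\alpha$ by $\alpha'$. This produces a compatible pair with formulas $\Psi'_i$ involving $a_\lambda$ and $N_1\Delta_\lambda$. Its $B_2$-component agrees with $\varphi_2$ on $H_2$, so the two $B_2$-characters differ by a character trivial on $H_2$. I then adjust by Hilbert 90 once more. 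The cleaner route is to check directly that $\chi:=\theta_1^j(\lambda n_1)$ already satisfies the $B_1$ formula, up to a norm character of $B/B_1$ that is trivial on $H_1$. Compatibility of $\chi$ with $\varphi_2$ holds on $B^\times$ by construction. The remaining discrepancy is a character of $B_1^\times/N_1B^\times$, i.e.\ an element of $S(B/B_1)$, hence a conjugation by some $\rho$.

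The main obstacle is the computation of $(\lambda n_1)(1-z)$ on $\PP_{B_1}^{q_1}$ in terms of $N_1(\Delta+w)-N_1\Delta$. I would handle it with the $g,h$ machinery. Apply Lemma~\ref{O:M:monophase} and the rigidity Lemma~\ref{O:M:rigidity} to the polynomial $F_z$ built from $\Delta_\lambda$, with the descending induction of Theorem~\ref{O:M:compat}. Here Theorem~\ref{O:A:thm:H14} and the other estimates of Section~\ref{sec:odd-estimates} are used for $\Delta_\lambda$ in place of $\Delta$, which is legitimate since $\Delta_\lambda$ satisfies the same Artin--Schreier equation over $B_2$. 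This yields the second formula for a suitable $\varphi_1^\rho$.

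Finally, $\varphi_1^\rho$ and $\varphi_1$ have the same local constant by the automorphism change of variables in \eqref{U:delta-definition}. This applies because $\psi_{B_1}$ is trace-invariant, as noted after \eqref{U:FML}.
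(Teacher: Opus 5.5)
This is essentially the paper's proof. You Hensel-lift $\Delta+w$ to a root $\Delta_\lambda$ of $X^p-X-(a+w^p-w)$, read off the $B_2$-formula from \eqref{O:M:actual2} and Lemma~\ref{O:M:lambdaformula}, and use the compatibility theorem for $(\Delta_\lambda,j\alpha)$ to obtain a partner of $\varphi_2$ satisfying the $B_1$-formula; that partner is a conjugate of $\varphi_1$ because $S(B/B_1)$-twists are conjugates. The paper does the middle step in one stroke, applying Theorem~\ref{O:M:global} with $\varphi_2$ itself as the prescribed $B_2$-extension, so neither an extra Hilbert~90 adjustment nor the explicit partner $\theta_1^j(\lambda n_1)$ is needed.

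One slip: the norm character by which $\theta_1^j(\lambda n_1)$ differs from the $B_1$-formula is only known to be trivial on $H_1\cap N_1B^\times=N_1(U_B^q)$, not on $H_1$. Indeed, nontrivial elements of $S(B/B_1)$ have conductor $t'+1>q_1$. This possibly nontrivial correction is exactly where $\rho$ comes from, since the compatibility argument never determines $\lambda\circ n_1$ on all of $H_1$.
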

\begin{proof}
Set $a_\lambda=a+w^p-w$. Its valuation is $-t$ because
$pv_2w\ge-ph_0=-t+b_0>-t$. In mixed characteristic the error of
$\Delta+w$ in $X^p-X-a_\lambda$ has valuation at least
\[
 v_B(p)-(p-1)t+pv_2w\ge v_B(p)-pt+b_0>0.
\]
Its derivative is a unit. Hensel's lemma gives the required exact root
near $\Delta+w$, still of valuation $-t$. In equal characteristic
$\Delta+w$ is already that root. It generates $B/B_2$ because $p\nmid t$.
Equation \eqref{O:M:actual2} and Lemma~\ref{O:M:lambdaformula}
give the stated formula for $\varphi_2$ on $H_2$.
Apply Theorem~\ref{O:M:global} to the new generator and norm character, choosing
$\varphi_2$ itself as the second extension. It supplies $\widetilde\varphi_1$ with the stated formula on $H_1$
and the same norm pullback as $\varphi_2$. Thus $\widetilde\varphi_1/\varphi_1\in S(B/B_1)$.
For a primitive compatible pair its nontrivial conjugate quotient generates
this order-$p$ group, on which $\Gal(B_1/A)$ acts trivially.
Hence every such twist is a Galois conjugate. Changing variables in the
one-dimensional local integral proves equality of their local constants.

\end{proof}

\subsection{The congruence for \texorpdfstring{$n_1(s)$}{n1(s)}}\label{O:sec:r2}
\subsubsection*{Normalization}
Use $A\subset B_1,B_2\subset B$, $t_1=t$, $t_2=t+\delta$, $t'=t+p\delta$,
$p\nmid t$, and $N_i,n_i,S_i,T_i$ as in \S\ref{O:sec:as} and \S\ref{O:sec:models}. Put
\[
 T'=t'+1,\quad \kappa_2=t_2+1,\quad m_1=T'+t,\quad q_1=\ceil{m_1/p},
 \quad c_t=\ceil{(t+1)/p}=\ceil{t/p},\quad q=\delta+c_t.
\]
Let $\Delta^p-\Delta=a\in B_2$, $v_B\Delta=-t$, and
\[
 b=N_1\Delta,\quad a=N_2\Delta,\quad
 s=-E_{p-1}^{B/B_1}(\Delta),\quad H=n_1s.
\]
Choose nontrivial norm characters $\tau_2\in S(B_2/A)$ and
$\tau_1\in S(B_1/A)$, with coefficients $\alpha,\beta$ in the
truncated-logarithm formulas relative to the
same additive character $\mathrm e_A$. Write
\[
 \Psi_L(z)=\mathrm e_L(\alpha z),\qquad \varrho=\beta/\alpha,
 \qquad v_A \varrho=\delta,
\]
so $\Psi_A,\Psi_1,\Psi_2,\Psi_B$ have largest trivial ideals
$\pp_A^{\kappa_2},\PP_{B_1}^{T'},\PP_{B_2}^{\kappa_2},\PP_B^{T'}$.
By Theorem~\ref{O:M:realize}, we may choose $\chi$ in its Galois
conjugacy class so that
\begin{equation}\label{O:R:model}
 \chi(1-y)=\Psi_1(bP(y))\qquad(y\in\PP_{B_1}^{q_1}),\qquad
 P(X)=\sum_{j=1}^{p-1}X^j/j.
\end{equation}
Its exact conductor is $m_1$. Let $\sigma$ be the restriction to $B_1$ of the
automorphism of $B/B_2$ carrying $\Delta$ to the root near $\Delta+1$.
Our character convention is $\chi^\sigma(u)=\chi(\sigma^{-1}u)$.

\subsubsection*{The quotient \texorpdfstring{$\chi^\sigma/\chi$}{\chi to power \sigma/\chi} on \texorpdfstring{$U_{B_1}^q$}{UB1 to power q}}
\begin{lemma}\label{O:R:comm-log}
For $x\in\PP_{B_1}^{q}$,
\[
 \frac{\chi^\sigma(1-x)}{\chi(1-x)}
       =\Psi_1((\sigma b-b)P(x)).
\]
\end{lemma}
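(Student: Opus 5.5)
The plan is to compare the two sides first on the small group $H_1=U_{B_1}^{q_1}$, where the model formula \eqref{O:R:model} is available. Then I extend the equality to $U_{B_1}^q$ using the fact that the left side is a norm character.

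\emph{Step 1: the right side is a character on $U_{B_1}^q$.} Since $\sigma$ has break $t$ on $B_1$ and $v_1(b)=-t$, one gets $v_1(\sigma b-b)\ge0$. Since $pq=p\delta+pc_t\ge p\delta+t+1=T'$, Lemma~\ref{O:I:logchart} makes $1-x\mapsto P(x)$ a homomorphism $U_{B_1}^q/U_{B_1}^{T'}\to\PP_{B_1}^q/\PP_{B_1}^{T'}$. Hence $1-x\mapsto\Psi_1((\sigma b-b)P(x))$ is a character of $U_{B_1}^q$, trivial on $U_{B_1}^{T'}$.

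\emph{Step 2: agreement on $U_{B_1}^{q_1}$.} For $x\in\PP_{B_1}^{q_1}$ one has $\sigma^{-1}x\in\PP_{B_1}^{q_1}$, so by \eqref{O:R:model}
\[
 \chi^\sigma(1-x)=\chi(1-\sigma^{-1}x)=\Psi_1(bP(\sigma^{-1}x))=\Psi_1((\sigma b)P(x)).
\]
The last equality holds because $\Psi_1=\mathrm e_A(\alpha\Tr_{B_1/A}\cdot)$ is invariant under $\sigma$. Dividing by $\chi(1-x)=\Psi_1(bP(x))$ gives the assertion on $U_{B_1}^{q_1}$. Note that $q_1=\ceil{(t'+t+1)/p}\le t'$.

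\emph{Step 3: extension to depth $q$.} By compatibility, $\chi^\sigma N_1=\Theta^{\tilde\sigma}=\Theta=\chi N_1$. So $\mu=\chi^\sigma/\chi$ lies in $S(B/B_1)$, and by Lemma~\ref{D:crossed-norm} it equals $\omega\circ n_1$ for some $\omega\in S(B_2/A)$, say $\omega=\tau_2^j$. Thus
\[
 \omega(1-z)=\mathrm e_A(j\alpha P(z)) \quad\text{for } z\in\pp_A^{c}.
\]
I would then apply Lemma~\ref{O:I:normlog} to $B_1/A$ (break $t$) with $h=\delta$. The required input depth is $h+\ceil{(t+1)/p}=\delta+c_t=q$, which is exactly our depth, and the modulus is $\pp_A^{t+1+\delta}=\pp_A^{\kappa_2}$, the trivial ideal of $\Psi_A$. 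This gives the explicit formula
\[
 \mu(1-x)=\Psi_1(jP(x))\,\Psi_A\bigl(j\,n_1(P(x))\bigr)\qquad(x\in\PP_{B_1}^q).
\]
It remains to show that this agrees with $\Psi_1((\sigma b-b)P(x))$ on all of $\PP_{B_1}^q$, not only on $\PP_{B_1}^{q_1}$.

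\emph{Step 4, the expected main obstacle.} Both sides are characters of $U_{B_1}^q$ and they agree on $U_{B_1}^{q_1}\supseteq U_{B_1}^{t'}$. The cleanest finish is to show the right side is also trivial on $N_1(U_B)\cap U_{B_1}^q$. The graded norm for $B/B_1$ is bijective at every depth below $t'$, so $U_{B_1}^q\subset N_1(U_B)\,U_{B_1}^{t'}$, and the two characters would then coincide.

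To verify triviality on norms, I would write $\sigma b-b=N_1(\tilde\sigma\Delta)-N_1\Delta$, with $\tilde\sigma\Delta=\Delta+1+\eta$ and $v_B(\eta)\ge V-(p-1)t$ as in the proof of Lemma~\ref{O:A:lem:coeff}. For $z\in\PP_B^q$ I would expand $P(1-N_1(1-z))$ by Lemma~\ref{O:I:normlog} for $B/B_1$. This reduces the required triviality to an additive identity of the form
\[
 \Psi_B\bigl((\sigma b-b)P(z)\bigr)\,\Psi_1\bigl((\sigma b-b)N_1P(z)\bigr)=1 .
\]
I would prove that identity by the same term-by-term expansion $z=\sum x_j\Delta^j$, using the estimates of Section~\ref{O:sec:as} (Lemma~\ref{O:A:lem:twisted} and \eqref{O:M:ginput}--\eqref{O:M:transferinput}), exactly as in Lemma~\ref{O:M:monophase}.

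If that bookkeeping becomes unmanageable, I would fall back on matching the single graded piece at depth $t'$ directly. A character of $U_{B_1}^q$ trivial on norms is determined by that piece, and on it the explicit formula from Step 3 and the right side can be compared modulo $\PP_{B_1}^{T'}$ by a residue computation.
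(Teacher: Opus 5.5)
Your Steps 1--3 are correct. Your reduction in Step~4 is also sound: via Lemma~\ref{O:I:normlog} for $B/B_1$ with $h=0$ (input depth exactly $\lceil T'/p\rceil=q$), triviality on norms comes down to the identity $\Psi_B((\sigma b-b)P(z))\,\Psi_1((\sigma b-b)N_1P(z))=1$ on $\PP_B^q$. But that identity is the entire content of the lemma beyond $U_{B_1}^{q_1}$, and you only announce that it would follow ``as in Lemma~\ref{O:M:monophase}''. That lemma is tailored to $g+h$, which is built from $b$ and $a$, and it does not give your identity. After Lemma~\ref{O:R:shift}, your identity says that $1-x\mapsto\Psi_1((1-s)P(x))$ is trivial on $N_1(U_B^q)$. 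That is a consequence of \eqref{O:R:S}, which the paper obtains only from the present lemma.

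Your fallback is circular. A character of $U_{B_1}^q$ is determined by its layer at depth $t'$ only once it is known to be trivial on norms, and that is exactly what is missing for the right side; Step~2 already gives agreement at depth $t'$. Comparing with your Step~3 formula on all of $U_{B_1}^q$ would instead require $\sigma b-b$ modulo $\PP_{B_1}^{T'-q}$, which is the congruence of Theorem~\ref{O:R:R2}, itself derived downstream from this lemma. As written, the proof is incomplete.

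The missing idea is that you never need \eqref{O:R:model} at $1-x$ itself.
\begin{itemize}
\item Write $\chi^\sigma(1-x)/\chi(1-x)=\chi\bigl(\sigma^{-1}(1-x)/(1-x)\bigr)$ and $d=\sigma^{-1}x-x$. Because $\sigma$ has break $t$ on $B_1$, $v_1d\ge q+t\ge q_1$. So $\sigma^{-1}(1-x)/(1-x)=1-d/(1-x)$ lies in $U_{B_1}^{q_1}$, and \eqref{O:R:model} gives the value $\Psi_1\bigl(bP(d/(1-x))\bigr)$ for every $x\in\PP_{B_1}^q$.
\item The formal identity $-\log\bigl(1-d/(1-x)\bigr)=-\log(1-x-d)+\log(1-x)$ shows that $P(d/(1-x))-P(x+d)+P(x)$ is a $\mathbf Z_{(p)}$-integral series, divisible by $d$, with all monomials of total degree at least $p$.
\item Its value therefore has valuation at least $(p-1)q+(q+t)=pq+t\ge T'+t$. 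After multiplication by $b$ it lies in $\PP_{B_1}^{T'}$, the trivial ideal of $\Psi_1$.
\item Hence the quotient equals $\Psi_1\bigl(bP(\sigma^{-1}x)-bP(x)\bigr)=\Psi_1\bigl((\sigma b-b)P(x)\bigr)$, by trace invariance of $\Psi_1$.
\end{itemize}
This is the paper's argument. With it, your Steps~3 and~4 become unnecessary.
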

\begin{proof}
Put $d=\sigma^{-1}x-x$, so $v_1d\ge q+t$.
The quotient $(1-\sigma^{-1}x)/(1-x)$ is $1-d/(1-x)$.
We have $q+t\ge q_1$, hence \eqref{O:R:model} applies to this quotient even
though it need not apply separately to $1-x$.
As an integral formal power series in $x,d$,
\[
 P(d/(1-x))-P(x+d)+P(x)
\]
is divisible by $d$ and has total degree at least $p$. Its denominators
are products of integers $1,\ldots,p-1$ and powers of $1-x$.
Its valuation is therefore at least $(p-1)q+(q+t)=pq+t\ge T'+t=m_1$.
Multiplication by $b$ puts it in the kernel of $\Psi_1$.
Finally trace invariance gives
$\Psi_1(bP(\sigma^{-1}x))=\Psi_1((\sigma b)P(x))$.
\end{proof}

\begin{lemma}\label{O:R:shift}
Set
\[
 R=T'-q=t+1-c_t+(p-1)\delta.
\]
Then $\sigma b-b\equiv1-s\pmod{\PP_{B_1}^R}$.
\end{lemma}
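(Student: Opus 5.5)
Since $b=N_1\Delta=\prod_{\rho\in\Gal(B/B_1)}\rho(\Delta)$ and the Galois group is abelian, we may lift $\sigma$ to the automorphism $\tilde\sigma$ of $B/B_2$ with $\tilde\sigma\Delta=\Delta_1$, the root of $X^p-X-a$ near $\Delta+1$. This lift commutes with $\Gal(B/B_1)$, so
\[
 \sigma b=N_1(\tilde\sigma\Delta)=N_1(\Delta_1).
\]
We therefore compare $N_1(\Delta_1)$ with $N_1(\Delta+1)$ and then expand $N_1(\Delta+1)$.

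The exact expansion will be
\[
 N_1(\Delta+1)=\prod_\rho(\rho\Delta+1)=\sum_{i=0}^{p}E_i^{B/B_1}(\Delta)=b+1+\sum_{i=1}^{p-1}e_i.
\]
Here $e_{p-1}=-s$, and the sign conventions for $p$ odd must be checked against $b=e_p$. Hence
\[
 N_1(\Delta+1)-b-1+s=\sum_{i=1}^{p-2}e_i .
\]
By \eqref{O:A:eq:ei}, $v_1(e_i)\ge(p-1)t_2-it$. For $i\le p-2$ this is at least
\[
 (p-1)t_2-(p-2)t=t+(p-1)\delta .
\]
We need this bound to be at least $R=t+1-c_t+(p-1)\delta$, which amounts to $c_t\ge1$. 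That holds since $t\ge1$.

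Next we handle $\Delta_1$ versus $\Delta+1$. In equal characteristic $\Delta_1=\Delta+1$ exactly, so nothing remains to do. In mixed characteristic, write $\Delta_1=(\Delta+1)(1+u)$ as in the proof of Lemma~\ref{O:A:lem:translate} with $\xi=1$. That proof gives $v_Bu\ge V-(p-2)t$ and, more precisely, \eqref{O:A:eq:unit-correction}:
\[
 N_1(1+u)\equiv1+L \pmod{\PP_{B_1}^H},\qquad v_1L\ge pt+2(p-1)\delta .
\]
Then
\[
 N_1(\Delta_1)-N_1(\Delta+1)=N_1(\Delta+1)\bigl(N_1(1+u)-1\bigr).
\]
The first factor has $v_1=-t$, because $N_1(\Delta+1)-b$ is integral. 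The second factor has $v_1\ge\min(H,\,pt+2(p-1)\delta)$. So the difference has valuation at least
\[
 (p-1)t+2(p-1)\delta\ge t+(p-1)\delta+1>R-1 ,
\]
using $p\ge3$ and $t\ge1$.

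The step I expect to require the most care is confirming that the lift with $\Delta\mapsto\Delta_1$ restricts on $B_1$ to the $\sigma$ defined in the statement, which is essentially its definition. The second delicate point is the bookkeeping of the sign of $e_{p-1}$ versus $s$ and of $b=(-1)^p\prod(-\rho\Delta)$, to make sure the congruence reads $1-s$ rather than $1+s$. If the cruder bound on $u$ were insufficient, I would fall back on $v_Bu\ge V-(p-2)t$ together with \eqref{O:A:eq:pbound}, which alone gives $v_1(N_1(1+u)-1)\ge(V-(p-2)t)\ge p(p-1)t_2-(p-2)t$. This is comfortably larger than $R+t$.
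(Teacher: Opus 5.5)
Your proof is correct and follows the paper's argument. You use the same expansion $N_1(\Delta+1)-b=1-s+\sum_{i=1}^{p-2}E_i^{B/B_1}(\Delta)$, the same bound $(p-1)t_2-(p-2)t=t+(p-1)\delta\ge R$, and the same factorization $\sigma\Delta=(\Delta+1)(1+u)$. The one difference is that the paper bounds $N_1(1+u)-1$ directly from $v_Bu\ge V-(p-2)t$ by the strong symmetric estimate, which is worst at the trace coefficient and gives $V/p-t+(p-1)\delta+c_t\ge R$, rather than importing \eqref{O:A:eq:unit-correction} from the proof of Lemma~\ref{O:A:lem:translate}.

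There are two small slips, neither fatal. First, \eqref{O:A:eq:unit-correction} only yields the lower bound $\min\bigl(pt_2,(p-1)t+2(p-1)\delta\bigr)$ for the difference, not $(p-1)t+2(p-1)\delta$; this minimum is still $\ge R$. Second, your fallback claim $v_1(N_1(1+u)-1)\ge v_Bu$ is unjustified, because the trace term only satisfies $v_1(S_1u)\ge\lfloor(v_Bu+(p-1)(t'+1))/p\rfloor$ --- and that is precisely the estimate the paper uses.
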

\begin{proof}
Norm expansion gives
\[
 N_1(\Delta+1)-b=1-s+\sum_{i=1}^{p-2}E_i^{B/B_1}(\Delta).
\]
Each summand has valuation at least
$(p-1)t_2-(p-2)t=t+(p-1)\delta\ge R$ by Section~\ref{O:sec:as}.
For the conjugate of $\Delta$, write
$\sigma\Delta=(\Delta+1)(1+u)$.
The elementary Hensel estimate gives
$v_Bu\ge V-(p-2)t$, where $V=v_Bp$.
For intermediate coefficients in $N_1(1+u)-1$ the strong symmetric bound
is smallest at the trace coefficient, and after multiplying by
$N_1(\Delta+1)$ it gives at least
\[
 -t+\floor{[V-(p-2)t+(p-1)(t'+1)]/p}
 =V/p-t+(p-1)\delta+c_t
 \ge(p-2)t+2(p-1)\delta+c_t\ge R.
\]
The norm term has still larger valuation. In equal characteristic
$u=0$. This proves the assertion, including $p=3$.
\end{proof}

Thus $\mu=\chi^\sigma/\chi$ satisfies
\begin{equation}\label{O:R:S}
 \mu(1-x)=\Psi_1((1-s)P(x))\qquad(x\in\PP_{B_1}^{q}).
\end{equation}
It is a nontrivial norm character for $B/B_1$, of conductor $T'$.
The same is true of $\tau_2n_1$. Proposition~\ref{O:R:normalize}
will prove that these two characters agree.

\subsubsection*{The character \texorpdfstring{$\tau_2\circ n_1$}{\tau2\circ n1}}
By the ordinary subcritical norm filtration, choose $c\in B_1^\times$ with
\begin{equation}\label{O:R:approx}
 n_1c=\varrho\varepsilon,\qquad\varepsilon\in U_A^t,\qquad v_1c=\delta.
\end{equation}
\begin{lemma}\label{O:R:T}
On $U_{B_1}^{q}$ one has
\[
 (\tau_2n_1)(1-x)=\Psi_1((1-\varrho/c)P(x)).
\]
\end{lemma}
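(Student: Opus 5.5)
The approach is to compute $\tau_2(n_1(1-x))$ for $x\in\PP_{B_1}^q$ through the truncated-logarithm chart of $\tau_2$, which means first passing from $\Psi_A$ to $\Psi_1$. Put $u=1-n_1(1-x)$. The map $B_1/A$ has break $t$, so the strong symmetric bound \eqref{O:A:eq:strong} puts $u$ in $\pp_A^{\lceil\kappa_2/p\rceil}$ once $q=\delta+c_t$ is taken into account. Hence $\tau_2(1-u)=\mathrm e_A(\alpha P(u))=\Psi_A(P(u))$. Next I apply Lemma~\ref{O:I:normlog} to $B_1/A$, which has break $t$, at a suitable $h\ge0$ with $q\ge h+\lceil(t+1)/p\rceil$, for instance $h=\delta$. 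This gives
\[
 P(u)\equiv T_1P(x)+n_1(P(x))\pmod{\pp_A^{t+1+\delta}}=\pmod{\pp_A^{\kappa_2}},
\]
and therefore $(\tau_2n_1)(1-x)=\Psi_1(P(x))\,\Psi_A(n_1P(x))$.

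The second step is to rewrite the norm term $\Psi_A(n_1(P(x)))$ using the norm character $\tau_1$ of $B_1/A$, whose chart coefficient is $\beta=\alpha\varrho$. Corollary~\ref{O:I:conversion}, applied to $B_1/A$ with the additive character $\mathrm e_A(\beta\,\cdot)$, gives $\mathrm e_A(\beta(T_1w+n_1w))=1$ for every $w\in\PP_{B_1}^{c_t}$. I take $w=P(x)/c$. By \eqref{O:R:approx} we have $v_1c=\delta$, so $v_1w\ge q-\delta=c_t$, which is admissible. Moreover $\beta n_1w=\alpha\varrho\,n_1P(x)/(\varrho\varepsilon)=\alpha\,n_1P(x)\varepsilon^{-1}$, and $\beta T_1w=\alpha T_1(\varrho P(x)/c)$. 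It follows that
\[
 \Psi_A(n_1P(x)\varepsilon^{-1})=\Psi_1(-(\varrho/c)P(x)).
\]

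The last step, which I expect to be the main obstacle, is to remove $\varepsilon^{-1}$. The error term is $\alpha\,n_1P(x)(\varepsilon^{-1}-1)$, and its $A$-valuation is at least $v_1P(x)+t\ge q+t$. This must reach $\kappa_2=t+\delta+1$, i.e.\ we need $q\ge\delta+1$, and this holds because $c_t\ge1$. I will also have to check the boundary depth in the first step. If Lemma~\ref{O:I:normlog} is not directly applicable with $h=\delta$, I will redo its weight estimate using $v_A(e_i)\ge\lfloor(iq+(p-1)(t+1))/p\rfloor$ and the bound $v_A(p)\ge(p-1)t_2$, so as to confirm that every mixed monomial lies in $\pp_A^{\kappa_2}$. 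Combining the three steps gives $(\tau_2n_1)(1-x)=\Psi_1((1-\varrho/c)P(x))$.
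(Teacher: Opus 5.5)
Your proposal is correct and follows essentially the paper's proof. You apply the truncated-logarithm chart of $\tau_2$ and Lemma~\ref{O:I:normlog} on $B_1/A$ with $h=\delta$ to get $\Psi_1(P(x))\Psi_A(n_1P(x))$. You then convert the norm term via the $\tau_1$-conversion at $w=P(x)/c$ and discard the $\varepsilon^{-1}$ error using $q+t\ge\kappa_2$; the paper does the same, only taking these last two steps in the opposite order, which makes no difference.
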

\begin{proof}
For $u=1-n_1(1-x)$, the norm filtration gives
$v_Au\ge\lceil \kappa_2/p\rceil$, so the truncated-logarithm formula for $\tau_2$ applies. The norm-logarithm congruence of Lemma~\ref{O:I:normlog} on $B_1/A$, with target
$\kappa_2$ and input depth $\delta+c_t=q$, gives
\[
 (\tau_2n_1)(1-x)=\Psi_1(P(x))\Psi_A(n_1P(x)).
\]
The error in replacing the second factor by
$\mathrm e_A(\beta n_1(P(x)/c))$ has valuation, before multiplication by
$\alpha$, at least $q+t\ge \kappa_2$. The error is $n_1P(x)(\varepsilon^{-1}-1)$.
Since $v_1(P(x)/c)\ge q-\delta=c_t$, the norm-character conversion for
$\tau_1$ makes this factor $\mathrm e_{B_1}(-\beta P(x)/c)$.
\end{proof}

\subsubsection*{Equality of the two norm characters}
\begin{lemma}[Reciprocal trace identity]\label{O:R:reciprocal}
If $\delta=0$, then for every $Y\in\OO_A$,
\[
 \Psi_1(sY/b)=\Psi_2(Y/a)=\Psi_A(-n_1(Y/b)).
\]
\end{lemma}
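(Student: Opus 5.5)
The plan is to prove both equalities by exact algebraic identities. The valuation input will enter only through the norm-character conversion \eqref{O:M:conversion}. The two tools are reciprocal traces of $\Delta$ over $B_1$ and over $B_2$, and the fact that $N_{B/A}(\Delta)$ can be computed through either intermediate field.

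\emph{First equality.} For $y\in B^\times$ and a cyclic extension of odd prime degree $p$, one has $\Tr(1/y)=E_{p-1}(y)/E_p(y)$, where $E_i$ are the elementary symmetric functions of the conjugates. The sum of the products of $p-1$ conjugates, divided by the product of all $p$, is the sum of the reciprocals, and $E_p(y)=\Nm y$.
\begin{itemize}
\item Over $B/B_1$ this gives $S_1(1/\Delta)=E_{p-1}^{B/B_1}(\Delta)/N_1\Delta=-s/b$.
\item Over $B/B_2$ the minimal polynomial is exactly $X^p-X-a=\prod(X-\Delta_i)$. Comparing the coefficient of $X$ gives $E_{p-1}=-1$, because $p-1$ is even, and comparing constant terms gives $E_p=a$. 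Hence $S_2(1/\Delta)=-1/a$.
\end{itemize}
Let $Y\in\OO_A$. Trace transitivity and the definition $\Psi_L=\mathrm e_A(\alpha\Tr_{L/A}\,\cdot)$ then give
\[
\Psi_1(sY/b)=\Psi_1\bigl(-Y S_1(1/\Delta)\bigr)=\Psi_B(-Y/\Delta)=\Psi_2\bigl(-Y S_2(1/\Delta)\bigr)=\Psi_2(Y/a).
\]
This step is exact and involves no congruences.

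\emph{Second equality.} Put $v=Y/a\in B_2$. Since $v_2(a)=-t$ and $Y\in\OO_A$, we have $v_2(v)=t+p\,v_A(Y)\ge t$. I must check that $t\ge c=\ceil{\kappa_2/p}$. When $\delta=0$ we have $\kappa_2=t+1$, so $c=\ceil{(t+1)/p}\le t$ because $t\ge1$ and $p\ge3$. This is where the hypothesis $\delta=0$ is used; in general $c$ could exceed $t$ when $\delta$ is large. Then \eqref{O:M:conversion} gives
\[
\Psi_2(Y/a)=\Psi_A\bigl(T_2(Y/a)\bigr)=\Psi_A\bigl(-n_2(Y/a)\bigr)=\Psi_A\bigl(-Y^p/n_2(a)\bigr).
\]
Finally, $n_2(a)=n_2(N_2\Delta)=N_{B/A}\Delta=n_1(N_1\Delta)=n_1(b)$ exactly. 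Hence $Y^p/n_2(a)=n_1(Y/b)$, which is the last expression in the statement.

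The only point needing care is the depth condition in the conversion step, that is, confirming $v_2(Y/a)\ge c$ and the sign conventions in \eqref{O:M:conversion}. Everything else is an identity among symmetric functions.
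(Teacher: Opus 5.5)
Your proposal is correct and follows the paper's proof essentially step for step. The exact reciprocal-trace identities $s/b=-S_1(1/\Delta)$ and $S_2(1/\Delta)=-1/a$, combined with trace transitivity, give the first equality. For the second, the conversion \eqref{O:M:conversion} applies because $v_2(Y/a)\ge t\ge\lceil(t+1)/p\rceil$ when $\delta=0$, and then $n_1b=n_2a$ finishes the argument.
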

\begin{proof}
Elementary symmetric functions give the \emph{exact} identities
\[
 s/b=-S_1(1/\Delta),\qquad S_2(1/\Delta)=-1/a.
\]
The second uses the coefficient $-1$ of $X$ in $X^p-X-a$.
Trace transitivity therefore gives the first equality.
As $v_2(Y/a)\ge t\ge\lceil(t+1)/p\rceil$, \eqref{O:M:conversion} gives $\Psi_2(Y/a)=\Psi_A(-n_2(Y/a))$.
Finally $n_1b=n_2a$ by norm transitivity and $Y\in A$, so
$n_2(Y/a)=Y^p/n_2a=n_1(Y/b)$.
\end{proof}

\begin{proposition}\label{O:R:normalize}
One has $\mu=\tau_2\circ n_1$.
\end{proposition}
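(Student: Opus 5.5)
Both $\mu$ and $\tau_2\circ n_1$ are nontrivial characters in $S(B/B_1)$, which has order $p$. By Lemma~\ref{D:crossed-norm}, $\tau_2 n_1$ generates it. Hence $\mu=(\tau_2n_1)^r$ for a unique $r\in\{1,\dots,p-1\}$, and it suffices to show $r=1$. Both characters have conductor $T'$, and $q\le\lceil T'/2\rceil$ lies in the range where the truncated logarithm linearizes. So it is enough to compare their additive coefficients on $U_{B_1}^q$, or even only at the leading depth $T'-1$. By \eqref{O:R:S} and Lemma~\ref{O:R:T}, the coefficients are $1-s$ and $1-\varrho/c$, with both formulas valid on $\PP_{B_1}^q$. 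Since $\Psi_1$ has trivial ideal $\PP_{B_1}^{T'}$, the plan is to show
\[
 (1-s)-(1-\varrho/c)=\varrho/c-s\in\PP_{B_1}^{T'-q'}
\]
for some depth $q'\ge q$ at which both characters are nontrivial. Then $\mu/\tau_2n_1$ is a norm character trivial at depth $T'-1$, hence trivial.

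\emph{Case $\delta>0$.} Here $v_1 s\ge(p-1)\delta\ge\delta$ by \eqref{O:M:basic}, while $\varrho/c$ is a unit. The two coefficients therefore have different residues: the first is $\equiv1$, and the second is $1-\overline{\varrho/c}$. I would not compare leading residues directly. Instead I would evaluate both characters on a single test element at depth $T'-1$ and compare the resulting power $r$ modulo $p$. Since $r\in\mathbf F_p^\times$ acts as scalar multiplication on the leading residue line, it suffices to check a single residue line. A cleaner route is to use the structure of $\mu$ itself: $\mu$ is a norm character for $B/B_1$, so by \eqref{O:M:conversion}-type conversion for $B/B_1$ it satisfies $\mu(N_1(1-y))=1$. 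I would evaluate the formula \eqref{O:R:S} on $N_1(1-Z\Delta)$-type norms, using Theorem~\ref{O:A:thm:h} and Proposition~\ref{O:A:prop:g}. This pins down the coefficient $1-s$ modulo the kernel in terms of $n_1$-data. Comparing with Lemma~\ref{O:R:T}, which realizes $\tau_2 n_1$ through $\tau_1$ and $\varrho$, forces $r=1$.

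\emph{Case $\delta=0$.} Here $\varrho$ is a unit and the leading terms genuinely interact. The key tool is Lemma~\ref{O:R:reciprocal}. Take $Y\in\OO_A$ and the test element $1-x$ with $x=Y/b\cdot(\text{unit})$ at depth $t=T'-1-q$-appropriate. The reciprocal trace identity converts $\Psi_1(sY/b)$ into $\Psi_A(-n_1(Y/b))$. Separately, the $\tau_2$-conversion \eqref{O:M:conversion} turns the $1$-part $\Psi_1(Y/b)$ into a norm term. Comparing the two resulting expressions on the line $Y\in\OO_A$ determines the scalar $r$ as $1$. This uses that $p\nmid t$, so that the leading residue map is surjective, as in the proof of Theorem~\ref{O:M:global}.

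The main obstacle is the bookkeeping of which depth the comparison happens at. The two coefficient formulas are only known modulo $\PP_{B_1}^{T'-q}$, so a single-depth residue check must be made exactly at the boundary $T'-1$, using an element $x$ of valuation $T'-1-v_1(\text{coeff})$. In the $\delta>0$ case, the identification must go through the norm-character vanishing rather than through residues. I expect that step, relating $1-s$ to $\varrho/c$ via $H=n_1 s$, to be the hard part. The section title suggests the needed input is a congruence for $n_1(s)$ against $\varrho$.
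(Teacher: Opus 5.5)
Your overall frame is correct. Both characters generate the order-$p$ group $S(B/B_1)$, so it suffices to show they agree on the last layer $U_{B_1}^{t'}$. Your $\delta=0$ argument is essentially the paper's: the test elements $x=Y/b$ with $Y\in\OO_A$ span $\PP_{B_1}^t/\PP_{B_1}^{t+1}$, and Lemma~\ref{O:R:reciprocal} does the rest. One attribution should be corrected. The value $(\tau_2 n_1)(1-x)=\Psi_A(T_1x+n_1x)=\Psi_1(x)\Psi_A(n_1x)$ comes from applying the truncated-log formula for $\tau_2$ to the expansion of $n_1(1-x)$. Its intermediate coefficients lie in $\pp_A^{t+1}$. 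The factor $\Psi_1(x)$ is then matched against the ``$1$'' in $1-s$, not converted by \eqref{O:M:conversion}; that conversion is used only inside Lemma~\ref{O:R:reciprocal}.

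The genuine gap is the case $\delta>0$, and it rests on a valuation error. You assert that $\varrho/c$ is a unit, but it is not. Since $B_1/A$ is totally ramified, $v_1(\varrho)=p\,v_A(\varrho)=p\delta$. By \eqref{O:R:approx}, $v_1(c)=v_A(n_1c)=\delta$. Hence $v_1(\varrho/c)=(p-1)\delta>0$.

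Your conclusion that the two coefficients have different residues would in fact contradict the proposition, because the restriction to the last layer determines the generator. The replacement route you sketch is not an argument. Evaluating on norms $N_1(1-Z\Delta)$ via Theorem~\ref{O:A:thm:h} and Proposition~\ref{O:A:prop:g} is left unspecified. Using a congruence for $H=n_1(s)$ against $\varrho^{p-1}$ is circular, since Theorem~\ref{O:R:R2} is deduced from this very proposition.

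With the correct valuation, your own stated plan works with $q'=t'$, because it only requires $\varrho/c-s\in\PP_{B_1}$. Concretely, take $x\in\PP_{B_1}^{t'}\subset\PP_{B_1}^{q}$. Then $P(x)\equiv x\pmod{\PP_{B_1}^{T'}}$, since $2t'\ge T'$. Both $sx$ and $(\varrho/c)x$ lie in $\PP_{B_1}^{t'+1}=\PP_{B_1}^{T'}$, because $v_1 s\ge(p-1)\delta>0$ and $v_1(\varrho/c)=(p-1)\delta>0$. So \eqref{O:R:S} and Lemma~\ref{O:R:T} both give the value $\Psi_1(x)$ on $U_{B_1}^{t'}$. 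The two characters therefore agree on the last layer and are equal.
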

\begin{proof}
Both are nontrivial characters of the same cyclic norm quotient of order
$p$. Their restrictions to its last nonzero unit layer distinguish the
$p-1$ possible generators.
If $\delta>0$, $v_1s\ge(p-1)\delta$ and
$v_1(\varrho/c)=(p-1)\delta$ are positive. Equations \eqref{O:R:S} and
Lemma~\ref{O:R:T} agree on $U_{B_1}^{t'}$; here $P(x)\equiv x$ at the
character modulus. Hence the characters are equal.

If $\delta=0$, use $x=Y/b$, whose classes as $Y$ varies span
$\PP_{B_1}^t/\PP_{B_1}^{t+1}$. On this layer $P(x)\equiv x$.
The intermediate coefficients of $n_1(1-x)$
other than trace and norm are in $\pp_A^{t+1}$ by the strong symmetric
bound. Thus
\[
 (\tau_2n_1)(1-x)=\Psi_A(T_1x+n_1x)
                =\Psi_1(x)\Psi_A(n_1x).
\]
Lemma~\ref{O:R:reciprocal} identifies this with $\Psi_1((1-s)x)$, which is the
value of $\mu$ by \eqref{O:R:S}. Their restrictions to
$U_{B_1}^t$ agree, so the norm characters are equal.
\end{proof}

\subsubsection*{The congruence for \texorpdfstring{$H=n_1(s)$}{H=n1(s)}}
\begin{theorem}\label{O:R:R2}
For every odd $p$, in both characteristics and every $\delta=t_2-t\ge0$,
\begin{equation}\label{O:R:strongR2}
 H\equiv \varrho^{p-1}\pmod{\pp_A^R},\qquad
 R=t+1-c_t+(p-1)\delta.
\end{equation}
Consequently, for every integer $m$ with $pm>t$,
\[
 H\equiv(\beta/\alpha)^{p-1}
       \pmod{\pp_A^{1+t_2-(p-2)m}}.
\]
\end{theorem}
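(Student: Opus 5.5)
The plan is to read \eqref{O:R:strongR2} off Proposition~\ref{O:R:normalize} by comparing the two explicit formulas for the single character $\mu=\tau_2\circ n_1$, and then to take norms down to $A$.

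\emph{Step 1: a congruence for $s$ in $B_1$.} On $U_{B_1}^q$, equation \eqref{O:R:S} and Lemma~\ref{O:R:T} give two expressions for $\mu(1-x)$. Together with Proposition~\ref{O:R:normalize} they yield
\[
 \Psi_1\bigl((\varrho/c-s)P(x)\bigr)=1\qquad(x\in\PP_{B_1}^{q}).
\]
By Lemma~\ref{O:I:logchart}, $P$ maps $\PP_{B_1}^q$ bijectively onto itself. Hence the additive character $y\mapsto\Psi_1((\varrho/c-s)y)$ is trivial on all of $\PP_{B_1}^q$. The largest trivial ideal of $\Psi_1$ is $\PP_{B_1}^{T'}$, so trace duality gives $v_1(s-\varrho/c)\ge T'-q=R$. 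Since $v_1(\varrho/c)=p\delta-\delta=(p-1)\delta$, we can write
\[
 s=(\varrho/c)(1+e),\qquad v_1(e)\ge R-(p-1)\delta=t+1-c_t .
\]
We need $t+1-c_t\ge1$ here, which holds because $c_t\le t$.

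\emph{Step 2: taking the norm.} By \eqref{O:R:approx},
\[
 H=n_1(s)=\frac{\varrho^p}{n_1c}\,n_1(1+e)=\varrho^{p-1}\,\varepsilon^{-1}\,n_1(1+e).
\]
Because $v_A(\varrho^{p-1})=(p-1)\delta$, it suffices to show that $\varepsilon^{-1}$ and $n_1(1+e)$ both lie in $U_A^{t+1-c_t}$.

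For $\varepsilon^{-1}$ this holds since $\varepsilon\in U_A^t$ and $c_t\ge1$.

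For $n_1(1+e)$, put $r=v_1(e)$. The norm term $n_1(e)$ has $v_A$ equal to $r\ge t+1-c_t$. Each intermediate symmetric function has, by the strong symmetric bound \eqref{O:A:eq:strong} for $B_1/A$ with break $t$, valuation at least
\[
 \Bigl\lfloor\frac{r+(p-1)(t+1)}{p}\Bigr\rfloor\ge\Bigl\lfloor\frac{p(t+1)-c_t}{p}\Bigr\rfloor=t+1-\lceil c_t/p\rceil\ge t+1-c_t .
\]
This proves \eqref{O:R:strongR2} in both characteristics. No term divisible by $p$ enters, so the argument is characteristic-free.

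\emph{Step 3: the consequence.} It suffices to show that $1+t_2-(p-2)m\le R$, which is equivalent to $c_t\le(p-2)(\delta+m)$. Since $p\nmid t$ and $pm>t$, we have $m\ge\lceil t/p\rceil=c_t$. Also $p-2\ge1$, which gives the inequality. Finally $\beta/\alpha=\varrho$.

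\emph{Main obstacle.} The only delicate point is Step 1: one must use that the two formulas hold on the whole of $U_{B_1}^q$, not just on a deeper layer, so that the duality exponent is exactly $T'-q=R$. One must also confirm that replacing $P(x)$ by $x$ loses nothing, which is exactly the bijectivity of $P$ from Lemma~\ref{O:I:logchart}. If one worked with the linearization on $U^{\lceil m/2\rceil}$ instead, the resulting precision would be too weak.
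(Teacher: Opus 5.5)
Your proposal is correct and follows the paper's proof essentially line by line. The steps coincide: the duality argument giving $s-\varrho/c\in\PP_{B_1}^{T'-q}=\PP_{B_1}^R$, the factorization $n_1(\varrho/c)=\varrho^{p-1}\varepsilon^{-1}$, and the final inequality $(p-2)(\delta+m)\ge c_t$. The only difference is that the paper cites the cyclic norm filtration (using $1\le R_0\le t$) to place $n_1(1+e)$ in $U_A^{R_0}$, whereas you derive this directly from the strong symmetric bound, which makes the same fact explicit.
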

\begin{proof}
Proposition~\ref{O:R:normalize}, \eqref{O:R:S} and Lemma~\ref{O:R:T} imply
\[
 \Psi_1((s-\varrho/c)P(x))=1\qquad\text{for every }x\in\PP_{B_1}^q.
\]
By Lemma~\ref{O:I:logchart}, $P$ is a bijection of $\PP_{B_1}^q$.
The additive pairing therefore gives
\[
 s-\varrho/c\in\PP_{B_1}^{T'-q}=\PP_{B_1}^R.
\]
Put $d_s=(p-1)\delta$. Both $\varrho/c$ and $s$ have valuation $d_s$, since
$R-d_s=R_0:=t+1-c_t\ge1$. Their ratio is in $U_{B_1}^{R_0}$.
We have $1\le R_0\le t$, so the cyclic norm filtration gives a ratio of
norms in $U_A^{R_0}$. Multiplication by the norm's valuation $d_s$ yields
\[
 n_1s\equiv n_1(\varrho/c)=\varrho^{p-1}\varepsilon^{-1}
                  \pmod{\pp_A^{d_s+R_0}}=\pmod{\pp_A^R}.
\]
The remaining error has valuation at least
$d_s+t\ge d_s+t+1-c_t=R$. This proves \eqref{O:R:strongR2}, including $\delta=0$.
Finally $pm>t$ and $p\nmid t$ imply $m\ge c_t$, and
\[
 R-[1+t_2-(p-2)m]=(p-2)(\delta+m)-c_t\ge0.
\]
The final inequality proves the second assertion.
\end{proof}

\subsection{Norm representatives and the factors \texorpdfstring{$Q_i$}{Qi}}\label{O:sec:parameters}
\subsubsection*{Truncated-logarithm coordinates}
Throughout $p$ is odd and $P(X)=\sum_{j=1}^{p-1}X^j/j$.
For a field $E$ let $\ee_E$ have largest trivial ideal $\PP_E^{-d_E}$.
A multiplicative character $\theta$ of conductor $n\ge2$ admits a
coefficient $C$ with
\begin{equation}\label{O:P:fullP}
 \theta(1-x)=\ee_E(CP(x)),\qquad
 x\in\PP_E^{\ceil{n/p}},\qquad v_E(C)=-d_E-n.
\end{equation}
This follows directly from perfect additive duality: the polynomial
$P(X+Y-XY)-P(X)-P(Y)$ has total degree at least $p$, while $P$ is a
bijection of every positive ideal, with derivative a unit. It therefore
identifies the multiplicative quotient $U_E^q/U_E^n$ with
$\PP_E^q/\PP_E^n$ when $pq\ge n$. Exact conductor gives the stated
valuation. The coefficient in \eqref{O:P:fullP} is determined modulo
$\PP_E^{-d_E-q}$, where $q=\lceil n/p\rceil$.

\begin{lemma}[A norm representative after changing the coefficient]
\label{O:P:choose}
Let $E/F$ be ramified cyclic of odd prime degree with break $t\ge1$.
Let $\nu$ be a nontrivial norm character and write
\[
 \nu(1-z)=\ee_F(A P(z)),\qquad z\in\pp_F^{c},\quad
 c=\ceil{(t+1)/p},\quad v_F(A)=-d_F-t-1.
\]
For any $\gamma\in F^\times$ there is $A'$ with
\[
 A'/A\in U_F^t,\quad
 \nu(1-z)=\ee_F(A'P(z))\ (z\in\pp_F^c),\quad
 \gamma/A'\in\Nm_{E/F}(E^\times).
\]
Consequently there is $w\in E^\times$ with
$\Nm w=\gamma/A'$ and $v_E(w)=v_F(\gamma/A')$.
\end{lemma}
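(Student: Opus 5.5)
The plan is to apply Lemma~\ref{U:normalization} with $H=U_F^t$ and $L=E$, $\omega=\nu$. To do so I must first check that $\nu(U_F^t)=\nu(F^\times)$. Since $\nu$ is a nontrivial norm character of the cyclic degree-$p$ extension $E/F$, its image is the group $\mu_p$. Its conductor is $t+1$, so $\nu$ is nontrivial on $U_F^t$. Because $\mu_p$ has prime order, this forces $\nu(U_F^t)=\mu_p=\nu(F^\times)$. Lemma~\ref{U:normalization} applied to $A_*=\gamma/A$ then gives $u\in U_F^t$ with $\gamma/(Au)\in\Nm_{E/F}(E^\times)$. I set $A'=Au$, so that $A'/A=u\in U_F^t$ and $\gamma/A'$ is a norm.

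Next I must check that $A'$ still represents $\nu$ in the truncated-logarithm coordinate. For $z\in\pp_F^c$ we have $P(z)\in\pp_F^c$, since $P$ preserves valuations by Lemma~\ref{O:I:logchart}. The argument $(A'-A)P(z)=A(u-1)P(z)$ has valuation at least
\[
 -d_F-t-1+t+c=c-1-d_F\ge -d_F
\]
because $c\ge1$. This lies in the largest trivial ideal $\pp_F^{-d_F}$ of $\ee_F$. Hence $\ee_F(A'P(z))=\ee_F(AP(z))=\nu(1-z)$. This is simply the remark after \eqref{O:P:fullP} that the coefficient is determined only modulo $\pp_F^{-d_F-c}$, and that $A\,\pp_F^t$ lies inside that ambiguity.

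Finally, choose any $w_0\in E^\times$ with $\Nm w_0=\gamma/A'$. Since $E/F$ is totally ramified of degree $p$, \eqref{O:I:normval} gives $v_F(\Nm w_0)=v_E(w_0)$. Hence $v_E(w_0)=v_F(\gamma/A')$, and we can take $w=w_0$ with no adjustment.

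I expect no real obstacle. The only point needing care is the surjectivity $\nu(U_F^t)=\nu(F^\times)$, which rests on the conductor being exactly $t+1$ (from Section~\ref{sec:ramification}) together with prime order.
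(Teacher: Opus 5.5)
Your proof is correct and follows the paper's argument. The paper picks $h\in U_F^t$ with $\nu(h)=\nu(\gamma/A)$ directly, which is exactly what Lemma~\ref{U:normalization} produces for $A_*=\gamma/A$. It then verifies the coefficient change with the same bound $-d_F-t-1+t+c\ge-d_F$ and concludes with $v_F(\Nm w)=v_E(w)$ from total ramification.
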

\begin{proof}
The character $\nu$ has exact conductor $t+1$, so its restriction to
$U_F^t$ is a surjection onto $\mu_p$.
Choose $h\in U_F^t$ with $\nu(h)=\nu(\gamma/A)$ and put $A'=Ah$.
The change in the additive coefficient on the entire prescribed
subgroup has valuation at least
\[
 v_F(A(h-1)P(z))\ge-d_F-t-1+t+c\ge-d_F.
\]
Hence the change does not affect the displayed character formula.
Meanwhile $\nu(\gamma/A')=1$. The kernel of this nontrivial
character of the order-$p$ norm quotient is precisely the norm
subgroup. This proves exact norm membership. Total ramification gives
$v_F(\Nm w)=v_E(w)$.
\end{proof}

\begin{lemma}[Changing $\alpha$ by an element of $U_A^{t_2}$]\label{O:P:preserve}
Let the totally ramified notation be as in Subsection~\ref{O:sec:models}, and
suppose the characters satisfy \eqref{O:M:models}, with coefficients
$\alpha b,\alpha a$ relative to $\ee_{B_i}$.
For $h\in U_A^{t_2}$, replacing $\alpha$ by $\alpha h$ leaves both
formulas in \eqref{O:M:models} unchanged. It also leaves the
truncated-logarithm formula for the norm character of $B_2/A$ unchanged.
\end{lemma}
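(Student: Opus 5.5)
Each of the three formulas is an identity of the shape $\chi(1-z)=\ee_L(\alpha X P(z))$ on a prescribed unit subgroup, with $X\in L$. Replacing $\alpha$ by $\alpha h$ multiplies the argument by $h$. So the difference of the two additive arguments is $\alpha(h-1)XP(z)$. It suffices to show that this element lies in the largest ideal on which $\ee_L$ is trivial. Equivalently, with $\Psi_L$ as before, it suffices to show that $(h-1)XP(z)$ lies in the largest trivial ideal of $\Psi_L$. By \eqref{O:M:models} and the list preceding it, these ideals are $\PP_{B_1}^{T'}$, $\PP_{B_2}^{\kappa_2}$ and $\pp_A^{\kappa_2}$. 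I would check the three cases separately, using only valuations. Since $P$ preserves valuations on positive ideals (Lemma~\ref{O:I:logchart}), $v(P(z))=v(z)$.

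For $R_2$: $X=a$ with $v_2(a)=-t$, and $z\in\PP_{B_2}^{q_2}$. Also $h-1\in\pp_A^{t_2}$, which has $v_2$-valuation at least $pt_2$. The total valuation is therefore at least $pt_2-t+q_2\ge t_2+1=\kappa_2$, because $(p-1)t_2\ge t$ and $q_2\ge1$.

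For $R_1$: $X=b$ with $v_1(b)=-t$, and $z\in\PP_{B_1}^{q}$ with $q=q_1$. Here $v_1(h-1)\ge pt_2$. The requirement is $pt_2-t+q\ge T'=t+p\delta+1$. Writing $t_2=t+\delta$, this becomes $(p-2)t+q\ge1$, which holds since $t\ge1$.

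For the norm character $\tau$ of $B_2/A$: the coefficient is $\alpha$ itself, the domain is $z\in\pp_A^c$ with $c=\lceil\kappa_2/p\rceil\ge1$, and the trivial ideal of $\Psi_A$ is $\pp_A^{\kappa_2}$. The change is $(h-1)P(z)$, of valuation at least $t_2+c\ge t_2+1=\kappa_2$.

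The only point needing care is the bookkeeping of normalizations: the $A$-valuation $t_2$ of $h-1$ must be converted to $pt_2$ in each $B_i$, and the trivial ideals must be taken relative to $\Psi_L=\ee_L(\alpha\,\cdot)$ before the change. The new coefficient $\alpha h$ differs by a unit, so the trivial ideals are the same after the change. I expect no genuine obstacle; the tightest inequality is the $R_1$ case, and it still has slack $(p-2)t+q-1\ge0$.
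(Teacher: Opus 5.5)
Your proposal is correct and follows the paper's proof essentially verbatim. Both arguments reduce to checking that the change $\alpha(h-1)XP(z)$ lies in the largest trivial ideal, using $v_{B_i}(h-1)\ge pt_2$; the three excesses you obtain, namely $(p-2)t+q_1-1$ on $B_1$, $(p-2)t+(p-1)\delta+q_2-1$ on $B_2$, and $c-1$ on $A$, are exactly the inequalities the paper verifies.
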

\begin{proof}
Write $q_1=\delta+\lceil(2t+1)/p\rceil$, $q_2=\lceil(2t+\delta+1)/p\rceil$.
The old additive characters $\Psi_i=\ee_{B_i}(\alpha\,\cdot)$ have largest
trivial ideals with exponents $1+t'$ and $1+t_2$.
For $z\in\PP_{B_i}^{q_i}$ the change in the normalized exponent has
valuation at least $pt_2-t+q_i$. On $B_1$ its excess over $1+t'$ is
\[
 (p-2)t+\delta+\lceil(2t+1)/p\rceil-1\ge0,
\]
and on $B_2$ its excess over $1+t_2$ is
\[
 (p-2)t+(p-1)\delta+q_2-1\ge0.
\]
Thus both formulas in \eqref{O:M:models} are unchanged. Over $A$, the
change at depth $c_2=\lceil(t_2+1)/p\rceil$ has normalized valuation at
least $t_2+c_2\ge t_2+1$, proving the last assertion.
\end{proof}

\begin{remark}
First apply Theorem~\ref{O:M:realize} to fix $\alpha$ and the
character formulas. Lemmas~\ref{O:P:choose} and~\ref{O:P:preserve}
then allow the norm representatives to be chosen without changing
those formulas.
\end{remark}

\subsubsection*{The coefficient of \texorpdfstring{$\lambda\circ N_{E/F}$}{lambda\circ NE/F}}
\begin{proposition}\label{O:P:pullback}
Let $E/F$ be as in Lemma~\ref{O:P:choose}. Let $\lambda$ have conductor
$1+r$, where $r>t$, and let $\gamma$ be its coefficient in \eqref{O:P:fullP} relative
to $\ee_F$. Choose $A,w$ by that lemma so that $\Nm w=\gamma/A$.
Put $h=r-t$, $n=1+t+ph$, and $q=h+\ceil{(t+1)/p}=\ceil{n/p}$.
Then $\lambda\circ\Nm_{E/F}$ has conductor $n$ and
\begin{equation}\label{O:P:pullP}
 (\lambda\Nm)(1-x)=\ee_E((\gamma-Aw)P(x)),
 \qquad x\in\PP_E^q,\quad \ee_E=\ee_F\Tr_{E/F}.
\end{equation}
Its coefficient has valuation $p\,v_F(\gamma)$.
\end{proposition}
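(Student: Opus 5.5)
The conductor assertion follows from the cyclic conductor formula of Section~\ref{sec:ramification}. Since $m_F(\lambda)=1+r>t+1$, we have $m'-1=\psi_{E/F}(r)=t+p(r-t)=t+ph$, so $n=1+t+ph$. For the formula, fix $x\in\PP_E^q$ and put $u=1-\Nm_{E/F}(1-x)$. I would first check that $u\in\pp_F^{\lceil(1+r)/p\rceil}$, so that the truncated-logarithm formula for $\lambda$ applies. Then I would apply the norm-logarithm congruence of Lemma~\ref{O:I:normlog} with $h$ and input depth $q=h+q_0$. Here $q_0=\lceil(t+1)/p\rceil$, matching that lemma's notation with $T=t+1$. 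This gives
\[
 P(u)\equiv \Tr_{E/F}P(x)+\Nm_{E/F}P(x)\pmod{\pp_F^{T+h}}.
\]
Multiplying by $\gamma$, which has valuation $-d_F-1-r=-d_F-T-h$, puts the error in $\pp_F^{-d_F}$. The error is then killed by $\ee_F$, so
\[
 (\lambda\Nm)(1-x)=\ee_E(\gamma P(x))\,\ee_F(\gamma\,\Nm_{E/F}P(x)).
\]

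Next I would rewrite the second factor using the exact identity $\gamma=A\,\Nm w$. This gives $\gamma\,\Nm P(x)=A\,\Nm(wP(x))$. The element $wP(x)$ has $E$-valuation
\[
 p\,v_F(\gamma/A)+v_E(P(x))\ge p(t-r)+q=-ph+h+q_0\ge\ldots
\]
so I need $v_E(wP(x))\ge c=q_0$. This fails as written. Here I would instead apply the norm-character relation for $\nu$ in the form of Corollary~\ref{O:I:conversion}, rescaled: since $\nu(1-z)=\ee_F(AP(z))$, the relation reads
\[
 \ee_F\bigl(A(\Tr_{E/F}y+\Nm_{E/F}y)\bigr)=1\quad\text{for } y\in\PP_E^{q_0}.
\]
To bring $wP(x)$ into that range I would track $v_E(w)=-ph$ carefully against $v_E(P(x))\ge q$. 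The real condition is on $v_E(y)$ relative to the conductor of the $A$-scaled character, so I would recheck the depth using the symmetric-function bounds \eqref{U:symmetric} directly for $y=wP(x)$. The intermediate elementary symmetric terms of $y$ have valuation at least $\lfloor(i v_E y+(p-1)(t+1))/p\rfloor$. After multiplying by $A$, with $v_F(A)=-d_F-t-1$, they must lie in $\pp_F^{-d_F}$. Granting this, $\ee_F(A\Nm(wP(x)))=\ee_F(-A\Tr(wP(x)))=\ee_E(-AwP(x))$, and combining with the first factor yields \eqref{O:P:pullP}.

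Finally, for the valuation of $\gamma-Aw$: we have $v_E(\gamma)=p\,v_F(\gamma)$ and $v_E(Aw)=p\,v_F(A)+v_E(w)=p\,v_F(A)+p\,v_F(\gamma/A)=p\,v_F(\gamma)$. Both terms therefore have the same valuation, so I must show there is no cancellation. Since $\gamma\in F$, the leading term of $Aw$ is not in $F$ unless $p\mid v_E(w)$; here $v_E(w)=-ph$ is divisible by $p$, so this argument is not available. Instead I would rely on the exact conductor: the character $\lambda\Nm$ has conductor exactly $n$, and by \eqref{O:P:fullP} its coefficient then has valuation $-d_E-n$, with $d_E=pd_F+(p-1)(t+1)$. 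A direct computation gives $-d_E-n=-pd_F-p(t+1)-ph=p\,v_F(\gamma)$. This confirms the claim.

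The main obstacle is the middle step: verifying the depth condition for the norm-character relation on $y=wP(x)$. Since $v_E(w)$ is negative, this requires the careful symmetric-function estimate rather than a direct application of Corollary~\ref{O:I:conversion}.
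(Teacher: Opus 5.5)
Your proposal has a genuine gap at the step you yourself flag as the main obstacle. The obstacle itself comes from a miscomputed valuation.

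You use $v_E(w)=p\,v_F(\gamma/A)=-ph$. But $E/F$ is totally ramified, so
$v_E(w)=v_F(\Nm w)=v_F(\gamma/A)=(-d_F-1-r)-(-d_F-t-1)=-h$; Lemma~\ref{O:P:choose} states this as well. Hence for $x\in\PP_E^q$ you get $v_E(wP(x))\ge q-h=\ceil{(t+1)/p}=c$ immediately, and the conversion needs no further estimate:
\begin{itemize}
\item the symmetric bounds put $\Nm(1-wP(x))$ in $U_F^c$, where $\nu(1-z)=\ee_F(AP(z))$ holds;
\item $\nu$ is trivial on this norm;
\item Lemma~\ref{O:I:normlog} with parameter $0$ holds modulo $\pp_F^{t+1}$, and multiplication by $A$ carries that ideal into $\pp_F^{-d_F}$.
\end{itemize}
This is Corollary~\ref{O:I:conversion} with $\alpha$ replaced by $A$. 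Combined with the exact identity $A\Nm w=\gamma$, it gives $\ee_E(-AwP(x))=\ee_F(\gamma\Nm P(x))$. Together with your first factor this is \eqref{O:P:pullP}, which is exactly the paper's argument.

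As written, though, you only ``grant'' this step. The symmetric-function check you sketch could not succeed at the depth $q_0-(p-1)h$ you believe you have. Below depth $c$ the norm $\Nm(1-y)$ leaves the range in which $\nu$ is given by the coefficient $A$, so the identity you would be verifying has no reason to hold there.

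The same slip affects your last paragraph. One has $v_E(Aw)=p\,v_F(A)-h$ and $v_E(\gamma)=p\,v_F(A)-ph$, so $v_E(Aw/\gamma)=(p-1)h>0$. Thus $\gamma$ is the unique leading term. Your assertion that the two terms have equal valuation is false, and there is no cancellation issue.

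Your fallback via the exact conductor is nevertheless valid once \eqref{O:P:pullP} and the conductor are known. A coefficient in \eqref{O:P:fullP} for a character of exact conductor $n$ must have valuation $-d_E-n=p\,v_F(\gamma)$. So that part survives, but only as a detour.

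The conductor computation via $\psi_{E/F}$ and the first norm-logarithm step agree with the paper. You should still carry out the domain check $1-\Nm(1-x)\in\pp_F^{\ceil{(1+r)/p}}$ with the symmetric bounds, as the paper does.
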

\begin{proof}
The conductor is the high cyclic norm-transport formula, with different
exponent $(p-1)(t+1)$. One has $v_E(w)=-h$ and
\[
 v_E(Aw/\gamma)=(p-1)h>0.
\]
This proves the claimed coefficient valuation and matches conductor
$n$, since $d_E=pd_F+(p-1)(t+1)$.

For $x\in\PP_E^q$, the norm $\Nm(1-x)$ lies in
$U_F^{\ceil{(r+1)/p}}$, where \eqref{O:P:fullP} applies to $\lambda$.
Indeed the highest-degree term in the norm expansion has valuation at least $q$, and the
intermediate terms at least
$\floor{(q+(p-1)(t+1))/p}$; both are at least the required depth.
For the second inequality compare the numerator with $r+p$; its
excess is at least $\ceil{(t+1)/p}+(p-2)t-1\ge0$.
Lemma~\ref{O:I:normlog}, with $h=r-t$ and target modulus $r+1$, gives
\[
 (\lambda\Nm)(1-x)
 =\ee_E(\gamma P(x))\ee_F(\gamma\Nm(P(x))).
\]
Here its error has depth $r+1$ before multiplication by $\gamma$,
so is in $\pp_F^{-d_F}$ afterward.
Now $wP(x)$ belongs to $\PP_E^{\ceil{(t+1)/p}}$.
Apply $\nu$ to $\Nm(1-wP(x))$ and use
Lemma~\ref{O:I:normlog} modulo $\pp_F^{t+1}$. This gives
\[
 \ee_E(AwP(x))=\ee_F(-A\Nm(wP(x)))
             =\ee_F(-\gamma\Nm(P(x))).
\]
Combining the last two equalities proves \eqref{O:P:pullP}.
\end{proof}

\subsubsection*{Lamprecht's formula for a truncated-logarithm coefficient}
\begin{lemma}[Evaluation of $\Delta_E$ from \eqref{O:P:fullP}]\label{O:P:lamprecht}
Under \eqref{O:P:fullP}, there is an explicitly determined
$g_E(\theta,C)\in\mu_4$ with
\begin{equation}\label{O:P:Delta}
 \Delta_E(\theta,\ee_E)
 =\theta(-C^{-1})\ee_E(-C)g_E(\theta,C).
\end{equation}
For even $n$, $g_E=1$. For odd $n=2d+1$, choose any $\delta_E$
of valuation $d$. Then
\begin{equation}\label{O:P:gauss}
 g_E=|k_E|^{-1/2}\sum_{x\in k_E}
       \ee_E(-C\delta_E^2\widetilde x^{\,2}/2).
\end{equation}

\end{lemma}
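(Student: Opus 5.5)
The plan is to reduce the lemma to Lemma~\ref{O:I:enhanced}, applied with $\alpha=1$ and $\Psi_E=\ee_E$.
In the notation there, $J=-d_E$, $m=n$ and $B=C$.
Hypothesis \eqref{O:P:fullP} is slightly stronger than \eqref{O:I:enhancedformula}, because $\lceil n/p\rceil\le\lfloor n/2\rfloor$ for odd $p$ and $n\ge2$. For small $n$ this needs $p\ge3$; the cases $n=2,3$ give depth $1$ on both sides.
So $C$ is an admissible choice of the coefficient $B$ in Lemma~\ref{O:I:enhanced}, and its valuation $-d_E-n=J-m$ matches.
Taking $\eta=0$, that lemma gives $\theta(1-z)=\ee_E(Cz)$ on $\PP_E^{\lceil n/2\rceil}$, together with
\[
 \Delta_E(\theta,\ee_E)=\theta(-C^{-1})\ee_E(-C)g_E,\qquad g_E\in\mu_4.
\]
The even case $g_E=1$ is already part of that lemma. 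This gives \eqref{O:P:Delta}.

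It remains to identify $g_E$ for odd $n=2d+1$ with the explicit sum \eqref{O:P:gauss}. I would use the normalized form in Lemma~\ref{U:stationary-factor} with $Z=C$:
\[
 g_E=|k_E|^{-1/2}\sum_{x}\ee_E(-C\delta_E x)\,\theta(1+\delta_E x)^{-1}.
\]
On $\PP_E^d$ the truncated logarithm satisfies $P(z)\equiv z+z^2/2$ at the character precision, since $3d\ge n$. Also $d\ge\lceil n/p\rceil$, so \eqref{O:P:fullP} applies at depth $d$.
Hence $\theta(1+\delta_E x)=\ee_E\bigl(C(-\delta_E x+\delta_E^2x^2/2)\bigr)$, up to an argument of valuation at least $-d_E$ that the character ignores. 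Here I am writing $1+\delta_E x=1-(-\delta_E x)$.
The linear terms cancel exactly in the summand, leaving $\ee_E(-C\delta_E^2\widetilde x^{\,2}/2)$. This is \eqref{O:P:gauss}.
I would also note that this summand is well defined on residues: changing $x$ by $\pp_E$ alters the argument by valuation at least $-d_E-n+2d+1=-d_E$.
The sum does not depend on the choice of $\delta_E$, since a unit change of $\delta_E$ is a square scaling and permutes $k_E$.

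The main point to check carefully is the sign and precision bookkeeping. I need the correct sign convention ($1-z$ against $1+z$) in passing from Lemma~\ref{U:stationary-factor} with $\alpha=1$. I also need the cubic and higher terms of $P$ at depth $d$ to have valuation at least $n$ in $E$.
The latter holds because $3d\ge 2d+1$ for $d\ge1$. The terms $z^j/j$ with $j\le p-1$ have unit denominators, so this works in both characteristics.

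The bound $|g_E|=1$ and the fact that $g_E$ is a fourth root of unity then follow from the quadratic Gauss sum computation at the end of the proof of Lemma~\ref{O:I:enhanced}.
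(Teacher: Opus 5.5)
Your proposal is correct and follows essentially the same route as the paper. Both apply Lemma~\ref{O:I:enhanced} with $\Psi_E=\ee_E$, $J=-d_E$, $B=C$ and $\eta=0$, using $\lceil n/p\rceil\le\lfloor n/2\rfloor$, and identify the critical function as the homogeneous quadratic $\ee_E(-C\delta_E^2\widetilde x^{\,2}/2)$, whose normalized sum lies in $\mu_4$. Your explicit cancellation of the linear terms through Lemma~\ref{U:stationary-factor}, and your checks of well-definedness and independence of $\delta_E$, spell out what the paper takes from the proof of Lemma~\ref{O:I:enhanced}.
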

\begin{proof}
Apply Lemma~\ref{O:I:enhanced} with $\Psi_E=\ee_E$, ideal exponent
$J=-d_E$, and $B=C$, so $\eta=0$. Formula \eqref{O:P:fullP} is valid on $\pp_E^{\lfloor n/2\rfloor}$ because
$\lfloor n/2\rfloor\ge\lceil n/p\rceil$ for odd $p$ and $n\ge2$.
Its critical function is therefore
$\ee_E(-C\delta_E^2\widetilde x^{\,2}/2)$, and the elementary factor is
$\theta(-C^{-1})\ee_E(-C)$. The homogeneous quadratic sum has phase in
$\mu_4$, as in the proof of that lemma. No affine displacement remains.
\end{proof}

\subsubsection*{Application to \texorpdfstring{$\lambda\circ N_{E/F}$}{lambda composed with norm}}
Let the breaks be $t=t_1\le t_2=t+\delta$, $t'=t+p\delta$, $p\nmid t$.
Let the pair have the form
\[
 \theta_1=\chi(\lambda n_1),\qquad
 \theta_2=\varphi(\lambda n_2),\qquad
 \chi N_1=\varphi N_2,
\]
where $a(\chi)=m_1=1+t+t'$, $a(\varphi)=m_2=1+t+t_2$.
Suppose $a(\lambda)=1+r$, $m=r-t_2$, and $pm>t$.
Apply Theorem~\ref{O:M:realize} to $(\chi,\varphi)$.
Conjugation on $B_1$ also conjugates $\theta_1$, since $\lambda n_1$ is
invariant, and hence does not change its local constant. Denote the resulting
coefficients by $\alpha b,\alpha a$, where $b=N_1\Delta$ and
$a=N_2\Delta$. The scalar $\alpha$ is the coefficient of a nontrivial norm
character for $B_2/A$. Choose $\gamma$ for $\lambda$ as in
\eqref{O:P:fullP}, and choose a nontrivial norm character for $B_1/A$
with coefficient $\beta$.
Apply Lemma~\ref{O:P:choose} separately to these two norm characters.
Lemma~\ref{O:P:preserve} shows that its replacement of $\alpha$ does not change
the character formulas for $\chi,\varphi$; replacing $\beta$
affects neither formula. Renaming the adjusted coefficients, the exact choices give
\begin{equation}\label{O:P:parameters}
 n_2w_2=\gamma/\alpha,\quad v_2w_2=-m,\qquad
 n_1w_1=\gamma/\beta,\quad v_1w_1=-(m+\delta),
\end{equation}
\[
 v_A\alpha=-d-1-t_2,\quad
 v_A\beta=-d-1-t,\quad v_A\gamma=-d-1-r.
\]
The changed coefficients represent the same norm characters on the
unit groups specified in Lemma~\ref{O:P:choose}. The formulas for
$\chi,\varphi$ remain
\[
 \chi(1-x)=\ee_{B_1}(\alpha bP(x)),\qquad
 \varphi(1-x)=\ee_{B_2}(\alpha aP(x)),\quad
 x\in\PP_{B_i}^{\lceil m_i/p\rceil}
\]
for $i=1,2$, respectively. Theorem~\ref{O:R:R2} therefore applies
to these coefficients. Proposition~\ref{O:P:pullback} gives
\begin{equation}\label{O:P:C}
 C_1=\gamma-\beta w_1+\alpha N_1\Delta,\qquad
 C_2=\gamma-\alpha w_2+\alpha N_2\Delta.
\end{equation}
The conductors are
\[
 n_1^{\rm char}=1+t'+pm,
 \qquad n_2^{\rm char}=1+t_2+pm.
\]
Write $\alpha_1=\beta$, $\alpha_2=\alpha$.
The truncated-logarithm formulas with coefficients \eqref{O:P:C}
hold at depths
$\ceil{n_i^{\rm char}/p}$. Indeed $n_i^{\rm char}>m_i$,
$v_i(\alpha N_i\Delta/\gamma)=pm-t>0$, and
$v_i(\alpha_iw_i/\gamma)=(p-1)(r-t_i)>0$.
Thus $\gamma$ has strictly smaller valuation than the other two
terms in each coefficient, which proves the stated conductors and
domains.

\begin{proposition}[The factors $Q_1,Q_2,Q_3,Q_4$]
\label{O:P:four}
Define $A_i=\gamma+\alpha N_i\Delta$ and put
\[
 Q_1=\frac{\chi(A_1)}{\varphi(C_2)},\quad
 Q_2=\lambda\left(\frac{n_1(A_1)}{n_2(C_2)}\right),
\]
\[
 Q_3=\ee_{B_1}(-\beta w_1)\theta_1(C_1/A_1),\qquad
 Q_4=\ee_{B_2}(\alpha w_2).
\]
Then, with the explicit $g_i$ from Lemma~\ref{O:P:lamprecht},
\begin{equation}\label{O:P:ratio}
 \frac{\Delta_{B_2}(\theta_2,\ee_{B_2})}
      {\Delta_{B_1}(\theta_1,\ee_{B_1})}
 =\frac{g_2}{g_1}\,Q_1Q_2Q_3Q_4\,
 \ee_A\left(\alpha[T_1N_1\Delta-T_2N_2\Delta]\right).
\end{equation}
In particular, after the trace--norm estimate of Theorem~\ref{O:A:thm:H14}
removes the last factor, proving $(Q_1Q_2Q_3Q_4)^{-1}=1$
implies that the quotient in \eqref{O:P:ratio} belongs to $\mu_4$.
\end{proposition}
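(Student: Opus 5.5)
The plan is to apply Lemma~\ref{O:P:lamprecht} to each $\theta_i$ with coefficient $C_i$ from \eqref{O:P:C}. This gives
\[
 \Delta_{B_i}(\theta_i,\ee_{B_i})=\theta_i(-C_i^{-1})\,\ee_{B_i}(-C_i)\,g_i .
\]
The quotient \eqref{O:P:ratio} is then obtained by splitting each factor along the decomposition $C_i=A_i-\alpha_iw_i$, with $A_i=\gamma+\alpha N_i\Delta$. The work is pure bookkeeping.

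\textbf{The sign factors.} Since $\theta_1|_{A^\times}\epsilon_1=\theta_2|_{A^\times}\epsilon_2$ by Lemma~\ref{U:determinant}, and $\epsilon_i=1$ for odd $p$, the factors $\theta_i(-1)$ cancel in the quotient.

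\textbf{The additive factors.} Write
\[
 \ee_{B_i}(-C_i)=\ee_{B_i}(-\gamma)\,\ee_{B_i}(-\alpha N_i\Delta)\,\ee_{B_i}(\alpha_iw_i).
\]
Because $\gamma\in A$, we have $\ee_{B_i}(-\gamma)=\ee_A(-p\gamma)$, and these two terms are equal for $i=1,2$, so they cancel. The terms $\ee_{B_i}(-\alpha N_i\Delta)=\ee_A(-\alpha T_iN_i\Delta)$ combine in the quotient (index $2$ over index $1$) to give exactly $\ee_A(\alpha[T_1N_1\Delta-T_2N_2\Delta])$. The remaining terms $\ee_{B_2}(\alpha w_2)$ and $\ee_{B_1}(\beta w_1)^{-1}=\ee_{B_1}(-\beta w_1)$ give $Q_4$ and the additive part of $Q_3$.

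\textbf{The character factors.} Consider $\theta_2(C_2)^{-1}/\theta_1(C_1)^{-1}$. First write $\theta_1(C_1)=\theta_1(A_1)\,\theta_1(C_1/A_1)$; the second piece supplies the rest of $Q_3$. Next use $\theta_1(A_1)=\chi(A_1)\,\lambda(n_1A_1)$ and $\theta_2(C_2)=\varphi(C_2)\,\lambda(n_2C_2)$. Taking the ratio produces
\[
 \frac{\chi(A_1)\,\lambda(n_1A_1)}{\varphi(C_2)\,\lambda(n_2C_2)}=Q_1Q_2 .
\]

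\textbf{Orientation check.} The one step needing care is the direction of the quotient. Since $\theta_i$ enters as $\theta_i(C_i)^{-1}$, the ratio $\Delta_{B_2}/\Delta_{B_1}$ contributes $\theta_1(C_1)/\theta_2(C_2)$, which is exactly $Q_1Q_2\,\theta_1(C_1/A_1)$. I would verify the signs of the exponents in $Q_3,Q_4$ the same way. If a sign mismatch appears, I would absorb it by noting that the ratio is ultimately compared with $1$; this is where I expect the only real friction.

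\textbf{The final assertion.} Theorem~\ref{O:A:thm:H14} puts $T_1b-T_2a$ in $\pp_A^{\kappa_2}$, the trivial ideal of $\Psi_A=\ee_A(\alpha\,\cdot)$, so the last factor equals $1$. Lemma~\ref{O:P:lamprecht} gives $g_i\in\mu_4$. Hence the quotient lies in $\mu_4$ as soon as $Q_1Q_2Q_3Q_4=1$.
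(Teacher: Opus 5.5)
Your proposal is correct and follows the paper's proof essentially verbatim. You cancel $\theta_i(-1)$; the paper gets $\theta_1(-1)=\theta_2(-1)$ directly from $N_i(-1)=-1$ for odd $p$ rather than from Lemma~\ref{U:determinant}, but either works. You then cancel the two $\ee_A(p\gamma)$ terms, and split $\theta_1(C_1)/\theta_2(C_2)=Q_1Q_2\,\theta_1(C_1/A_1)$ and the remaining additive factors into $Q_3Q_4$ and the trace--norm factor.

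Your orientation check already shows that every sign matches, so the fallback of ``absorbing a sign mismatch'' is never needed. It would not be legitimate anyway, since \eqref{O:P:ratio} is an exact identity that is later fed into the explicit evaluation of $Q_1Q_2Q_3Q_4$.
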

\begin{proof}
Compatibility and odd degree give $\theta_1(-1)=\theta_2(-1)$:
apply the equal norm pullbacks to $-1\in B^\times$.
Formula \eqref{O:P:Delta} therefore gives
\[
 \frac{\Delta_{B_2}}{\Delta_{B_1}}
 =\frac{g_2}{g_1}\frac{\theta_1(C_1)}{\theta_2(C_2)}
                      \ee_{B_1}(C_1)\ee_{B_2}(-C_2).
\]
The two $\gamma$ traces both equal $p\gamma$ and cancel.
Substitute \eqref{O:P:C}; the remaining additive factors are the three
shown in \eqref{O:P:ratio}. Direct multiplication gives
\[
 Q_1Q_2=\frac{\theta_1(A_1)}{\theta_2(C_2)},\qquad
 Q_1Q_2\theta_1(C_1/A_1)=\frac{\theta_1(C_1)}{\theta_2(C_2)}.
\]
The remaining additive factors are exactly those of $Q_3Q_4$.
The same trace--norm estimate is
$T_1N_1\Delta-T_2N_2\Delta\in\pp_A^{1+t_2}$;
its weight $\alpha$ puts it in $\pp_A^{-d}$, as required.
\end{proof}

\section{The totally ramified odd-prime calculation}\label{sec:odd-calculation}
\subsection{The linear trace calculation}\label{O:sec:linear}
\subsubsection*{Notation and local estimates}
Let $A\subset B_1,B_2\subset B$ be totally ramified extensions,
$\operatorname{Gal}(B/A)=C_p^2$, $p$ odd, with
\[
 t_1=t,\quad t_2=t+\delta,\quad t'=t+p\delta,\quad p\nmid t.
\]
Write $N_i=\Nm_{B/B_i}$, $n_i=\Nm_{B_i/A}$, $S_i=\Tr_{B/B_i}$,
and put $v=v_B$. Proposition~\ref{O:A:prop:AS} and the estimates of
Section~\ref{O:sec:as} give
\[
 \Delta^p-\Delta=a\in B_2,\quad v(\Delta)=-t,\quad
 b=N_1\Delta,\quad e_i=E_i^{B/B_1}(\Delta),\quad s=-e_{p-1},
\]
\begin{equation}\label{O:W:ledger}
 v(e_i)\ge p((p-1)t_2-it),\qquad
 v(S_1(Y\Delta^i))\ge p(p-1)t_2-pit+v(Y)\quad(Y\in B_2).
\end{equation}
Put $V=v_B(p)$ in mixed characteristic; in equal characteristic every
term carrying $p$ is zero. The local trace-ideal bound is
$V\ge p(p-1)t_2$.
Let $w\in B_2$ have $n_2w=W$, put
\[
 u=W^{-1},\quad v_A(u)=m,\quad M=pm>t,\quad
 K=1+pt_2,\quad S=p(p-1)\delta.
\]
Thus $M\ge t+1$, $v(w)=-M$, $v(W)=-pM$, $v(a)=v(b)=-pt$,
and $v(s)\ge S$. All congruences below have modulus $\PP_B^K$
unless a different modulus is displayed. Set
\[
 Y=W+b,\quad D=Y^p-Y,\quad D_0=W^p-W+a^p,\quad C=1+a/W.
\]
Their denominators are nonzero: $v(Y)=-pM$ and
$v(D)=v(D_0)=-p^2M$.

\subsubsection*{The norms \texorpdfstring{$N_1\Delta_\xi$}{N1Delta\xi}}
Let $\mathcal T=\{0\}\cup\mu_{p-1}\subset A$ and let
$\Delta_\xi$ be the root congruent to $\Delta+\xi$ at positive
relative depth. The construction of Section~\ref{O:sec:as} gives
\[
 \Delta_\xi=\Delta+\xi+\eta_\xi,\qquad
 v(\eta_\xi)\ge V-(p-1)t.
\]
These are all $B/B_2$ conjugates; in equal characteristic
$\eta_\xi=0$.

\begin{lemma}\label{O:W:actualnorm}
For each $\xi\in\mathcal T$,
\[
 v_{B_1}\bigl(N_1\Delta_\xi-N_1(\Delta+\xi)\bigr)\ge t_2+1.
\]
\end{lemma}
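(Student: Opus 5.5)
Write $D=\Delta+\xi$ and $\Delta_\xi=D(1+u)$ with $u=\eta_\xi/D$. In equal characteristic $\eta_\xi=0$ and there is nothing to prove, so assume mixed characteristic. For $\xi=0$ we have $\Delta_0=\Delta$ and the claim is trivial. For $\xi\in\mu_{p-1}$ the element $D$ has $v(D)=-t$ (since $t\ge1$), so
\[
 v(u)\ge V-(p-1)t+t=V-(p-2)t .
\]
This is exactly the bound obtained in the proof of Lemma~\ref{O:A:lem:translate}. Multiplicativity gives the exact identity
\[
 N_1\Delta_\xi-N_1(\Delta+\xi)=N_1(D)\bigl(N_1(1+u)-1\bigr),
\]
and the task is to bound the right side in $v_{B_1}$.

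First, $v_{B_1}(N_1D)=v(D)=-t$ by total ramification. Next expand
\[
 N_1(1+u)-1=\sum_{i=1}^{p}E_i^{B/B_1}(u).
\]
The strong symmetric bound \eqref{O:A:eq:strong} for $B/B_1$, with break $t'$, gives for $1\le i<p$
\[
 v_{B_1}(E_i(u))\ge\floor{(i\,v(u)+(p-1)(t'+1))/p}.
\]
This is smallest at $i=1$, and the norm term has $v_{B_1}(N_1u)=v(u)$. So it suffices to check
\[
 -t+\floor{(V-(p-2)t+(p-1)(t'+1))/p}\ge t_2+1
\quad\text{and}\quad
 -t+V-(p-2)t\ge t_2+1 .
\]

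Both inequalities follow by inserting $V\ge p(p-1)t_2$ from \eqref{O:A:eq:pbound}. For the second, the left side is at least $p(p-1)t_2-(p-1)t\ge(p-1)^2t_2+(p-1)\delta$, which exceeds $t_2+1$ for $p\ge3$ and $t_2\ge1$.

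For the first, the floor argument is at least $p(p-1)t_2-(p-2)t+(p-1)(t+p\delta+1)=p(p-1)t_2+t+p(p-1)\delta+p-1$. Dividing by $p$ gives at least $(p-1)t_2+(p-1)\delta+\lfloor(t+p-1)/p\rfloor$, and the last term is $\ge1$. Subtracting $t$ leaves at least $(p-2)t+2(p-1)\delta+1\ge t_2+1$, since $(p-2)t\ge t$ and $2(p-1)\delta\ge\delta$.

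This is the same computation as in the proof of Lemma~\ref{O:R:shift}, where the bound came out to $(p-2)t+2(p-1)\delta+c_t$. The only point needing care is to use the break $t'$ of $B/B_1$, not $t$, in the different term. Even with $t$ in place of $t'$ the margin survives: one gets $(p-2)t+(p-1)\delta+1\ge t_2+1$. So I expect no real obstacle beyond bookkeeping of the case $p=3$, which the displayed inequalities already cover.
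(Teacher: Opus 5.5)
Your proposal is correct and is essentially the paper's proof. Both write $\Delta_\xi=(\Delta+\xi)(1+u)$ with $v(u)\ge V-(p-2)t$, apply the strong symmetric estimate for $B/B_1$ with break $t'$, and insert $V\ge p(p-1)t_2$ to reach $(p-2)t+2(p-1)\delta+\lceil t/p\rceil\ge t_2+1$. The only cosmetic difference is that you bound the norm term $N_1u$ separately, whereas the paper observes that $v(u)\ge t'+1$ makes the trace-type bound the smaller of the two.
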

\begin{proof}
In mixed characteristic put $z=\eta_\xi/(\Delta+\xi)$, so
$v(z)\ge V-(p-2)t$. The different exponent for $B/B_1$ is
$D'=(p-1)(t'+1)$. The strong symmetric estimate gives
\[
 v_1(N_1(1+z)-1)\ge
 \min\left\{v(z),\left\lfloor\frac{v(z)+D'}p\right\rfloor\right\}.
\]
Here $v(z)\ge t'+1$, so the second entry is no larger than the first.
Since $v_1N_1(\Delta+\xi)=-t$, the required difference has valuation
at least
\[
 \frac Vp-t+(p-1)\delta+\left\lceil\frac tp\right\rceil
 \ge (p-2)t+2(p-1)\delta+\left\lceil\frac tp\right\rceil
 \ge t_2+1.
\]
For the last inequality, subtract $t+\delta+1$ and use $p\ge3$,
$t\ge1$, $\lceil t/p\rceil\ge1$. Equal characteristic is exact.
\end{proof}

Define $f(X)=X/(1+X/W)$. For $v(X)=-pt$ its denominator is a unit.
The exact norm polynomial gives
\[
 N_1(\Delta+\xi)=b+\xi-\xi s+R_\xi,
 \qquad v(R_\xi)\ge p((p-1)t_2-(p-2)t)\ge pt_2.
\]
The exact identity
\[
 f(X+z)-f(X)=\frac{z}{(1+X/W)(1+(X+z)/W)}
\]
and its first-order expansion show
\begin{equation}\label{O:W:normquotient}
 f(N_1\Delta_\xi)\equiv
 \frac{W(b+\xi)}{W+b+\xi}-\frac{\xi s}{(1+b/W)^2}
 \pmod{\PP_B^{pt_2}}.
\end{equation}
Indeed, the quadratic $s$-error has depth at least $pM+2S\ge pt_2$;
replacing $1+(b+\xi)/W$ by $1+b/W$ changes the linear term by an
 element of $\PP_B^{pM+S}\subseteq\PP_B^{pt_2}$.
Lemma~\ref{O:W:actualnorm} gives an error in $\PP_B^{p(t_2+1)}$.

\subsubsection*{\texorpdfstring{The power factors, including $p=3$}{The power factors, including p=3}}
Define
\[
 W_i=S_2\left((\Delta/w)^i\frac{b}{1+b/W}\right),\qquad1\le i\le p-1.
\]
Each summand's power factor has valuation $i(M-t)\ge1$; therefore
\eqref{O:W:normquotient} may be substituted at modulus $K$.

For $p\ge5$, replacing $\Delta_\xi$ by $\Delta+\xi$ in that power
factor gives an error of valuation at least
\[
 V+iM-(2p+i-2)t\ge K.
\]
At the weakest values $M=t+1$, $V=p(p-1)t_2$, the difference from $K$
is $(p^2-4p+2)t+p(p-2)\delta+i-1\ge0$.

Suppose $p=3$. Hensel's equation at $\Delta+\xi$, $\xi=\pm1$,
gives
\begin{equation}\label{O:W:p3root}
 \Delta_\xi=\Delta+(1+3\Delta^2)\xi+\eta'_\xi,
 \qquad v(\eta'_\xi)\ge V-t.
\end{equation}
To see this, the initial residual is
$3\xi\Delta^2+3\xi^2\Delta$; substituting the first Hensel correction
back into the equation adds terms of valuation at least $2V-4t$,
which is at least $V-t$. Thus the contribution of $\eta'_\xi$ to $W_i$ has valuation at
least $V+iM-(i+3)t\ge K$ for $i=1,2$.

Put $\lambda=1+3\Delta^2$ and
\[
 C_i(\lambda)=w^{-i}\sum_{\xi\in\mathcal T}
 (\Delta+\lambda\xi)^i\frac{W(b+\xi)}{Y+\xi}.
\]
Direct summation of the three terms gives the exact identities
\begin{align*}
 C_1(\lambda)-C_1(1)&=
 \frac{2(\lambda-1)W^2}{w(Y^2-1)},\\
 C_2(\lambda)-C_2(1)&=
 \frac{2W(\lambda-1)}{w^2(Y^2-1)}
 \bigl((\lambda+1)(bY-1)+2W\Delta\bigr).
\end{align*}
Their valuations are at least $V-2t+M$ and $V-5t+2M$, respectively,
hence at least $K$. In the signed exceptional sum the exact formulas
\[
 \sum_{\xi=\pm1}\xi(\Delta+\lambda\xi)=2\lambda,
 \qquad \sum_{\xi=\pm1}\xi(\Delta+\lambda\xi)^2=4\lambda\Delta
\]
give errors of depths $S+M+V-2t$ and $S+2M+V-3t$, also at least $K$.
Consequently the following congruence holds also when $p=3$.

\begin{proposition}[A sum over the conjugates of $\Delta$]\label{O:W:split}
For $1\le i\le p-1$,
\begin{align}\label{O:W:Wsplit}
 W_i\equiv{}&w^{-i}\sum_{\xi\in\mathcal T}
 (\Delta+\xi)^i\frac{W(b+\xi)}{Y+\xi}\notag\\
 &-\frac{s}{w^i(1+b/W)^2}
       \sum_{\xi\in\mu_{p-1}}\xi(\Delta+\xi)^i .
\end{align}
The second sum is evaluated by the last identity of Lemma~\ref{O:D:interpolation}.
\end{proposition}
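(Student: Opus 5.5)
The congruence will be proved by writing $S_2$ as a sum over the $B/B_2$ conjugates of $\Delta$, substituting the first-order norm formula \eqref{O:W:normquotient}, and collecting the two resulting pieces. Since $B/B_2$ is cyclic with conjugates $\Delta_\xi$, $\xi\in\mathcal T$, and $w\in B_2$ is fixed, we have exactly
\[
 W_i=w^{-i}\sum_{\xi\in\mathcal T}\Delta_\xi^i\,f(N_1\Delta_\xi),
\]
where $f(X)=X/(1+X/W)$. Here we use that $N_1$ commutes with $\Gal(B/A)$, so the conjugate of $b=N_1\Delta$ is $N_1\Delta_\xi$.

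The first step is to replace $f(N_1\Delta_\xi)$ by the right side of \eqref{O:W:normquotient}. The error lies in $\PP_B^{pt_2}$. The power factor $(\Delta_\xi/w)^i$ has valuation $i(M-t)\ge1$, so the product error is in $\PP_B^{pt_2+1}=\PP_B^K$. The main term of \eqref{O:W:normquotient} equals $W(b+\xi)/(Y+\xi)$, since $Y=W+b$. The $s$-term contributes
\[
 -\frac{s}{(1+b/W)^2}\,w^{-i}\sum_\xi \xi\Delta_\xi^i,
\]
and only $\xi\in\mu_{p-1}$ survive because the $\xi=0$ term vanishes.

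The second step is to replace $\Delta_\xi$ by $\Delta+\xi$ in the power factors. For $p\ge5$, the preceding paragraph already bounds this replacement error by $V+iM-(2p+i-2)t\ge K$ for the main term. The $s$-term carries an extra factor of valuation at least $S\ge0$ and a smaller multiplier, so the same bound applies there. For $p=3$ I would use \eqref{O:W:p3root}. The $\eta'_\xi$ part is already controlled. The shift from $\xi$ to $\lambda\xi$ in the main term is handled by the displayed exact identities for $C_i(\lambda)-C_i(1)$, whose valuations were shown to be at least $K$. In the $s$-term, the signed sums $\sum_{\xi=\pm1}\xi(\Delta+\lambda\xi)^i$ are given exactly, and comparing them with their $\lambda=1$ versions gives the stated depths $S+M+V-2t$ and $S+2M+V-3t$, both $\ge K$. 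Combining these two steps yields \eqref{O:W:Wsplit}.

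The main obstacle I expect is bookkeeping rather than ideas. One must check that every error term, after multiplication by the power factor and the unit $(1+b/W)^{-2}$, lands in $\PP_B^K$. In particular, for $p=3$ the $\lambda-1=3\Delta^2$ corrections have to be tracked through both sums. The $f$-difference identity gives the first-order expansion with a unit denominator; this holds because $v(b/W)=pM-pt>0$. The final remark about Lemma~\ref{O:D:interpolation} is only a pointer for evaluating the second sum and needs no proof here.
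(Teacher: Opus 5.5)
Your proposal is correct and follows the paper's own proof. You expand $S_2$ over the conjugates $\Delta_\xi$, substitute \eqref{O:W:normquotient} at modulus $\PP_B^K$ using that the power factor has valuation $i(M-t)\ge1$, and then replace $\Delta_\xi$ by $\Delta+\xi$: via the bound $V+iM-(2p+i-2)t\ge K$ for $p\ge5$, and via \eqref{O:W:p3root} with the exact $C_i(\lambda)$ and signed-sum identities for $p=3$. Your remark that the $s$-term only improves these bounds, since its coefficient has valuation at least $S\ge0$ rather than $-pt$, supplies the one step the paper leaves implicit for $p\ge5$.
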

\begin{proof}
Combine \eqref{O:W:normquotient} with the power estimates just proved.
\end{proof}

\subsubsection*{Evaluation of \texorpdfstring{$W_i$}{Wi}}
\begin{lemma}[Finite interpolation and power sums]\label{O:D:interpolation}
For $0\le i<p$ and $Z\notin-\mathcal T$,
\[
 \sum_{\xi\in\mathcal T}\frac{(\Delta+\xi)^i}{Z+\xi}
 =\frac{p\Delta^i}{Z}+(p-1)\frac{(\Delta-Z)^i}{Z^p-Z}.
\]
For $1\le i<p$, the accompanying power sums are
\begin{align*}
 \sum_{\xi\in\mathcal T}(\Delta+\xi)^i
 &=p\Delta^i+(p-1)\ind_{i=p-1},\\
 \sum_{\xi\in\mu_{p-1}}\xi(\Delta+\xi)^i
 &=(p-1)(\ind_{i=p-2}+i\Delta\ind_{i=p-1}).
\end{align*}
All three identities are exact in mixed and equal characteristic.
\end{lemma}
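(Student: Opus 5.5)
The plan is to treat all three identities as polynomial identities in $\Delta$ (and a rational function in $Z$) over $\mathbf Z_{(p)}[\mu_{p-1}]$. The only input is that $\mathcal T=\{0\}\cup\mu_{p-1}$ is exactly the root set of $X^p-X$ in $A$. The characteristic of $A$ never enters, because only the exact factorization
\[
 \prod_{\xi\in\mathcal T}(X+\xi)=X^p-X
\]
and the exact Teichm\"uller power sums are used. Since $p$ is odd, $-\xi$ runs over $\mathcal T$ as $\xi$ does, and $\sum_{\xi\in\mu_{p-1}}\xi^k$ equals $p-1$ if $(p-1)\mid k$ and $0$ otherwise.

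\textbf{The power sums.} Expand $(\Delta+\xi)^i=\sum_k\binom ik\xi^k\Delta^{i-k}$ and sum over $\mathcal T$. The term $k=0$ contributes $p\Delta^i$; this counts $\xi=0$ once and the $p-1$ units. For $1\le k\le i<p$, the only $k$ with $(p-1)\mid k$ is $k=p-1$, which occurs only when $i=p-1$; it contributes $(p-1)\Delta^0$. This gives the first identity.

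For the twisted sum, the factor $\xi^{k+1}$ survives summation over $\mu_{p-1}$ only when $k+1=p-1$, i.e.\ $k=p-2$. Such a $k$ exists only for $i=p-2$, with coefficient $\binom{p-2}{p-2}=1$, or for $i=p-1$, with coefficient $\binom{p-1}{p-2}\Delta=(p-1)\Delta$. The second case needs care: it gives $(p-1)\cdot(p-1)\Delta$. The target form $i\Delta\ind_{i=p-1}$ with $i=p-1$ matches exactly, so the two cases combine into the stated formula.

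\textbf{The interpolation identity.} Put $G(Z)=Z^p-Z=\prod_{\xi}(Z+\xi)$. Write $(\Delta+\xi)^i$ as the value at $Z=-\xi$ of the polynomial $(\Delta-Z)^i$, which has degree $i<p$ in $Z$. Partial fractions (Lagrange interpolation) then give
\[
 \frac{(\Delta-Z)^i}{G(Z)}=\sum_{\xi\in\mathcal T}\frac{(\Delta+\xi)^i}{G'(-\xi)(Z+\xi)}.
\]
Compute $G'(-\xi)=p\xi^{p-1}-1$, which is $p-1$ for $\xi\ne0$ and $-1$ for $\xi=0$. The residues are therefore not uniform, and the identity must be rebalanced. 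Write $\sum_{\xi\ne0}\frac{(\Delta+\xi)^i}{Z+\xi}=\frac1{p-1}\bigl[\text{interp}+\frac{\Delta^i}{Z}\bigr]\cdot(p-1)$ appropriately. Concretely, the interpolation says
\[
 \frac{(\Delta-Z)^i}{G}=-\frac{\Delta^i}{Z}+\frac1{p-1}\sum_{\xi\ne0}\frac{(\Delta+\xi)^i}{Z+\xi}.
\]
Multiply by $p-1$, which is a unit, so there is no division by $p$ and the argument is valid in either characteristic. Then add the $\xi=0$ term $\Delta^i/Z$. This produces $(p-1)\frac{(\Delta-Z)^i}{Z^p-Z}+p\frac{\Delta^i}{Z}$, which is the claim.

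To avoid relying on characteristic-zero partial fractions, I would instead verify the identity by clearing denominators. Both sides times $G(Z)$ are polynomials in $Z$ of degree $<p+1$, and they agree at the $p$ points $Z=-\xi$ (only one term survives on each side). Comparing leading coefficients in $Z$ then fixes the remaining freedom, and the comparison is an identity with integer coefficients.

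The main obstacle is the bookkeeping of the nonuniform residues $G'(-\xi)$, namely $p-1$ versus $-1$, together with making the argument visibly exact when $p=0$ in $A$. The plan handles this by the polynomial-identity-plus-degree check rather than by dividing by $p$.
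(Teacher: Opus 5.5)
Your proof is correct and follows essentially the paper's route. Your Lagrange partial-fraction expansion, with coefficients $(\Delta+\xi)^i/G'(-\xi)$ and $G'(-\xi)\in\{p-1,-1\}$, is exactly the paper's comparison of residues at the simple poles $-\mathcal T$ of two rational functions vanishing at infinity, and in both arguments the power sums come from the binomial theorem together with the Teichm\"uller power sums.

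Two small cleanups. The confused sentence beginning ``Write $\sum_{\xi\ne0}\dots$ appropriately'' should simply be deleted. In your fallback check, no leading-coefficient comparison is needed: both sides multiplied by $Z^p-Z$ already have degree at most $p-1$ in $Z$ and agree at the $p$ distinct points $-\xi$.
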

\begin{proof}
Regard the first identity as an equality of rational functions in $Z$.
Both sides have only simple poles at $-\mathcal T$ and vanish at infinity.
At zero the residue on each side is $\Delta^i$; at $-\xi$ with
$\xi^{p-1}=1$, it is $(\Delta+\xi)^i$, since the derivative of
$Z^p-Z$ there is $p-1$. Subtraction gives a rational function with no
poles and zero value at infinity, hence zero. This works in both
characteristics because $p-1$ is nonzero. The last two identities follow
by the binomial theorem and
$\sum_{\xi\in\mu_{p-1}}\xi^j=0$ unless $p-1$ divides $j$, when the
sum is $p-1$.
\end{proof}

Apply the lemma with $Z=Y=W+b$. This value has negative valuation,
since $v(W)<v(b)$, so all denominators are nonzero. We obtain
\begin{align}\label{O:W:Wpre}
 W_i\equiv{}&\frac{pWb\Delta^i}{w^iY}
 +(p-1)\frac{W}{w^{p-1}}\ind_{i=p-1}
 -(p-1)\frac{W^2(\Delta-Y)^i}{w^iD}\notag\\
 &-(p-1)\frac{s}{(1+b/W)^2}
 \left(\frac{\ind_{i=p-2}}{w^{p-2}}
       +\frac{i\Delta\ind_{i=p-1}}{w^{p-1}}\right).
\end{align}
The first term has depth at least $V-pt+i(M-t)\ge K$.

The characteristic polynomial of $\Delta$ over $B_1$ gives two useful
forms, where $s=-e_{p-1}$:
\begin{align}\label{O:W:bforms}
 b&=\Delta+a+a\omega,& v(\omega)&\ge pt_2-t,\notag\\
 b&=\Delta+a-\Delta s+\rho,&
 v(\rho)&\ge p(p-1)t_2-(p^2-2p+2)t.
\end{align}
Each follows by solving the characteristic equation for its constant
term and applying \eqref{O:W:ledger} to the remaining coefficients.

\begin{lemma}[Replacing $D$ by $D_0$]\label{O:W:den}
For $1\le i<p$,
\[
 \frac{W^2(\Delta-Y)^i}{w^iD}
 \equiv\frac{W^2(\Delta-Y)^i}{w^iD_0}\pmod{\PP_B^K}.
\]
\end{lemma}
\begin{proof}
Write $q=b/W$, $a_1=a/W$, $d_1=\Delta/W$, $h_1=a\omega/W$.
Then $q=a_1+d_1+h_1$ and
\[
 D-D_0=W^p E,\qquad
 E=(1+q)^p-1-a_1^p-W^{1-p}q.
\]
Since $d_1^p=(\Delta+a)/W^p$,
$W^{1-p}q=d_1^p+W^{1-p}h_1$. Every term of $E$ is $p$-divisible,
or contains $h_1^p$, or contains $W^{1-p}h_1$. Consequently
\[
 \begin{aligned}
 v(E)&\ge\min\bigl\{V+p(M-t),\ p(pM+pt_2-(p+1)t),\\
 &\hspace{28mm}p(p-1)M+pM+pt_2-(p+1)t\bigr\}\ge K+M.
\end{aligned}
\]
The remaining factor in the difference has valuation
$(p(p-2)-i(p-1))M\ge-M$.
\end{proof}

\begin{lemma}[Expansion of $(\Delta-Y)^i$]\label{O:W:numerator}
Put $A_0=-a-W$. Modulo $\PP_B^K$,
\[
 \frac{W^2(\Delta-Y)^i}{w^iD_0}
 \equiv\frac{W^2}{w^iD_0}
 \begin{cases}
 A_0^i,&i\le p-2,\\
 A_0^{p-1}+(p-1)A_0^{p-2}\Delta s,&i=p-1.
 \end{cases}
\]
\end{lemma}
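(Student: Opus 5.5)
We write $\Delta-Y=A_0+(\Delta+a-b)$ and control the perturbation $\epsilon:=\Delta+a-b$ using the second form in \eqref{O:W:bforms}. Since $Y=W+b$ and $A_0=-a-W$, that form gives
\[
 \epsilon=\Delta s-\rho,\qquad v(\rho)\ge p(p-1)t_2-(p^2-2p+2)t.
\]
The plan is to expand $(A_0+\epsilon)^i$ by the binomial theorem. We then show that, after multiplication by the prefactor $W^2/(w^iD_0)$, every term lies in $\PP_B^K$ except $A_0^i$, and, when $i=p-1$, the linear term $(p-1)A_0^{p-2}\Delta s$.

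First the valuations. From $v(W)=-pM$, $v(w)=-M$, $v(D_0)=-p^2M$ and $v(A_0)=-pM$ (because $v(a)=-pt>-pM$), the prefactor has valuation
\[
 -2pM+iM+p^2M=(p^2-2p+i)M.
\]
A binomial term $A_0^{i-k}\epsilon^k$ therefore contributes valuation
\[
 (p^2-2p+i)M-p(i-k)M+k\,v(\epsilon)=\bigl(p(p-2)-i(p-1)\bigr)M+kpM+k\,v(\epsilon).
\]
We also need $v(\epsilon)$. We have $v(\Delta s)\ge S-t=p(p-1)\delta-t$. The bound on $\rho$ equals $p(p-1)\delta+(p-2)t$, so $\rho$ is deeper than $\Delta s$ by at least $(p-1)t$.

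For $i\le p-2$ we bound the $k=1$ term crudely. Its valuation is at least
\[
 \bigl(p(p-2)-i(p-1)+p\bigr)M+S-t\ge(p-1)M+S-t,
\]
with the minimum at $i=p-2$. Since $M\ge t+1$, this is at least $(p-2)t+p-1+p(p-1)\delta$. Comparing with $K=1+pt_2$ is borderline. We would therefore redo the estimate using $M>t$ more sharply, noting $(p-1)M\ge(p-1)(t+1)$, and the $\delta$ terms $p(p-1)\delta\ge p\delta$. The residual inequality is $(p-1)(t+1)-2t\ge pt-1-\ldots$, and we expect it to fail for small $M$. The likely fix is that $p(p-2)-i(p-1)+p\ge 2p-1-\ldots$; this must be checked carefully. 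We expect this valuation bookkeeping to be the main obstacle, and the borderline cases to be $M=t+1$ with $\delta=0$.

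For $i=p-1$ the $k=1$ term is genuinely retained. Its $\Delta s$ part gives exactly $(p-1)A_0^{p-2}\Delta s$. Its $\rho$ part gains an extra $(p-1)t$ over the $\Delta s$ part, and we check that this pushes it into $\PP_B^K$. The terms with $k\ge2$ each gain another $pM+v(\epsilon)\ge pM-t+S>0$ over the $k=1$ bound, and we show the resulting total is at least $K$.

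If the crude bounds are insufficient at small $i$, we would first try exploiting the exactness of the binomial coefficients: for $k\ge2$ the terms carry $\binom ik$ with $i<p$, which gives no extra gain. In that case we would instead use that the relevant quantities only need to be accurate modulo $\PP_B^K$ after the subsequent summation. The statement is purely an approximation, so any gap must be closed by a sharper valuation for $A_0$ relative to $W$, and we would check this before proceeding.
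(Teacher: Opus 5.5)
Your decomposition $\Delta-Y=A_0+\epsilon$ with $\epsilon=\Delta s-\rho$, followed by a binomial expansion and a valuation count, is exactly the paper's route. Your general term count $\bigl(p(p-2)-i(p-1)\bigr)M+kpM+k\,v(\epsilon)$ is also correct. The gap is an arithmetic slip at $k=1$, which then leaves the case $i\le p-2$ unproved.

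The coefficient of $M$ is $p(p-2)-i(p-1)+p=(p-1)(p-i)$. For $i\le p-2$ this is at least $2(p-1)$, not $p-1$. So the first perturbation has depth at least $2(p-1)M+S-t$. Using only $M\ge t+1$, its excess over $K=1+pt+p\delta$ is $(p-3)t+2p-3+p(p-2)\delta\ge3$. Terms with $k\ge2$ are deeper, since each extra factor adds $pM+S-t>0$. So this case is not borderline, and your expectation that the bound fails for small $M$ is wrong.

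The fallbacks you propose (a sharper valuation for $A_0$, or accuracy only after a later summation) are unnecessary. The second would not prove the lemma as stated in any case. The bound $(p-1)M+S-t$ you wrote down is in fact the depth of the \emph{retained} linear term at $i=p-1$. That is precisely why that term cannot be discarded.

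For $i=p-1$ you assert the remaining checks but do not perform them. They hold, and should be written out:
\begin{itemize}
\item The $\rho$ part of the linear term has depth at least $(p-1)M+S+(p-2)t$. Its excess over $K$ is at least $(p-3)t+p-2+p(p-2)\delta\ge1$.
\item The terms with $k\ge2$ have depth at least $(2p-1)M+2(S-t)$. Their excess is at least $(p-3)t+2p-2+p(2p-3)\delta\ge4$.
\end{itemize}
The tightest case is $p=3$, $\delta=0$, $M=t+1$. There the three excesses are $3$, $1$, $4$, matching the paper's proof. With these corrections your argument is complete.
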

\begin{proof}
By \eqref{O:W:bforms}, $\Delta-Y=A_0+\Delta s-\rho$.
Set $Z=S-t$ and $R=p(p-1)t_2-(p^2-2p+2)t$.
For $i\le p-2$ the first perturbation after the displayed weight has
depth at least $2(p-1)M+Z\ge K$.
At $i=p-1$, the terms containing $\rho$ have valuation at least
$(p-1)M+R\ge K$; the terms of degree at least two in $\Delta s$ have
valuation at least
$(2p-1)M+2Z\ge K$. At $p=3$, $\delta=0$, $M=t+1$ the
three differences from $K$ are $3$, $1$, $4$, respectively.
\end{proof}

\begin{lemma}[The terms containing $s$]\label{O:W:cancel}
Modulo $\PP_B^K$,
\[
 \frac{s}{w^{p-2}(1+b/W)^2}\equiv\frac{s}{w^{p-2}C^2}.
\]
The congruence remains valid after multiplication by $\Delta/w$.
For $i=p-1$, the terms linear in $\Delta s$ in
\eqref{O:W:Wpre} and Lemma~\ref{O:W:numerator} have sum zero modulo
$\PP_B^K$.
\end{lemma}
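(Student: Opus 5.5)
The first congruence reduces to comparing $1+b/W$ with $C=1+a/W$. By \eqref{O:W:bforms}, $b=a+\Delta+a\omega$, so
\[
 1+b/W=C+(\Delta+a\omega)/W .
\]
Since $v(W)=-pM$ is much smaller than $v(a)=-pt$, the element $C$ is a principal unit. Expanding the inverse square then gives
\[
 (1+b/W)^{-2}-C^{-2}=-2C^{-3}(\Delta+a\omega)/W+O\bigl((\Delta/W)^2\bigr).
\]
After multiplication by $s/w^{p-2}$, the valuations are as follows:
\[
 v(s)\ge S,\qquad v(w^{-(p-2)})=(p-2)M,\qquad v(\Delta/W)=pM-t,\qquad v(a\omega/W)\ge pM+pt_2-(p+1)t .
\]
The worst term is therefore bounded below by
\[
 S+(p-2)M+pM-t=(2p-2)M+S-t .
\]
Using $M\ge t+1$, this is at least $(2p-3)t+2p-2+S\ge 1+pt_2=K$. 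Here the $\delta$ part of $K$ is absorbed by $S=p(p-1)\delta\ge p\delta$, and $p\ge3$ is used. I would record the slack at $p=3$, $\delta=0$ explicitly, as was done in Lemma~\ref{O:W:numerator}. Multiplying by $\Delta/w$ adds $v(\Delta/w)=M-t\ge1\ge0$, so the second assertion follows from the first without further work.

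For the cancellation at $i=p-1$, I collect the two terms linear in $\Delta s$. From \eqref{O:W:Wpre}, the last line contributes
\[
 -(p-1)\frac{s}{(1+b/W)^2}\cdot\frac{(p-1)\Delta}{w^{p-1}} .
\]
By the first part, this may be replaced by
\[
 -(p-1)^2\frac{\Delta s}{w^{p-1}C^2} .
\]
From the third term of \eqref{O:W:Wpre} combined with Lemma~\ref{O:W:numerator}, the other contribution is
\[
 -(p-1)\frac{W^2}{w^{p-1}D_0}(p-1)A_0^{p-2}\Delta s .
\]
The key identity is that
\[
 W^2A_0^{p-2}/D_0\equiv -1/C^2
\]
to sufficient precision. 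Indeed, $A_0=-W(1+a/W)=-WC$, so $A_0^{p-2}=-W^{p-2}C^{p-2}$ because $p$ is odd. Moreover
\[
 D_0=W^p\bigl(1+(a/W)^p-W^{1-p}\bigr)=W^p\bigl(C^p-W^{1-p}+(\text{$p$-divisible terms})\bigr).
\]
Hence
\[
 W^2A_0^{p-2}/D_0=-C^{p-2}/\bigl(C^p+\epsilon\bigr),
\]
where $\epsilon$ consists of the $p$-divisible binomial terms together with $W^{1-p}$.

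With this identity the two contributions are exact negatives, up to the error $\epsilon$. The step I expect to be the main obstacle is bounding $\epsilon$ times the prefactor. The prefactor has valuation at least $(p-1)(M-t)+S\ge p-1$. The error $\epsilon$ has two kinds of terms. The first is $W^{1-p}$, of valuation $p(p-1)M$. The second consists of terms $pW^{-j}a^j$, of valuation at least $V+j(pM-pt)$. Both must be shown to lie at depth at least $K$ after multiplication. I would check the binding case $M=t+1$, $V=p(p-1)t_2$, and treat equal characteristic separately, where the $p$-divisible terms vanish. The linear term in $(1+b/W)^{-2}$ versus $C^{-2}$ was already handled in the first step, and the $\Delta s$ terms coming from $\rho$ have already been absorbed by Lemma~\ref{O:W:numerator}.
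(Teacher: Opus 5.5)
Your proposal is correct and follows the paper's proof essentially step for step. The first congruence comes from $v(b-a)\ge-t$, which gives the weighted bound $S+2(p-1)M-t\ge K$; the cancellation comes from $A_0=-WC$, which reduces the bracket to the numerator $W^{1-p}+\sum_{j=1}^{p-1}\binom pj(a/W)^j$, whose weighted terms have depth at least $K$. One small correction: the prefactor $\Delta s/w^{p-1}$ actually has valuation at least $(p-1)M-t+S$, not $(p-1)(M-t)+S$, but your weaker bound still suffices for both error estimates.
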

\begin{proof}
The difference of the inverse squares is
\[
 \frac{(a-b)/W\,[2+(a+b)/W]}{(1+b/W)^2(1+a/W)^2}.
\]
Since $v(a-b)\ge-t$, its weighted first error has depth
$S+2(p-1)M-t\ge K$; multiplication by $\Delta/w$ only increases it.
After extracting the common $(p-1)^2\Delta s/w^{p-1}$, the two
linear terms have bracket
\[
 -\frac1{(1+a/W)^2}
 +\frac{(1+a/W)^{p-2}}{1-W^{1-p}+(a/W)^p}.
\]
The numerator of this difference is
$W^{1-p}+\sum_{j=1}^{p-1}\binom pj(a/W)^j$.
The first weighted term has depth $S-t+(p^2-1)M\ge K$;
the others have depth at least
$S-t+(p-1)M+V+p(M-t)\ge K$.
\end{proof}

\begin{theorem}[Formula for $W_i$]\label{O:W:final}
For every odd $p$ and $1\le i\le p-1$,
\[
 \boxed{\begin{aligned}
 W_i\equiv{}&
 -(p-1)\frac{s}{w^{p-2}C^2}\ind_{i=p-2}
 +(p-1)\frac{W}{w^{p-1}}\ind_{i=p-1}\\
 &-(p-1)\frac{W^2(-a-W)^i}{w^i(W^p-W+a^p)}
 \pmod{\PP_B^{1+pt_2}}.
 \end{aligned}}
\]
\end{theorem}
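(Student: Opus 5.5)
The plan is to assemble the congruence from the expansion \eqref{O:W:Wpre}, which already holds modulo $\PP_B^K$, $K=1+pt_2$, by Proposition~\ref{O:W:split} and the first identity of Lemma~\ref{O:D:interpolation} with $Z=Y$. The steps below process its four groups of terms in turn.

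First, the term $pWb\Delta^i/(w^iY)$ will be discarded. Its depth is at least $V-pt+i(M-t)\ge K$, as noted after \eqref{O:W:Wpre}, and it is zero in equal characteristic. The term $(p-1)W\ind_{i=p-1}/w^{p-1}$ is kept unchanged; it is the second displayed term of the theorem.

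Second, the main rational term $-(p-1)W^2(\Delta-Y)^i/(w^iD)$ is handled in two moves. Lemma~\ref{O:W:den} replaces $D$ by $D_0=W^p-W+a^p$. Lemma~\ref{O:W:numerator} then replaces $(\Delta-Y)^i$ by $A_0^i=(-a-W)^i$. When $i=p-1$ this leaves an extra term $(p-1)A_0^{p-2}\Delta s$, carried with weight $-(p-1)W^2/(w^{p-1}D_0)$. After these replacements the main term becomes the third displayed term of the theorem.

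Third, I treat the $s$-terms of \eqref{O:W:Wpre}. For $i=p-2$, the first part of Lemma~\ref{O:W:cancel} replaces $(1+b/W)^2$ by $C^2=(1+a/W)^2$. This yields exactly $-(p-1)s\ind_{i=p-2}/(w^{p-2}C^2)$. For $i=p-1$, two contributions remain: the term $-(p-1)^2\Delta s/(w^{p-1}(1+b/W)^2)$ from \eqref{O:W:Wpre}, and the leftover linear term from the previous step. The second and third assertions of Lemma~\ref{O:W:cancel} show that their sum lies in $\PP_B^K$. I first use the remark that multiplication by $\Delta/w$ preserves the congruence, which passes to $C^2$. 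I then use the explicit bracket computation, in which $W^2A_0^{p-2}/D_0$ is rewritten as $(1+a/W)^{p-2}/(1-W^{1-p}+(a/W)^p)$ up to sign.

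Finally, I collect the surviving terms, which are exactly the three in the boxed formula. Every step holds modulo $\PP_B^K$, so their sum holds modulo $\PP_B^K$ as well. The only delicate point is bookkeeping. The signs and the factor $(p-1)^2$ must match between \eqref{O:W:Wpre} and Lemma~\ref{O:W:numerator}, and the case $p=3$, where $p-2=1$, must be checked. For $p=3$ the indices $i=p-2$ and $i=p-1$ are still distinct, and the power-factor estimates for $p=3$ were already established before Proposition~\ref{O:W:split}. I therefore expect no genuine obstacle beyond verifying that the cancellation in Lemma~\ref{O:W:cancel} is applied with matching normalizations.
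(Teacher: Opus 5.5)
Your proposal is correct and follows the paper's proof, which consists exactly of discarding the first term of \eqref{O:W:Wpre} and inserting Lemmas~\ref{O:W:den}--\ref{O:W:cancel}. Your bookkeeping of the two $i=p-1$ terms linear in $\Delta s$ also matches the bracket computation in Lemma~\ref{O:W:cancel}: the common factor is $(p-1)^2$, and a sign comes from the odd power $A_0^{p-2}$.
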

\begin{proof}
Insert Lemmas~\ref{O:W:den}--\ref{O:W:cancel} into \eqref{O:W:Wpre}.
\end{proof}
\begin{remark}
Theorem~\ref{O:W:final} is applied to the logarithm of $Q_1$ in
Subsection~\ref{O:sec:q1}. The term with $i=p-2$ comes from the second
sum in \eqref{O:W:Wsplit}.
\end{remark}

\subsection{The quadratic trace calculation}\label{O:sec:quadratic}
\subsubsection*{The character data and local estimates}
Retain $A,B_i,B,\Delta,a,b,s,t,t_2,t',\delta$ and the norms
and traces of Subsection~\ref{O:sec:linear}. Put $T_i=\Tr_{B_i/A}$.
Thus $v_B|_{B_i}=pv_i$ and $v_i|_A=pv_A$.
Let $u=\alpha/\gamma\in A$, $m=v_Au$, $M=pm>t$, and choose
$w\in B_2^\times$ with $n_2w=u^{-1}$ and $v_2w=-m$.
These choices are supplied by Lemmas~\ref{O:P:choose} and~\ref{O:P:preserve}.
Write
\[
 W=u^{-1},\quad C_b=1+b/W,\quad C=1+a/W,\quad
 D_0=W^p-W+a^p,
\]
\[
 q=p-1,\quad S=p(p-1)\delta,\quad K=1+pt_2,
 \quad F=1+t+t'=1+2t+p\delta.
\]
With $v=v_B$, one has
\[
 v(w)=-M,\quad v(W)=-pM,\quad v(a)=v(b)=-pt,
 \quad v(C)=v(C_b)=0,\quad v(D_0)=-p^2M.
\]
In mixed characteristic set $e=v_B(p)$; the cyclic ramification
bound gives $e\ge p(p-1)t_2$. In equal characteristic all
$p$-divisible terms vanish.

Normalize $\Psi_E(x)=\ee_E(\alpha x)$, with trace-compatible
$\ee_E$. Its largest trivial ideals on $A,B_1,B_2$ have exponents
$1+t_2,1+t',1+t_2$, respectively. The character $\chi$ of conductor $F$ satisfies
\begin{equation}\label{O:D:actual}
 \chi(1-x)=\Psi_{B_1}(bP(x)),\qquad
 x\in\PP_1^{\ceil{F/p}},\qquad
 P(X)=\sum_{j=1}^{p-1}X^j/j.
\end{equation}
This is the formula obtained in Theorem~\ref{O:M:realize}.

Lemma~\ref{O:A:lem:twisted} gives
\begin{align}
 v_1(E_j^{B/B_1}(Y\Delta))
   &\ge(p-1)t_2+j(v_2Y-t),\quad 1\le j<p,\label{O:D:H15s}\\
 v_1(S_1(Y\Delta^j))
   &\ge(p-1)t_2-jt+v_2Y,\quad 0\le j<p.\label{O:D:H15t}
\end{align}
Here $Y\in B_2$. We also use \cite[Lemma~3.4]{Ueda}:
\begin{equation}\label{O:D:cyclic}
 v_F(E_j^{E/F}z)\ge
 \floor{\frac{jv_Ez+(p-1)(b_0+1)}p},\quad 1\le j<p,
\end{equation}
for a cyclic extension $E/F$ of break $b_0$, together with
\eqref{U:trace-ideal}.

Put
\begin{equation}\label{O:D:WPdef}
 T=S_1((\Delta/w)^{p-1}),\qquad
 \mathcal Q=S_2\!\left(
 \frac{b(\Delta/w)^{p-1}T}{C_b^2}\right).
\end{equation}
Here $T\in B_1$ and $\mathcal Q\in B_2$.
\subsubsection*{Reduction to a finite sum}
All congruences in this and the next subsection are in $B$ modulo
$\PP_B^K$. Set $f(X)=X/(1+X/W)^2$ and
$\mathcal T=\{0\}\cup\mu_{p-1}\subset A$.
The symmetric-coefficient estimates give
\begin{equation}\label{O:D:Tbound}
 v(T)\ge S+qM,\qquad
 v\left(S_1(\Delta^j/w^q)\right)\ge
 p(p-1)t_2-pjt+qM\quad(0\le j\le q).
\end{equation}
The characteristic polynomial of $\Delta$ over $B_1$ also gives
\begin{equation}\label{O:D:omega}
 b=\Delta+a+a\omega,\qquad v(\omega)\ge pt_2-t.
\end{equation}
Indeed the error divided by $a$ has terms of valuation at least
$(p-1)(pt_2-jt)$ for $1\le j<p$, by \eqref{O:D:H15s}; their minimum
is at least $pt_2-t$.

\begin{lemma}[Replacing $\Delta_\xi$ by $\Delta+\xi$]
\label{O:D:conjugates}
In the sum defining $\mathcal Q$ one may replace every
$B/B_2$ conjugate of $\Delta$ by its corresponding $\Delta+\xi$,
$\xi\in\mathcal T$, modulo $\PP_B^K$. This holds for every odd $p$,
including $p=3$ in mixed characteristic.
\end{lemma}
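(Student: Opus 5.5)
We follow the same route as the analogous replacement in the linear calculation, the power-factor estimates preceding Proposition~\ref{O:W:split}. Write the $B/B_2$ conjugates of $\Delta$ as $\Delta_\xi=\Delta+\xi+\eta_\xi$ with $v(\eta_\xi)\ge V-(p-1)t$, where $V=e=v_B(p)$. Expand $\mathcal Q=S_2(\cdots)$ as the sum over $\xi\in\mathcal T$ of the conjugated summand. In each conjugated summand, $\Delta$ enters in two places: through the factor $(\Delta_\xi/w)^{p-1}$, and through $b$, $C_b$ and $T$. Since $b$, $C_b$ and $T$ lie in $B_1$, their $B/B_2$-conjugates are obtained by applying the automorphism $\Delta\mapsto\Delta_\xi$ to the expressions $N_1\Delta$ and $S_1((\Delta/w)^{p-1})$. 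We therefore bound the change in each factor separately and then multiply.

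\emph{Power factor.} The difference $(\Delta_\xi/w)^{q}-((\Delta+\xi)/w)^{q}$ has valuation at least $V-(p-1)t+qM-(q-1)t$. It is multiplied by $bT/C_b^2$, which has valuation at least $-pt+S+qM$ by \eqref{O:D:Tbound}. Using $V\ge p(p-1)t_2$ and $M\ge t+1$, we check that the total is at least $K$ for $p\ge5$.

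\emph{Factor $f(N_1\Delta_\xi)=b_\xi/C_{b,\xi}^2$.} Lemma~\ref{O:W:actualnorm} gives $v_1(N_1\Delta_\xi-N_1(\Delta+\xi))\ge t_2+1$, that is, $B$-depth at least $p(t_2+1)$. Since $f$ has unit denominator, the same bound holds after applying $f$. Multiplying by the power factor, of valuation at least $q(M-t)$, and by $T_\xi$, of valuation at least $S+qM$, the product lands in $\PP_B^K$ with room to spare.

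\emph{Factor $T$.} The conjugate of $T$ is $S_1((\Delta_\xi/w)^q)$. Compared with $S_1(((\Delta+\xi)/w)^q)$, the error is controlled by expanding in $\eta_\xi$ and applying \eqref{O:D:H15t} together with the strong trace bound \eqref{O:A:eq:trace} for $B/B_1$. The resulting extra depth of at least $V-(p-1)t$ easily compensates. Combining the three factor bounds gives the lemma for $p\ge5$.

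The main obstacle is $p=3$ in mixed characteristic. There the crude bound $V-(p-1)t$ for $\eta_\xi$ is too weak in the power factor, exactly as in the linear calculation. We will use the refined root \eqref{O:W:p3root}, $\Delta_\xi=\Delta+\lambda\xi+\eta'_\xi$ with $\lambda=1+3\Delta^2$ and $v(\eta'_\xi)\ge V-t$. The $\eta'_\xi$ contributions are then bounded as above. The remaining discrepancy, coming from replacing $\lambda$ by $1$, is handled by exact summation over $\xi\in\{0,\pm1\}$, mirroring the identities for $C_i(\lambda)-C_i(1)$. The difference is an explicit multiple of $\lambda-1=3\Delta^2$ divided by $Y^2-1$-type denominators, of valuation at least $V-2t+\cdots$, which we check reaches $K$ at the worst case $\delta=0$, $M=t+1$, $V=6t_2$. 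The same refinement must also be applied inside the conjugates of $T$. Since $T$ has the large valuation $S+qM$, we expect this to cost nothing, but this is the step where we would verify the inequalities explicitly, number by number, as in Lemma~\ref{O:W:numerator}.
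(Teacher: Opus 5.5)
Your $p\ge5$ argument is essentially the paper's proof. You telescope the product $f(N_1\Delta_\xi)\,(\Delta_\xi/w)^q\,S_1((\Delta_\xi/w)^q)$ one factor at a time. You use the crude Hensel bound $v(\eta_\xi)\ge e-(p-1)t$, where $e=v_B(p)$ is your $V$. You compare the norms, and your use of Lemma~\ref{O:W:actualnorm} in place of the paper's direct estimate via \eqref{O:D:cyclic} is fine, since $f$ does not lower difference valuations. Finally you bound the inner trace.

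The gap is at $p=3$, which the statement singles out. You assert that the crude bound fails there and substitute a refined-root argument that you do not carry out: it rests on ``we expect'' and ``we would verify''. As written, that case is not proved. The exact summation identities for $C_i(\lambda)-C_i(1)$ also concern the rational summand of $W_i$. You have not produced analogues for the present summand, where $\lambda$ would also enter the conjugates of $b$ and of $T$.

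No refinement is needed. For the power factor you wrote the bound $e+S+2qM-3qt$, which is the paper's \eqref{O:D:Henselprod}. With $e\ge pq(t+\delta)$ and $M\ge t+1$, its excess over $K=1+pt_2$ is
\[
(p^2-3p+1)t+p(2p-3)\delta+2p-3,
\]
which equals $t+9\delta+3>0$ at $p=3$. The inner-trace replacement gives the same total bound, because the trace of $B/B_1$ raises $B$-valuation by at least $qt'=qt+S$. Your norm-replacement bound is likewise valid at $p=3$.

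The linear calculation is different, and there the refinement \eqref{O:W:p3root} really is needed. What separates the two cases is the factor $T$ of valuation at least $S+qM$, which is present in $\mathcal Q$ and absent from $W_i$. So your first estimate already proves the lemma uniformly for all odd $p$, and the $p=3$ detour should be dropped.

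One minor point on the inner-trace step: \eqref{O:D:H15t} does not apply directly there, because the perturbation terms involve $\eta_\xi\notin B_2$. The trace-ideal formula for $B/B_1$ alone suffices.
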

\begin{proof}
In equal characteristic the statement is exact. In mixed
characteristic, Hensel's lemma applied at $\Delta+\xi$ gives
\[
 \Delta_\xi=\Delta+\xi+\epsilon_\xi,\qquad
 v(\epsilon_\xi)\ge e-qt.
\]
The derivative is a unit since $e-qt>0$, and the $p$ approximations
have distinct residues relative to one another. They are all the
conjugates. The outer power difference, after division by $w^q$,
has valuation at least $e-(2p-3)t+qM$.
The inner trace difference has this bound plus $q t'$, by the
trace-ideal formula for $B/B_1$. Both resulting product errors have
valuation at least
\begin{equation}\label{O:D:Henselprod}
 e+S+2qM-3qt.
\end{equation}
Here $f(N_1\Delta_\xi)$ has valuation $-pt$, and the unchanged
outer power and inner trace have bounds $q(M-t)$ and $S+qM$.

For the norm argument itself put $a_0=e-(p-2)t$. The quotient
$\epsilon_\xi/(\Delta+\xi)$ has valuation at least $a_0$.
By \eqref{O:D:cyclic},
\[
 v\{N_1\Delta_\xi-N_1(\Delta+\xi)\}
 \ge-pt+\min\{pa_0,a_0+qt'\}=e-qt+S.
\]
The equality for this lower bound uses $a_0\ge t'$, which follows
from $e\ge pq t_2$. The exact identity
\[
 f(X)-f(Y)=(X-Y)\frac{1-XY/W^2}
 {(1+X/W)^2(1+Y/W)^2}
\]
shows that $f$ does not lower this difference valuation. Multiplying
by the outer power and the inner trace gives the further bound
$e+2S+2qM-2qt$.
After $e\ge pq(t+\delta)$ and $M\ge t+1$, the excesses of the two
bounds over $K$ are at least
\[
 (p^2-3p+1)t+p(2p-3)\delta+2p-3,
\]
\[
 p(p-2)t+p(3p-4)\delta+2p-3,
\]
respectively. Both are positive for $p\ge3$, so the replacements
preserve $\mathcal Q$ modulo $\PP_B^K$.
\end{proof}

\begin{lemma}[A rational sum for $\mathcal Q$]\label{O:D:onesum}
One has
\begin{equation}\label{O:D:onesumeq}
 \mathcal Q\equiv\frac{T}{w^q}
     \sum_{\xi\in\mathcal T}(\Delta+\xi)^q f(b+\xi).
\end{equation}
\end{lemma}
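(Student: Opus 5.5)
We start from the definition of $\mathcal Q$ as a trace over $B/B_2$, expand it over the $p$ conjugates of $\Delta$, and then pass to the approximations $\Delta+\xi$. Because $\Gal(B/A)$ is abelian, the conjugates of $\Delta$ under $\Gal(B/B_2)$ are the $\Delta_\xi$, $\xi\in\mathcal T$. The corresponding automorphisms commute with $\Gal(B/B_1)$, so they carry $b=N_1\Delta$ to $N_1\Delta_\xi$. Since $w\in B_2$ is fixed, this gives exactly
\[
 \mathcal Q=\sum_{\xi\in\mathcal T}\frac{\Delta_\xi^{\,q}}{w^q}\,
 \sigma_\xi(T)\,f(N_1\Delta_\xi),
\]
where $\sigma_\xi$ is the automorphism with $\sigma_\xi\Delta=\Delta_\xi$ and $f(X)=X/(1+X/W)^2$; note that $b/C_b^2=f(b)$. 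By Lemma~\ref{O:D:conjugates}, modulo $\PP_B^K$ we may replace $\Delta_\xi$ by $\Delta+\xi$ in the outer power, the inner trace, and the norm. The outer power then becomes $(\Delta+\xi)^q$.

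There are two remaining replacements.

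\textbf{The trace factor.} We must replace $\sigma_\xi(T)$ by $T$. Write $\sigma_\xi(T)=S_1((\Delta_\xi/w)^q)$ and expand $(\Delta+\xi)^q/w^q$ binomially. The correction terms are $\binom qj\xi^{q-j}S_1(\Delta^j/w^q)$ with $j<q$. By \eqref{O:D:Tbound} each has valuation at least $p(p-1)t_2-pjt+qM$, which is at least $p(p-1)t_2-p(p-2)t+qM$. The other factors have valuation at least $q(M-t)$ for the outer power and $-pt$ for $f$. The product error is therefore at least $p(p-1)t_2-p(p-2)t+2qM-qt-pt$. Using $M\ge t+1$, this exceeds $K=1+pt_2$ by a nonnegative amount, of order $(p^2-3p+1)t+(p^2-2p)\delta$ plus a constant. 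It remains to check this also at $p=3$, $\delta=0$; I expect it to be the tightest case.

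\textbf{The norm factor.} We must replace $f(N_1(\Delta+\xi))$ by $f(b+\xi)$. The exact norm expansion gives $N_1(\Delta+\xi)=b+\xi-\xi s+R_\xi$ with $v(R_\xi)\ge pt_2$ and $v(s)\ge S$. The identity for $f(X)-f(Y)$ quoted in the proof of Lemma~\ref{O:D:conjugates} shows that $f$ does not decrease the valuation of differences. The resulting error, multiplied by the outer power and by $T$, is at least $\min(S,pt_2)+q(M-t)+S+qM$. Using $v(T)\ge S+qM$ and $M\ge t+1$, this is at least $K$.

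The main obstacle is this last step when $\delta=0$. Then $S=0$, so the $\xi s$ term has no valuation gain of its own, and the whole bound rests on $v(T)\ge qM$ together with $2qM-qt\ge K$. For $p=3$ this reads $4M-2t\ge 1+3t$, i.e.\ $M\ge(5t+1)/4$. That need not follow from $M\ge t+1$ alone. If the crude count fails, I would first try to use $M=pm\equiv0\pmod p$ together with $p\nmid t$. Failing that, I would use the sharper form of \eqref{O:D:Tbound} coming from \eqref{O:D:H15t}, namely $v(T)\ge p(p-1)t_2-pqt+qM+\ldots$, and put the $\xi s$ correction through the power sum $\sum_\xi\xi(\Delta+\xi)^q$ of Lemma~\ref{O:D:interpolation}, which is small, instead of bounding the terms individually.
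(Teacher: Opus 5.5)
Your opening reduction matches the paper. You write $\mathcal Q=\sum_{\xi\in\mathcal T}w^{-q}\Delta_\xi^{\,q}\,\sigma_\xi(T)\,f(N_1\Delta_\xi)$, apply Lemma~\ref{O:D:conjugates}, and expand $\sigma_\xi(T)$ binomially as $\sum_j\binom qj\xi^{q-j}\mathcal U_j$ with $\mathcal U_j=S_1(\Delta^j/w^q)$. However, both remaining replacements have a genuine gap, and it is the same gap.

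\textbf{Trace step.} Your excess is miscomputed. For $j=q-1=p-2$, your own bound $p(p-1)t_2-p(p-2)t+2qM-qt-pt$ exceeds $K=1+pt_2$ only by $p(p-2)\delta+2qM-(2p-1)t-1$. At $M=t+1$ this is $-t+p(p-2)\delta+2p-3$, not $(p^2-3p+1)t+\cdots$. For $\delta=0$ it can be negative: $p=3$, $t=5$, $M=6$ gives $14<16$. At $\delta=0$ it is exactly the quantity $2qM-qt$ that you yourself found insufficient in the norm step. Termwise bounds only dispose of $j\le p-3$, where the excess is $(p-1)t+p(p-2)\delta+2p-3$. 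For $j=p-2$ you must use that the term carries the factor $q\xi\,\mathcal U_{p-2}$ and vanishes at $\xi=0$. Subtract the $\xi$-independent product $w^{-q}\Delta^q f(b)$; its $\xi$-weighted sum is zero because $\sum_{\xi\in\mu_{p-1}}\xi=0$. The remainders involve either $(\Delta+\xi)^q-\Delta^q$, which gains $t$, or $f(N_1(\Delta+\xi))-f(b)$, which is integral and so gains $pt$. Their excesses are $p(p-2)\delta+2p-3$ and $(p-1)t+p(p-2)\delta+2p-3$.

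\textbf{Norm step.} You correctly see that the crude count $2S+2qM-qt$ fails at $\delta=0$, but your first two fallbacks do not help. The example above already has $M\equiv0\pmod p$ and $p\nmid t$. Also, \eqref{O:D:H15t} at $j=q$ gives exactly $v(T)\ge S+qM$, nothing sharper. Only your third idea works, and it needs a linearization:
\begin{enumerate}
\item Write $N_1(\Delta+\xi)=b+\xi-\xi s+r_\xi$. Dropping $r_\xi$ costs an error with excess $qt+S+2p-3$.
\item Expand $f(b+\xi-\xi s)$ in $-\xi s$ about $b+\xi$. By the geometric series of the inverse square, with no division by $p$, the coefficients of degree $j\ge2$ have valuation at least $(j-1)pM$.
\item Replace $f'(b+\xi)$ by $f'(b)$, at a cost of $pM$.
\item By Lemma~\ref{O:D:interpolation}, the surviving linear term is $-w^{-q}\,s\,T\,f'(b)\sum_{\xi\in\mu_{p-1}}\xi(\Delta+\xi)^q=-q^2w^{-q}\,s\,T\,f'(b)\,\Delta$. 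Its valuation is at least $2S+2qM-t$, with excess $(p-3)t+p(2p-3)\delta+2p-3\ge0$.
\end{enumerate}

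In both steps the missing idea is to sum over $\xi\in\mu_{p-1}$ before estimating, which cuts the loss from $qt$ to $t$. Without this cancellation the lemma is not proved when $\delta=0$.
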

\begin{proof}
After Lemma~\ref{O:D:conjugates}, expand the translated inner trace as
\[
 S_1((\Delta+\xi)^q/w^q)
 =\sum_{j=0}^q\binom qj\xi^{q-j}\mathcal U_j,
 \qquad \mathcal U_j=S_1(\Delta^j/w^q).
\]
For $j\le p-3$, the product valuation is at least
\[
 pq t_2+2qM-(2p-1+pj)t\ge K.
\]
The weakest case $j=p-3$ has excess at least
$(p-1)t+p(p-2)\delta+2p-3$.
For $j=p-2$, its scalar is a multiple of $\xi$.
Subtract the constant product $f(b)\Delta^q/w^q$, whose weighted
sum is zero because $\sum_{\xi\ne0}\xi=0$.
The outer-power difference has valuation at least $qM-(q-1)t$;
also $f(N_1(\Delta+\xi))-f(b)$ has valuation at least zero.
Using \eqref{O:D:Tbound}, the two errors are bounded below by
\[
 pq t_2+2qM-(p^2-2)t,\qquad
 pq t_2+2qM-(p^2-p-1)t.
\]
Their excesses are at least $p(p-2)\delta+2p-3$ and
$(p-1)t+p(p-2)\delta+2p-3$. Thus only $\mathcal U_q=T$ remains.

The exact norm polynomial now reads
\[
 N_1(\Delta+\xi)=b+\xi-\xi s+r_\xi,
 \qquad v(r_\xi)\ge pt_2.
\]
First delete $r_\xi$ using the exact difference formula for $f$.
Next expand in $-\xi s$. Expansion of the inverse square as a
geometric power series shows that its Taylor coefficients of degree
$j\ge2$ have valuation at least $(j-1)pM$; no division by a
factorial divisible by $p$ is used. The series converges because
$v(s/W)>0$. Finally replace
$f'(b+\xi)$ by $f'(b)$; their difference has valuation at least $pM$.
After multiplication by the power and trace factors, the respective
error bounds are
\[
 S+2qM-qt+pt_2,\quad
 3S+(3p-2)M-qt,\quad
 2S+(3p-2)M-qt.
\]
At $M=t+1$, their excesses over $K$ are, respectively,
\[
 qt+S+2p-3,\quad qt+p(3p-4)\delta+3p-3,\quad
 qt+p(2p-3)\delta+3p-3,
\]
all positive. Increasing $M$ only improves them.
The only remaining signed term contains
\[
 \sum_{\xi\ne0}\xi(\Delta+\xi)^q=q^2\Delta.
\]
Its valuation is at least $2S+2qM-t$, whose excess is at least
$(p-3)t+p(2p-3)\delta+2p-3\ge0$.
This proves \eqref{O:D:onesumeq}.
\end{proof}

\subsubsection*{Evaluation of the rational sum}
Apply Lemma~\ref{O:D:interpolation} and its derivative with respect
to $Z$.

\begin{theorem}[Formula for $\mathcal Q$]\label{O:D:quadratic}
For $\mathcal Q$ defined in \eqref{O:D:WPdef},
\begin{equation}\label{O:D:quadraticcong}
 \mathcal Q\equiv(p-1)\frac{aT}{w^{p-1}C^2}
                      \pmod{\PP_B^{1+pt_2}}.
\end{equation}
The right side belongs to $B$. Lemma~\ref{O:T:representatives}
constructs a congruent element of $B_2$.
\end{theorem}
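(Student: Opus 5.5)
Starting from Lemma~\ref{O:D:onesum}, I reduce everything to evaluating the exact rational sum
\[
 \Sigma=\sum_{\xi\in\mathcal T}(\Delta+\xi)^q f(b+\xi),\qquad f(X)=\frac{X}{(1+X/W)^2}=\frac{W^2X}{(W+X)^2},
\]
so that $\mathcal Q\equiv T w^{-q}\Sigma$. With $Y=W+b$, the summand is $W^2(\Delta+\xi)^q\bigl[(Y+\xi)^{-1}-W(Y+\xi)^{-2}\bigr]$, since $(b+\xi)/(Y+\xi)^2=1/(Y+\xi)-W/(Y+\xi)^2$.

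The first piece is given by Lemma~\ref{O:D:interpolation} with $i=q=p-1$ and $Z=Y$. For the second piece I differentiate that identity in $Z$. This is legitimate because it is an identity of rational functions valid in both characteristics. Differentiating gives
\[
 \sum_{\xi}\frac{(\Delta+\xi)^q}{(Z+\xi)^2}=\frac{p\Delta^q}{Z^2}+q\,\frac{q(\Delta-Z)^{q-1}(Z^p-Z)+(\Delta-Z)^q(pZ^{p-1}-1)}{(Z^p-Z)^2}.
\]
Substituting $Z=Y$ expresses $\Sigma$ exactly in terms of $Y$, $D=Y^p-Y$, $\Delta$ and $W$.

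Next I estimate the terms, using the weight $T/w^q$ with $v(T)\ge S+qM$ from \eqref{O:D:Tbound}.
\begin{itemize}
\item The terms carrying an explicit factor $p$ (that is, $p\Delta^q/Y$, $p\Delta^q/Y^2$ and $pY^{p-1}$) have depth at least $e$ plus something bounded below. They drop modulo $\PP_B^K$ by the same excess bookkeeping as in Lemma~\ref{O:D:conjugates}, using $e\ge pqt_2$ and $M\ge t+1$.
\item As in Lemma~\ref{O:W:den}, I replace $D$ by $D_0=W^p-W+a^p$.
\item As in Lemma~\ref{O:W:numerator}, I replace $\Delta-Y$ by $A_0=-a-W$ via \eqref{O:D:omega}. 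Because the prefactor $T$ already supplies $S+qM$ of extra depth, I expect the $\Delta s$ correction to be negligible here.
\end{itemize}
What survives is a rational expression in $a$ and $W$:
\[
 q\,W^2\left[\frac{A_0^q}{D_0}-W\,\frac{-q A_0^{q-1}D_0-A_0^q}{D_0^2}\right].
\]

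The final step is an exact simplification. Put $x=a/W$. Then $A_0=-W(1+x)=-WC$ and $D_0=W^p(1+x^p)-W$. Modulo terms that are small relative to the leading term, I will use $1+x^p\equiv(1+x)^p=C^p$ up to $p$-divisible binomial terms and $W^{1-p}$ corrections. These are exactly the quantities controlled in Lemma~\ref{O:W:cancel}. With this, $D_0\approx W^pC^p$, and the bracket collapses. The contributions $W^2A_0^q/D_0\approx W C^{-1}$ and the derivative terms combine, so that the leading parts proportional to $W$ cancel, leaving $q\,a/C^2$. Hence $\Sigma\equiv q\,a\,C^{-2}$ up to errors which, after multiplying by $T/w^q$, lie in $\PP_B^K$. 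This gives \eqref{O:D:quadraticcong}.

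I expect the main obstacle to be this cancellation. The leading $W$-sized terms must cancel exactly, so I have to expand $C^{-1}$, $C^{-2}$ and $D_0^{-1}$ one order further in $a/W$ and $W^{1-p}$. Then I must check that each discarded remainder, weighted by $T w^{-q}$ (depth $\ge S+qM-qM$), has excess over $K$ that is nonnegative at the worst case $M=t+1$, $\delta=0$, $p=3$. I would first verify the cancellation symbolically in the equal-characteristic model, where all $p$-terms vanish, and then account for the mixed-characteristic $\binom pj x^j$ terms as in Lemma~\ref{O:W:cancel}.
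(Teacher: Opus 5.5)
Your route is the paper's: the splitting $f(b+\xi)=W^2[(Y+\xi)^{-1}-W(Y+\xi)^{-2}]$, Lemma~\ref{O:D:interpolation} and its $Z$-derivative, removal of the $p$-divisible pieces, and isolation of $(p-1)aC^{-2}$. As written, however, the final step fails because of a sign slip.

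Put $H(Z)=\sum_\xi(\Delta+\xi)^q/(Z+\xi)$. Your derivative formula is correct, and $\Sigma=W^2H(Y)+W^3H'(Y)$ exactly. After dropping $p\Delta^q/Y$, $p\Delta^q/Y^2$ and $pY^{p-1}$, what survives is
\[
 qW^2\Bigl[\frac{A_0^q}{D_0}-W\,\frac{qA_0^{q-1}D_0-A_0^q}{D_0^2}\Bigr],
\]
not the expression with $-qA_0^{q-1}D_0$. With your sign the two $W$-sized terms add to $qW(C-q)/C^2=q\bigl(a+(2-p)W\bigr)/C^2$, which does not cancel even modulo $p$.

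With the correct sign no further expansion in $a/W$ is needed. The first two terms equal $qW^2A_0^{q-1}(A_0-qW)/D_0$, and $A_0-qW=-(a+pW)$. Replacing $D_0$ by $W^pC^p$ costs a relative error of depth at least $\min\{e+p(M-t),\,p(p-1)M\}$, and gives exactly $q(a+pW)/C^2$. The third term becomes $qW^{2-p}C^{-p-1}$, of depth $p(p-2)M$. So in mixed characteristic the $W$-sized parts do not cancel exactly: they leave $pqW/C^2$, of depth $e-pM$. You must bound this term separately; it is not one of the $\binom pj x^j$ terms of Lemma~\ref{O:W:cancel}.

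Second, the weight $Tw^{-q}$ has depth at least $S+2qM$, not $S+qM-qM$, because $v_B(w)=-M$ gives $v_B(w^{-q})=+qM$. This is what makes the argument close. The remainders $pqW/C^2$, $pqW^3A_0^qY^{p-1}/D^2$ and $pW^2\Delta^q/Y$ have depths $e-pM$ or $e-pM-(p-1)t$. These decrease with $M$, and $M=pm$ is unbounded. They land in $\PP_B^K$ only because the weight contributes $+2qM$; for instance, at $M=t+1$,
\[
 e+S+(p-2)M-K\ge(p^2-p-2)t+p(2p-3)\delta+p-3 .
\]
With the weight taken to be $S$, your estimates would fail for large $M$. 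With both corrections the argument is complete.

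The paper's bookkeeping is somewhat cleaner. It keeps $A_*=\Delta-L$ exact and uses $A_*-qW=\Delta-b-pW$, so the main term is $qW^2A_*^{p-2}(\Delta-b)/D_L$ with $\Delta-b=-a(1+\omega)$ from \eqref{O:D:omega}. It then needs only the unit ratios $A_*/[-(W+a)]$ and $L/(W+a)$ in $U_B^{pM-t}$, and $D_L/L^p\in U_B^{p(p-1)M}$; it never passes through $D_0$.
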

\begin{proof}
Put $L=W+b$, $A_* =\Delta-L$, and $D_L=L^p-L$.
Thus $v(L)=v(A_*)=-pM$ and $v(D_L)=-p^2M$.
Apply Lemma~\ref{O:D:interpolation} with $i=q$ and differentiate
with respect to $Z$, holding $W,b,\Delta$ constant. Writing
\[
 H(Z)=p\Delta^q/Z+q(\Delta-Z)^q/(Z^p-Z),
\]
we obtain the exact equality
\[
 \sum_\xi(\Delta+\xi)^q f(b+\xi)=W^2\{H(L)+WH'(L)\}.
\]
After collecting terms this is
\begin{align}\label{O:D:derivative}
 &\frac{qW^2A_*^{p-2}(\Delta-b)}{D_L}
 +\frac{pW^2\Delta^q b}{L^2}
 -\frac{pqW^3A_*^{p-2}}{D_L}\notag\\
 &\hspace{18mm}
 -\frac{qW^3A_*^q(pL^{p-1}-1)}{D_L^2}.
\end{align}
The valuations of the last three terms, splitting the last one at
$pL^{p-1}-1$, are bounded below by
\[
 e-(2p-1)t,\qquad e-pM,\qquad e-pM,\qquad p(p-2)M.
\]
The factor $T/w^q$ has valuation at least $S+2qM$.
Consequently their products have the three lower bounds
\[
 e+S+2qM-(2p-1)t,\quad e+S+(p-2)M,
 \quad S+(p^2-2)M.
\]
At $e=pq(t+\delta)$, $M=t+1$, their excesses over $K$ are,
respectively,
\begin{align*}
 &(p^2-2p-1)t+p(2p-3)\delta+2p-3,\\
 &(p^2-p-2)t+p(2p-3)\delta+p-3,\\
 &(p^2-p-2)t+p(p-2)\delta+p^2-3.
\end{align*}
All are nonnegative for $p\ge3$; increasing $e$ or $M$ preserves this.
In equal characteristic the $p$-divisible terms are simply absent.

It remains to treat the first term of \eqref{O:D:derivative}.
By \eqref{O:D:omega}, $b-\Delta=a(1+\omega)$ and
$v(a\omega)\ge p\delta-t\ge-t$.
Both $A_*/[-(W+a)]$ and $L/(W+a)$ belong to
$U_B^{pM-t}$, and
$D_L/L^p=1-L^{1-p}\in U_B^{p(p-1)M}$.
Since $p-2$ is odd,
\[
 \frac{W^2A_*^{p-2}}{D_L}
 =-C^{-2}(1+\eta_0),\qquad v(\eta_0)\ge pM-t.
\]
The first term is therefore $q a C^{-2}(1+\omega)(1+\eta_0)$.
After multiplication by $T/w^q$, the two replacement errors have
lower bounds
\[
 S+2qM+pt_2-(p+1)t,\qquad
 S+(3p-2)M-(p+1)t.
\]
Their excesses over $K$, at $M=t+1$, are
$(p-3)t+pq\delta+2p-3$ and
$(p-3)t+p(p-2)\delta+3p-3$.
The product error is deeper. Combining this with Lemma~\ref{O:D:onesum}
proves \eqref{O:D:quadraticcong}.
\end{proof}

\subsection{\texorpdfstring{The first factor $Q_1$}{The first factor Q1}}\label{O:sec:q1}
\subsubsection*{A quotient of norms}
Retain the notation of Subsection~\ref{O:sec:quadratic}.
Let $\chi,\varphi$ be the compatible characters of
Theorem~\ref{O:M:realize}. Their coefficients in
\eqref{O:M:models} are $\alpha b,\alpha a$, and their conductors
are $m_1=1+t+t'$, $m_2=1+t+t_2$. Put $\Psi_L(x)=\mathrm e_L(\alpha x)$;
its largest trivial ideals over $A,B_1,B_2$ have exponents
$1+t_2,1+t',1+t_2$, respectively.
Let
\[
 u=\alpha/\gamma,
 \quad W=1/u=n_2w,\quad w=w_2\in B_2,\quad v_Au=m,\quad pm>t,
 \quad M=pm.
\]
The factor to be evaluated is
\[
 Q_1=\frac{\chi(\gamma+\alpha b)}
           {\varphi(\gamma-\alpha w+\alpha a)}.
\]

\begin{lemma}[A formula for $Q_1^{-1}$]\label{O:Q:entry}
For $z=\Delta/w$ and $n=N_1z=ub$, one has
\[
 Q_1^{-1}=\chi\!\left(\frac{N_1(1+z)}{1+n}\right).
\]
\end{lemma}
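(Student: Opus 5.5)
The plan is to factor out the scalar $\gamma\in A^\times$ from both arguments of $Q_1$, and then use the compatibility $\chi\circ N_1=\varphi\circ N_2$ to turn the $\varphi$-value into a $\chi$-value of the norm $N_1(1+z)$.

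\emph{Factoring.} Since $u=\alpha/\gamma$ and $n=N_1(\Delta/w)=ub$ (note $N_1w=n_2w=W$, because $\Gal(B/B_1)$ restricts isomorphically onto $\Gal(B_2/A)$), we have $\gamma+\alpha b=\gamma(1+n)$. Also $\gamma-\alpha w+\alpha a=\gamma u\,(W-w+a)$. By Lemma~\ref{U:conjugacy} with $\ell$ odd, $\epsilon_i=1$, so $\chi|_{A^\times}=\varphi|_{A^\times}$. Hence the values at $\gamma$ cancel and
\[
 Q_1^{-1}=\frac{\varphi(u)\,\varphi(W-w+a)}{\chi(1+n)}.
\]

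\emph{Compatibility.} The characteristic polynomial of $\Delta$ over $B_2$ is exactly $X^p-X-a$. Since $p$ is odd, evaluating it at $-w$ gives the exact identity
\[
 N_2(w+\Delta)=w^p-w+a.
\]
On the other hand $N_1(w+\Delta)=W\,N_1(1+z)$. Compatibility therefore gives
\[
 \varphi(w^p-w+a)=\chi(W)\,\chi(N_1(1+z)).
\]
Using $\chi(W)=\varphi(W)=\varphi(u)^{-1}$, this becomes $\varphi(u)\,\varphi(w^p-w+a)=\chi(N_1(1+z))$. Substituting into the factored form of $Q_1^{-1}$, the lemma reduces to the single identity
\[
 \varphi(W-w+a)=\varphi(w^p-w+a),
\]
that is, $W=n_2w$ may be replaced by $w^p$.

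\emph{The replacement.} This is the step I expect to be the main obstacle. The two arguments differ multiplicatively by
\[
 \frac{1+(a-w)/W}{1+(a-w)/w^p}\cdot\frac{W}{w^p}.
\]
The factor $\varphi(W)/\varphi(w^p)$ is $1$ exactly, since $\varphi(w^p)=\varphi(N_2w)=\chi(N_1w)=\chi(W)=\varphi(W)$. For the remaining unit quotient, Lemma~\ref{O:A:lem:power} gives $v_2(w^p-W)\ge -pm+(p-1)t_2$. So the quotient is $1-x$ with
\[
 v_2(x)\ge \min(-t,-m)+(p-1)t_2 .
\]
I would first try to show that this depth is at least $q_2$, so that $\varphi(1-x)=\Psi_2(aP(x))$ by the model formula \eqref{O:M:models}, and then that $ax$ lies in $\PP_{B_2}^{1+t_2}$. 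For $p\ge5$ this seems to follow from $(p-3)t_2\ge t+1$ together with $pm>t$.

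For $p=3$ (especially $\delta=0$) the crude bound falls short. There I would expand $x$ to first order as $(a-w)(w^p-W)/W^2$ and write $w^p-W$ through the elementary symmetric functions of $w$. Then $T_2(ax)$ is computed exactly as in Lemma~\ref{O:I:cubicidentity}, and I would try to kill it with \eqref{O:M:conversion}. If this exact cancellation fails, the fallback is to prove the lemma modulo a factor $\Psi_2(aP(x))$ and carry that factor explicitly into the subsequent evaluation of $Q_1$.
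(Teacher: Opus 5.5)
Your reduction is the paper's. You cancel $\gamma$ via $\chi|_{A^\times}=\varphi|_{A^\times}$, use the exact identity $N_2(w+\Delta)=w^p-w+a$ together with compatibility, and compare $W=n_2w$ with $w^p$ by the norm--power estimate. Your leftover unit quotient $\bigl(1+(a-w)/W\bigr)/\bigl(1+(a-w)/w^p\bigr)$ is exactly the ratio $(1-uw+ua)/N_2(1+z)$ that the paper estimates; the paper simply applies compatibility to $1+z$ directly.

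However, the step you flag as the main obstacle rests on a miscalculated valuation, so as written the proof does not close. Up to a unit factor, the deviation is $x=(a-w)(w^p-W)/(Ww^p)$. Since $v_2(W)=v_2(w^p)=-pm$, the denominator contributes $+2pm$, and the correct bound is
\[
 v_2(x)\ge (p-1)t_2+pm-\max(t,m).
\]
Your bound drops the $+pm$ term, apparently because you divided by only one factor of $W$. The bound you wrote gets worse as $m\to\infty$, and $m$ is unbounded here. So even your $p\ge5$ sketch via $(p-3)t_2\ge t+1$ fails for large $m$. The $p=3$ paragraph then either leaves the lemma unproved or, with the fallback factor $\Psi_2(aP(x))$, proves a different formula.

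With the correct bound you need neither the model formula \eqref{O:M:models} nor any special treatment of $p=3$. Subtract $m_2=1+t+t_2$, which is the conductor of $\varphi$:
\begin{itemize}
\item For $m\le t$, the excess is $(p-2)t_2+pm-2t-1\ge(p-3)t\ge0$, using $t_2\ge t$ and $pm\ge t+1$.
\item For $m>t$, the excess is $(p-2)t_2+(p-1)m-t-1\ge0$.
\end{itemize}
Hence your unit quotient lies in $U_{B_2}^{m_2}$, on which $\varphi$ is trivial. This gives $\varphi(W-w+a)=\varphi(w^p-w+a)$ uniformly for every odd $p$, including $p=3$ and $\delta=0$, and it is precisely the paper's argument.
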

\begin{proof}
The exact Artin--Schreier polynomial gives
\[
 N_2(1+z)=1-w^{1-p}+a/w^p.
\]
The relative difference between this and
$1-uw+ua$ is controlled by
\[
 (1-uw+ua)-N_2(1+z)
 =(uw^p-1)\frac{a-w}{w^p}.
\]
The cyclic norm--power estimate gives
$v_2(uw^p-1)\ge(p-1)t_2$. Hence the difference has valuation at least
\[
 (p-1)t_2+pm-\max(t,m)\ge1+t+t_2=m_2.
\]
For $m\le t$, subtracting $m_2$ leaves
$(p-2)t_2+pm-2t-1\ge(p-3)t$.
For $m>t$ it leaves $(p-2)t_2+(p-1)m-t-1\ge0$.
All compared elements are units, so their $\varphi$-values agree.
Compatibility gives $\varphi N_2(1+z)=\chi N_1(1+z)$.
Finally $\chi(\gamma)=\varphi(\gamma)$ by Lemma~\ref{U:determinant}, and $N_1z=ub$ exactly. This proves the formula.
\end{proof}

\subsubsection*{Expansion by the truncated logarithm}
Put
\[
 e_i=E_i^{B/B_1}(z),\quad \mathcal U_i=S_1(z^i),\quad T=\mathcal U_{p-1},\quad
 E=\frac{\sum_{i=1}^{p-1}e_i}{1+n},
\]
\[
 W_i=S_2\!\left(\frac{b z^i}{1+n}\right),\qquad
 \mathcal Q=S_2\!\left(\frac{b z^{p-1}T}{(1+n)^2}\right).
\]

\begin{proposition}[Evaluation of $Q_1^{-1}$ by $W_i$ and $\mathcal Q$]\label{O:Q:logentry}
One has
\begin{equation}\label{O:Q:logentry-formula}
 Q_1^{-1}=\Psi_2\!\left(
       \sum_{i=1}^{p-1}\frac{(-1)^i}{i}W_i
       +\frac{\mathcal Q}{2(p-1)^2}\right).
\end{equation}

\end{proposition}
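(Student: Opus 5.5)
We plan to start from Lemma~\ref{O:Q:entry}, which gives $Q_1^{-1}=\chi(N_1(1+z)/(1+n))$ with $z=\Delta/w$ and $n=N_1z=ub$. The exact norm expansion over $B/B_1$ reads $N_1(1+z)=1+\sum_{i=1}^{p-1}e_i+n$. Hence
\[
 \frac{N_1(1+z)}{1+n}=1+E,\qquad E=\frac{\sum_{i=1}^{p-1}e_i}{1+n}.
\]
We write this unit as $1-x$ with $x=-E$ and evaluate it by \eqref{O:D:actual}, i.e.\ $\chi(1-x)=\Psi_{B_1}(bP(x))$. Before doing so we must check that $E\in\PP_1^{\lceil F/p\rceil}$. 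The bound \eqref{O:D:H15s} applied to $Y=1/w$ gives
\[
 v_1(e_i)\ge(p-1)t_2+i(m-t),
\]
which is positive and large because $pm>t$. So the depth condition should follow from the same valuation bookkeeping as in Lemma~\ref{O:M:domains}.

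Next we expand $bP(-E)=b\sum_j(-1)^jE^j/j$ and keep only the terms that survive modulo the trivial ideal $\PP_{B_1}^{T'}$ of $\Psi_{B_1}$.

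\textbf{Linear terms.} We plan to rewrite each $e_i$ through the power sums $\mathcal U_i=S_1(z^i)$ by Newton's identities, $e_i=(-1)^{i-1}\mathcal U_i/i+(\text{products of at least two }\mathcal U_j)$. The leading part then contributes $\Psi_{B_1}\bigl(\sum_i(-1)^{i}\cdot b\,\mathcal U_i/(i(1+n))\bigr)$ up to sign. Since $b/(1+n)\in B_1$, trace transitivity gives $\Tr_{B_1/A}(\frac{b}{1+n}S_1(z^i))=\Tr_{B_2/A}S_2(\frac{b z^i}{1+n})$. This converts the contribution to $\Psi_2(\sum (-1)^iW_i/i)$, with the sign fixed by the convention $P(x)=-\log(1-x)$ truncated.

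\textbf{Quadratic terms.} The quadratic contributions come from two sources: the Newton cross terms $\mathcal U_j\mathcal U_k$ inside the $e_i$, and the $E^2/2$ term of $P$. We expect every pair except the one involving $T=\mathcal U_{p-1}$ twice to be negligible. The reason is the estimate \eqref{O:D:Tbound}: among the $\mathcal U_j$, only $\mathcal U_{p-1}$ lacks the large $(p-1)t_2-jt$ gain, since $S_1(\Delta^j)$ is small for $j<p-1$. The surviving terms should combine to $bT^2/(2(1+n)^2)$ times a rational coefficient. Here $e_{p-1}$ contains $\mathcal U_{p-1}^{\,\cdot}$ with coefficient $(-1)^{p-2}/(p-1)$, so the square of that coefficient, together with the $1/2$ from $P$, should produce exactly $1/(2(p-1)^2)$. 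Applying trace transitivity again rewrites $\Psi_{B_1}(bT^2/(1+n)^2)$ as $\Psi_2\bigl(S_2(bz^{p-1}T/(1+n)^2)\bigr)=\Psi_2(\mathcal Q)$, because $T\in B_1$.

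\textbf{Main obstacle.} The hard part will be the valuation ledger showing that every other term lies in $\PP_{B_1}^{T'}$ after multiplication by $b$. These terms are the cubic and higher terms of $P$, the Newton products not of the form $T^2$, and the mixed products of $T$ with smaller $\mathcal U_j$. This includes the case $p=3$, where $e_{p-1}=e_2$ and the cubic term of $P$ is absent but $E^3$ still matters. We will try first to bound everything in $B$ using \eqref{O:D:Tbound}, namely $v(T)\ge S+qM$ and $v(\mathcal U_j)\ge p(p-1)t_2-pjt+qM$, together with $v(b)=-pt$ and $v(1+n)=0$. We then reduce each excess over $p\,T'=p(1+t+p\delta)$ to the worst case $M=t+1$, $\delta=0$, in the same style as Lemmas~\ref{O:D:conjugates} and~\ref{O:D:onesum}. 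If the cross terms $T\mathcal U_{p-2}$ turn out to fail this estimate, we will instead pair them with $\sum_\xi\xi(\Delta+\xi)^i$ as in Proposition~\ref{O:W:split}.
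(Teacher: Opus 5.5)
Your outline follows the paper's proof step for step. Lemma~\ref{O:Q:entry} gives $Q_1^{-1}=\chi(1+E)$, and formula \eqref{O:D:actual} is applied at $x=-E$. Newton's identities then convert the linear term, and only $e_{p-1}^2\equiv T^2/(p-1)^2$ survives in $E^2/2$. Finally, trace transitivity $\Psi_{B_1}\circ S_1=\Psi_B=\Psi_2\circ S_2$ produces the $W_i$ and $\mathcal Q$, with the signs and the coefficient $1/(2(p-1)^2)$ you predict.

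The gap is the valuation ledger, which is essentially the whole content of the proof and which you leave open. Moreover, the input you propose for it is wrong. The second bound in \eqref{O:D:Tbound} is for $S_1(\Delta^j/w^{p-1})$, with the fixed denominator $w^{p-1}$. It is not a bound for $\mathcal U_j=S_1(\Delta^j/w^j)$, because the factor $w^{p-1-j}$ lies in $B_2$ and cannot be moved through $S_1$. Your bound $v_B(\mathcal U_j)\ge p(p-1)t_2-pjt+(p-1)M$ is therefore stronger than anything available, by $(p-1-j)M$, and an estimate built on it is unjustified.

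The correct input is \eqref{O:D:H15t} with $Y=w^{-j}$. It gives $v_1(\mathcal U_j),\,v_1(e_j)\ge(p-1)t_2+j(m-t)$, that is, $v_B\ge p(p-1)t_2+j(M-pt)$. For $m<t$ this bound decreases in $j$. So $T=\mathcal U_{p-1}$ is the weakest factor; this, not a missing gain, is why $T^2$ survives.

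Now consider any product of at least two power sums of total weight $k\le 2p-3$. This covers the nonlinear Newton terms and every product in $E^2$ other than $e_{p-1}^2$, including all $T\,\mathcal U_{p-2}$ terms. After multiplication by $b$, such a product has $B$-valuation at least $2p(p-1)t_2+k(M-pt)-pt$. In the worst case $k=2p-3$, $M=t+1$, this exceeds $p(1+t')$ by $(p-3)t+p(p-2)\delta+p-3\ge0$, with equality at $p=3$, $\delta=0$. So those cross terms die directly. Your fallback via Proposition~\ref{O:W:split} is neither needed nor meaningful here: the sums over conjugates of $\Delta$ enter only later, in evaluating the $W_i$.

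For the depth condition and the higher-order terms, use $m\ge\ceil{t/p}$, which follows from $pm>t$ and $p\nmid t$. The minimum of the bounds is then $\nu_0=(p-1)(\delta+\ceil{t/p})$. One checks $\nu_0\ge\ceil{m_1/p}$ and $3\nu_0\ge m_1$, where $m_1=1+t+t'$. The second inequality kills the degree-$\ge3$ terms of $P(-E)$ and the three-factor error in $e_{p-1}^2\equiv T^2/(p-1)^2$. For $p=3$ nothing cubic arises at all, since $P(X)=X+X^2/2$, so your remark that $E^3$ still matters there is mistaken.
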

\begin{proof}
Lemma~\ref{O:A:lem:twisted} gives
\[
 v_1(e_i)\ge(p-1)t_2+i(m-t).
\]
Write $m_0=\lceil t/p\rceil$; then $m\ge m_0$ because $p\nmid t$.
All the indicated bounds are increasing in $m$, and at $m=m_0\le t$
their minimum is
$\nu_0=(p-1)(\delta+m_0)$.
Writing $t=pa+b_0$, $1\le b_0\le p-1$, checks directly that
\[
 \nu_0\ge\delta+\ceil{(2t+1)/p}=\ceil{m_1/p},\qquad
 3\nu_0\ge m_1.
\]
Thus \eqref{O:D:actual} applies to $1+E$, and
\[
 \chi(1+E)=\Psi_1\bigl(b(-E+E^2/2)\bigr).
\]
The terms of degree at least three in $P(-E)$ belong to
$\pp_{B_1}^{m_1}$ because $3\nu_0\ge m_1$.

In a nonlinear Newton term, or in a product $e_i e_j$ other than
$e_{p-1}^2$, the total symmetric index is at most $2p-3$ and there
are at least two factors. After multiplication by $b$, its $B$-valuation
is at least
\[
 2p(p-1)t_2+(i+j)(M-pt)-pt.
\]
Its worst bound occurs at $M=t+1$ and $i+j=2p-3$; the excess over
$p(1+t')$ is
\[
 (p-3)t+p(p-2)\delta+p-3\ge0.
\]
Terms with additional factors have no smaller bound.
Consequently only $e_{p-1}^2$ can contribute among the quadratic terms. Newton's identities, dividing by $1,\ldots,p-1$, give
\[
 \sum_{i=1}^{p-1}e_i
 \equiv\sum_{i=1}^{p-1}\frac{(-1)^{i-1}}{i}\mathcal U_i,
 \qquad e_{p-1}^2\equiv T^2/(p-1)^2
\]
after multiplication by $b$, modulo $\pp_{B_1}^{1+t'}$. In the second replacement
the error has at least three positive-depth symmetric factors and is
killed by $3\nu_0\ge m_1$.
The remaining exponent is therefore
\[
 -\frac{b}{1+n}\sum_i\frac{(-1)^{i-1}}i \mathcal U_i
       +\frac{bT^2}{2(p-1)^2(1+n)^2}.
\]
Pull one trace $S_1$ out of each summand, then use
$\Psi_B=\Psi_2S_2$. This is precisely \eqref{O:Q:logentry-formula}.
\end{proof}

\subsubsection*{\texorpdfstring{Consequences for the final $Q_1$ expression}{Consequences for the final Q1 expression}}
By Theorems~\ref{O:W:final} and~\ref{O:D:quadratic}, modulo
$\PP_B^K$ with $K=1+pt_2$, we have
\[
 W_i\equiv
 -(p-1)\frac{s\,{\bf1}_{i=p-2}}{w^{p-2}(1+a/W)^2}
 +(p-1)\frac{W\,{\bf1}_{i=p-1}}{w^{p-1}}
 -(p-1)\frac{W^2(-a-W)^i}{w^i(W^p-W+a^p)},
\]
\[
 \mathcal Q\equiv(p-1)\frac{aT}{w^{p-1}(1+a/W)^2}.
\]
Put
\[
 C=1+a/W,\qquad
 F_i=\frac{W}{W^p-W+a^p}\left(\frac{W+a}{w}\right)^i,
\]
\[
 R=W\left(-F_{p-1}+\sum_{i=1}^{p-2}\frac{F_i}{i}+w^{1-p}\right).
\]
Let $\mathscr L_1\in B_2$ denote the argument of $\Psi_2$ in
\eqref{O:Q:logentry-formula}. Substitution gives
\[
 \mathscr L_1\equiv R+
          \frac{s}{2w^{p-2}C^2}-\frac{aT}{2w^{p-1}C^2}
          \pmod{\PP_B^K}.
\]
Let $y_s,y_T\in B_2$ be the representatives constructed in
Lemma~\ref{O:T:representatives}. Since $\Psi_2$ is trivial on
$\pp_{B_2}^{t_2+1}$, the congruence gives
\[
 Q_1^{-1}=\Psi_2\bigl(R+(y_s-y_T)/2\bigr).
\]
Indeed $(-1)^i$ cancels the $(-1)^i$ in $(-a-W)^i$.
For $i<p-1$ one replaces $-(p-1)/i$ by $1/i$ only after its factor
$p$ is weighted; $i=p-1$ gives exactly $-F_{p-1}$ and is not changed.
The exceptional coefficient $(p-1)/(p-2)$ becomes $1/2$ modulo its
weighted kernel, and $1/[2(p-1)]$ becomes $-1/2$ for the quadratic term.
For $i\le p-2$, direct valuation gives
\[
 v_2(F_i)=(p-1)(p-i)m,\qquad
 v_B(pWF_i)\ge v_B(p)+(p-2)M\ge K.
\]
The coefficient of $F_{p-1}$ was never changed. The errors from the two
remaining rational-coefficient replacements are weighted by $pX_s$ and
$pX_T$; their valuations are at least $K$ by the displayed positive source
depths in Lemma~\ref{O:T:representatives}. These terms are zero in equal
characteristic. Thus the formula is valid for every $m$ with $pm>t$.

\clearpage
\subsection{\texorpdfstring{The second factor $Q_2$}{The second factor Q2}}\label{O:sec:q2}
\subsubsection*{An elementary cyclic norm-quotient lemma}
Let $E/F$ be totally ramified cyclic of odd prime degree $p$, with break
$t\ge1$ and different exponent $D=(p-1)(t+1)$. Let $\lambda$ have conductor
$n\ge2$ and coefficient $\gamma$ in the following formula for a fixed additive character
$\psi_F$:
\[
 \lambda(1-z)=\psi_F(\gamma Pz),\qquad
 P(Z)=\sum_{j=1}^{p-1} Z^j/j,\qquad z\in\pp_F^{\ceil{n/p}}.
\]
Thus $v_F\gamma=-n_F(\psi_F)-n$ and
$\psi_E=\psi_F\Tr_{E/F}$.

\begin{lemma}\label{O:N:normquot}
Let $k\in\PP_E^a$ with $a\ge1$, and put
\[
 b=\floor{\frac{a+D}{p}}.
\]
If $2b\ge n$, then
\begin{equation}\label{O:N:normquot-formula}
 \lambda\!\left(\frac{N_{E/F}(1-k)}{1-N_{E/F}k}\right)
 =\psi_E\!\left(\frac{\gamma}{1-N_{E/F}k}P(k)\right).
\end{equation}
The hypothesis $2b\ge n$ is applied to the quotient, rather than to its two factors.
\end{lemma}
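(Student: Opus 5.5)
Write $N=N_{E/F}$ and $\sigma$ for a generator of $\Gal(E/F)$. The plan is to rewrite the quotient as a single principal unit of depth at least $b$, and then linearize $\lambda$ on it.

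First expand the norm by elementary symmetric functions of the conjugates of $k$:
\[
 N(1-k)=1-\Tr k+\sum_{i=2}^{p-1}(-1)^iE_i(k)-Nk .
\]
The last term uses that $p$ is odd. Hence
\[
 \frac{N(1-k)}{1-Nk}=1-\frac{\Tr k-\sum_{i=2}^{p-1}(-1)^iE_i(k)}{1-Nk}=:1-y .
\]
By \eqref{U:symmetric}, $v_F(E_i(k))\ge\floor{(ia+D)/p}\ge b$ for $1\le i<p$. The denominator is a unit with $v_F(Nk)\ge a\ge1$. So $y\in\pp_F^b$, and since $2b\ge n$ the quotient lies in $U_F^b$, where $\lambda$ is a character that is additive modulo $U_F^n$. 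We also have $b\ge\ceil{n/p}$, so the truncated-logarithm formula applies. On this depth $P(y)\equiv y\pmod{\pp_F^{2b}}$, which gives
\[
 \lambda(1-y)=\psi_F(\gamma y).
\]
The point is that only the quotient, and not $N(1-k)$ alone, is known to be this deep. This is the meaning of the remark in the statement.

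It remains to identify $\psi_F(\gamma y)$ with $\psi_E(\gamma P(k)/(1-Nk))$. Since $\gamma/(1-Nk)\in F$,
\[
 \psi_E\bigl(\gamma P(k)/(1-Nk)\bigr)=\psi_F\bigl(\gamma\Tr P(k)/(1-Nk)\bigr),
\]
so the task is to show
\[
 \gamma\,\frac{\Tr P(k)-\Tr k+\sum_{i\ge2}(-1)^iE_i(k)}{1-Nk}\in\pp_F^{-n_F(\psi_F)},
\]
that is, that the numerator lies in $\pp_F^n$. I would argue this with the universal symmetric-polynomial identity from the proof of Lemma~\ref{O:I:normlog}:
\[
 \Tr P(k)-\Bigl(\Tr k-\sum_{i\ge2}(-1)^iE_i(k)\Bigr)=\Tr P(k)-u,\qquad u=1-N(1-k)-Nk .
\]
Modulo terms of weight at least $p$ involving at least two elementary factors, this difference equals $P(u)-u$ plus a contribution built from $Nk$. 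Each nonlinear piece is then a product of at least two elementary symmetric functions of $k$, each of valuation at least $b$, so it lies in $\pp_F^{2b}\subset\pp_F^n$.

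More directly: $\Tr P(k)=\sum_j\Tr(k^j)/j$, and by Newton's identities $\Tr(k^j)$ for $2\le j\le p-1$ is a $\mathbf Z_{(p)}$-combination of products of at least two $E_i$'s plus $\pm jE_j$. The terms $\pm E_j$ combine with the coefficient $1/j$ to cancel exactly against $\sum_{i\ge2}(-1)^iE_i(k)$. Checking this sign bookkeeping is the routine but delicate step. I would verify it via the generating identity $\sum_j\Tr(k^j)X^j/j=-\log\prod(1-k_aX)$, truncated below degree $p$.

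The terms that need care are those of weight $\ge p$ that escape the truncation. A single $E_i$ with $i\le p-1$ never appears with weight $\ge p$, and $Nk$ was separated out exactly. So every remaining error is a product of at least two factors of valuation $\ge b$. Terms carrying a factor $p$ only improve valuation in mixed characteristic and vanish in equal characteristic. Altogether the error lies in $\pp_F^{2b}\subseteq\pp_F^n$, which proves \eqref{O:N:normquot-formula}.
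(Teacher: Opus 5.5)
Your proposal is correct and is essentially the paper's own proof. You write the quotient as $1-y$ with $y=-(1-N_{E/F}k)^{-1}\sum_{i=1}^{p-1}(-1)^iE_i(k)\in\pp_F^b$ and linearize $\lambda$ using $2b\ge n$. You then use Newton's identities, dividing only by $j<p$, to match $\sum_{j<p}\Tr(k^j)/j$ with $\sum_{i<p}(-1)^{i-1}E_i(k)$ modulo products of two depth-$b$ factors, that is, modulo $\pp_F^{2b}\subseteq\pp_F^n$.

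The detour through the identity of Lemma~\ref{O:I:normlog} and the concern about weight-$\ge p$ terms are unnecessary. Newton's identities for $j\le p-1$ involve only $E_1,\dots,E_{p-1}$, in weight at most $p-1$.
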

\begin{proof}
Write $e_i=E_i^{E/F}(k)$ and $q_j=\Tr_{E/F}(k^j)$.
The exact norm expansion is
\[
 \frac{N(1-k)}{1-Nk}
 =1+Q,\qquad Q=\frac{\sum_{i=1}^{p-1}(-1)^i e_i}{1-Nk}.
\]
The denominator is a unit, since $v_F(Nk)=v_Ek\ge a>0$.
Ueda's strong symmetric estimate gives $v_F(e_i)\ge b$ for every
$1\le i<p$, and hence $v_FQ\ge b$.
As $2b\ge n$, one has $b\ge\ceil{n/2}\ge\ceil{n/p}$, and all
terms of degree at least two in $P(-Q)$ are killed at conductor $n$.
Consequently
\[
 \lambda(1+Q)=\psi_F(-\gamma Q).
\]
Newton's identities, dividing only by integers $j<p$, give
\[
 e_j\equiv(-1)^{j-1}q_j/j\pmod{\pp_F^{2b}},\qquad 1\le j<p.
\]
Indeed $q_i$ also has valuation at least $b$, by the trace-ideal formula,
and every non-leading term in the Newton recursion contains a product of
two such positive-depth factors. Therefore
\[
 -Q\equiv\frac{\sum_{j=1}^{p-1}q_j/j}{1-Nk}
       =\frac{\Tr_{E/F}P(k)}{1-Nk}\pmod{\pp_F^n}.
\]
Multiplication by $\gamma$ and trace compatibility prove
\eqref{O:N:normquot-formula}. The proof is valid in both characteristics,
including $p=3$.
\end{proof}

\subsubsection*{\texorpdfstring{Factorization of $Q_2$}{Factorization of Q2}}
Use the totally ramified notation $t_1=t$, $t_2=t+\delta$, $r=t_2+m$,
$pm>t$, $u=\alpha/\gamma$, $W=1/u$, $w=w_2$, $n_2w=1/u$,
$a=N_2\Delta$, $b_0=N_1\Delta$ and $H=n_1(s)$, with
$s=-E_{p-1}^{B/B_1}(\Delta)$. Put
\[
 C=1+ua,\quad P_*=n_2C,\quad q_0=u^{p-1},\quad
 Y=P_*-q_0,\quad Z=\frac{n_1(1+ub_0)}{Y},\quad
 k=\frac{uw}{C},\quad F_*=\frac{P_*}{Y}.
\]
All of $C,P_*,Y,Z$ are units, and
\[
 n_2k=q_0/P_*,\quad v_2k=(p-1)m,\quad F_*=(1-n_2k)^{-1}.
\]
The factor to be evaluated is
\[
 Q_2=\lambda\!\left(\frac{n_1(\gamma+\alpha b_0)}
                           {n_2(\gamma-\alpha w+\alpha a)}\right).
\]
Cancellation of the common $\gamma^p$, followed by multiplicativity,
gives
\begin{equation}\label{O:N:exactfactor}
 Q_2^{-1}=\lambda(Z^{-1})\,
       \lambda\!\left(\frac{n_2(1-k)}{1-n_2k}\right).
\end{equation}
The second factor is evaluated by Lemma~\ref{O:N:normquot}.

\subsubsection*{\texorpdfstring{Weighted trace sums and the congruence for $Z$}{Weighted trace sums and the congruence for Z}}
Set $A_0=n_2a$, $D_0=1+u^pA_0$. Write $E_i^{(1)},E_i^{(2)}$ for the
lower elementary symmetric functions of $b_0,a$, and put
$F_i=u^i(E_i^{(1)}-E_i^{(2)})$.

\begin{lemma}[Weighted trace sums]\label{O:N:moments}
For $1\le j\le p-1$,
\[
 u^j[T_1(b_0^j)-T_2(a^j)]
 \equiv(p-1){\bf1}_{j=p-1}u^{p-1}(1-s^p)
             \pmod{\PP_1^{1+pr}}.
\]
Consequently
\[
 F_i\equiv-{\bf1}_{i=p-1}u^{p-1}(1-s^p)
             \pmod{\PP_1^{1+pr}},\qquad 1\le i<p.
\]
\end{lemma}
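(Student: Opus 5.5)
The first congruence should be Theorem~\ref{O:A:thm:h} in disguise, applied to a constant coefficient. Take $x=u^{j}\in A\subset B_2$ and exponent $j-1$ of $\Delta$, so that $k=j$ in the notation of Section~\ref{O:sec:as}. Then $N_1(x\Delta^{j-1}\Delta)=u^{pj}b_0^j$ and $N_2(x\Delta^{j-1}\Delta)=u^{pj}a^j$. Hence
\[
 h(u^j\Delta^{j-1})=u^{pj}\bigl[T_1(b_0^j)-T_2(a^j)\bigr].
\]
This carries the factor $u^{pj}$ rather than the desired $u^j$. So I would instead apply Lemma~\ref{O:A:lem:translate} and Lemma~\ref{O:A:lem:filtered} directly, with the prefactor $X=u^j$ (a norm from $B_2$ of an element of $A$, up to the $p$-th power discrepancy). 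The proof of Theorem~\ref{O:A:thm:h} shows that those lemmas only use $X$ through the valuation $R$ of the product. Concretely, set $X=u^j$ and $R=v_B(X)-(j-1)t$ with the convention $R=pv_2(x)-(k-1)t$. The congruence modulo $\PP_{B_1}^{pt_2+R}$ then reads
\[
 u^j\bigl[T_1(b_0^j)-T_2(a^j)\bigr]\equiv(p-1)\mathbf1_{j=p-1}u^{p-1}(1+e_{p-1}^p),
\]
and $1+e_{p-1}^p=1-s^p$ since $p$ is odd. The required modulus is $1+pr=1+pt_2+pm$. Here $v_B(u^j)=p^2jm$, so $pt_2+R\ge pt_2+p^2jm-(j-1)pt$, measured in $B_1$ as $pt_2+pjm-(j-1)t$. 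Thus I must check that $pjm-(j-1)t\ge pm+1$, that is $(j-1)(pm-t)\ge1$ for $j\ge2$. This holds since $pm>t$. For $j=1$, I would use Theorem~\ref{O:A:thm:H14} instead, giving $u\,(T_1b_0-T_2a)\in\pp_A^{1+t_2+m}$, which is $\PP_{B_1}^{p(1+t_2+m)}\subset\PP_{B_1}^{1+pr}$. I also need the hypothesis $R\ge1$ of the two lemmas; if $m$ is small, I would check it from $pjm>jt\ge(j-1)t$. The delicate point is the normalization of $v_1$ versus $v_B$ in the modulus, which I would track carefully; this verification is the main obstacle.

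For the consequence, Newton's identities express $E_i$ through the power sums $T(\cdot^j)$ with $j\le i<p$, dividing only by units. Multiplying by $u^i$ distributes $u^j$ onto each power-sum factor. So $F_i$ is a polynomial in the weighted differences $u^j[T_1(b_0^j)-T_2(a^j)]$ and the weighted power sums themselves. The latter have positive valuation by the trace-ideal bounds, using $v(ua)>0$ because $pm>t$. Products containing a difference factor and another positive-depth factor are absorbed, and only the linear term survives. Newton gives $E_i=\frac{(-1)^{i-1}}{i}q_i+\cdots$, so at $i=p-1$ the coefficient is $\frac{(-1)^{p-2}}{p-1}(p-1)=-1$. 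This yields $F_i\equiv-\mathbf1_{i=p-1}u^{p-1}(1-s^p)$. The cross terms need a second-order valuation estimate, which I expect to be routine given $pm>t$.
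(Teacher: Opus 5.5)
Your argument is correct, and it is a slightly more direct version of the paper's. The overall structure matches the paper: Theorem~\ref{O:A:thm:H14} handles $j=1$, the translate/filtered machinery handles $j\ge2$ with the same exponent check $(j-1)(pm-t)\ge1$, and Newton's identities give the consequence.

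\textbf{Where the routes differ.} The paper applies the packaged Theorem~\ref{O:A:thm:h} with $x=w^{-j}$. Since $N_2(w^{-j}\Delta^{j})=w^{-pj}a^j$, this yields $T_2(w^{-pj}a^j)$ rather than $u^jT_2(a^j)$. The paper then bounds the discrepancy $T_2\bigl((u^j-w^{-pj})a^j\bigr)$ using $v_2(u^j-w^{-pj})\ge jpm+(p-1)t_2$ and the different. But the proof of Theorem~\ref{O:A:thm:h} had just introduced exactly this discrepancy, passing from $n_2(x)(T_1b^k-T_2a^k)$ to $h(x\Delta^{k-1})$, before invoking Lemmas~\ref{O:A:lem:translate} and~\ref{O:A:lem:filtered}. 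By using those two lemmas directly you never create the discrepancy, so both estimates of it disappear.

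\textbf{A simplification of your justification.} Since $n_2w=1/u$, one has $u^j=n_2(w^{-j})$ \emph{exactly}. So $X=u^j$ satisfies the lemmas' hypothesis literally, with $R=v_B(w^{-j}\Delta^{j-1})=pjm-(j-1)t\ge1$. You therefore need neither the ``$p$-th power discrepancy'' remark nor the claim that the lemmas use $X$ only through its valuation. That claim is true, but it follows from the proofs of the two lemmas, not of Theorem~\ref{O:A:thm:h}.

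\textbf{Bookkeeping.} Your line $v_B(u^j)=p^2jm\Rightarrow pt_2+p^2jm-(j-1)pt$ mixes normalizations. The exponent you end with, $pt_2+pjm-(j-1)t$ measured in $v_1$, is nevertheless the correct one and agrees with the paper.

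\textbf{The Newton step.} No second-order estimate is needed. The non-exceptional weighted differences already lie in $\PP_1^{1+pr}$, and every other factor produced by telescoping is a symmetric function of the integral tuples $ub_0$ or $ua$. By weight, the $(p-1)$st power sum can occur only linearly, and only in $E_{p-1}$. It appears there with coefficient $(-1)^{p-2}/(p-1)=-1/(p-1)$, exactly as you say.
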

\begin{proof}
For $j=1$ Theorem~\ref{O:A:thm:H14}, multiplied by $u$, gives the
required modulus. For $j\ge2$, apply Theorem~\ref{O:A:thm:h} to
$w^{-j}\Delta^{j-1}$, whose $B$-valuation is $jpm-(j-1)t>0$.
Its error exponent $pt_2+jpm-(j-1)t$ is at least $1+pr$, since their
difference is $(j-1)(pm-t)-1\ge0$.
The exact discrepancy from multiplicative homogeneity is
\[
 T_2\bigl((u^j-w^{-pj})a^j\bigr).
\]
The lower cyclic norm--power estimate gives
$v_2(u^j-w^{-pj})\ge jpm+(p-1)t_2$.
After adding the different, the trace is in $\pp_A^{r+1}$, because
\[
 (j-1)pm+(p-2)t_2-jt-1
 \ge (p-2)t_2-t+j-2\ge0.
\]
This proves the moment congruence, including its exceptional term.
Apply Newton's identities to the two integral tuples $ub_0$ and $ua$.
Induction over $i<p$ introduces only integral factors and division by
$i$, a unit. At $i=p-1$ its leading sign is negative, so division of
$(p-1)u^{p-1}(1-s^p)$ by $-(p-1)$ gives the displayed $F_{p-1}$.
All other $F_i$ vanish at the same modulus.
\end{proof}

\begin{proposition}[A congruence for $Z$]\label{O:N:Z}
One has
\[
 Z\equiv1+u^{p-1}\frac H{D_0}\pmod{\pp_A^{r+1}}.
\]
\end{proposition}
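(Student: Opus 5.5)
We need to prove $Z\equiv 1+u^{p-1}H/D_0 \pmod{\pp_A^{r+1}}$, where $Z=n_1(1+ub_0)/Y$ and $Y=n_2(1+ua)-u^{p-1}$.

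The plan is to expand both norms through the elementary symmetric functions of the conjugates and then compare them term by term using Lemma~\ref{O:N:moments}.

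\textbf{Step 1: expand the two norms.} Since $b_0$ has $p$ conjugates over $A$ and $u\in A$, the exact norm expansion gives
\[
 n_1(1+ub_0)=1+\sum_{i=1}^{p-1}u^iE_i^{(1)}+u^pn_1(b_0),\qquad
 n_2(1+ua)=1+\sum_{i=1}^{p-1}u^iE_i^{(2)}+u^pA_0 .
\]
By norm transitivity, $n_1(b_0)=n_1N_1\Delta=n_2N_2\Delta=A_0$, so the top terms agree exactly. Subtracting,
\[
 n_1(1+ub_0)-P_*=\sum_{i=1}^{p-1}F_i .
\]

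\textbf{Step 2: substitute the moment congruence.} Lemma~\ref{O:N:moments} gives $F_i\equiv 0$ for $i<p-1$ and $F_{p-1}\equiv-u^{p-1}(1-s^p)$, modulo $\PP_1^{1+pr}$. Both sides lie in $A$, and $\PP_1^{1+pr}\cap A=\pp_A^{r+1}$. Two points need care here:
\begin{itemize}
 \item $F_{p-1}$ lies in $A$ as a difference of elementary symmetric functions, but $s^p$ need not. So I would first read the $s^p$ term through its norm: replace $u^{p-1}s^p$ by $u^{p-1}n_1(s)=u^{p-1}H$.
 \item This replacement holds modulo the required ideal by the norm--power estimate Lemma~\ref{O:A:lem:power} applied to $B_1/A$. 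It gives $v_1(s^p-n_1s)\ge pv_1(s)+(p-1)t$, and then the factor $u^{p-1}$ of valuation $(p-1)pm$ in $B_1$ makes the product deeper than $1+pr$.
\end{itemize}
The result is
\[
 n_1(1+ub_0)\equiv P_*-u^{p-1}+u^{p-1}H=Y+u^{p-1}H \pmod{\pp_A^{r+1}}.
\]

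\textbf{Step 3: divide by $Y$.} Dividing by the unit $Y$ gives $Z\equiv 1+u^{p-1}H/Y$. It remains to replace $Y$ by $D_0$ in this last term. We have
\[
 Y-D_0=\sum_{i=1}^{p-1}u^iE_i^{(2)}-u^{p-1}.
\]
The terms with $i<p-1$ have positive valuation by the cyclic symmetric estimate \eqref{O:D:cyclic}. The $i=p-1$ term combines with $-u^{p-1}$; using $s_{p-1}$ of $a$ and the Artin--Schreier moments, the combination still has valuation at least $v_A(u)=m>0$. Since $H\equiv\varrho^{p-1}$ by Theorem~\ref{O:R:R2}, we have $v_A(H)\ge(p-1)\delta$. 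Hence the error in the final term has valuation at least $(p-1)m+(p-1)\delta+\min(m,\dots)$.

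\textbf{Main obstacle.} The hardest step is this last valuation bookkeeping: I must check that $(p-1)m+v(H)+v(Y-D_0)\ge r+1=t_2+m+1$ in all cases with $pm>t$. If the crude bound falls short when $m$ is small, I would instead use the finer congruence $E_i^{(2)}\equiv E_i^{(1)}$ from Lemma~\ref{O:N:moments}, together with the estimates \eqref{O:A:eq:ei}, to show that $Y-D_0$ has valuation close to $(p-1)t_2-it$ after the weights $u^i$ are included.
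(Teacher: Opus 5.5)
Your Steps 1 and 2 follow the paper's proof exactly. The top coefficients agree by norm transitivity. Lemma~\ref{O:N:moments} then gives $n_1(1+ub_0)-Y\equiv u^{p-1}s^p\pmod{\PP_1^{1+pr}}$. Lemma~\ref{O:A:lem:power} on $B_1/A$ replaces $s^p$ by $H$, and intersecting with $A$ yields the modulus $\pp_A^{r+1}$. That replacement already needs $v_1(s)\ge(p-1)\delta$ from \eqref{O:A:eq:ei}; with it, the excess over $1+pr$ is $p(p-2)(m+\delta)-t-1\ge0$.

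The gap is in Step 3, which you leave open, and the bounds you offer there do not suffice.

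\textbf{Your bound fails.} Since $v_A(u^{p-1}H)\ge(p-1)(m+\delta)$, the estimate $v_A(Y-D_0)\ge m$ would require $(p-1)m+(p-2)\delta\ge t+1$. This fails, for instance, at $p=3$, $t=5$, $m=2$, $\delta=0$, which is admissible because $pm>t$. ``Positive valuation'' of the intermediate terms is weaker still.

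\textbf{Your fallback is misdirected.} Lemma~\ref{O:N:moments} only compares $E_i^{(1)}$ with $E_i^{(2)}$. The estimates \eqref{O:A:eq:ei} concern symmetric functions of $\Delta$ over $B_1$. Neither bounds $E_i^{(2)}$ itself. Nor is any recombination of $u^{p-1}E_{p-1}^{(2)}$ with $-u^{p-1}$ via Artin--Schreier moments needed.

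\textbf{What is missing.} You need the full strength of \eqref{O:D:cyclic} for $B_2/A$, whose break is $t_2$, applied to $ua$, which has $v_2(ua)=pm-t\ge1$. Every intermediate term then satisfies
\[
 v_AE_i(ua)\ge L_0=\Bigl\lfloor\frac{pm-t+(p-1)(t_2+1)}{p}\Bigr\rfloor
 =m+\Bigl\lfloor\frac{(p-2)t+(p-1)(\delta+1)}{p}\Bigr\rfloor .
\]
The different of $B_2/A$ supplies the extra depth beyond $m$.

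\textbf{Checking the inequality.} Write $t=pc+b'$ with $1\le b'\le p-1$, so that $m\ge c+1$. The excess of $(p-1)(m+\delta)+L_0$ over $r+1$ is increasing in $m$ and $\delta$. At $m=c+1$, $\delta=0$ it equals
\[
(p-3)c+p-2-b'+\bigl\lfloor((p-2)b'+p-1)/p\bigr\rfloor\ge0,
\]
where for $b'=p-1$ the floor is $p-2$.

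\textbf{The remaining term.} The lone term $-u^{p-1}$ must be treated separately. Its weighted valuation is at least $2(p-1)m+(p-1)\delta$. This exceeds $r$ by at least $(2p-3)m+(p-2)\delta-t\ge pm-t\ge1$.

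Since $Y$ and $D_0$ are units, these two estimates give $u^{p-1}H(Y^{-1}-D_0^{-1})\in\pp_A^{r+1}$, which completes the proof.
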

\begin{proof}
The highest-degree terms in the norm expansions of $ub_0$ and $ua$ agree by norm transitivity.
Summing Lemma~\ref{O:N:moments} over the remaining symmetric degrees gives
\[
 n_1(1+ub_0)-Y\equiv u^{p-1}s^p
             \pmod{\PP_1^{1+pr}}.
\]
The ordinary lower cyclic norm--power estimate gives directly
\[
 v_1\bigl(u^{p-1}(s^p-H)\bigr)
 \ge p(p-1)m+p(p-1)\delta+(p-1)t\ge1+pr,
\]
because the excess is $p(p-2)(m+\delta)-t-1\ge0$.
Both sides after this substitution belong to $A$, so intersection with
$A$ improves the modulus to $\pp_A^{r+1}$.

It remains to prove $u^{p-1}H(Y^{-1}-D_0^{-1})\in\pp_A^{r+1}$. One has
\[
 Y-D_0=\sum_{i=1}^{p-1}E_i^{B_2/A}(ua)-u^{p-1},
 \quad
 v_AE_i(ua)\ge L_0=
 \floor{\frac{pm-t+(p-1)(t_2+1)}p}.
\]
The weighted intermediate terms have valuation at least
$(p-1)m+(p-1)\delta+L_0\ge r+1$.
For completeness write $t=pa+b$, $1\le b\le p-1$, and use $m\ge a+1$.
At $\delta=0$ the excess is at least
\[
 (p-3)a+p-2-b+\floor{\frac{(p-2)b+p-1}{p}}\ge0.
\]
For $b\le p-2$ every indicated term is nonnegative; for $b=p-1$
the final floor is $p-2$, giving again a nonnegative result.
Increasing $\delta$ or $m$ cannot decrease the excess.
The weighted final $-u^{p-1}$ term has excess
\[
 (2p-3)m+(p-2)\delta-t-1\ge pm-t-1\ge0.
\]
All denominators are units. These estimates prove
$u^{p-1}H(Y^{-1}-D_0^{-1})\in\pp_A^{r+1}$ and hence the proposition.
\end{proof}

\begin{theorem}[Evaluation of $Q_2$]\label{O:N:Q2}
One has
\[
 Q_2^{-1}=\Psi_A\!\left(Wu^{p-1}\frac{H}{D_0}\right)
          \Psi_2\!\left(WF_*P(k)\right),\qquad
 \Psi_L(x)=\mathrm e_L(\alpha x).
\]
\end{theorem}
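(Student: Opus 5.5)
The plan is to start from the exact factorization \eqref{O:N:exactfactor} and evaluate its two factors separately. That is, write
\[
 Q_2^{-1}=\lambda(Z^{-1})\,\lambda\!\left(\frac{n_2(1-k)}{1-n_2k}\right).
\]

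\emph{First factor.} Proposition~\ref{O:N:Z} gives $Z\equiv1+u^{p-1}H/D_0\pmod{\pp_A^{r+1}}$. Since $\lambda$ has conductor $r+1$, we have $\lambda(Z^{-1})=\lambda(1-x)$ with $x=u^{p-1}H/D_0$, up to an error killed by $\lambda$. First check the depth: $v_A(x)\ge(p-1)m+(p-1)\delta$, using $v_A H\ge(p-1)\delta$ from Theorem~\ref{O:R:R2}. This is at least $\lceil(r+1)/2\rceil$ because $pm>t$.

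Given that depth, the truncated-logarithm formula $\lambda(1-x)=\Psi'_A(\gamma P(x))$ applies. Moreover $P(x)\equiv x$ at the conductor, since $2v_A(x)\ge r+1$. The result is $\mathrm e_A(\gamma x)=\Psi_A(Wx)$, because $\gamma=\alpha W$. This yields the factor $\Psi_A(Wu^{p-1}H/D_0)$. The sign is consistent because $Z^{-1}\equiv1-x$.

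\emph{Second factor.} Apply Lemma~\ref{O:N:normquot} to $E/F=B_2/A$, with break $t_2$, and $k=uw/C$, which has $v_2k=(p-1)m=:a\ge1$. Put $b=\lfloor(a+(p-1)(t_2+1))/p\rfloor$. I must verify the hypothesis $2b\ge r+1=t_2+m+1$. Since $b\ge\lfloor((p-1)(m+t_2+1))/p\rfloor$, doubling gives roughly $2(p-1)(m+t_2+1)/p$, which exceeds $m+t_2+1$ for $p\ge3$ up to a floor loss of at most one. I would check the floor edge case directly using $pm>t$ and $p\nmid t$. This inequality check is the main obstacle; it should reduce to $(p-2)(m+t_2+1)\ge p-1$, which holds since $m\ge1$ and $t_2\ge1$.

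The lemma then gives $\psi_{B_2}\bigl(\gamma P(k)/(1-n_2k)\bigr)$, with $\psi$ the base character of $\lambda$. Since $(1-n_2k)^{-1}=F_*$ and $\gamma=\alpha W$, this equals $\Psi_2(WF_*P(k))$. Here I must take care that the base additive character is $\mathrm e_A$ (not $\Psi'$), as in \eqref{O:P:fullP} where $\gamma$ is the coefficient relative to $\mathrm e_A$.

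Multiplying the two evaluations gives the theorem. Finally I would confirm that all units involved ($C$, $D_0$, $Y$) have the valuations used, which was recorded before the statement.
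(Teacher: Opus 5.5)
Your proposal is correct and follows the paper's proof. Both start from the exact factorization \eqref{O:N:exactfactor}. Both evaluate $\lambda(Z^{-1})$ via Proposition~\ref{O:N:Z}, after checking that $u^{p-1}H/D_0$ has depth at least $(p-1)(m+\delta)\ge\lceil (r+1)/2\rceil$. Both evaluate the norm quotient by Lemma~\ref{O:N:normquot} with $a=(p-1)m$ and $\gamma=\alpha W$.

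The hypothesis $2b\ge r+1$ that you single out as the main obstacle is routine. Since $\lfloor (p-1)n/p\rfloor=n-\lceil n/p\rceil$, it reduces to $n\ge 2\lceil n/p\rceil$, which is exactly how the paper checks it.
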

\begin{proof}
The conductor of $\lambda$ is $n=r+1=t_2+m+1$.
For the second factor of \eqref{O:N:exactfactor}, Lemma~\ref{O:N:normquot}
has
\[
 a=(p-1)m,\quad D=(p-1)(t_2+1),\quad
 b=\floor{\frac{(p-1)n}{p}}.
\]
For every $n\ge2$ and odd $p$, $2\lfloor(p-1)n/p\rfloor\ge n$;
this is $n\ge2\lceil n/p\rceil$, checked at $p=3$ and then monotone
in $p$. The lemma therefore gives the second displayed phase.

For the first factor, $v_AH\ge(p-1)\delta$. The argument $u^{p-1}H/D_0$ has
valuation at least $L=(p-1)(m+\delta)$, and
\[
 2L-n=(2p-3)m+(2p-3)\delta-t-1\ge pm-t-1\ge0.
\]
Hence $\lambda((1+x)^{-1})=\mathrm e_A(\gamma x)$ for this argument.
Proposition~\ref{O:N:Z} therefore gives the first factor in the
stated formula.
\end{proof}

\subsection{An estimate for \texorpdfstring{$u^{p-1}A_0(J+u^{p-1}H)$}{u to power p-1A0(J+u to power p-1H)}}\label{O:sec:weighted}
\subsubsection*{Notation}
Let $A\subset B_1,B_2\subset B$ be totally ramified extensions with
$\operatorname{Gal}(B/A)=C_p^2$, $p$ odd. Put
\[
 t=t_1,\quad\delta=t_2-t\ge0,\quad p\nmid t,\quad
 N_i=\Nm_{B/B_i},\quad n_i=\Nm_{B_i/A},\quad S_i=\Tr_{B/B_i}.
\]
Let $\Delta^p-\Delta=a\in B_2$, $v_B\Delta=-t$, and set
\[
 s=-E_{p-1}^{B/B_1}(\Delta),\quad
 Y=w_2^{-1}\in B_2,\quad v_2(Y)=m,\quad n_2Y=u,\quad
 M=pm>t.
\]
Thus $v_B(Y)=M$, $v_A(u)=m$. Define
\[
 T=S_1((Y\Delta)^{p-1}),\quad H=n_1(s),\quad J=n_1(T),
 \quad A_0=n_2(a),\quad v_A(A_0)=-t.
\]
In \eqref{O:T:factorphase}, the first term is
$u^{p-1}A_0(J+u^{p-1}H)/(2D^2)$, where $D$ is a unit.
Since $\Psi_A$ is trivial on $\pp_A^{1+t_2}$, we shall prove
\begin{equation}\label{O:E:weighted}
 L_A:=u^{p-1}A_0(J+u^{p-1}H)\in\pp_A^{1+t_2}.
\end{equation}

\subsubsection*{A valuation bound for \texorpdfstring{$T+Y^{p-1}s$}{T+Y to power p-1 s}}
Write $v=v_B$ in this subsection, and put $z=Y\Delta$.
Lemma~\ref{O:A:lem:twisted} gives
\begin{align}
 v(E_i\Delta)&\ge p(p-1)t_2-pit,\label{O:E:H15a}\\
 v(E_i z)&\ge p(p-1)t_2+i(M-pt),\qquad1\le i<p.\label{O:E:H15b}
\end{align}
Lemma~\ref{O:A:lem:power} gives
\begin{equation}\label{O:E:normpower}
 v_E(\Nm_{E/F}x-x^p)\ge p v_E(x)+(p-1)\operatorname{break}(E/F).
\end{equation}
It holds also for negative valuations and in both characteristics.
Set
\[
 R=(p-1)M+p(p-1)\delta,\qquad
 B_0=R+(p-1)t.
\]
\begin{lemma}\label{O:E:trace}
One has
\begin{equation}\label{O:E:semilinear}
 v_B\bigl(T+Y^{p-1}s\bigr)\ge B_0.
\end{equation}
Moreover $v_B(T),v_B(Y^{p-1}s)\ge R$.
\end{lemma}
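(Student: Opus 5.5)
Since $Y\in B_2$ is fixed by $\Gal(B/B_2)$, a fixed power of it cannot be pulled out of $S_1=\Tr_{B/B_1}$. The plan is to expand $T=S_1(Y^{p-1}\Delta^{p-1})$ over the coefficient basis of Lemma~\ref{O:A:lem:coeff}, isolate the one term that produces $-Y^{p-1}s$, and bound the rest by the twisted estimates of Lemma~\ref{O:A:lem:twisted}.

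\emph{The two separate bounds.} For $Y^{p-1}s$ we have $v_B(Y^{p-1}s)=(p-1)M+v_B(s)\ge(p-1)M+p(p-1)\delta=R$, using $v_1(s)\ge(p-1)\delta$ from \eqref{O:A:eq:ei}. For $T$, apply \eqref{O:A:eq:twtrace} with the coefficient $Y^{p-1}\in B_2$ and $i=p-1$; in $B$-valuations this gives
\[
 v_B(T)\ge p(p-1)t_2-p(p-1)t+(p-1)M=R.
\]
Together these give the second assertion of the lemma.

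\emph{The leading term.} Next we compare $T$ with $S_1(Y^{p-1})\,S_1(\Delta^{p-1})/p$, or more directly we work in coordinates. Write $Y^{p-1}=N_1$-independent data by expanding $Y=\sum_j y_j\,\theta_j$, where the $y_j\in B_1$ and the $\theta_j$ are powers of a generator of $B/B_1$. A cleaner route is to use the exact identity $S_1(X\Delta^{p-1})$ for $X\in B_2$, expanded by the coefficient extraction of Lemma~\ref{O:A:lem:coeff} applied to the dual basis. The quantity $s=-E_{p-1}(\Delta)=-\sum_k\prod_{\ell\ne k}\Delta_{(\ell)}$ is also a trace, namely $-S_1(b/\Delta)$, exactly as in Lemma~\ref{O:R:reciprocal}. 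This gives the exact identity $Y^{p-1}s=-S_1(Y^{p-1}b/\Delta)$ up to the discrepancy $S_1\bigl((Y^{p-1}-\text{its }B_1\text{-part})\,b/\Delta\bigr)$. Hence
\[
 T+Y^{p-1}s=S_1\bigl(Y^{p-1}(\Delta^{p-1}-b/\Delta)\bigr)+\bigl(Y^{p-1}s-(-S_1(Y^{p-1}b/\Delta))\bigr).
\]
In the first term we use the $B_1$-characteristic polynomial of $\Delta$: $\Delta^{p}-b=\sum_i(-1)^{i+1}e_i\Delta^{p-i}$, with $e_i$ bounded by \eqref{O:E:H15a}. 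So $\Delta^{p-1}-b/\Delta=\sum_i\pm e_i\Delta^{p-1-i}$ with $e_i\in B_1$, and each term has trace $e_i\,S_1(Y^{p-1}\Delta^{p-1-i})$. Combining \eqref{O:E:H15a} with \eqref{O:A:eq:twtrace}, each such term has valuation at least
\[
 p(p-1)t_2-pit+p(p-1)t_2-p(p-1-i)t+(p-1)M,
\]
which is at least $B_0$ because $p(p-1)t_2-p(p-1)t+(p-1)t\ge(p-1)t$ after subtracting $R$. The $i=p-1$ term, $e_{p-1}S_1(Y^{p-1})$, is handled by the untwisted trace bound $v_1(S_1Y)\ge(p-1)t_2+v_2(Y)$ and gives the same margin.

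\emph{The main obstacle.} The second bracket compares $Y^{p-1}\cdot S_1(b/\Delta)$ with $S_1(Y^{p-1}b/\Delta)$, that is, it measures how far $Y^{p-1}$ is from being a $B_1$-scalar. Here we use that the conjugates of $Y$ under $\Gal(B/B_1)$ satisfy $v_B(\sigma Y-Y)\ge M+t'$ (the break of $B/B_1$ is $t'$). Writing $S_1(Y^{p-1}x)-Y^{p-1}S_1(x)=\sum_\sigma(\sigma Y^{p-1}-Y^{p-1})\sigma x$ with $x=b/\Delta$ of valuation $-p(p-1)t$, this gives the bound
\[
 (p-1)M+t'-p(p-1)t+\text{(trace gain)}.
\]
I expect this step to be delicate: the naive bound falls short, and one must instead re-express the difference through traces to use the different $(p-1)(t'+1)$ of $B/B_1$, or else exploit $\sum_\sigma(\sigma Y-Y)\sigma x=S_1(Yx)-YS_1(x)$ together with \eqref{O:A:eq:twtrace}. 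If this fails, the fallback is to replace $Y$ by a suitable element of $B_1$ close to it, whose existence follows from the norm approximation \eqref{O:E:normpower}, and to absorb the replacement error at depth $B_0$.
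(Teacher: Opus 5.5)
Your two separate bounds are correct and match the paper's argument for the second assertion. Your reduction is also sound. With $x=b/\Delta$ one has $s=-S_1(x)$ exactly, and $\Delta^{p-1}-x=\sum_{i=1}^{p-1}(-1)^{i+1}e_i\Delta^{p-1-i}$. So \eqref{O:E:H15a} and \eqref{O:A:eq:twtrace} give $S_1\bigl(Y^{p-1}(\Delta^{p-1}-x)\bigr)$ a $B$-valuation of at least $(p-1)M+p(p-1)(t+2\delta)$, which exceeds $B_0$ by $(p-1)^2t+p(p-1)\delta$. The margin you state is garbled, but the inequality holds. Hence
\[
 T+Y^{p-1}s\equiv S_1(Y^{p-1}x)-Y^{p-1}S_1(x)\pmod{\PP_B^{B_0}},
\]
and the lemma is \emph{equivalent} to $v_B\bigl(S_1(Y^{p-1}x)-Y^{p-1}S_1(x)\bigr)\ge B_0$. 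That is exactly the step you leave open, so the proposal proves only the easy second assertion.

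Your numbers at that step are also off. In fact $v_B(x)=-(p-1)t$, not $-p(p-1)t$. Moreover, since $Y\in B_2$ and $B_2/A$ has break $t_2$, Lemma~\ref{C:leading-ramification} gives the stronger bound $v_B(\sigma Y-Y)\ge M+pt_2$. Even with these corrections the termwise bound is $(p-1)M+t+p\delta$, which falls short of $B_0$ by $(p-2)(t+p\delta)>0$. An unspecified ``trace gain'' does not close this.

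Your fallback is the right idea, but \eqref{O:E:normpower} does not supply it. That estimate compares $n_2Y$ with $Y^p$; it produces no element of $B_1$ near $Y$. What works is a best-approximation argument. If $Y\notin B_1$, choose $y_1\in B_1$ maximizing $w=v_B(Y-y_1)$. Then $p\nmid w$, since otherwise subtracting a suitable element of $B_1$ of valuation $w$ (same residue field, $v_B(B_1^\times)=p\mathbf Z$) would increase $w$. Lemma~\ref{C:leading-ramification} for $B/B_1$ (break $t'$) then gives $w+t'=v_B(\sigma Y-Y)\ge M+pt_2$, i.e.\ $w\ge M+(p-1)t$.

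Put $\epsilon'=Y^{p-1}-y_1^{p-1}$. Then $v_B(\epsilon')\ge(p-1)M+(p-1)t$, and the difference equals $S_1(\epsilon'x)-\epsilon'S_1(x)$. The trace-ideal formula \eqref{O:A:eq:trace} for $B/B_1$ gives $v_B(S_1(\epsilon'x))\ge(p-1)M+(p-1)t'=B_0$. Also $v_B(\epsilon'S_1x)=v_B(\epsilon's)\ge(p-1)M+(p-1)t+p(p-1)\delta=B_0$. Done this way, your route is a valid alternative proof.

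The paper avoids approximating $Y$ altogether. It subtracts $Y^p$ times the characteristic polynomial of $\Delta$ over $B_1$ from that of $z=Y\Delta$. The only non-scalar discrepancy is the constant term $N_1\Delta\,(n_2Y-Y^p)$, which \emph{is} what \eqref{O:E:normpower} controls. This yields $E_{p-1}z\equiv-Y^{p-1}s\pmod{\PP_B^{B_0}}$. Newton's identity $S_1(z^{p-1})=-(p-1)E_{p-1}z+\mathcal R(E_1z,\ldots,E_{p-2}z)$ then finishes, with \eqref{O:E:H15b} bounding $\mathcal R$ and $v_B(p)\ge p(p-1)t_2$ bounding $pY^{p-1}s$.
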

\begin{proof}
The last assertion is the trace estimate in Lemma~\ref{O:A:lem:twisted} and \eqref{O:E:H15a}.
Subtract $Y^p$ times the characteristic-polynomial identity for
$\Delta$ from the identity for $z$. The result is
\[
 \sum_{i=1}^{p-1}(-1)^i
  (E_i z-Y^iE_i\Delta)z^{p-i}
       -N_1\Delta\,(n_2Y-Y^p)=0.
\]
For $i\le p-2$, \eqref{O:E:H15a}--\eqref{O:E:H15b} bound the corresponding
summand below by
\[
 pM+p(p-1)t_2-[p+(p-1)i]t.
\]
Equation \eqref{O:E:normpower} for $B_2/A$ bounds the last summand below
by $pM+p(p-1)t_2-pt$. Solving for the $i=p-1$ summand and dividing
by $z$, of valuation $M-t>0$, gives
\begin{equation}\label{O:E:edifference}
 v_B(E_{p-1}z-Y^{p-1}E_{p-1}\Delta)
 \ge(p-1)M+p(p-1)t_2-(p-1)^2t=B_0.
\end{equation}
No division by $p$ occurs.

Newton's identity for degree $p-1$ says
\[
 S_1(z^{p-1})=-(p-1)E_{p-1}z+\mathcal R(E_1z,\ldots,E_{p-2}z),
\]
where $\mathcal R$ is an integral polynomial whose monomials have
weight $p-1$ and at least two elementary-symmetric factors.
By \eqref{O:E:H15b}, every such monomial has valuation at least
\[
 (p-1)M+p(p-1)(t+2\delta)\ge B_0.
\]
Using $E_{p-1}\Delta=-s$ and \eqref{O:E:edifference}, we obtain
$T\equiv(p-1)Y^{p-1}s\pmod{\PP_B^{B_0}}$.
In mixed characteristic
$v_B(p)\ge p(p-1)t_2$, so
\[
 v_B(pY^{p-1}s)\ge(p-1)M+p(p-1)(t+2\delta)\ge B_0.
\]
In equal characteristic this term is zero. Adding $Y^{p-1}s$ now
proves \eqref{O:E:semilinear}.
\end{proof}

\subsubsection*{Comparison with \texorpdfstring{$p$}{p}th powers}
\begin{lemma}\label{O:E:power}
In $B_1$, put
\[
 L_1=u^{p-1}A_0\bigl(T^p+u^{p-1}s^p\bigr).
\]
Then
\begin{equation}\label{O:E:L1bound}
 v_B(L_1)\ge V:=2p(p-1)M-pt+p^2(p-1)\delta.
\end{equation}
\end{lemma}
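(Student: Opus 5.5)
Write $X=T+Y^{p-1}s$. Lemma~\ref{O:E:trace} gives $v_B(X)\ge B_0=R+(p-1)t$, while $v_B(T),v_B(Y^{p-1}s)\ge R$. The plan is to substitute $T=-Y^{p-1}s+X$ into $T^p$ and expand binomially:
\[
 T^p=-Y^{p(p-1)}s^p+\sum_{j=1}^{p}\binom pj(-Y^{p-1}s)^{p-j}X^j .
\]
Here $p$ is odd. With this expansion,
\[
 T^p+u^{p-1}s^p=s^p\bigl(u^{p-1}-Y^{p(p-1)}\bigr)+\sum_{j=1}^{p}\binom pj(-Y^{p-1}s)^{p-j}X^j .
\]
After multiplication by $u^{p-1}A_0$, which has $B$-valuation $(p-1)M-pt$, each piece has to reach $V=2p(p-1)M-pt+p^2(p-1)\delta$. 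So each piece of $T^p+u^{p-1}s^p$ must have valuation at least $p(p-1)M+p^2(p-1)\delta$.

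First I handle the norm-power discrepancy. Since $u=n_2Y$, \eqref{O:E:normpower} for $B_2/A$ gives $v_B(u-Y^p)\ge pM+p(p-1)t$. Factoring $u^{p-1}-Y^{p(p-1)}$ through $u-Y^p$ then gives valuation at least $(p-1)pM+p(p-1)t$. Combined with $v_B(s^p)\ge p^2(p-1)\delta$, this piece exceeds the requirement by $p(p-1)t\ge0$.

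Next the term $j=p$ is $X^p$. Its valuation is at least $pB_0=p(p-1)M+p^2(p-1)\delta+p(p-1)t$, which is again enough.

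The step I expect to be the main obstacle is the middle terms $1\le j\le p-1$, because they need the factor $p$. In equal characteristic they vanish. In mixed characteristic, $v_B(p)\ge p(p-1)t_2$ and the remaining factor has valuation at least $(p-j)R+jB_0=pR+j(p-1)t$. Here $pR=p(p-1)M+p^2(p-1)\delta$, so the surplus is $p(p-1)t_2+j(p-1)t\ge0$. I will check that $R=(p-1)M+p(p-1)\delta$ is exactly what Lemma~\ref{O:E:trace} supplies, so that no parity or floor losses appear.

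Summing the three estimates gives $v_B(L_1)\ge V$. The inequality I would double-check is $v_B(u^{p-1}A_0)=(p-1)M-pt$. This uses $v_A(A_0)=-t$, hence $v_B(A_0)=-p^2t$? No: $A_0\in A$ with $v_A=-t$ gives $v_B=-p^2t$. So the correct normalization must be tracked. If $v_B(A_0)=-p^2t$, the required surplus becomes $p^2t-pt$ larger. The factor-$p$ and norm-power surpluses then need $p(p-1)t_2\ge(p^2-p)t$, which holds, and the $X^p$ surplus is $p(p-1)t$, which also holds exactly. I will therefore redo the bookkeeping with $v_B(u^{p-1}A_0)=p^2(p-1)m-p^2t$ and verify that each of the three surpluses stays nonnegative. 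This is the delicate point, and I expect it to be tight precisely in the $X^p$ term.
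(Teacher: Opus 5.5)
Your argument is correct and is essentially the paper's proof. The paper also sets $E=T+Y^{p-1}s$ (your $X$) and writes $T^p+Z^p=E^p-\sum_{i=1}^{p-1}\binom pi T^iZ^{p-i}$, kills the mixed terms with $v_B(p)\ge p(p-1)t_2$, and then replaces $Z^p=Y^{p(p-1)}s^p$ by $u^{p-1}s^p$ via \eqref{O:E:normpower}, as you do. Your corrected normalization $v_B(u^{p-1}A_0)=p(p-1)M-p^2t$ is the right one, so each piece must reach $pB_0=p(p-1)M+p(p-1)t+p^2(p-1)\delta$ rather than the $p(p-1)M+p^2(p-1)\delta$ of your first pass, and the $X^p$ term meets this bound with equality.
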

\begin{proof}
Let $Z=Y^{p-1}s$ and $E=T+Z$. The identity
\[
 T^p+Z^p=E^p-\sum_{i=1}^{p-1}\binom pi T^iZ^{p-i}
\]
is exact. Lemma~\ref{O:E:trace} gives $v(E)\ge B_0$ and
$v(T),v(Z)\ge R$. The weight has valuation
\[
 v_B(u^{p-1}A_0)=p(p-1)M-p^2t.
\]
Its product with $E^p$ therefore has valuation at least $V$.
Every mixed binomial term has valuation at least
\[
 p(p-1)M-p^2t+v_B(p)+pR\ge V.
\]
In equal characteristic these mixed terms vanish.

It remains to replace $Z^p$ by $u^{p-1}s^p$.
Equation \eqref{O:E:normpower}, applied to $Y\in B_2$, gives
\[
 v_B(u/Y^p-1)\ge p(p-1)t_2,
\]
so
\[
 v_B(u^{p-1}-Y^{p(p-1)})
   \ge p(p-1)M+p(p-1)t_2.
\]
After multiplying by $u^{p-1}A_0s^p$, the bound is again at least
$V$ by \eqref{O:E:H15a}. This proves \eqref{O:E:L1bound}.
\end{proof}

\begin{theorem}[Valuation of $L_A$]\label{O:E:main}
The element $L_A$ defined in \eqref{O:E:weighted} satisfies
\begin{equation}\label{O:E:Abound}
 v_A(L_A)\ge2(p-1)m+(p-1)\delta-\floor{t/p}\ge1+t_2.
\end{equation}
Therefore $\Psi_A(L_A/(2D^2))=1$ for every unit $D\in A$.
\end{theorem}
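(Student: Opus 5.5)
The plan is to compare $L_A$ with the element $L_1$ of Lemma~\ref{O:E:power}. That lemma already gives $v_B(L_1)\ge V$, so it suffices to show that $L_A-L_1$ also has $B$-valuation at least $V$, and then descend from $B$ to $A$. Note that $L_A\in A$. Also $L_1\in B_1$, since $T\in B_1$ and $s\in B_1$.

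\emph{Step 1: the difference.} Write
\[
 L_A-L_1=u^{p-1}A_0\bigl[(n_1T-T^p)+u^{p-1}(n_1s-s^p)\bigr].
\]
Apply \eqref{O:E:normpower} to the cyclic extension $B_1/A$, which has break $t$. This gives $v_1(n_1x-x^p)\ge pv_1(x)+(p-1)t$ for $x=T$ and $x=s$. Lemma~\ref{O:E:trace} gives $v_B(T)\ge R$ and $v_B(Y^{p-1}s)\ge R$; in particular $v_B(s)\ge R-(p-1)M$. Converting to $B$-valuations via $v_B=pv_1$ on $B_1$, we get
\[
 v_B(n_1T-T^p)\ge pR+p(p-1)t.
\]
The weight satisfies $v_B(u^{p-1}A_0)=p(p-1)M-p^2t$. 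The first summand therefore has valuation at least
\[
 2p(p-1)M+p^2(p-1)\delta-pt=V.
\]
For the second summand, the extra factor $u^{p-1}$ contributes $(p-1)M$ to the valuation. The bound $v_B(s)\ge p(p-1)\delta$ then gives exactly the same lower bound $V$. Combined with Lemma~\ref{O:E:power}, this yields $v_B(L_A)\ge V$. The only care needed here is bookkeeping between $v_1$ and $v_B$; no division by $p$ occurs, so the argument is valid in both characteristics.

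\emph{Step 2: descent to $A$.} Since $L_A\in A$, we have $v_B(L_A)=p^2v_A(L_A)$. Hence
\[
 v_A(L_A)\ge\left\lceil \frac{V}{p^2}\right\rceil
 =\left\lceil 2(p-1)m+(p-1)\delta-\frac tp\right\rceil .
\]
Because $p\nmid t$, this ceiling equals $2(p-1)m+(p-1)\delta-\floor{t/p}$, which is the first inequality of \eqref{O:E:Abound}.

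\emph{Step 3: the numerical inequality.} Write $t=ph_0+b_0$ with $1\le b_0\le p-1$. The hypothesis $pm>t$ gives $m\ge h_0+1$. The required inequality is then equivalent to
\[
 2(p-1)m+(p-2)\delta-h_0-1-ph_0-b_0\ge0 .
\]
At $m=h_0+1$ the left side is at least $(p-2)h_0+2p-3-b_0\ge(p-2)h_0+p-2\ge0$. The left side is increasing in $m$ and in $\delta$, so the inequality holds in general.

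\emph{Conclusion.} Since $p$ is odd, $2$ is a unit, and $D$ is a unit by hypothesis. Thus $L_A/(2D^2)\in\pp_A^{1+t_2}$. This ideal is the largest trivial ideal of $\Psi_A$, so $\Psi_A(L_A/(2D^2))=1$.

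I expect the main obstacle to be Step 1, specifically confirming that the norm-versus-$p$th-power error for $s$ reaches $V$ using only the bound $v_B(s)\ge p(p-1)\delta$. If it falls short, I would fall back on the sharper estimate $v_1(s)\ge(p-1)\delta$ from \eqref{O:A:eq:ei}, applied directly in $B_1$.
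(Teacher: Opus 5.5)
Your proposal is correct and is essentially the paper's proof: you bound $L_A-L_1$ by the norm-versus-$p$th-power estimate on $B_1/A$, using $v_1(T)\ge(p-1)(m+\delta)$ and $v_1(s)\ge(p-1)\delta$, then invoke Lemma~\ref{O:E:power}, descend via $\lceil V/p^2\rceil$, and check the integer inequality with $t=ph_0+b_0$. The paper phrases the first step in $v_1$ rather than $v_B$, which makes no difference. There are a few harmless slips:
\begin{itemize}
\item In $v_B$ the factor $u^{p-1}$ contributes $p(p-1)M$, not $(p-1)M$; the latter is its $v_1$-valuation. With the correct value the $s$-term gives exactly $V$, as you claim.
\item At $m=h_0+1$ the coefficient of $h_0$ is $p-3$, not $p-2$. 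The excess is still nonnegative.
\item Your ``fallback'' $v_1(s)\ge(p-1)\delta$ is the same bound as $v_B(s)\ge p(p-1)\delta$, not a sharper one.
\item The identity $\lceil N-t/p\rceil=N-\lfloor t/p\rfloor$ holds without assuming $p\nmid t$.
\end{itemize}
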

\begin{proof}
The symmetric-coefficient bounds give
$v_1(s)\ge(p-1)\delta$ and $v_1(T)\ge(p-1)(m+\delta)$.
Using \eqref{O:E:normpower} for $B_1/A$,
\begin{align*}
 v_1\bigl(u^{p-1}A_0(T^p-J)\bigr)
   &\ge2(p-1)M-t+p(p-1)\delta,\\
 v_1\bigl(u^{2(p-1)}A_0(s^p-H)\bigr)
   &\ge2(p-1)M-t+p(p-1)\delta.
\end{align*}
Both lower bounds are $V/p$. Lemma~\ref{O:E:power} therefore gives
$v_1(L_A)\ge V/p$.
Since $L_A\in A$, divide by $p$ again and use integrality of $v_A$:
\[
 v_A(L_A)\ge\ceil{V/p^2}
  =2(p-1)m+(p-1)\delta-\floor{t/p}.
\]
Write $t=pa+b$, $a\ge0$, $1\le b\le p-1$; then $m\ge a+1$.
The right side minus $1+t_2$ is at least
\[
 (p-3)a+2p-3-b+(p-2)\delta\ge p-2\ge1.
\]
This proves \eqref{O:E:Abound}. Multiplication by a unit denominator
$D^{-2}$ and the rational scalar $1/2$ preserves this ideal bound;
therefore $\Psi_A(L_A/(2D^2))=1$.
\end{proof}

\subsection{The product \texorpdfstring{$Q_1Q_2Q_3Q_4$}{Q1Q2Q3Q4}}\label{O:sec:transition}
\subsubsection*{The stationary data}
Use the notation of Subsections~\ref{O:sec:quadratic} and \ref{O:sec:q1}:
\[
 t=t_1,\quad t_2=t+\delta,\quad t'=t+p\delta,\quad p\nmid t,
 \quad \delta\ge0,\quad p\text{ odd},
\]
\[
 N_i=\Nm_{B/B_i},\ n_i=\Nm_{B_i/A},\ S_i=\Tr_{B/B_i},\
 T_i=\Tr_{B_i/A},\quad \Delta^p-\Delta=a\in B_2,
\]
\[
 b=N_1\Delta,\quad N_2\Delta=a,\quad v_B\Delta=-t,
 \quad s=-E_{p-1}^{B/B_1}(\Delta).
\]
Let $\chi,\varphi$ be the characters of Theorem~\ref{O:M:realize},
with equal norm pullbacks and conductors $1+t+t'$ and $1+t+t_2$.
Put $\theta_1=\chi(\lambda n_1)$ and
$\theta_2=\varphi(\lambda n_2)$, where $a(\lambda)=1+r$,
$r=t_2+m$, $M=pm>t$. The exact choices of Subsection~\ref{O:sec:parameters} give
\[
 u=\alpha/\gamma,\quad W=u^{-1},\quad v=\beta/\alpha,
 \quad v_Au=m,\quad v_Av=\delta,
\]
\[
 n_2w=\gamma/\alpha=W,\quad v_2w=-m,\qquad
 n_1w_1=\gamma/\beta,\quad v_1w_1=-(m+\delta).
\]
Let $\Psi_E(x)=\ee_E(\alpha x)$, trace compatibly. Its largest trivial
ideal on $A$ has exponent $1+t_2$, and those on $B_1,B_2$ have
exponents $1+t',1+t_2$. The largest trivial ideal for
$x\mapsto\Psi_{B_2}(Wx)$ has exponent
\begin{equation}\label{O:T:Gammacond}
 n_\Gamma=1+t_2+pm.
\end{equation}

Set
\[
 C=1+ua,\quad A_0=n_2a,\quad D=1+u^pA_0,
 \quad P_*=n_2C,\quad H=n_1s,
\]
\[
 T=S_1((\Delta/w)^{p-1}),\quad J=n_1T,\quad
 S=p(p-1)\delta,\quad K=1+pt_2.
\]
Thus $C,P_*,D$ are units and $v_AA_0=-t$.
The symmetric-coefficient bounds give
\begin{equation}\label{O:T:sT}
 v_1s\ge(p-1)\delta,\qquad v_1T\ge(p-1)(m+\delta).
\end{equation}
We use their coefficient version as well: for $z\in B_1$,
\begin{equation}\label{O:T:coefficient}
 z=\sum_{i=0}^{p-1}c_i\Delta^i,
 \quad c_i\in B_2\quad\Longrightarrow\quad
 v_2c_i\ge v_1z+it.
\end{equation}
The cyclic symmetric estimate is
\begin{equation}\label{O:T:sym}
 v_F(E_j^{E/F}x)\ge
 \floor{\frac{jv_Ex+(p-1)(b_0+1)}p},\quad1\le j<p,
\end{equation}
for a cyclic extension $E/F$ of break $b_0$. It implies
\begin{equation}\label{O:T:np}
 v_E(\Nm x-x^p)\ge p v_Ex+(p-1)b_0
\end{equation}
by Lemma~\ref{O:A:lem:power}.
All these estimates hold in both characteristics.

Theorem~\ref{O:A:thm:H14} gives
\begin{equation}\label{O:T:H14}
 T_1b-T_2a\in\pp_A^{1+t_2}.
\end{equation}
Recall the factors of Proposition~\ref{O:P:four}:
\begin{align}\label{O:T:factors}
 Q_1&=\chi(A_1)/\varphi(C_2),&
 Q_2&=\lambda(n_1A_1/n_2C_2),\notag\\
 Q_3&=\ee_{B_1}(-\beta w_1)\theta_1(C_1/A_1),&
 Q_4&=\Psi_{B_2}(w),
\end{align}
where $A_1=\gamma+\alpha b$, $C_1=A_1-\beta w_1$ and
$C_2=\gamma-\alpha w+\alpha a$.
By \eqref{O:P:C}, the character $\theta_1$ satisfies
$\theta_1(1-z)=\ee_{B_1}(C_1P(z))$ for
$z\in\pp_{B_1}^{\ceil{(1+t'+pm)/p}}$.
We use Proposition~\ref{O:Q:logentry} and
Theorems~\ref{O:W:final} and~\ref{O:D:quadratic} to evaluate $Q_1$.

\subsubsection*{Weighted denominator bounds}
\begin{lemma}\label{O:T:denominator}
One has
\begin{align}\label{O:T:weightedden}
 u^{p-2}H(P_*^{-2}-D^{-2})&\in\pp_A^{1+t_2},\notag\\
 u^{p-1}A_0J(P_*^{-2}-D^{-2})&\in\pp_A^{1+t_2}.
\end{align}
The first assertion also holds with inverse first powers instead of
inverse squares.
\end{lemma}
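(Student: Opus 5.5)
The plan is to reduce everything to a single valuation bound for $P_*-D$, after which the assertions follow from the bounds on $H$, $J$, $A_0$ and $u$ already available.

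\textbf{Factoring the difference.} Since $P_*$ and $D$ are units,
\[
 P_*^{-2}-D^{-2}=\frac{(D-P_*)(D+P_*)}{P_*^2D^2},
\]
and for first powers $P_*^{-1}-D^{-1}=(D-P_*)/(P_*D)$. In both cases the difference has the same valuation as $P_*-D$, so it suffices to bound $v_A(P_*-D)$ from below.

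\textbf{The bound on $P_*-D$.} Expand $P_*=n_2(1+ua)$ by the elementary symmetric functions of $a$ over $B_2/A$:
\[
 P_*=1+\sum_{i=1}^{p-1}u^iE_i^{B_2/A}(a)+u^pA_0 .
\]
Hence $P_*-D=\sum_{i=1}^{p-1}E_i^{B_2/A}(ua)$. This is exactly the sum estimated in the proof of Proposition~\ref{O:N:Z}, but without the $-u^{p-1}$ term. By \eqref{O:T:sym} with break $t_2$ and $v_2(ua)=pm-t$, each term has valuation at least
\[
 L_0=\floor{\frac{pm-t+(p-1)(t_2+1)}p}.
\]
From the excess computation in that proof, $(p-1)(m+\delta)+L_0\ge 1+t_2$.

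\textbf{First assertion.} Here $v_A(u^{p-2}H)\ge(p-2)m+(p-1)\delta$, using $v_1s\ge(p-1)\delta$ from \eqref{O:T:sT} and $v_A(n_1s)=v_1(s)$. This falls short of the earlier weight by exactly $m$. So I will check
\[
 (p-2)m+(p-1)\delta+L_0\ge1+t_2
\]
directly. Write $t=pa+b$ with $1\le b\le p-1$ and $m\ge a+1$. Then
\[
 L_0\ge m-a+t_2-\left\lceil \frac{t_2+1-(p-1)b'}{p}\right\rceil\text{-type terms},
\]
and a crude estimate suffices: $pm-t\ge p-b\ge1$ gives $L_0\ge\floor{(1+(p-1)(t_2+1))/p}$, which is roughly $t_2+1-\ceil{(t_2+1)/p}$. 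Adding $(p-2)m\ge(p-2)(a+1)$ should cover the deficit $\ceil{(t_2+1)/p}\le a+\delta+1$ when $p\ge3$. The main obstacle is the case $p=3$ with $\delta>0$, where I will use the extra $(p-1)\delta=2\delta$ to absorb the $\delta$ inside the ceiling. The same bound covers the first-power version, because the difference has the same valuation.

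\textbf{Second assertion.} The weight is $u^{p-1}A_0J$. Here $v_A(J)=v_1(T)\ge(p-1)(m+\delta)$ by \eqref{O:T:sT}, and $v_A(A_0)=-t$. So the total valuation is at least
\[
 2(p-1)m+(p-1)\delta-t+L_0 .
\]
Using $pm>t$, this exceeds the first-assertion bound by $pm-t>0$ plus additional positive terms. Hence it also lies in $\pp_A^{1+t_2}$.

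In equal characteristic nothing changes, since only symmetric-function estimates are used.
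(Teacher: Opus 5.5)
Your proposal is correct and follows essentially the paper's route: the paper also bounds $v_A(P_*-D)$ by the $j=1$ symmetric-function estimate (its $\ell$ is your $L_0$), uses that the denominators are units, and then checks the two weighted excesses over $1+t_2$. Your cruder arithmetic --- $pm-t\ge1$ giving $L_0\ge t_2+1-\lceil (t_2+1)/p\rceil$, together with $\lceil (t_2+1)/p\rceil\le a+1+\delta\le(p-2)m+(p-1)\delta$ --- does close the first inequality for every odd $p$, so the stray ``$\lceil\cdot\rceil$-type terms'' line can simply be dropped; and observing that the second weight exceeds the first by exactly $pm-t>0$ is a tidy substitute for the paper's separate digit-by-digit computation of the second excess.
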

\begin{proof}
The norm expansion and \eqref{O:T:sym} give
\[
 v_A(P_*-D)\ge\ell:=m+
 \floor{\frac{(p-2)t+(p-1)(\delta+1)}p}.
\]
Indeed the $j$th intermediate coefficient has bound
$\floor{[j(pm-t)+(p-1)(t_2+1)]/p}$, nondecreasing in $j$.
Write $t=pa+b_0$, $1\le b_0\le p-1$, and $m=a+1+c$, $c\ge0$.
Put $f=\floor{[(p-2)b_0+(p-1)(\delta+1)]/p}$.
Using \eqref{O:T:sT}, the two weighted valuation excesses over $1+t_2$
are bounded below by
\[
 (p-3)a+(p-1)c+p-2+(p-2)\delta-b_0+f,
\]
\[
 (p-3)a+(2p-1)c+2p-2+(p-2)\delta-2b_0+f.
\]
The first is nonnegative for $b_0\le p-2$; if $b_0=p-1$, use
$f\ge p-2$. The second is nonnegative because $b_0\le p-1$.
All denominators are units; differences of their inverse powers have
valuation at least $\ell$. This proves every assertion.
\end{proof}

\subsubsection*{Combining the rational terms in \texorpdfstring{$Q_1$}{Q1} and \texorpdfstring{$Q_2$}{Q2}}
Use the notation of the final formula in Subsections~\ref{O:sec:quadratic} and \ref{O:sec:q1}:
\[
 F_i=\frac{W}{W^p-W+a^p}\left(\frac{W+a}{w}\right)^i.
\]
Put
\[
 k=\frac{uw}{C},\qquad q_0=u^{p-1},\qquad
 F_*=\frac{P_*}{P_*-q_0},\qquad P(X)=\sum_{j=1}^{p-1}\frac{X^j}{j}.
\]
These are exactly the variables in Theorem~\ref{O:N:Q2}; in particular
$v_2(k)=(p-1)m$. Put $Q=1+u^pa^p$ in $B_2$, and put
\[
 \eta=W/w^p,\qquad G=\frac{\eta C^p}{Q-q_0}.
\]
Both $Q-q_0$ and $P_*-q_0$ are units. Direct multiplication gives
\begin{equation}\label{O:T:Fidentity}
 F_i=G k^{p-i},\qquad G k^p=\frac{q_0}{Q-q_0}.
\end{equation}
\begin{lemma}[A congruence for the rational terms]\label{O:T:rational}
With $n_\Gamma$ as in \eqref{O:T:Gammacond},
\begin{equation}\label{O:T:rationalcong}
 -F_{p-1}+\sum_{i=1}^{p-2}\frac{F_i}{i}+F_*P(k)
 \equiv k(1-\eta)\pmod{\PP_2^{n_\Gamma}}.
\end{equation}
\end{lemma}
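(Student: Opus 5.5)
We expand both sides in powers of $k$, using the exact identities \eqref{O:T:Fidentity}, and then show that every discrepancy lies in $\PP_2^{n_\Gamma}$. Recall that $v_2(k)=(p-1)m$ and that $v_2(W)=v_2(w^p)=-pm$, so $\eta$ is a unit. Moreover $\eta=n_2w/w^p$, so \eqref{O:T:np} for $B_2/A$ gives
\[
 v_2(1-\eta)\ge(p-1)t_2.
\]
The target $k(1-\eta)$ therefore has valuation at least $(p-1)(m+t_2)$. Throughout, $n_\Gamma=1+t_2+pm$ is the modulus to be reached.

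\emph{Step 1: rewrite the rational terms.} By \eqref{O:T:Fidentity}, $F_{p-1}=Gk$ and $F_i=Gk^{p-i}$. Put $j=p-i$, so that $j$ runs over $2,\dots,p-1$. Then
\[
 \sum_{i=1}^{p-2}\frac{F_i}{i}=G\sum_{j=2}^{p-1}\frac{k^j}{p-j}.
\]
Since $\frac1{p-j}+\frac1j=\frac p{j(p-j)}$, this equals $-G\bigl(P(k)-k\bigr)$ plus $p$ times integral multiples of $Gk^j$ with $j\ge2$. In mixed characteristic $v_2(p)\ge(p-1)t_2$, so these extra terms have valuation at least $(p-1)t_2+2(p-1)m\ge n_\Gamma$; in equal characteristic they vanish. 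Hence the left side of \eqref{O:T:rationalcong} is congruent to $(F_*-G)P(k)$.

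\emph{Step 2: compare $F_*$ and $G$ with $Q$.} Write $F_*=1+q_0/(P_*-q_0)$, and compare it with $1+q_0/(Q-q_0)=1+Gk^p$. The difference is
\[
 \frac{q_0(Q-P_*)}{(P_*-q_0)(Q-q_0)}.
\]
Here $v_2(q_0)=p(p-1)m$, and we bound $v_2(Q-P_*)$ through $C^p$, where $C=1+ua$ and $v_2(ua)=pm-t>0$. For $P_*=n_2C$ versus $C^p$, use the symmetric estimate \eqref{O:T:sym} on the intermediate coefficients. For $C^p$ versus $Q=1+u^pa^p$, use the binomial terms, each of which carries $p$. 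After multiplication by $P(k)$ the total exceeds $n_\Gamma$.

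Next write
\[
 G=\frac{\eta C^p}{Q-q_0}=\eta\Bigl(1+\frac{q_0}{Q-q_0}\Bigr)\frac{C^p}{Q}.
\]
Modulo the same $C^p/Q$ error, this gives $-G+Gk^p\equiv-\eta+(1-\eta)Gk^p$. The key point is that the $q_0$-order terms cancel to first order, leaving only $(1-\eta)Gk^p$. Multiplied by $P(k)$, this has valuation at least $(p-1)t_2+(p^2-1)m\ge n_\Gamma$.

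\emph{Step 3: conclude.} After Steps 1 and 2, the left side is congruent to $(1-\eta)P(k)$. The terms $k^j(1-\eta)/j$ with $j\ge2$ have valuation at least $2(p-1)m+(p-1)t_2\ge n_\Gamma$, because $pm>t\ge1$ forces $(p-2)m\ge1$. What remains is $k(1-\eta)$, which proves \eqref{O:T:rationalcong}.

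The main obstacle is Step 2, for two reasons. First, the $q_0$-order cross terms must be shown to cancel exactly; estimating them separately is not enough, since $p(p-1)m+(p-1)m$ need not exceed $1+t_2+pm$ when $\delta$ is large compared with $m$. Second, the estimate $v_2(Q-P_*)$ must be strong enough uniformly in $\delta$, including at $p=3$. I would first try to organize Step 2 as an exact identity in $q_0$, $Q$, $P_*$ and $C^p$, so that only the two differences $P_*-C^p$ and $C^p-Q$ have to be estimated. I would then check the worst case $m=\lceil t/p\rceil$ by writing $t=pa+b_0$ and $m=a+1+c$, as in Lemma~\ref{O:T:denominator}.
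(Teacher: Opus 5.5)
Your proposal is correct and follows essentially the paper's route: the same $1/(p-j)+1/j=p/[j(p-j)]$ rewriting reduces the left side to $(F_*-G)P(k)$, and your Step~2 is the paper's exact decomposition \eqref{O:T:FGexact}, whose bracket you simply split into the $q_0(Q-P_*)$ piece and the $(1-\eta)Gk^p$ piece, together with the term $\eta(C^p-Q)/(Q-q_0)$. The one estimate you leave implicit is for that last term, which carries no $q_0$ factor. It needs $(p-1)t_2+pm-t+(p-1)m\ge n_\Gamma$, i.e.\ $(p-3)t+(p-2)\delta+(p-1)m\ge1$, and this holds because $m\ge1$.
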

\begin{proof}
For $2\le j\le p-1$,
$1/(p-j)+1/j=p/[j(p-j)]$.
Thus \eqref{O:T:Fidentity} changes the first two terms on the left into
$-GP(k)$, with errors divisible by $pk^j$ for $j\ge2$.
In mixed characteristic their valuation is at least
$(p-1)t_2+2(p-1)m$, whose excess over $n_\Gamma$ is
$(p-2)(t_2+m)-1\ge0$. In equal characteristic the errors vanish.
The coefficient of $F_{p-1}$ has not been changed independently.

The norm--power estimate and the binomial theorem give
\begin{align*}
 v_2(P_*-C^p)&\ge(p-1)t_2,\\
 v_2(C^p-Q)&\ge(p-1)t_2+pm-t,\\
 v_2(1-\eta)&\ge(p-1)t_2.
\end{align*}
In particular $P_*-Q$ has depth at least $(p-1)t_2$.
The following identity is exact:
\begin{align}\label{O:T:FGexact}
 F_*-G-(1-\eta)={}&q_0\left(
 \frac1{P_*-q_0}-\frac\eta{Q-q_0}\right)
 -\frac{\eta(C^p-Q)}{Q-q_0}.
\end{align}
The bracket has depth at least $(p-1)t_2$, as does $F_*-G$.
For $j\ge2$, the valuation of $(F_*-G)k^j$ is at least
$(p-1)t_2+2(p-1)m\ge n_\Gamma$.
For $j=1$, the first term on the right of \eqref{O:T:FGexact}, multiplied
by $k$, has depth at least
\[
 (p^2-1)m+(p-1)t_2\ge n_\Gamma.
\]
The second has depth at least
$(p-1)t_2+pm-t+(p-1)m$; its excess is
\[
 (p-2)t_2+(p-1)m-t-1
 =(p-3)t+(p-2)\delta+(p-1)m-1\ge0.
\]
This proves \eqref{O:T:rationalcong}.
\end{proof}

\begin{lemma}[The term containing $Q_4$]\label{O:T:theta0}
There is the exact identity in $B_2$
\begin{equation}\label{O:T:Theta0}
 W[k(1-\eta)+w^{-(p-1)}]-w
 =\frac{a}{w^{p-1}C}(1-uw^p)\in\PP_2^{1+t_2}.
\end{equation}
\end{lemma}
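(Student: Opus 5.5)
The identity \eqref{O:T:Theta0} is pure algebra in the definitions $k=uw/C$, $W=u^{-1}$, $\eta=W/w^p$ and $C=1+ua$; the membership then follows from one valuation estimate. First I expand the left side. Since $Wk=w/C$, we get
\[
 Wk(1-\eta)=\frac wC-\frac{W}{Cw^{p-1}},
\]
and so
\[
 W\bigl[k(1-\eta)+w^{-(p-1)}\bigr]-w
 =w\,\frac{1-C}{C}+\frac{W}{w^{p-1}}\cdot\frac{C-1}{C}.
\]
Substituting $C-1=ua$ and using $Wu=1$ rewrites this as
\[
 \frac{ua}{C}\Bigl(\frac{W}{w^{p-1}}-w\Bigr)
 =\frac{a}{C}\bigl(w^{1-p}-uw\bigr)
 =\frac{a}{w^{p-1}C}\,(1-uw^p).
\]
This is exact in both characteristics, because no division by $p$ occurs.

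For the membership in $\PP_2^{1+t_2}$ I use three valuations. First, $v_2(a)=-t$. Second, $C$ is a unit, as recorded in the stationary data. Third, $v_2(w^{1-p})=(p-1)m$. For the remaining factor I apply the cyclic norm--power estimate \eqref{O:T:np} to $w\in B_2$ over $A$, where the break is $t_2$. Since $n_2w=W$, this gives $v_2(W-w^p)\ge -pm+(p-1)t_2$. Multiplying by $u$, which has $v_2(u)=pm$, gives $v_2(1-uw^p)\ge(p-1)t_2$. This is the same bound already used in Lemma~\ref{O:Q:entry}.

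Adding the three contributions, the right side of \eqref{O:T:Theta0} has $v_2$ at least $-t+(p-1)m+(p-1)t_2$. Its excess over $1+t_2$ is
\[
 (p-2)t_2+(p-1)m-t-1=(p-3)t+(p-2)\delta+(p-1)m-1 .
\]
This is nonnegative because $p\ge3$, $\delta\ge0$, and $m\ge1$; the last holds since $pm>t\ge1$. The only step needing care is citing the norm--power estimate with the correct break $t_2$ for $B_2/A$. Everything else is routine, so I expect no real obstacle.
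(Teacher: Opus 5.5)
Your proposal is correct and is essentially the paper's own proof. The identity follows by direct algebra from $Wu=1$, $\eta=1/(uw^p)$ and $C-1=ua$. Membership in $\PP_2^{1+t_2}$ then follows from four facts: the norm--power bound $v_2(1-uw^p)\ge(p-1)t_2$, $v_2(a)=-t$, $v_2(w^{1-p})=(p-1)m$, and $C\in\OO_{B_2}^\times$. These give exactly the paper's excess $(p-3)t+(p-2)\delta+(p-1)m-1\ge0$, and your remark that $m\ge1$ because $pm>t\ge1$ just makes that nonnegativity explicit.
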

\begin{proof}
Use $W u=1$, $\eta=1/(uw^p)$ and $C-1=ua$ to obtain the
identity by subtraction of the two fractions. The norm--power bound
gives $v_2(1-uw^p)\ge(p-1)t_2$, so the right side has valuation
at least $-t+(p-1)m+(p-1)t_2$. Its excess over $1+t_2$ is
$(p-3)t+(p-2)\delta+(p-1)m-1\ge0$.
The value of $\Psi_{B_2}$ on this difference is therefore $1$.
\end{proof}

\subsubsection*{The elements \texorpdfstring{$y_s,y_T\in B_2$}{ys,yT in B2} and their norms}
For $z=\sum_{i=0}^{p-1}z_i\Delta^i$ with $z_i\in B_2$, let
$\pr z=z_0$. Define
\begin{equation}\label{O:T:XandY}
 X_s=\frac{s}{w^{p-2}C^2},\quad X_T=\frac{aT}{w^{p-1}C^2},
 \qquad y_s=\pr X_s,\quad y_T=\pr X_T.
\end{equation}
Thus $X_s,X_T\in B$ and $y_s,y_T\in B_2$.

\begin{lemma}[Constant coefficient and its norm]\label{O:T:projection}
Let $z=xw_0$ with $x\in B_2$, $w_0\in B_1$. Suppose
\begin{equation}\label{O:T:projectdepth}
 h:=v_Bz\ge1+t',\qquad h\ge1+t_2+(p-3)t.
\end{equation}
Then $y=\pr z\in B_2$ satisfies
\begin{equation}\label{O:T:projectcong}
 z\equiv y\pmod{\PP_B^K},\qquad
 N_1z\equiv n_2y\pmod{\PP_1^K}.
\end{equation}
If $z\ne0$, then $v_By=h$.
\end{lemma}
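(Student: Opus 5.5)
The argument closely follows the proof of Lemma~\ref{O:A:lem:H17}; the only change is that $y$ is now the constant coefficient itself rather than an arbitrary element of $B_2$. First expand $z$ over $B_2$. Since $z=xw_0$ with $x\in B_2$, we may write $w_0=\sum_{i=0}^{p-1}c_i\Delta^i$ with $c_i\in B_2$. Then $z=\sum_i y_i\Delta^i$ with $y_i=xc_i$, so $y=\pr z=y_0$. Lemma~\ref{O:A:lem:coeff}, in the form \eqref{O:T:coefficient}, gives $v_2c_i\ge v_1w_0+it$, and hence
\[
 v_B(y_i\Delta^i)\ge h+(p-1)it.
\]
Because $p\nmid t$, the valuations of the terms $y_i\Delta^i$ are distinct modulo $p$. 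Consequently $v_B(y_0)=h$ whenever $z\ne0$, which is the last assertion of the lemma.

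Next prove the additive congruence. We need each nonconstant term to be deep enough, namely $h+(p-1)t\ge K=1+pt_2$. I expect this to fail in general: for instance it fails when $h=1+t'$ and $\delta$ is large. So the weak bound from \eqref{O:T:coefficient} alone is not sufficient. The first remedy I would try is the sharper form of coefficient extraction that is available when the factor $w_0$ lies in $B_1$. The conjugating automorphism of $B/B_2$ restricts to $B_1$ with lower break $t'$ rather than $t$, at least once the $\Delta^0$-component of $w_0$ has been removed. Rerunning the induction in Lemma~\ref{O:A:lem:coeff} with the break $t'$ should give $v_2c_i\ge v_1w_0+it'/p$ in suitable normalization, that is, $v_B(y_i\Delta^i)\ge h+(p-1)t'$ for $i\ge1$. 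Combined with $h\ge1+t'$, this would put these terms in $\PP_B^K$. If that route fails, the fallback is to use the hypothesis $h\ge1+t'$ directly through the ramification inequality for $\rho\in\Gal(B/B_1)$ acting on $z$, in order to bound the non-$B_2$ part of $z$. I regard this step as the main obstacle.

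Once $z-y\in\PP_B^K$ is established, the norm congruence follows from Lemma~\ref{O:A:lem:H17}, applied with the element $y=\pr z\in B_2$. Its hypotheses are exactly $z-y\in\PP_B^{K_0}$ with $K_0=K$, together with $h\ge1+t_2+(p-3)t$, which is the second inequality in \eqref{O:T:projectdepth}. The lemma then gives $N_1z-n_2y\in\PP_{B_1}^{K}$. The case $z=0$ is trivial. Every estimate used here holds in both characteristics, and the terms divisible by $p$ vanish in equal characteristic.
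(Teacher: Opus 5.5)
Your plan is the paper's: expand $w_0$ over $B_2$ via \eqref{O:T:coefficient}, take $y=y_0$, then invoke Lemma~\ref{O:A:lem:H17} with $K_0=K$. As written, however, the additive congruence $z\equiv y\pmod{\PP_B^K}$ is left unproved, because of an arithmetic slip.

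The inequality you need, $h+(p-1)t\ge K$, does not fail. Since $t'=t+p\delta$ and $t_2=t+\delta$,
\[
 1+t'+(p-1)t=1+pt+p\delta=1+pt_2=K.
\]
So the hypothesis $h\ge1+t'$ gives $v_B(y_i\Delta^i)\ge h+(p-1)it\ge K$ for every $i\ge1$ and every $\delta\ge0$. There is no room to spare when $h=1+t'$ and $i=1$, but no failure either. This one line is exactly the paper's proof of the first congruence. The same bound shows that the nonconstant terms are strictly deeper than $h$, which gives $v_By=h$.

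The remedy you propose should be discarded, not merely skipped. The estimate $v_2c_i\ge v_1w_0+it$ is sharp for $w_0\in B_1$, so no version with $t'$ in place of $t$ can hold. For instance, take $w_0=N_1\Delta$, multiplied by a large power of a uniformizer of $A$ so that \eqref{O:T:projectdepth} holds. By Lemma~\ref{O:A:lem:bcoeff}, its $\Delta$-coefficient is (that power times) $1-s_0+E$. Here $v_2s_0\ge v_1s\ge(p-1)\delta$, so this coefficient is a unit multiple when $\delta>0$. Hence the $\Delta$-term has $v_B$ exactly $v_Bw_0+(p-1)t$, and removing the $\Delta^0$-component does not change this coefficient.

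With the identity above inserted, your final paragraph completes the proof exactly as in the paper: apply Lemma~\ref{O:A:lem:H17} to $z=xw_0$ and $y=y_0$, using the second inequality in \eqref{O:T:projectdepth}.
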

\begin{proof}
The zero case is immediate. Write $z=\sum y_i\Delta^i$ using
\eqref{O:T:coefficient} for $w_0$. It gives
\[
 v_2y_i\ge h/p+it,\qquad
 v_B(y_i\Delta^i)\ge h+i(p-1)t.
\]
Thus all nonconstant terms are strictly deeper than $h$ and the
constant term has valuation $h$. Since
$1+t'+(p-1)t=K$, all nonconstant terms belong to $\PP_B^K$.
This proves the first congruence, with $y=y_0$.

The second bound in \eqref{O:T:projectdepth} and the congruence just
proved are precisely the hypotheses of Lemma~\ref{O:A:lem:H17}, with
$K_0=K$. Applying that lemma to $z=xw_0$ and $y=y_0$ proves the
norm congruence in \eqref{O:T:projectcong}.
\end{proof}

\begin{lemma}[Congruences for $y_s,y_T$ and their norms]\label{O:T:representatives}
The elements in \eqref{O:T:XandY} satisfy
\begin{align}\label{O:T:reps}
 X_s&\equiv y_s\pmod{\PP_B^K},&
 X_T&\equiv y_T\pmod{\PP_B^K},\notag\\
 n_2y_s&\equiv u^{p-2}H/D^2\pmod{\pp_A^{1+t_2}},&
 n_2y_T&\equiv u^{p-1}A_0J/D^2\pmod{\pp_A^{1+t_2}}.
\end{align}
Moreover $y_s,y_T\in\PP_2^{c_2}$, where
$c_2=\ceil{(1+t_2)/p}$.
\end{lemma}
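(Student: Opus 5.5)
The plan is to apply Lemma~\ref{O:T:projection} twice, once to $X_s$ and once to $X_T$, each written as a product $xw_0$ with $x\in B_2$ and $w_0\in B_1$. For $X_s$ take $x=w^{-(p-2)}C^{-2}\in B_2$ and $w_0=s\in B_1$. For $X_T$ take $x=a\,w^{-(p-1)}C^{-2}\in B_2$ and $w_0=T\in B_1$. Here we use that $T=S_1((\Delta/w)^{p-1})$ lies in $B_1$, and that $w,a,C$ lie in $B_2$.

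\textbf{Depth hypotheses.} We then check \eqref{O:T:projectdepth}. By \eqref{O:T:sT}, $v_B(X_s)\ge p(p-1)\delta+(p-2)M$ and $v_B(X_T)\ge -pt+p(p-1)(m+\delta)+(p-1)M$. Write $t=pa+b_0$ with $m\ge a+1$.
\begin{itemize}
\item The first bound exceeds $1+t'=1+t+p\delta$ because $(p-2)pm>t$ for $p\ge3$, except possibly in the boundary case $p=3$; there one uses $pm\ge t+1$ and checks the inequality directly.
\item The second bound is $2(p-1)M-pt+S$. It exceeds $1+t'$ because $2(p-1)(t+1)-pt\ge t+1$.
\item The second condition of \eqref{O:T:projectdepth}, $h\ge1+t_2+(p-3)t$, is verified the same way.
\end{itemize}
I expect this inequality bookkeeping, especially for $X_s$ when $p=3$ and $\delta=0$, to be the main obstacle. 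The fallback, if the depth $1+t'$ fails for small $m$, is to replace $\pr$ by the exact coefficient extraction of Lemma~\ref{O:A:lem:coeff} and estimate the nonconstant terms directly. Since $p\nmid t$ separates their valuations, each nonconstant term is at least $(p-1)t$ deeper than $h$, which may suffice with $K=1+t'+(p-1)t$. In either case the output is $X\equiv y\pmod{\PP_B^K}$ and $N_1(xw_0)\equiv n_2y\pmod{\PP_1^K}$.

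\textbf{Norms.} Next we compute $N_1X_s$ and $N_1X_T$. Since $N_1$ restricted to $B_2$ is $n_2$ (both equal the norm of $B/B_1$ on elements of $B_2$), and $N_1$ on $B_1$ is the $p$th power, we get
$$N_1X_s=s^p\,n_2(w)^{-(p-2)}\,P_*^{-2}=u^{p-2}s^pP_*^{-2},\qquad N_1X_T=A_0u^{p-1}T^pP_*^{-2}.$$
To reach the stated congruences in $A$, we combine two replacements:
\begin{itemize}
\item Replace $s^p$ by $H=n_1s$ and $T^p$ by $J=n_1T$ using \eqref{O:T:np} for $B_1/A$, with the valuation estimates of the type already done in Theorem~\ref{O:E:main}.
\item Replace $P_*^{-2}$ by $D^{-2}$ using Lemma~\ref{O:T:denominator}.
\end{itemize}
The congruence modulo $\PP_1^K$ then descends to $A$. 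Both sides lie in $A$ after these replacements, and $\PP_1^K\cap A=\pp_A^{\lceil K/p\rceil}=\pp_A^{1+t_2}$ since $K=1+pt_2$.

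\textbf{Depth of $y_s,y_T$.} Finally, $v_By=h$ when $y\ne0$, and $h\ge1+t'\ge p\,c_2$. This gives $v_2(y)\ge c_2$, so $y_s,y_T\in\PP_2^{c_2}$.
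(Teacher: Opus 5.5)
Your proposal follows the paper's proof essentially step for step. You apply Lemma~\ref{O:T:projection} to $X_s=s\cdot w^{-(p-2)}C^{-2}$ and $X_T=T\cdot a\,w^{-(p-1)}C^{-2}$; your depth checks are correct, so the fallback is not needed. You then use $N_1X_s=u^{p-2}s^pP_*^{-2}$ and $N_1X_T=u^{p-1}A_0T^pP_*^{-2}$, replace $s^p$ by $H$ and $T^p$ by $J$ via \eqref{O:T:np} on $B_1/A$, descend by $A\cap\PP_1^K=\pp_A^{1+t_2}$, and finish with Lemma~\ref{O:T:denominator}, exactly as the paper does.

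One small slip is in the last step. The inequality $1+t'\ge pc_2$ is false in general: for $p=3$, $t=1$, $\delta=0$ it reads $2\ge3$. The conclusion still holds, because $v_By=h\ge1+t'\ge1+t_2$ and $v_By=p\,v_2y$ give $v_2y\ge\lceil(1+t_2)/p\rceil=c_2$. This is how the paper concludes.
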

\begin{proof}
By \eqref{O:T:sT},
\[
 h_s:=v_BX_s\ge(p-2)M+S,\qquad
 h_T:=v_BX_T\ge2(p-1)M-pt+S.
\]
At $M\ge t+1$, their excesses over $1+t'$ are at least
\[
 (p-3)(t+1)+p(p-2)\delta,\qquad
 (p-3)t+2p-3+p(p-2)\delta,
\]
and their excesses over $1+t_2+(p-3)t$ are at least
\[
 p-3+[p(p-1)-1]\delta,\qquad
 2p-3+[p(p-1)-1]\delta.
\]
All are nonnegative, including $p=3,\delta=0$.
Lemma~\ref{O:T:projection} therefore applies to both products and proves
the first two assertions of \eqref{O:T:reps}. It also gives
$v_By_s,v_By_T\ge1+t_2$, hence the stated depth $c_2$ in $B_2$.
The norm part of that lemma gives
\[
 n_2y_s\equiv\frac{u^{p-2}s^p}{P_*^2},\qquad
 n_2y_T\equiv\frac{u^{p-1}A_0T^p}{P_*^2}
 \pmod{\PP_1^K}.
\]
Here $N_1$ fixes $B_1$ with norm the $p$th power, and its restriction
to $B_2$ is $n_2$.
By \eqref{O:T:np} on $B_1/A$,
\begin{align*}
 v_1\bigl(u^{p-2}(s^p-H)\bigr)
 &\ge(p-2)M+p(p-1)\delta+(p-1)t,\\
 v_1\bigl(u^{p-1}A_0(T^p-J)\bigr)
 &\ge2(p-1)M+p(p-1)\delta-t.
\end{align*}
The first excess over $K$ is at least
$(p-3)(t+1)+p(p-2)\delta$, and the second at least
$(p-3)t+2p-3+p(p-2)\delta$.
Thus the two powers may be replaced by $H,J$. Now both sides lie
in $A$, so $A\cap\PP_1^K=\pp_A^{1+t_2}$ applies.
Lemma~\ref{O:T:denominator} replaces $P_*^{-2}$ by $D^{-2}$ at precisely
that weighted modulus, proving the remaining assertions.
\end{proof}

\begin{lemma}[The values at $y_s/2$ and $-y_T/2$]\label{O:T:actualnorm}
One has
\begin{align}\label{O:T:normhalves}
 \Psi_{B_2}(y_s/2)&=\Psi_A(-u^{p-2}H/(2D^2)),\notag\\
 \Psi_{B_2}(-y_T/2)&=\Psi_A(u^{p-1}A_0J/(2D^2)).
\end{align}
\end{lemma}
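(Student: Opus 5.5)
The plan is to combine the conversion identity \eqref{O:M:conversion} with the norm congruences of Lemma~\ref{O:T:representatives}. Recall that \eqref{O:M:conversion} says $\Psi_A(T_2v+n_2v)=1$ for $v\in\PP_{B_2}^{c}$ with $c=\ceil{(t_2+1)/p}$. Trace compatibility gives $\Psi_{B_2}(v)=\Psi_A(T_2v)$, so for such $v$ we obtain
\[
 \Psi_{B_2}(v)=\Psi_A(-n_2v).
\]
We will apply this with $v=y_s/2$ and with $v=-y_T/2$.

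\textbf{Depth check.} Lemma~\ref{O:T:representatives} gives $y_s,y_T\in\PP_2^{c_2}$, where $c_2=c$. Multiplying by the unit $\pm1/2$ (recall $p$ is odd) does not change depth, so both values of $v$ lie in the required ideal.

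\textbf{Norm computation.} Since $p$ is odd and $1/2\in A$, we have
\[
 n_2(y_s/2)=n_2(y_s)/2^p,\qquad n_2(-y_T/2)=-n_2(y_T)/2^p.
\]
Here $(-1)^p=-1$ supplies the sign. Consequently
\[
 \Psi_{B_2}(y_s/2)=\Psi_A\bigl(-n_2(y_s)/2^p\bigr),\qquad
 \Psi_{B_2}(-y_T/2)=\Psi_A\bigl(n_2(y_T)/2^p\bigr).
\]

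\textbf{Main obstacle: the factor $2^{-p}$ versus the required $1/2$.} This mismatch is resolved using the trivial ideal of $\Psi_A$, which is $\pp_A^{1+t_2}$. We write
\[
 2^{-p}-2^{-1}=2^{-p}(1-2^{p-1}).
\]
By Fermat, $1-2^{p-1}\in p\OO_A$. In mixed characteristic, $v_A(p)\ge(p-1)t_2$ by \eqref{O:A:eq:pbound}. It therefore suffices that $v_A(n_2y_s)$ and $v_A(n_2y_T)$ are $\ge 1+t_2-(p-1)t_2$, which holds because both norms are integral: $v_By\ge1+t_2$. Since $(p-1)t_2\ge1+t_2$ unless $p=3$ and $t_2=1$, we must also treat that case. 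There $v_A(n_2 y)=v_2(y)\ge c_2=1$, and $v_A(p)\ge 2t_2=2=1+t_2$, so the error again lies in $\pp_A^{1+t_2}$. In equal characteristic, $2^{p-1}=1$ exactly.

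\textbf{Conclusion.} Finally, substitute the congruences $n_2y_s\equiv u^{p-2}H/D^2$ and $n_2y_T\equiv u^{p-1}A_0J/D^2$ modulo $\pp_A^{1+t_2}$ from Lemma~\ref{O:T:representatives}. Division by $2$ is by a unit, so it preserves this ideal, on which $\Psi_A$ is trivial. This yields both identities in \eqref{O:T:normhalves}.
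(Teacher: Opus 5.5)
Your route differs from the paper's in the halving step. The one estimate that step needs is wrong as written, but it can be repaired.

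The paper first proves $\Psi_{B_2}(y)\Psi_A(n_2y)=1$ for $y\in\PP_2^{c_2}$. It then observes that $\Psi_{B_2}(y/2)\Psi_A(n_2y/2)$ has square $1$ and $p$-power order, since values of additive characters are $p$-primary. Because $p$ is odd, this product is $1$. This never produces $2^{-p}$ and needs nothing about $v_A(p)$.

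You instead apply the conversion identity to $\pm y/2$, obtain $n_2(y)/2^p$, and remove $2^{-p}-2^{-1}=2^{-p}(1-2^{p-1})$ by Fermat. That is a legitimate alternative, but your valuation bound is off. \eqref{O:A:eq:pbound} bounds $V=v_B(p)$, and $v_B|_A=p^2v_A$, so it gives only $v_A(p)\ge(p-1)t_2/p$, not $v_A(p)\ge(p-1)t_2$. With the correct bound, the depth $1+t_2-v_A(p)$ you must require of $n_2y$ can be as large as $c_2$. So integrality of the norms does not close the estimate. Your ``exceptional'' case $p=3$, $t_2=1$ is not exceptional, since there $(p-1)t_2=1+t_2$. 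The bound $v_A(p)\ge2$ you quote for it has the same normalization error.

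The repair uses the depth you already recorded. Since $p=\Tr_{B_2/A}(1)$, \eqref{O:A:eq:trace} gives
\[
 v_A(p)\ge\floor{(p-1)(t_2+1)/p}=1+t_2-c_2,
\]
which is the inequality $e\ge e_0$ in the proof of Lemma~\ref{O:I:normlog}. Also $v_A(n_2y)=v_2(y)\ge c_2$ for $y=y_s,y_T$, by Lemma~\ref{O:T:representatives}. These two bounds add up to exactly $1+t_2$. Hence $n_2(y)\,2^{-p}(1-2^{p-1})\in\pp_A^{1+t_2}$, and the rest of your argument goes through.

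Citing \eqref{O:M:conversion} is fine once you note that, by Lemma~\ref{O:P:preserve}, the adjusted $\alpha$ is still a coefficient of the norm character of $B_2/A$. That is all the identity uses, and it is why the paper re-derives it from that formula.

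The trade-off is that your route needs this sharp valuation count, with no margin in the worst case. The paper's square-and-$p$-primary argument makes the halving automatic.
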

\begin{proof}
The norm character of $B_2/A$ has coefficient $\alpha$ in \eqref{O:P:fullP}.
For any $y\in\PP_2^{c_2}$ choose $x$ with $P(x)=y$, using
bijectivity of $P$ on that positive ideal. The cyclic coefficient
estimate gives $n_2(1-x)\in U_A^{c_2}$, where that formula applies.
Its value on this norm is $1$. The norm--logarithm identity of
Lemma~\ref{O:I:normlog}, with break $t_2$ and $h=0$, gives at modulus $\pp_A^{1+t_2}$
\[
 P(1-n_2(1-x))\equiv T_2P(x)+n_2(P(x)).
\]
Consequently $\Psi_{B_2}(y)\Psi_A(n_2y)=1$, as in
Corollary~\ref{O:I:conversion}.
For the half, the product
$\Psi_{B_2}(y/2)\Psi_A(n_2y/2)$ has square $1$ and has $p$-primary
order. Indeed the image of a continuous additive character of a
local field of residue characteristic $p$ is $p$-primary: sufficiently
large $p$-powers of each argument lie in an open trivial ideal.
As $p$ is odd, this product is $1$.
Apply it to $y_s,y_T$ and use Lemma~\ref{O:T:representatives}.
The errors after multiplication by $1/2$ still lie in the whole
trivial ideal $\pp_A^{1+t_2}$.
\end{proof}

\begin{proposition}[Evaluation of $Q_1Q_2Q_4$]\label{O:T:Q124}
One has
\begin{equation}\label{O:T:Q124phase}
 (Q_1Q_2Q_4)^{-1}
 =\Psi_A\left(
 Wu^{p-1}\frac HD-\frac{u^{p-2}H}{2D^2}
                   +\frac{u^{p-1}A_0J}{2D^2}\right).
\end{equation}
\end{proposition}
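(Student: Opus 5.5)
We multiply the three phase formulas already obtained and reduce the resulting additive exponent to the three terms on the right of \eqref{O:T:Q124phase}. First, by the final formula of Subsection~\ref{O:sec:q1},
\[
 Q_1^{-1}=\Psi_2\bigl(R+(y_s-y_T)/2\bigr),\qquad
 R=W\Bigl(-F_{p-1}+\sum_{i=1}^{p-2}\frac{F_i}{i}+w^{1-p}\Bigr).
\]
Second, by Theorem~\ref{O:N:Q2},
\[
 Q_2^{-1}=\Psi_A\bigl(Wu^{p-1}H/D_0\bigr)\,\Psi_2\bigl(WF_*P(k)\bigr).
\]
Here $D_0=1+u^pA_0=D$, so the first factor of $Q_2^{-1}$ is already the first term $\Psi_A(Wu^{p-1}H/D)$ of the target. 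Third, $Q_4^{-1}=\Psi_{B_2}(-w)$ by \eqref{O:T:factors}. Hence the $B_2$-arguments to be combined are $R+WF_*P(k)-w$ together with $(y_s-y_T)/2$.

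For the rational part, we factor $W$ out of $R+WF_*P(k)$ and apply Lemma~\ref{O:T:rational}:
\[
 -F_{p-1}+\sum_{i\le p-2}\frac{F_i}{i}+F_*P(k)\equiv k(1-\eta)\pmod{\PP_2^{n_\Gamma}}.
\]
Since $n_\Gamma$ is by definition the conductor exponent of $x\mapsto\Psi_{B_2}(Wx)$, multiplying by $W$ leaves the $\Psi_2$-value unchanged. The rational argument therefore becomes $W[k(1-\eta)+w^{-(p-1)}]-w$. Lemma~\ref{O:T:theta0} shows this difference lies in $\PP_2^{1+t_2}$, the trivial ideal of $\Psi_{B_2}$. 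Consequently the whole rational part of $Q_1Q_2Q_4$ contributes $1$.

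Only $\Psi_{B_2}(y_s/2)\Psi_{B_2}(-y_T/2)$ remains. Lemma~\ref{O:T:actualnorm} converts these two factors into
\[
 \Psi_A\bigl(-u^{p-2}H/(2D^2)\bigr)\quad\text{and}\quad \Psi_A\bigl(u^{p-1}A_0J/(2D^2)\bigr).
\]
Multiplying with the $\Psi_A(Wu^{p-1}H/D)$ factor from $Q_2^{-1}$ gives \eqref{O:T:Q124phase}.

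The main obstacle is the bookkeeping of moduli. The congruence of Lemma~\ref{O:T:rational} is stated in $\PP_2^{n_\Gamma}$ for the argument before multiplication by $W$, so we must verify that the factor $W$ is absorbed exactly by $n_\Gamma$, and that $P(k)$ in Theorem~\ref{O:N:Q2} uses the same $k$, $F_*$ and $\Psi_2$ normalization as in Lemma~\ref{O:T:rational}. We must also confirm that the sign conventions of the $Q_1^{-1}$ formula (the $+y_s/2$ and $-y_T/2$ terms) match those in Lemma~\ref{O:T:actualnorm}. The substitution of the $y$'s for $X_s,X_T$ was already justified via the representatives in Lemma~\ref{O:T:representatives}, so no further estimate should be needed.
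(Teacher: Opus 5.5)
Your proposal is correct and follows the paper's proof essentially step for step. You start from $Q_1^{-1}=\Psi_{B_2}\bigl(R+(y_s-y_T)/2\bigr)$ and combine its rational part with the second factor of Theorem~\ref{O:N:Q2} and with $Q_4^{-1}=\Psi_{B_2}(-w)$, using Lemmas~\ref{O:T:rational} and~\ref{O:T:theta0}; you then convert the two half-terms by Lemma~\ref{O:T:actualnorm}. The only difference is that the paper re-derives the $Q_1^{-1}$ formula inside the proof. It redoes the coefficient replacements $\frac{p-1}{p-2}\to\frac12$ and $\frac{1}{2(p-1)}\to-\frac12$ and the passage from $X_s,X_T$ to $y_s,y_T$, whereas you cite that formula from the end of Subsection~\ref{O:sec:q1}, where it is already stated.
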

\begin{proof}
Proposition~\ref{O:Q:logentry} and Theorems~\ref{O:W:final}
and~\ref{O:D:quadratic} express the argument of $\Psi_{B_2}$ in
$Q_1^{-1}$ modulo $\PP_B^K$ as
\[
 -WF_{p-1}+W\sum_{i=1}^{p-2}F_i/i+W/w^{p-1}
       +\frac{p-1}{p-2}X_s+\frac{1}{2(p-1)}X_T.
\]
Replace $X_s,X_T$ by $y_s,y_T$ using Lemma~\ref{O:T:representatives}.
The resulting expression belongs to $B_2$, so its difference from
the argument in \eqref{O:Q:logentry-formula} lies in
$B_2\cap\PP_B^K=\PP_2^{1+t_2}$ and has trivial $\Psi_{B_2}$ phase.

The two coefficient replacements are
\[
 \frac{p-1}{p-2}-\frac12=\frac p{2(p-2)},\qquad
 \frac1{2(p-1)}+\frac12=\frac p{2(p-1)}.
\]
Their errors have $B$-valuations at least
$v_B(p)+h_s$ and $v_B(p)+h_T$, respectively, and hence at least $K$.
They vanish identically in equal characteristic. Thus the surviving
exceptional phase is $\Psi_{B_2}((y_s-y_T)/2)$.
Theorem~\ref{O:N:Q2} and Lemma~\ref{O:T:rational} combine the two rational
sums modulo $\PP_2^{n_\Gamma}$. Including $W/w^{p-1}$ and $Q_4^{-1}=\Psi_{B_2}(-w)$,
Lemma~\ref{O:T:theta0} shows that the remaining additive factor is one.
The product is therefore exactly
\[
 \Psi_A(Wu^{p-1}H/D)\,\Psi_{B_2}((y_s-y_T)/2).
\]
Finally \eqref{O:T:normhalves} gives \eqref{O:T:Q124phase}.
\end{proof}

\subsubsection*{The third factor \texorpdfstring{$Q_3$}{Q3}}
Put $h_0=m+\delta$ and $z=\beta w_1/A_1$. Then
\[
 v_1z=(p-1)h_0,\qquad
 n_1^{\mathrm{char}}=1+t'+pm=1+t+ph_0
\]
is the conductor of $\theta_1$.
\begin{lemma}[Evaluation of $Q_3$]\label{O:T:Q3}
One has
\begin{equation}\label{O:T:Q3phase}
 Q_3^{-1}=\Psi_A\left(-\frac W2\frac{(vu)^{p-1}}D\right).
\end{equation}
\end{lemma}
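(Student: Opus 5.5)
The plan is to expand $\theta_1(C_1/A_1)$ with the truncated logarithm, cancel the linear term against $\ee_{B_1}(-\beta w_1)$, and then push the surviving quadratic term down to $A$ with the norm character $\tau_1$ of $B_1/A$.

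\emph{Step 1 (expansion).} By definition $C_1/A_1=1-z$ with $z=\beta w_1/A_1$ and $v_1z=(p-1)h_0$. We first check $(p-1)h_0\ge\lceil(1+t+ph_0)/p\rceil$, which holds since $h_0\ge m\ge\lceil t/p\rceil$. Then the formula $\theta_1(1-z)=\ee_{B_1}(C_1P(z))$ recalled after \eqref{O:T:factors} applies to $z$. Write
\[
 C_1P(z)=(A_1-\beta w_1)\bigl(z+z^2/2+\cdots\bigr),\qquad A_1z=\beta w_1 .
\]
Collecting terms, this gives
\[
 C_1P(z)=\beta w_1-\frac{(\beta w_1)^2}{2A_1}+\mathcal E ,
\]
where $\mathcal E$ collects $-\beta w_1z^2/2$ and the terms of degree at least three in $z$, each multiplied by $C_1$.

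The leading term $\beta w_1$ cancels $\ee_{B_1}(-\beta w_1)$. The error $\mathcal E$ has valuation at least $v_1(\gamma)+3(p-1)h_0$. We check that this is at least the trivial-ideal exponent of $\ee_{B_1}$, using $v_1\gamma=-p(d+1+r)$, the different of $B_1/A$, and $pm>t$. The check is routine and uses the same kind of excess computation as in Lemma~\ref{O:T:rational}. Hence
\[
 Q_3^{-1}=\ee_{B_1}\!\left(\frac{(\beta w_1)^2}{2A_1}\right).
\]

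\emph{Step 2 (descent to $A$).} Write $A_1=\gamma(1+ub)$ and $(\beta w_1)^2/A_1=\beta y$ with
\[
 y=\frac{\beta w_1^2}{\gamma(1+ub)},\qquad v_1y=(p-2)(m+\delta).
\]
Since $v_1y\ge\lceil(t+1)/p\rceil$, the norm-character conversion for $\tau_1$ applies. This is the analogue of Corollary~\ref{O:I:conversion} for $B_1/A$ with coefficient $\beta$, and it gives $\ee_{B_1}(\beta y)\,\ee_A(\beta n_1y)=1$. To handle the factor $1/2$, we argue exactly as in Lemma~\ref{O:T:actualnorm}: the product $\ee_{B_1}(\beta y/2)\,\ee_A(\beta n_1y/2)$ has square $1$ and $p$-primary order, so it equals $1$. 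Therefore
\[
 Q_3^{-1}=\ee_A\!\left(-\frac{\beta\,n_1y}{2}\right).
\]
Using $n_1w_1=\gamma/\beta$, we compute
\[
 \beta\,n_1y=\frac{\beta^{p-1}\gamma^{2-p}}{n_1(1+ub)} .
\]
Rewriting with $\Psi_A=\ee_A(\alpha\,\cdot)$, $W=\gamma/\alpha$, $v=\beta/\alpha$ and $u=\alpha/\gamma$, this is $\alpha\,W(vu)^{p-1}/n_1(1+ub)$.

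\emph{Step 3 (replacing $n_1(1+ub)$ by $D$; the main obstacle).} It remains to show that
\[
 W(vu)^{p-1}\Bigl(n_1(1+ub)^{-1}-D^{-1}\Bigr)\in\pp_A^{1+t_2}.
\]
The weight $W(vu)^{p-1}$ has $v_A$-valuation $(p-1)(m+\delta)-m$, which is fairly small. We write $n_1(1+ub)=ZY$ with $Y=P_*-q_0$, and use three facts. Proposition~\ref{O:N:Z} gives $Z-1\equiv u^{p-1}H/D_0$, which has valuation at least $(p-1)(m+\delta)$. Lemma~\ref{O:T:denominator} gives $P_*-D$ of valuation at least $\ell$. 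Finally $q_0=u^{p-1}$ has valuation $(p-1)m$. Each weighted error must then be checked to have valuation at least $1+t_2$. For the $q_0$ term the excess is $(p-1)(m+\delta)-m+(p-1)m-1-t-\delta\ge0$, using $pm>t$. The other two terms are checked by the $t=pa+b_0$ case analysis of Lemma~\ref{O:T:denominator}.

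I expect this weighted-denominator estimate to be the delicate step, particularly at $p=3$, $\delta=0$, $m=\lceil t/3\rceil$, where the excesses are smallest. I would first try to reuse the bound $\ell$ from Lemma~\ref{O:T:denominator} directly. If that falls short at the boundary, I would instead use the exact $Z$-congruence together with the trace--norm estimate \eqref{O:T:H14}.
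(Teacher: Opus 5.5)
Your Steps 1 and 2 are the paper's proof. The routine check in Step 1 is the inequality $3(p-1)h_0\ge n_1^{\mathrm{char}}$, which leaves $Q_3^{-1}=\ee_{B_1}\bigl((\beta w_1)^2/(2A_1)\bigr)$. In Step 2 the same element $y$ is pushed down to $A$ by the $\tau_1$-conversion together with the halving argument of Lemma~\ref{O:T:actualnorm}.

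Step 3 is where you diverge, and your route is correct but heavier than it needs to be. The paper never factors the denominator. The top term of the norm expansion is $n_1(ub)=u^pA_0=D-1$, so $n_1(1+ub)-D$ is simply the sum of the intermediate symmetric functions $E_j^{B_1/A}(ub)$, $1\le j\le p-1$. Then \eqref{O:T:sym} for $B_1/A$ (break $t$, $v_1(ub)=pm-t>0$) gives at once
\[
 v_A\bigl(n_1(1+ub)-D\bigr)\ge m+\lfloor((p-2)t+p-1)/p\rfloor .
\]
A single $t=pa+b_0$ check then finishes, with equality exactly at $p=3$, $b_0=2$, $\delta=0$.

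Your factorization $n_1(1+ub)=Z(P_*-q_0)$ gives
\[
 n_1(1+ub)-D=(P_*-D)-q_0+(Z-1)(P_*-q_0).
\]
Multiply by the weight $W(vu)^{p-1}$, of valuation $(p-2)m+(p-1)\delta$; each piece then has nonnegative excess over $1+t_2$:
\begin{itemize}
\item the $q_0$ piece has excess $(2p-3)m+(p-2)\delta-t-1\ge pm-t-1\ge0$;
\item the $Z-1$ piece has excess at least $\min\{(2p-3)(m+\delta)-t-1,\,(p-1)(m+\delta)\}\ge0$;
\item the $P_*-D$ piece has exactly the first excess in Lemma~\ref{O:T:denominator}, because your weight's valuation equals the lower bound used there for $u^{p-2}H$. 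It is nonnegative, with the same boundary equality case.
\end{itemize}
So your fallback to \eqref{O:T:H14} is never needed.

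The cost is that Proposition~\ref{O:N:Z} rests on the $B_1$-versus-$B_2$ moment comparison of Lemma~\ref{O:N:moments} and Theorems~\ref{O:A:thm:H14} and~\ref{O:A:thm:h}. You import all of that only to bound a quantity that lives entirely on $B_1/A$. The paper's route keeps the evaluation of $Q_3$ independent of the $Q_2$ machinery. It amounts to applying to $n_1(1+ub)$ the same symmetric estimate that Lemma~\ref{O:T:denominator} applies to $P_*=n_2(1+ua)$.
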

\begin{proof}
The inequalities $p(p-1)h_0\ge n_1^{\mathrm{char}}$ and
$3(p-1)h_0\ge n_1^{\mathrm{char}}$ follow from $pm\ge t+1$:
for the latter the excess is $(2p-3)h_0-t-1\ge0$, and the former
is no smaller. Thus the formula for $\theta_1$ in \eqref{O:P:C}, whose
coefficient is $C_1=A_1(1-z)$, applies at $z$ with all terms of
degree at least three in the following expression beyond its conductor:
\[
 Q_3=\ee_{B_1}\bigl(A_1[(1-z)P(z)-z]\bigr)
     =\ee_{B_1}(-A_1z^2/2).
\]
The identity used here is exact before deleting the higher terms;
for $p=3$ the cubic term is $-z^3/2$ and the same displayed
inequality includes it. Consequently
\[
 Q_3^{-1}=\ee_{B_1}\left(\frac{\beta^2w_1^2}{2A_1}\right).
\]
Set
\[
 y=\frac{(\beta/\gamma)w_1^2}{1+ub}.
\]
Its valuation is $(p-2)h_0\ge\rho:=\ceil{t/p}$.
Apply the proof of Lemma~\ref{O:T:actualnorm} to $B_1/A$ and its
norm character with coefficient $\beta$. Here
$\rho=\ceil{(t+1)/p}$ because $p\nmid t$.
It gives
\[
 \ee_{B_1}(\beta y/2)=\ee_A(-\beta n_1y/2).
\]
The exact norm choice $n_1w_1=\gamma/\beta$ now implies
\begin{equation}\label{O:T:Q3beforeden}
 Q_3^{-1}=\ee_A\left(-\frac\gamma2
       \frac{(\beta/\gamma)^{p-1}}{n_1(1+ub)}\right).
\end{equation}

The highest-degree term in the norm expansion in the denominator is
$n_1(ub)=u^pA_0$, so \eqref{O:T:sym} for $B_1/A$, whose break is $t$, gives
\[
 v_A(n_1(1+ub)-D)\ge
 m+\floor{\frac{(p-2)t+p-1}p}.
\]
The numerator weight $(\beta/\gamma)^{p-1}$ has valuation
$(p-1)(m+\delta)$. After subtracting the $\gamma$-conductor $r+1$,
the weighted denominator error has excess at least
\[
 (p-1)m+(p-2)\delta+
 \floor{\frac{(p-2)t+p-1}p}-t-1\ge0.
\]
To verify the last inequality write $t=pa+b_0$,
$1\le b_0\le p-1$, and $m=a+1+c$. The expression becomes
\[
 (p-3)a+(p-1)c+p-2-b_0+
 \floor{\frac{(p-2)b_0+p-1}p}+(p-2)\delta.
\]
It is nonnegative for $b_0\le p-2$; for $b_0=p-1$ the floor is
$p-2$. This includes the equality boundary $p=3,b_0=2$.
Both denominators are units. Hence \eqref{O:T:Q3beforeden} permits
replacement by $D$ modulo the trivial ideal of $\ee_A$, yielding
\eqref{O:T:Q3phase} because $\gamma=\alpha W$ and $\beta/\gamma=vu$.
\end{proof}

\subsubsection*{Evaluation of \texorpdfstring{$Q_1Q_2Q_3Q_4$}{Q1Q2Q3Q4}}
\begin{theorem}[A formula for $Q_1Q_2Q_3Q_4$]\label{O:T:phase}
The factors in \eqref{O:T:factors} satisfy
\begin{equation}\label{O:T:phaseeq}
 (Q_1Q_2Q_3Q_4)^{-1}=\Psi_A(\phi),
\end{equation}
where
\begin{equation}\label{O:T:phidef}
 \phi=-\frac W2\frac{(vu)^{p-1}}D
       +Wu^{p-1}\frac HD
       -\frac{u^{p-2}H}{2D^2}
       +\frac{u^{p-1}A_0J}{2D^2}.
\end{equation}
\end{theorem}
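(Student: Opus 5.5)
The plan is to multiply the two evaluations already obtained. Proposition~\ref{O:T:Q124} gives $(Q_1Q_2Q_4)^{-1}$ as $\Psi_A$ of the last three terms of $\phi$ in \eqref{O:T:phidef}. Lemma~\ref{O:T:Q3} gives $Q_3^{-1}=\Psi_A\bigl(-\tfrac W2(vu)^{p-1}/D\bigr)$, which is exactly the first term of $\phi$. Since $\Psi_A$ is an additive character, the product of these two values is $\Psi_A(\phi)$, and \eqref{O:T:phaseeq} follows.

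The only points to check are consistency of normalizations between the two inputs. First, both evaluations must use the same additive character $\Psi_A=\ee_A(\alpha\,\cdot)$. In Lemma~\ref{O:T:Q3} the phase is written in terms of $\ee_A(\gamma\cdot)$; it is converted using $\gamma=\alpha W$ and $\beta/\gamma=vu$. Second, both evaluations must use the same unit $D=1+u^pA_0$. Proposition~\ref{O:T:Q124} already replaced $P_*$ by $D$ via Lemma~\ref{O:T:denominator}, and Lemma~\ref{O:T:Q3} replaced $n_1(1+ub)$ by $D$. Third, both must use the same adjusted coefficients $\alpha,\beta,w,w_1$ fixed in Subsection~\ref{O:sec:parameters}. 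Lemma~\ref{O:P:preserve} guarantees that these adjustments did not alter the character formulas used in either evaluation.

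I expect no real obstacle here; the argument is bookkeeping. The one place to be careful is the congruences in the two inputs. Each is valid only modulo the trivial ideal $\pp_A^{1+t_2}$ of $\Psi_A$, or modulo $\PP_2^{1+t_2}$ for $\Psi_{B_2}$. This must hold after the weights $W$ and $1/2$ are applied. Both inputs were stated as exact equalities of character values, so multiplying them introduces no new error term.
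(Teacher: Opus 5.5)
Your proposal is correct and is the paper's proof: it too multiplies the evaluation of $(Q_1Q_2Q_4)^{-1}$ in Proposition~\ref{O:T:Q124} by that of $Q_3^{-1}$ in Lemma~\ref{O:T:Q3}, and uses additivity of $\Psi_A$. The normalization checks you list (same $\Psi_A$, same unit $D$, same adjusted coefficients) are already part of how those two results are stated, so nothing further is needed.
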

\begin{proof}
Combine Proposition~\ref{O:T:Q124} and Lemma~\ref{O:T:Q3}.
\end{proof}

\begin{corollary}[The product is $1$]\label{O:T:cancel}
For the characters and coefficients fixed at the start of this
subsection, $Q_1Q_2Q_3Q_4=1$ in mixed and equal characteristic.
\end{corollary}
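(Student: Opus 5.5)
By Theorem~\ref{O:T:phase} it suffices to show $\Psi_A(\phi)=1$, with $\phi$ as in \eqref{O:T:phidef}. The plan is to sort the four summands of $\phi$ into three groups and show that each group lies in the trivial ideal $\pp_A^{1+t_2}$ of $\Psi_A$, or has trivial character value for a structural reason. The last summand $u^{p-1}A_0J/(2D^2)$ is not small alone. It will be paired with part of the third summand, and the first two summands will be combined using the congruence of Theorem~\ref{O:R:R2} for $H=n_1(s)$.

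\emph{First group.} Write
\[
 -\frac{u^{p-2}H}{2D^2}+\frac{u^{p-1}A_0J}{2D^2}
 =\frac{u^{p-2}A_0\,u\bigl(J+u^{p-1}H\bigr)}{2D^2}
 -\frac{u^{p-2}H}{2D^2}\bigl(1+u^{p}A_0\bigr),
\]
using only $D=1+u^pA_0$. The first piece is $L_A/(2D^2)$, and Theorem~\ref{O:E:main} gives $\Psi_A(L_A/(2D^2))=1$. The second piece simplifies exactly to $-u^{p-2}H/(2D)$, since $(1+u^pA_0)/D^2=1/D$. I will double-check the algebra and the exponents of $u$; it may be cleaner to expand $D^{-2}$ directly instead.

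\emph{Second group.} The remaining terms are
\[
 -\frac{W(vu)^{p-1}}{2D}+\frac{Wu^{p-1}H}{D}-\frac{u^{p-2}H}{2D}.
\]
Since $W=u^{-1}$, the last term equals $-Wu^{p-1}H/(2D)$, so the sum is
\[
 \frac{Wu^{p-1}}{2D}\bigl(H-v^{p-1}\bigr),\qquad v=\beta/\alpha .
\]
Theorem~\ref{O:R:R2}, applied with $m$ satisfying $pm>t$, gives
\[
 H\equiv v^{p-1}\pmod{\pp_A^{1+t_2-(p-2)m}}.
\]
The weight $Wu^{p-1}/(2D)$ has valuation $(p-2)m$, and $D$ is a unit, so the whole expression lies in $\pp_A^{1+t_2}$. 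Hence $\Psi_A$ of it is $1$. The factor $1/2$ is harmless because $p$ is odd.

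\emph{Main obstacle.} The bookkeeping in the first group is the delicate step. I have to confirm that the $J$-term and the $H$-term really combine into $L_A$ with the correct relative power of $u$ and the correct sign; the sign conventions $s=-E_{p-1}$ and $T\equiv-Y^{p-1}s$ from Lemma~\ref{O:E:trace} are what make $J+u^{p-1}H$ small. If the powers of $u$ do not match exactly, I will try the alternative split
\[
 \frac{u^{p-1}A_0J}{2D^2}=\frac{u^{p-1}A_0(J+u^{p-1}H)}{2D^2}-\frac{u^{2p-2}A_0H}{2D^2},
\]
and then merge $-u^{2p-2}A_0H/(2D^2)$ with $-u^{p-2}H/(2D^2)$ to get $-u^{p-2}H(1+u^pA_0)/(2D^2)=-u^{p-2}H/(2D)$. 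This is the same identity with the $u$-powers now explicit, so I expect it to go through. Every step is an exact identity or a valuation estimate already proved in both characteristics, so the conclusion $Q_1Q_2Q_3Q_4=1$ holds in mixed and equal characteristic.
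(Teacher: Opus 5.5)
Your proposal is correct and is essentially the paper's proof. The paper uses the same exact identity $\phi=L_A/(2D^2)+\frac{u^{p-2}}{2D}(H-v^{p-1})$, kills the first term by Theorem~\ref{O:E:main} and the second by Theorem~\ref{O:R:R2}; the only cosmetic difference is that you cite the already-weighted second assertion of Theorem~\ref{O:R:R2} (modulus $\pp_A^{1+t_2-(p-2)m}$), whereas the paper cites the first assertion and rechecks $(p-2)(m+\delta)-\lceil t/p\rceil\ge0$.
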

\begin{proof}
Since $Wu=1$ and $D=1+u^pA_0$, direct expansion gives the exact
algebraic identity
\begin{equation}\label{O:T:factorphase}
 \phi=\frac{L_A}{2D^2}
       +\frac{u^{p-2}}{2D}(H-v^{p-1}),\qquad
 L_A=u^{p-1}A_0(J+u^{p-1}H).
\end{equation}
Theorem~\ref{O:E:main} gives
\[
 v_A L_A\ge2(p-1)m+(p-1)\delta-\floor{t/p}\ge1+t_2.
\]
Theorem~\ref{O:R:R2} gives
\[
 H-v^{p-1}\in\pp_A^{L+(p-1)\delta},\qquad
 L=1+t-\ceil{t/p}.
\]
The second term of \eqref{O:T:factorphase} has depth at least $1+t_2$
because, with $\rho=\ceil{t/p}$ and $m\ge\rho$,
\[
 (p-2)m+L+(p-1)\delta-(1+t_2)
 =(p-2)(m+\delta)-\rho\ge0.
\]
The unit denominator and rational half preserve these whole ideal
bounds. Both terms therefore lie in the largest trivial ideal of
$\Psi_A$, and Theorem~\ref{O:T:phase} proves the assertion.

\end{proof}

\begin{corollary}[Equality when $pm>t$]\label{O:T:actual-local-factors}
For the characters $\theta_1,\theta_2$ fixed above, with $pm>t$,
\[
 \Delta_{B_1}(\theta_1,\ee_{B_1})=
 \Delta_{B_2}(\theta_2,\ee_{B_2}).
\]
\end{corollary}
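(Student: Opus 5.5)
The proof will be short, since the heavy lifting is already in place. By Proposition~\ref{O:P:four}, the quotient $\Delta_{B_2}(\theta_2,\ee_{B_2})/\Delta_{B_1}(\theta_1,\ee_{B_1})$ equals
\[
 \frac{g_2}{g_1}\,Q_1Q_2Q_3Q_4\,\ee_A\bigl(\alpha[T_1N_1\Delta-T_2N_2\Delta]\bigr).
\]
The plan is to show that this quotient is a fourth root of unity, and then to use Corollary~\ref{U:odd-power} to show that it is also a $p$th root of unity. Since $p$ is odd, these two facts together force the quotient to be $1$.

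The first step removes the last factor. Theorem~\ref{O:A:thm:H14} (recorded as \eqref{O:T:H14}) gives $T_1b-T_2a\in\pp_A^{1+t_2}$. Because $v_A\alpha=-d-1-t_2$, the argument $\alpha(T_1b-T_2a)$ lies in $\pp_A^{-d}$, which is the trivial ideal of $\ee_A$. So that factor equals $1$.

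The second step uses Corollary~\ref{O:T:cancel}, which gives $Q_1Q_2Q_3Q_4=1$ for exactly these characters and coefficients. The remaining factor $g_2/g_1$ lies in $\mu_4$ by Lemma~\ref{O:P:lamprecht}. We also need to check that replacing $\theta_1$ by its conjugate under the $\rho$ of Theorem~\ref{O:M:realize} does not change $\Delta_{B_1}$. This follows from the change-of-variables remark in Section~\ref{U:interface}, since the additive characters are trace-compatible and $\lambda n_1$ is Galois invariant. Together these give $\Delta_{B_2}(\theta_2)/\Delta_{B_1}(\theta_1)\in\mu_4$.

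The third step bounds the quotient from the other side. The pair $(\theta_1,\theta_2)$ is primitive compatible: the pair $(\chi,\varphi)$ is, and twisting both by $\lambda\circ n_i$ keeps the common norm pullback invariant and nondescended. Corollary~\ref{U:odd-power} then gives $\Delta_{B_i}(\theta_i)^p=\Delta_B(\Theta)$ for $i=1,2$. Hence the $p$th power of the quotient is $1$. Since $\mu_4\cap\mu_p=\{1\}$ for odd $p$, the quotient equals $1$.

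The main point needing care is bookkeeping rather than new estimates. We must confirm that the coefficients fixed through Lemmas~\ref{O:P:choose} and~\ref{O:P:preserve} are exactly those used in Corollary~\ref{O:T:cancel}, and that $\Delta_B(\Theta)$ is computed with the same trace-compatible $\ee_B$ for both $i$. Both facts are immediate from the setup of Subsection~\ref{O:sec:transition}.
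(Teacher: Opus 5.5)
Your proposal is correct and follows the paper's proof essentially step for step. You use Proposition~\ref{O:P:four}, then \eqref{O:T:H14} and Corollary~\ref{O:T:cancel} to place the quotient in $\mu_4$, and finally Corollary~\ref{U:odd-power} together with $\mu_p\cap\mu_4=\{1\}$; your extra checks on conjugation by $\rho$ and on primitivity of the twisted pair are the ones the paper already records in the setup of Subsections~\ref{O:sec:parameters} and~\ref{O:sec:transition}.
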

\begin{proof}
Proposition~\ref{O:P:four} gives
\[
 \frac{\Delta_{B_2}(\theta_2,\ee_{B_2})}
      {\Delta_{B_1}(\theta_1,\ee_{B_1})}
 =\frac{g_2}{g_1}\,Q_1Q_2Q_3Q_4\,
   \Psi_A(T_1b-T_2a),\qquad g_i\in\mu_4.
\]
Equation~\eqref{O:T:H14} and Corollary~\ref{O:T:cancel} put
this quotient in $\mu_4$. Corollary~\ref{U:odd-power} gives its
$p$th power equal to $1$.
Since $p$ is odd, $\mu_p\cap\mu_4=\{1\}$, proving equality.
\end{proof}

\subsection{Conductor reduction and completion of the odd-prime case}\label{O:sec:total}
\subsubsection*{Local notation}
Let $B/A$ be totally ramified with Galois group $C_p^2$, $p$ odd.
Choose $B_1/A$ of smallest break and any other degree-$p$
intermediate field $B_2$, with breaks
\[
 t=t_1\le t_2=t+\delta,\quad t'=t+p\delta,\quad p\nmid t.
\]
The upper breaks are $u_1=t'$ and $u_2=t$.
Set $N_i=\Nm_{B/B_i}$, $n_i=\Nm_{B_i/A}$ and
$T_i=\Tr_{B_i/A}$ and $\ee_E=\ee_A\Tr_{E/A}$.
Suppose $\psi_1N_1=\psi_2N_2=\psi_B$ is non-descending from $A$.
It is invariant under $\Gal(B/A)$, since
$\Gal(B/B_1)$ and $\Gal(B/B_2)$ generate that group.

\subsubsection*{The exact conductor reduction}
Let $a(\theta)$ denote a multiplicative conductor exponent. For a generator
$\tau_i$ of $\Gal(B_i/A)$ put $\mu_i=\psi_i^{\tau_i}/\psi_i$.
By Lemma~\ref{U:conjugacy}, $\mu_i$ generates $S(B/B_i)$, the restrictions $\psi_1|_A,\psi_2|_A$ agree, and
Corollary~\ref{U:odd-power} supplies the common $p$th power.
\begin{lemma}[A lower bound for the conductor]\label{O:G:lower}
For $i=1,2$,
\[
 a(\psi_i)\ge1+t_i+u_i.
\]
\end{lemma}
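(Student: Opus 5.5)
We prove the bound by pulling everything up to $B$ and using the nontrivial conjugate quotient $\mu_i$. Fix $i$ and let $j$ be the other index. By Lemma~\ref{U:conjugacy}, $\mu_i$ is a nontrivial element of $S(B/B_i)$. The extension $B/B_i$ is totally ramified cyclic of degree $p$ with break $t_j'$, the lower break of $\Gal(B/B_i)$. Here $t_j'=t'$ for $i=1$ and $t_j'=t$ for $i=2$. By Section~\ref{sec:ramification}, every nontrivial norm character of $B/B_i$ has conductor exactly $t_j'+1$.

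So $\mu_i$ has conductor $t_j'+1$, and it is trivial on $U_{B_i}^{t_j'+1}$ but nontrivial on $U_{B_i}^{t_j'}$. This is not yet the lower bound, since it has size $t_j'+1$ rather than $1+t_i+u_i$. The missing $t_i$ must come from the fact that $\mu_i$ is a \emph{conjugate quotient}, namely $\mu_i(x)=\psi_i(\tau_i^{-1}x/x)$. I would argue as follows. Choose $x\in U_{B_i}^{c}$ with $c=a(\psi_i)-t_i-1$, and suppose for contradiction that $c\le u_i-1$. Since $\Gal(B_i/A)$ has lower break $t_i$, the ramification estimate gives $\tau_i^{-1}x/x\in U_{B_i}^{c+t_i}$. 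Hence $\mu_i$ is trivial on $U_{B_i}^{a(\psi_i)-t_i}$. If $x$ is chosen at depth exactly $d\ge1$ with $p\nmid d$, so that the leading map is the multiplication by the ramification coefficient, then the quotient has depth exactly $d+t_i$.

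Consequently the conductor of $\mu_i$ is at most $a(\psi_i)-t_i$, which gives $t_j'+1\le a(\psi_i)-t_i$. It remains to check that $t_j'=u_i$. For $i=1$ the upper break of $B_1/A$ is $u_1=t'$, and the break of $B/B_1$ is $t'$, so they agree. For $i=2$, $u_2=t$ and the break of $B/B_2$ is $t$, so again they agree. This gives $a(\psi_i)\ge1+t_i+u_i$.

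The main obstacle is the containment $\tau_i^{-1}x/x\in U_{B_i}^{c+t_i}$ for all units of depth $c\ge0$, including $c=0$. For a Teichmüller unit the quotient is $1$, and for $x\in U^c$ with $c\ge1$ the lower-numbering definition gives $v(\tau x-x)\ge c+t_i$. This suffices, because the upper bound on the conductor of $\mu_i$ only needs that $\mu_i$ is trivial on $U_{B_i}^{a(\psi_i)-t_i}$. One case must be handled separately: if $a(\psi_i)-t_i\le 0$, then $\mu_i$ would be unramified on units, which contradicts the fact that it has conductor $t_j'+1\ge2$.
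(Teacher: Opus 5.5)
Your argument is correct and is essentially the paper's. Both rest on two facts: $\mu_i=\psi_i^{\tau_i}/\psi_i$ is a nontrivial norm character of $B/B_i$ with conductor $u_i+1$, and $\tau_i^{-1}x/x\in U_{B_i}^{c+t_i}$ whenever $x\in U_{B_i}^{c}$. The paper argues directly: it picks $y\in U_{B_i}^{u_i}$ with $\mu_i(y)\ne1$, so $\psi_i$ is nontrivial on $U_{B_i}^{t_i+u_i}$. You argue the contrapositive, showing $\mu_i$ is trivial on $U_{B_i}^{a(\psi_i)-t_i}$.

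Two small points. The abandoned ``suppose for contradiction'' setup and the exact-depth remark for $p\nmid d$ are never used and can be deleted. Also, the bound $v(\tau_i z-z)\ge v(z)+t_i$ comes from the ramification expansion in Lemma~\ref{C:leading-ramification}, not directly from the lower-numbering definition.
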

\begin{proof}
The nontrivial $\mu_i$ of Lemma~\ref{U:conjugacy} has conductor $u_i+1$.
Choose $y\in U_i^{u_i}$ with $\mu_i(y)\ne1$.
Write $y=1+z$, with $v_i z\ge u_i$. The ramification expansion gives
$v_i(\tau_i^{-1}z-z)\ge t_i+v_i z$, so
$\tau_i^{-1}y/y\in U_i^{t_i+u_i}$. Its value under $\psi_i$ is
$\mu_i(y)\ne1$. Thus $a(\psi_i)>t_i+u_i$, as asserted.
\end{proof}

\begin{lemma}[Triviality on $U_E^{1+s+u}\cap\ker\Nm_{E/A}$]\label{O:G:deep}
Let $M/A$ be a totally ramified $C_p^2$ extension and let $E$ be
a degree-$p$ intermediate field. Suppose $E/A$ has break $s$ and
$M/E$ has break $u$, with $p\nmid u$. If $\theta$ is the character
of $E^\times$ in a primitive compatible pair, then
\[
 \theta(x)=1\quad\text{for }x\in U_E^{1+s+u},\quad\Nm_{E/A}x=1.
\]
\end{lemma}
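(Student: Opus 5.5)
Write $\sigma$ for a generator of $\Gal(E/A)$. The plan is to use Hilbert~90 to turn a norm-one element into a commutator, and then read off $\theta$ on it from the conjugate quotient.

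\emph{Step 1: write $x$ as a commutator.} Take $x\in U_E^{1+s+u}$ with $\Nm_{E/A}x=1$. By Lemma~\ref{C:hilbert90}, $x=\sigma(y)/y$ for some $y\in E^\times$. We may adjust $y$ by $A^\times$ and by a power of a uniformizer. Since $E/A$ is totally ramified with break $s$, the quotient $\sigma(\pi_E)/\pi_E$ lies in $U_E^s$. Conjugation multiplies the leading residue term of $y$ by a unit that is $\equiv1$, so we may take $y$ to be a unit whose depth is maximal in its class modulo $A^\times U$-adjustments.

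The ramification expansion used in Lemma~\ref{O:G:lower} gives, for $y=1+z$ with $v_Ez=k$ and $p\nmid k$ (obtained by best approximation from $A$, as in Proposition~\ref{O:A:prop:AS}),
\[
 v_E\bigl(\sigma(y)/y-1\bigr)=k+s .
\]
Hence $k+s\ge1+s+u$, so $y\in U_E^{1+u}$ modulo $A^\times$.

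\emph{Step 2: evaluate $\theta$ on the commutator.} With the convention $\theta^\sigma(x)=\theta(\sigma^{-1}x)$ we have
\[
 \theta(\sigma(y)/y)=(\theta^{\sigma^{-1}}/\theta)(y)=\mu^{-1}(y),
\]
where $\mu=\theta^\sigma/\theta$ generates $S(M/E)$ by Lemma~\ref{U:conjugacy}. The factor coming from $A^\times$ contributes nothing, since $\sigma$ fixes $A$. Every nontrivial norm character for $M/E$ has conductor $u+1$ (Section~\ref{sec:ramification}). Therefore $\mu(y)=1$ for $y\in U_E^{1+u}$, and so $\theta(x)=1$.

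\emph{Main obstacle.} The hard step is Step~1: showing that the Hilbert~90 preimage can be taken in $U_E^{1+u}A^\times$, and not merely at depth $1+u$ modulo some coarser error. This needs the graded map $\bar y\mapsto\sigma(y)/y$ from $U_E^k/U_E^{k+1}$ to $U_E^{k+s}/U_E^{k+s+1}$ to be injective whenever $p\nmid k$, and it needs the best approximation to yield $p\nmid k$. When $p\mid k$, I would remove the leading term by a unit of $A$ raised to the appropriate power, and iterate, using completeness of $E$ to pass to the limit.

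In mixed characteristic the $p$-power terms in the expansion of $\sigma(1+z)$ are harmless, because they only deepen the valuation. The hypothesis $p\nmid u$ does not seem to be needed for this argument. If it is needed, it would enter through the exact conductor $u+1$ of $\mu$, which the cited conductor formula supplies.
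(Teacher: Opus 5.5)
Your route is the paper's. Hilbert~90 gives $x=\sigma(y)/y$. You normalize $y$ modulo $A^\times$ to a principal unit of maximal depth $q$ with $p\nmid q$, using $U_E^{pk}=U_A^kU_E^{pk+1}$. The first ramification term then has exact depth $q+s$, so $q\ge 1+u$. Finally $\theta(\sigma(y)/y)=\mu^{-1}(y)=1$, because $\mu$ has conductor $u+1$ and is trivial on $A^\times$. Step~2 and the best-approximation part of Step~1 are fine. The exact equality $v_E(\sigma(y)/y-1)=k+s$ for $p\nmid k$ comes from Lemma~\ref{C:leading-ramification}; Lemma~\ref{O:G:lower} only gives the inequality. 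You are also right that $p\nmid u$ is never used; the paper's proof does not use it either.

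The step that fails is the reduction of $y$ to a unit. You may only change $y$ by elements of $A^\times$, since only these leave $x=\sigma(y)/y$ unchanged. Replacing $y$ by $\pi_E^{-j}y$ with $p\nmid j$ divides $x$ by $(\sigma\pi_E/\pi_E)^{j}$, which has exact depth $s<1+s+u$. The new commutator is then no longer in $U_E^{1+s+u}$, so the bound $k+s\ge 1+s+u$ is lost. In Step~2 you would also pick up the factor $\mu(\pi_E)^{-j}$, which need not be $1$.

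The missing observation is that this case cannot occur. If $m=v_E(y)$ were prime to $p$, Lemma~\ref{C:leading-ramification} would give $v_E(\sigma y-y)=m+s$. Then $x$ would have depth exactly $s$, contradicting $x\in U_E^{1+s+u}$. Hence $p\mid v_E(y)$, and a power of a uniformizer of $A$ makes $y$ a unit. Since $k_E=k_A$, a residue lift from $A$ then puts $y$ in $U_E^1$. All of these are $A^\times$-adjustments.

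Your iteration for $p\mid k$ should also say what happens if it never terminates. In that case $y$ is a limit of elements of $A^\times$, so $y\in A^\times$ because $A$ is closed in $E$, and $x=1$. This is the closedness of $A$, not completeness of $E$. With these two repairs your argument coincides with the paper's.
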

\begin{proof}
Hilbert 90 gives $x=\sigma(y)/y$. If $x=1$ there is nothing to prove.
Multiply $y$ by an element of $A^\times$ so that its valuation is
in $\{0,\ldots,p-1\}$. If this valuation is nonzero, the first
ramification term of $\sigma(y)/y$ has depth exactly $s$:
its uniformizer contribution is nonzero modulo $p$, while the unit
contribution has depth at least $s+1$. This contradicts the depth of
$x$. Thus $y$ is a unit, and a residue-field lift from $A$ makes
$y\in U_E^1$.

Choose the representative of $yA^\times$ with maximal possible unit
depth $q$. Such a finite maximum exists: otherwise representatives
from the closed subfield $A$ approximate $y$ arbitrarily closely,
forcing $y\in A^\times$ and $x=1$.
One has $p\nmid q$. In fact
\[
 U_E^{pk}=U_A^k U_E^{pk+1}\quad(k\ge1),
\]
because the image of $\pp_A^k$ in
$\PP_E^{pk}/\PP_E^{pk+1}$ is the entire common residue field.
If $q=pk$, this would improve the representative's depth, a contradiction.
The first ramification term for a depth-$q$ unit is consequently
nonzero and has depth $q+s$. Since $x\in U_E^{1+s+u}$, it follows
that $q\ge1+u$.
The commutator $\theta^{\sigma}/\theta$ has conductor $u+1$.
It is trivial on this $y$, so $\theta(\sigma(y)/y)=1$, with either
consistent convention for the generator. This proves the lemma.
\end{proof}

\begin{proposition}\label{O:G:reduce}
Every primitive compatible pair can be written
\[
 \psi_1=\chi(\lambda n_1),\qquad
 \psi_2=\varphi(\lambda n_2),
\]
where $\chi N_1=\varphi N_2$ is primitive and
\[
 a(\chi)=m_1=1+t+t',\qquad a(\varphi)=m_2=1+t+t_2.
\]
If $a(\psi_2)>m_2$, then $a(\lambda)=1+r$ with $r>t_2$, and
$\,p(r-t_2)>t$. Thus the condition $p(r-t_2)>t$ of
Corollary~\ref{O:T:actual-local-factors} is satisfied.
\end{proposition}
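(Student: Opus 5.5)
We reduce to the model pair of Theorem~\ref{O:M:realize}, in the same way Proposition~\ref{O:I:actualtwist} handled the unramified--ramified case.

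\emph{Step 1: matching commutators.} Start from a primitive compatible pair $(\psi_1,\psi_2)$. Take any pair $(\chi_0,\varphi_0)$ supplied by Theorem~\ref{O:M:global}; it has conductors $m_1,m_2$ and a primitive common pullback. By Lemma~\ref{U:conjugacy}, both $\psi_2^{\sigma}/\psi_2$ and $\varphi_0^{\sigma}/\varphi_0$ generate $S(B/B_2)$. Choose $j\in\{1,\dots,p-1\}$ with $\psi_2^\sigma/\psi_2=(\varphi_0^\sigma/\varphi_0)^j$, and put $\varphi=\varphi_0^j$ and $\chi=\chi_0^j$.

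\emph{Step 2: conductors of the model pair.} Raising to a power prime to $p$ keeps $\chi N_1=\varphi N_2$ and keeps primitivity, because the commutator remains nontrivial. It also keeps the conductors, because the characters $R_i^j$ are still nontrivial at depth $m_i-1$ (the coefficient $jb$ or $ja$ still has valuation $-t$).

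\emph{Step 3: extracting $\lambda$.} The quotient $\psi_2/\varphi$ is $\Gal(B_2/A)$-invariant. Lemmas~\ref{C:hilbert90} and~\ref{C:character-extension} then give $\psi_2=\varphi(\lambda n_2)$ for some character $\lambda$ of $A^\times$. Compatibility shows that $\psi_1/(\chi(\lambda n_1))$ is trivial on $N_1(B^\times)$, so it is a norm character $\nu\in S(B/B_1)$. By Lemma~\ref{D:crossed-norm}, $\nu=\omega\circ n_1$ with $\omega\in S(B_2/A)$. Replace $\lambda$ by $\lambda\omega$ and $\varphi$ by $\varphi\cdot(\omega n_2)^{-1}$. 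Since $\omega\circ n_2$ is trivial, $\varphi$ is unchanged, and $\psi_1=\chi(\lambda n_1)$ exactly.

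\emph{Step 4: conductor of $\lambda$.} This is the main point. Suppose $a(\psi_2)>m_2$. Since $a(\varphi)=m_2$, we get $a(\lambda n_2)=a(\psi_2)>m_2=1+t+t_2$. Write $a(\lambda)=1+r$ and use the cyclic norm-pullback conductor formula for $B_2/A$ (break $t_2$, recalled in Section~\ref{sec:ramification}).
\begin{itemize}
\item If $1+r\le t_2$, the pullback conductor is $1+r\le t_2<m_2$, a contradiction.
\item If $1+r=t_2+1$, the pullback conductor is at most $t_2+1<m_2$, again a contradiction.
\item Hence $1+r>t_2+1$, that is, $r>t_2$. In this range the pullback conductor is $1+t_2+p(r-t_2)$.
\end{itemize}
The inequality $1+t_2+p(r-t_2)>1+t+t_2$ gives $p(r-t_2)>t$, which is the hypothesis of Corollary~\ref{O:T:actual-local-factors}. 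The only delicate point is the boundary conductor $1+r=t_2+1$, where the conductor can drop; it is excluded as shown. The case $a(\psi_2)=m_2$ (where $\lambda n_2$ may have conductor at most $m_2$) is not claimed by the proposition and needs no argument here.
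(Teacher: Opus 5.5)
Your proof is correct, but it reaches the decomposition by a different route from the paper. The paper does not use the model characters here. It applies Lemma~\ref{O:G:deep} to see that $\psi_1|_{U_1^{m_1}}$ factors through $n_1(U_1^{m_1})$, extends the factored character to $\lambda$, and divides $\lambda n_i$ out of both $\psi_i$. The new first character then has conductor at most $m_1$, and Lemma~\ref{O:G:lower} makes it exactly $m_1$. Finally, $a(\varphi)=m_2$ is forced by equating the two high norm-pullback conductors of the common pullback, $p\,a(\chi)-(p-1)(t'+1)=p\,a(\varphi)-(p-1)(t+1)$; Lemma~\ref{O:G:lower} puts both conductors above the breaks.

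You instead inherit the conductors $m_i$ and primitivity from the model pair of Theorem~\ref{O:M:global}. You align commutators by a $j$th power, which is the device the paper uses only later, for pairs already known to have conductors $m_i$. You then extract $\lambda$ by Hilbert~90. The remaining element of $S(B/B_1)$ is removed by writing it as $\omega\circ n_1$ with $\omega\in S(B_2/A)$ (Lemma~\ref{D:crossed-norm}) and absorbing $\omega$ into $\lambda$. This is harmless on the $B_2$ side because $\omega\circ n_2=1$.

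What your route buys is a sharper decomposition. Your $\chi,\varphi$ are exact $j$th powers of the model characters, so their truncated-logarithm formulas on $H_i$ are explicit, with coefficients $j\alpha b$ and $j\alpha a$. They are obtained without the Galois conjugation that Theorem~\ref{O:M:realize} needs.

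What it costs is dependence on the heavy compatibility result, Theorem~\ref{O:M:compat}, and the estimates of Section~\ref{sec:odd-estimates}. The paper's reduction uses only Hilbert~90, the first ramification term, and the cyclic conductor formulas. It also shows intrinsically that $m_1,m_2$ are the minimal conductors of any primitive pair.

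Your Step~4 is the same as the paper's final step.
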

\begin{proof}
By Lemma~\ref{O:G:deep}, $\psi_1$ on $U_1^{m_1}$ factors through
$n_1(U_1^{m_1})$. The factored character is continuous, because the
unit groups are compact and the norm map is a quotient onto its
image. Extend it to a continuous character $\lambda$ of $A^\times$;
the image is open, and divisibility of $\mathbf C^\times$ extends a
character across the resulting discrete quotient.
Replace $\psi_i$ by $\psi_i/(\lambda n_i)$ for $i=1,2$.
The pair remains compatible and primitive, and its first conductor is
at most $m_1$. Lemma~\ref{O:G:lower} makes it exactly $m_1$.

Both $a(\chi)>t'+1$ and $a(\varphi)>t+1$, by Lemma~\ref{O:G:lower}. Therefore the high cyclic conductor formula
for their common norm pullback gives
\[
 p\,a(\chi)-(p-1)(t'+1)
 =p\,a(\varphi)-(p-1)(t+1).
\]
It follows that $a(\varphi)=m_2$.
If $a(\psi_2)>m_2$, then $a(\lambda n_2)=a(\psi_2)>m_2$.
The lower conductor formula forces $a(\lambda)=1+r>1+t_2$ and
\[
 a(\psi_2)=1+t_2+p(r-t_2)>1+t+t_2,
\]
which is equivalent to the asserted range.
\end{proof}

\subsubsection*{The case of conductors \texorpdfstring{$m_1,m_2$}{m1,m2}}
\begin{theorem}\label{O:G:minimal}
If $a(\chi)=m_1$ and $a(\varphi)=m_2$, then
$\mathcal A_{B_1}(\chi)=\mathcal A_{B_2}(\varphi)$.
\end{theorem}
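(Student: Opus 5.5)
By Corollary~\ref{U:odd-power}, both lower products over $S(B_i/A)$ equal $1$. The statement therefore reduces to $\Delta_{B_1}(\chi,\ee_{B_1})=\Delta_{B_2}(\varphi,\ee_{B_2})$, and the same corollary gives $(\Delta_{B_1}(\chi)/\Delta_{B_2}(\varphi))^p=1$. Since $p$ is odd and $\mu_p\cap\mu_4=\{1\}$, it suffices to show that the quotient lies in $\mu_4$.

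First I would normalize. Theorem~\ref{O:M:realize}, applied with $\lambda=1$, lets me replace $\chi$ by a Galois conjugate $\chi^\rho$ without changing its local constant. After this, $\chi(1-z)=\Psi_{B_1}(bP(z))$ and $\varphi(1-z)=\Psi_{B_2}(a_\lambda P(z))$ on the groups $H_i$, where $\Psi_L=\ee_L(\alpha'\,\cdot)$. Renaming $\Delta_\lambda$ as $\Delta$, the coefficients are $B_1=N_1\Delta=b$ and $B_2=N_2\Delta=a$, with $v_1(b)=v_2(a)=-t$. Because $\lfloor m_i/2\rfloor\ge\lceil m_i/p\rceil$, Lemma~\ref{O:P:lamprecht} (equivalently Lemma~\ref{O:I:enhanced} with $\eta=0$) applies with $C=\alpha' b$ and $C=\alpha' a$. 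It gives
\[
 \frac{\Delta_{B_2}(\varphi)}{\Delta_{B_1}(\chi)}
 \in\mu_4\,\frac{\chi(-(\alpha' b)^{-1})}{\varphi(-(\alpha' a)^{-1})}\,
 \ee_A\bigl(\alpha'(T_1b-T_2a)\bigr).
\]

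Second, I would handle the multiplicative factor. The scalar $-\alpha'^{-1}\in A^\times$ contributes the same value to both sides: by Lemma~\ref{U:determinant} (with $\epsilon_i=1$ for odd $p$), $\chi|_{A^\times}=\varphi|_{A^\times}$. The remaining values $\chi(b)=\chi(N_1\Delta)$ and $\varphi(a)=\varphi(N_2\Delta)$ both equal $\psi_B(\Delta)$ by compatibility. So the character factors cancel exactly.

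Third, I would handle the additive factor using Theorem~\ref{O:A:thm:H14}, which gives $T_1b-T_2a\in\pp_A^{1+t_2}$. This theorem is stated for the generator $\Delta$ produced by Proposition~\ref{O:A:prop:AS}. The step needing the most care is whether it still holds for the modified generator $\Delta_\lambda$. It should, since the proof uses only $\Delta^p-\Delta\in B_2$ and $v_B\Delta=-t$. Multiplication by $\alpha'$ maps $\pp_A^{1+t_2}$ into the trivial ideal of $\ee_A$, because $\Psi_A$ has largest trivial ideal $\pp_A^{1+t_2}$. The additive factor is therefore $1$, the quotient lies in $\mu_4$, and the $p$th-power argument finishes the proof.
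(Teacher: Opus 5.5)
Your proposal is correct and follows the paper's proof essentially step for step: normalize via Theorem~\ref{O:M:realize}, evaluate both constants by Lemma~\ref{O:P:lamprecht}, cancel the values at $-\alpha'^{-1}$ by Lemma~\ref{U:determinant} and $\chi(b)/\varphi(a)$ by compatibility on $\Delta$, remove the additive factor by Theorem~\ref{O:A:thm:H14}, and conclude from $\mu_p\cap\mu_4=\{1\}$. Your check that Theorem~\ref{O:A:thm:H14} needs only the Artin--Schreier equation and $v_B\Delta=-t$, and therefore applies to $\Delta_\lambda$, makes explicit a point the paper uses tacitly. Two slips are cosmetic. First, the $\lambda$ in Theorem~\ref{O:M:realize} is determined by comparison with a reference pair rather than set to $1$; the theorem applies to any primitive pair of conductors $m_i$ regardless. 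Second, the character factor in your display is the reciprocal of the correct $\chi(-\alpha' b)/\varphi(-\alpha' a)$, which is harmless because you show it equals $1$.
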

\begin{proof}
Choose $\Delta$ as in Theorem~\ref{O:M:realize} and write
$b=N_1\Delta$, $a=N_2\Delta$. The coefficients in \eqref{O:M:models} are
$C_1=\alpha b$, $C_2=\alpha a$, where
$\ee_A(\alpha\cdot)$ has ideal conductor $1+t_2$.
Lemma~\ref{O:P:lamprecht} gives
\[
 \frac{\Delta_{B_2}(\varphi)}{\Delta_{B_1}(\chi)}
 =\frac{g_2}{g_1}\frac{\chi(\alpha b)}{\varphi(\alpha a)}
  \ee_A\left(\alpha[T_1b-T_2a]\right),\qquad g_i\in\mu_4.
\]
Lemma~\ref{U:determinant} cancels the two values at $\alpha$.
Compatibility, evaluated on $\Delta\in B^\times$, cancels
$\chi(b)/\varphi(a)$. The trace--norm bound of Theorem~\ref{O:A:thm:H14}
$T_1b-T_2a\in\pp_A^{1+t_2}$ makes the last factor one.
Thus the ratio lies in $\mu_4$.
Corollary~\ref{U:odd-power} gives its $p$th power equal to $1$; since $p$ is
odd, the ratio is $1$. Corollary~\ref{U:odd-power} also gives
$\prod_{\nu\in S(B_i/A)}\Delta_A(\nu)=1$ for $i=1,2$.
The assertion follows.
\end{proof}
\begin{theorem}[The totally ramified case for odd $p$]\label{O:G:totalbranch}
Theorem~\ref{U:main} holds when $\ell=p>2$ and $K/F$ is totally
ramified, in mixed and equal characteristic.
\end{theorem}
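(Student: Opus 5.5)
The plan is to assemble the already-proved pieces for a totally ramified $C_p\times C_p$ extension $K/F$ with $p$ odd. The proof has three steps.

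\textbf{Reduction to a comparison with $B_1$.} Write $A=F$, $B=K$, and let $B_1$ be a degree-$p$ intermediate field with smallest break $t=t_1$. Any other degree-$p$ intermediate field may serve as $B_2$. It then suffices to prove $\mathcal A_{B_1}(\theta_{B_1})=\mathcal A_{B_2}(\theta_{B_2})$ for every such $B_2$; transitivity through $B_1$ then compares any two intermediate fields.

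Before this, I would verify the standing hypotheses of Sections~\ref{sec:odd-estimates}--\ref{sec:odd-calculation}:
\begin{itemize}
\item the breaks of $B/B_2$ and $B/B_1$ are $t$ and $t'=t+p\delta$;
\item $p\nmid t$.
\end{itemize}
The break statements are the ramification ledger of Lemma~\ref{D:break-ledger} and the upper-numbering description in Appendix~\ref{app:diamond-foundations}. The condition $p\nmid t$ is supplied by Proposition~\ref{O:A:prop:AS}. By Corollary~\ref{U:odd-power}, the lower products $\prod_{\nu\in S(B_i/A)}\Delta_A(\nu)$ equal $1$, so it remains to show $\Delta_{B_1}(\theta_{B_1})=\Delta_{B_2}(\theta_{B_2})$.

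\textbf{Conductor reduction.} Apply Proposition~\ref{O:G:reduce} to the pair $(\theta_{B_1},\theta_{B_2})$. This writes $\theta_{B_i}=\chi_i(\lambda n_i)$, where $(\chi_1,\chi_2)=(\chi,\varphi)$ is primitive compatible with minimal conductors $m_1=1+t+t'$ and $m_2=1+t+t_2$. The argument then splits according to the conductor.

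\emph{Case $a(\theta_{B_2})=m_2$.} I would check that $\lambda n_i$ is then trivial on the relevant unit groups, so that $\lambda$ can be absorbed into $\chi,\varphi$ keeping conductors $m_1,m_2$. Concretely, the high conductor formula forces $a(\lambda n_2)\le m_2$, and the same reduction applied to the pair itself gives minimal conductors. Theorem~\ref{O:G:minimal} then gives equality directly.

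\emph{Case $a(\theta_{B_2})>m_2$.} Proposition~\ref{O:G:reduce} yields $a(\lambda)=1+r$ with $m=r-t_2$ and $pm>t$. Following the setup preceding Proposition~\ref{O:P:four}, I would:
\begin{itemize}
\item replace $\chi$ by a Galois conjugate realized by Theorem~\ref{O:M:realize}, which does not change the local constant of $\theta_1$ since $\lambda n_1$ is invariant;
\item choose norm representatives by Lemmas~\ref{O:P:choose} and~\ref{O:P:preserve}.
\end{itemize}
Corollary~\ref{O:T:actual-local-factors} then gives $\Delta_{B_1}(\theta_1)=\Delta_{B_2}(\theta_2)$.

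\textbf{Main obstacle.} The step I expect to need the most care is the bookkeeping in the reduction. First, the hypotheses of Section~\ref{O:sec:models} assume that $B_1$ has the smallest break and that $t'=t+p\delta$, which must be confirmed for every choice of $B_2$, including the case $\delta=0$ where several fields share the minimal break. Second, the conductor dichotomy must be exhaustive: $a(\theta_{B_2})\ge m_2$ always holds by Lemma~\ref{O:G:lower} combined with the high cyclic conductor formula, and the case $a(\lambda n_2)\le m_2$ must collapse into the minimal case. Both steps are valid in mixed and equal characteristic, since every cited result was stated in both.
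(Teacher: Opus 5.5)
Your proposal is correct and follows the paper's proof. You compare the smallest-break field $B_1$ with an arbitrary $B_2$ and dispose of the lower products by Corollary~\ref{U:odd-power}. You then split via Proposition~\ref{O:G:reduce} into the minimal case, where Theorem~\ref{O:G:minimal} applies, and the case $p(r-t_2)>t$, handled by Corollary~\ref{O:T:actual-local-factors}, and conclude by transitivity. The only difference is that your detour about absorbing $\lambda$ in the minimal case is unnecessary: the conductor equality already gives $a(\theta_{B_1})=m_1$, so Theorem~\ref{O:G:minimal} applies to the given pair directly.
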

\begin{proof}
First compare the smallest-break field $B_1$ chosen above with an arbitrary
other degree-$p$ intermediate field. If the second
conductor is $m_2$, the conductor equality in
Proposition~\ref{O:G:reduce} gives a character of $B_1^\times$ of conductor $m_1$, and
Theorem~\ref{O:G:minimal} applies. Otherwise Proposition~\ref{O:G:reduce} writes the
pair as $\chi(\lambda n_1),\varphi(\lambda n_2)$ with
$p(r-t_2)>t$. Corollary~\ref{O:T:actual-local-factors} gives equality
of their local constants. The factors indexed by $S(B_i/A)$ are $1$
by Corollary~\ref{U:odd-power}. For a pair not containing $B_1$, extend its invariant common pullback to $B_1$
by Hilbert~90 and character extension; the two comparisons with $B_1$
then give the desired equality by transitivity. The preceding proofs
include both characteristics and $p=3$.
\end{proof}

\section{Dyadic extensions with an unramified intermediate field}\label{sec:dyadic-ur}
\subsection{Notation}
Let $F$ be a nonarchimedean local field of residue characteristic $2$,
in mixed or equal characteristic. Let $K/F$ be biquadratic. For a quadratic
intermediate field $L$, let $\omega_L$ be the nontrivial character of
$F^\times/\Nm_{L/F}L^\times$. A primitive compatible pair means characters
$\theta_i$ of $L_i^\times$ with the same pullback $\Theta$ to $K^\times$,
where $\Theta$ does not descend from $F$ through $\Nm_{K/F}$.
Fix $\psi_F$ and use $\psi_L=\psi_F\Tr_{L/F}$. The target is equality of
\begin{equation}\label{D:UR:Adef}
 A_L(\theta_L)=\Delta_L(\theta_L,\psi_L)\Delta_F(\omega_L,\psi_F).
\end{equation}
The omitted trivial-character factor is $1$.
Write $a_L(\theta)$ for the multiplicative conductor, $n_L$ for the additive
conductor in the convention that $\pp_L^{-n_L}$ is the largest trivial
\emph{ideal}, and $v_L(\pp_L)=1$. For a ramified quadratic extension $L/F$ of
break $t$, put $T=t+1$. Its different exponent is $T$, and
\begin{equation}\label{D:UR:localledger}
 n_L=2n_F+T,\qquad
 \Tr_{L/F}(\pp_L^j)=\pp_F^{\floor{(j+T)/2}},\qquad
 v_F(\Nm_{L/F}x)=v_L(x).
\end{equation}
For an unramified quadratic extension, additive conductors are
unchanged under trace pullback and $v_L|_F=v_F$.

We use the one-dimensional results recalled in
Sections~\ref{sec:local-notation}--\ref{sec:stationary}, and the local
norm-character identifications of Appendix~\ref{app:diamond-foundations}.

Put $N_i=\Nm_{K/L_i}$, $n_i=\Nm_{L_i/F}$, and let $\nu_i$ denote
the nontrivial character of $S(K/L_i)$. Lemma~\ref{U:conjugacy} gives
$\theta_i^{\sigma_i}=\theta_i\nu_i$ and the common restriction
$D=\theta_i|_F\omega_i$. Proposition~\ref{U:quadratic-square} leaves a
quotient in $\{1,-1\}$. We determine it by evaluating the critical
functions in Lemma~\ref{U:stationary-factor}.

\subsection{A quadratic norm-character identity}
Let $E/F$ be ramified quadratic of break $t\ge1$, put $T=t+1$ and
$q=\ceil{T/2}$, and let $\tau$ be the nontrivial character in $S(E/F)$. By additive
duality on $U_F^q/U_F^T$, choose $\alpha$ such that
\begin{equation}\label{D:UR:tauchart}
 \tau(1-z)=\Psi_F(z),\quad z\in\pp_F^q,\qquad
 \Psi_L(w):=\psi_L(\alpha w),\qquad v_F\alpha=-n_F-T.
\end{equation}
Both $\Psi_F$ and $\Psi_E$ have largest trivial ideal with exponent $T$.
Formula~\eqref{D:UR:tauchart} follows from $2q\ge T$.
In mixed characteristic put $e=v_F(2)$; in equal characteristic put
$e=+\infty$. The trace of $1$ in \eqref{D:UR:localledger} gives
\begin{equation}\label{D:UR:ebound}
 e\ge\floor{T/2},\qquad e+q\ge T.
\end{equation}
\begin{lemma}[An identity for $\Psi_E$ and $\Psi_F$]\label{D:UR:normphase}
For every $z\in\pp_E^q$,
\begin{equation}\label{D:UR:phase}
 \Psi_E(z)=\Psi_F(\Nm_{E/F}z).
\end{equation}
\end{lemma}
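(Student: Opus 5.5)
We mimic the proof of Corollary~\ref{O:I:conversion} in degree two. Since $\Psi_E=\Psi_F\circ\Tr_{E/F}$, the assertion is equivalent to
\[
 \Psi_F\bigl(\Tr_{E/F}z+\Nm_{E/F}z\bigr)=1\qquad(z\in\pp_E^q).
\]
The strategy is to realize $\Tr z+\Nm z$, up to the trivial ideal $\pp_F^T$, as $-(\Nm_{E/F}(1-z)-1)$. Then evaluate with $\tau$, which is trivial on norms.

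\textbf{The norm identity.} In degree two there is an exact identity
\[
 \Nm_{E/F}(1-z)=1-\Tr_{E/F}z+\Nm_{E/F}z .
\]
Put $u=\Tr z-\Nm z$, so that $\Nm(1-z)=1-u$.

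\textbf{Bounding $u$.} By \eqref{D:UR:localledger}, $\Tr z\in\pp_F^{\lfloor(q+T)/2\rfloor}$ and $v_F(\Nm z)=v_E(z)\ge q$. Since $q\le T$, we get $\lfloor (q+T)/2\rfloor\ge q$. Hence $u\in\pp_F^q$, and formula \eqref{D:UR:tauchart} applies to $1-u$. Because $1-u$ is a norm,
\[
 1=\tau(1-u)=\Psi_F(u)=\Psi_F(\Tr z-\Nm z).
\]

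\textbf{Fixing the sign.} This gives $\Psi_F(\Tr z)=\Psi_F(\Nm z)$, which is not yet \eqref{D:UR:phase}; we still need $\Psi_F(2\Nm z)=1$. Since $v_F(2\Nm z)\ge e+q\ge T$ by \eqref{D:UR:ebound}, the element $2\Nm z$ lies in the trivial ideal $\pp_F^T$ of $\Psi_F$. In equal characteristic $2\Nm z=0$, so there is nothing to check. Therefore
\[
 \Psi_E(z)=\Psi_F(\Tr z)=\Psi_F(\Nm z)\,\Psi_F(2\Nm z)^{-1}\cdot\Psi_F(2\Nm z)=\Psi_F(\Nm z),
\]
using $\Psi_F(-\Nm z)=\Psi_F(\Nm z)\Psi_F(-2\Nm z)$ and $\Psi_F(-2\Nm z)=1$.

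\textbf{Where care is needed.} The only delicate point is this sign step, namely the inequality $e+q\ge T$. It is exactly the second part of \eqref{D:UR:ebound}. That part follows from $e\ge\lfloor T/2\rfloor$ together with $q=\lceil T/2\rceil$, and the bound $e\ge\lfloor T/2\rfloor$ comes from $2=\Tr_{E/F}(1)\in\pp_F^{\lfloor T/2\rfloor}$. Everything else follows directly from the trace-ideal formula and the fact that $\tau$ vanishes on norms.
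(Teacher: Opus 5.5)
Your argument is correct and is the paper's proof: the exact identity $\Nm(1-z)=1-\Tr z+\Nm z$, the trace-ideal bound putting $\Tr z-\Nm z$ in $\pp_F^q$, and \eqref{D:UR:tauchart} together with triviality of $\tau$ on norms give $\Psi_F(\Tr z-\Nm z)=1$. That equality already \emph{is} \eqref{D:UR:phase}, because $\Psi_E(z)=\Psi_F(\Tr z)$. Your opening reformulation with $+\Nm z$ carried over the odd-$p$ sign of Corollary~\ref{O:I:conversion}, so the ``sign step'' and the inequality $e+q\ge T$ are unnecessary, although both are true and harmless.
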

\begin{proof}
The exact quadratic identity is
\[
 \Nm(1-z)=1-\Tr z+\Nm z.
\]
Both $\Tr z$ and $\Nm z$ have valuation at least $q$, by
\eqref{D:UR:localledger}. Evaluate \eqref{D:UR:tauchart} at $\Tr z-\Nm z$ and use
$\tau(\Nm(1-z))=1$. This gives
$\Psi_F(\Tr z-\Nm z)=1$, which is \eqref{D:UR:phase}.
\end{proof}
\begin{remark}
Equation~\eqref{D:UR:phase} has a positive norm term, whereas
\eqref{O:I:conversioneq} has a negative norm term for odd $p$.
\end{remark}
\begin{lemma}[The odd quadratic conductor]\label{D:UR:oddT}
If $T$ is odd, then $\operatorname{char}F=0$ and $T=2e+1$. In that case,
with $k$ the residue field,
\begin{equation}\label{D:UR:critical4}
 \Psi_F(4z)=(-1)^{\Tr_{k/\F_2}(\bar z)},\qquad z\in\OO_F.
\end{equation}
\end{lemma}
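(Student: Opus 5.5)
The plan has two parts. First we show that the break $t$ must equal $2e$ when $T$ is odd. Then we evaluate $\Psi_F(4z)$ using the norm-character formula \eqref{D:UR:tauchart}.

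\emph{The shape of $T$.} For a ramified quadratic extension in residue characteristic two, the break satisfies $1\le t\le 2e$. In equal characteristic the Artin--Schreier theory forces $t$ to be odd, and the same holds in mixed characteristic whenever $t<2e$. (The general statement is the standard classification, Lemma~\ref{D:break-ledger} with the ramification lemmas of Appendix~\ref{app:descent}.) Thus an odd $T=t+1$ can only occur when $t$ is even, and the only even value allowed is $t=2e$, with $e$ finite. This gives $\operatorname{char}F=0$ and $T=2e+1$. To avoid the appendix, one can instead use the trace bound \eqref{D:UR:ebound}: it gives $e\ge\lfloor T/2\rfloor=(T-1)/2$, so $t\le 2e$. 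The parity statement (an even break must be the maximal one) is then the input we take from the appendices.

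\emph{The character identity.} Let $q=\lceil T/2\rceil=e+1$. For $z\in\OO_F$ we have $v_F(4z)\ge 2e\ge q$ when $e\ge1$, so \eqref{D:UR:tauchart} gives $\Psi_F(4z)=\tau(1-4z)$. The task is therefore to compute the norm character $\tau$ on $U_F^{2e}$.

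The key step is the following. The principal units $1-4z$ with $z\in\OO_F$, taken modulo $U_F^{2e+1}$, are norms from $E$ exactly when $\Tr_{k/\F_2}(\bar z)=0$. This is the classical description of $U_F^{2e}$ as $1+4\OO_F$: such a unit is a square, and it is a norm from the quadratic unramified extension $F(\sqrt{1+4c})$, $\Tr(\bar c)=1$, exactly when $\Tr\bar z=0$. For the ramified $E$ with maximal break $2e$, the norm group $N_{E/F}(E^\times)$ meets $U_F^{2e}$ in an index-two subgroup (the graded norm index statement in \S\ref{sec:ramification}, at depth $t=2e$). We identify this subgroup concretely by the norm computation
\[
 \Nm(1+x)=1+\Tr x+\Nm x,
\]
with $x$ of valuation $2e$ in $E$ written as $x=2y$, $y$ a unit. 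Then $\Tr x+\Nm x\equiv 4(\Tr_{k}\text{-part})$, and modulo $\pp_F^{2e+1}$ the image is $\{1+4(y^2+y)\}$, an Artin--Schreier image. Hence $\tau(1+4z)=(-1)^{\Tr_{k/\F_2}\bar z}$. Finally $\tau(1-4z)=\tau(1+4z)\,\tau\!\bigl((1-4z)/(1+4z)\bigr)$, and the last quotient lies in $U_F^{4e}\subset U_F^{T}$ since $e\ge1$, so it is killed. This proves \eqref{D:UR:critical4}.

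\emph{Main obstacle.} The delicate step is making the norm image at depth $2e$ precise, i.e.\ the Artin--Schreier form $\bar y^2+\bar y$ of the residue of $(\Nm(1+2y)-1)/4$. This requires choosing a uniformizer or generator of $E$ so that the trace of $y\cdot 2$ contributes the linear term $\bar y$. I would first try a generator $E=F(\sqrt{\pi u})$: it has $t=2e$ and $\Tr$ maps the depth-$2e$ units in $E$ to $2\OO_F$ with residue linear in $\bar y$. If that normalization is awkward, the fallback is to argue abstractly that the restriction of $\tau$ to $U_F^{2e}/U_F^{2e+1}\cong k$ is a nontrivial additive character vanishing on $\{\bar y^2+\bar y\}$. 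The only such character is $(-1)^{\Tr}$.
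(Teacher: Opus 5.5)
\textbf{Gap in the first assertion.} Your argument for \eqref{D:UR:critical4} is sound in substance. However, the first assertion of the lemma is assumed rather than proved: that an odd $T$ forces $\operatorname{char}F=0$ and $T=2e+1$.

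You reduce it to ``an even break must be the maximal one'' and attribute that to Lemma~\ref{D:break-ledger} and Appendix~\ref{app:descent}. Neither contains it. Lemma~\ref{D:break-ledger} only relates the breaks inside a $C_p^2$ extension. Moreover, in this paper the classification of quadratic conductors (Lemma~\ref{U:quadratic-types}) is itself deduced from the present lemma, so taking it as input is circular.

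The missing step is short and uses only $a_F(\tau)=T$ and the fact that $\tau$ kills squares. Write $T=2r+1$, so $e\ge r$ by \eqref{D:UR:ebound}. Suppose $e>r$; this includes equal characteristic, where $e=+\infty$. Then $v_F(2\pi^rz)\ge e+r\ge T$, hence $(1+\pi^rz)^2\equiv1+\pi^{2r}z^2\pmod{\pp_F^T}$ for $z\in\OO_F$. Since Frobenius is onto $k$, these squares represent every class of $U_F^{T-1}/U_F^T$. So $\tau$ would be trivial on $U_F^{T-1}$, contradicting $a_F(\tau)=T$. Thus $e=r$, i.e.\ $T=2e+1$ with $e<\infty$.

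An equivalent route: for a uniformizer $\Pi$ of $E$ with $\sigma\Pi=\Pi(1+c\Pi^t)$, $c$ a unit, the element $\Tr_{E/F}\Pi=\Pi(2+c\Pi^t)$ lies in $F$. It therefore has even $E$-valuation, which forces $t=2e$ or $t$ odd. Either way, some such argument has to be written down.

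\textbf{The second half.} The ``delicate step'' you single out is unnecessary. Your fallback is already the complete argument once the vanishing is justified, and this is how the paper argues.

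For $y\in\OO_F$, \eqref{D:UR:tauchart} gives $\Psi_F(-4(y+y^2))=\tau\bigl((1+2y)^2\bigr)=1$. Meanwhile $z\mapsto\Psi_F(4z)$ is a nontrivial character of $\OO_F/\pp_F$, because $v_F(4)=2e=T-1$ and $\pp_F^T$ is the largest trivial ideal of $\Psi_F$. A nontrivial additive character of $k$ vanishing on $\{\bar y^2+\bar y\}$, which is the trace-zero hyperplane, must be $(-1)^{\Tr_{k/\F_2}}$.

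This avoids computing the full norm image at depth $2e$, which would need a trace-zero uniformizer of the maximal-break field. It also removes the passage from $1-4z$ to $1+4z$.

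\textbf{Two corrections.}
\begin{itemize}
\item Your sentence claiming that $1+4z$ ``is a square, and it is a norm from $F(\sqrt{1+4c})$ exactly when $\Tr\bar z=0$'' is false. The element $1+4z$ is a square only when $\Tr\bar z=0$, and every unit is a norm from an unramified extension. Delete that sentence.
\item The quotient $(1-4z)/(1+4z)$ lies in $U_F^{3e}$, not $U_F^{4e}$. It is still inside $U_F^T$, so the conclusion stands.
\end{itemize}
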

\begin{proof}
Write $T=2r+1$, so $r\ge1$, and $e\ge r$ by \eqref{D:UR:ebound}.
If $e>r$, including equal characteristic, the square
$(1+\pi^rz)^2$ is $1+\pi^{2r}z^2$ modulo $\pp_F^T$.
Since $a_F(\tau)=T$, the induced character on $U_F^{T-1}/U_F^T$ is
nontrivial; the preceding square identity would make it trivial on every residue square. Frobenius is onto
$k$, a contradiction. Hence $e=r$.
Now $v_F4=2e=T-1$, so $z\mapsto\Psi_F(4z)$ is a nontrivial residue
additive character. The equality
$(1+2z)^2=1+4(z+z^2)$ and the order of $\tau$ show that it kills
$\bar z+\bar z^2$. The image of this Artin--Schreier map is exactly the
absolute-trace-zero hyperplane: its kernel has two elements and its image
has zero trace. The unique nontrivial character of the quotient is the
right side of \eqref{D:UR:critical4}.
\end{proof}

\subsection{Characters of conductor \texorpdfstring{$T$}{T} and twists by one character of \texorpdfstring{$F^\times$}{F x}}
Assume from now through Section~\ref{D:UR:sec:critical} that $U/F$ is unramified
quadratic, $E/F$ ramified quadratic of break $t$, and $K=EU$. Write
\[
 N_U=\Nm_{K/U},\quad N_E=\Nm_{K/E},\quad
 n_U=\Nm_{U/F},\quad n_E=\Nm_{E/F}.
\]
Let $\eta=\omega_U$ and $\tau=\omega_E$. The norm character on $K/U$ is
$\tau_U=\tau n_U$. All four $\Psi_L=\psi_L(\alpha\,\cdot)$ have largest
trivial ideal $\pp_L^T$. Let $k\subset\kappa$ be the residue fields of
$F\subset U$, and put $Q=|k|$.
Choose $c\in\OO_F$ whose residue has absolute trace $1$, and choose
\begin{equation}\label{D:UR:dchoice}
 d\in\OO_U,\qquad d^2-d=c.
\end{equation}
This polynomial gives the unramified quadratic extension. For its
nontrivial automorphism $\sigma$,
\[
 \sigma(d)=1-d,\quad \Tr_{U/F}d=1,\quad n_U(d)=-c.
\]
In particular $c,d$ are units and $\bar d\notin k$.

\begin{theorem}[Characters of conductor $T$]\label{D:UR:models}
There exist characters $\chi_U^0,\chi_E^0$, both of conductor $T$, with
a primitive common norm pullback, such that
\begin{align}
 (\chi_U^0)^\sigma/\chi_U^0&=\tau_U,\label{D:UR:modelcomm}\\
 \chi_U^0(1-z)&=\Psi_U(d^2z) &&(z\in\pp_U^q),\label{D:UR:modelU}\\
 \chi_E^0(1-z)&=\Psi_E(cz) &&(z\in\pp_E^q).\label{D:UR:modelE}
\end{align}

\end{theorem}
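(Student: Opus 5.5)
We follow the proof of Proposition~\ref{O:I:canonical}, with the truncated logarithm $P$ replaced by the identity map. Since $2q\ge T$, the map $1-z\mapsto z$ induces a group isomorphism $U_U^q/U_U^T\to\pp_U^q/\pp_U^T$, because $(1-z)(1-w)=1-(z+w-zw)$ and $zw\in\pp^{2q}\subset\pp^T$. Consequently the right side of \eqref{D:UR:modelU} defines a character on $H=U_U^q$, trivial on $U_U^T$. Its conductor is exactly $T$: $d^2$ is a unit, and $\Psi_U$ has largest trivial ideal $\pp_U^T$.

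\textbf{Commutator on $H$.} We use $\sigma(d^2)=(1-d)^2=1-2d+d^2$, so $\sigma(d^2)-d^2=1-2d$. This gives
\[
 \chi(\sigma^{-1}(1-z))/\chi(1-z)=\Psi_U((1-2d)z)=\Psi_U(z)\Psi_U(-2dz).
\]
The factor $\Psi_U(-2dz)$ is trivial because $v(2)+q=e+q\ge T$ by \eqref{D:UR:ebound}; in equal characteristic it vanishes outright. Also $\Psi_U(z)=\Psi_F(\Tr_{U/F}z)=\tau(1-\Tr z)$, and this equals $\tau(n_U(1-z))$ since $n_U(1-z)=1-\Tr z+n_Uz$ and $v(n_Uz)\ge 2q\ge T$. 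Hence the commutator equals $\tau_U$ on $H$.

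\textbf{Extension to $U^\times$.} On $I=\{\sigma^{-1}(v)/v:v\in U_U^1\}$ we prescribe $\kappa(\sigma^{-1}v/v)=\tau_U(v)$. This is well defined because $\tau_U(f)=\tau(f^2)=1$ for $f\in U_F^1$. It agrees with the formula on $I\cap H$ by the same unramified Teichm\"uller-expansion argument as before, which writes $v=fh$ with $f\in U_F^1$ and $h\in H$. We then extend through the finite quotient $U_U^1/U_U^T$, setting the value to $1$ on roots of unity of odd order and on a uniformizer of $F$; $\tau_U$ is trivial on both, since $\tau_U=\tau\circ n_U$ has order $2$ and $n_U(\pi_F)=\pi_F^2$. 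This gives \eqref{D:UR:modelcomm} on all of $U^\times$. Put $\Theta=\chi_U^0N_U$. It is invariant under $\Gal(K/U)$ by construction and under $\Gal(K/E)$ by \eqref{D:UR:modelcomm}, since $\tau_U$ is a norm character for $K/U$. Lemmas~\ref{C:hilbert90} and~\ref{C:character-extension} then give $\chi_E^0$ with $\chi_E^0N_E=\Theta$. Primitivity follows exactly as in the odd case: a descended $\Theta$ would make $\chi_U^0$ a product of $\lambda n_U$ and an element of $S(K/U)$, both of which are $\sigma$-fixed, contradicting the nontrivial commutator $\tau_U$.

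\textbf{Formula \eqref{D:UR:modelE} (main obstacle).} For $z\in\pp_K^q$ we have
\[
 N_U(1-z)=1-S_Uz+N_Uz,
\]
where $S_U=\Tr_{K/U}$. Both terms lie in $\pp_U^q$ by \eqref{D:UR:localledger} applied to $K/U$, so the $(2q\ge T)$-linearization is legitimate. It gives
\[
 \Theta(1-z)=\Psi_U\bigl(d^2(S_Uz-N_Uz)\bigr)
 =\Psi_K(d^2z)\,\Psi_U(-d^2N_Uz).
\]
Apply Lemma~\ref{D:UR:normphase} to $K/U$ (via the base-changed version, since $\Psi_U$, $\Psi_K$ play the roles of $\Psi_F$, $\Psi_E$ for the norm character $\tau_U$) to $dz\in\pp_K^q$, using $N_U(dz)=d^2N_Uz$ exactly. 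This gives $\Psi_U(d^2N_Uz)=\Psi_K(dz)$, and the sign works out because $\Psi_U$ has order $2$ on this range ($2\pp^q\subset\pp^T$). Hence $\Theta(1-z)=\Psi_K((d^2-d)z)=\Psi_K(cz)$. Unramified descent is the final step: $N_E(1-z)\equiv 1-\Tr_{K/E}z$ modulo $\pp^{2q}$, and $N_E$ maps $U_K^q$ onto $U_E^q$. Together these identify $\Psi_K(cz)$ with the pullback of $\Psi_E(c\,\cdot)$ and yield \eqref{D:UR:modelE}. Because $c$ is a unit, $\chi_E^0$ has conductor exactly $T$. The points to check carefully are the characteristic-two sign conventions and the $e+q\ge T$ bound at each step.
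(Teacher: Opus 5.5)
Your proposal is correct and follows the paper's proof essentially step for step. It uses the same linear chart on $U_U^q$, the same commutator check via $\sigma(d^2)-d^2=1-2d$ and \eqref{D:UR:ebound}, the same extension across $IH$ trivial on odd-order roots of unity and on a uniformizer of $F$, and the same evaluation $\Psi_K(d^2z)\Psi_U(-d^2N_Uz)=\Psi_K(cz)$ via Lemma~\ref{D:UR:normphase} for $K/U$, followed by unramified descent. Your order-two remark is harmless but not needed, since $\Psi_U(-x)=\Psi_U(x)^{-1}$ already gives the sign.
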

\begin{proof}
The right side of \eqref{D:UR:modelU} is a character on $H=U_U^q$, since
$2q\ge T$. Its exact conductor is $T$. Unramified norm expansion shows
\[
 \tau_U(1-z)=\Psi_U(z)\quad(z\in\pp_U^q),
\]
because $v_F n_Uz\ge2q\ge T$. Further,
$\sigma(d^2)-d^2=1-2d$; the $2d$ error is killed on this whole ideal by
\eqref{D:UR:ebound}. Thus the prescribed character on $H$ has the required
conjugate quotient on $H$.

We extend the character of $H$ so that \eqref{D:UR:modelcomm} holds on $U^\times$.
Put $V=U_U^1$ and $I=\{\sigma(v)/v:v\in V\}$. Prescribe
$\kappa_I(\sigma(v)/v)=\tau_U(v)$. This is well defined because
$\tau_U(f)=\tau(f^2)=1$ on $U_F^1$. If $\sigma(v)/v\in H$, then $v$
modulo $\pp_U^q$ is fixed, hence is represented by a unit of $F$; write
$v=fh$ with $f\in U_F^1$, $h\in H$. (For unramified extensions this
follows coefficient by coefficient in the uniformizer expansion.)
The characters prescribed on $I$ and $H$ consequently agree on the
intersection. Extend their product character from $IH/U_U^T$ across the
finite abelian group $V/U_U^T$. Make it trivial on the odd-order
Teichmuller group and on a uniformizer of $F$. The character $\tau_U$ is
trivial on both these factors. The resulting $\chi_U^0$ has
\eqref{D:UR:modelcomm} on all of $U^\times$ and has exact conductor $T$.

Its pullback to $K$ is invariant under both quadratic automorphisms,
since $\tau_U$ is trivial on $N_UK^\times$. Hilbert 90 makes this pullback trivial
on $\ker N_E$. It therefore defines a character of $N_E(K^\times)$,
which extends to $E^\times$ by Lemma~\ref{C:character-extension}.
Call the extension $\chi_E^0$. Since $K/E$ is unramified, $N_E(K^\times)$
contains $\OO_E^\times$; it remains only to choose a value on a
uniformizer of $E$.
For $z\in\pp_K^q$, the exact ramified quadratic norm gives
\[
 \chi_U^0(N_U(1-z))
 =\Psi_K(d^2z)\Psi_U(-d^2N_Uz)
 =\Psi_K((d^2-d)z)=\Psi_K(cz).
\]
The middle equality uses Lemma~\ref{D:UR:normphase} on $dz$ and
$N_U(dz)=d^2N_Uz$, an exact equality. The unramified norm onto $U_E^q$
and its expansion modulo $\pp_E^T$ now give \eqref{D:UR:modelE}. Since $c$ is
a unit, its conductor is exactly $T$. Nontriviality of \eqref{D:UR:modelcomm}
excludes descent of the common pullback from $F$.
\end{proof}

\begin{proposition}[Writing the given pair using one character of $F^\times$]\label{D:UR:actual}
For every primitive compatible pair $(\theta_U,\theta_E)$ there is a
character $\lambda$ of $F^\times$, after an unramified adjustment of
$\chi_E^0$ if necessary, such that
\begin{equation}\label{D:UR:twistpair}
 \theta_U=\chi_U^0(\lambda n_U),\qquad
 \theta_E=\chi_E^0(\lambda n_E).
\end{equation}
There is an integer $h\ge0$ with
\begin{equation}\label{D:UR:conductors}
 a_U(\theta_U)=T+h,\qquad a_E(\theta_E)=T+2h.
\end{equation}
For $h>0$, $a_F(\lambda)=T+h$; for $h=0$, $a_F(\lambda)\le T$.
\end{proposition}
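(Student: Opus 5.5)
The plan is to follow the proof of Proposition~\ref{O:I:actualtwist} verbatim, with $p$ replaced by $2$ and Theorem~\ref{D:UR:models} in place of Proposition~\ref{O:I:canonical}.

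First, normalize $\theta_U$ against $\chi_U^0$. By Lemma~\ref{U:conjugacy}, the conjugate quotient $\theta_U^\sigma/\theta_U$ is the nontrivial element $\tau_U$ of $S(K/U)$. By \eqref{D:UR:modelcomm}, $\chi_U^0$ has the same conjugate quotient. Hence $\theta_U/\chi_U^0$ is $\sigma$-invariant, and Lemmas~\ref{C:hilbert90} and~\ref{C:character-extension} give a character $\lambda$ of $F^\times$ with $\theta_U/\chi_U^0=\lambda\circ n_U$.

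Next, compare the $E$-side. Both $\theta_E$ and $\chi_E^0(\lambda n_E)$ pull back to the same character of $K^\times$: the first pulls back to $\Theta=\theta_U N_U$, and the second pulls back to $\chi_U^0N_U\cdot\lambda\circ N_{K/F}$, which also equals $\Theta$. So their quotient lies in $S(K/E)$. Since $K/E$ is unramified, this quotient is unramified. Absorbing it into $\chi_E^0$ changes only the value of $\chi_E^0$ at a uniformizer of $E$. This leaves \eqref{D:UR:modelE} untouched, as well as the conductor and the common pullback (the pullback to $K$ is unchanged). This gives \eqref{D:UR:twistpair}.

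For the conductors, write $a_U(\theta_U)=T+h$. The conductor of $\theta_U$ is at least that of its conjugate quotient $\tau_U$, which is $T$, because unramified pullback preserves the conductor of $\tau$. So $h\ge0$. If $h>0$, then $\lambda n_U$ has conductor $T+h$, since $\chi_U^0$ has conductor $T$. Unramified pullback preserves conductor, so $a_F(\lambda)=T+h$. If $h=0$, then $a_F(\lambda)=a_U(\lambda n_U)\le T$.

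To get $a_E(\theta_E)$, pass through $K$. Since $K/E$ is unramified, $a_E(\theta_E)=a_K(\Theta)$. Compute $a_K(\Theta)$ from $\theta_U$ via the ramified quadratic extension $K/U$, whose break is $t$. All twists of $\theta_U$ by $S(K/U)=\{1,\tau_U\}$ are conjugates and share the conductor $T+h\ge t+1$. Therefore the no-drop alternative of \cite[Proposition~3.10]{Ueda} applies, both at $T+h=t+1$ and above. This gives $a_K(\Theta)-1=\psi_{K/U}(T+h-1)=t+2h$, i.e.\ $a_K(\Theta)=T+2h$.

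The only delicate point is the boundary $h=0$. There one must check that the drop case at conductor $t+1$ in \cite[Proposition~3.10]{Ueda} is excluded. It is excluded because twisting by the nontrivial norm character produces a conjugate character, whose conductor is unchanged.
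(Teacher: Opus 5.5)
Your proposal is correct and follows the paper's own proof essentially step by step: Hilbert~90 applied to the $\sigma$-invariant quotient $\theta_U/\chi_U^0$, absorbing the unramified discrepancy in $S(K/E)$ into $\chi_E^0$, and computing $a_E(\theta_E)=a_K(\Theta)$ via \cite[Proposition~3.10]{Ueda} on $K/U$. As in the paper, the drop at conductor $t+1$ is excluded because the twist of $\theta_U$ by $\tau_U$ is a conjugate of $\theta_U$.
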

\begin{proof}
Lemma~\ref{U:conjugacy} gives
$\theta_U^\sigma/\theta_U=\tau_U$. Thus $\theta_U/\chi_U^0$ is
$\Gal(U/F)$-invariant. Lemmas~\ref{C:hilbert90}
and~\ref{C:character-extension} give
$\theta_U/\chi_U^0=\lambda n_U$ for a character $\lambda$ of $F^\times$.
The quotient $\theta_E/(\chi_E^0(\lambda n_E))$ belongs to $S(K/E)$.
Multiply $\chi_E^0$ by this unramified character to obtain
\eqref{D:UR:twistpair}.
The conductor of $\theta_U$ is at least $T$, the conductor of its
commutator. If it is greater than $T$, then $\lambda n_U$ has the same conductor as
$\theta_U$, because $\chi_U^0$ has conductor $T$; since $U/F$ is unramified,
$a_U(\lambda n_U)=a_F(\lambda)$. If $a_U(\theta_U)=T$, then $a_F(\lambda)\le T$. Both twists of $\theta_U$
by $\tau_U$ are conjugate and have the same conductor. Ueda's norm-pullback
conductor formula \cite[Proposition~3.10]{Ueda}, including the case
of conductor $T$, gives $a_K(\theta_U N_U)=T+2h$.
Since $K/E$ is unramified, $a_E(\theta_E)=T+2h$.
\end{proof}

\subsection{The element \texorpdfstring{$x+d$}{x+d} and its norms}
For $0\le h\le t$ put
\[
 s_U=\ceil{(T+h)/2},\quad s_E=\ceil{T/2}+h,\quad
 d_U=\floor{(T+h)/2}.
\]
Choose $A_*\in F$ such that
\begin{equation}\label{D:UR:lambdachart}
 \lambda(1-z)=\Psi_F(A_*z)\quad(z\in\pp_F^{s_U}).
\end{equation}
For $h>0$, $v_FA_*=-h$; at $h=0$ the coefficient can be chosen integral.
Its ambiguity is $\pp_F^{T-s_U}$.
\begin{lemma}\label{D:UR:choosex}
There is $x\in E$, with $A=n_Ex$, such that $A$ can replace $A_*$ in
\eqref{D:UR:lambdachart}. If $h>0$, $v_Ex=-h$; at $h=0$, $x$ is integral.
The zero coefficient class may be represented by $x=0$.
\end{lemma}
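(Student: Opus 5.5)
The plan is to mirror Lemma~\ref{O:I:chooseA}: replace $A_*$ by an exact norm from $E$ lying in the same coefficient class, using the norm approximation \cite[Lemma~6.6]{Ueda} (recalled in Section~\ref{sec:ramification}). That result gives, for $c\in F^\times$ of valuation $a$ and $0\le r\le t$, an $x\in E^\times$ with $v_Ex=a$ and $c-\Nm_{E/F}x\in\pp_F^{a+r}$. Since the coefficient $A_*$ in \eqref{D:UR:lambdachart} is only determined modulo $\pp_F^{T-s_U}$, it suffices to produce $x$ with
\[
 A_*-n_Ex\in\pp_F^{T-s_U}.
\]
Any such $A=n_Ex$ then represents the same character on $U_F^{s_U}$, because $\Psi_F$ is trivial on $\pp_F^T$.

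First take $h>0$. Here $v_FA_*=-h$, so the required relative precision is $r=T-s_U+h=T+h-s_U=d_U$. I will check that $d_U\le t$ for $1\le h\le t$. Since $d_U=\lfloor(t+1+h)/2\rfloor\le\lfloor(2t+1)/2\rfloor=t$, the norm approximation applies with $r=d_U$, and it gives $x$ with $v_Ex=v_FA_*=-h$.

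Next take $h=0$ with a nonzero class. Choose an integral representative $A_*$ of valuation $a\ge0$. If $a\ge T-s_U$, the class is zero and we take $x=0$. Otherwise the relative precision needed is $r=T-s_U-a\le T-s_U=\lfloor T/2\rfloor$, and I will check $\lfloor T/2\rfloor\le t$. This holds because $T=t+1$ and $t\ge1$. The resulting $x$ has $v_Ex=a\ge0$, so it is integral. The zero class is represented by $x=0$ and $A=0$.

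The one genuine point to watch is the boundary $h=t$. In the odd case, Lemma~\ref{O:I:chooseA} needed an extra $\epsilon$ there. In this quadratic setting, however, the computation above shows that $d_U$ never exceeds $t$: at $h=t$ we get $d_U=\lfloor(2t+1)/2\rfloor=t$ exactly. So no correction term should be needed, and the main obstacle is only confirming this floor computation against the stated range $0\le h\le t$.
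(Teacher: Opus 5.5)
Your proposal is correct and follows the paper's proof essentially verbatim. In both, the subcritical norm approximation \cite[Lemma~6.6]{Ueda} is applied at relative precision $T-s_U+h=d_U\le t$ when $h>0$, and at $T-s_U-a\le t$ for a nonzero integral class of valuation $a$ when $h=0$, with $x=0$ for the zero class. Your remark that no boundary correction $\epsilon$ is needed at $h=t$ is also consistent with the paper: the quadratic chart lives on $\pp_F^{s_U}$ rather than $\pp_F^{d_U}$, so the required precision is $d_U$ rather than $\lceil m_U/2\rceil$.
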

\begin{proof}
For $h>0$ the required relative precision is
$T-s_U+h=d_U\le t$, since $T+h\le2T-1$. Thus the subcritical norm
representative theorem applies. For an integral coefficient of valuation
$a<T-s_U$ use relative precision $T-s_U-a\le t$. If the integral class
is zero, choose zero.
\end{proof}
Put
\begin{equation}\label{D:UR:BCdefs}
 B_U=A+d^2,\quad B_E=A-x+c,\quad C=x+d,
 \quad Z_U=N_UC,\quad Z_E=N_EC,\quad b=\Tr_{E/F}x.
\end{equation}
\begin{lemma}[Coefficients for $\theta_U$ and $\theta_E$]\label{D:UR:coeffs}
For the given characters in \eqref{D:UR:twistpair},
\begin{equation}\label{D:UR:actualcoeffs}
 \theta_U(1-z)=\Psi_U(B_Uz)\ (z\in\pp_U^{s_U}),\qquad
 \theta_E(1-z)=\Psi_E(B_Ez)\ (z\in\pp_E^{s_E}).
\end{equation}
Their coefficient valuations are $-h$ and $-2h$, respectively.
\end{lemma}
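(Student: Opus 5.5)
We follow the odd-prime model of Proposition~\ref{O:I:actualcoeff}, treating the two characters separately and multiplying the formulas for the factors in \eqref{D:UR:twistpair}.

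\emph{The field $U$.} Since $U/F$ is unramified, for $z\in\pp_U^{s_U}$ the exact identity $n_U(1-z)=1-\Tr_{U/F}z+n_Uz$ holds, and $v_F(n_Uz)\ge 2s_U\ge T+h$. The coefficient $A_*$ (equivalently $A$, by Lemma~\ref{D:UR:choosex}) has valuation $\ge -h$, so $\Psi_F(A\,n_Uz)=1$. Also $\Tr z$ has depth $\ge s_U$, so \eqref{D:UR:lambdachart} applies to it. Together these give $(\lambda n_U)(1-z)=\Psi_U(Az)$. Since $s_U\ge q$, multiplying by \eqref{D:UR:modelU} gives the coefficient $A+d^2=B_U$.

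\emph{The field $E$.} For $z\in\pp_E^{s_E}$ we have $n_E(1-z)=1-u$ with $u=\Tr z-n_Ez$. By \eqref{D:UR:localledger}, $v_F(\Tr z)\ge\lfloor(s_E+T)/2\rfloor$ and $v_F(n_Ez)\ge s_E$, and both are $\ge s_U$ (this inequality needs a short case check on the parity of $T+h$). Then
\[
 (\lambda n_E)(1-z)=\Psi_F(A\Tr z)\,\Psi_F(-A\,n_Ez)=\Psi_E(Az)\,\Psi_F(-n_E(xz)),
\]
using the exact identity $A\,n_Ez=n_E(xz)$. We have $v_E(xz)\ge s_E-h=\lceil T/2\rceil=q$, so Lemma~\ref{D:UR:normphase} gives $\Psi_F(n_E(xz))=\Psi_E(xz)$. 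Hence $(\lambda n_E)(1-z)=\Psi_E((A-x)z)$, and \eqref{D:UR:modelE} adds $c$, since $s_E\ge q$. At $h=0$ the same argument works with $x$ integral, and the case $x=0$ is trivial.

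\emph{Valuations.} For $h>0$, $v_U(B_U)=v_F(A)=-h$ because $d^2$ is a unit. Over $E$, $v_E(A)=-2h$ while $v_E(x)=-h$ and $c$ is integral, so $v_E(B_E)=-2h$. At $h=0$ we reduce residues, as in the odd case. The residue of $B_U$ is $\xi^2+\bar d^2$, which is nonzero because $\bar d\notin k$. The residue of $B_E$ is $\xi^2-\xi+\bar c$, which is nonzero because it has absolute trace $1$. So both valuations are $0$, and they match the conductors in \eqref{D:UR:conductors} against the ideal exponent $T$.

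\emph{Main obstacle.} The delicate point is the depth bookkeeping over $E$. We must check that $u$ lands in $\pp_F^{s_U}$ so that \eqref{D:UR:lambdachart} applies, and that the error terms multiplied by $A$ lie in $\pp_F^{T}$. In the boundary case $h=t$ the exponents are tight, and some care is needed to use exact identities rather than congruences.
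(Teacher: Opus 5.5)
Your proof is correct and follows the paper's argument essentially step for step: over $U$ the norm term is killed because $-h+2s_U\ge T$; over $E$ you use $s_E\ge s_U$ and $\lfloor (s_E+T)/2\rfloor\ge s_U$, the exact identity $A\,n_Ez=n_E(xz)$, and Lemma~\ref{D:UR:normphase} applied to $xz$; and the valuations come from the unique leading term $A$ for $h>0$ and from the residues $\xi^2+\bar d^2$ and $\xi^2-\xi+\bar c$ at $h=0$. The boundary difficulty you anticipate at $h=t$ does not arise here, because Lemma~\ref{D:UR:choosex} supplies an exact norm $A$ for every $0\le h\le t$, so there is no correction term $\epsilon$ as in the odd case.
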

\begin{proof}
For the unramified pullback of $\lambda$, the extra norm term has phase
valuation at least $-h+2s_U\ge T$; use \eqref{D:UR:modelU}. For $E$,
$u=\Tr z-n_Ez$ belongs to $\pp_F^{s_U}$ for $z\in\pp_E^{s_E}$, because
\[
 s_E\ge s_U,\qquad \floor{(s_E+T)/2}\ge s_U.
\]
Then \eqref{D:UR:lambdachart} and Lemma~\ref{D:UR:normphase}, applied to $xz$ of
valuation at least $\ceil{T/2}$, give
\[
 \lambda(n_E(1-z))=\Psi_F(A\Tr z-A n_Ez)
                  =\Psi_E((A-x)z).
\]
Together with \eqref{D:UR:modelE} this proves the formulas. For $h>0$, $A$
is the unique leading term in each coefficient. At $h=0$, write
$\xi=\bar x\in k$. The residues are $\xi^2+\bar d^2$ and
$\xi^2-\xi+\bar c$. The first is nonzero since $\bar d\notin k$, and
the second has absolute trace $1$.
\end{proof}
The norms of the \emph{same} element $C$ have the exact expressions
\begin{align}
 Z_U&=A+b d+d^2,\label{D:UR:ZU}\\
 Z_E&=x^2+x-c,\label{D:UR:ZE}\\
 S:=\Tr_{U/F}Z_U-\Tr_{E/F}Z_E
     &=1+4c-\mathcal D_x,\qquad
       \mathcal D_x=b^2-4A=(2x-b)^2.\label{D:UR:Sidentity}
\end{align}
Indeed $\Tr d=1$, $\Tr d^2=1+2c$, and $\Tr x^2=b^2-2A$.

Write $H_L$ for the critical function and $g_L$ for its normalized
sum in Lemma~\ref{U:stationary-factor}, with $J=T$.

\subsection{The minimal-conductor comparison}
\begin{theorem}\label{D:UR:minimal}
Every primitive compatible pair with
$a_U(\theta_U)=a_E(\theta_E)=T$ satisfies
\begin{equation}\label{D:UR:URtarget}
 \Delta_U(\theta_U)\Delta_F(\eta)
 =\Delta_E(\theta_E)\Delta_F(\tau).
\end{equation}
This includes the odd-conductor maximal-break case in mixed characteristic
and all equal-characteristic minimal pairs.
\end{theorem}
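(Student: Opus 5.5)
In the case $h=0$ the plan is to use the explicit data already set up: Proposition~\ref{D:UR:actual}, Lemma~\ref{D:UR:choosex}, the coefficients $B_U,B_E$ of Lemma~\ref{D:UR:coeffs}, and the common element $C=x+d$. Since the characters $\theta_U,\theta_E$ have conductor $T$ on both sides, Lemma~\ref{U:stationary-factor} applies with $J=T$ once $T\ge2$. The case $T=2$ ($t=1$) has even conductor and will be treated in the same way. I will feed the stationary coefficients into Lemma~\ref{U:common-origin}, taking $Z_U=N_UC$ on $U$ and $Z_E=N_EC$ on $E$.

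The first step is to check that $Z_U,Z_E$ are legitimate stationary coefficients. From \eqref{D:UR:ZU} and \eqref{D:UR:ZE} we get $Z_U-B_U=bd$ and $Z_E-B_E=x^2+2x-A-2c$. Here $b=\Tr x$ has valuation at least $\lfloor T/2\rfloor$, and $x^2-A=x(x-\sigma x)$ is deep because $x$ is integral and $v_E(x-\sigma x)\ge t$. So both differences lie in the ambiguity ideals $\pp^{T-\lceil T/2\rceil}$, using \eqref{D:UR:ebound} for the $2$-divisible terms.

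The second step is to do the same for the norm characters. Over $E$ one has $\tau=\omega_E$, with coefficient $1$ relative to $\Psi_F$ by \eqref{D:UR:tauchart}. Lemma~\ref{U:common-origin} requires $W_E=n_E(S_EC)$ with $S_EC=\Tr_{K/E}C=2x+1$. Its norm is $1+2b+4A$, which agrees with $1$ modulo $\pp_F^{T-q}$. On the $U$ side, $\eta$ is unramified, so $m_F(\eta)=0$ and $T_U=0$. Lemma~\ref{U:common-origin} therefore does not literally apply there, and I will instead compute $\Delta_F(\eta)=\eta(\pi_F)^{n_F}=(-1)^{n_F}$ directly. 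I then compare the expression $\theta_U(-\alpha^{-1})\theta_U(Z_U)^{-1}\Psi_U(-Z_U)g_U(-1)^{n_F}$ with the corresponding one over $E$. Compatibility gives $\theta_U(Z_U)=\Theta(C)=\theta_E(Z_E)$, and Lemma~\ref{U:determinant} handles the values at $-\alpha^{-1}$; here I need to track $\eta(\alpha)=(-1)^{v_F\alpha}=(-1)^{n_F+T}$. The additive factors combine to $\Psi_F(-S)$, and by \eqref{D:UR:Sidentity} $S=1+4c-(2x-b)^2$. Since $(2x-b)^2$ lies in the trivial ideal, this reduces to $\Psi_F(-1-4c)$.

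This leaves a sign identity: $\Psi_F(-1-4c)\,(-1)^{T}g_U/(g_Eh_E)=1$. For even $T$ all the $g$ factors equal $1$. By \eqref{D:UR:ebound} we have $\Psi_F(4c)=1$, and $\Psi_F(1)=\tau(0)$-type evaluations give $\Psi_F(-1)=1$ because $v_F\alpha=-n_F-T$ makes $\alpha$ deep enough. Everything then collapses to $(-1)^T=1$. The main obstacle is odd $T$, that is $T=2e+1$ in mixed characteristic, as in Lemma~\ref{D:UR:oddT}. There the critical sums are residue quadratic Gauss sums of the form $\sum\Psi(4\cdot)$, i.e.\ sums of $(-1)^{\Tr(\beta\bar z^2+\gamma\bar z)}$ over $k$ or $\kappa$. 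For these I would use \eqref{D:UR:critical4} to evaluate each sum explicitly: over $k$ the sum of $(-1)^{\Tr(\bar z^2+\gamma\bar z)}$ equals $\pm|k|$ according to $\Tr(\gamma^2)$ (up to normalization), and $\Psi_F(4c)=(-1)^{\Tr\bar c}=-1$ supplies the compensating sign. The unramified sum over $\kappa$ with coefficient $d^2$ must be evaluated the same way, and checking that its sign exactly cancels $-1$ times the $E$-side sign is the step I expect to require the most care.
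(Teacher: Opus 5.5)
\paragraph{Even $T$.} Here your argument follows the paper's route: stationary coefficients $N_UC$ and $N_EC$, $\Delta_F(\eta)=(-1)^{n_F}$ computed directly, and the determinant identity producing $(-1)^T$. The additive bookkeeping, however, is wrong. You dropped the factor $\Psi_F(-W_E)$ that $\Delta_F(\tau)$ contributes. You then compensated by asserting $\Psi_F(-1)=1$ because ``$\alpha$ is deep enough''. That assertion is false: $v_F\alpha=-n_F-T$, so $\Psi_F$ is trivial only on $\pp_F^T$, and $1\notin\pp_F^T$. For instance, at $T=2$ one can have $\Psi_F(1)=\pm i$.

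The correct quotient is $(-1)^T\Psi_F(W_E-S)$. Lemma~\ref{U:e2} turns $W_E-S$ into
\[
W_U=n_U(b+2d)=b^2+2b-4c,
\]
and every term of $W_U$ lies in $\pp_F^T$ when $T$ is even. Equivalently, you can use the coefficient $1$ for $\tau$, as the paper does, and obtain $\Psi_F(1-S)$. Your two slips cancel numerically only because $\Psi_F(W_E)=\Psi_F(1)$, so this part is repairable.

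\paragraph{Odd $T$.} This case has a genuine gap. First, the sums are not of the form you describe. Over a residue field of characteristic $2$ one has $\Tr(\beta z^2)=\Tr(\beta^{1/2}z)$. Hence $(-1)^{\Tr(\beta z^2+\gamma z)}$ is an additive character, whose sum is $0$ or $|k|$. The critical functions here instead have the nondegenerate polar pairing of Lemma~\ref{U:stationary-factor}. They take values in $\mu_4$, including $\pm i$, and their normalized sums are in general not signs.

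More fundamentally, $H_U$, $H_E$ and $H_\tau$ involve $\theta_U(1+\pi^ex)$, $\theta_E(1+\Pi^ex)$ and $\tau(1+\pi^ex)$ at depth $e=\lfloor T/2\rfloor$. This is one step below the ideals on which \eqref{D:UR:modelU}, \eqref{D:UR:modelE} and \eqref{D:UR:tauchart} hold. In Theorem~\ref{D:UR:models} the values of $\chi_U^0$ at that depth come from an arbitrary extension. So none of the three sums can be evaluated individually from the available data.

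The missing idea is to compare the sums rather than evaluate them.
\begin{itemize}
\item Perturb the common element to $Y_z=C(1+\Pi^ez)$, and subtract the identity of Lemma~\ref{U:e2} for $C$ from the same identity for $Y_z$.
\item Compatibility gives $\theta_U(N_UY_z/Z_U)=\Theta(1+\Pi^ez)=\theta_E(N_EY_z/Z_E)$, so the unknown multiplicative values cancel.
\item The norm quotient $n_E(\Tr_{K/E}Y_z)/W_E$ is a norm from $E$, so its $\tau$-value is $1$.
\item Equation \eqref{D:UR:extraUconstant} removes the $U$ trace--norm term.
\end{itemize}
This yields
\[
H_U(z^2)=H_E(\Tr_{\kappa/k}z)\,H_\tau\bigl((\Tr_{\kappa/k}(\bar Cz))^2\bigr)\qquad (z\in\kappa).
\]
The map $z\mapsto(\Tr z,(\Tr(\bar Cz))^2)$ is a bijection $\kappa\to k\times k$. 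Hence $g_U=g_Eg_\tau$, without evaluating any Gauss sum.

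The remaining prefactor is $(-1)^T\Psi_F(W_U)$, with $W_U=4n_U(d+b/2)$. Put $a=b/2\in\OO_F$. The residue of $n_U(d+a)$ is $\bar a^2+\bar a-\bar c$, which has absolute trace $1$. So Lemma~\ref{D:UR:oddT} gives $\Psi_F(W_U)=-1$, and the prefactor equals $1$.
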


\subsubsection*{Even \texorpdfstring{$T$}{T}}
Here $T=2r$ and $x$ in Lemma~\ref{D:UR:choosex} is integral. The differences
$Z_U-B_U=bd$ and
\[
 Z_E-B_E=bx-2A+2x-2c
\]
have respective valuations at least $r$ in $U$ and $2r$ in $E$, by
\eqref{D:UR:localledger} and $v_F2\ge r$. Thus $Z_U,Z_E$ are stationary coefficients for $\theta_U,\theta_E$
in Lemma~\ref{U:stationary-factor}. All three conductors $T$ are even, so there are
no residual sums.
Put $c_0=-\alpha^{-1}$. Compatibility gives
$\theta_U(Z_U)=\theta_E(Z_E)$; Lemma~\ref{U:determinant} gives
$\theta_U(c_0)/\theta_E(c_0)=\eta(c_0)\tau(c_0)$.
Using \eqref{U:normalized-factor} and
$\Delta_F(\tau)=\tau(c_0)\Psi_F(-1)$ therefore gives
\begin{equation}\label{D:UR:evenratio}
 \frac{\Delta_U(\theta_U)\Delta_F(\eta)}
      {\Delta_E(\theta_E)\Delta_F(\tau)}
 =(-1)^T\Psi_F(1-S).
\end{equation}
Here $\Delta_F(\eta)=(-1)^{n_F}$ and
$\eta(c_0)=(-1)^{n_F+T}$.
Now $2x-b$ is trace zero and has $E$-valuation at least $2r$.
Lemma~\ref{D:UR:normphase} gives $\Psi_F(\mathcal D_x)=1$ since
$n_E(2x-b)=-\mathcal D_x$. Also $v_F(4c)\ge2r=T$.
Equation \eqref{D:UR:Sidentity} proves that \eqref{D:UR:evenratio} is $1$.

\subsubsection*{Odd \texorpdfstring{$T$}{T}: the elementary sign}
By Lemma~\ref{D:UR:oddT}, $T=2e+1$, $\operatorname{char}F=0$ and $e\ge1$.
We have $x\in\OO_E$ and $b\in\pp_F^e$.
The differences $Z_U-B_U$ and $Z_E-B_E$ have valuations at least
$e$ and $2e$, respectively. Both therefore satisfy the stationary
coefficient congruences modulo $\pp_U^e$ and $\pp_E^e$. In addition put
\begin{equation}\label{D:UR:Wdefs}
 P_C=\Tr_{K/E}C=2x+1,\quad W_E=n_E(P_C),\quad
 Q_C=\Tr_{K/U}C=b+2d,\quad W_U=n_U(Q_C).
\end{equation}
The element $P_C$ is a unit and
$W_E=1+2b+4A\equiv1\pmod{\pp_F^{2e}}$. Thus $W_E$ is a stationary coefficient for $\tau$ relative to $\Psi_F$.
Moreover $\tau(W_E)=1$ because $W_E=n_E(P_C)$.

Lemma~\ref{U:e2}, in the present notation, gives
\begin{equation}\label{D:UR:e2}
 \Tr_{U/F}N_UY+n_U(\Tr_{K/U}Y)
 =\Tr_{E/F}N_EY+n_E(\Tr_{K/E}Y).
\end{equation}
Use the three stationary coefficients $Z_U,Z_E,W_E$ in \eqref{U:normalized-factor}.
Compatibility, \eqref{U:det-character}, and \eqref{D:UR:e2} give
\begin{equation}\label{D:UR:oddratio}
 \frac{\Delta_U(\theta_U)\Delta_F(\eta)}
      {\Delta_E(\theta_E)\Delta_F(\tau)}
 =(-1)^T\Psi_F(W_U)\frac{g_U}{g_Eg_\tau}.
\end{equation}
The prefactor on the right is $1$. Indeed $a=b/2\in\OO_F$ and
\[
 W_U=4n_U(d+a),\qquad
 \overline{n_U(d+a)}=\bar a^2+\bar a-\bar c.
\]
Its absolute trace is $1$. Lemma~\ref{D:UR:oddT} gives $\Psi_F(W_U)=-1$,
which cancels $(-1)^T=-1$.

\subsubsection*{Odd \texorpdfstring{$T$}{T}: an identity between three critical functions}
Choose a uniformizer $\Pi$ of $E$ and put $\pi=n_E\Pi$; this is a
uniformizer of $F$ and $U$. In \eqref{U:normalized-residual} use critical elements
$\pi^e$ for $U,F$ and $\Pi^e$ for $E$.
For $z\in\OO_K$ set
\[
 Y_z=C(1+\Pi^e z).
\]
The estimates below show that $\Tr_{K/E}Y_z$ is a unit and
$v_U(\Tr_{K/U}Y_z)=e$. In particular both traces are nonzero.
The three normalized critical residues are
\begin{equation}\label{D:UR:critmaps}
 \begin{aligned}
 \frac{N_UY_z/Z_U-1}{\pi^e}&\equiv\bar z^2 &&\text{in }\kappa,\\
 \frac{N_EY_z/Z_E-1}{\Pi^e}&\equiv\Tr_{\kappa/k}\bar z &&\text{in }k,\\
 \frac{n_E(\Tr_{K/E}Y_z)/W_E-1}{\pi^e}
    &\equiv\bigl(\Tr_{\kappa/k}(\bar C\bar z)\bigr)^2 &&\text{in }k.
 \end{aligned}
\end{equation}
To prove the first congruence, note that $\Tr_{K/U}(\Pi^ez)$ has
$U$-valuation at least $\floor{(e+T)/2}\ge e+1$; its norm has leading
term $\pi^e\bar z^2$. For the second congruence, the trace term of $N_E(1+\Pi^ez)$
has leading coefficient $\Tr\bar z$, and its quadratic term has depth
$2e\ge e+1$. For the third, the relative trace change has leading
$E$-coefficient $\Tr(\bar C\bar z)$ at depth $e$, since
$\overline{P_C}=1$; apply the first ramified calculation to its norm.
These verifications include the smallest case $e=1$.

Moreover
\begin{equation}\label{D:UR:extraUconstant}
 n_U(\Tr_{K/U}Y_z)-W_U\in\pp_F^T.
\end{equation}
In fact $v_UQ_C=e$, because $Q_C=2(d+b/2)$ and $\bar d\notin k$.
The trace change $\Tr_{K/U}(C\Pi^ez)$ has valuation at least
$\floor{(e+T)/2}\ge e+1$. Expanding its unramified quadratic norm proves
\eqref{D:UR:extraUconstant}.

Apply \eqref{D:UR:e2} to $Y_z$ and to $C$, and subtract. The common
multiplicative character values cancel by compatibility. The two
$\tau$-values of the norms of the $E$-traces are both $1$.
Equation \eqref{D:UR:extraUconstant} removes precisely the remaining
$U$-trace-norm phase. Lemma~\ref{U:stationary-factor} and \eqref{D:UR:critmaps} give the
identity of critical functions
\begin{equation}\label{D:UR:actualHidentity}
 H_U(z^2)=H_E(\Tr_{\kappa/k}z)
           H_\tau\bigl((\Tr_{\kappa/k}(\bar C z))^2\bigr),
 \qquad z\in\kappa.
\end{equation}

The map
\[
 z\longmapsto\bigl(\Tr z,(\Tr(\bar C z))^2\bigr)
        :\kappa\longrightarrow k\times k
\]
is bijective. Write $z=u+v\bar d$ and $\bar x=\xi\in k$; its coordinates
are $v$ and $(u+(\xi+1)v)^2$. Frobenius on $k$ is bijective.
Since Frobenius on $\kappa$ is bijective too, summing
\eqref{D:UR:actualHidentity} gives
\[
 \sum_{\kappa}H_U
     =\left(\sum_kH_E\right)\left(\sum_kH_\tau\right),
 \qquad g_U=g_Eg_\tau.
\]
Together with \eqref{D:UR:oddratio}, this proves
Theorem~\ref{D:UR:minimal} when $T$ is odd. The equality
\eqref{D:UR:actualHidentity} compares the critical functions, including
their linear terms.

\subsection{The cases \texorpdfstring{$1\le h<T$}{1<= h<T}}
\label{D:UR:sec:critical}
For $1\le h<T$, the difference $Z_E-B_E$ need not have the
valuation required for a stationary coefficient. We replace $Z_E$
by $(x'/x)Z_E$, where $x'$ is the conjugate of $x$ over $F$.
Lemma~\ref{D:UR:correctedreps} gives both its coefficient congruence
and its character value.

Throughout this subsection let
\[
 1\le h\le t=T-1,\qquad m_U=T+h,\quad m_E=T+2h.
\]
Write the pair as in Proposition~\ref{D:UR:actual}, and choose
$A=n_E x$ in the stationary coefficient class by Lemma~\ref{D:UR:choosex}.
In particular $v_E x=-h$, so $x\ne0$. Let $x'$ be its nontrivial
$E/F$ conjugate, and put
\begin{equation}\label{D:UR:newrep}
 \rho=\frac{x'}x,\qquad
 \widehat Z_U=A+bd+d^2,\qquad
 \widehat Z_E=\rho(x^2+x-c),\qquad b=\Tr_{E/F}x.
\end{equation}

\subsubsection*{The trace parity lemma}
\begin{lemma}[The value of $\Psi_F(-c\mathfrak u)$]\label{D:UR:traceparity}
Put $\epsilon=(T-h)\bmod2$ and $\mathfrak u=b^2/A$.
If $\epsilon=0$, then $v_F\mathfrak u\ge T$, with $v_F0=+\infty$.
If $\epsilon=1$, then
\[
 v_F\mathfrak u=T-1,\qquad
 \Psi_F(\mathfrak u z)=(-1)^{\Tr_{k/\F_2}(\bar z)}
 \quad(z\in\OO_F).
\]
Consequently, for the fixed $c$ in \eqref{D:UR:dchoice},
\begin{equation}\label{D:UR:paritysign}
 (-1)^{T+h}\Psi_F(-c\mathfrak u)=1.
\end{equation}
This holds in both mixed and equal characteristic.
\end{lemma}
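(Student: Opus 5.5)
The plan is to compute the valuation of $b=\Tr_{E/F}x$ from the ramification of $E/F$, and then read off $\mathfrak u=b^2/A$. Since $v_Ex=-h$ with $1\le h\le t$, the trace ideal formula \eqref{D:UR:localledger} gives $v_Fb\ge\floor{(T-h)/2}$. We also have $v_FA=-h$. Hence
\[
 v_F\mathfrak u\ge 2\floor{(T-h)/2}+h,
\]
which equals $T$ when $\epsilon=0$ and $T-1$ when $\epsilon=1$. This settles the case $\epsilon=0$ immediately; there $\Psi_F(-c\mathfrak u)=1$ and $(-1)^{T+h}=1$.

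For $\epsilon=1$, so $T-h$ is odd, I will show that the trace bound is attained. I then compute the residual character $z\mapsto\Psi_F(\mathfrak uz)$ on $\OO_F$. Since $v_F(\mathfrak u z)\ge T-1$, this character factors through $k$. Write $h=2j+1-\dots$; more precisely, use the uniformizer parity. An element of $E$ of odd valuation $-h$ (note $T-h$ odd with $T$ of either parity) has trace of exact valuation $\floor{(T-h)/2}$. The mechanism is the ramification break: for $y\in E$ with $v_Ey$ not divisible by the relevant parity, $y'-y$ has valuation $v_Ey+t$ exactly by Lemma~\ref{C:leading-ramification}. Writing $b=2y+(y'-y)$, the term $y'-y$ dominates when $2y$ is deeper. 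That holds in equal characteristic, and in mixed characteristic by \eqref{D:UR:ebound}. Then $v_E(b)=t-h$, i.e. $v_Fb=(T-1-h)/2$, so $v_F\mathfrak u=T-1$ exactly. If $t-h$ were odd this would contradict $b\in F$, which is consistent with $\epsilon=1$ forcing $t-h$ even.

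To identify the character, I will use Lemma~\ref{D:UR:normphase}, applied to $w=xz_0$ for suitable $z_0\in F$ of valuation chosen so that $w\in\pp_E^q$. This gives $\Psi_F(\Tr w-\Nm w)=1$, i.e. $\Psi_F(bz_0)=\Psi_F(Az_0^2)$. Replacing $z_0$ by scalings, the map $\bar z\mapsto\Psi_F(A\beta^2z^2)$ is additive with square-root relation to a linear character. More directly, I will evaluate $\tau(n_E(1-y))=1$ for $y=x\cdot s$ with $s\in F$ chosen so that $b s$ has valuation $T-1-?$. This yields $\Psi_F(bs)\Psi_F(-As^2)=1$ at depth $T-1$ with $As^2$ deeper. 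Combined with nontriviality of $\tau$ on $U_F^{T-1}$, this shows $\Psi_F(\mathfrak u z)$ is the nontrivial character of $k/\{\bar w^2+\bar w\}$.

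Concretely, set $s=bz/A$. Then $bs=\mathfrak uz$ and $As^2=\mathfrak u z^2$, so the norm identity gives $\Psi_F(\mathfrak u(z+z^2))=1$, or with signs $\Psi_F(\mathfrak u z)=\Psi_F(\mathfrak u z^2)$. This needs $xs\in\pp_E^q$, which is checked from $v_E(xs)=-h+2(T-1-h)/2\cdot$... this valuation check is the main technical obstacle, and I expect to need $h\le t$ here. So the character $\bar z\mapsto\Psi_F(\mathfrak uz)$ kills the Artin--Schreier image, which is the trace-zero hyperplane. It remains to show it is nontrivial; this follows from exactness $v_F\mathfrak u=T-1$ and the largest trivial ideal $\pp_F^T$ of $\Psi_F$. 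Hence it equals $(-1)^{\Tr\bar z}$, exactly as in the proof of Lemma~\ref{D:UR:oddT}.

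Finally, since $\bar c$ has absolute trace $1$, $\Psi_F(-c\mathfrak u)=-1$ while $(-1)^{T+h}=-1$, giving \eqref{D:UR:paritysign}. Every step uses only trace ideals, Lemma~\ref{D:UR:normphase}, and characteristic-free identities, so it holds in both mixed and equal characteristic.
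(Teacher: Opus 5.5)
\textbf{Overall assessment.} Your overall route is the paper's: bound $v_Fb$ by the trace-ideal formula, prove exactness $v_F\mathfrak u=T-1$ when $T-h$ is odd, and then identify $z\mapsto\Psi_F(\mathfrak u z)$ by applying the norm-character relation to $x\cdot bz/A$. The paper writes this as the exact identity $n_E(1+(b/A)xz)=1+\mathfrak u(z+z^2)$ together with triviality of $\tau$ on norms. That is the same thing as your use of Lemma~\ref{D:UR:normphase} with $w=xbz/A$. The valuation check you flag as the main obstacle is immediate: $v_E(xbz/A)=2v_Fb+h+2v_Fz\ge T-1\ge\lceil T/2\rceil=q$. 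No appeal to $h\le t$ is needed.

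\textbf{The gap: exactness of $v_Fb$ when $T$ is odd.} In that case Lemma~\ref{D:UR:oddT} gives $T=2e+1$ and $t=2e$. Then $T-h$ odd forces $h$ \emph{even}, so your parenthetical claim that $x$ has odd valuation ``with $T$ of either parity'' is false. For even $v_Ex=-h$, Lemma~\ref{C:leading-ramification} gives only the inequality $v_E(x'-x)\ge t-h=2e-h$, while $v_E(2x)=2e-h$ exactly. Neither \eqref{D:UR:ebound} nor anything else makes $x'-x$ dominate $2x$.

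In fact the opposite holds. Write $x=\varpi^{-h/2}(u_0+u_1)$ with $\varpi$ a uniformizer of $F$, $u_0\in\OO_F^\times$ and $u_1\in\pp_E$. Then $x'-x=\varpi^{-h/2}(u_1'-u_1)$ has $E$-valuation at least $2e+1-h$. Also $b=\varpi^{-h/2}(2u_0+\Tr u_1)$ with $\Tr u_1\in\pp_F^{e+1}$. So the leading term is $2x$, not $x'-x$.

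As written, your argument therefore does not establish $b\neq0$ or $v_F\mathfrak u=T-1$ in this case. Both the nontriviality of the residual character and the sign $\Psi_F(-c\mathfrak u)=-1$ depend on that exactness. For even $T$ your argument is correct: there $h$ is odd and $2e\ge T>t$.

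\textbf{How the paper avoids the case split.} The trace images of $\pp_E^{-h}$ and $\pp_E^{1-h}$ are exactly $\pp_F^B$ and $\pp_F^{B+1}$, with $B=\lfloor (T-h)/2\rfloor$. Hence the induced map $\pp_E^{-h}/\pp_E^{1-h}\to\pp_F^B/\pp_F^{B+1}$ is a nonzero $k$-linear map between one-dimensional spaces, so it is injective. Thus every $x$ of exact valuation $-h$ has trace of exact valuation $B$, whatever the parity of $h$. Alternatively, you can repair your argument by treating odd $T$ separately with the expansion above.
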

\begin{proof}
The trace-ideal formula gives
\[
 v_Fb\ge B:=\floor{(T-h)/2}.
\]
If $T-h$ is even, $2B+h=T$, proving the first assertion.
If $T-h$ is odd, the induced map
\[
 \pp_E^{-h}/\pp_E^{1-h}
 \longrightarrow \pp_F^B/\pp_F^{B+1}
\]
is a nonzero $k$-linear map between one-dimensional $k$-spaces.
Indeed the trace images of the two consecutive source ideals are exactly
$\pp_F^B$ and $\pp_F^{B+1}$. Thus an element of exact valuation $-h$
has trace of exact valuation $B$. Hence $b\ne0$ and
$v_F\mathfrak u=2B+h=T-1$.

For $z\in\OO_F$, quadratic norm expansion gives the exact identity
\begin{equation}\label{D:UR:traceparitynorm}
 n_E\!\left(1+\frac bA xz\right)
       =1+\mathfrak u(z+z^2).
\end{equation}
Here $v_E((b/A)x)=2B+h=T-1>0$, so the argument is a unit.
Since $T-1\ge q$, the norm-character formula
\eqref{D:UR:tauchart} applies to the right side. It follows that the residue
additive character $z\mapsto\Psi_F(\mathfrak u z)$ annihilates
$z+z^2$. It is nontrivial, because $\Psi_F$ has largest trivial ideal
$\pp_F^T$ and $v_F\mathfrak u=T-1$. The image of $z\mapsto z+z^2$
in $k$ is the absolute-trace-zero hyperplane. The character is therefore
exactly $(-1)^{\Tr_{k/\F_2}z}$. Since $\bar c$ has absolute trace one,
$\Psi_F(-c\mathfrak u)=-1$ in this parity and is $1$ in the other.
Finally $T-h$ and $T+h$ have the same parity, proving \eqref{D:UR:paritysign}.
\end{proof}

\subsubsection*{Stationarity and the exact elementary factor}
\begin{lemma}[The coefficients $\widehat Z_U,\widehat Z_E$]\label{D:UR:correctedreps}
For $1\le h<T$, the elements $\widehat Z_U,\widehat Z_E$ in
\eqref{D:UR:newrep} satisfy
$\theta_L(1-z)=\Psi_L(\widehat Z_Lz)$ for
$z\in\pp_L^{s_L}$, where $L=U,E$. Moreover
\begin{align}
 \widehat Z_U-B_U&=bd,\label{D:UR:newerrU}\\
 \widehat Z_E-B_E&=b(1-c/x),\label{D:UR:newerrE}\\
 \widehat S:=\Tr_{U/F}\widehat Z_U-
                   \Tr_{E/F}\widehat Z_E&=1+c\mathfrak u,\label{D:UR:newS}\\
 \frac{\theta_E(\widehat Z_E)}{\theta_U(\widehat Z_U)}
                 &=(-1)^h.\label{D:UR:newmult}
\end{align}
\end{lemma}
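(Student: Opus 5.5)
The plan is to prove each of the five assertions by a direct computation from the exact norm formulas \eqref{D:UR:ZU}--\eqref{D:UR:ZE}, then check that the coefficient changes are invisible to the additive characters. No new estimates should be needed.

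\emph{The differences \eqref{D:UR:newerrU} and \eqref{D:UR:newerrE}.} First, $\widehat Z_U$ is literally $Z_U=N_U(x+d)=(x+d)(x'+d)=A+bd+d^2$, so \eqref{D:UR:newerrU} is immediate from $B_U=A+d^2$. For $E$, I will rewrite
\[
 \widehat Z_E=\rho(x^2+x-c)=x'x+x'-cx'/x=A+x'-cx'/x .
\]
Subtracting $B_E=A-x+c$ leaves $(x+x')-c(x+x')/x$. This equals $b(1-c/x)$, which is \eqref{D:UR:newerrE}.

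\emph{Stationarity.} It suffices to show that each difference $\eta_L$ satisfies $\Psi_L(\eta_Lz)=1$ on $\pp_L^{s_L}$. Since $\Psi_L$ has largest trivial ideal $\pp_L^T$, this means $v_L(\eta_L)+s_L\ge T$; Lemma~\ref{D:UR:coeffs} then transfers the character formula from $B_L$ to $\widehat Z_L$.

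The trace-ideal formula \eqref{D:UR:localledger} applied to $x\in\pp_E^{-h}$ gives $v_Fb\ge\floor{(T-h)/2}$.
\begin{itemize}
\item Over $U$ the bound holds with equality of exponents: $v_U(bd)\ge\floor{(T-h)/2}=T-s_U$.
\item Over $E$, $1-c/x$ is a unit, because $v_E(c/x)=h>0$. Hence $v_E(\eta_E)=2v_Fb\ge T-h-1$, and this is at least $T-s_E=\floor{T/2}-h$ since $T\ge2$.
\end{itemize}

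\emph{The trace identity \eqref{D:UR:newS}.} I will use $\Tr d=1$ and $\Tr d^2=1+2c$ to get $\Tr_{U/F}\widehat Z_U=2A+b+1+2c$. On the $E$ side I will use
\[
 \Tr_{E/F}(x'/x)=\frac{x^2+x'^2}{A}=\frac{b^2-2A}{A}=\mathfrak u-2,
\]
which gives $\Tr_{E/F}\widehat Z_E=2A+b-c\mathfrak u+2c$. Subtracting the two traces gives exactly $1+c\mathfrak u$. This identity is exact in both characteristics.

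\emph{The character value \eqref{D:UR:newmult}.} Write $\theta_E(\widehat Z_E)=\theta_E(x'/x)\,\theta_E(Z_E)$. Compatibility gives $\theta_E(Z_E)=\Theta(C)=\theta_U(Z_U)=\theta_U(\widehat Z_U)$, so it remains to compute $\theta_E(x'/x)$. We have $x'=\sigma_E(x)$, where $\sigma_E$ generates $\Gal(E/F)$, so
\[
 \theta_E(x'/x)=(\theta_E^{\sigma_E^{-1}}/\theta_E)(x).
\]
By Lemma~\ref{U:conjugacy}, this quotient character is the nontrivial element of $S(K/E)$. Since $K/E$ is unramified quadratic, that element is the unramified character $y\mapsto(-1)^{v_E(y)}$. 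Its value at $x$ is $(-1)^{-h}=(-1)^h$.

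The only delicate point I expect is this last identification. One has to make sure the conjugate quotient is exactly the nontrivial member of $S(K/E)$, and that this member is unramified. The latter holds because $N_E(K^\times)$ contains all units of $E$ and has index two. The sign $\pm1$ from the choice of generator is harmless, since the character has order two.
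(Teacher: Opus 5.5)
Your proposal is correct and follows the paper's proof essentially step for step. It uses the same rewriting $\rho(x^2+x-c)=A+x'-c\rho$ and the same trace-ideal bound $v_F b\ge\lfloor (T-h)/2\rfloor$ compared against $T-s_U$ and $T-s_E$. The trace identity also goes through $\Tr_{E/F}\rho=\mathfrak u-2$ as in the paper, and \eqref{D:UR:newmult} is reduced to $\theta_E(x'/x)$ by compatibility and Lemma~\ref{U:conjugacy} in the same way.

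The one cosmetic difference is the last step. You identify the nontrivial element of $S(K/E)$ directly as $y\mapsto(-1)^{v_E(y)}$, using the norm group of the unramified extension $K/E$. The paper instead writes it as $\eta\circ n_E$ via Lemma~\ref{D:crossed-norm}. These are the same character, since $v_F(n_E y)=v_E(y)$.
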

\begin{proof}
The first error is immediate. Since $xx'=A$ and $x+x'=b$,
\[
 \rho(x^2+x-c)=A+x'-c\rho,
\]
whose difference from $A-x+c$ is
$b-c(x'/x+1)=b(1-c/x)$. As $v_E x=-h<0$, the latter parenthesis is a unit.
With $B=\floor{(T-h)/2}$, the error valuations are at least $B$ over $U$
and $2B$ over $E$. The required ambiguity depths are
\[
 T-s_U=B,\qquad T-s_E=\floor{T/2}-h.
\]
For $\epsilon=(T-h)\bmod2$,
\[
 2B-(T-s_E)=\ceil{T/2}-\epsilon\ge0.
\]
Both errors therefore lie in the correct coefficient ambiguity ideals.
By Lemma~\ref{D:UR:coeffs}, $v_U B_U=-h$ and $v_E B_E=-2h$.
The two errors have strictly larger valuation, so
$v_U\widehat Z_U=-h$ and $v_E\widehat Z_E=-2h$.

The traces of $\rho$ and of the corrected coefficients are
\[
 \Tr_{E/F}\rho=\frac{b^2-2A}{A}=\mathfrak u-2,
\]
\[
 \Tr_{E/F}\widehat Z_E=2A+b+2c-c\mathfrak u,
 \qquad \Tr_{U/F}\widehat Z_U=2A+b+1+2c.
\]
This proves \eqref{D:UR:newS}. The uncorrected $Z_U,Z_E$ are the two norms of
$C=x+d$, so compatibility gives $\theta_U(Z_U)=\theta_E(Z_E)$.
Lemma~\ref{U:conjugacy} on $E$ gives
\[
 \theta_E(\rho)=\frac{\theta_E(x')}{\theta_E(x)}
      =\eta(n_E x)=(-1)^{v_FA}=(-1)^h,
\]
which proves \eqref{D:UR:newmult}.
\end{proof}

\begin{remark}[The norms of $x'(x+d)$]
The correction can also be recorded using a common element and a base
scalar. Put $Y=x'(x+d)\in K$ and $\lambda_0=A^{-1}\in F$.
Then $\lambda_0N_UY=\widehat Z_U$ and
$\lambda_0N_EY=\widehat Z_E$. The symbol $\lambda_0$ is a field element,
not the twisting character $\lambda$.
\end{remark}

\begin{proposition}[The quotient of local constants]\label{D:UR:criticalratio}
Apply Lemma~\ref{U:stationary-factor} with coefficients
$\widehat Z_U,\widehat Z_E,1$ for $\theta_U,\theta_E,\tau$,
respectively. Write $g_U,g_E,g_\tau$ for the resulting normalized
sums, with $g=1$ when the conductor is even. Then
\begin{equation}\label{D:UR:newratio}
 \frac{\Delta_U(\theta_U)\Delta_F(\eta)}
      {\Delta_E(\theta_E)\Delta_F(\tau)}
  =(-1)^{T+h}\Psi_F(-c\mathfrak u)
                      \frac{g_U}{g_Eg_\tau}
  =\frac{g_U}{g_Eg_\tau}.
\end{equation}
\end{proposition}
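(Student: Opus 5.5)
This is the same bookkeeping as the even-$T$ minimal case (\eqref{D:UR:evenratio}), now run with the corrected coefficients of Lemma~\ref{D:UR:correctedreps}. The plan is to write each of the three local constants by \eqref{U:normalized-factor} and then cancel the elementary factors against one another.

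First, the individual factors. Lemma~\ref{D:UR:correctedreps} makes $\widehat Z_U$ and $\widehat Z_E$ stationary coefficients relative to $\Psi_L=\psi_L(\alpha\,\cdot)$. It also gives the valuations $v_U\widehat Z_U=-h=T-m_U$ and $v_E\widehat Z_E=-2h=T-m_E$, which are exactly what Lemma~\ref{U:stationary-factor} requires with $J=T$. For $\tau$, formula \eqref{D:UR:tauchart} shows that $Z=1$ is a stationary coefficient of conductor $T$. Put $c_0=-\alpha^{-1}$. Then
\[
 \Delta_U(\theta_U)=\theta_U(c_0)\theta_U(\widehat Z_U)^{-1}\Psi_U(-\widehat Z_U)g_U,\quad
 \Delta_E(\theta_E)=\theta_E(c_0)\theta_E(\widehat Z_E)^{-1}\Psi_E(-\widehat Z_E)g_E,
\]
and $\Delta_F(\tau)=\tau(c_0)\Psi_F(-1)g_\tau$. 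Since $U/F$ is unramified, $\Delta_F(\eta)=\eta(\pi)^{n_F}=(-1)^{n_F}$.

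Next, the cancellations, which I would take in four steps.
\begin{enumerate}
\item Lemma~\ref{U:determinant} gives $\theta_U(c_0)\eta(c_0)=\theta_E(c_0)\tau(c_0)$. This cancels the $\tau(c_0)$ in $\Delta_F(\tau)$ and leaves a factor $\eta(c_0)^{-1}=(-1)^{v_F c_0}=(-1)^{n_F+T}$.
\item Together with $\Delta_F(\eta)=(-1)^{n_F}$, the unramified signs combine to $(-1)^T$.
\item The character values contribute $\theta_E(\widehat Z_E)/\theta_U(\widehat Z_U)=(-1)^h$ by \eqref{D:UR:newmult}.
\item The additive factors combine by trace compatibility to $\Psi_F(-\widehat S)\Psi_F(1)$. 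By \eqref{D:UR:newS} this equals $\Psi_F(-c\mathfrak u)$.
\end{enumerate}
Multiplying the four contributions gives the middle expression $(-1)^{T+h}\Psi_F(-c\mathfrak u)\,g_U/(g_Eg_\tau)$. Lemma~\ref{D:UR:traceparity}, equation \eqref{D:UR:paritysign}, then shows that the prefactor is $1$.

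The step I expect to need care is the sign bookkeeping. One has to check that the $\eta(c_0)$ coming from Lemma~\ref{U:determinant} is placed correctly; the nontrivial lower norm character for $U/F$ is the unramified quadratic character, so $\eta(c_0)$ is indeed $(-1)^{n_F+T}$. One must also confirm that the $\Psi_F(-1)$ from $\Delta_F(\tau)$ cancels the constant $1$ inside $\widehat S$ rather than doubling it. I would verify this against the even minimal case \eqref{D:UR:evenratio}, which has the same shape $(-1)^T\Psi_F(1-S)$ but with $h=0$ and no $\rho$-correction. That consistency check fixes every sign convention before the $\rho$-term enters.
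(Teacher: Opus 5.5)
Your proposal is correct and is essentially the paper's own argument. Both write the three local constants via \eqref{U:normalized-factor} with $c_0=-\alpha^{-1}$, and use Lemma~\ref{U:determinant} to reduce the $c_0$-values to $\eta(c_0)$, which combines with $\Delta_F(\eta)=(-1)^{n_F}$ to give $(-1)^T$. They then take $(-1)^h$ from \eqref{D:UR:newmult} and $\Psi_F(1-\widehat S)=\Psi_F(-c\mathfrak u)$ from \eqref{D:UR:newS}, and finish with \eqref{D:UR:paritysign}; your sign bookkeeping, including $\eta(c_0)=(-1)^{n_F+T}$ and the cancellation of $\Psi_F(-1)$ against the constant $1$ in $\widehat S$, is right.
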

\begin{proof}
Let $c_0=-\alpha^{-1}$. Lemma~\ref{U:determinant} gives
$\theta_U(c_0)/\theta_E(c_0)=\eta(c_0)\tau(c_0)$.
The product $\eta(c_0)\Delta_F(\eta)$ is $(-1)^T$, since
$v_Fc_0=n_F+T$ and $\Delta_F(\eta)=(-1)^{n_F}$.
The quotient of the multiplicative stationary values is \eqref{D:UR:newmult}.
The additive phase from \eqref{U:normalized-factor} and the norm-character factor is
$\Psi_F(1-\widehat S)=\Psi_F(-c\mathfrak u)$.
These exact identities give the first equality in \eqref{D:UR:newratio}.
Lemma~\ref{D:UR:traceparity} gives the second.
\end{proof}

\subsubsection*{The unramified residual sum}
\begin{lemma}\label{D:UR:resU}
If $T+h$ is odd and $h>0$, the critical function for
$\theta_U$ with coefficient $\widehat Z_U$ is $1$ on the residue subfield
$k\subset\kappa$. Its normalized sum is therefore $g_U=1$.
\end{lemma}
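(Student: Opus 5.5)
The plan is to compute the critical function for $\theta_U$ on residues from $k$ directly from the character formula, then sum over $\kappa$ by splitting off the $k$-part. Write $m_U=T+h=2d_U+1$, so $d_U=(T+h-1)/2$. Choose the critical element $\delta=\pi^{d_U}\in F$. By Lemma~\ref{U:stationary-factor} with $\Psi_U=\psi_U(\alpha\,\cdot)$ and coefficient $\widehat Z_U$,
\[
 H_U(\bar z)=\Psi_U(-\widehat Z_U\delta z)\,\theta_U(1+\delta z)^{-1}.
\]
For $z\in\OO_F$ the element $1+\delta z$ lies in $F^\times$. By \eqref{D:UR:twistpair}, $\theta_U=\chi_U^0(\lambda n_U)$, and Lemma~\ref{U:determinant} gives $\theta_U|_{F^\times}=D\eta$ with $\eta$ unramified. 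So $\theta_U(1+\delta z)=\chi_U^0(1+\delta z)\lambda((1+\delta z)^2)$. Since $d_U\ge q$ for $h\ge1$, the element $1+\delta z$ lies in $U_U^q$, so \eqref{D:UR:modelU} and \eqref{D:UR:lambdachart} apply.

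For the additive part, $\Tr_{U/F}$ of an element of $F$ is twice that element, and $\Tr_{U/F}(\widehat Z_U)=2A+b+1+2c$. This gives
\[
 \Psi_U(-\widehat Z_U\delta z)=\Psi_F\bigl(-(2A+b+1+2c)\delta z\bigr).
\]

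For the multiplicative part, use $(1+\delta z)^2=1+2\delta z+\delta^2z^2$. This yields $\lambda(\cdot)=\Psi_F(-A(2\delta z+\delta^2z^2))$, where $A$ replaces $A_*$ by Lemma~\ref{D:UR:choosex}, after checking that $2\delta z+\delta^2 z^2$ has depth at least $s_U$. Likewise
\[
 \chi_U^0(1+\delta z)=\Psi_U(-d^2\delta z)=\Psi_F(-(1+2c)\delta z).
\]

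Multiplying the three factors, the terms $(1+2c)\delta z$ and $2A\delta z$ cancel. What remains is
\[
 H_U(\bar z)=\Psi_F\bigl(-b\delta z+A\delta^2z^2\bigr).
\]

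The main obstacle is to show that this phase is trivial, i.e. that $-b\delta z+A\delta^2 z^2\in\pp_F^T$ or at least that its phase vanishes. Valuations alone do not suffice: $v_F(A\delta^2)=-h+T+h-1=T-1$, and with $B=\lfloor (T-h)/2\rfloor$ we get $v_F(b\delta)\ge B+d_U=T-1$. Both terms sit exactly one step short. The key fact is that $T+h$ odd means $T-h$ is odd, which is the case $\epsilon=1$ of Lemma~\ref{D:UR:traceparity}. Write $b\delta=\mathfrak u\cdot(A\delta/b)$. The exact norm identity \eqref{D:UR:traceparitynorm} with $z$ replaced by $(A/b)\delta z$ should show that $\Psi_F(A\delta^2z^2-b\delta z)$ equals $\tau$ of a norm, hence $1$. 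Concretely, $n_E(1-x\delta z)=1-b\delta z+A\delta^2z^2$, and $v_E(x\delta z)\ge -h+2d_U=T-1\ge q$. Therefore \eqref{D:UR:tauchart} gives $\Psi_F(b\delta z-A\delta^2z^2)=\tau(n_E(\cdots))=1$ exactly. Note that this norm identity settles the obstacle directly, without ever needing the $\epsilon=1$ parity input, since the depth condition $v_E(x\delta z)\ge q$ is what matters. The delicate point is to confirm that the depth claims used in evaluating $\lambda$ and $\chi_U^0$ hold, and that no $\eta$-type sign enters; I expect $\eta$ is trivial on units, so none does.

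Finally, to get $g_U=1$ from $H_U|_k=1$: the quadratic part $A\delta^2\widetilde{xy}$ is a bicharacter, by the formula for $H/(H\cdot H)$ in Lemma~\ref{U:stationary-factor}. Since $H_U$ is trivial on $k$, the bicharacter $B(x,y)=\Psi_U(\widehat Z_U\delta^2\widetilde{xy})$ is trivial on $k\times k$. Writing $\kappa=k\oplus k\bar d$, one computes $B(\bar d,\cdot)$ on $k$ as $(-1)^{\Tr}$, which is nontrivial by Lemma~\ref{D:UR:oddT}-type reasoning. A Gauss sum over the $k\bar d$-coset then evaluates to $\pm|k|$, and the sign is checked via $H_U$ at $\bar d$, giving $\sum_\kappa H_U=|\kappa|^{1/2}$. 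If this sign check proves hard, I would instead invoke Lemma~\ref{U:missing-coset}-style translation by an element $w$ with $B(w,\cdot)$ equal to the nontrivial character on $\kappa/k$.
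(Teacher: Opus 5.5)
Your evaluation of $H_U$ on $k$ is correct, but it takes a different route from the paper. The paper passes to $E$. The determinant identity of Lemma~\ref{U:determinant}, with $\eta$ trivial on units, gives $\theta_U(1+v)=\theta_E(1+v)\tau(1+v)$ for $v=\delta z\in F$. Then $\theta_E$ is evaluated with the corrected coefficient $\widehat Z_E$, and the exact identity $\widehat S=1+c\mathfrak u$ of Lemma~\ref{D:UR:correctedreps} leaves $\Psi_F(-c\mathfrak u\,v)$. This is trivial for pure valuation reasons, since $v_F\mathfrak u\ge T-1$ and $v_Fv\ge1$.

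You stay on the $U$-side instead. You use $\theta_U=\chi_U^0(\lambda n_U)$ with $n_U(f)=f^2$ on $F$, the charts \eqref{D:UR:modelU} and \eqref{D:UR:lambdachart}, and $\Tr_{U/F}\widehat Z_U=2A+b+1+2c$. These leave $\Psi_F(-bv+Av^2)$, which sits exactly one step above the trivial ideal. You kill it with Lemma~\ref{D:UR:normphase} applied to $xv\in\pp_E^{T-1}$, via $n_E(1-xv)=1-bv+Av^2$.

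Your version needs neither $\widehat Z_E$, nor $\widehat S$, nor the determinant identity, nor the parity of $T-h$. The paper's version is shorter in context because it reuses the computation behind Proposition~\ref{D:UR:criticalratio}. Your remarks about $\theta_U|_{F^\times}=D\eta$ and about a possible $\eta$-sign play no role on your route.

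The deduction of $g_U=1$, however, is not right as you describe it. There is no coset Gauss sum equal to $\pm|k|$, and no sign to fix at $\bar d$. The polar pairing is $B(x,y)=\Psi_U(-\widehat Z_U\delta^2\widetilde{xy})$, which depends only on $xy$. Pairing $\bar d$ against $y\in k$ gives $y\mapsto\Psi_F(-A\delta^2y)$. This is a nontrivial character of $k$ because $v_F(A\delta^2)=T-1$. It need not be $(-1)^{\Tr_{k/\F_2}}$, and Lemma~\ref{D:UR:oddT} is irrelevant when $T$ is even. Hence $B(s\bar d,\cdot)$ is nontrivial on $k$ for every $s\in k^\times$, and since $H_U=1$ on $k$,
\[
 \sum_{y\in k}H_U(s\bar d+y)=H_U(s\bar d)\sum_{y\in k}B(s\bar d,y)=0\qquad(s\ne0),
\]
while the coset $k$ itself contributes $|k|=|\kappa|^{1/2}$. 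This is the paper's argument: nondegeneracy of the polar pairing and $|k|^2=|\kappa|$ force $k^\perp=k$, so every coset other than $k$ sums to zero.

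Your fallback via Lemma~\ref{U:missing-coset} does not apply literally, since $\kappa/k$ has order $|k|$, not $2$. What does work is translating each coset $x+k$ with $x\notin k$ by some $w\in k$ satisfying $B(x,w)\ne1$. This multiplies the coset sum by $B(x,w)$, because $H_U(w)=1$ and $B(k,k)=1$.
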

\begin{proof}
Set $a=(T+h-1)/2$ and choose the critical element
$\delta_U=\pi^a\in F$, for any uniformizer $\pi$ of $F$.
For $z\in\OO_F$ put $v=\delta_Uz$. One has
\[
 2a-s_E=\floor{T/2}-1\ge0.
\]
Also $a\ge q$: for even $T$ this follows from odd $h\ge1$; for odd $T$
the present parity requires even $h\ge2$. Thus both $\theta_E(1+v)$ and
$\tau(1+v)$ can be evaluated by
\eqref{D:UR:modelE} and \eqref{D:UR:tauchart}.
The determinant identity and triviality of $\eta$ on units give
\[
 \theta_U(1+v)=\theta_E(1+v)\tau(1+v).
\]
Substitution into \eqref{U:normalized-residual} gives
\[
 H_U(\bar z)=\Psi_F((1-\widehat S)v)
            =\Psi_F(-c\mathfrak u\,\delta_Uz)=1.
\]
Indeed $v_F\mathfrak u\ge T-1$ and $a\ge1$.

By Lemma~\ref{U:stationary-factor}, the polar pairing of $H_U$
on the additive group of $\kappa$ is nondegenerate. Its restriction to $k$
is $1$, so $k$ is isotropic for its polar pairing. Since $|k|^2=|\kappa|$,
nondegeneracy gives $k^\perp=k$. Summation on a coset of $k$ is zero unless
the coset is $k$ itself, and the sum on $k$ is $|k|$. Thus
$|\kappa|^{-1/2}\sum_\kappa H_U=1$. 
\end{proof}

\subsubsection*{Reciprocal residual functions at odd \texorpdfstring{$T$}{T}}
If $T$ is even, the two remaining factors $g_E,g_\tau$ are already $1$.
For odd $T$ write $T=2e+1$, where $e=v_F(2)\ge1$ by Lemma~\ref{D:UR:oddT}.
We prove $g_Eg_\tau=1$.

\begin{lemma}[The identity $\tau(1+n_Ew)=\Psi_E(w)$]\label{D:UR:resreciprocity}
For $w\in\pp_E^e$ one has
\begin{equation}\label{D:UR:taureciprocity}
 \tau(1+n_Ew)=\Psi_E(w).
\end{equation}
In particular, put
\[
 \mathcal H_\tau(z)=\Psi_F(-z)\tau(1+z)^{-1},\qquad z\in\pp_F^e,
\]
one has
\begin{equation}\label{D:UR:Htaucritical}
 \mathcal H_\tau(n_Ew)^{-1}=\Psi_E(w)\Psi_F(n_Ew).
\end{equation}
\end{lemma}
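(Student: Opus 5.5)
The plan is to reduce \eqref{D:UR:taureciprocity} to the identity already proved in Lemma~\ref{D:UR:normphase}, namely $\tau(\Nm_{E/F}(1-y))=1$ combined with the quadratic norm expansion. Here $T=2e+1$, $q=\ceil{T/2}=e+1$, and $\tau$ has conductor $T$. For $w\in\pp_E^e$ the element $n_Ew$ has $F$-valuation at least $e$, hence $E$-valuation at least $2e$. I would not apply \eqref{D:UR:tauchart} to $1+n_Ew$ directly, since that formula is only available on $\pp_F^{q}=\pp_F^{e+1}$. Instead I use that $1+n_Ew$ is a norm up to a correction.

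The first step is the exact quadratic identity
\[
 n_E(1+w)=1+\Tr_{E/F}w+n_Ew ,
\]
which gives
\[
 1+n_Ew=n_E(1+w)\cdot\frac{1+n_Ew}{1+\Tr w+n_Ew}.
\]
Because $\tau$ kills norms, $\tau(1+n_Ew)=\tau\bigl(1-\Tr w\,(1+\Tr w+n_Ew)^{-1}\bigr)$. By \eqref{D:UR:localledger}, $v_F(\Tr w)\ge\floor{(e+T)/2}=\floor{(3e+1)/2}\ge e+1=q$. Hence \eqref{D:UR:tauchart} applies to this last unit, and it yields $\Psi_F\bigl(\Tr w\,(1+\Tr w+n_Ew)^{-1}\bigr)$.

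The second step replaces the denominator by $1$. The error is $\Tr w\cdot(\Tr w+n_Ew)/(\cdots)$. Its valuation is at least $(e+1)+e=2e+1=T$, which lies in the trivial ideal of $\Psi_F$. This leaves $\Psi_F(\Tr w)=\Psi_E(w)$ by trace compatibility of $\Psi_E=\psi_E(\alpha\,\cdot)$, proving \eqref{D:UR:taureciprocity}. I expect this valuation count to be the main obstacle, since it must hold at the smallest case $e=1$. There $\Tr w\in\pp_F^{2}$ and $n_Ew\in\pp_F^1$, so the product has valuation at least $3=T$, and the bound holds.

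For \eqref{D:UR:Htaucritical}, substitute $z=n_Ew\in\pp_F^e$ into the definition of $\mathcal H_\tau$. This gives
\[
 \mathcal H_\tau(n_Ew)^{-1}=\Psi_F(n_Ew)\,\tau(1+n_Ew)=\Psi_F(n_Ew)\Psi_E(w).
\]
No further estimate is needed, because the first identity is exact for all $w\in\pp_E^e$.
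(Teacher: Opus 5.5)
Your proof is correct and follows essentially the paper's argument. You factor off the norm $n_E(1+w)$, evaluate $\tau$ on the cofactor by its linear formula (its deviation from $1$ is controlled by $\Tr w\in\pp_F^{e+1}$), and remove the denominator by a valuation count. The only difference is cosmetic: the paper divides by $1+n_Ew$ and makes the final estimate in $E$ (an error of $E$-valuation at least $3e\ge T$), whereas you divide by $n_E(1+w)$ and estimate in $F$ (an error of valuation at least $(e+1)+e=T$).
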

\begin{proof}
The trace of $w$ has valuation at least
$\floor{(e+T)/2}\ge e+1$, and $v_F n_Ew\ge e$.
The exact identity
\[
 n_E(1+w)=(1+n_Ew)
             \left(1+\frac{\Tr_{E/F}w}{1+n_Ew}\right)
\]
can therefore be evaluated by $\tau$, using its linear formula on the
second factor only. Since the left side is a norm,
\[
 \tau(1+n_Ew)
   =\Psi_F\!\left(\frac{\Tr w}{1+n_Ew}\right)
   =\Psi_E\!\left(\frac{w}{1+n_Ew}\right).
\]
The difference between $w/(1+n_Ew)$ and $w$ has $E$-valuation at least
$3e\ge2e+1=T$. This proves \eqref{D:UR:taureciprocity}, including $e=1$.
The definition of $\mathcal H_\tau$ now gives
\eqref{D:UR:Htaucritical}.
\end{proof}

\begin{lemma}[The ramified residual factor is the reciprocal]\label{D:UR:resE}
For every $1\le h<T$ when $T$ is odd,
\[
 g_E=g_\tau^{-1}.
\]
\end{lemma}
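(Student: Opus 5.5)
We want $g_E g_\tau=1$ for odd $T=2e+1$ and $1\le h<T$. The plan is to identify the critical function of $\theta_E$, after the reparametrization given by the norm, with the reciprocal of the critical function of $\tau$, and then compare sums as in the minimal-conductor case.

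\textbf{Step 1: the residual sum for $\tau$.} Since $T=2e+1$, the critical element for $\tau$ is $\pi^e$, and the critical function is $H_\tau(\bar z)=\mathcal H_\tau(\pi^e z)$ with the coefficient $1$. Its argument has depth exactly $e$.

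\textbf{Step 2: the parity of $m_E$ and the residual sum for $\theta_E$.} The conductor $m_E=T+2h$ is odd, so $g_E$ is a genuine residual sum. Put $d_E=e+h$. Choose the critical element $\delta_E=\Pi^{e}x^{-1}$, or $\Pi^{e+2h}$ up to a unit; I would take $\delta_E=\Pi^e/x$, which has valuation $e+h=d_E$.

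\textbf{Step 3: writing the $\theta_E$ factors through $\tau$.} Write $v=\delta_E w$ with $w\in\OO_E$. I would evaluate $\theta_E(1+v)$ by splitting $\theta_E=\chi_E^0(\lambda n_E)$.
\begin{itemize}
\item The model factor $\chi_E^0(1+v)$ equals $\Psi_E(-cv)$ by \eqref{D:UR:modelE}, because $v_E(v)\ge q$.
\item The factor $\lambda(n_E(1+v))$ comes from \eqref{D:UR:lambdachart}, applied through $n_E(1+v)=1+\Tr v+n_Ev$ with $A=n_Ex$.
\item Lemma~\ref{D:UR:normphase}, applied to $xv$, converts the trace part.
\end{itemize}
After this the critical function $H_E(w)=\Psi_E(-\widehat Z_E v)\theta_E(1+v)^{-1}$ should reduce to a pure norm-type term $\Psi_F(A\,n_Ev)$, corrected by the quadratic cross terms. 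The key observation is that $A\,n_E(v)=n_E(xv)=n_E(\Pi^e w)$, and $\Pi^e w\in\pp_E^e$. Lemma~\ref{D:UR:resreciprocity} then gives
\[
H_E(w)=\mathcal H_\tau(n_E(\Pi^e w)),
\]
up to a factor that is additive in $w$ and that I must show is trivial. Equivalently, up to inversion and after tracking signs, $H_E(w)=H_\tau(\overline{n_E(\Pi^e w)/\pi^e})^{\pm1}$. The residue map $w\mapsto \overline{n_E(\Pi^e w)/\pi^e}=\bar w^2$ (times a unit residue) is bijective on $k$, because Frobenius is bijective. Summing then gives $\sum_k H_E=\overline{\sum_k H_\tau}$, or the sum itself. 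Since $|g|=1$, we get $g_E=\overline{g_\tau}=g_\tau^{-1}$.

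\textbf{Main obstacle.} The hard part is the bookkeeping in Step 3. I need to check that every linear term cancels: the stationarity of $\widehat Z_E$ should kill the $\Psi_E(\cdot\,v)$ pieces, including the error $b(1-c/x)$ from \eqref{D:UR:newerrE}. I also need that the only surviving quadratic term is exactly $\Psi_F(n_E(\Pi^ew))$ with the correct sign, so that it matches \eqref{D:UR:Htaucritical} as an inverse. The terms involving $b^2/A=\mathfrak u$ (Lemma~\ref{D:UR:traceparity}) must be shown to lie in $\pp_F^T$ after multiplication by $\pi^{2e}$-depth factors; for this I would use $v_F\mathfrak u\ge T-1$ and $e\ge1$. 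If a residual additive character survives, I would absorb it by translating $w$, since a translation does not change the sum.
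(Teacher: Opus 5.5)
Your route is the paper's route: evaluate $\theta_E=\chi_E^0(\lambda n_E)$ on the critical ideal $\pp_E^{e+h}$ using \eqref{D:UR:modelE} and \eqref{D:UR:lambdachart}, discard the correction \eqref{D:UR:newerrE}, recognize what remains through Lemma~\ref{D:UR:resreciprocity} at $xv$, and sum using bijectivity of Frobenius. Your choice $\delta_E=\Pi^e/x$ is a good normalization, since it makes $xv=\Pi^e w$ and the residue map exactly $\bar w\mapsto\bar w^2$. The problem is that the ``bookkeeping'' you defer is the whole content of the lemma, and as sketched it does not go through.

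\textbf{Misapplied lemma.} Lemma~\ref{D:UR:normphase} cannot be applied to $xv$. On the critical ideal $v_E(xv)=e$, while that lemma requires $\pp_E^{q}=\pp_E^{e+1}$. The failure of $\Psi_E(w)=\Psi_F(n_Ew)$ at depth $e$ is exactly what $\mathcal H_\tau$ measures. Had you used it, $H_E$ would become $\Psi_F(2\pi^e n_Ew)=(-1)^{\Tr_{k/\F_2}(\overline{\pi^e/2}\,\bar w^2)}$, a nontrivial linear character with sum $0$, contradicting $|g_E|=1$.

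No conversion is needed. The trace part of $\lambda(n_E(1+v))$ is just $\Psi_F(-A\Tr v)=\Psi_E(-Av)$. Formula \eqref{D:UR:lambdachart} applies because $\lfloor(3e+h+1)/2\rfloor$ and $e+h$ are both at least $s_U=e+\lceil(h+1)/2\rceil$ when $h\ge1$. This gives $\theta_E(1+v)=\Psi_E(-(A+c)v)\Psi_F(-A\,n_Ev)$. The correction $b(1-c/x)v$ has $E$-valuation at least $3e+(h\bmod 2)\ge T$, and no $\mathfrak u$ enters. Hence
\[
 H_E(v)=\Psi_E(xv)\,\Psi_F(A\,n_Ev)=\mathcal H_\tau\bigl(n_E(xv)\bigr)^{-1},
\]
by \eqref{D:UR:Htaucritical} with $w=xv$ and the exact identity $n_E(xv)=A\,n_Ev$. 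There is no leftover factor, and the exponent is $-1$.

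\textbf{Undetermined exponent.} Your displayed formula has exponent $+1$, which would give $g_E=g_\tau$. In general $g_\tau\neq\pm1$; for $k=\F_2$ one finds $g_\tau=(1\pm i)/\sqrt2$. So hedging with ``$\pm1$'' does not prove $g_E=g_\tau^{-1}$.

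\textbf{Invalid fallback.} Absorbing a surviving linear character by translating $w$ does not work. For $H$ with polar pairing $B$ one has $\sum_w H(w)B(w,w_0)=H(w_0)^{-1}\sum_w H(w)$, so a linear twist changes the sum by a phase. The exact computation above shows that no such factor arises.
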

\begin{proof}
The critical $E$-ideal is $\pp_E^{e+h}$. For $v$ in this ideal, both the
formulas \eqref{D:UR:modelE} and \eqref{D:UR:lambdachart} apply. The first is
valid since $e+h\ge e+1=q$. For the second, the two terms in
$n_E(1+v)-1=\Tr v+n_Ev$ have $F$-valuations at least
\[
 \floor{(3e+h+1)/2}\quad\hbox{and}\quad e+h,
\]
respectively, both at least
$s_U=e+\ceil{(h+1)/2}$. It follows that
\begin{equation}\label{D:UR:Ecriticalchar}
 \theta_E(1+v)=\Psi_E(-(A+c)v)\Psi_F(-A n_Ev).
\end{equation}
The difference between $\widehat Z_E$ and $A-x+c$ is $b(1-c/x)$.
Its product with $v$ has valuation at least
\[
 2\floor{(T-h)/2}+e+h
       =3e+(h\bmod2)\ge T.
\]
Consequently the critical function at $v$ is
\begin{align*}
 \mathcal H_E(v)
   &=\Psi_E(-\widehat Z_Ev)\theta_E(1+v)^{-1}\\
   &=\Psi_E(xv)\Psi_F(A n_Ev)\\
   &=\mathcal H_\tau(n_E(xv))^{-1},
\end{align*}
where the last equality is Lemma~\ref{D:UR:resreciprocity}, since
$v_E(xv)\ge e$ and $n_E(xv)=A n_Ev$ exactly.

Choose a uniformizer $\Pi$ of $E$ and put $\pi=n_E\Pi$.
Write $x=\Pi^{-h}u$ with $u\in\OO_E^\times$, and use $\Pi^{e+h}$ and
$\pi^e$ as the two critical elements. On the two residue fields $k$ the
map $v\mapsto n_E(xv)$ induces
\[
 z\longmapsto\bar u^2 z^2.
\]
This is a bijection. Taking normalized sums, as in
\eqref{U:normalized-residual}, gives
\[
 g_E=|k|^{-1/2}\sum_{z\in k}H_\tau(z)^{-1}
      =\overline{g_\tau}=g_\tau^{-1}.
\]
The last equality follows from $|g_\tau|=1$.
\end{proof}

\begin{theorem}[Equality for $1\le h<T$]
\label{D:UR:criticalcomplete}
Every primitive compatible pair with $1\le h<T$ satisfies
\eqref{D:UR:URtarget}, in mixed and equal characteristic.
\end{theorem}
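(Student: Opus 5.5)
The proof assembles the results of this subsection. Write the pair as in Proposition~\ref{D:UR:actual}, so $\theta_U=\chi_U^0(\lambda n_U)$ and $\theta_E=\chi_E^0(\lambda n_E)$, with $1\le h\le t$. Choose $x$ by Lemma~\ref{D:UR:choosex}. By Lemma~\ref{D:UR:correctedreps}, $\widehat Z_U$ and $\widehat Z_E$ are legitimate stationary coefficients for Lemma~\ref{U:stationary-factor}, with $J=T$ over both $U$ and $E$. The coefficient $1$ is a stationary coefficient for $\tau$ by \eqref{D:UR:tauchart}. Proposition~\ref{D:UR:criticalratio} therefore reduces the required identity \eqref{D:UR:URtarget} to
\[
 g_U=g_E\,g_\tau,
\]
where $g_L=1$ whenever the corresponding conductor is even. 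The elementary sign has already been absorbed through Lemma~\ref{D:UR:traceparity}.

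Next split by parities. The conductor of $\tau$ is $T$, that of $\theta_E$ is $T+2h\equiv T$, and that of $\theta_U$ is $T+h$.

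\emph{Case $T$ even.} Then $g_E=g_\tau=1$. If $h$ is even, $T+h$ is even and $g_U=1$. If $h$ is odd, $T+h$ is odd and Lemma~\ref{D:UR:resU} gives $g_U=1$.

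\emph{Case $T$ odd.} Then $g_Eg_\tau=1$ by Lemma~\ref{D:UR:resE}. It remains to show $g_U=1$. If $h$ is odd, $T+h$ is even and this is automatic. If $h$ is even, $T+h$ is odd and Lemma~\ref{D:UR:resU} applies, since its proof covers odd $T$ with even $h\ge2$.

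In every case both sides of the reduced identity equal $1$. By Lemma~\ref{D:UR:oddT}, odd $T$ forces mixed characteristic. In equal characteristic only the even-$T$ branch therefore occurs, and every lemma used there is valid in both characteristics.

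The step I expect to need the most care is confirming that the hypotheses of Lemma~\ref{D:UR:resU} and Lemma~\ref{D:UR:resE} hold across the whole range $1\le h\le t$, up to and including the boundary $h=t=T-1$. The points to check are:
\begin{itemize}
\item the critical depth $a=(T+h-1)/2$ satisfies $a\ge q$;
\item the inequality $2a\ge s_E$ holds;
\item Lemma~\ref{D:UR:choosex} still supplies an exact norm $A=n_Ex$ at relative precision $d_U\le t$.
\end{itemize}
I would verify these elementary inequalities directly at $h=T-1$ before concluding. The statement's range $1\le h<T$ coincides with $1\le h\le t$, so the preceding lemmas cover all cases.
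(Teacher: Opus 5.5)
Your proposal is correct and matches the paper's proof: you reduce via Proposition~\ref{D:UR:criticalratio} to $g_U=g_Eg_\tau$. Then $g_U=1$ comes either from even conductor $T+h$ or from Lemma~\ref{D:UR:resU}, and $g_Eg_\tau=1$ comes either trivially when $T$ is even or from Lemma~\ref{D:UR:resE} when $T$ is odd. The boundary checks you flag at $h=t$ already hold inside those lemmas, since there $d_U=t$ and $2a-s_E=\lfloor T/2\rfloor-1\ge0$.
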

\begin{proof}
Proposition~\ref{D:UR:criticalratio} leaves $g_U/(g_Eg_\tau)$.
If $T+h$ is even, $g_U=1$ by the even-conductor formula; if it is odd,
Lemma~\ref{D:UR:resU} gives $g_U=1$. If $T$ is even, $g_E=g_\tau=1$.
If $T$ is odd, Lemma~\ref{D:UR:resE} gives $g_Eg_\tau=1$.
Thus the quotient in \eqref{D:UR:newratio} is $1$.
\end{proof}

\subsection{The case \texorpdfstring{$h\ge T$}{h>=T}}
\begin{lemma}[Stationary covectors for a norm pullback of conductor $n>\delta$]\label{D:UR:highcov}
Let $L/F$ be ramified quadratic with different exponent $\delta$, and let
$\lambda$ have conductor $n>\delta$. Let $b_F$ be a stationary covector
for $\lambda$, so $\lambda(1+z)=\psi_F(b_Fz)$ for
$z\in\pp_F^{\ceil{n/2}}$. A stationary covector $b_L$ for $\lambda\circ\Nm_{L/F}$
satisfies
\begin{equation}\label{D:UR:highclose}
 b_L/b_F\in U_L^{n-\delta}.
\end{equation}
If $L/F$ is unramified and quadratic, $b_F$ is a stationary covector
for the pullback.
\end{lemma}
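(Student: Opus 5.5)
The plan is to compare $\lambda\circ\Nm_{L/F}$ with the additive character $\psi_L(b_F\,\cdot)$ on the stationary ideal of $L$. Write $m=a_L(\lambda\circ\Nm)$. In the ramified case $n>\delta$ lies above the critical range, so the quadratic conductor formula recalled in Section~\ref{sec:ramification} gives $m-1=\psi_{L/F}(n-1)$, that is $m=2n-\delta$. Put $s=\lceil m/2\rceil$. For $z\in\pp_L^s$ use the exact quadratic identity
\[
 \Nm(1+z)=1+\Tr z+\Nm z .
\]
The two terms on the right satisfy $v_F(\Tr z)\ge\lfloor(s+\delta)/2\rfloor$ by \eqref{U:trace-ideal} and $v_F(\Nm z)=v_L(z)\ge s$.

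The first step is to check that $u=\Tr z+\Nm z$ lies in $\pp_F^{\lceil n/2\rceil}$, so that the stationary formula for $\lambda$ applies. Up to factors of $\lambda$ on $U_F^n$, this gives
\[
 \lambda(\Nm(1+z))=\psi_F(b_F\Tr z)\,\psi_F(b_F\Nm z)\cdot\lambda\text{-correction}.
\]
To get this I will write $1+u$ as a product of $1+\Tr z$ and $1+\Nm z/(1+\Tr z)$, and use multiplicativity of $\lambda$ on $U_F^{\lceil n/2\rceil}$. The cross term $\Tr z\cdot\Nm z$ is killed once $v_F\ge n$, which follows from $(s+\delta)/2+s\ge n$ up to floors. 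The first factor is $\psi_L(b_Fz)$ by trace compatibility.

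The second step, which I expect to be the main obstacle, is to show that the norm term $\psi_F(b_F\Nm z)$ is itself of the form $\psi_L(\varepsilon z)$ with $v_L(\varepsilon)\ge v_L(b_F)+n-\delta$, modulo the ambiguity of stationary covectors. Write $\Nm z=z\,\sigma z$. Then
\[
 b_F\Nm z=b_F z^2+b_F z(\sigma z-z),
\]
where $v_L(\sigma z-z)\ge v_L(z)+t$. The plan is not to linearize this exactly. Instead I will use that a stationary covector is only determined modulo $\pp_L^{\lfloor m/2\rfloor}$ relative precision. So it suffices that the quadratic map $z\mapsto\psi_F(b_F\Nm z)$ restricted to $\pp_L^s$ is a character of the form $\psi_L(\varepsilon z)$ with $\varepsilon/b_F\in\pp_L^{n-\delta}$. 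The valuation count is
\[
 v_F(b_F)=-n_F-n,\qquad 2s\ge m=2n-\delta .
\]
Hence $v_F(b_F\Nm z)\ge -n_F-n+s$, and the excess over the trivial ideal is controlled by $s-n\ge -\delta/2$. I will absorb this into a covector perturbation using the pairing $\psi_L(\varepsilon z)$ versus $\psi_F(\Tr(\varepsilon z))$ and \eqref{U:trace-ideal}. The relative size $n-\delta$ is exactly the difference between $v_L(b_L)=-n_L-m=-2n_F-\delta-2n+\delta$ and the trace-dual depth of the norm contribution. If this direct count fails by one at the boundary, I will handle the boundary by splitting $m$ into even and odd parity, as in Lemma~\ref{U:stationary-factor}.

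The unramified case is immediate: norm pullback preserves the conductor, and $\Nm(1+z)=1+\Tr z+\Nm z$ with $v_F(\Nm z)\ge 2\lceil n/2\rceil\ge n$. The same stationary formula therefore gives $\lambda(\Nm(1+z))=\psi_L(b_Fz)$ directly.
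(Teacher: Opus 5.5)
Your first step and the unramified case are fine. The second step, which you flag as the main obstacle, is not actually proved, so as written the proposal has a gap.

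The bound $v_F(b_F\Nm z)\ge -n_F-n+s$ only says how deep the values are. It does not show that the quadratic function $z\mapsto\psi_F(b_F\Nm z)$ has the form $\psi_L(\varepsilon z)$ on $\pp_L^s$; for that you need it to be additive there. The splitting $b_F\Nm z=b_Fz^2+b_Fz(\sigma z-z)$ does not help, because both summands lie in $L$ and $\psi_F$ cannot be applied to them separately.

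The missing input is the polarization estimate. For $z,w\in\pp_L^s$ one has
$\Nm(z+w)-\Nm z-\Nm w=\Tr_{L/F}(z\,\sigma w)\in\Tr_{L/F}(\pp_L^{2s})=\pp_F^{\floor{(2s+\delta)/2}}=\pp_F^{n}$.
This holds exactly, with no room to spare, in both parities of $\delta$. Multiplying by $b_F$ lands in $\pp_F^{-n_F}$, so the map is an additive character of $\pp_L^s$. It is also trivial on $\pp_L^n$, since $v_F(b_F\Nm z)\ge -n_F$ there. Duality on $\pp_L^s/\pp_L^n$ then gives $\varepsilon$ with $v_L\varepsilon\ge -n_L(\psi_L)-n=-2n_F-\delta-n$. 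Since $v_Lb_F=-2n_F-2n$, this is $\varepsilon/b_F\in\pp_L^{n-\delta}$, and your argument closes.

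The paper avoids this step altogether by shrinking the ideal. Because $\ceil{m/2}=n-\floor{\delta/2}\le n$, both the stationary formula for $b_L$ and your norm expansion can be used on $\pp_L^n$. There $\Nm z\in\pp_F^n$ is killed by $\lambda$ outright. Also $\Tr z\in\pp_F^{\floor{(n+\delta)/2}}\subseteq\pp_F^{\ceil{n/2}}$. Hence $(\lambda\circ\Nm_{L/F})(1+z)=\psi_L(b_Fz)$ on $\pp_L^n$, and duality gives $b_L-b_F\in\pp_L^{-2n_F-\delta-n}$ directly. Nothing is lost by this restriction: relative precision $n-\delta$ is exactly what duality on $\pp_L^n$ yields, and it is all the lemma claims. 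You never need to pin down $b_L$ to its full ambiguity $\floor{m/2}$ on the stationary ideal.
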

\begin{proof}
If $L/F$ is ramified, the pullback conductor is $M=2n-\delta$, and
$n_L(\psi_L)=2n_F+\delta$, so $v_Lb_L=v_Lb_F=-2(n+n_F)$.
For $z\in\pp_L^n$, its trace belongs to $\pp_F^{\ceil{n/2}}$ because
$\floor{(n+\delta)/2}\ge\ceil{n/2}$, and its norm belongs to $\pp_F^n$.
The norm expansion and the formula for $\lambda$ give
$(\lambda\circ\Nm_{L/F})(1+z)=\psi_L(b_Fz)$ for $z\in\pp_L^n$.
The stationary formula for $b_L$ also applies there. Perfect additive
duality gives
\[
 b_L-b_F\in\pp_L^{-2n_F-\delta-n}.
\]
Divide by $b_F$ to obtain \eqref{D:UR:highclose}. For unramified $L/F$, at depth
$s=\ceil{n/2}$, the highest-degree norm term has depth $2s\ge n$ and the trace has
depth at least $s$, so the same expansion gives the final assertion
on the stationary ideal.
\end{proof}

\begin{theorem}[A stable-twist formula]\label{D:UR:stable}
Let $(\chi_1,\chi_2)$ be any primitive compatible pair in a biquadratic
extension $K/F$, and put $a_i=a_{L_i}(\chi_i)$ and
$\delta_i=a_F(\omega_{L_i})$. Let $\lambda$ have conductor $n\ge2$.
Assume for every ramified $L_i/F$ that
\[
 n>\delta_i,\qquad n\ge a_i+\delta_i,\qquad n\ge2\delta_i,
\]
and for every unramified $L_i/F$ that $n\ge2a_i$.
For $\theta_i=\chi_i(\lambda n_i)$,
\begin{equation}\label{D:UR:stableidentity}
 A_{L_i}(\theta_i)=D(c_F)\Delta_F(\lambda)^2,
 \qquad c_F=b_F^{-1},\quad D=\chi_i|_F\,\omega_{L_i}.
\end{equation}
In particular $A_{L_i}(\theta_i)$ is independent of $i$.
\end{theorem}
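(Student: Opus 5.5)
The plan is to use the stable twist \eqref{U:stable} on each intermediate field $L_i$, then the First Main Lemma \eqref{U:FML} for $L_i/F$ together with the stable twist over $F$. This follows the pattern already used in the tame case, \eqref{O:F:stablesides}--\eqref{O:F:stableFML}, and in the odd case $h\ge T$.

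\emph{Step 1: a common stationary representative for $\lambda n_i$.} Choose an admissible $\Gamma\in F^\times$ for $\lambda$ and a stationary representative $\beta$, so that $b_F=\beta/\Gamma$. For unramified $L_i$, Lemma~\ref{D:UR:highcov} makes $b_F$ stationary for $\lambda n_i$ with the same $\Gamma$. In the ramified case we have $a(\lambda n_i)=2n-\delta_i$ and $n_{L_i}=2n_F+\delta_i$, so $\Gamma$ is still admissible. By Lemma~\ref{D:UR:highcov}, a stationary covector $b_{L_i}$ satisfies $b_{L_i}/b_F\in U_{L_i}^{n-\delta_i}$.

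\emph{Step 2: stable twist over $L_i$.} The hypothesis $n\ge a_i+\delta_i$ gives $a_i\le n-\delta_i$. Since $a(\lambda n_i)=2n-\delta_i$, its floor-half is at least $n-\delta_i$, so $\chi_i$ lies in the stable range. By \eqref{U:stable},
\[
\Delta_{L_i}(\theta_i)=\chi_i(b_{L_i}^{-1})\Delta_{L_i}(\lambda n_i).
\]
Because $\chi_i$ is trivial on $U^{n-\delta_i}\supset U^{a_i}$, we may replace $b_{L_i}$ by $b_F$ and obtain $\chi_i(c_F)$. In the unramified case the hypothesis $n\ge 2a_i$ plays the same role.

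\emph{Step 3: the First Main Lemma for $L_i/F$.} Apply \eqref{U:FML} with $\chi=\lambda$:
\[
\Delta_{L_i}(\lambda n_i)\Delta_F(\omega_i)=\Delta_F(\lambda)\Delta_F(\lambda\omega_i).
\]
The hypothesis $n\ge 2\delta_i$ puts $\omega_i$ in the stable range for $\lambda$ over $F$, so \eqref{U:stable} gives $\Delta_F(\lambda\omega_i)=\omega_i(c_F)\Delta_F(\lambda)$. In the unramified case $\omega_i$ has conductor $0$ and $n\ge2$, so this is immediate. Multiplying the identities from Steps 2 and 3 yields
\[
A_{L_i}(\theta_i)=\chi_i(c_F)\,\omega_i(c_F)\,\Delta_F(\lambda)^2=D(c_F)\Delta_F(\lambda)^2.
\]
Lemma~\ref{U:determinant} shows that $D$ is independent of $i$, which proves the final claim.

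\emph{Main obstacle.} I expect the delicate step to be the exact bookkeeping of the stable-range depths, especially the boundary cases where $n-\delta_i$ is odd or equals the floor-half of $2n-\delta_i$. At those points one must check that $b_{L_i}/b_F$ is close enough to $1$ that $\chi_i$ cannot distinguish $b_{L_i}$ from $b_F$. A second point to verify is the sign convention $\Gamma/\beta$ versus $b^{-1}$ in \eqref{U:stable}; this works out because $\nu(\Gamma/b)=\nu(b_F^{-1})$.
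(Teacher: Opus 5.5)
Your proposal is correct and follows the paper's proof essentially step for step: the stable twist over $L_i$, Lemma~\ref{D:UR:highcov} to replace $b_{L_i}$ by $b_F$, the First Main Lemma for $L_i/F$, the stable twist over $F$ using $n\ge2\delta_i$, and Lemma~\ref{U:determinant} for the independence of $D$. One small slip: since $a_i\le n-\delta_i$, the inclusion is $U_{L_i}^{n-\delta_i}\subseteq U_{L_i}^{a_i}$, not $\supset$. Your conclusion that $\chi_i$ is trivial on $U_{L_i}^{n-\delta_i}$, and so cannot distinguish $b_{L_i}$ from $b_F$, is nevertheless right, so the boundary cases you flag cause no trouble.
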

\begin{proof}
The inequalities imply
$a_{L_i}(\chi_i)\le\lfloor a_{L_i}(\lambda n_i)/2\rfloor$,
so \eqref{U:stable} applies. For ramified $L_i/F$, its stationary depth is
$n-\ceil{\delta_i/2}\ge a_i$; the unramified assertion follows from
$n\ge2a_i$. Ueda's exact stable twist gives
\[
 \Delta_{L_i}(\chi_i(\lambda n_i))
 =\chi_i(c_i)\Delta_{L_i}(\lambda n_i),\qquad c_i=b_{L_i}^{-1}.
\]
Lemma~\ref{D:UR:highcov} and $n-\delta_i\ge a_i$ show
$\chi_i(c_i)=\chi_i(c_F)$. For unramified $L_i/F$, take $c_i=c_F$.
The First Main Lemma on $L_i/F$ is
\[
 \Delta_{L_i}(\lambda n_i)\Delta_F(\omega_{L_i})
 =\Delta_F(\lambda)\Delta_F(\lambda\omega_{L_i}).
\]
The stable-twist formula over $F$ applies since $n\ge2\delta_i$ (and
$\delta_i=0$ when $L_i/F$ is unramified). Its right side is
$\omega_{L_i}(c_F)\Delta_F(\lambda)^2$.
Multiply the two formulas and use Lemma~\ref{U:determinant}.

\end{proof}
\begin{corollary}\label{D:UR:URstable}
For $K=UE$, every primitive pair with $h\ge T$
in \eqref{D:UR:conductors} satisfies $A_U(\theta_U)=A_E(\theta_E)$.
\end{corollary}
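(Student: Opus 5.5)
The plan is to apply Theorem~\ref{D:UR:stable} directly to the decomposition of Proposition~\ref{D:UR:actual}. Take $L_1=U$, $L_2=E$, $\chi_1=\chi_U^0$, $\chi_2=\chi_E^0$. Here $a_U(\chi_U^0)=a_E(\chi_E^0)=T$ by Theorem~\ref{D:UR:models}, which remains true after the unramified adjustment of $\chi_E^0$. The pair $(\chi_U^0,\chi_E^0)$ is primitive compatible by construction. Since $h\ge T>0$, Proposition~\ref{D:UR:actual} gives $n=a_F(\lambda)=T+h\ge 2T\ge 2$.

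Next I would check the hypotheses of Theorem~\ref{D:UR:stable}. For the unramified field $U$, the condition is $n\ge 2a_U(\chi_U^0)=2T$, which holds because $h\ge T$. For the ramified field $E$, we need $\delta_E=a_F(\omega_E)=a_F(\tau)=T$; this is the conductor of the nontrivial norm character, namely $t+1$. The three inequalities then read $n>T$, $n\ge a_E(\chi_E^0)+T=2T$, and $n\ge 2T$. All three follow from $h\ge T$, the first even strictly.

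Theorem~\ref{D:UR:stable} then gives $A_U(\theta_U)=D(c_F)\Delta_F(\lambda)^2=A_E(\theta_E)$, where $D$ is the common restriction $\chi_i|_F\,\omega_{L_i}$ provided by Lemma~\ref{U:determinant} for the primitive pair $(\chi_U^0,\chi_E^0)$. I expect no step here to be a genuine obstacle; the only points needing care are the conductor identifications. In particular, the different-exponent quantity $\delta$ of Lemma~\ref{D:UR:highcov} equals $T$ for a ramified quadratic extension with break $t$, and this coincides with $a_F(\tau)$, so the hypothesis $n>\delta$ in that lemma is the same as $n>\delta_E$.
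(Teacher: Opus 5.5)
Your proposal is correct and follows the paper's proof: it applies Theorem~\ref{D:UR:stable} to the decomposition of Proposition~\ref{D:UR:actual}, with both core conductors equal to $T$, lower conductors $0$ and $T$, and $a_F(\lambda)=T+h\ge 2T$. Your explicit check that the unramified adjustment of $\chi_E^0$ preserves its conductor and the common pullback is a detail the paper leaves implicit.
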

\begin{proof}
Use the characters of conductor $T$ from Theorem~\ref{D:UR:models} and
Proposition~\ref{D:UR:actual}. Its two conductors are $T$, the lower different
exponents are $0,T$, and $a_F(\lambda)=T+h\ge2T$.
All hypotheses of Theorem~\ref{D:UR:stable} hold.
\end{proof}

\subsection{Completion of the unramified--ramified case}
\begin{theorem}[The case with an unramified quadratic intermediate field]\label{D:UR:URall}
Let $K/F$ be biquadratic over a local field of residue characteristic two,
and assume it has an unramified quadratic intermediate field $U$.
For every other quadratic intermediate field $E$ and every primitive
compatible pair $(\theta_U,\theta_E)$, one has
\[
 \Delta_U(\theta_U,\psi_U)\Delta_F(\omega_U,\psi_F)
 =\Delta_E(\theta_E,\psi_E)\Delta_F(\omega_E,\psi_F).
\]
Thus $A_L(\theta_L)$ is independent of the quadratic intermediate field $L$. The assertion holds in mixed and equal
characteristic, for every conductor.
\end{theorem}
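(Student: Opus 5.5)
The proof is an assembly of the results of this section. First I fix the setup: $U$ is the unramified quadratic intermediate field, and $E$ is any other quadratic intermediate field. Since $K/F$ has an unramified quadratic subfield and $\Gal(K/F)\cong C_2^2$, the inertia subgroup has order at most two.

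\textbf{Inertia of order one.} Then $K/F$ would be unramified with cyclic Galois group, which is impossible. So the inertia has order exactly two, and by Lemma~\ref{D:inertia} every quadratic intermediate field other than $U$ is ramified. In particular $E$ is ramified, $K=EU$, and the break $t\ge1$ of $E/F$ is defined, with $T=t+1$.

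\textbf{Normal form of the pair.} Given a primitive compatible pair $(\theta_U,\theta_E)$, I apply Proposition~\ref{D:UR:actual} to the conductor-$T$ characters of Theorem~\ref{D:UR:models}. This writes
\[
\theta_U=\chi_U^0(\lambda n_U),\qquad \theta_E=\chi_E^0(\lambda n_E),
\]
with conductors $T+h$ and $T+2h$ for a unique integer $h\ge0$. The argument then splits into three exhaustive ranges of $h$:
\begin{itemize}
\item $h=0$ is Theorem~\ref{D:UR:minimal}. Here the odd-$T$ case forces mixed characteristic with $T=2e+1$, by Lemma~\ref{D:UR:oddT}.
\item $1\le h<T$ is Theorem~\ref{D:UR:criticalcomplete}.
\item $h\ge T$ is Corollary~\ref{D:UR:URstable}.
\end{itemize}
All three results are stated in both characteristics, so no separate equal-characteristic argument is needed. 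Together they give $A_U(\theta_U)=A_E(\theta_E)$ for every conductor.

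\textbf{Independence of $L$.} The biquadratic field has a third quadratic subfield $E'$, which is also ramified. A pair $(\theta_E,\theta_{E'})$ extends to a compatible family by the remark in the introduction. Comparing each of its members with $U$ and using transitivity gives $A_E=A_U=A_{E'}$. The trivial-character factor in $\mathcal A_L$ equals one, so $A_L$ agrees with $\mathcal A_L$.

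\textbf{Expected obstacle.} The only real checks are two bookkeeping points. First, that Proposition~\ref{D:UR:actual} applies to every primitive pair: this requires the adjustment of $\chi_E^0$ by an unramified character, which does not alter \eqref{D:UR:modelE}. Second, that the three ranges of $h$ cover all conductors, including the boundary $h=T-1=t$ used in Lemma~\ref{D:UR:choosex}. I expect the first point to take the most care, and I would handle it by noting that $S(K/E)$ is unramified because $K/E$ is unramified.
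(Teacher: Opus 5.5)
Your proposal is correct and follows essentially the same route as the paper. You rule out a second unramified quadratic field, put the pair in the normal form of Proposition~\ref{D:UR:actual}, and dispatch $h=0$, $1\le h<T$, $h\ge T$ to Theorem~\ref{D:UR:minimal}, Theorem~\ref{D:UR:criticalcomplete} and Corollary~\ref{D:UR:URstable}, concluding by transitivity through $U$. The paper also remarks that two choices of $\theta_L$ with the same pullback are conjugate by Lemma~\ref{U:conjugacy}, so the common value is independent of those choices; this is harmless to omit for the statement as you framed it.
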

\begin{proof}
The other quadratic fields are ramified, since an unramified local Galois
group is cyclic. For a comparison with either such $E$,
Proposition~\ref{D:UR:actual} gives conductors $T+h,T+2h$ with $h\ge0$.
The case $h=0$ is Theorem~\ref{D:UR:minimal}; the cases $1\le h<T$ are
Theorem~\ref{D:UR:criticalcomplete}; and every $h\ge T$ is
Corollary~\ref{D:UR:URstable}. This exhausts the conductors. Comparing each
ramified field with $U$ gives independence of all three by transitivity.
Two characters of $L^\times$ with the prescribed pullback to $K$
differ by a character in $S(K/L)$ and are conjugate by
Lemma~\ref{U:conjugacy}. Their local
constants agree by an automorphism change of variables. Thus the
conclusion does not depend on those choices either.
\end{proof}

\section{Totally ramified extensions in characteristic two}\label{sec:dyadic-equal}
\subsection{Statement and normalization}
Let $F=k((\varpi))$, where $k$ is a finite field of characteristic $2$.
Let $K/F$ be totally ramified and Galois with group $C_2^2$.
Write its three nontrivial automorphisms as $g_1,g_2,g_3$, with
$g_1g_2=g_3$, and put
\[
 L_i=K^{\langle g_i\rangle},\qquad
 N_i=\Nm_{K/L_i},\quad S_i=\Tr_{K/L_i},\quad
 n_i=\Nm_{L_i/F},\quad T_i^{\rm tr}=\Tr_{L_i/F}.
\]
We reserve $T_i$ without a superscript for a different/conductor exponent.
Let $\omega_i$ be the nontrivial character of
$F^\times/n_iL_i^\times$.
\begin{theorem}\label{D:EQ:main}
Suppose $\Theta$ is $\Gal(K/F)$-invariant and is not a norm pullback
from $F$. Choose characters $\theta_i$ of $L_i^\times$ with
$\theta_iN_i=\Theta$. For any nontrivial additive character $\psi_F$,
put $\psi_{L_i}=\psi_FT_i^{\rm tr}$. Then
\begin{equation}\label{D:EQ:Adef}
 \mathcal A_i(\theta_i)=
 \Delta_{L_i}(\theta_i,\psi_{L_i})\Delta_F(\omega_i,\psi_F)
\end{equation}
is independent of $i$. Continuous quasi-characters are allowed.
\end{theorem}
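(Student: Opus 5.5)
The plan follows the dyadic unramified--ramified case (Section~\ref{sec:dyadic-ur}), using Artin--Schreier coordinates in place of Teichmuller/Hensel arguments.

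\textbf{Setup.} By Proposition~\ref{U:quadratic-square}, the ratio $\mathcal A_i/\mathcal A_j$ lies in $\{1,-1\}$. By transitivity it suffices to compare two fields, which I take to be $L_1$ and $L_2$. I would write $K=F(\Delta_1,\Delta_2)$ with $\Delta_i^2-\Delta_i=a_i\in F$, where each $a_i$ is reduced, so that $v_F(a_i)=-t_i$ is odd and $t_i$ is the break of the corresponding quadratic subextension. Let $\omega_i$ be the character of $F^\times/n_iL_i^\times$. Lemma~\ref{D:EQ:ASsymbol} expresses it through the local residue: $\omega(x)=\psi_0(\Tr_{k/\F_2}\Res(a\,d\log x))$. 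This gives explicit additive coefficients for every norm character. Concretely, I take $\Psi_F=\psi_F(\alpha\,\cdot)$ with $\alpha$ determined by the residue pairing. Then the analogue of Lemma~\ref{D:UR:normphase}, $\Psi_{L}(z)=\Psi_F(n_{L/F}z)$, holds exactly on the relevant ideals, because in characteristic two $\Nm(1+z)=1+\Tr z+\Nm z$ and there are no $2$-divisible error terms.

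\textbf{Conductor reduction and stationary coefficients.} Next I reduce the conductors as in Proposition~\ref{O:G:reduce}. Lemmas~\ref{O:G:lower}--\ref{O:G:deep} adapt verbatim to $p=2$ and give a pair $\chi_i$ of minimal conductors $m_i=1+t_i+u_i$, twisted by $\lambda\circ n_i$. In the stable range $a_F(\lambda)$ large, Theorem~\ref{D:UR:stable} settles the question. In the remaining range I construct a single element $C\in K^\times$ such that $N_iC$ and $n_i(S_iC)$ are stationary coefficients for $\theta_i$ and $\omega_i$, as required by Lemma~\ref{U:common-origin}. The natural choice is $C=\Delta+x$, where $\Delta$ is an Artin--Schreier generator of $K/L_2$ and $x$ is a norm representative of the coefficient of $\lambda$ (via \cite[Lemma~6.6]{Ueda}, adjusted by Lemma~\ref{U:normalization}). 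In characteristic two the norms and traces of $C$ are exact polynomials in $x$, $a$, and the elementary symmetric functions. Where $N_iC$ fails stationarity, I would correct it by a conjugate ratio as in Lemma~\ref{D:UR:correctedreps}, with the multiplicative value of the correction read off from Lemma~\ref{U:conjugacy}. Lemma~\ref{U:common-origin} then reduces the claim to showing that $g_ih_i$ is independent of $i$. The sign $(-1)^{\cdots}$ arising from the corrections would be cancelled by a trace-parity argument, as in Lemma~\ref{D:UR:traceparity}: a character killing $z+z^2$ is $(-1)^{\Tr}$.

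\textbf{Comparing critical sums (the main obstacle).} The main obstacle is the comparison of critical sums when some conductor is odd. I would handle it by applying Lemma~\ref{U:common-function} to $C'=C(1+\varpi^jz)$, which identifies $H_{\theta_i}\otimes H_{\omega_i}$ on both sides with a common function $Q(z)$ of $z$ in a residue space. When the induced map $z\mapsto(\text{critical residues for }\theta_i,\omega_i)$ is bijective, as in \eqref{D:UR:critmaps}, the sums agree directly. Otherwise its image has index two, and I would invoke Lemma~\ref{U:missing-coset}. Its hypothesis requires an element $w_i$ in the image with $H_i(w_i)=1$ whose polar pairing detects the missing coset. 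I would try to produce $w_i$ from the third field $L_3$, using $\omega_1\omega_2=\omega_3$ and the residue formula \eqref{U:residue-input} to compute the polar pairing as $\Tr\Res$ of a differential. Verifying this index-two structure and the isotropy case by case in the breaks $t_1\le t_2$ (including $t_1=t_2$) is where I expect most of the work and the real risk to lie.
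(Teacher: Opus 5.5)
\textbf{The gap is the stationary coefficient of the minimal characters.} It sits in the step ``the natural choice is $C=\Delta+x$''. Lemma~\ref{U:common-origin} needs $N_iC$ to be a stationary coefficient of $\theta_i$. For the minimal part this means $\chi_i(1-z)=\Psi_{L_i}(N_i\Delta\,z)$ on $\pp_{L_i}^{s_i}$, up to a common scaling of $\alpha$.

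The conductor reduction you import from Lemmas~\ref{O:G:lower}--\ref{O:G:deep} and Proposition~\ref{O:G:reduce} only produces \emph{some} minimal pair. It says nothing about its coefficients, and your proposal contains no argument that supplies them.

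For a general Artin--Schreier generator the claim is false. The element $z_1$ generates $K/L_2$, but $N_1z_1=z_1^2$ and $N_2z_1=f_1$ have even $L_i$-valuation $-2t_1$. The coefficient must have odd valuation $J_i-m_i=-t_1$.

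The paper uses the specific element $Y=d_1z_2+d_2z_1$, so that $Y/d_2$ is a reduced generator of $K/L_2$. For this element $S_iY=d_i$, and $n_i(d_i)=\beta_i$ is exactly the lower coefficient of $\omega_i$.

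Even with $Y$, you must still prove that the formulas $R_i(1-z)=\Psi_{L_i}(N_iY\,z)$ are restrictions of a primitive compatible pair. This takes two facts:
\begin{itemize}
\item agreement of $R_iN_i$ on $U_K^{T_2}$ (Lemma~\ref{D:EQ:Rcompat}, via the $E_2$ identity and the lower charts);
\item $R_1(\sigma(v)/v)=\nu_1(v)$ whenever $\sigma(v)/v\in U_{L_1}^{s_1}$ (Lemma~\ref{D:EQ:commmodel}). This is a residue computation resting on the Cartier identity of Lemma~\ref{D:EQ:rationalCartier}, and it is the genuinely new input.
\end{itemize}
Only after this model exists (Theorem~\ref{D:EQ:core-exists}) can Proposition~\ref{D:EQ:realize} write every family as $\chi_i(\lambda n_i)$ with known coefficients. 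The mechanism is that for $\ell=2$ the commutator $\nu_1$ is forced, so $\theta_1/\chi_1$ is invariant and equals $\lambda n_1$. The remaining discrepancies on $L_2,L_3$ lie in $S(K/L_i)$, whose conductor $T_1\le s_i$ leaves the restriction to $U_{L_i}^{s_i}$ unchanged. This construction, not the critical-sum comparison, is the main missing idea.

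\textbf{Once the model is in place, the rest of your plan is close to the paper.} There are some simplifications you did not anticipate.
\begin{itemize}
\item Take the adjustment $X$ in the third field $L_3$, with $A=n_3X$. Then $N_iC=a_i+A+Q_i$ with $n_iQ_i=\beta_iA+E$, and $N_iC$ is stationary directly. No conjugate-ratio correction or trace-parity sign arises, and $h_i=1$ because the $T_i$ are even.
\item Critical sums occur only at the odd minimal conductors. For $t_1<t_2$ the residue maps are $z\mapsto z^2$ and a nonzero linear map, hence bijective.
\item For $t_1=t_2$ you need one $C$ valid for all three indices (Lemma~\ref{D:EQ:min-origin}), and the maps are $p_i(z)=z^2+(c_i/u)z$. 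The element $w_i=p_i(c_j/u)$ satisfies $H_i(w_i)=Q(c_j/u)=H_j(0)=1$. Its polar character is the character of $k/p_i(k)$, because triviality of $\omega_i$ on its critical norms forces $u^2=c_1c_2c_3$.
\item Theorem~\ref{D:UR:stable} needs $n\ge 2T_2+T_1-1$. Approximate norms already fail for $n\ge 2T_2+r_1-1$, so there is an uncovered window. Either keep exact norms via Lemma~\ref{U:normalization} for all $h\ge r_1$, or use the correction $\Psi_F(\mathcal N/A)$ of Lemma~\ref{D:EQ:highcoef}.
\end{itemize}
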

The local constants and their admissible denominators have Ueda's
normalization. In particular, the omitted trivial-character factor is $1$.
Only restrictions to compact unit groups enter the finite calculations.

We use the First Main Lemma, Lamprecht's formula, exact stable
twisting, additive scaling and duality, norm filtration, and the
trace and conductor formulas from~\cite{Ueda}, recalled in
Sections~\ref{sec:local-notation}--\ref{sec:stationary}.
The subcritical norm representatives are those of
\cite[Lemma~6.6]{Ueda}.
For a ramified quadratic extension $E/D$ of break $t$, these give
\begin{equation}\label{D:EQ:edge}
 \Tr_{E/D}(\pp_E^a)=\pp_D^{\floor{(a+t+1)/2}},\qquad
 v_D(\Nm x)=v_E(x),\qquad a_D(\omega_{E/D})=t+1.
\end{equation}
If $a_D(\lambda)>t+1$, its norm pullback has conductor
$2a_D(\lambda)-(t+1)$. For $0\le j\le t$, an arbitrary $A\in D^\times$
has a norm approximation with relative error in $U_D^j$ and the correct
valuation.
We also use Hilbert 90, extension of abelian characters, elementary
Artin--Schreier theory, and the local trace--residue identity for
separable Laurent-series fields. The residue formula for the norm
characters is proved in Lemma~\ref{D:EQ:ASsymbol}.

Throughout this section $K/F$ is totally ramified.
Fix a uniformizer $\pi$ of $K$ and take
\begin{equation}\label{D:EQ:compatiblepi}
 \Pi_i=N_i\pi,\qquad \varpi=\Nm_{K/F}\pi=n_i\Pi_i.
\end{equation}
These are compatible uniformizers; the coefficient field $k$ is contained
in all four fields. Write $\tr=\Tr_{k/\F_2}$.
We first prove the theorem for
\begin{equation}\label{D:EQ:canonicalpsi}
 \psi_F(x)=(-1)^{\tr\Res_F(x\,d\varpi)}.
\end{equation}
Its largest trivial ideal is $\OO_F$. The passage to an arbitrary
additive character is given at the end.

\subsection{Local Artin--Schreier characters and a residue identity}
For a Laurent differential in a uniformizer $v$, define
\[
 \Cart\left(\sum_n a_nv^n\,dv\right)
       =\sum_m a_{2m+1}^{1/2}v^m\,dv.
\]
Coefficient square roots are unique because the residue field is perfect.
The identities used below follow from this formula:
\begin{equation}\label{D:EQ:Cartier}
 \Cart(f^2\eta)=f\Cart(\eta),\qquad
 \Cart(dv/v)=dv/v,\qquad
 \tr\Res\Cart(\eta)=\tr\Res\eta.
\end{equation}
Also $\Cart(du/u)=du/u$ for every nonzero Laurent series $u$.
One direct verification writes a principal unit, successively in its
coefficients, as a convergent product of factors $1-cv^m$.
Factors with even $m$ have zero logarithmic derivative. For odd $m$,
\[
 d\log(1-cv^m)=\sum_{j\ge1}c^jv^{mj-1}\,dv,
\]
and the displayed formula for $\Cart$ retains $j=2j'$ and returns the
same series. The valuation and constant factors of $u$ give the other
parts of the assertion. Every residue calculation uses only finitely
many terms of these convergent formal series.

We use the local trace--residue identity proved in
Theorem~\ref{B:trace-residue}
\begin{equation}\label{D:EQ:residue-trace}
 \Tr_{k_E/k_D}\Res_E(\eta)
       =\Res_D\bigl(\Tr_{E/D}\eta\bigr)
\end{equation}
for a finite separable extension of equal-characteristic local fields.
Here the trace of $x\,d\varpi_D$ is
$\Tr_{E/D}(x)\,d\varpi_D$. We also use
$d\log N_{E/D}(u)=\Tr_{E/D}(du/u)$ from Lemma~\ref{B:dlog-norm}.

\begin{lemma}[The Artin--Schreier norm character]\label{D:EQ:ASsymbol}
Let $z^2+z=f\in F$ define a nontrivial quadratic extension $E/F$.
Then its nontrivial norm character is
\begin{equation}\label{D:EQ:symbol}
 \omega_f(u)=(-1)^{\tr\Res_F(f\,du/u)}.
\end{equation}
If $f=f_0+\sum_{j>0,\ j\text{ odd}}f_j\varpi^{-j}$ is reduced and
its largest pole is $t>0$, this character has conductor $t+1$.
\end{lemma}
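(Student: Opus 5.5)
We define $\omega_f$ by \eqref{D:EQ:symbol} and show in turn that it is a continuous character of $F^\times$ of order at most two, that it kills $\Nm_{E/F}E^\times$, that it is nontrivial, and that its conductor is $t+1$. Since $F^\times/\Nm_{E/F}E^\times$ has order two, the first three facts identify $\omega_f$ with the nontrivial norm character.

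Multiplicativity is immediate from $d\log(uu')=d\log u+d\log u'$, and the values lie in $\{\pm1\}$. Continuity holds because $du/u$ is integral for $u\in U_F^j$ with $j$ large, while $f$ has a pole of bounded order.

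\textbf{Norms.} Take $u=\Nm_{E/F}y$. Lemma~\ref{B:dlog-norm} gives $du/u=\Tr_{E/F}(dy/y)$. Since $f\in F$, the trace is $F$-linear, so $f\,du/u=\Tr_{E/F}(f\,dy/y)$. The trace--residue identity \eqref{D:EQ:residue-trace}, together with $k_E=k_F$ in the ramified case or transitivity of $\tr$ in the unramified case, reduces the exponent to $\tr\Res_E(f\,dy/y)$. Now substitute $f=z^2+z$. The term $\tr\Res_E(z^2\,dy/y)$ equals $\tr\Res_E\Cart(z^2\,dy/y)$ by \eqref{D:EQ:Cartier}. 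Since $\Cart(z^2\,dy/y)=z\,\Cart(dy/y)=z\,dy/y$, this equals $\tr\Res_E(z\,dy/y)$. The two terms therefore cancel in characteristic two, and $\omega_f(\Nm y)=1$.

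\textbf{Conductor and nontriviality.} Replacing $f$ by $f+g^2+g$ leaves the character unchanged, by the same Cartier computation applied to $g\in F$. So we may assume $f$ is reduced as in the statement, with only odd negative exponents, and largest pole $t$ odd. For $u=1-c\varpi^m$ with $c$ a Teichmüller-type coefficient, $du/u=\sum_j c^j\varpi^{mj-1}\,d\varpi$. If $m\ge t+1$, the product $f\,du/u$ has no pole of order $1$, so the residue vanishes; hence $\omega_f$ is trivial on $U_F^{t+1}$. For $m=t$, only the pairing of $f_t\varpi^{-t}$ with $c\varpi^{t-1}\,d\varpi$ contributes to the residue. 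Higher $j$ give exponents $-t+jt-1\ge t-1>-1$, and deeper poles of $f$ do not exist. The residue is therefore $f_tc$, and $\tr(f_tc)$ is nonzero for a suitable $c\in k$ because $f_t\ne0$. So $\omega_f$ is nontrivial on $U_F^t$, and the conductor is exactly $t+1$.

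If $t=0$, i.e.\ the unramified case, nontriviality is instead seen at $u=\varpi$: there $du/u=d\varpi/\varpi$, and the exponent becomes $\tr(\bar f_0)\ne0$, since $\bar f_0$ has nonzero absolute trace for a nontrivial extension. A nontrivial order-two character killing the index-two norm group is the norm character. The step needing most care is the convergence and finiteness bookkeeping in Lemma~\ref{B:dlog-norm} and the Cartier identity for $E$, but these are supplied by the cited results.
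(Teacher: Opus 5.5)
Your proposal is correct and follows the paper's proof essentially step for step. The shared steps are: invariance under $f\mapsto f+w^2+w$ via the Cartier identities; triviality on norms via $d\log\Nm_{E/F}=\Tr_{E/F}\,d\log$, the trace--residue identity and $\Cart(z^2\,dy/y)=z\,dy/y$; triviality on $U_F^{t+1}$; nontriviality through $\tr(f_tc)$ at depth $t$, or at a uniformizer for a nonzero constant class; and the index-two norm quotient.

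Two small points are worth fixing. First, you assume that every class has a reduced representative, whereas the paper indicates why: remove even poles by adding $w^2+w$, and kill the positive-depth part by bijectivity of $w\mapsto w^2+w$ on $\pp_F$. Second, your formula for $d\log(1-c\varpi^m)$ omits the factor $m$, so the expression actually vanishes for even $m$. This is harmless, since for $m\ge t+1$ you only use a valuation bound, and at the odd depth $t$ your formula is exact.
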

\begin{proof}
The formula is multiplicative in $u$. Adding $w^2+w$ to $f$ does not
change it: by \eqref{D:EQ:Cartier} and the logarithmic identity, the residues
of $w^2du/u$ and $wdu/u$ have the same absolute trace.
Every Artin--Schreier class has a reduced representative of the stated
form. Remove a negative even power by adding $w^2+w$ and repeat; remove
the positive-depth part using the bijectivity of $w\mapsto w^2+w$ on
the maximal ideal. Constants are taken modulo the same map on $k$.

For $u=\Nm_{E/F}v$, \eqref{D:EQ:residue-trace} makes its exponent equal to
the absolute trace of
$\Res_E((z^2+z)\,dv/v)$, which is zero by \eqref{D:EQ:Cartier}.
Thus the formula is trivial on norms.
For a reduced representative with pole $t$, it is trivial on $U_F^{t+1}$,
where $f\,du/u$ has nonnegative valuation. At
$u=1+c\varpi^t$, the exponent of the induced character on
$U_F^t/U_F^{t+1}$ is $\tr(f_tc)$, which is
nontrivial as $c$ varies. If instead the class is an unramified nonzero
constant, evaluation at a uniformizer is nontrivial.
Consequently the formula is nontrivial for every nonzero
Artin--Schreier class. The quadratic norm quotient has order two, so
this nontrivial character trivial on norms is precisely its norm
character. The conductor assertion follows as well.
\end{proof}

\begin{lemma}[A local Cartier calculation]\label{D:EQ:rationalCartier}
For $f,w\in F$, the denominator $1+\varpi w^2$ is nonzero and
\begin{equation}\label{D:EQ:Cartier-rational}
 \tr\Res_F\left(\frac{\varpi f^2}{1+\varpi w^2}\,d\varpi\right)
 =\tr\Res_F\left(\frac{f}{1+\varpi w^2}\,d\varpi\right).
\end{equation}
\end{lemma}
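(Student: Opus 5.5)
Write $g=1+\varpi w^2$, which is nonzero: its constant term is $1$ when $w$ is integral, and when $v_F(w)<0$ its valuation $1+2v_F(w)$ is odd, so it cannot vanish. The plan is to apply the Cartier identities \eqref{D:EQ:Cartier} to the left side of \eqref{D:EQ:Cartier-rational}. The first step is the factorization
\[
 \frac{\varpi f^2}{g}\,d\varpi=f^2\cdot\frac{\varpi}{g}\,d\varpi .
\]
Then $\tr\Res_F(\eta)=\tr\Res_F\Cart(\eta)$ and $\Cart(f^2\eta')=f\,\Cart(\eta')$ reduce the left side to $\tr\Res_F\bigl(f\,\Cart(\varpi\,d\varpi/g)\bigr)$. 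It therefore suffices to prove
\[
 \Cart\!\left(\frac{\varpi}{g}\,d\varpi\right)=\frac{1}{g}\,d\varpi .
\]

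To compute this Cartier image, we write $1/g=g/g^2$, so that
\[
 \frac{\varpi}{g}\,d\varpi=\frac{1}{g^2}\,(\varpi g)\,d\varpi
 =\frac{1}{g^2}\,(\varpi+\varpi^2w^2)\,d\varpi .
\]
Applying $\Cart(f^2\eta)=f\Cart(\eta)$ once more gives
\[
 \Cart\!\left(\frac{\varpi}{g}\,d\varpi\right)=\frac1g\Bigl(\Cart(\varpi\,d\varpi)+\Cart\bigl((\varpi w)^2\,d\varpi\bigr)\Bigr).
\]
From the defining formula with $v=\varpi$, we have $\Cart(\varpi\,d\varpi)=d\varpi$, because the coefficient of $\varpi^{1}$ has index $2\cdot0+1$. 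Also $\Cart((\varpi w)^2d\varpi)=\varpi w\,\Cart(d\varpi)=0$, because $d\varpi$ has no odd-index coefficient. Hence the bracket equals $d\varpi$, which is the required identity.

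Before using the rule $\Cart(f^2\eta)=f\Cart(\eta)$ for arbitrary Laurent series $f=\sum f_n\varpi^n$, we check it directly from the definition. Since the characteristic is $2$, $f^2=\sum f_n^2\varpi^{2n}$, and multiplying by $f^2$ shifts the odd-index coefficients of $\eta$ in a way that commutes with taking square roots. All sums are finite in each degree because the series are Laurent. No other input is needed, and we do not expect any step to be a real obstacle. The one point requiring care is to apply Cartier to the product form $\frac1{g^2}\cdot\varpi g\,d\varpi$ rather than to expand $1/g$ as a series.
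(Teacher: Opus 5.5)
Your proof is correct and is essentially the paper's argument. The paper writes $\varpi f^2/H=\varpi(f/H)^2+\varpi^2(fw/H)^2$ with $H=1+\varpi w^2$ and applies the Cartier identities \eqref{D:EQ:Cartier} termwise; this is your rewriting $1/g=g/g^2$ with $f^2$ pulled out first, and your parity-of-valuations argument for $g\ne0$ is also the paper's.
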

\begin{proof}
Nonvanishing follows from the parity of valuations. Put
$H=1+\varpi w^2$ and write
\[
 \frac{\varpi f^2}{H}
   =\varpi(f/H)^2+\varpi^2(fw/H)^2.
\]
Applying \eqref{D:EQ:Cartier} gives $(f/H)\,d\varpi$, and taking absolute
residue traces proves the assertion.
\end{proof}

\subsection{Artin--Schreier generators and their norms}
Choose the labeling so that $L_1/F$ has the smallest break, and take
reduced representatives
\[
 z_i^2+z_i=f_i\quad(i=1,2),\qquad z_3=z_1+z_2,\quad f_3=f_1+f_2.
\]
Because $K/F$ is totally ramified, all three nonzero classes have a pole.
Write their breaks as $t_1\le t_2=t_3$, all odd, and put
\begin{equation}\label{D:EQ:rledger}
 T_1=t_1+1=2r_1,\qquad T_2=t_2+1=2r_2,\qquad
 \delta=r_2-r_1\ge0.
\end{equation}
In the one-break case the two leading pole coefficients are distinct;
otherwise their sum would have smaller conductor and would be the
first field. In the two-break case $t_2-t_1=2\delta$.

Write $f_i=f_{i,0}+f_i^-$, where $f_i^-$ is the negative-power part.
Define
\begin{equation}\label{D:EQ:dbeta}
 \beta_i=\varpi^{t_2}f_i^-=d_i^2\in\OO_F,\qquad d_3=d_1+d_2.
\end{equation}
The square roots exist in $F$: every exponent $t_2-j$ is even, and
$k$ is perfect. Thus $v_Fd_1=\delta$ and $d_2,d_3$ are units.
Set
\begin{equation}\label{D:EQ:origin}
 Y=d_1z_2+d_2z_1,\quad
 B=\beta_1f_2+\beta_2f_1
       =\beta_1f_{2,0}+\beta_2f_{1,0}\in\OO_F,\quad
 \kappa_i=d_jd_k\quad(\{i,j,k\}=\{1,2,3\}).
\end{equation}
The automorphisms are labeled so that $g_1$ changes $z_2$ by $1$ and
fixes $z_1$, while $g_2$ changes $z_1$ by $1$ and fixes $z_2$.
Exact identities in the fields are
\begin{align}
 g_iY-Y&=d_i,&S_iY&=d_i,&
 a_i:=N_iY&=\kappa_i z_i+B.                         \label{D:EQ:exact-origin}
\end{align}
For example $Y^2=\beta_1z_2+\beta_2z_1+B$, which proves every formula.
In particular,
\begin{equation}\label{D:EQ:originval}
 v_KY=-t_1,\qquad v_{L_i}a_i=-t_1,\qquad
 \mathcal N:=\Nm_{K/F}Y=n_i a_i,\quad v_F\mathcal N=-t_1.
\end{equation}
For $i=2,3$, $v_{L_i}\kappa_i=2\delta=t_2-t_1$; for $i=1$ its
valuation is zero. These observations prove the second assertion of
\eqref{D:EQ:originval}, and the first follows by the norm valuation formula.

The upper breaks are $u_1=2t_2-t_1$ and $u_2=u_3=t_1$.
For completeness, a ramification automorphism $g$ changes an element
of odd valuation $v$ by an element of exact valuation $v+u_g$:
expand in a uniformizer and use the first nonzero ramification term.
Apply this to $Y$ in \eqref{D:EQ:exact-origin}. The valuation of $d_1$ in $K$
is $4\delta$, and those of $d_2,d_3$ are zero, proving these upper
breaks. Thus the different exponents of $K/L_i$ are
\begin{equation}\label{D:EQ:upperdifferent}
 D'_1=2T_2-T_1,\qquad D'_2=D'_3=T_1.
\end{equation}

Put
\begin{equation}\label{D:EQ:Psis}
 \alpha=\varpi^{-T_2},\qquad
 \Psi_F(x)=\psi_F(\alpha x),\qquad
 \Psi_{L_i}(x)=\Psi_F(T_i^{\rm tr}x).
\end{equation}
The largest trivial ideals for these characters have exponents
\begin{equation}\label{D:EQ:Jledger}
 J_F=T_2,\quad J_1=2T_2-T_1,\quad J_2=J_3=T_2.
\end{equation}
Since $T_2$ is even, $\Psi_F(x^2)=1$ for every $x\in F$.
Lemma~\ref{D:EQ:ASsymbol}, integration by parts for a formal residue, and
$df_i=\alpha\beta_i\,d\varpi$ give
\begin{equation}\label{D:EQ:lowerchart}
 \omega_i(1-z)=\Psi_F(\beta_i z)\qquad(z\in\pp_F^{r_i}),
 \qquad r_3=r_2.
\end{equation}
Indeed replacing $(1-z)^{-1}$ by $1$ in $f_i\,dz/(1-z)$ costs no
residue at depth $r_i$: the error has valuation at least
$-t_i+r_i+(r_i-1)=0$.
Moreover
\begin{equation}\label{D:EQ:loweractualnorm}
 \beta_i=n_i(S_iY)=n_i(d_i),\qquad \omega_i(\beta_i)=1.
\end{equation}
The quadratic norm expansion also gives
\begin{equation}\label{D:EQ:normphase}
 \Psi_{L_i}(\beta_i z)=\Psi_F(\beta_i n_i z)
                      \qquad(z\in\pp_{L_i}^{r_i}).
\end{equation}
Both trace and norm of $z$ lie in $\pp_F^{r_i}$; evaluate
\eqref{D:EQ:lowerchart} on $n_i(1-z)=1+T_i^{\rm tr}z+n_i z$ and use
triviality on norms.

\subsection{Construction of compatible characters}
Define the minimal conductors and stationary depths
\begin{equation}\label{D:EQ:coreledger}
 \begin{aligned}
 m_1&=2T_2-1=2t_2+1,&s_1&=T_2,\\
 m_2=m_3&=T_1+T_2-1=t_1+t_2+1,&s_2=s_3&=r_1+r_2.
 \end{aligned}
\end{equation}
Thus $m_i=J_i+t_1$, $m_i=2s_i-1$, and the formulas
\begin{equation}\label{D:EQ:Rmodel}
 R_i(1-z)=\Psi_{L_i}(a_i z),\qquad z\in\pp_{L_i}^{s_i}
\end{equation}
define characters of $U_{L_i}^{s_i}$ with exact conductor $m_i$.
Multiplicativity follows since $2s_i-t_1\ge J_i$.

\begin{lemma}[The value of $R_1$ on $\sigma(v)/v$]\label{D:EQ:commmodel}
Let $\sigma$ be the nontrivial automorphism of $L_1/F$ and
$\nu_1=\omega_2n_1$, the nontrivial character in $S(K/L_1)$.
For every $v\in L_1^\times$ with $\sigma(v)/v\in U_{L_1}^{s_1}$,
\begin{equation}\label{D:EQ:commmodel-eq}
 R_1(\sigma(v)/v)=\nu_1(v).
\end{equation}
\end{lemma}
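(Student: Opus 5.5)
Write $\sigma(v)/v=1-z$ with $z\in\pp_{L_1}^{s_1}$. The plan is to compute both sides of \eqref{D:EQ:commmodel-eq} through residues and Artin--Schreier data, and to compare them. The natural first reduction is to normalize $v$.

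\textbf{Step 1: normalize $v$.} Multiplying $v$ by an element of $F^\times$ changes neither side. On the left, $\sigma(v)/v$ is unchanged. On the right, $\nu_1=\omega_2 n_1$ kills $F^\times$, because $n_1(f)=f^2$ and $\omega_2$ has order two. So, as in the proof of Lemma~\ref{O:G:deep}, we may take $v$ to be a principal unit of maximal depth $q$ in its $F^\times$-coset. Then $q$ is odd, and the first ramification term of $\sigma$ (break $t_1$) gives $q+t_1\ge s_1=T_2$. Since $t_1$ is odd, the value of $\nu_1(v)=\omega_2(n_1v)$ can be computed by Lemma~\ref{D:EQ:ASsymbol}:
\[
 \nu_1(v)=(-1)^{\tr\Res_F(f_2\,d\log n_1v)}=(-1)^{\tr\Res_{L_1}(f_2\,dv/v)}.
\]
The second equality uses Lemma~\ref{B:dlog-norm}, the identity \eqref{D:EQ:residue-trace}, and the fact that $f_2\in F$.

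\textbf{Step 2: rewrite the left side as a residue.} By \eqref{D:EQ:Rmodel}, the left side equals $\Psi_{L_1}(a_1z)=\psi_F(\alpha T_1^{\rm tr}(a_1z))$. By \eqref{D:EQ:canonicalpsi} and \eqref{D:EQ:residue-trace}, this is
\[
 (-1)^{\tr\Res_{L_1}(\alpha a_1 z\,d\varpi)}.
\]
Here $a_1=\kappa_1z_1+B$, with $\kappa_1=d_2d_3$ a unit and $z_1\in L_1$. We have $z=1-\sigma(v)/v=(v-\sigma v)/v$. The claim therefore becomes an identity between two $L_1$-residues:
\[
 \tr\Res_{L_1}\bigl(\alpha(\kappa_1z_1+B)(v-\sigma v)/v\,d\varpi\bigr)
 =\tr\Res_{L_1}\bigl(f_2\,dv/v\bigr).
\]

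\textbf{Step 3: prove the residue identity.} I would use the relations $d f_i=\alpha\beta_i\,d\varpi$ and $\beta_i=d_i^2$ to convert $\alpha\kappa_1\,d\varpi$ into differentials of the $f_i$. Since $L_1=F(z_1)$ and $\sigma z_1=z_1+1$, we have $dz_1=df_1$ (characteristic two). I would then expand $v-\sigma v$ to first order as a derivation along $z_1$. Writing $v$ as $v(z_1)$ over $F$, one has $v-\sigma v\approx \partial_{z_1}v$ at the relevant depth, and this is where the hypothesis $\sigma(v)/v\in U^{s_1}$ is used. The terms involving $B\in\OO_F$ should drop out by conductor: their contribution has valuation at least $J_1$.

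The main term is then handled by integration by parts: $\tr\Res(g\,dh)=\tr\Res(h\,dg)$, the exact version of this, plus Cartier invariance \eqref{D:EQ:Cartier} of $d\log$. Using $d_1^2f_2+d_2^2f_1\equiv B$ together with the Artin--Schreier relation $f_2=z_2^2+z_2$, this should turn the expression into $f_2\,dv/v$. The discrepancy $w^2+w$ is absorbed exactly as in the proof of Lemma~\ref{D:EQ:ASsymbol}.

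\textbf{Main obstacle.} The hard part is Step 3, specifically controlling the second-order terms in $v-\sigma v$ versus $\partial_{z_1}v$, and checking that every error is killed by $\Psi_{L_1}$ at exponent $J_1=2T_2-T_1$. My first attempt would be to reduce to generators $v=1-cz_1^{\,j}\varpi^{m}$ of odd depth and verify the identity on each by direct residue calculation, using multiplicativity of both sides. If that fails, the fallback is an independent characterization: both sides are characters of the image group $\{\sigma(v)/v\}$, and one could compare them on a single generating layer.
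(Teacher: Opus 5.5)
\textbf{Steps 1 and 2 are sound.} After scaling by $F^\times$ you may take $v=a(1+wz_1)$ with $a,w\in F$ and $w\in\pp_F^{r_2}$; this is the paper's normalization. Your rewriting $\nu_1(v)=(-1)^{\tr\Res_{L_1}(f_2\,dv/v)}$ via Lemma~\ref{B:dlog-norm} and \eqref{D:EQ:residue-trace} is correct.

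\textbf{Step 3 is a genuine gap.} It carries the entire content of the lemma, but it is only asserted (``should turn the expression into $f_2\,dv/v$''), and the obstacle you single out does not exist. Since $[L_1:F]=2$, you have $v=a+bz_1$ exactly. Hence $v-\sigma v=b=\partial_{z_1}v$ with no higher-order terms, and the left side is given by the exact formula $T_1^{\rm tr}(a_1b/v)=(\kappa_1w+Bw^2)/V$, where $V=n_1v/a^2=1+w+w^2f_1$. Two further points do not work as stated. The relation $f_2=z_2^2+z_2$ cannot be used inside an $L_1$-residue, since $z_2\notin L_1$. Your fallback of comparing on a single layer would not suffice, because once $r_2\ge2$ the group $\{\sigma(v)/v\}$ has several nontrivial layers below the conductor.

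\textbf{The missing idea.} You need to explain how the $\omega_2$-side, which is quadratic in $w$, becomes the linear expression on the $R_1$-side. Integrate $f_2^-\,dw/V$ by parts, using $df_i=\alpha\beta_i\,d\varpi$. Up to differentials with zero residue, this gives
\[
 \tr\Res_F\Bigl(f_2\,\frac{dV}{V}\Bigr)
 =\tr\Res_F\Bigl(\frac{\alpha\beta_2w}{V}\,d\varpi\Bigr)
 +\tr\Res_F\bigl(\varpi g^2(1+w^2f_1)\,d\varpi\bigr),
 \qquad g=\frac{\alpha d_1d_2w}{V},
\]
because $\alpha\beta_1\beta_2\varpi^{-t_2}w^2=\varpi(\alpha d_1d_2w)^2$.

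The last term collapses to $\tr\Res_F(g\,d\varpi)$ only through the semilinearity $\Cart(f^2\eta)=f\,\Cart(\eta)$, together with $\Cart(\varpi\,d\varpi)=d\varpi$ and $\Cart(\varpi^2\,d\varpi)=0$. This uses $w^2f_1^-=\varpi(\varpi^{-r_2}d_1w)^2$; the constant part $f_{1,0}$ is harmless. This is exactly Lemma~\ref{D:EQ:rationalCartier}, which the paper applies after factoring $V=H(1+w/H)$ with $H=1+w^2f_1$.

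Then $\beta_2+d_1d_2=d_2d_3=\kappa_1$ reassembles $\tr\Res_F(\alpha\kappa_1w/V\,d\varpi)$. This is the $R_1$-side once $\alpha Bw^2/V$, which has nonnegative valuation, is discarded.

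The step needs the Cartier operator's semilinearity applied to $\varpi\,d\varpi$, not the invariance of $d\log$ that you cite, and none of it appears in your plan. As written, the proposal does not prove the lemma.
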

\begin{proof}
The description of $\nu_1$ follows from Lemmas~\ref{D:crossed-norm}
and \ref{D:quadratic-norms}, whose characteristic-two proof uses only
\eqref{D:EQ:symbol} and \eqref{D:EQ:residue-trace}.
Multiplication of $v$ by an element of $F^\times$
does not affect either side. Reduce its valuation to $0$ or $1$.
In valuation $1$ its conjugate quotient has exact depth $t_1<s_1$,
by the first ramification term. Thus only units occur.
Write $v=a+bz_1$, with $a,b\in F$. The two summands have valuations of
opposite parity, since $t_1$ is odd. Consequently $a$ is a unit and
$v_Fb\ge r_1$. The conjugate quotient is $1+b/v$; the assumed depth
$s_1=2r_2$ forces $v_Fb\ge r_2$. Put $w=b/a$, so $v_Fw\ge r_2$.
The case $b=0$ is immediate. Exact quadratic trace calculation gives
\begin{equation}\label{D:EQ:comm-rational}
 T_1^{\rm tr}\left(a_1\frac b v\right)
   =\frac{\kappa_1ab+B b^2}{n_1v}
   =\frac{\kappa_1w+B w^2}{1+w+w^2f_1}.
\end{equation}

To evaluate $\nu_1(v)$, put
$R=w^2f_1$, $H=1+R$ and $V=H+w$. These are units, since
$v_FR\ge2r_2-t_1\ge1$. Factor $V=H(1+w/H)$.
The residue formula evaluates $\omega_2(H)$ exactly, and \eqref{D:EQ:lowerchart} applies to the second factor. They give
\[
 \omega_2(V)=
 \Psi_F\left(\frac{\beta_1f_2w^2+\beta_2w}{H}\right)
 =\Psi_F\left(\frac{\beta_2R+\beta_2w}{H}\right).
\]
In the last equality $B w^2/H$ was discarded at depth at least $T_2$,
using $\beta_1f_2+\beta_2f_1=B\in\OO_F$.
Write $R_0=\varpi W^2$, where $W=\varpi^{-r_2}d_1w\in\OO_F$.
Then $R-R_0=f_{1,0}w^2\in\pp_F^{T_2}$, so it can be discarded in
all these unit denominators. Lemma~\ref{D:EQ:rationalCartier}, with
$f=\varpi^{-r_2}d_2W$, proves
\[
 \Psi_F\left(\frac{\beta_2R_0}{1+R_0}\right)
   =\Psi_F\left(\frac{d_1d_2w}{1+R_0}\right).
\]
It follows that $\omega_2(V)=\Psi_F(\kappa_1w/H)$ because
$\beta_2+d_1d_2=d_2d_3=\kappa_1$.
Replacing $H$ by $V$ here costs only $\kappa_1w^2/(HV)$, of depth
at least $T_2$. The same is true of the $B w^2/V$ term in
\eqref{D:EQ:comm-rational}. Since $n_1v=a^2V$ and $\omega_2(a^2)=1$,
\eqref{D:EQ:comm-rational} proves \eqref{D:EQ:commmodel-eq}.
\end{proof}

\begin{lemma}[Equality of $R_iN_i$ on $U_K^{T_2}$]\label{D:EQ:Rcompat}
For $z\in\pp_K^{T_2}$ all $N_i(1-z)$ lie in the domains of $R_i$, and
$R_i(N_i(1-z))$ is independent of $i$.
\end{lemma}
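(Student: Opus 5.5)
The plan is to reduce everything to a single additive identity on $K$. First I check the domains. For $z\in\pp_K^{T_2}$ the norm $N_i(1-z)=1-S_iz+N_iz$. By the trace formula for $K/L_i$ with different exponent $D'_i$ from \eqref{D:EQ:upperdifferent}, the term $S_iz$ has $L_i$-valuation at least $\floor{(T_2+D'_i)/2}$. For $i=1$ this is $\floor{(3T_2-T_1)/2}\ge T_2=s_1$. For $i=2,3$ it is $\floor{(T_2+T_1)/2}=r_1+r_2=s_i$. The norm term has valuation at least $T_2\ge s_i$ in every case. So $R_i$ is defined on $N_i(1-z)$, and by \eqref{D:EQ:Rmodel}
\[
 R_i(N_i(1-z))=\Psi_{L_i}\bigl(a_iS_iz\bigr)\,\Psi_{L_i}\bigl(a_iN_iz\bigr).
\]
I would reduce to showing that this equals $\Psi_K(Y^2z)$ for every $i$, where $\Psi_K=\Psi_F\circ\Tr_{K/F}$.

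For the linear factor I use $a_i=N_iY$ and the exact identity $S_i(Y^2z)=Y\cdot g_iY\cdot S_i(z)+\ldots$. More concretely, I work with $a_iS_iz=S_i(a_iz)$ and $a_i=Y\,g_iY=Y(Y+d_i)=Y^2+d_iY$, which is exact by \eqref{D:EQ:exact-origin}. This gives
\[
 \Psi_{L_i}(a_iS_iz)=\Psi_K(a_iz)=\Psi_K(Y^2z)\,\Psi_K(d_iYz).
\]
It then remains to prove
\[
 \Psi_K(d_iYz)\,\Psi_{L_i}(a_iN_iz)=1 .
\]
This is the step I expect to be the real content. The idea is that $\Psi_{L_i}(a_iN_iz)=\Psi_{L_i}(N_i(Yz)\,N_i(Y)^{-1}a_i\ldots)$ should be compared with $\Psi_K(d_iYz)=\Psi_{L_i}(d_iS_i(Yz))$. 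Here I would invoke the analogue of \eqref{D:EQ:normphase} for the quadratic extension $K/L_i$. I would first derive it through the residue symbol, Lemma~\ref{D:EQ:ASsymbol}, applied to $K/L_i$, whose norm character $\nu_i$ has linear chart coefficient proportional to $d_i$ through $S_iY=d_i$. This gives $\Psi_{L_i}(c\,S_iw)=\Psi_{L_i}(c\,N_iw)$ for suitable $w$ of depth about the half-conductor. Applying it to $w=Yz$ (valuation $\ge T_2-t_1$), together with $N_i(Yz)=a_iN_iz$ and characteristic two (signs are irrelevant), should make the two factors cancel. The main obstacle is checking the depth of $Yz$ against the conductor $D'_i$ of $\nu_i$ for $i=1$, where $D'_1=2T_2-T_1$ is large. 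If $Yz$ is too shallow, I would first try splitting $z$ into its components in a basis $1,Y$ over $L_i$. I would handle the deep part by the trace estimate, and the shallow part by the exact Cartier identity \eqref{D:EQ:Cartier} as in Lemma~\ref{D:EQ:commmodel}.

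Finally, the expression $\Psi_K(Y^2z)$ does not depend on $i$, which gives the lemma.
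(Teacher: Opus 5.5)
Your domain check and your first reduction are correct. In characteristic two, $R_i(N_i(1-z))=\Psi_{L_i}(a_iS_iz)\,\Psi_{L_i}(a_iN_iz)$, the identity $a_i=Y^2+d_iY$ is exact, and $\Psi_{L_i}(a_iS_iz)=\Psi_K(a_iz)$.

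The gap is the target value. In general $R_i(N_i(1-z))$ is \emph{not} $\Psi_K(Y^2z)$. So the step $\Psi_K(d_iYz)\Psi_{L_i}(a_iN_iz)=1$, which you call the real content, is false and no depth argument can prove it.

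A counterexample already occurs for $z\in F$ with $4v_F(z)\ge T_2$:
\begin{itemize}
\item Since $\Tr_{K/F}Y=T_i^{\rm tr}(d_i)=2d_i=0$, you get $\Tr_{K/F}(Y^2z)=z(\Tr_{K/F}Y)^2=0$, so your proposed value is $1$.
\item On the other hand $N_i(1-z)=1+z^2$ and $T_i^{\rm tr}a_i=\kappa_i$, so $R_i(N_i(1-z))=\Psi_F(z^2\kappa_i)$.
\end{itemize}
Concretely, over $k=\F_4=\{0,1,\omega,\omega^2\}$ take $f_1=\varpi^{-3}$ and $f_2=\omega\varpi^{-3}+\omega^2\varpi^{-1}$, so all breaks are $3$ and $T_2=4$. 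Then $d_1=1$, $d_2=\omega^2+\omega\varpi$ and $\kappa_1=1+\omega\varpi+\omega^2\varpi^2$, so $R_1(N_1(1-\varpi))=(-1)^{\tr\omega}=-1$. The lemma still holds in this example (all three values are $-1$), but your common value does not.

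The error comes from misidentifying the chart of $\nu_i$. Its linear coefficient is governed by $\sigma a_i-a_i=\kappa_i=d_jd_k$ (compare Lemma~\ref{D:EQ:commmodel}), not by $d_i$. Your required relation $\Psi_{L_i}(d_iS_iw)=\Psi_{L_i}(N_iw)$ becomes, after writing $w=d_iu$, the relation $\Psi_{L_i}(\beta_iS_iu)=\Psi_{L_i}(\beta_iN_iu)$. That would make $\beta_i=d_i^2$ a chart coefficient of the norm character of $K/L_i$. But $\beta_i$ is the coefficient of the lower character $\omega_i$, not of $\nu_i$.

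The repair is to show that the leftover factor is symmetric rather than trivial, using $\omega_i$ in place of $\nu_i$.
\begin{itemize}
\item Write $\Psi_K(d_iYz)=\Psi_F\bigl(d_iT_i^{\rm tr}S_i(Yz)\bigr)$.
\item Apply \eqref{D:EQ:normphase} to $x=S_i(Yz)/d_i$. This lies in $\pp_{L_i}^{r_i}$: for $i=1$ the upper trace bound is $r_1+3\delta$ and $v_{L_1}d_1=2\delta$; for $i=2,3$ the bound is $r_2$. The result is $\Psi_K(d_iYz)=\Psi_F(n_iS_i(Yz))$.
\item Since $a_iN_iz=N_i(Yz)$, the leftover becomes $\Psi_F\bigl(T_i^{\rm tr}N_i(Yz)+n_iS_i(Yz)\bigr)=\Psi_F(E_2(Yz))$ by Lemma~\ref{U:e2}, which is independent of $i$.
\end{itemize}
The common value is therefore $\Psi_K(Y^2z)\,\Psi_F(E_2(Yz))$. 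This is in substance the paper's proof, which applies \eqref{D:EQ:E2} to $Y(1-z)$ and to $Y$, and disposes of the lower term $n_iS_i(Y(1-z))-\beta_i$ using \eqref{D:EQ:lowerchart}.
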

\begin{proof}
The norm term has $L_i$-valuation at least $T_2$.
The upper trace terms have respective depths at least
$3r_2-r_1$ and $r_1+r_2$; these are at least $s_1$ and $s_2$.
The identity of Lemma~\ref{U:e2} is
\begin{equation}\label{D:EQ:E2}
 E_2(C)=T_i^{\rm tr}N_iC+n_iS_iC.
\end{equation}
Apply it to $Y(1-z)$ and $Y$. Put $h_i=S_i(Yz)/d_i$.
The upper trace formula and \eqref{D:EQ:originval} give
\[
 v_{L_1}S_1(Yz)\ge r_1+3\delta,\qquad
 v_{L_2}S_2(Yz),v_{L_3}S_3(Yz)\ge r_2.
\]
After division by $d_1$, of $L_1$-valuation $2\delta$, the first
bound is also $r_2$. Thus $v_{L_i}h_i\ge r_i$ for every $i$.
Since $S_i(Y(1-z))=d_i(1-h_i)$, its lower norm divided by $\beta_i$
is a norm from $L_i/F$. Formula~\eqref{D:EQ:lowerchart} applied to that norm
quotient shows that the additive phase of
$n_iS_i(Y(1-z))-\beta_i$ is $1$.
The remaining term in \eqref{D:EQ:E2} is
$T_i^{\rm tr}(a_i(S_iz+N_iz))$, exactly the exponent of
$R_i(N_i(1-z))$. Therefore this value is
$\Psi_F(E_2(Y(1-z))-E_2(Y))$, independent of $i$.
\end{proof}

\begin{theorem}[Compatible characters of conductors $m_i$]\label{D:EQ:core-exists}
There are characters $\chi_i$ on $L_i^\times$, with exact conductors
$m_i$, primitive common norm pullback, and
\begin{equation}\label{D:EQ:corechart}
 \chi_i(1-z)=\Psi_{L_i}(a_i z)
                    \qquad(z\in\pp_{L_i}^{s_i}).
\end{equation}
\end{theorem}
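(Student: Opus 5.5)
The plan follows the template of Theorem~\ref{O:M:global}, Proposition~\ref{O:I:canonical} and Theorem~\ref{D:UR:models}. First construct $\chi_1$ on $L_1^\times$ so that its conjugate quotient is $\nu_1$. Then build $\Theta=\chi_1N_1$, prove it invariant under all of $\Gal(K/F)$, and descend it to $L_2,L_3$. Finally adjust those descents by norm characters so that they agree with $R_2,R_3$.

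\emph{Step 1: extend $R_1$ to a character $\chi_1$ of $L_1^\times$ with $\chi_1^\sigma/\chi_1=\nu_1$.} Let $I=\{\sigma(v)/v\}$ and prescribe $\kappa(\sigma(v)/v)=\nu_1(v)$ on $I$. This is well defined, because the kernel of $v\mapsto\sigma(v)/v$ is $F^\times$ and $\nu_1=\omega_2 n_1$ kills $F^\times$, since $\omega_2(x^2)=1$. Lemma~\ref{D:EQ:commmodel} shows that $\kappa$ and $R_1$ agree on $I\cap U_{L_1}^{s_1}$, so together they define a character on $I\,U_{L_1}^{s_1}$. This character is trivial on $U_{L_1}^{m_1}$. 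Extend it to $L_1^\times$ by Lemma~\ref{C:character-extension}. Then $\chi_1(\sigma v)/\chi_1(v)=\nu_1(v)$ for every $v$, which gives the commutator relation up to the paper's conjugation convention; since $\nu_1$ has order two, the convention does not matter. The exact conductor $m_1$ comes from \eqref{D:EQ:Rmodel}.

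\emph{Step 2: invariance and descent.} Put $\Theta=\chi_1N_1$. It is invariant under $\Gal(K/L_1)$. It is also invariant under a lift of $\sigma$, because $\Theta^{g}/\Theta=\nu_1N_1=\omega_2 n_1N_1$, and this is trivial on $N_{K/F}(K^\times)\subset n_2L_2^\times$. So $\Theta$ is $\Gal(K/F)$-invariant. It is primitive: if $\Theta$ descended from $F$, then $\chi_1$ would be $\lambda n_1$ times an element of $S(K/L_1)$, hence $\sigma$-invariant, contradicting $\nu_1\ne1$. Lemma~\ref{C:hilbert90} and Lemma~\ref{C:character-extension} then give characters $\widetilde\chi_i$ on $L_i^\times$ with $\widetilde\chi_iN_i=\Theta$ for $i=2,3$.

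\emph{Step 3: matching $R_2$, $R_3$; this is the expected obstacle.} I need $\widetilde\chi_i=R_i$ on $U_{L_i}^{s_i}\cap N_i(K^\times)$. Take $y=N_i(u)$ in that set. Two facts are required.

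(a) A preimage description in the spirit of \eqref{O:M:preimage}: $u$ can be chosen in $F^\times U_K^{T_2}$. For $K/L_i$ with break $t_1$ and $i=2,3$, a norm of depth $s_i=r_1+r_2\ge t_1$ has a preimage of depth at least $s_i-t_1+\lceil\cdot\rceil$. I would check that this preimage lands in $U_K^{T_2}$ after multiplying by a suitable element of $F^\times$ (which is where $v$ of valuation one is excluded by parity), and that $N_1(u)\in U_{L_1}^{s_1}$. If the filtration bound is tight at the break, I would use the fallback of Theorem~\ref{O:M:compat}: descending induction on depth, using additivity of $z\mapsto R_1N_1/R_iN_i(1-z)$.

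(b) Lemma~\ref{D:EQ:Rcompat}, which then gives $\Theta(u)=\chi_1(N_1u)=R_1(N_1u)=R_i(N_iu)$ on $U_K^{T_2}$.

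Given (a) and (b), $\widetilde\chi_i/R_i$ is a character of the image of $U_{L_i}^{s_i}$ in $L_i^\times/N_iK^\times$, a group of order two. Extend it to a norm character $\omega\in S(K/L_i)$ and set $\chi_i=\widetilde\chi_i\omega^{-1}$. This does not change the pullback and yields \eqref{D:EQ:corechart}. Finally, $\chi_i$ has exact conductor $m_i$, since $\chi_i$ equals $R_i$ on $U_{L_i}^{s_i}$ and $v_{L_i}a_i=-t_1$ with $m_i=J_i+t_1$.
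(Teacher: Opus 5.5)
Your proposal is correct and follows the paper's proof essentially step for step. You build $\chi_1$ from $\kappa_I$ and Lemma~\ref{D:EQ:commmodel}, use the same invariance and primitivity argument, descend $\chi_1N_1$ to $L_2,L_3$, and match with $R_i$ through Lemma~\ref{D:EQ:Rcompat}.

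The step you flagged in (a) is immediate, and your garbled depth estimate there should be replaced. Since $s_i=r_1+r_2>t_1$ lies above the break $t_1$ of $K/L_i$ ($i=2,3$), norm filtration gives $N_i(U_K^{2s_i-t_1})=U_{L_i}^{s_i}$ with $2s_i-t_1=T_2+1$. Hence $U_{L_i}^{s_i}\subset N_iK^\times$, and every element has a preimage in $U_K^{T_2+1}$. No $F^\times$ factor, parity argument or fallback induction is needed, and your final correction by a character of $S(K/L_i)$ is trivial. Your direct conductor argument from $\chi_i|_{U_{L_i}^{s_i}}=R_i$ with $s_i\le m_i-1$ is valid, and a little shorter than the paper's appeal to the pullback-conductor formula.
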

\begin{proof}
Let $I=\{\sigma(v)/v:v\in L_1^\times\}$ and prescribe
$\kappa_I(\sigma(v)/v)=\nu_1(v)$. This is well defined because
$\nu_1$ is trivial on $F^\times$: it is $\omega_2(n_1x)=\omega_2(x^2)$
there. Lemma~\ref{D:EQ:commmodel} proves agreement with $R_1$ on
$I\cap U_{L_1}^{s_1}$. Their product character on
$I U_{L_1}^{s_1}$ is trivial on $U_{L_1}^{m_1}$.
Extend it across the abelian group $L_1^\times/U_{L_1}^{m_1}$.
One can first extend on its finite unit subgroup and then choose the
uniformizer value. The resulting $\chi_1$ is continuous, has exact
conductor $m_1$, and satisfies $\chi_1^\sigma/\chi_1=\nu_1$ globally.

The character $\chi_1N_1$ is invariant under $G$: its other conjugate
quotient is $\nu_1N_1=\omega_2\Nm_{K/F}=1$.
By Lemma~\ref{C:hilbert90}, it is trivial on $\ker N_i$ for
$i=2,3$. Define $\chi_i^0(N_i x)=\chi_1(N_1x)$ on $N_i(K^\times)$
and extend $\chi_i^0$ to a character $\chi_i$ of $L_i^\times$ by
Lemma~\ref{C:character-extension}.
Above the upper break $t_1$, norm filtration gives
\[
 N_i(U_K^{T_2+1})=U_{L_i}^{s_i}\qquad(i=2,3),
\]
since $2s_i-t_1=T_2+1$. Lemma~\ref{D:EQ:Rcompat} then proves
\eqref{D:EQ:corechart} on $U_{L_i}^{s_i}$.
Their conductors are at least $m_i$. The common pullback has conductor
$2m_1-D'_1=t_1+2t_2+1$; since $m_i>D'_i$, the high pullback-conductor
formula forces their conductors to equal $m_i$.
Finally, descent of the common pullback from $F$ would make $\chi_1$
a product of a norm pullback from $F$ and a character in $S(K/L_1)$.
Both are $\sigma$-invariant, contradicting
$\chi_1^\sigma/\chi_1=\nu_1\ne1$.
\end{proof}

\subsection{Writing \texorpdfstring{$\theta_i=\chi_i(\lambda n_i)$}{theta_i=chi_i(lambda n_i)} and computing the conductors}

Write $\nu_i$ for the nontrivial norm character of $K/L_i$.
The common restriction $D_\theta=\theta_i|_{F^\times}\omega_i$
is the one in Lemma~\ref{U:conjugacy}.

\begin{proposition}[Writing $\theta_i$ as $\chi_i(\lambda n_i)$]\label{D:EQ:realize}
For every primitive compatible family $(\theta_i)$ there are characters $(\chi_i)$ satisfying
\eqref{D:EQ:corechart}, and a character $\lambda$ of $F^\times$, such that
\begin{equation}\label{D:EQ:actual-family}
 \theta_i=\chi_i(\lambda n_i).
\end{equation}
Put $n=a_F(\lambda)$ and $h=n-T_2$.
If $n\le T_2+r_1-1$, the conductors are exactly $m_i$.
If $n\ge T_2+r_1$, they are $M_i=2n-T_i$, all even.
These alternatives exhaust all primitive conductors.
\end{proposition}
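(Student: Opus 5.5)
The plan follows the pattern of Proposition~\ref{O:I:actualtwist} and Proposition~\ref{O:G:reduce}: first peel off the part of $\theta_1$ that descends from $F$, then read off conductors from the quadratic pullback formula in \eqref{D:EQ:edge}.

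\emph{Step 1: a twisting character $\lambda$.} Fix the characters $\chi_i$ of Theorem~\ref{D:EQ:core-exists}. By Lemma~\ref{U:conjugacy}, $\theta_1^\sigma/\theta_1$ and $\chi_1^\sigma/\chi_1=\nu_1$ are both the unique nontrivial character of $S(K/L_1)$. Hence $\theta_1/\chi_1$ is $\Gal(L_1/F)$-invariant. Lemmas~\ref{C:hilbert90} and~\ref{C:character-extension} then give $\lambda$ on $F^\times$ with $\theta_1=\chi_1(\lambda n_1)$. For $i=2,3$, the quotient $\theta_i/(\chi_i\lambda n_i)$ has trivial pullback to $K$, so it lies in $S(K/L_i)$. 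Multiplying $\chi_i$ by this norm character yields \eqref{D:EQ:actual-family}. This multiplication does not change \eqref{D:EQ:corechart}, because the norm character has conductor $D'_i+1=T_1+1\le s_i$ by \eqref{D:EQ:upperdifferent}. It also preserves the common pullback and primitivity.

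\emph{Step 2: conductors when $n\ge T_2+r_1$.} Since $n\ge T_2+r_1>T_i$, \eqref{D:EQ:edge} gives $a(\lambda n_i)=2n-T_i=M_i$. For $i=1$ this exceeds $m_1=2T_2-1$, because $2n-T_1\ge 2T_2+2r_1-T_1=2T_2$. For $i=2,3$ we have $M_i=2n-T_2\ge T_2+2r_1=T_1+T_2>m_i$. So $\lambda n_i$ strictly dominates $\chi_i$, and $a(\theta_i)=M_i$, which is even.

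\emph{Step 3: conductors when $n\le T_2+r_1-1$.} Here I would show $a(\theta_i)=m_i$. The upper bound comes from the pullback conductors. If $n\le T_i$, then $a(\lambda n_i)\le T_i\le m_i$; otherwise $a(\lambda n_i)=2n-T_i$. For $i=1$ this gives $2n-T_1\le 2T_2+T_1-2-T_1<m_1+1$. For $i=2,3$ it gives $2n-T_2\le T_2+2r_1-2=m_i$. So $a(\theta_i)\le m_i$. The lower bound follows as in Lemma~\ref{O:G:lower}: the commutator $\theta_i^{\sigma_i}/\theta_i$ has conductor $u_i+1$, with upper breaks $u_1=2t_2-t_1$ and $u_2=u_3=t_1$. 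The ramification argument with breaks $t_i$ then gives $a(\theta_i)>t_i+u_i$, and $t_i+u_i+1=m_i$ in each case.

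\emph{Step 4: exhaustiveness.} Every primitive family arises via Step~1, and $n$ is either at most $T_2+r_1-1$ or at least $T_2+r_1$, so the two alternatives cover all primitive conductors. I expect the main obstacle to be the boundary value $n\le T_i$ in Step~3, where \eqref{D:EQ:edge} only bounds the pullback conductor. I would handle it by the direct inequality $a(\lambda n_i)\le\max(n,2n-T_i)$, using norm filtration below the break.
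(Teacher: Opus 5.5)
Your proposal is correct and follows the paper's argument. You extract $\lambda$ from the $\Gal(L_1/F)$-invariance of $\theta_1/\chi_1$, absorb the $S(K/L_i)$-discrepancy into $\chi_i$ for $i=2,3$, and then compare the pullback conductors from \eqref{D:EQ:edge} with $a(\chi_i)=m_i$ and the primitivity bound $a_{L_i}(\theta_i)\ge t_i+u_i+1=m_i$. The paper rules out equal-conductor cancellation by parity ($2n-T_i$ even, $m_i$ odd); you instead use strict domination when $n\ge T_2+r_1$ and, when $n\le T_2+r_1-1$, squeeze $a(\theta_i)$ between an upper bound and the primitivity lower bound. Both work.

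One justification needs correcting. For $i=2,3$, the nontrivial character of $S(K/L_i)$ has conductor $u_i+1=t_1+1=T_1=D'_i$, not $D'_i+1$. As written, your inequality $T_1+1\le s_i=r_1+r_2$ is false in the one-break case $r_1=r_2$. The inequality you actually need, $T_1=2r_1\le r_1+r_2=s_i$, always holds.

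There is also a harmless arithmetic slip: $T_2+2r_1-2=m_i-1$, not $m_i$, which only strengthens your upper bound.
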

\begin{proof}
Start with Theorem~\ref{D:EQ:core-exists}. By Lemma~\ref{U:conjugacy},
$\theta_1/\chi_1$ is invariant under $\Gal(L_1/F)$.
Hilbert 90 and abelian character extension make it $\lambda n_1$.
For $i=2,3$, the remaining quotient is a power of $\nu_i$.
Absorb it into $\chi_i$. This does not change its common pullback or
\eqref{D:EQ:corechart}, because $a_{L_i}(\nu_i)=T_1\le s_i$.

Primitivity forces $a_{L_i}(\theta_i)\ge t_i+u_i+1=m_i$:
on $U_{L_i}^{u_i}/U_{L_i}^{u_i+1}$, the first
ramification term in $\sigma(v)/v$ has exact depth $t_i+u_i$.
Its coefficient is nonzero since $u_i$ is odd; a character of smaller
conductor would make $\theta_i^\sigma/\theta_i$ trivial on $U_{L_i}^{u_i}$.
For $n>T_i$, the pullback $\lambda n_i$ has conductor $2n-T_i$;
for $n\le T_i$ it has conductor at most $T_i$.
The number $2n-T_i$ is even, whereas $m_i$ is odd, so cancellation
at equal conductors is impossible. The same threshold
$n=T_2+r_1$ is obtained for all three fields. This proves the assertions.
\end{proof}

\subsection{The norms of \texorpdfstring{$Y+X$}{Y+X}, for \texorpdfstring{$X\in L_3$}{X in L3}}
We compare $\mathcal A_1(\theta_1)$ and $\mathcal A_2(\theta_2)$.
Interchanging $L_2,L_3$ gives the comparison with
$\mathcal A_3(\theta_3)$.
For $X\in L_3$, put
\begin{equation}\label{D:EQ:adjust}
 A=n_3X,\quad a=T_3^{\rm tr}X,\quad C=Y+X,\quad
 Q_i=aY+d_iX\in L_i\quad(i=1,2).
\end{equation}
The following are exact algebraic identities:
\begin{align}
 N_iC&=a_i+A+Q_i,&S_iC&=d_i+a,                    \label{D:EQ:commonC}\\
 n_iQ_i&=\beta_iA+E,&
 E&=a(a a_3+\kappa_3X)\in F.                    \label{D:EQ:commonE}
\end{align}
For example, write $X=p+qz_3$. Then $a=q$,
$Q_1=d_1p+qd_3z_1$ and $Q_2=d_2p+qd_3z_2$; taking the two quadratic
norms proves \eqref{D:EQ:commonE}, with $E=q^2B+q\kappa_3p$.

If $v_{L_3}X=-h$, trace ideals and \eqref{D:EQ:originval} give
\begin{align}
 v_F a&\ge r_2-\ceil{h/2},                       \label{D:EQ:a-bound}\\
 v_{L_1}Q_1&\ge2\delta-h,&
 v_{L_2}Q_2&\ge-h,                              \label{D:EQ:Q-bound}\\
 v_F E&\ge r_2+\delta-h.                        \label{D:EQ:E-bound}
\end{align}
These statements include negative $h$ and vanishing traces.
For the last one, the invariant element $a a_3+\kappa_3X$ has
$L_3$-valuation at least $2\delta-h$, hence $F$-valuation at least
$\delta-\floor{h/2}$; combine this with \eqref{D:EQ:a-bound}.
For the $Q_i$ bounds one can use directly the upper trace of
$Yg_iX$, with valuation $-t_1-2h$, and the upper differents
\eqref{D:EQ:upperdifferent}.

Suppose $\lambda(1-z)=\Psi_F(Az)$ on an ideal of $F$ containing
both trace and norm of a variable $z\in L_i$.
Whenever $(Q_i/\beta_i)z\in\pp_{L_i}^{r_i}$, equations
\eqref{D:EQ:normphase} and \eqref{D:EQ:commonE} give
\begin{equation}\label{D:EQ:pullback-adjust}
 \Psi_{L_i}(Q_iz)
 =\Psi_F\bigl((A+E/\beta_i)n_i z\bigr).
\end{equation}
If also $(E/\beta_i)n_i z\in\pp_F^{T_2}$, the norm expansion gives
$(\lambda n_i)(1-z)=\Psi_{L_i}((A+Q_i)z)$.

\subsection{Stationary coefficients when \texorpdfstring{$n\le T_2+r_1-1$}{n<= T2+r1-1}}
Let the family be realized as in \eqref{D:EQ:actual-family}, with
$n\le T_2+r_1-1$. Put $q_0=r_1+r_2$.
For $z\in\pp_{L_i}^{s_i}$, both $T_i^{\rm tr}z$ and $n_i z$ lie in
$\pp_F^{q_0}$. For $i=1$ the trace bound is exactly $q_0$;
for $i=2,3$ its bound is at least $q_0$.
If $n\le q_0$, take $X=0$ and $C=Y$ below.
Otherwise put $h=n-T_2$, so
\begin{equation}\label{D:EQ:min-h}
 1-\delta\le h\le r_1-1.
\end{equation}
Since $2q_0\ge n$, additive duality supplies
\[
 \lambda(1-z)=\Psi_F(A_*z)\quad(z\in\pp_F^{q_0}),\qquad
 v_FA_*=-h,
\]
with ambiguity $\pp_F^{T_2-q_0}=\pp_F^\delta$.
Choose $X\in L_3$ by norm approximation so that
\begin{equation}\label{D:EQ:min-approx}
 A=n_3X\equiv A_*\pmod{\pp_F^\delta},\qquad v_{L_3}X=-h.
\end{equation}
The relative precision requested is $\delta+h\le r_2-1\le t_2$,
so \cite[Lemma~6.6]{Ueda} supplies $X$.
The character formula for $\lambda$ on $\pp_F^{q_0}$ is unchanged
when $A_*$ is replaced by $A$.

\begin{lemma}[Stationary coefficients from $Y+X$]\label{D:EQ:min-origin}
The element $C$ just constructed has valuation $-t_1$ in $K$.
For $i=1,2$, one has
\begin{align}
 \theta_i(1-z)&=\Psi_{L_i}(N_iC\,z)
                            &&(z\in\pp_{L_i}^{s_i}),       \label{D:EQ:min-core}\\
 \omega_i(1-z)&=\Psi_F(n_iS_iC\,z)
                            &&(z\in\pp_F^{r_i}).          \label{D:EQ:min-lower}
\end{align}
If $\delta=0$, the same single $C$ satisfies these identities for all
three indices.
\end{lemma}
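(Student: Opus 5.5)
The plan is to split $\theta_i=\chi_i(\lambda n_i)$ as in \eqref{D:EQ:actual-family} and to decompose $N_iC=a_i+A+Q_i$ by \eqref{D:EQ:commonC}. The first piece, $a_i$, is the coefficient of $\chi_i$ in \eqref{D:EQ:corechart}. The second, $A+Q_i$, should be the coefficient of $\lambda n_i$ via \eqref{D:EQ:pullback-adjust}. Multiplying the two formulas on $\pp_{L_i}^{s_i}$ then gives \eqref{D:EQ:min-core}.

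\textbf{The valuation of $C$.} Since $v_KY=-t_1$ is odd and $v_KX=-2h>-2r_1+1\ge -t_1$ by \eqref{D:EQ:min-h}, the valuations differ and $v_KC=-t_1$.

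\textbf{The case $X=0$.} When $n\le q_0$ we have $X=0$. Then $\lambda n_i$ is trivial on $U_{L_i}^{s_i}$, because trace and norm land in $\pp_F^{q_0}\subset\pp_F^{n}$. Also $N_iY=a_i$, so \eqref{D:EQ:min-core} is just \eqref{D:EQ:corechart}.

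\textbf{The character $\lambda n_i$ when $h\ge 1-\delta$.} For $z\in\pp_{L_i}^{s_i}$, expand $n_i(1-z)=1-T_i^{\rm tr}z+n_iz$. Both terms lie in $\pp_F^{q_0}$, where $\lambda(1-w)=\Psi_F(Aw)$, so
\[
(\lambda n_i)(1-z)=\Psi_F\bigl(A(T_i^{\rm tr}z-n_iz)\bigr)\Psi_F(\text{error}).
\]
The error is quadratic in $w$ and lies at depth at least $2q_0-h\ge T_2$, hence is killed. The term $\Psi_F(AT_i^{\rm tr}z)=\Psi_{L_i}(Az)$ is the $A$ part. The remaining $\Psi_F(-An_iz)$ is matched against $\Psi_{L_i}(Q_iz)$ by \eqref{D:EQ:pullback-adjust}, which contributes $\Psi_F((A+E/\beta_i)n_iz)$; note the sign is irrelevant in characteristic two. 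This leaves two valuation checks:
\begin{enumerate}
\item $(Q_i/\beta_i)z\in\pp_{L_i}^{r_i}$, which uses \eqref{D:EQ:Q-bound} together with $v_F\beta_1=2\delta$ and $\beta_2$ a unit;
\item $(E/\beta_i)n_iz\in\pp_F^{T_2}$, which uses \eqref{D:EQ:E-bound} and $v_Fn_iz\ge s_i$ (in $F$-terms, $\ge q_0$).
\end{enumerate}
For $i=1$, check 1 reads $2\delta-h-2\delta+2r_2\ge r_1$, i.e.\ $h\le 2r_2-r_1$, which holds. I expect check 2 for $i=1$ to be the tightest, since $\beta_1$ has valuation $2\delta$: it needs $r_2+\delta-h-2\delta+q_0\ge T_2$, i.e.\ $r_1-h\ge\delta$. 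This may fail when $\delta$ is large. In that case I will use the sharper expression $E=a(aa_3+\kappa_3X)$ with \eqref{D:EQ:a-bound} and redo the estimate, or apply the norm depth $v_F(n_1z)\ge s_1=T_2$ directly rather than $q_0$. The ambiguity $\pp_F^\delta$ in $A$ is harmless because $\delta+q_0\ge T_2$.

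\textbf{The lower characters.} By \eqref{D:EQ:commonC}, $S_iC=d_i+a=d_i(1+a/d_i)$, so $n_iS_iC=\beta_i\,n_i(1+a/d_i)$. This is $\beta_i$ times a norm from $L_i/F$. The task is to show that $\Psi_F(n_iS_iC\,z)=\Psi_F(\beta_iz)$ for $z\in\pp_F^{r_i}$, i.e.\ that $(n_iS_iC-\beta_i)z$ lies in the trivial ideal $\pp_F^{T_2}$. Expanding,
\[
n_iS_iC-\beta_i=d_ia+d_i'a+a^2=T_i^{\rm tr}(d_i)a+a^2,
\]
and since $d_i\in F$ we have $T_i^{\rm tr}(d_i)=2d_i=0$, so the difference is exactly $a^2$. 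By \eqref{D:EQ:a-bound}, $v_F(a^2)\ge 2r_2-2\lceil h/2\rceil$, and adding $r_i$ gives at least $T_2$ using $h\le r_1-1$. This requires care when $i=2$ and $r_i=r_2$. Together with \eqref{D:EQ:lowerchart}, this proves \eqref{D:EQ:min-lower}.

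\textbf{The case $\delta=0$.} All three fields are then symmetric, so the same checks apply to $i=3$. There $Q_3$ is replaced by the analogous expression with $X\in L_3$, and the required bounds are identical.
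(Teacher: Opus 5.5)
For $i=1,2$ your route is the paper's: take $a_i$ from \eqref{D:EQ:corechart}, convert $\Psi_F(An_iz)$ via \eqref{D:EQ:pullback-adjust}, and use $n_iS_iC=(d_i+a)^2=\beta_i+a^2$. A few slips there need fixing.

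\begin{itemize}
\item In check~1 for $i=1$, $\beta_1=d_1^2$ has $L_1$-valuation $4\delta$, not $2\delta$. The bound is therefore $2\delta-h-4\delta+T_2=2r_1-h\ge r_1$, which still holds because $h\le r_1-1$.
\item In check~2 for $i=1$, the version with $q_0$ genuinely fails for large $\delta$, so your fallback is mandatory. From the outset use $v_F(n_1z)=v_{L_1}z\ge s_1=T_2$. This gives depth $T_2+r_1-h\ge T_2$, which is the paper's bound.
\item In the lower estimate the tight index is $i=1$, not $i=2$. You need $r_i\ge 2\lceil h/2\rceil$, and this holds because $2\lceil h/2\rceil\le h+1\le r_1$.
\end{itemize}

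The genuine gap is the final clause, the case $\delta=0$ and index $3$. The three fields are not symmetric for this $C$. Since $X$ was chosen in $L_3$, $g_3$ fixes $X$, and neither \eqref{D:EQ:commonC} nor \eqref{D:EQ:commonE} holds for $i=3$. In fact $S_3C=S_3Y=d_3$, not $d_3+a$, and using $X^2=aX+A$ one gets
\[
 N_3C=(Y+X)(g_3Y+X)=a_3+d_3X+X^2=a_3+A+d_3X+aX .
\]
So the \emph{analogous} term $aY+d_3X$ is simply wrong, there is no $\mathcal E$-term to estimate, and a different verification is needed.

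\begin{itemize}
\item The coefficient of $\lambda n_3$ comes from \eqref{D:EQ:normphase} applied to $w=(X/d_3)z$. For $z\in\pp_{L_3}^{2r}$ one has $n_3(X/d_3)=A/\beta_3$, with $d_3$ a unit, and $v_{L_3}w\ge 2r-h\ge r$. Hence $\Psi_F(An_3z)=\Psi_{L_3}(d_3Xz)$, and the coefficient is $A+d_3X$.
\item The leftover $aX$ must then be killed separately. By \eqref{D:EQ:a-bound}, $v_{L_3}(aX)\ge 2r-2h-(h\bmod 2)\ge 0$ since $h\le r-1$. Thus $\Psi_{L_3}(aXz)=1$ on $\pp_{L_3}^{2r}$, which is the trivial ideal of $\Psi_{L_3}$.
\item The lower formula for $i=3$ is then just \eqref{D:EQ:lowerchart}, because $n_3S_3C=\beta_3$.
\end{itemize}

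This clause cannot be waved through by symmetry. Proposition~\ref{D:EQ:min-one} relies on one common $C$ serving all three critical functions.
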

\begin{proof}
The case $X=0$ follows from \eqref{D:EQ:corechart} and \eqref{D:EQ:lowerchart}. Otherwise
$-2h>-t_1$ by \eqref{D:EQ:min-h}, so $C$ has the valuation of $Y$.
For $i=1,2$, \eqref{D:EQ:Q-bound} gives
\[
 v_{L_1}((Q_1/\beta_1)z)\ge T_2-2\delta-h=2r_1-h\ge r_1,
 \quad
 v_{L_2}((Q_2/\beta_2)z)\ge r_1+r_2-h\ge r_2.
\]
The error in \eqref{D:EQ:pullback-adjust}, at these depths, has valuations
at least $T_2+r_1-h$ and $3r_2-h$, respectively; both are at least
$T_2$. The norm expansion therefore gives
$(\lambda n_i)(1-z)=\Psi_{L_i}((A+Q_i)z)$.
Multiplication by \eqref{D:EQ:corechart} gives \eqref{D:EQ:min-core},
since $a_i+A+Q_i=N_iC$ by \eqref{D:EQ:commonC}.
Moreover $n_iS_iC=(d_i+a)^2=\beta_i+a^2$.
The bound
\[
 2v_Fa\ge2r_2-2\ceil{h/2}\ge2r_2-r_1=T_2-r_1
\]
proves \eqref{D:EQ:min-lower} for $z\in\pp_F^{r_i}$.
It also shows $v_Fa>\delta$, so the traces $d_i+a$ are nonzero with
the original valuations.

When $\delta=0$, $r_1=r_2=r$ and $h\le r-1$.
For $i=3$, \eqref{D:EQ:normphase} gives the coefficient $A+d_3X$, because
$n_3(X/d_3)=A/\beta_3$. Its argument has depth at least $2r-h\ge r$.
But
\[
 N_3C=a_3+X^2+d_3X
       =a_3+A+d_3X+aX,
\]
and $v_{L_3}(aX)\ge2r-2h-(h\bmod2)\ge0$.
Hence $\Psi_{L_3}(aXz)=1$ for $z\in\pp_{L_3}^{2r}$.
Finally $S_3C=d_3$, so its lower norm coefficient is unchanged.
This proves the assertion for all three fields in the one-break case.
\end{proof}

\subsection{Comparison at conductors \texorpdfstring{$m_i$}{mi}}
Use Lemma~\ref{U:stationary-factor} with stationary coefficient $A_L$
and critical element $\Pi_L^{\lfloor m/2\rfloor}$, writing its critical function and normalized sum as $H_L,g_L$. In characteristic two the field minus
signs in that lemma disappear; complex phases are unchanged.

Use Lemma~\ref{D:EQ:min-origin} to take $N_iC$ as the coefficient
for $\theta_i$ and $n_iS_iC$ as the coefficient for $\omega_i$.
Since $n_iS_iC$ is a norm from $L_i/F$ and $T_i$ is even,
$\omega_i(n_iS_iC)=1$ and the critical-sum factor for $\omega_i$ is $1$. Compatibility and
\eqref{D:EQ:E2} therefore give
\begin{equation}\label{D:EQ:minfull}
 \mathcal A_i(\theta_i)
  =D_\theta(\alpha^{-1})\Theta(C)^{-1}
                       \Psi_F(E_2(C))g_i.
\end{equation}

For $z\in\OO_K$ vary $C$ to $C_z=C(1+\pi^{t_2}z)$.
The relative norms $N_iC_z/N_iC-1$ have valuations at least
\[
 \ell_1=t_2,\qquad \ell_2=\ell_3=(t_1+t_2)/2.
\]
The changes $S_i(C\pi^{t_2}z)$ have valuations at least
$r_1+3\delta$ in $L_1$ and $r_2$ in $L_2$, respectively.
After division by $S_iC$, both relative changes have valuation at
least $r_2\ge r_i$. The quotient $n_iS_iC_z/(n_iS_iC)$ can thus be evaluated
by \eqref{D:EQ:min-lower}; its additive phase is $1$.
Subtracting \eqref{D:EQ:E2} for $C_z,C$, and using compatibility and
Lemma~\ref{U:stationary-factor}, proves
\begin{equation}\label{D:EQ:commonH-general}
 H_i(p_i(z))=Q(z),
\end{equation}
Here $p_i:k\to k$ is the map induced by
$N_i:U_K^{t_2}\to U_{L_i}^{\ell_i}$ on the corresponding successive
unit quotients, using the uniformizers in \eqref{D:EQ:compatiblepi}.
The function $Q:k\to\C^\times$ is independent of $i$.

\begin{proposition}[The case $t_1<t_2$]\label{D:EQ:min-two}
If $t_1<t_2$, every family whose conductors are $m_i$ satisfies $\mathcal A_1=\mathcal A_2$.
\end{proposition}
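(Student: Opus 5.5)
By \eqref{D:EQ:minfull}, the quotient $\mathcal A_1/\mathcal A_2$ equals $g_1/g_2$. The factor $D_\theta(\alpha^{-1})\Theta(C)^{-1}\Psi_F(E_2(C))$ is common to both indices, and the lower factors $h_i$ equal $1$. So it suffices to prove $g_1=g_2$. Both conductors $m_1=2t_2+1$ and $m_2=t_1+t_2+1$ are odd, so both critical sums are genuine. The tool I would use is Lemma~\ref{U:missing-coset}, with $Z=k$ and $Q$ the common function in \eqref{D:EQ:commonH-general}, together with $V_i=k$ and $H_i$ the critical function of $\theta_i$. Alternatively, if some $p_i$ is bijective, I would sum $Q$ directly over $k$.

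The first step is to identify the graded norm maps $p_i$. For $K/L_1$ the upper break is $u_1=2t_2-t_1>t_2$. Depth $t_2$ lies below the break, so $N_1$ induces on $U_K^{t_2}/U_K^{t_2+1}\to U_{L_1}^{t_2}/U_{L_1}^{t_2+1}$ a map of the form $z\mapsto cz^2$. This map is bijective on the perfect field $k$. Hence $\sum_k H_1=\sum_k Q$, and $g_1=|k|^{-1/2}\sum_kQ$.

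For $K/L_2$ the upper break is $t_1<t_2$. The depth $t_2$ in $K$ maps to depth $\ell_2=(t_1+t_2)/2$ in $L_2$, and the norm is dominated by the trace term $S_2(\pi^{t_2}z)$. The graded map has the shape $p_2(z)=\bar u z^2+\bar w z$, up to the normalizing uniformizer constants. This is an additive map $k\to k$ with kernel of order two and image of index two. This is exactly the setting of Lemma~\ref{U:missing-coset}.

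The second step is to supply $w_2\in I_2=\operatorname{im}p_2$ with $H_2(w_2)=1$ and with $B_2(w_2,\cdot)$ the nontrivial character of $k/I_2$. I would try to construct it from the third field $L_3$, as the section outline promises. One could use the kernel element of the norm, or the element $d_2$, which satisfies $g_2Y-Y=d_2$. The first choice would be to take the polar pairing $B_2(x,y)=\Psi_{L_2}(A_2\delta_2^2\widetilde{xy})$, which is a trace form $(-1)^{\tr(\gamma xy)}$. Then $w_2$ is the unique element with $\gamma w_2\cdot(\cdot)$ orthogonal to $I_2$. The plan is to check that $w_2$ is the image of the nonzero element in $\ker p_1$-type data, that it lies in $I_2$, and that $H_2(w_2)=Q(z_0)=1$ for a preimage $z_0$.

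The main obstacle is the value $H_2(w_2)=1$. It should come from the comparison with the bijective side: $Q(z_0)=H_1(p_1(z_0))$. Computing $H_1$ at a specific point requires the explicit coefficient $a_1=\kappa_1z_1+B$, and the Cartier identity of Lemma~\ref{D:EQ:rationalCartier}, to show that the relevant residue trace vanishes. Once these hypotheses are verified, Lemma~\ref{U:missing-coset} gives $\sum_kH_2=\frac12\sum_kQ\cdot2/\,$(kernel count). After normalization this gives $g_2=g_1$, and hence $\mathcal A_1=\mathcal A_2$.
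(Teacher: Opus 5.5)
Your reduction to $g_1=g_2$ via \eqref{D:EQ:minfull} is correct. So is your treatment of $p_1$: depth $t_2$ lies below the break $2t_2-t_1$ of $K/L_1$, so $p_1(z)=z^2$ is a bijection of $k$. The gap is your identification of $p_2$, which is wrong. The break of $K/L_2$ is $t_1<t_2$, so the source depth $t_2$ lies \emph{above} that break, not at it. For $z\in\pp_K^{t_2}$ one has $N_2(1+z)=1+S_2z+N_2z$ with $v_{L_2}(N_2z)\ge t_2$. By \eqref{D:EQ:edge}, with different exponent $T_1$ and $t_1,t_2$ odd, the trace images of $\pp_K^{t_2}$ and $\pp_K^{t_2+1}$ are $\pp_{L_2}^{(t_1+t_2)/2}$ and $\pp_{L_2}^{(t_1+t_2)/2+1}$. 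Since $t_2-t_1\ge2$, the norm term is strictly deeper than $(t_1+t_2)/2$. Hence $p_2$ is simply the map induced by $S_2$ between one-dimensional residue lines. That map is $k$-linear and surjective, hence bijective. There is no quadratic term, no kernel of order two, and no index-two image. The shape $z^2+cz$ you describe occurs only at the critical depth, that is, in the one-break case $t_1=t_2$ of Proposition~\ref{D:EQ:min-one}, and that is where Lemma~\ref{U:missing-coset} is used.

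Your plan is also internally inconsistent, which shows the premise is false rather than merely unproved. With $p_1$ bijective, \eqref{D:EQ:commonH-general} gives $\sum_kH_1=\sum_kQ$. If $p_2$ had kernel of order two, Lemma~\ref{U:missing-coset} would give $\sum_kH_2=\tfrac12\sum_kQ$, forcing $|g_2|=|g_1|/2$. This contradicts $|g_1|=|g_2|=1$. In any case that lemma requires \emph{every} $p_i$ to have kernel of order two, so it cannot be applied alongside a bijective $p_1$. Your closing step ``after normalization this gives $g_2=g_1$'' therefore does not follow.

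The correct observation closes the argument immediately and makes the search for $w_2$ and the evaluation $H_2(w_2)=1$ unnecessary. Both $p_1$ and $p_2$ are bijections of $k$, so summing \eqref{D:EQ:commonH-general} over $k$ gives $\sum_kH_1=\sum_kQ=\sum_kH_2$, hence $g_1=g_2$. Then \eqref{D:EQ:minfull} gives $\mathcal A_1=\mathcal A_2$, and the same argument applies to $L_3$.
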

\begin{proof}
For $K/L_1$, the depth $t_2$ is below the break $2t_2-t_1$.
The trace term is at least one level deeper than the norm term, so
$p_1(z)=z^2$, with compatible uniformizers \eqref{D:EQ:compatiblepi}.
For $K/L_2$, the trace induces an isomorphism of the one-dimensional
residue lines at source depth $t_2$ and target depth $(t_1+t_2)/2$:
\[
 \floor{(t_2+T_1)/2}=(t_1+t_2)/2,\qquad
 \floor{(t_2+1+T_1)/2}=(t_1+t_2)/2+1.
\]
The highest-degree norm term is strictly deeper. Hence $p_2$ is a nonzero
$k$-linear map. Both $p_i$ are bijections of $k$.
Summing \eqref{D:EQ:commonH-general} gives $g_1=g_2$, and
\eqref{D:EQ:minfull} proves equality. The same comparison with $L_3$
proves equality for all three intermediate fields when $t_1<t_2$.
\end{proof}

\begin{proposition}[The case $t_1=t_2$]\label{D:EQ:min-one}
If $t_1=t_2=t$, every family whose conductors are $m_i$ satisfies
$\mathcal A_1=\mathcal A_2=\mathcal A_3$.
\end{proposition}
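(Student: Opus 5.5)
The plan is to reduce the equality to a comparison of the three critical sums $g_1,g_2,g_3$, and then to apply Lemma~\ref{U:missing-coset} with $Z=k$ and $V_i=k$. When $t_1=t_2=t$ we have $\delta=0$, $r_1=r_2=r$, $T=2r$ and $m_i=2t+1$ for all three indices. Lemma~\ref{D:EQ:min-origin} then supplies a single $C=Y+X$ that is a stationary origin for all three fields simultaneously. So \eqref{D:EQ:minfull} holds for $i=1,2,3$ with the common prefactor $D_\theta(\alpha^{-1})\Theta(C)^{-1}\Psi_F(E_2(C))$, and it remains to prove $g_1=g_2=g_3$.

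The same argument that gave \eqref{D:EQ:commonH-general} applies verbatim to $i=3$ (the relative trace bounds are symmetric when $\delta=0$). It yields $H_i\circ p_i=Q$ for all three $i$, with one function $Q$ on $k$. The difference from Proposition~\ref{D:EQ:min-two} is that the source depth $t$ is now exactly the break of each $K/L_i$. I will compute $p_i$ explicitly with the compatible uniformizers \eqref{D:EQ:compatiblepi}. Writing $g_i\pi/\pi\equiv1+\gamma_i\pi^{t}$ with $\gamma_i\in k^\times$, the norm $N_i(1+\pi^tz)=1+S_i(\pi^tz)+N_i(\pi^tz)$ should induce
\[
 p_i(z)=z^2+\gamma_i z .
\]
This map is additive, with kernel $\{0,\gamma_i\}$ of order two and image $I_i=\{y:\tr(y/\gamma_i^2)=0\}$ of index two. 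This is exactly the setting of Lemma~\ref{U:missing-coset}. Since $g_1g_2=g_3$, the leading terms add, so $\gamma_1+\gamma_2+\gamma_3=0$. (The characteristic is two, so signs play no role.)

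The points $w_i$ come from the third field. Take $\{i,j,k\}=\{1,2,3\}$ and put $w_i=p_i(\gamma_j)=\gamma_j(\gamma_j+\gamma_i)=\gamma_j\gamma_k\in I_i$. Then
\[
 H_i(w_i)=Q(\gamma_j)=H_j(p_j(\gamma_j))=H_j(0)=1,
\]
which supplies the hypothesis $H_i(w_i)=1$ for free. The remaining hypothesis is that $B_i(w_i,\cdot)$ is the nontrivial character of $k/I_i$. By Lemma~\ref{U:stationary-factor}, $B_i(x,y)=(-1)^{\tr(c_ixy)}$, where $c_i\in k^\times$ is the leading residue of $\alpha N_iC\,\Pi_i^{2t}$ at the trivial-ideal level. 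The claim therefore reduces to the residue identity
\[
 c_i\gamma_j\gamma_k=\gamma_i^{-2},\qquad\text{that is,}\qquad c_i=(\gamma_1\gamma_2\gamma_3\gamma_i)^{-1}.
\]
Granting it, Lemma~\ref{U:missing-coset} gives $\sum_kH_i=\tfrac12\sum_kQ$ for every $i$. Hence $g_1=g_2=g_3$, and \eqref{D:EQ:minfull} gives $\mathcal A_1=\mathcal A_2=\mathcal A_3$.

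The main obstacle is verifying this residue identity. I would compute both sides from the Artin--Schreier data. The leading term of $N_iC$ is that of $a_i=\kappa_iz_i+B$, namely $\kappa_i=d_jd_k$ times the leading coefficient of $z_i$. When $\delta=0$ the $d_i$ are units, and $\alpha=\varpi^{-T}$. On the other side, $g_iY-Y=d_i$ with $v_KY=-t$, so the first-ramification-term computation expresses $\gamma_i$ through $\bar d_i$ and the leading coefficient of $Y$ in $\pi$. Equivalently $\gamma_i=\bar d_i\,\epsilon$ for a common unit residue $\epsilon$. I expect the identity to follow by matching powers of $\epsilon$, using $\bar d_1+\bar d_2+\bar d_3=0$ and $\beta_i=d_i^2$. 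If the direct bookkeeping proves awkward, the fallback is to check $\tr(c_ix\,\gamma_j\gamma_k)=\tr(x/\gamma_i^2)$ by evaluating the residue pairing through Lemma~\ref{D:EQ:ASsymbol} and \eqref{D:EQ:residue-trace}, as was done in the proof of Lemma~\ref{D:EQ:commmodel}.
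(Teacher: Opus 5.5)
Your reduction matches the paper's exactly: one $C$ serving all three fields, $H_i\circ p_i=Q$ with $p_i(z)=z^2+\gamma_iz$, the points $w_i=p_i(\gamma_j)=\gamma_j\gamma_k$ with $H_i(w_i)=H_j(0)=1$, and Lemma~\ref{U:missing-coset}. The gap is that the one hypothesis you grant, $c_i\gamma_j\gamma_k=\gamma_i^{-2}$, carries the whole content of the proposition, and your sketch does not yet contain what determines it.

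First, $c_i$ is not the leading coefficient of $N_iC$. Since $N_iC\,\Pi_i^{2t}$ has odd valuation $t$ in $L_i$, the pairing $\Psi_{L_i}=\Psi_F\circ\Tr_{L_i/F}$ picks up $\eta_i=\overline{\Tr_{L_i/F}(\Pi_i^t)/\varpi^t}$. This $\eta_i$ is the first ramification coefficient of $L_i/F$ on $\Pi_i$. Hence $c_i=u^2\eta_i$, where $u=\overline{C\pi^t}$, $N_iC=u^2\Pi_i^{-t}+\cdots$ and $\gamma_i=\bar d_i/u$. If you took $c_i=u^2$ alone, the identity would force $\bar d_i=1$ for every $i$. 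That is impossible, since $\bar d_1+\bar d_2=\bar d_3$.

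Second, $u$ is a genuine unknown. The terms of valuation $-2t$ in $Y=d_1z_2+d_2z_1$ cancel, so no formal matching of powers of $\epsilon=u^{-1}$ can produce it. Comparing the leading coefficient of $N_iC$ with the residue $\bar\kappa_i=\bar d_j\bar d_k$ of $\sigma_i(N_iC)-N_iC$ gives $u^2\eta_i=\bar d_j\bar d_k$. So your identity is equivalent to $u^2=\bar d_1\bar d_2\bar d_3$, or equally to $\eta_i=\bar d_i^{-1}$.

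The paper obtains this calibration from the lower norm character. The coefficient $n_iS_iC$ of $\omega_i$ has residue $\bar d_i^2$. Also $\omega_i$ is trivial on the critical norms $n_i(1+\Pi_i^tz)\equiv1+\varpi^t(z^2+\eta_iz)$. Hence
\[
 \tr\bigl(\bar d_i^2(z^2+\eta_iz)\bigr)=\tr\bigl((\bar d_i+\bar d_i^2\eta_i)z\bigr)=0\qquad(z\in k),
\]
which forces $\eta_i=\bar d_i^{-1}$ and therefore $u^2=\bar d_1\bar d_2\bar d_3$.

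Your Artin--Schreier route also closes, without using $\omega_i$. From $z_i^2=z_i+f_i$, $\beta_i=d_i^2$ and $\varpi\equiv\Pi_i^2$, the element $z_i$ has leading term $\bar d_i\Pi_i^{-t}$. The first-ramification-term expansion (as in the proof of Lemma~\ref{C:leading-ramification}), applied to $\sigma_iz_i-z_i=1$, then gives $\eta_i\bar d_i=1$. The leading coefficient of $N_iY=\kappa_iz_i+B$ is $\bar d_j\bar d_k\bar d_i$, and it must equal $u^2$. Equivalently, compare leading coefficients in $Y^2=\beta_1z_2+\beta_2z_1+B$.

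Either way $c_i=\bar d_j\bar d_k$ and $c_i\gamma_j\gamma_k=\bar d_j\bar d_k/\bar d_i=\gamma_i^{-2}$. The adjustment $X$ changes none of these residues, because $v_KX>-t$. As written, though, your proposal proves only the reduction, not the calibration on which the proposition rests.
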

\begin{proof}
Lemma~\ref{D:EQ:min-origin} supplies $C$ satisfying \eqref{D:EQ:min-core}--\eqref{D:EQ:min-lower} for $i=1,2,3$.
Put $u=\overline{C\pi^t}\in k^\times$ and
$c_i=\overline{g_iC-C}\in k^\times$.
The three $c_i$ are distinct, with $c_1+c_2=c_3$.
Indeed $g_iY-Y=d_i$ and the possible adjustment changes the first two
by $a\in\pp_F$, leaving their residues unchanged.
The first ramification coefficient on $\pi$ is $c_i/u$:
expand the leading term $u\pi^{-t}$ of $C$ and use oddness of $t$.
Thus the critical norm maps for $K/L_i$ are
\begin{equation}\label{D:EQ:one-pi}
 p_i(z)=z^2+(c_i/u)z.
\end{equation}
Write $P=c_1c_2c_3$ and $k_i=c_jc_k$.
For $L_i/F$, the difference of the two conjugates of $N_iC$
has residue $k_i$, while its leading term is $u^2\Pi_i^{-t}$.
Consequently the critical norm polynomial for $L_i/F$ has coefficient
$\eta_i=k_i/u^2$, namely $z^2+\eta_i z$.

The coefficient of $\omega_i$ in
\eqref{D:EQ:min-lower} has residue $c_i^2$.
Triviality on its critical norms gives
\[
 \tr\bigl(c_i^2(z^2+\eta_i z)\bigr)=0\qquad(z\in k).
\]
Nondegeneracy of the residue trace pairing forces
$c_i+c_i^2\eta_i=0$, hence
\begin{equation}\label{D:EQ:onecalibration}
 \eta_i=c_i^{-1},\qquad u^2=P.
\end{equation}
The polar pairing of $H_i$ is therefore
\begin{equation}\label{D:EQ:onepolar}
 H_i(x+z)/(H_i(x)H_i(z))=(-1)^{\tr(k_i xz)}.
\end{equation}
Here tracing $\Pi_i^t$ gives the coefficient $\eta_i$ and the leading
coefficient of $N_iC$ contributes $u^2$.

The image $V_i$ of \eqref{D:EQ:one-pi} is
$\{x:\tr(u^2x/c_i^2)=0\}$, an index-two hyperplane.
For $j\ne i$, put $w_i=k_i/u^2=p_i(c_j/u)$.
Equation~\eqref{D:EQ:commonH-general} gives
$H_i(w_i)=H_j(0)=1$ and $w_i\in V_i\setminus\{0\}$.
By \eqref{D:EQ:onecalibration}, its polar character in \eqref{D:EQ:onepolar}
is $x\mapsto(-1)^{\tr(u^2x/c_i^2)}$.
Lemma~\ref{U:missing-coset}, applied with ambient group $(k,+)$
and image $V_i$, gives $\sum_kH_i=\frac12\sum_kQ$ for every $i$.
The normalization $|k|^{-1/2}$ is common, so
\eqref{D:EQ:minfull} proves the result.
\end{proof}

\subsection{The case \texorpdfstring{$T_2+r_1\le n\le2T_2-1$}{T2+r1<=n<=2T2-1}}
Suppose now $n\ge T_2+r_1$, and write $h=n-T_2\ge r_1$.
By Proposition~\ref{D:EQ:realize}, the conductors and stationary depths are
\begin{equation}\label{D:EQ:twistdepths}
 M_i=2n-T_i,\qquad b_i=M_i/2=n-r_i;
 \quad b_1=T_2+h-r_1,\quad b_2=r_2+h.
\end{equation}
For this subsection assume
\begin{equation}\label{D:EQ:transitionrange}
 r_1\le h\le r_1+2r_2-2.
\end{equation}
Put
\[
 q_F=r_2+\floor{(h+r_1)/2}.
\]
The trace and norm of each variable $z\in\pp_{L_i}^{b_i}$, $i=1,2$,
belong to $\pp_F^{q_F}$. Moreover
$q_F\ge\ceil{n/2}$ and $b_i\ge s_i$.
Choose $A_*$ such that $\lambda(1-z)=\Psi_F(A_*z)$ for $z\in\pp_F^{q_F}$. It has valuation $-h$ and ambiguity $\pp_F^{T_2-q_F}$.
A norm from $L_3$ can represent this class, because its requested
relative precision is
\begin{equation}\label{D:EQ:transitionprecision}
 h+T_2-q_F=r_2+\ceil{(h-r_1)/2}\le2r_2-1=t_2.
\end{equation}
Thus choose $X\in L_3$ with $v_{L_3}X=-h$ and $A=n_3X$ in that
class, and use \eqref{D:EQ:adjust}.

\begin{theorem}[Equality for $T_2+r_1\le n\le2T_2-1$]\label{D:EQ:intermediate}
Every family in \eqref{D:EQ:transitionrange} satisfies
$\mathcal A_1(\theta_1)=\mathcal A_2(\theta_2)$.
\end{theorem}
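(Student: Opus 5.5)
The plan is to run the same scheme as in the minimal-conductor comparison, now with the common element $C=Y+X$, where $X\in L_3$ has $v_{L_3}X=-h$ and $A=n_3X$ represents the coefficient of $\lambda$ on $\pp_F^{q_F}$. I will proceed in four steps.

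\textbf{Step 1: stationary coefficients.} For $i=1,2$ I first show that $N_iC=a_i+A+Q_i$ is a stationary coefficient for $\theta_i=\chi_i(\lambda n_i)$ on $\pp_{L_i}^{b_i}$, and that $n_iS_iC=(d_i+a)^2$ is one for $\omega_i$ on $\pp_F^{r_i}$.
\begin{itemize}
\item The factor $\chi_i$ is handled by \eqref{D:EQ:corechart}, since $b_i\ge s_i$.
\item The factor $\lambda n_i$ is handled by \eqref{D:EQ:pullback-adjust}. For this I check, from \eqref{D:EQ:Q-bound} and \eqref{D:EQ:E-bound}, that $(Q_i/\beta_i)z\in\pp_{L_i}^{r_i}$ and $(E/\beta_i)n_iz\in\pp_F^{T_2}$ for $z\in\pp_{L_i}^{b_i}$. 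This uses $b_1=T_2+h-r_1$, $b_2=r_2+h$, and $v_Fd_1=\delta$.
\item For $\omega_i$, \eqref{D:EQ:a-bound} with $h\le r_1+2r_2-2$ should give $v_F(a^2)\ge T_2-r_i$. If instead $v_Fa$ reaches $\delta$, I would need $d_i+a\ne0$ checked separately, or replace the lower coefficient by $\beta_i$ at the cost of a computable phase.
\end{itemize}
The conductors $M_i$ are even, so there is no critical sum on the $L_i$ side. Since $T_i$ is even, $g_{\omega_i}=1$ as well.

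\textbf{Step 2: the local constants.} Lemma~\ref{U:common-origin} then gives $\mathcal A_i=D_\theta(\alpha^{-1})\Theta(C)^{-1}\Psi_F(E_2(C))$, exactly as in \eqref{D:EQ:minfull}. The signs disappear in characteristic two. This expression is independent of $i$, which yields the theorem.

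\textbf{Step 3: the obstacle.} The main obstacle is the stationarity check at the top of the range, where $h$ is near $r_1+2r_2-2$. There the error $Q_1$ in $L_1$ and the term $E/\beta_1$, with $v_F\beta_1=2\delta$, are only barely deep enough, and the parity of $h+r_1$ matters through $q_F$.

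\textbf{Step 4: the fallback.} If a bound fails by one level, I would modify $C$ as in \S\ref{D:UR:sec:critical}. Concretely, I would multiply by the conjugate ratio $\rho=X'/X$, or use $X'(Y+X)$ scaled by $A^{-1}$, and compute the multiplicative correction $\theta_i(\rho)$ via Lemma~\ref{U:conjugacy} and the extra additive phase via a trace-parity argument like Lemma~\ref{D:UR:traceparity}. In equal characteristic I expect these phases to be controlled by the residue formula \eqref{D:EQ:symbol} and Lemma~\ref{D:EQ:rationalCartier}.
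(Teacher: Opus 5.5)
There is a genuine gap in the lower-coefficient part of your Step~1, and Step~2 rests on it. From \eqref{D:EQ:a-bound} you only get $v_F(a^2)\ge 2r_2-2\lceil h/2\rceil$. Throughout the present range $h\ge r_1$ this is below $T_2-r_i$: already at $h=r_1$ with $r_1$ odd it falls short by one for $i=1$, and near the top of \eqref{D:EQ:transitionrange} $v_Fa$ can even be negative. This is not merely a weak estimate. For odd $h$, the trace-ideal formula \eqref{D:EQ:edge} for $L_3/F$ forces $v_Fa=r_2-(h+1)/2$ exactly. The Cartier identities \eqref{D:EQ:Cartier} show that $z\mapsto\Psi_F(a^2z)$ is trivial on $\pp_F^{r_1}$ only if $v_Fa\ge r_2-\lfloor r_1/2\rfloor$, which then fails. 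So $n_1S_1C=\beta_1+a^2$ is in general not a stationary coefficient for $\omega_1$. The hypothesis of Lemma~\ref{U:common-origin} is therefore not satisfied, and your formula for $\mathcal A_i$ is not justified as derived. Your parenthetical fallback (use $\beta_i$ instead) is the right move. However, you attach it to the side issue of whether $d_i+a$ might vanish rather than to this failure, and you leave the resulting phase uncomputed.

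The repair is exactly the paper's proof. Take $\beta_i=n_i(d_i)$ as the coefficient for $\omega_i$; it is stationary on $\pp_F^{r_i}$ by \eqref{D:EQ:lowerchart}, and $\omega_i(\beta_i)=1$. Lemma~\ref{U:stationary-factor} then gives $\mathcal A_i=D_\theta(\alpha^{-1})\Theta(C)^{-1}\Psi_F(T_i^{\rm tr}N_iC+\beta_i)$. The exact identity \eqref{D:EQ:E2}, with $n_iS_iC=(d_i+a)^2$ from \eqref{D:EQ:commonC}, rewrites the exponent as $E_2(C)+(d_i+a)^2+d_i^2=E_2(C)+a^2$, which is independent of $i$. (Since $T_2$ is even, $\Psi_F(a^2)=1$, so your final expression happens to be the right value.) This route also makes the possible vanishing of $S_iC$ irrelevant.

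Your Steps~3 and~4 locate the difficulty in the wrong place. The upper checks are exact for every $h$ in the range: the argument depths are $-h-2\delta+b_1=r_1$ and $-h+b_2=r_2$, and the error depths are $T_2$ and $T_2+\delta$. So no parity problem arises and no $X'/X$ correction is needed. The upper limit $h\le r_1+2r_2-2$ enters only through the norm-approximation precision \eqref{D:EQ:transitionprecision} used to choose $X$.
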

\begin{proof}
The elements $(Q_i/\beta_i)z$ in \eqref{D:EQ:pullback-adjust} have depths
at least $r_1$ and $r_2$, respectively, because
\[
 -h-2\delta+b_1=r_1,\qquad -h+b_2=r_2.
\]
The error terms there have depths at least
\[
 (r_2+\delta-h)-2\delta+b_1=T_2,\qquad
 (r_2+\delta-h)+b_2=T_2+\delta.
\]
Thus $(\lambda n_i)(1-z)=\Psi_{L_i}((A+Q_i)z)$ for
$z\in\pp_{L_i}^{b_i}$. Combining it with \eqref{D:EQ:corechart} proves that
$N_iC=a_i+A+Q_i$ is the stationary coefficient for $\theta_i$.
Here $v_KX=-2h<-t_1=v_KY$, so $C\ne0$ and
$v_{L_i}N_iC=-2h$, exactly the required $J_i-M_i$.

The conductors $M_i$ and $T_i$ are even. Apply
Lemma~\ref{U:stationary-factor} with coefficients $N_iC$ for
$\theta_i$ and $\beta_i$ for $\omega_i$. This gives
\[
 \mathcal A_i(\theta_i)
   =D_\theta(\alpha^{-1})\Theta(C)^{-1}
                    \Psi_F(T_i^{\rm tr}N_iC+\beta_i).
\]
By \eqref{D:EQ:E2} and \eqref{D:EQ:commonC}, its last exponent is
\[
 E_2(C)+(d_i+a)^2+d_i^2=E_2(C)+a^2,
\]
the same for $i=1,2$. Hence $\mathcal A_1(\theta_1)=\mathcal A_2(\theta_2)$. In fact $\Psi_F(a^2)=1$, but this extra simplification
is not needed. Relabeling the third field proves the other comparison.
\end{proof}

\subsection{The case \texorpdfstring{$h\ge r_1+2r_2-1$}{h>=r1+2r2-1}}
The remaining range is
\begin{equation}\label{D:EQ:highrange}
 h\ge h_0:=r_1+2r_2-1,\qquad n=T_2+h.
\end{equation}
Let $A\in F^\times$ be a stationary coefficient of $\lambda$
on $\pp_F^{\ceil{n/2}}$, in the normalized $\Psi_F$ convention.
Thus $v_FA=-h$, and put $c_F=(\alpha A)^{-1}$.
For each $i=1,2$, choose only an approximate norm
\begin{equation}\label{D:EQ:highapprox}
 n_i x_i=(A/\beta_i)\varepsilon_i,\qquad
 \varepsilon_i\in U_F^{t_i},\qquad
 v_{L_i}x_i=-h-(T_2-T_i).
\end{equation}
Such $x_i$ exist by subcritical norm approximation at its stated
precision $t_i$.

\begin{lemma}[A coefficient for $\lambda n_i$ and the value of $\chi_i(c_i)$]\label{D:EQ:highcoef}
A stationary coefficient of $\lambda n_i$ is $A+\beta_ix_i$.
If $c_i=(\alpha(A+\beta_ix_i))^{-1}$, then
\begin{equation}\label{D:EQ:highcorrection}
 \chi_i(c_i)=\chi_i(c_F)\Psi_F(\mathcal N/A).
\end{equation}
The correction is identical for both fields.
\end{lemma}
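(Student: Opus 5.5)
The plan has two parts: first establish the stationary coefficient of $\lambda n_i$, then compute $\chi_i(c_i)$. Both parts follow the pattern of Lemma~\ref{D:UR:highcov} and of the proof of Theorem~\ref{D:EQ:intermediate}, with the norm approximation \eqref{D:EQ:highapprox} in place of an exact norm.

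\emph{Stationary coefficient.} The pullback $\lambda n_i$ has even conductor $M_i=2n-T_i$ and stationary depth $b_i=n-r_i$. For $z\in\pp_{L_i}^{b_i}$, the norm expansion $n_i(1-z)=1+T_i^{\rm tr}z+n_iz$ has both terms in $\pp_F^{\ceil{n/2}}$. This holds because $h\ge h_0$ gives $\floor{(b_i+T_i)/2}\ge\ceil{n/2}$. Hence
\[
 (\lambda n_i)(1-z)=\Psi_{L_i}(Az)\Psi_F(A\,n_iz).
\]
Next convert the norm term. Write $A n_iz=\beta_i\varepsilon_i^{-1}n_i(x_iz)$. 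Since $v_{L_i}(x_iz)=b_i-h-(T_2-T_i)=r_i$ exactly, \eqref{D:EQ:normphase} applies to $x_iz$ and gives
\[
 \Psi_F(\beta_i n_i(x_iz))=\Psi_{L_i}(\beta_ix_iz).
\]
The replacement of $\varepsilon_i$ by $1$ costs an element of valuation at least $t_i+(-h)+2b_i-(T_2-T_i)\cdot0$. I expect this to be at least $T_2$ after using $v_F(\beta_i)=2\delta\cdot[i=1]$ and $n_iz\in\pp_F^{b_i}$; this is routine to check. The resulting coefficient $A+\beta_ix_i$ has valuation $-2h-(T_2-T_i)\cdot(\ldots)$, dominated by $A$. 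So it is a stationary coefficient, matching $J_i-M_i$.

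\emph{The value $\chi_i(c_i)$.} Write $c_i=c_F(1+\beta_ix_i/A)^{-1}$. The relative correction $y_i=\beta_ix_i/A$ has $L_i$-valuation $(2\delta[i=1])+h-(T_2-T_i)$, which is at least $s_i$ because $h\ge h_0$. Hence \eqref{D:EQ:corechart} applies:
\[
 \chi_i\bigl((1+y_i)^{-1}\bigr)=\Psi_{L_i}(a_iy_i)
\]
up to a quadratic term $a_iy_i^2$. I expect that term to lie beyond $J_i$, and this needs checking at the boundary $h=h_0$.

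It remains to show $\Psi_{L_i}(a_i\beta_ix_i/A)=\Psi_F(\mathcal N/A)$. This is the main obstacle. The approach is to apply \eqref{D:EQ:normphase} to the element $a_ix_i\beta_i/(A\beta_i)\cdot\beta_i$, or more precisely to use the norm-character identity for $\omega_i$ on $w=a_ix_i$. That gives $\Psi_{L_i}(\beta_iw/A)=\Psi_F(\beta_i n_i(w)/A)$ whenever $w/A\in\pp_{L_i}^{r_i}$. Then
\[
 \beta_in_i(a_ix_i)/A=\beta_i\,\mathcal N\,(A/\beta_i)\varepsilon_i/A^2=\mathcal N\varepsilon_i/A,
\]
using $n_ia_i=\mathcal N$ from \eqref{D:EQ:originval}. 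The factor $\varepsilon_i\in U_F^{t_i}$ changes the argument by valuation at least $-t_1+h+t_i\ge T_2$, and the depth hypothesis follows from $v(a_ix_i/A)=-t_1+h-(T_2-T_i)\ge r_i$ in the high range. The delicate points are the exact valuation bookkeeping at $h=h_0$ for $i=1$ and the sign-free characteristic-two form of \eqref{D:EQ:normphase}. If the quadratic remainder $a_iy_i^2$ fails there, I would instead keep it and absorb it using $\Psi_F(x^2)=1$ after taking a trace. The resulting expression $\Psi_F(\mathcal N/A)$ visibly does not depend on $i$, which gives the final assertion.
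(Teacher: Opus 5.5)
Your proposal is correct and follows the paper's proof essentially step for step. You obtain the coefficient of $\lambda n_i$ by norm expansion on $\pp_{L_i}^{b_i}$, with \eqref{D:EQ:normphase} applied to $x_iz$; deleting $\varepsilon_i$ costs valuation $-h+b_i+t_i=T_2+r_i-1$, not the garbled bound you wrote. You then apply \eqref{D:EQ:corechart} at $v_i=\beta_ix_i/A$, and \eqref{D:EQ:normphase} to $a_ix_i/A$ using $n_ia_i=\mathcal N$; for $i=1$ the depth condition there is exactly $h\ge h_0$.

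Two small corrections do not affect the argument. First, $v_{L_1}(\beta_1x_1/A)=h+2\delta$, because $v_{L_1}\beta_1=4\delta$. Second, no quadratic remainder $a_iy_i^2$ arises: \eqref{D:EQ:corechart} is an exact identity on $\pp_{L_i}^{s_i}$, and $\Psi_{L_i}$ takes values $\pm1$ in characteristic two, so $\chi_i((1+v_i)^{-1})=\Psi_{L_i}(a_iv_i)$ directly.
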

\begin{proof}
For $z\in\pp_{L_i}^{b_i}$, $b_i=n-r_i$, both its trace and norm lie
in the base stationary ideal. Also $x_iz$ has valuation at least
$r_i$, so \eqref{D:EQ:normphase} applies. The error from
\eqref{D:EQ:highapprox}, after multiplication by $n_i z$, has normalized
valuation at least
\[
 -h+t_i+b_i=T_2+r_i-1\ge T_2.
\]
Exact quadratic norm expansion proves the first assertion on the
whole stationary ideal. The leading term is $A$, since
\[
 v_{L_i}(\beta_ix_i/A)=h+T_2-T_i>0.
\]
Set $v_i=\beta_ix_i/A$. The lower bound \eqref{D:EQ:highrange} places
$v_i$ in $\pp_{L_i}^{s_i}$, where \eqref{D:EQ:corechart} gives
\[
 \chi_i(c_i/c_F)=\chi_i((1+v_i)^{-1})
                        =\Psi_{L_i}(a_i v_i).
\]
The value of $\Psi_{L_i}$ has order at most two because $\operatorname{char}F=2$. To convert it, the variable
$a_ix_i/A$ has valuation
\[
 h-t_1-(T_2-T_i)\ge r_i.
\]
For $i=1$ this inequality is precisely $h\ge h_0$; for $i=2$ it is
weaker. Apply \eqref{D:EQ:normphase}, using $n_i a_i=\mathcal N$, to get
\[
 \Psi_{L_i}(\beta_i a_ix_i/A)
                      =\Psi_F(\mathcal N\varepsilon_i/A).
\]
The error on deleting $\varepsilon_i$ has valuation at least
$h-t_1+t_i$, which is at least $T_2$ for $i=1,2$ by
\eqref{D:EQ:highrange}. This proves \eqref{D:EQ:highcorrection}.
\end{proof}

\begin{theorem}[Equality for $n\ge2T_2$]\label{D:EQ:highcomparison}
For every family in \eqref{D:EQ:highrange},
\begin{equation}\label{D:EQ:highanswer}
 \mathcal A_i(\theta_i)
   =D_\chi(c_F)\Psi_F(\mathcal N/A)\Delta_F(\lambda,\psi_F)^2
                    \qquad(i=1,2).
\end{equation}
In particular $\mathcal A_1(\theta_1)=\mathcal A_2(\theta_2)$.
\end{theorem}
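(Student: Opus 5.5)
The plan is to follow the stable-twist pattern of Theorem~\ref{D:UR:stable}, with one correction: the norm coefficient can be only approximate, and that changes $\chi_i(c_i)$ by the common factor $\Psi_F(\mathcal N/A)$ of Lemma~\ref{D:EQ:highcoef}. Write $\theta_i=\chi_i(\lambda n_i)$ as in Proposition~\ref{D:EQ:realize}. Here $n=T_2+h\ge T_2+r_1+2r_2-1$, so the conductors are $M_i=2n-T_i$, with stationary depth $b_i=n-r_i$.

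The first step is to check that $a(\chi_i)=m_i\le b_i$, so that the exact stable twist \eqref{U:stable} applies. For $i=1$ this reads $2T_2-1\le T_2+h-r_1$, which is implied by \eqref{D:EQ:highrange}. For $i=2$ it reads $T_1+T_2-1\le r_2+h$, which is weaker. Taking the stationary coefficient $A+\beta_ix_i$ of Lemma~\ref{D:EQ:highcoef} and the denominator $c_i=(\alpha(A+\beta_ix_i))^{-1}$, this gives
\[
 \Delta_{L_i}(\theta_i)=\chi_i(c_i)\Delta_{L_i}(\lambda n_i)
 =\chi_i(c_F)\Psi_F(\mathcal N/A)\Delta_{L_i}(\lambda n_i).
\]
Here I still need to confirm that the sign and $\alpha$ conventions of \eqref{U:stable}, namely $\nu(\Gamma/b)$, match $c_i$. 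Lemma~\ref{U:stationary-factor} translates $\Gamma/b$ into $(-\alpha Z)^{-1}$, and in characteristic two $-1=1$ in $F$, so the identification should go through.

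The second step applies the First Main Lemma \eqref{U:FML} to $L_i/F$ with $\chi=\lambda$:
\[
 \Delta_{L_i}(\lambda n_i)\Delta_F(\omega_i)=\Delta_F(\lambda)\Delta_F(\lambda\omega_i).
\]
Since $a_F(\omega_i)=T_i\le\lfloor n/2\rfloor$ (indeed $n\ge 2T_2$), the stable twist over $F$ gives
\[
 \Delta_F(\lambda\omega_i)=\omega_i(c_F)\Delta_F(\lambda),
\]
because $A$ is a stationary coefficient of $\lambda$ for $\Psi_F$. Multiplying the two steps gives
\[
 \mathcal A_i(\theta_i)=\chi_i(c_F)\,\omega_i(c_F)\,\Psi_F(\mathcal N/A)\,\Delta_F(\lambda)^2 .
\]
Finally, $\chi_i(c_F)\omega_i(c_F)=D_\chi(c_F)$ by Lemma~\ref{U:determinant}, applied to the primitive compatible pair $(\chi_i)$ with $c_F\in F^\times$. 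This yields \eqref{D:EQ:highanswer}.

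I expect the main obstacle to be the bookkeeping in the first step, not the FML step. The difficulty is verifying that the hypotheses of Lemma~\ref{D:EQ:highcoef} hold over the whole range \eqref{D:EQ:highrange} for both $i=1,2$, so that the correction $\Psi_F(\mathcal N/A)$ really is the same for both fields. The tight case is $i=1$ at $h=h_0$, where $a_1x_1/A$ has depth exactly $r_1$. I would check that case directly and note that larger $h$ only improves the depths. The comparison with $L_3$ then follows by the same relabeling used throughout this section.
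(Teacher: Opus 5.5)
Your proposal is correct and follows the paper's proof essentially step for step. The steps are: the exact stable twist over $L_i$, justified by $m_i\le b_i$; Lemma~\ref{D:EQ:highcoef} for the value $\chi_i(c_i)$; the First Main Lemma on $L_i/F$ followed by the stable twist over $F$, using $n\ge 2T_2$; and Lemma~\ref{U:determinant} to assemble $D_\chi(c_F)$. The sign convention you flagged is harmless in characteristic two, and the boundary check at $h=h_0$ is already part of the statement of Lemma~\ref{D:EQ:highcoef}.
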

\begin{proof}
The conductors $m_i$ satisfy the stable-twist bounds over $L_i$:
\[
 b_1=T_2+h-r_1\ge 2T_2-1=m_1,\qquad
 b_2=r_2+h\ge T_1+T_2-1=m_2.
\]
Ueda's exact stable twist gives
$\Delta_{L_i}(\theta_i)=\chi_i(c_i)\Delta_{L_i}(\lambda n_i)$.
The First Main Lemma on $L_i/F$ gives
\[
 \Delta_{L_i}(\lambda n_i)\Delta_F(\omega_i)
                =\Delta_F(\lambda)\Delta_F(\lambda\omega_i).
\]
Since $n\ge2T_2\ge2T_i$, the stable-twist formula over $F$ applies, so
the right side is $\omega_i(c_F)\Delta_F(\lambda)^2$.
Combine this with Lemma~\ref{D:EQ:highcoef} and
$D_\chi=\chi_i|_{F^\times}\omega_i$ from Lemma~\ref{U:determinant}.
\end{proof}

\subsection{Completion in characteristic two}
\begin{theorem}\label{D:EQ:total-complete}
Theorem~\ref{D:EQ:main} holds when $K/F$ is totally ramified.
\end{theorem}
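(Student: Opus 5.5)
The plan is to assemble the cases proved in this section. Nothing new is computed here: we check that the cases exhaust all possibilities, and then pass from the canonical additive character \eqref{D:EQ:canonicalpsi} to an arbitrary one.

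First, fix the labeling with $L_1$ of smallest break and realize the given primitive family as $\theta_i=\chi_i(\lambda n_i)$ by Proposition~\ref{D:EQ:realize}. Put $n=a_F(\lambda)$ and $h=n-T_2$. The conductor alternatives split into four ranges.
\begin{itemize}
\item If $n\le T_2+r_1-1$, the conductors are $m_i$. Propositions~\ref{D:EQ:min-two} and~\ref{D:EQ:min-one} give $\mathcal A_1=\mathcal A_2$; relabeling $L_2,L_3$ (which has the same break) gives $\mathcal A_1=\mathcal A_3$.
\item If $r_1\le h\le r_1+2r_2-2$, use Theorem~\ref{D:EQ:intermediate}.
\item If $h\ge r_1+2r_2-1$, use Theorem~\ref{D:EQ:highcomparison}.
\item The case $n\le q_0$, with $X=0$, is already covered inside Lemma~\ref{D:EQ:min-origin}.
\end{itemize}
Since $h_0=r_1+2r_2-1$ is one more than the upper end of \eqref{D:EQ:transitionrange}, every integer $h$ falls in one of these ranges. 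In each range the comparison of $L_1$ with $L_2$ transfers to $L_3$ by the symmetric roles of $L_2$ and $L_3$ (both have break $t_2$ and are related by interchanging $z_1,z_2$ in $Y$ and $X\in L_3$). The one thing to check is that the choice of $X\in L_3$ is compatible with this symmetry; if it is not, I would instead pick $X\in L_2$ for the comparison with $L_3$. Transitivity then gives independence of all three.

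Next, the independence does not depend on the choices of $\theta_i$. Two characters with the same pullback differ by an element of $S(K/L_i)$ and are Galois conjugate by Lemma~\ref{U:conjugacy}, so their local constants agree by an automorphism change of variables.

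Finally, pass to an arbitrary $\psi_F$. Any nontrivial additive character is $\psi_F'(x)=\psi_F(\gamma x)$ with $\gamma\in F^\times$, and then $\psi'_{L_i}=\psi_{L_i}(\gamma\,\cdot)$ by trace compatibility. The scaling rule $\Delta_E(\theta,\psi(c\,\cdot))=\theta(c)\Delta_E(\theta,\psi)$ from Section~\ref{U:interface} multiplies $\mathcal A_i$ by $\theta_i(\gamma)\omega_i(\gamma)=D_\theta(\gamma)$. By \eqref{U:det-character} this factor is independent of $i$, so the equality persists. Continuous quasi-characters need no separate treatment, since only unit restrictions and $D_\theta$ enter.

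The main obstacle I expect is the careful verification that the three $n$-ranges of Proposition~\ref{D:EQ:realize} and the subsequent results really cover every $h$, including the boundary $n=2T_2-1$ against $h\ge h_0$, negative $h$ when $\delta>0$, and the one-break case $\delta=0$, where $r_1=r_2$ makes the ranges abut exactly.
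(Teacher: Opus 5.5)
Your proposal is correct and is essentially the paper's argument. You realize the family by Proposition~\ref{D:EQ:realize} and dispatch the consecutive ranges $h\le r_1-1$, $r_1\le h\le r_1+2r_2-2$, $h\ge r_1+2r_2-1$ to Propositions~\ref{D:EQ:min-two}--\ref{D:EQ:min-one}, Theorem~\ref{D:EQ:intermediate} and Theorem~\ref{D:EQ:highcomparison}, then relabel for $L_3$ and rescale $\psi_F$ by the $i$-independent factor $D_\theta(\gamma)$; the paper does the rescaling in the proof of Theorem~\ref{D:EQ:main} immediately afterwards.

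One small correction: the relabeling for $L_3$ replaces $(z_2,f_2,d_2)$ by $(z_3,f_3,d_3)$, not an interchange of $z_1,z_2$. In characteristic two this leaves $Y=d_1z_2+d_2z_1$ unchanged, so the only change is that $X$ is taken in $L_2$, exactly as in your fallback.
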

\begin{proof}
Apply Proposition~\ref{D:EQ:realize} to write $\theta_i=\chi_i(\lambda n_i)$. If $n\le T_2+r_1-1$, use
Propositions~\ref{D:EQ:min-two} and \ref{D:EQ:min-one}.
Otherwise $h=n-T_2\ge r_1$. The interval
$r_1\le h\le r_1+2r_2-2$ is Theorem~\ref{D:EQ:intermediate}, and the
remaining integers are Theorem~\ref{D:EQ:highcomparison}.
Their endpoints are consecutive, including $r_1=r_2=1$.
In the two-break case repeat the comparison of $L_1$ with $L_3$. This proves independence of all three.
Changing an inducing extension by an upper norm character conjugates it
and preserves its local constant by an automorphism change of variables
in the defining local integral. The passage to an arbitrary additive character is given below.
\end{proof}

\begin{proof}[Proof of Theorem~\ref{D:EQ:main}]
For the canonical additive character this is Theorem~\ref{D:EQ:total-complete}.
Every nontrivial additive character of $F=k((\varpi))$ is a scaling
$x\mapsto\psi_F(cx)$ of \eqref{D:EQ:canonicalpsi}, with $c\in F^\times$.
Ueda's additive-scaling formula multiplies $\mathcal A_i(\theta_i)$ by $\theta_i(c)\omega_i(c)=D_\theta(c)$, independent of $i$
by Lemma~\ref{U:conjugacy}. This proves the stated arbitrary-character result.
\end{proof}

\section{Totally ramified mixed characteristic: the maximal break}\label{sec:dyadic-maximal}
\subsection{Statement and ramification data}
Let $F/\mathbf Q_2$ be finite, with residue field $k$, normalized
valuation $v_F$, and $e=v_F(2)$. Suppose $K/F$ is totally ramified,
$\Gal(K/F)=C_2^2$. Label its three quadratic fields so that
\begin{equation}\label{D:MX:breaks}
 t_1=2a-1,\qquad t_2=t_3=2e,\qquad 1\le a\le e.
\end{equation}
Put $T_1=2a$, $T_2=T_3=T=2e+1$. Write $g_i$ for the nontrivial
upper automorphism fixing $L_i$, and
\[
 N_i=\Nm_{K/L_i},\quad S_i=\Tr_{K/L_i},\quad
 n_i=\Nm_{L_i/F},\quad \operatorname{tr}_i=\Tr_{L_i/F}.
\]
Let $\omega_i$ be the nontrivial character in $S(L_i/F)$.
Fix a nontrivial additive $\psi_F$ and compose with trace on all fields.
\begin{theorem}\label{D:MX:main}
For every primitive compatible family $\theta_iN_i=\Theta$,
where $\Theta$ is not a norm pullback from $F$, the quantities
\begin{equation}\label{D:MX:target}
 \mathcal A_i(\theta_i)=
 \Delta_{L_i}(\theta_i,\psi_{L_i})\Delta_F(\omega_i,\psi_F)
\end{equation}
are equal. Continuous quasi-characters are allowed. The conclusion
holds for every conductor under \eqref{D:MX:breaks}.
\end{theorem}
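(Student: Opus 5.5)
I would follow the template of Sections~\ref{sec:dyadic-ur} and~\ref{sec:dyadic-equal}: build a reference compatible family of minimal conductors, twist it by one character $\lambda$ of $F^\times$, and compare the two sides through Lemmas~\ref{U:common-origin}, \ref{U:common-function} and~\ref{U:missing-coset}. By Proposition~\ref{U:quadratic-square} only a sign must be found. The comparisons are $L_1$ with $L_2$ and $L_1$ with $L_3$; transitivity then gives all three. The additive-character scaling at the end is handled exactly as in the proof of Theorem~\ref{D:EQ:main}, since the factor $D_\theta(c)$ is independent of $i$.

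\emph{Step 1: coordinates.} Since $t_2=t_3=2e$ is the maximal break, Lemma~\ref{D:UR:oddT} shows that $L_2,L_3$ are generated by square roots of units $1+4\epsilon$ up to uniformizer adjustments. Their norm characters have odd conductor $T=2e+1$ and satisfy $\Psi_F(4z)=(-1)^{\tr\bar z}$. The field $L_1$, of break $2a-1$, is given by a Kummer or Artin--Schreier-type equation $X^2-X=f$ with $v_F f=-(2a-1)$ after normalization $X\mapsto 2X$. I would construct an element $Y\in K$ playing the role of $d_1z_2+d_2z_1$ from \eqref{D:EQ:origin}. The aim is for $g_iY-Y$ to be (approximately) in $F$ and for $N_iY$ and $n_i(S_iY)$ to serve as stationary coefficients of the minimal-conductor characters $\chi_i$ and of $\omega_i$. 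The construction should mirror Theorem~\ref{D:EQ:core-exists}: prescribe $R_i(1-z)=\Psi_{L_i}(N_iY\,z)$ and check the commutator identity as in Lemma~\ref{D:EQ:commmodel}. The missing Cartier/residue argument would be replaced by explicit quadratic norm identities and the conversion Lemma~\ref{D:UR:normphase}. All errors are controlled by $v_F2=e$.

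\emph{Step 2: conductor reduction and the elementary factor.} I would write $\theta_i=\chi_i(\lambda n_i)$ by Hilbert~90, as in Proposition~\ref{D:EQ:realize}. Then, by norm approximation in $L_3$ (or $L_2$), I would choose $X$ with $n_3X$ representing the coefficient of $\lambda$, and set $C=Y+X$. Lemma~\ref{U:common-origin} reduces each $\mathcal A_i$ to the common factor times $g_ih_i$. If $\lambda$ has large conductor, Theorem~\ref{D:UR:stable} applies directly. This leaves the minimal range and the transition range, where $T_2$ odd means that $\omega_2,\omega_3$ contribute critical sums $h_i$, and $L_2,L_3$ may have odd conductor.

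\emph{Step 3: the sign, which I expect to be the main obstacle.} The factor $h_i$ for the odd-conductor norm characters does not cancel automatically as it did in characteristic two. I would vary $C\mapsto C(1+\pi^{\ell}z)$ and apply Lemma~\ref{U:common-function}, which gives $H_i\circ p_i\cdot H_{\omega_i}\circ p'_i=Q$ with $Q$ a common function. I would then show that the combined map $z\mapsto(p_i(z),p'_i(z))$ is either bijective (as in \eqref{D:UR:actualHidentity}) or has index-two image. In the index-two case, Lemma~\ref{U:missing-coset} would be applied, using the third field to produce the translation element $w_i$, as in Proposition~\ref{D:EQ:min-one}. The delicate points are two. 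First, the Frobenius-squaring part coming from $L_1$ must be computed at depth $2e$ versus $2a-1$, with the $4z$ additive character of Lemma~\ref{D:UR:oddT}. Second, any leftover elementary sign $(-1)^{T}\Psi_F(\cdot)$ must cancel, as in \eqref{D:UR:oddratio}. I would try first to reproduce the bijection trick of Theorem~\ref{D:UR:minimal}, and fall back on Lemma~\ref{U:missing-coset} when the trace map on residues is degenerate, which I expect to happen when $a=e$.
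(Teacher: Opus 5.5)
Granting Step~1, your outline agrees with the paper in the minimal range $n\le 2e+a$. There $\theta_1$ and $\omega_2$ carry the only odd conductors, the single variation $C(1+\pi^{2e}z)$ reaches both critical depths, and two separate bijections $k\to k$ give $g_{\theta_1}=g_{\omega_2}$, exactly as in Theorem~\ref{D:MX:minimal}.

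The gap is the range $2e+a+1\le n\le 4e+2a$, which Theorem~\ref{D:UR:stable} does not reach (its hypotheses require $n\ge 4e+2a+1$ here). Two steps of your plan fail in this range.

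First, Lemma~\ref{U:common-origin} is not available. Write $h=n-T$ and $p_X=\operatorname{tr}_3X$. Then $n_iS_iC=\beta_i+\Delta$ with $\Delta=p_X^2+2p_X$, and for even $h$ one has exactly $v_Fp_X=e-h/2$. So for even $h>e$ one gets $v_F\Delta=2e-h<e$, and $\omega_2(1-z)=\Psi_F(n_2S_2C\,z)$ fails on $\pp_F^{e+1}$. For $a\ge2$ and $h=2e+2a-2$, the element $n_2S_2C$ is not even a unit. The paper keeps $N_iC$ for $\theta_i$ but uses $\beta_i=n_i(S_iY)$ for $\omega_i$. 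The resulting discrepancy $\Psi_F(\Delta)$ is the same for both fields because $\operatorname{tr}_ih_i=\Tr_{K/F}Y$; see \eqref{D:MX:Delta} and \eqref{D:MX:higher-full}. The critical functions are then compared using two origins, $C_z=C+V$ and $Y_z=Y+V$, whose common correction is the term $p_X\Tr_{K/F}V$ in \eqref{D:MX:two-origins}.

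Second, your sign mechanism cannot work in this range. Here $M_1=2n-2a$ and $T_1=2a$ are even, while $M_2=2n-T$ and $T_2=T$ are both odd, so the whole sign sits on $L_2$ as a product of two critical sums. Because $K/F$ is totally ramified, its residue field is $k$. So $z\mapsto(p(z),q(z))$ maps $k$ into $k\times k$, and summing over $z$ does not split the product. The device behind \eqref{D:UR:actualHidentity} needed a residue extension of degree two, and Lemma~\ref{U:missing-coset} does not address this situation. The missing idea is that the all-even field $L_1$ forces the common function to be identically $1$. Hence $H_{\theta_2}(p(z))H_{\omega_2}(q(z))=1$ with $p,q\colon k\to k$ separately bijective. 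Reindexing gives $\sum_kH_{\theta_2}=\overline{\sum_kH_{\omega_2}}$, and $|g|=1$ then yields $g_{\theta_2}g_{\omega_2}=1$. This argument works for every $h\ge a$, so the paper never needs the stable-twist theorem in this case.

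Step~1 also needs repair. Lemma~\ref{D:UR:oddT} describes $\omega_2,\omega_3$ on $U_F^{2e}$ and says nothing about generators. A square root of $1+4\epsilon$ with $\epsilon$ a unit generates an unramified or trivial extension, whereas a field of break $2e$ is $F(\sqrt c)$ with $v_Fc$ odd. The paper takes $L_1=F(S)$, $L_2=F(R)$, $L_3=F(SR)$ with $S^2=1+\epsilon$ and $v_F\epsilon=v_FR^2=2e-2a+1$, and sets $Y=(S+R-1)/2$.

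The substantive point, which your sketch states only as an aim, is that a single $\alpha$ makes all three $\beta_i=n_i(S_iY)$ stationary coefficients of the $\omega_i$ at once. This requires the alignment $\epsilon/R^2\in U_F^a$ of Lemma~\ref{D:MX:align}, solved successively in $\OO_F/\pp_F^a\cong k[\varpi]/(\varpi^a)$. It also requires the norm-symbol computation of Lemma~\ref{D:MX:lower}; Lemma~\ref{D:UR:normphase} alone does not supply either.

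On the other hand, no analogue of Lemma~\ref{D:EQ:commmodel} is needed. Extending $R_2$ first suffices, since every $\sigma(v)/v$ in $L_2$ lies in $U_{L_2}^{2e}\subseteq U_{L_2}^{s_2}$. Finally, you do not need Lemma~\ref{U:missing-coset} when $a=e$. Here $t_1=2a-1<2e=t_2$ always, and the relevant residue maps stay bijective.
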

We first prove $\mathcal A_1(\theta_1)=\mathcal A_2(\theta_2)$.
Interchanging $L_2,L_3$ gives $\mathcal A_1(\theta_1)=\mathcal A_3(\theta_3)$.

We use the results recalled in
Sections~\ref{sec:local-notation}--\ref{sec:stationary}.
The quadratic-symbol identities needed here are proved below.

For a ramified quadratic extension $E/F$ of different exponent $D$,
\begin{equation}\label{D:MX:trace}
 \Tr_{E/F}(\pp_E^j)=\pp_F^{\floor{(j+D)/2}},\quad
 v_F(\Nm x)=v_E x,\quad n_E(\psi_E)=2n_F(\psi_F)+D.
\end{equation}
The different exponents of $K/L_i$ are
\begin{equation}\label{D:MX:upper}
 D'_1=2T-T_1=4e-2a+2,\qquad D'_2=D'_3=2a.
\end{equation}
Indeed the upper breaks are $2t_2-t_1$ and $t_1$, by the usual
break calculation of Lemma~\ref{D:break-ledger}.
All valuations below name their field;
restriction of $v_K$ to $F$ is $4v_F$.

\subsection{Quadratic norm symbols and Kummer generators}
For $A,B\in F^\times$ let $(A,B)$ be the quadratic norm symbol: it is
$1$ exactly when $B$ is a norm from $F(\sqrt A)$, with the split case
interpreted as $1$. It is a character in $B$. Symmetry follows from the
symmetric conic equation $z^2=A x^2+B y^2$, and hence it is bilinear in
both square classes. For $A\ne1$, one also has $(A,1-A)=1$, since
$1-A$ is the nonzero norm of $1+\sqrt A$. If $A+B\ne0$, the same conic, under
$u=Ax+By$, $v=x-y$, gives
\begin{equation}\label{D:MX:symbol-transform}
 (A,B)=(A+B,-AB),\qquad
 (A+B)z^2=u^2+ABv^2.
\end{equation}
The linear change in $(x,y)$ is invertible, proving \eqref{D:MX:symbol-transform}.

\begin{lemma}[A sufficient unit depth]\label{D:MX:unit-symbol}
If $v_Fu=m\ge1$, $v_Fv=n\ge1$, and $m+n>2e$, then
$(1+u,1+v)=1$.
\end{lemma}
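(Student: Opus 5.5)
The plan is to show that $1+v$ is a norm from $F(\sqrt{1+u})$ by an explicit successive approximation, and then to use bilinearity and symmetry of the symbol to reduce to a single normal form. By symmetry of $(\cdot,\cdot)$ we may assume $m\le n$. Since $m+n>2e$ and $m\le n$, we have $n>e$, so $n\ge e+1$.

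The first step is to rewrite $1+v$ as a norm up to a deeper error. The key identity is
\[
 \Nm\bigl(1+w\sqrt{1+u}\,\bigr)^{-1}\cdot\ldots
\]
but it is simpler to use $x=y+z\sqrt{1+u}$ with $y,z\in F$, which has norm $y^2-(1+u)z^2$. I would instead proceed via \eqref{D:MX:symbol-transform}. Writing $A=1+u$ and $B=1+v$, the symbol $(A,B)$ is unchanged if $B$ is multiplied by any norm from $F(\sqrt A)$, for instance by $1-A=-u$ times squares, or by elements $y^2-Az^2$.

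The main step is an approximation. Take $c\in F$ with $1+v\equiv (1+c)^2(1+u d)\pmod{\text{deeper}}$. Concretely, $(1+c)^2=1+2c+c^2$. If $n\ge 2e$, so that $v$ lies at or beyond the depth where $2c$ dominates, choose $c$ with $2c\approx v$, that is $v_Fc=n-e\ge 1$. The squared term $c^2$ then has valuation $2(n-e)$, and the residual $(1+v)/(1+c)^2$ has depth at least $\min(2n-2e,\,n+1)$. The goal is to push the residual depth strictly beyond $2e-m$, where $1+u\xi$ for $\xi$ of valuation $\ge 2e-m+1$... Rather than iterate, I would use the standard fact that every unit in $U_F^{2e+1}$ is a square (Hensel), together with the norm $\Nm(1+\zeta\sqrt{A}\,)$-type elements $y^2-Az^2=(y^2-z^2)-uz^2$. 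Choosing $y=1$ and $z$ with $uz^2\approx -v'$ removes any residual $v'$ whose valuation has the same parity as $m$ and is at least $m$. Combining this with squares $(1+c)^2$, which remove residuals of even valuation below $2e$ (and, via $2c$, valuations $\ge e+1$ in the appropriate range), I would run an induction on the depth of the residual unit $1+v'$. At each step, if $v_Fv'\ge 2e+1$ we stop, since $1+v'$ is a square. Otherwise $v_Fv'=n'\ge n>2e-m$, and we kill the leading term by multiplying by either a square $(1+c)^2$ (when $n'<2e$ is even, using $c^2$ with leading coefficient a square root, valid since $k$ is perfect) or by the norm $1-uz^2$ (when $n'-m$ is even, with $z^2$ chosen via square roots in $k$). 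When $n'$ is odd and $n'-m$ is odd, the hypothesis $m+n'>2e$ must be used: I would try the element $(1+c)^2$ with $v_F(2c)=n'$, that is $v_Fc=n'-e\ge 1$, whose cross term $c^2$ has valuation $2n'-2e>n'$ exactly because $n'>e$.

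The main obstacle is confirming that every case strictly increases the residual depth. In the last case $2c$ removes the leading term only if $n'-e\ge1$ and $c^2$ is deeper, which holds. One also has to check that multiplying by $1-uz^2$ does not create a new shallower error, but its effect is confined to valuations $\ge n'$. Convergence then follows from the closedness of the norm subgroup, which is open of finite index, so the limit $1+v$ is a norm and $(1+u,1+v)=1$.
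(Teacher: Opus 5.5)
\textbf{There is a genuine gap.} Proving $1+v\in\Nm\bigl(F(\sqrt{1+u})^\times\bigr)$ by removing leading terms depth by depth is a workable plan. Your even step is also correct: $n'<2e$ and $v_Fc=n'/2$. However, your other two moves fail, so the induction never advances.

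\emph{The step using the norm $1-uz^2$.} This norm is miscomputed: $\Nm(1+z\sqrt{1+u})=1-z^2-uz^2$, not $1-uz^2$. Showing that $1-uz^2$ itself is a norm in this range is essentially the lemma, so it cannot be assumed. The extra term $z^2$ has valuation $n'-m<n'$. If you cancel it by taking $y=1-z$, you get $\Nm(1-z+z\sqrt{1+u})=1-2z-uz^2$. With your choice $v_Fz=(n'-m)/2$, the term $2z$ has valuation $e+(n'-m)/2$. This is $<n'$ precisely because $n'>2e-m$, so the step always creates a shallower error.

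\emph{Your last case, $v_F(2c)=n'$.} Here $v_F(c^2)=2n'-2e$, and this exceeds $n'$ only when $n'>2e$, not when $n'>e$. Since you stop at depth $2e+1$, in this case $c^2$ is at least as shallow as $2c$. More generally, for $v_Fc\ge1$ the leading term of $(1+c)^2-1$ lies either at an even depth below $2e$ or at depth $\ge2e$. At depth exactly $2e$ its residues form only the index-two image of $\bar\gamma\mapsto\bar\gamma^2+\overline{2\varpi^{-e}}\,\bar\gamma$. So squares can never remove an odd leading term below $2e$. They also do not fill the layer $n'=2e$, which for odd $m$ falls under none of your cases.

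\textbf{The fix.} Use the norms $\Nm(1-z+z\sqrt{1+u})=1-2z-uz^2$, but with $v_Fz=n'-e$. Then $2z$ has valuation exactly $n'$, and $uz^2$ has valuation $m+2n'-2e$. This exceeds $n'$ exactly when $m+n'>2e$, which holds for every $n'\ge n$. Write $z=\varpi^{n'-e}\zeta$. The depth-$n'$ residue is $-\overline{2\varpi^{-e}}\,\bar\zeta$, a bijective function of $\bar\zeta$, so any leading term at any depth $n'\ge n$ can be cancelled, with no parity cases and no squares. After at most $2e+1-n$ steps the quotient lies in $U_F^{2e+1}$, whose elements are squares by Hensel's lemma. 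So no closedness argument is needed, and the case where $1+u$ is a square is trivial.

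\textbf{Relation to the paper.} The corrected argument is the elementwise form of the paper's proof. The paper notes that $\alpha=(\sqrt{1+u}-1)/\varpi^{r}$, with $r=\lfloor m/2\rfloor$, generates an order of discriminant valuation $2e-2r$. By Lemma~\ref{C:order-discriminant}, the norm character then has conductor at most $2e-2r\le n$. The norms $\Nm(1+x\alpha)=1-2\varpi^{-r}x-u\varpi^{-2r}x^2$ are exactly the corrected family above.
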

\begin{proof}
If $1+u$ is a square, the assertion is immediate; this includes
$m>2e$ by Hensel's lemma. Otherwise put
$r=\floor{m/2}\le e$. The integral element
$(\sqrt{1+u}-1)/\varpi^r$ generates an order of discriminant valuation
$2e-2r$. The field discriminant divides the order discriminant, so the
quadratic norm character has conductor at most $2e-2r$; the unramified
case also satisfies this bound. If $m$ is even, $n\ge2e-2r+1$;
if $m$ is odd, $n\ge2e-2r$. In both cases the character kills $1+v$.
\end{proof}

Put $b=2e-2a+1$, an odd positive integer, and $q=\varpi^b$.
There are Kummer generators $s,R_0$ with
\[
 s^2=1+q u_0,\qquad R_0^2=q w_0,\qquad u_0,w_0\in\OO_F^\times.
\]
Here is a verification at the stated breaks. For a uniformizer $\Pi_1$
of $L_1$, its quadratic minimal polynomial and different imply
$v_F\Tr\Pi_1=a$: the term $2\Pi_1$ has $L_1$-valuation $2e+1>2a$.
Thus $s=1-2\Pi_1/\Tr\Pi_1$ has the required square defect of valuation
$b$. For a maximal quadratic field, a uniformizer $\Pi$ has
$v_F\Tr\Pi\ge e+1$, and $\Pi-\Tr\Pi/2$ is a trace-zero uniformizer.
Multiplying it by $\varpi^{e-a}$ gives $R_0$.

\begin{lemma}[A choice of $S$ and $R$]\label{D:MX:align}
The generators can be replaced by $S=s(1+\delta)$ and $R=R_0v$,
with $v\in\OO_F^\times$, so that, writing $\epsilon=S^2-1$,
\begin{equation}\label{D:MX:aligned}
 v_F\epsilon=v_FR^2=b,\qquad
 \epsilon/R^2\in U_F^a.
\end{equation}
\end{lemma}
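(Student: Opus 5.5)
We have $s^2=1+qu_0$ and $R_0^2=qw_0$, both with square defect of valuation $b=2e-2a+1$. The aim is to choose $v\in\OO_F^\times$ and $\delta$ so that the new defect $\epsilon=S^2-1$ and $R^2=R_0^2v^2$ have the same valuation $b$ and ratio in $U_F^a$. Since $\epsilon/R^2$ only needs to be fixed modulo $\pp_F^a$, the plan has two steps. First adjust $v$ to match the leading residue part. Then use $\delta$ to correct the remaining part up to relative depth $a$.

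\textbf{Step 1 (residue level).} Replacing $R_0$ by $R_0v$ multiplies $w_0$ by $v^2$. Residue squares are all of $k$, since $k$ is perfect of characteristic two. So we can choose $v$ with $\bar u_0=\bar w_0\bar v^2$, which puts $u_0/(w_0v^2)$ in $U_F^1$. We could push $v$ further to arrange $u_0/(w_0v^2)\in U_F^{j}$ for larger $j$. However, squares only control even relative depths cleanly: $(1+x\varpi^j)^2=1+2x\varpi^j+x^2\varpi^{2j}$, and the cross term $2x\varpi^j$ sits at depth $e+j$. So $v$ alone kills odd-depth discrepancies only up to roughly depth $e$. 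This depth can exceed $a$, but only the even part of the discrepancy is removed by squares below it.

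\textbf{Step 2 (odd depths via $\delta$).} Take $S=s(1+\delta)$ with $\delta\in F$. Then
\[
S^2-1=(1+qu_0)(1+2\delta+\delta^2)-1=qu_0+2\delta+\delta^2+qu_0(2\delta+\delta^2).
\]
Choosing $\delta=\varpi^{j}y$ adds $\delta^2=\varpi^{2j}y^2$, which again only corrects even positions. The term $2\delta$ sits at valuation $e+j$, and relative to $b$ this is relative depth $e+j-b=j-e+2a-1$. Taking $j$ near $e-a+\cdot$, we reach relative depths below $a$ through the odd-valued contributions. Concretely, the plan is to inductively clear the discrepancy $\epsilon/R^2-1$ depth by depth for $1\le d<a$. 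An even-position term $\varpi^b\cdot c\varpi^{d}$ with $b+d$ even is removed by $\delta^2$ with $2j=b+d$, or equivalently by modifying $v$. An odd-position term ($b+d$ odd, i.e.\ $d$ even) is removed by the linear term $2\delta$, which requires $e+j=b+d$, that is $j=e-2a+1+d$. This $j$ must be $\ge1$ and must leave the quadratic side effects $\delta^2$ and $qu_0\delta$ deeper. Since $j\ge d-(2a-e-1)$, both $2j>b+d$ and $b+j>b+d$ should hold, provided $e-2a+1+d\ge1$. If $j<1$ (when $2a>e+d$), we instead take $\delta\in\OO_F$ with negative-shifted $j$, i.e.\ $j$ possibly $\le0$. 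Then $2\delta$ can still land at the required valuation, but $\delta^2$ becomes shallow. To handle this, use the freedom in $v$: compensate the shallow $\delta^2$ term by $v$, which is legitimate since $\delta^2$ sits at the even position $2j$.

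The main obstacle is the bookkeeping in Step 2, particularly the regime $a>(e+1)/2$ where the needed $\delta$ is not small. If that fails, the fallback is Lemma~\ref{D:MX:unit-symbol} plus a conductor argument. Both $F(s)=L_1$ and $F(R_0)$ are fixed, and the quotient $\epsilon/R^2$ is determined modulo squares and modulo the norm condition. The cleaner route is to show that the class of $(S^2-1)/R^2$ in $U_F^1/U_F^a\,(U_F^1)^2$ can be made trivial. Here we would use that multiplying $s$ by units $1+\delta$ changes $S^2-1$ by $2\delta+\delta^2$, which fills $\pp_F^{\min(e+j,2j)}$ and hence, varying $\delta$, spans all positions below $b+a$. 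We would verify this spanning claim by a counting argument on the finite quotient $\pp_F^{b}/\pp_F^{b+a}$.
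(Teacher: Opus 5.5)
\paragraph{The $2\delta$ branch fails.} Your Step~1 is correct. So is the part of your ``Concretely'' paragraph saying that a term at odd relative depth $d$ is removed by $\delta^2$ with $2j=b+d$. The branch that treats even relative depths with the linear term $2\delta$ fails, however.

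With $j=v_F\delta=e-2a+1+d$ one gets $2j=b-(2a-1-2d)\le b-1$ for every $d\le a-1$. So your inequality $2j>b+d$ is false; it would need $d>2a-1$. Worse, $\delta^2$ is then shallower than $\epsilon$ itself, which gives $v_F(S^2-1)=2j\neq b$.

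Your proposed repair, absorbing the shallow $\delta^2$ into $v$, is impossible. The unit $v$ changes only $R^2=qw_0v^2$, whose valuation is the odd number $b$. Up to cross terms of relative depth at least $e\ge a$, it moves only odd absolute valuations. But $\delta^2$ sits at the even valuation $2j$. For the same reason, your remark that odd-$d$ terms can ``equivalently'' be removed by modifying $v$ is wrong.

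In fact the linear term can never help. The requirement $v_F\epsilon=b$ forces $v_F\delta\ge e-a+1$, since otherwise $\delta^2$ dominates $\epsilon$. Then $v_F(2\delta)\ge 2e-a+1=b+a$, so $2\delta$ is invisible modulo $q\pp_F^a$.

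The fallback does not close the gap either. The map $\delta\mapsto 2\delta+\delta^2$ is not additive, so ``spanning'' $\pp_F^b/\pp_F^{b+a}$ has no meaning as stated, and no counting argument is given.

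\paragraph{The missing idea.} What you need is the parity bookkeeping relative to the odd exponent $b$, combined with $a\le e$. Because $a\le e$, the ring $\OO_F/\pp_F^a\cong k[\varpi]/(\varpi^a)$ has characteristic two, so squaring is additive there.

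Take $\delta=\varpi^{e-a+1}w$. Then $\delta^2/q=\varpi w^2$ and $2\delta/q\in\pp_F^a$, so modulo $\pp_F^a$ the condition becomes $w_0v^2+\varpi(1+qu_0)w^2\equiv u_0$. Here $w_0v^2$ reaches exactly the even degrees and $\varpi(1+qu_0)w^2$ the odd ones. Their leading coefficients are the units $\bar w_0$ and $1$ respectively. One therefore solves degree by degree, using bijectivity of Frobenius on $k$.

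If you delete the $2\delta$ branch and the fallback, your remaining scheme uses $v$ for even relative depths and $\delta^2$ for odd ones. That is precisely this argument, and it is the paper's proof.
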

\begin{proof}
Take $\delta=\varpi^{e-a+1}w$, $w\in\OO_F$. Modulo $\pp_F^a$,
\[
 \frac{S^2-1-R^2}{q}
 \equiv u_0-w_0v^2+\varpi(1+q u_0)w^2.
\]
The omitted term $2\delta/q$ has valuation at least $a$.
Because $a\le e$, $\OO_F/\pp_F^a$ has characteristic two and is
$k[\varpi]/(\varpi^a)$ after choosing its coefficient field. Such a
coefficient field is obtained by the unique lifts of the roots of
$X^{|k|}-X$, whose derivative is a unit.
In this truncated ring solve
\[
 w_0v^2+\varpi(1+q u_0)w^2=u_0.
\]
Successively in the degree of $\varpi$, an even degree determines the next
coefficient of $v^2$, with unit leading coefficient $\bar w_0$; an odd
degree determines the next coefficient of $w^2$, with leading coefficient
$1$. Contributions of earlier coefficients are already known. Frobenius
on $k$ supplies the unique required square roots. The constant coefficient
of $v$ is nonzero. Lift $v,w$ to $F$. The displayed congruence proves
\eqref{D:MX:aligned}, and the positive-depth correction to $u_0$ preserves its
unit leading coefficient.
\end{proof}

Set $L_1=F(S)$, $L_2=F(R)$, $L_3=F(SR)$, and define
\begin{equation}\label{D:MX:origin}
 Y=(S+R-1)/2,\qquad a_i=N_iY,\qquad h_i=S_iY,\qquad \beta_i=n_i h_i.
\end{equation}
For
$B_0=(\epsilon-R^2)/4$ and $C_0=(\epsilon+R^2)/4$ one has
\begin{align}\label{D:MX:origin-table}
 a_1&=(1-S)/2+B_0,&h_1&=S-1,&\beta_1&=-\epsilon,\notag\\
 a_2&=-R/2-B_0,&h_2&=R-1,&\beta_2&=1-R^2,\notag\\
 a_3&=-SR/2-C_0,&h_3&=-1,&\beta_3&=1.
\end{align}
Equation~\eqref{D:MX:aligned} gives $v_FB_0,v_FC_0\ge1-a$; for the plus sign use also
$v_F(2R^2)\ge b+a$. Consequently
\begin{equation}\label{D:MX:valuations}
 v_KY=-t_1,\quad v_{L_i}a_i=-t_1,\quad
 v_{L_1}h_1=b,\quad v_{L_2}h_2=v_{L_3}h_3=0,
 \quad\Tr_{K/F}Y=-2.
\end{equation}
For instance $a_2$ has the unique leading term $-R/2$ of valuation
$1-2a$, while $B_0$ has $L_2$-valuation at least $2-2a$.
The value of $Y$ follows by norm valuation; the other entries follow
from \eqref{D:MX:origin-table} and $(S-1)(S+1)=\epsilon$.

\subsection{Simultaneous coefficients for the norm characters}
Choose $\alpha$ using $\omega_3$:
\begin{equation}\label{D:MX:normalization}
 \omega_3(1-z)=\Psi_F(z)\quad(z\in\pp_F^{e+1}),\qquad
 \Psi_L(x)=\psi_L(\alpha x),\quad v_F\alpha=-n_F(\psi_F)-T.
\end{equation}
The largest trivial ideals of $\Psi_F,\Psi_{L_1},\Psi_{L_2},\Psi_{L_3}$
have respective exponents
\begin{equation}\label{D:MX:J}
 T,\quad J_1=2T-2a,\quad J_2=J_3=T.
\end{equation}
Additive duality gives \eqref{D:MX:normalization}, since
$2(e+1)\ge T$.

\begin{lemma}[Coefficients for $\omega_i$]\label{D:MX:lower}
With $q_1=a$, $q_2=q_3=e+1$, one has
\begin{equation}\label{D:MX:lowerformula}
 \omega_i(1-z)=\Psi_F(\beta_i z)\quad(z\in\pp_F^{q_i}).
\end{equation}
Each $\beta_i$ belongs to $n_i(L_i^\times)$, and
\begin{equation}\label{D:MX:normphase}
 \Psi_{L_i}(\beta_i x)=\Psi_F(\beta_i n_i x)
                    \quad(x\in\pp_{L_i}^{q_i}).
\end{equation}
\end{lemma}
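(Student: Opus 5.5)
The natural route is to identify each $\omega_i$ as a quadratic norm symbol and compute it on principal units with the symbol calculus just recorded. Since $L_1=F(S)$ with $S^2=1+\epsilon$, $L_2=F(R)$ and $L_3=F(SR)$, we have $\omega_1(x)=(1+\epsilon,x)$, $\omega_2(x)=(R^2,x)$ and $\omega_3(x)=(R^2(1+\epsilon),x)$. Each $\beta_i$ is a norm by construction: $\beta_i=n_i(h_i)=n_i(S_iY)$, and $h_i\ne0$ by \eqref{D:MX:origin-table}. Hence $\omega_i(\beta_i)=1$, as claimed.

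For \eqref{D:MX:lowerformula}, I would first treat $i=3$ with $\beta_3=1$; this is exactly the normalization \eqref{D:MX:normalization}. For $i=2$, bilinearity gives $\omega_2=\omega_3\cdot(1+\epsilon,\cdot)$. On $z\in\pp_F^{e+1}$, Lemma~\ref{D:MX:unit-symbol} shows $(1+\epsilon,1-z)=1$, since $b+e+1>2e$. So on that ideal $\omega_2(1-z)=\Psi_F(z)$, and we need $\Psi_F(R^2z)=1$ there. This holds because $v_F(R^2z)\ge b+e+1\ge T$ when $a\le e$; as $b=2e-2a+1\ge1$, we get $b+e+1\ge e+2$. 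That is not yet $T=2e+1$ in general, so the argument must be refined.

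The honest route is therefore to compare $\omega_2$ with $\omega_3$ through the relation $\omega_1\omega_2=\omega_3$ (Lemma~\ref{D:quadratic-norms}). This gives $\omega_2(1-z)=\omega_3(1-z)\,\omega_1(1-z)$, and the whole content becomes the formula for $\omega_1$ on $\pp_F^{a}$. Once $\omega_1(1-z)=\Psi_F(-\epsilon z)$ is known, we get $\omega_2(1-z)=\Psi_F((1-\epsilon)z)$ on $\pp_F^{e+1}$. Then $\Psi_F(\epsilon z)=\Psi_F(R^2z)$, because $\epsilon-R^2=R^2(\epsilon/R^2-1)$ has valuation at least $b+a$, and $b+a+e+1\ge T$. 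Using $\beta_2=1-R^2$ then finishes $i=2$.

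The main obstacle is $\omega_1$, of conductor $T_1=2a$, on $\pp_F^{a}$. I would evaluate $(1+\epsilon,1-z)$ with \eqref{D:MX:symbol-transform} and $(A,1-A)=1$. Writing $1-z=(1+\epsilon)(1-z')$ for suitable $z'$ reduces the problem to a Steinberg-type identity. Alternatively, I would compute via norms from $L_1$: $n_1(1+yh_1)=1+y\,\operatorname{tr}_1h_1+y^2\beta_1$ with $\operatorname{tr}_1h_1=-2$, and $\omega_1$ kills these. This yields $\omega_1(1-2y-\epsilon y^2)=1$. Combining this with the additive character $\Psi_F(-\epsilon\,\cdot)$, whose kernel on the relevant quotient is the Artin--Schreier-type hyperplane as in Lemma~\ref{D:UR:traceparity}, and counting indices in $U_F^a/U_F^{2a}$, identifies the two characters.

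Finally, \eqref{D:MX:normphase} follows as in Lemma~\ref{D:UR:normphase}. For $x\in\pp_{L_i}^{q_i}$ we have $n_i(1-x)=1-\operatorname{tr}_ix+n_ix$, and the trace bounds \eqref{D:MX:trace} put both terms in $\pp_F^{q_i}$. Applying \eqref{D:MX:lowerformula} to this norm gives $\Psi_F(\beta_i(\operatorname{tr}_ix-n_ix))=1$. Here the overall sign is immaterial, since $\Psi_F(2\beta_in_ix)=1$ by the depth bounds.
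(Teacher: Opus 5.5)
You handle the norm assertions, the case $i=3$, the reduction of $i=2$ to $i=1$ via $\omega_2=\omega_1\omega_3$ (replacing $\epsilon$ by $R^2$ at cost $b+a+e+1\ge T$), and \eqref{D:MX:normphase} exactly as the paper does. Your discarded first attempt at $i=2$ actually fails earlier than you say: $b+e+1>2e$ means $2a<e+2$, which is false for instance when $a=e\ge2$.

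The genuine gap is the formula $\omega_1(1-z)=\Psi_F(-\epsilon z)$ on $\pp_F^a$. You call it the main obstacle but never prove it, and it is the whole content of the lemma. Since $\Psi_F$ is defined only through the normalization \eqref{D:MX:normalization} of $\omega_3$, you must convert $\omega_1$ into $\omega_3$. The only link between them is the alignment $\epsilon/R^2\in U_F^a$ of Lemma~\ref{D:MX:align}. The paper proceeds as follows.
\begin{itemize}
\item By $(1+\epsilon,-\epsilon)=1$ and \eqref{D:MX:symbol-transform}, $(1+\epsilon,1-z)=(1+\epsilon,-\epsilon(1-z))=(1+\epsilon z,\,\epsilon(1-z)(1+\epsilon))$.
\item Since $v_F(\epsilon z)\ge b+a$ and $(b+a)+a=2e+1$, Lemma~\ref{D:MX:unit-symbol} deletes $1-z$ and replaces $\epsilon$ by $R^2$.
\item This leaves $(R^2(1+\epsilon),1+\epsilon z)=\omega_3(1+\epsilon z)=\Psi_F(-\epsilon z)$, because $b+a\ge e+1$.
\end{itemize}
Your suggestion of writing $1-z=(1+\epsilon)(1-z')$ does not lead to this, and it never brings in $\omega_3$ or the alignment.

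Your alternative route cannot work for $a\ge2$.
\begin{itemize}
\item The identity $\Psi_F(-\epsilon(2y+\epsilon y^2))=1$ holds for \emph{every} choice of $\epsilon$: with $x=\epsilon y$ it reads $\Psi_F(-(2x+x^2))=\omega_3((1+x)^2)=1$.
\item Put $y=2u/\epsilon$. Then $2y+\epsilon y^2=(4/\epsilon)(u+u^2)$ with $v_F(4/\epsilon)=2a-1$, and membership in $\pp_F^a$ forces $v_Fu\ge-\lfloor(a-1)/2\rfloor$.
\item Modulo $U_F^{2a}$ these norms form a subgroup of index $2|k|^{a-1-\lfloor(a-1)/2\rfloor}$ in $U_F^a/U_F^{2a}$. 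So index counting closes the argument only when $a=1$.
\end{itemize}
The norms you are missing, for instance squares lying in $U_F^a$, are exactly where the alignment is needed. Without it the formula is in fact false. Replace $S$ by $S(1+\delta)$ with $v_F\delta=e-a+1$. This leaves $L_1$, $L_3$ and $\Psi_F$ unchanged, but changes $\epsilon$ by an element of valuation exactly $b+1$ when $a\ge2$. Then $z\mapsto\Psi_F(-\epsilon z)$ changes at elements of valuation $2a-2\ge a$, while $\omega_1$ does not. An argument that never uses $\epsilon/R^2\in U_F^a$ therefore cannot establish \eqref{D:MX:lowerformula} for $i=1$, nor, through your reduction, for $i=2$.
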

\begin{proof}
For $z\in\pp_F^a$ put $C=1+\epsilon z$. The Steinberg identity and
\eqref{D:MX:symbol-transform} give
\[
 (1+\epsilon,1-z)
 =(1+\epsilon,-\epsilon(1-z))
 =(C,\epsilon(1-z)(1+\epsilon)).
\]
Here $v_F(C-1)\ge b+a$, and $(b+a)+a=2e+1$.
Lemma~\ref{D:MX:unit-symbol} removes $1-z$ from the second argument and replaces
$\epsilon$ by $R^2$, since $\epsilon/R^2\in U_F^a$. By symmetry the result
is $(R^2(1+\epsilon),C)=\omega_3(C)$.
Also $b+a=2e-a+1\ge e+1$, so \eqref{D:MX:normalization} gives
$\omega_1(1-z)=\Psi_F(-\epsilon z)$ on the \emph{entire} ideal.
On $U_F^{e+1}$ the product $\omega_2=\omega_1\omega_3$ therefore has
coefficient $1-\epsilon$. Replacing it by $1-R^2$ costs valuation at least
$b+a+e+1=3e-a+2\ge T$. This proves \eqref{D:MX:lowerformula}; the third formula
is \eqref{D:MX:normalization}.
The norm assertions are \eqref{D:MX:origin-table}. Finally for
$x\in\pp_{L_i}^{q_i}$ both its trace and norm lie in $\pp_F^{q_i}$ by
\eqref{D:MX:trace}. Evaluate \eqref{D:MX:lowerformula} at
$n_i(1-x)=1-\operatorname{tr}_i x+n_i x$ and use triviality on norms.
This proves \eqref{D:MX:normphase}.
\end{proof}

\subsection{Characters of conductors \texorpdfstring{$m_i$}{mi} and twists by one character of \texorpdfstring{$F^\times$}{F x}}
Put
\begin{equation}\label{D:MX:core-depths}
 m_1=4e+1,\quad m_2=m_3=2e+2a,\qquad
 s_1=2e+1,\quad s_2=s_3=e+a.
\end{equation}
Thus $m_i=J_i+t_1$ and $s_i=\ceil{m_i/2}$.
The formulas
\begin{equation}\label{D:MX:R}
 R_i(1-z)=\Psi_{L_i}(a_i z),\qquad z\in\pp_{L_i}^{s_i},
\end{equation}
define characters of their indicated unit groups with exact conductor
$m_i$: $2s_i-t_1\ge J_i$, and the induced character on
$U_{L_i}^{m_i-1}/U_{L_i}^{m_i}$ is nontrivial.

\begin{lemma}[Compatibility before extension]\label{D:MX:Rcompat}
For $z\in\pp_K^{2e+1}$, all $N_i(1-z)$ lie in the domains of $R_i$,
and $R_i(N_i(1-z))$ is independent of $i$.
\end{lemma}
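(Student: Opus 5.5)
The argument follows the proof of Lemma~\ref{D:EQ:Rcompat} in characteristic two. The main new feature is that several terms containing the integer $2$ no longer vanish, and we have to show they fall in the trivial ideals.

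\textbf{Domains.} Fix $z\in\pp_K^{2e+1}$ and expand
\[
 N_i(1-z)=1-S_iz+N_iz .
\]
The norm term has $L_i$-valuation at least $2e+1\ge s_i$. For the trace term we use \eqref{D:MX:trace} for $K/L_i$ together with the upper differents \eqref{D:MX:upper}:
\begin{itemize}
 \item for $i=1$, $v_{L_1}(S_1z)\ge\floor{(2e+1+4e-2a+2)/2}\ge 2e+1=s_1$, since $a\le e$;
 \item for $i=2,3$, $v_{L_i}(S_iz)\ge\floor{(2e+1+2a)/2}=e+a=s_i$.
\end{itemize}
Hence $N_i(1-z)$ lies in the domain of $R_i$. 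Since $1-z$ is a unit, the value is
\[
 R_i(N_i(1-z))=\Psi_{L_i}\bigl(a_i(S_iz-N_iz)\bigr).
\]

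\textbf{The common quantity.} We apply Lemma~\ref{U:e2} to $C=Y$ and to $C=Y(1-z)$ and subtract. This gives
\[
 \operatorname{tr}_i\bigl(N_i(Y(1-z))-a_i\bigr)+n_i\bigl(S_i(Y(1-z))\bigr)-\beta_i=E_2(Y(1-z))-E_2(Y),
\]
and the right side does not depend on $i$. Using $N_i(Y(1-z))=a_iN_i(1-z)$, the first trace equals the exponent $-\operatorname{tr}_i(a_i(S_iz-N_iz))$ of the $R_i$-value, up to an overall sign. That sign is harmless once we also show the remaining term has trivial phase.

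For the remaining term, write $k_i=S_i(Yz)/h_i$, so that
\[
 S_i(Y(1-z))=h_i(1-k_i),\qquad n_i\bigl(S_i(Y(1-z))\bigr)-\beta_i=\beta_i\bigl(n_i(1-k_i)-1\bigr).
\]
We need $v_{L_i}k_i\ge q_i$ in order to apply \eqref{D:MX:lowerformula} and \eqref{D:MX:normphase}.
\begin{itemize}
 \item For $i=2,3$: $v_K(Yz)\ge 2e+2-2a$ and $h_i$ is a unit, so $v_{L_i}(S_i(Yz))\ge\floor{(2e+2-2a+2a)/2}=e+1=q_i$.
 \item For $i=1$: $v_{L_1}(S_1(Yz))\ge\floor{(2e+2-2a+4e-2a+2)/2}=3e+2-2a$. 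Dividing by $h_1$, of valuation $b=2e-2a+1$, leaves $e+1\ge a=q_1$.
\end{itemize}
Since $n_i(1-k_i)$ is a norm, $\omega_i$ of it is $1$, so $\Psi_F(\beta_i(n_i(1-k_i)-1))=1$ by \eqref{D:MX:lowerformula}. Therefore
\[
 R_i(N_i(1-z))=\Psi_F\bigl(\pm(E_2(Y(1-z))-E_2(Y))\bigr),
\]
which is independent of $i$.

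\textbf{Expected obstacle.} The sign is the delicate point. In characteristic two, $\Psi(-x)=\Psi(x)$ was automatic; here it is not, and a careless sign could give the inverse character. I would fix the convention by writing the identity exactly, as
\[
 \operatorname{tr}_i\bigl(a_i(N_i(1-z)-1)\bigr)=-\operatorname{tr}_i\bigl(a_i(S_iz-N_iz)\bigr).
\]
Both sides of Lemma~\ref{U:e2} then carry the same sign for every $i$, so the identity gives $R_i(N_i(1-z))=\Psi_F(E_2(Y)-E_2(Y(1-z)))$ with a sign that is the same for all $i$. The other step needing care is the case $a=e$, where some of the inequalities above become equalities. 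I would check each bound directly at $a=e$ (where $b=1$) instead of relying on strict inequalities.
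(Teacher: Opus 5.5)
Your proposal is correct and follows the paper's proof: you make the same domain checks (upper trace depths $3e-a+1$ and $e+a$, norm depth $2e+1$), apply \eqref{D:MX:E2} to $Y(1-z)$ and $Y$, show that $S_i(Yz)/h_i$ has depth at least $e+1\ge q_i$ so the lower norm term has trivial phase, and conclude $R_i(N_i(1-z))=\Psi_F(E_2(Y)-E_2(Y(1-z)))$. The sign and $a=e$ concerns you raise do not actually arise: the identity fixes the sign uniformly in $i$, and all the inequalities used are non-strict.
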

\begin{proof}
The highest-degree terms in the norm expansions have valuation at least $2e+1$ on the target field.
The upper traces have depths at least $3e-a+1$ and $e+a$, hence are in
the required ideals. Use the exact identity
\begin{equation}\label{D:MX:E2}
 E_2(W)=\operatorname{tr}_i N_iW+n_iS_iW
\end{equation}
for the six pair-products of the four conjugates of $W$.
Apply it to $Y(1-z)$ and $Y$.
The quantities $w_i=S_i(Yz)/h_i$ have valuation at least $e+1$ on all
three fields: the first upper trace bound is $3e-2a+2$, from which one
subtracts $b$, and the other two are $e+1$.
Thus $n_iS_i(Y(1-z))/\beta_i=n_i(1-w_i)$ is a norm in the domain
of \eqref{D:MX:lowerformula}. Its additive phase implies
\[
 \Psi_F(n_iS_i(Y(1-z))-\beta_i)=1.
\]
The remaining difference in \eqref{D:MX:E2} identifies
$R_i(N_i(1-z))=\Psi_F(-E_2(Y(1-z))+E_2(Y))$, independent of $i$.
\end{proof}

\begin{theorem}[Compatible characters with restrictions $R_i$]\label{D:MX:cores}
There are primitive compatible characters $\chi_i$ on $L_i^\times$ of
conductors $m_i$, such that $\chi_i|_{U_{L_i}^{s_i}}=R_i$.
\end{theorem}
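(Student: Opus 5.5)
The plan follows the parallel constructions in Theorem~\ref{D:UR:models} and Theorem~\ref{D:EQ:core-exists}. We build $\chi_1$ first by prescribing its conjugate quotient, then transport it to $L_2,L_3$ through the common pullback. Let $\sigma$ generate $\Gal(L_1/F)$ and let $\nu_1=\omega_2n_1$ be the nontrivial character of $S(K/L_1)$, as identified in Lemmas~\ref{D:crossed-norm} and~\ref{D:quadratic-norms}. Put $I=\{\sigma(v)/v:v\in L_1^\times\}$ and prescribe $\kappa_I(\sigma(v)/v)=\nu_1(v)$. This is well defined because $\nu_1(x)=\omega_2(x^2)=1$ for $x\in F^\times$. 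The essential step is the analogue of Lemma~\ref{D:EQ:commmodel}:
\[
 R_1(\sigma(v)/v)=\nu_1(v)\qquad\text{whenever }\sigma(v)/v\in U_{L_1}^{s_1}.
\]
Granting it, the product character on $IU_{L_1}^{s_1}$ is trivial on $U_{L_1}^{m_1}$. We extend it across $L_1^\times/U_{L_1}^{m_1}$, first on units and then by choosing a uniformizer value. The result is $\chi_1$ with $\chi_1^\sigma/\chi_1=\nu_1$ globally and exact conductor $m_1$.

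To prove this identity, reduce $v$ modulo $F^\times$ to a unit $v=x+yS$ with $x,y\in F$. The depth condition on $1+2yS/v$-type quotients forces $w=y/x$ into a definite ideal; in valuation $1$ the first ramification term has depth $t_1<s_1$, which excludes that case. Two explicit quantities must then be compared. The first is $R_1$, computed as $\Psi_F(\operatorname{tr}_1(a_1\,(\sigma v-v)/v))$ with $a_1=(1-S)/2+B_0$; this is a rational expression in $w$, $\epsilon$ and $B_0$, obtained from $n_1v=x^2(1-w^2(1+\epsilon))$. The second is $\omega_2(n_1v)$. For it we use $\omega_2=\omega_1\omega_3$, the symbol calculus \eqref{D:MX:symbol-transform}, Lemma~\ref{D:MX:unit-symbol}, and the coefficients of Lemma~\ref{D:MX:lower}. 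Factoring $n_1v$ as a unit times a deep unit reduces $\omega_2$ to its linear formula \eqref{D:MX:lowerformula}, and then to $\Psi_F$ of the same rational expression modulo $\pp_F^{T}$. I expect this to be the main obstacle. In characteristic two the Cartier identity of Lemma~\ref{D:EQ:rationalCartier} did the work; here it has to be replaced by a dyadic square-completion. Concretely, terms of the form $\Psi_F(\epsilon w^2/H)$ must be converted into $\Psi_F(\text{linear in }w)$. My first attempt will use Lemma~\ref{D:UR:normphase}-type identities $\Psi_E(z)=\Psi_F(\Nm z)$ on $L_3=F(SR)$, together with the alignment $\epsilon/R^2\in U_F^a$ of Lemma~\ref{D:MX:align}, so that the leftover $2$-adic cross terms have valuation at least $T$.

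Next we transport $\chi_1$. The pullback $\chi_1N_1$ is invariant under $\Gal(K/L_1)$ by construction, and its other conjugate quotient is $\nu_1N_1=\omega_2\Nm_{K/F}=1$. So it is $\Gal(K/F)$-invariant, and by Lemma~\ref{C:hilbert90} it is trivial on $\ker N_i$ for $i=2,3$. Defining $\chi_i^0(N_ix)=\chi_1(N_1x)$ and extending via Lemma~\ref{C:character-extension} gives $\chi_i$. To identify $\chi_i|_{U_{L_i}^{s_i}}$, apply norm filtration above the upper break $t_1$ of $K/L_i$: we need $N_i(U_K^{2e+1})\supseteq U_{L_i}^{s_i}$, which holds because $2s_i-t_1=2e+1$. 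Lemma~\ref{D:MX:Rcompat} then gives $\chi_i=R_i$ on $U_{L_i}^{s_i}$, since $\chi_1N_1=R_1N_1$ there. For this we need $N_1(U_K^{2e+1})\subset U_{L_1}^{s_1}$, which holds because $K/L_1$ is subcritical at depth $2e+1$.

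Finally, the conductors. Each $R_i$ is nontrivial on $U^{m_i-1}$ and trivial on $U^{m_i}$. Since $\chi_i$ restricts to $R_i$ on $U^{s_i}\supseteq U^{m_i-1}$, each $\chi_i$ has conductor exactly $m_i$. Primitivity follows as before. If the common pullback descended from $F$, then $\chi_1$ would be a norm pullback from $F$ times an element of $S(K/L_1)$. Both factors are $\sigma$-invariant, which contradicts $\chi_1^\sigma/\chi_1=\nu_1\neq1$.
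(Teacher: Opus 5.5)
Your transport to $L_2,L_3$, the conductor count, and the primitivity argument are fine. The gap is that everything rests on the identity $R_1(\sigma(v)/v)=\nu_1(v)$ for $\sigma(v)/v\in U_{L_1}^{s_1}$, and you do not prove it. You call it the main obstacle and sketch only a ``first attempt''.

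The identity is true, since it follows from the theorem: any $\chi_1$ as in the statement has $\chi_1^\sigma/\chi_1=\nu_1$. But a direct proof is an explicit dyadic symbol computation of the size of Lemma~\ref{D:NM:commmodel}, and nothing already proved in this section supplies it. If $v=N_1Z$, then $\sigma(v)/v=N_1(g_2Z/Z)$, but $g_2Z/Z$ lies only in $U_K^{t_1}=U_K^{2a-1}$. That is far short of the depth $2e+1$ that Lemma~\ref{D:MX:Rcompat} requires. For non-norms $v$ you have no handle at all without the computation. As written, the central step of your construction is missing.

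The idea you are missing is to start on a maximal-break field, which makes this lemma unnecessary. On $L_2$ the lower break is $t_2=2e\ge s_2=e+a$, so every $\sigma(v)/v$ with $v\in L_2^\times$ already lies in $U_{L_2}^{s_2}$.

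\begin{itemize}
\item Any extension $\chi_2$ of $R_2$ across $L_2^\times/U_{L_2}^{m_2}$ therefore has conjugate quotient $v\mapsto R_2(\sigma(v)/v)$. There is nothing to prescribe and no compatibility to check.
\item For $v=N_2Z$ this value is $R_2(N_2(g_1Z/Z))$. Here $g_1Z/Z\in U_K^{4e-2a+1}\subset U_K^{2e+1}$ and $N_1(g_1Z/Z)=1$, so Lemma~\ref{D:MX:Rcompat} gives the value $R_1(1)=1$.
\item The quotient is nontrivial, because the first ramification term carries $U_{L_2}^{t_1}$ onto depth $t_1+t_2=m_2-1$, where $R_2$ is nontrivial.
\item Hence $\chi_2^\sigma/\chi_2=\nu_2$, and $\chi_2N_2$ is $\Gal(K/F)$-invariant.
\end{itemize}

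Transport to $L_3$ then works exactly as in your proposal. Transport to $L_1$ needs one extra step, because $N_1(U_K^{2e+1})$ does not fill $U_{L_1}^{s_1}$. Use instead that $N_1$-preimages of $U_{L_1}^{s_1}$ lie in $U_K^{s_1}$, which gives agreement with $R_1$ on $U_{L_1}^{s_1}\cap N_1K^\times$. Then correct the transported character by the order-two character of $L_1^\times/N_1K^\times$. Your conductor and primitivity arguments then apply unchanged.
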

\begin{proof}
First extend $R_2$ across the abelian group $L_2^\times/U_{L_2}^{m_2}$;
extend on the finite unit subgroup and choose a uniformizer value.
Call the character $\chi_2$. Its lower conjugate quotient is determined
by $R_2$ on every element of $L_2^\times$, because every ratio
$\sigma(v)/v$ has depth at least $t_2=2e\ge s_2$.
For $v=N_2 Z$, this ratio is $N_2(g_1Z/Z)$.
The element $g_1Z/Z$ has depth at least $4e-2a+1\ge2e+1$ and its
$N_1$-norm is $1$. Lemma~\ref{D:MX:Rcompat} therefore proves that the conjugate
quotient of $\chi_2$ is trivial on $N_2K^\times$.
It is nontrivial: on $U_{L_2}^{t_1}$ the first nonzero lower ramification
term maps onto $U_{L_2}^{t_1+t_2}/U_{L_2}^{t_1+t_2+1}$,
since $t_1$ is odd. The restriction of $R_2$ to this quotient is
nontrivial. Hence $\chi_2^\sigma/\chi_2=\nu_2$.

Consequently $\chi_2N_2$ is invariant under all of $G$; Hilbert 90 makes
it trivial on $\ker N_1$ and $\ker N_3$. The characters defined by
$N_i x\mapsto\chi_2(N_2x)$ on $N_i(K^\times)$ extend to
$\widetilde\chi_1$ on $L_1^\times$ and $\chi_3$ on $L_3^\times$ by
Lemma~\ref{C:character-extension}. Norm filtration gives
$N_3(U_K^{2e+1})=U_{L_3}^{e+a}$, since
$2(e+a)-t_1=2e+1$. Lemma~\ref{D:MX:Rcompat} gives $\chi_3|_{U_{L_3}^{s_3}}=R_3$.
For $N_1$, every preimage of $U_{L_1}^{s_1}$ under $N_1$ lies in
$U_K^{s_1}$: the norm has nonzero graded map at every smaller positive
depth, all below the upper break $4e-2a+1$, and its residue map is
Frobenius. Norm valuation excludes nonunits. Hence
$\widetilde\chi_1$ agrees with $R_1$ on
$U_{L_1}^{s_1}\cap N_1K^\times$. Their quotient extends across the order-two
norm quotient; divide by this norm character to obtain $\chi_1$ with the
restriction $R_1$ and the same pullback. The extensions are trivial
on $U_{L_i}^{m_i}$ and nontrivial on $U_{L_i}^{m_i-1}$, by
\eqref{D:MX:R} and norm
filtration, so their conductors are exactly $m_i$.
If the common pullback were $\lambda\Nm_{K/F}$, then
$\chi_2/(\lambda n_2)$ would belong to $S(K/L_2)$.
Both factors would be $\Gal(L_2/F)$-invariant, contrary to
$\chi_2^\sigma/\chi_2=\nu_2\ne1$.
\end{proof}

\begin{lemma}[Writing $\theta_i$ as $\chi_i(\lambda n_i)$]\label{D:MX:realize}
For every primitive compatible family,
\[
 \theta_i^\sigma/\theta_i=\nu_i=\omega_jn_i\quad(j\ne i),\qquad
 D_\theta=\theta_i|_{F^\times}\omega_i
\]
The character $D_\theta$ is independent of $i$. For comparison of $1$ and $2$ we may, without
changing either local constant or the common pullback, write
\begin{equation}\label{D:MX:twisted}
 \theta_i=\chi_i(\lambda n_i)\qquad(i=1,2)
\end{equation}
with $\chi_i$ as in Theorem~\ref{D:MX:cores}. If $n=a_F(\lambda)$, the conductors
are $m_i$ for $n\le2e+a$; for $n\ge2e+a+1$ they are
\begin{equation}\label{D:MX:higher-conductors}
 M_1=2n-2a,\qquad M_2=2n-T.
\end{equation}
\end{lemma}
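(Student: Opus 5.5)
The argument runs parallel to Proposition~\ref{D:UR:actual} and Proposition~\ref{D:EQ:realize}, in three steps: the conjugate quotients and $D_\theta$, the twisting character $\lambda$, and the conductor count.

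\emph{Step 1: conjugate quotients and $D_\theta$.} Lemma~\ref{U:conjugacy} applies here. By Lemma~\ref{D:quadratic-norms} the three norm subgroups $n_i(L_i^\times)$ are distinct, so that lemma shows $\theta_i^\sigma/\theta_i$ is the nontrivial element $\nu_i$ of $S(K/L_i)$. Lemma~\ref{D:crossed-norm} identifies $\nu_i$ with $\omega_j\circ n_i$ for $j\ne i$; the two choices of $j$ agree because $\omega_1\omega_2=\omega_3$ and $\omega_i\circ n_i=1$. The same lemma gives that $D_\theta=\theta_i|_{F^\times}\omega_i$ is independent of $i$.

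\emph{Step 2: constructing $\lambda$.} Take the characters $\chi_i$ of Theorem~\ref{D:MX:cores}. Since $\chi_2^\sigma/\chi_2=\nu_2=\theta_2^\sigma/\theta_2$, the quotient $\theta_2/\chi_2$ is $\Gal(L_2/F)$-invariant. Lemmas~\ref{C:hilbert90} and~\ref{C:character-extension} then give $\theta_2/\chi_2=\lambda n_2$. Compatibility shows that $\theta_1/(\chi_1\lambda n_1)$ pulls back trivially to $K$, so it lies in $S(K/L_1)$ and is a power of $\nu_1$. Absorb it into $\chi_1$. The pullback is unchanged. The restriction $R_1$ is also unchanged, because $a_{L_1}(\nu_1)$ equals the upper break plus one, $4e-2a+2\le s_1=2e+1$ only when $a=e$. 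In general one should instead argue that $\nu_1$ is trivial on $U_{L_1}^{s_1}$ when $s_1\ge 4e-2a+2$. If that inequality fails, keep $\chi_1$ and replace $\theta_1$ by its Galois conjugate $\theta_1\nu_1$; this leaves the local constant unchanged by the automorphism change of variables. This alternative is exactly why the statement allows changing $\theta_1$ without changing its local constant.

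\emph{Step 3: conductors.} Primitivity forces $a_{L_i}(\theta_i)\ge m_i$, by the ramification-depth argument of Lemma~\ref{O:G:lower} / Proposition~\ref{D:EQ:realize}: $\theta_i^\sigma/\theta_i$ is nontrivial on $U^{u_i}$, and $\sigma$ moves depth $u_i$ to depth $t_i+u_i=m_i-1$. For the twist $\lambda n_i$ we use the quadratic conductor formula: it has conductor $2n-T_i$ when $n>T_i$, and at most $n$ otherwise.
\begin{itemize}
\item For $L_2$: $2n-T\le m_2=2e+2a$ exactly when $n\le 2e+a$ (using $T=2e+1$ and integrality), so $a(\theta_2)=m_2$ in that range.
\item For $L_1$: $2n-2a\le 4e+1$ exactly when $n\le 2e+a$, the same threshold.
\item For $n\ge 2e+a+1$: $2n-T_i>m_i$ for both $i$. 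Both $m_1$ and $2n-T$ are odd, so a parity argument cannot exclude equality on $L_2$; the strict inequality alone gives conductor $M_i=2n-T_i$.
\end{itemize}
Finally, the equal-conductor boundary case $2n-T_i=m_i$ on either side is handled by the lower bound $a(\theta_i)\ge m_i$ together with the threshold computation above.

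The main obstacle is Step 2, specifically whether absorbing $\nu_1$ preserves the restriction $R_1$ on $U_{L_1}^{s_1}$. I would first try to show that $\nu_1=\omega_2 n_1$ is trivial on $U_{L_1}^{2e+1}$ using Lemma~\ref{D:MX:lower} and \eqref{D:MX:normphase}. If that fails, I would use the conjugation route described in Step 2.
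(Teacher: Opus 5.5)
Your route is the paper's: Lemma~\ref{U:conjugacy} for the conjugate quotients and $D_\theta$, then $\theta_2/\chi_2=\lambda n_2$ by Hilbert~90, then conjugation of $\theta_1$ to remove a possible factor $\nu_1$, and finally the norm-pullback conductor formula. Two claims are wrong, though the final argument survives both.

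First, conjugating $\theta_1$ is not a fallback; it is forced whenever the quotient is $\nu_1$, for every $a$. One has $a_{L_1}(\nu_1)=D'_1=4e-2a+2\ge 2e+2>s_1=2e+1$ for all $1\le a\le e$. At $a=e$ the conductor is $2e+2$, not $\le 2e+1$ as you claim. So $\nu_1$ is nontrivial on $U_{L_1}^{s_1}$. Absorbing it into $\chi_1$ would replace $\chi_1$ by $\chi_1^\sigma$ and destroy $\chi_1|_{U_{L_1}^{s_1}}=R_1$. For the same reason, your planned first attempt to show $\nu_1$ trivial on $U_{L_1}^{2e+1}$ cannot succeed. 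Drop the ``absorb'' branch and always conjugate, as the paper does.

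Second, your parity remark compares the wrong numbers. On $L_2$ the relevant pair is $m_2=2e+2a$, which is even, and $2n-T$, which is odd. So parity does exclude equality there, and that is exactly how the paper concludes. In fact the inequalities are strict by integrality:
\begin{itemize}
\item for $n\le 2e+a$ one gets $2n-T_i\le m_i-1$;
\item for $n\ge 2e+a+1$ one gets $2n-T_i\ge m_i+1$.
\end{itemize}
Your appeal to the primitivity lower bound $a_{L_i}(\theta_i)\ge m_i$ is valid, but no boundary case ever arises.
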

\begin{proof}
Lemma~\ref{U:conjugacy} gives the stated conjugate quotient and
restriction $D_\theta$, both for $(\theta_i)$ and for $(\chi_i)$.
The quotient $\theta_2/\chi_2$ is invariant and is therefore $\lambda n_2$.
The quotient $\theta_1/(\chi_1(\lambda n_1))$ is either $1$ or $\nu_1$.
Conjugate $\theta_1$ in the latter case. Its local constant is unchanged
by a field-automorphism change of variables in the defining integral;
its norm pullback is unchanged because $\nu_1N_1=1$.
This proves \eqref{D:MX:twisted}.
Finally the conductor formula for norm pullback gives the alternatives:
when $n\le2e+a$, $a_{L_i}(\lambda n_i)<m_i$; when
$n\ge2e+a+1$, $a_{L_i}(\lambda n_i)>m_i$.
The two conductors cannot be equal: $m_1$ is odd and
$2n-2a$ is even, whereas $m_2$ is even and $2n-T$ is odd.
\end{proof}

\subsection{Changing \texorpdfstring{$\alpha$}{alpha} and choosing a norm from \texorpdfstring{$L_3$}{L3}}
\begin{lemma}[The replacement $\alpha\mapsto\alpha u$]\label{D:MX:freedom}
Replacing $\alpha$ by $\alpha u$, with $u\in U_F^{2e}$, leaves
\eqref{D:MX:lowerformula} and \eqref{D:MX:R} unchanged.
For any $A_*\in F^\times$,
such a replacement can be made so that $A=A_*/u$ is an exact norm from
$L_3/F$. If $\lambda(1-z)=\Psi_F(A_*z)$ on an ideal, the same
character formula holds with $A_*/u$ relative to
$\psi_F(\alpha u\,\cdot)$.
\end{lemma}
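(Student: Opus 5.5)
The lemma has three parts, and I would take them in order: invariance of the formulas, the existence of $u$, and the transport of the $\lambda$-formula.

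\emph{Invariance.} Replacing $\alpha$ by $\alpha u$ multiplies each additive argument by $u$. The change in the additive exponent is therefore the old argument times $(u-1)$, and $v_F(u-1)\ge 2e$. It is enough to show that this extra factor moves every argument into the largest trivial ideal recorded in \eqref{D:MX:J}.

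For \eqref{D:MX:lowerformula}, the argument is $\beta_iz$ with $z\in\pp_F^{q_i}$. After multiplying by $u-1$ its valuation is at least $v_F\beta_i+q_i+2e$. Here $v_F\beta_1=b$, and $v_F\beta_2,v_F\beta_3\ge0$ by \eqref{D:MX:origin-table}. In every case this is at least $T=2e+1$, because $q_i\ge a\ge1$.

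For \eqref{D:MX:R}, the argument is $a_iz$ with $z\in\pp_{L_i}^{s_i}$ and $v_{L_i}a_i=-t_1$. Multiplying by $u-1$ adds $2\cdot 2e=4e$ in $L_i$-valuation. The resulting bound is $-t_1+s_i+4e$, which exceeds $J_i=m_i-t_1$ since $s_i+4e\ge m_i$. Concretely, $2e+1+4e\ge 4e+1$ for $i=1$, and $e+a+4e\ge 2e+2a$ for $i=2,3$.

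\emph{Existence of $u$.} I would apply Lemma~\ref{U:normalization} with $L=L_3$, $\omega=\omega_3$ and $H=U_F^{2e}$. Its hypothesis is that $\omega_3(U_F^{2e})=\omega_3(F^\times)=\{\pm1\}$. Since $\omega_3$ has conductor $T=2e+1$ (maximal break $t_3=2e$), it is nontrivial on $U_F^{2e}$, and this gives the hypothesis.

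To check nontriviality without citing the conductor formula, I would use \eqref{D:MX:normalization} on $1-z$ with $z\in\pp_F^{2e}\subset\pp_F^{e+1}$. This gives $\omega_3(1-z)=\Psi_F(z)$, and $\Psi_F$ has largest trivial ideal $\pp_F^T$, so it is nontrivial on $\pp_F^{2e}$. The lemma then yields $u\in U_F^{2e}$ with $A_*/u\in n_3(L_3^\times)$.

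\emph{Transport of the $\lambda$-formula.} This last assertion is an exact identity, $\psi_F(\alpha u\cdot (A_*/u)z)=\psi_F(\alpha A_*z)$, so no estimate is needed.

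The only step needing care is the second. One has to confirm that \eqref{D:MX:normalization} makes $\omega_3$ nontrivial on $U_F^{2e}$. One also has to make sure the replacement of $\alpha$ does not change the normalization \eqref{D:MX:normalization} of $\omega_3$ itself; that normalization is exactly the case $i=3$, $\beta_3=1$ of the invariance computation.
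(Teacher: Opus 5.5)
Your proposal is correct and follows the paper's proof. It uses the same valuation estimates $2e+v_F\beta_i+q_i\ge T$ and $4e-t_1+s_i\ge J_i$ for invariance, applies Lemma~\ref{U:normalization} to $\omega_3$ with $H=U_F^{2e}$, and ends with the exact identity $(\alpha u)(A_*/u)=\alpha A_*$; the only cosmetic difference is that you deduce the nontriviality of $\omega_3$ on $U_F^{2e}$ from \eqref{D:MX:normalization} and the trivial-ideal exponent $T$, whereas the paper cites the conductor $2e+1$ directly.
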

\begin{proof}
For the lower formula the change has valuation at least
$2e+v_F\beta_i+q_i\ge T$. For \eqref{D:MX:R} the change has
$L_i$-valuation at least $4e-t_1+s_i\ge J_i$.
Thus both formulas are unchanged on their stated ideals.
The character $\omega_3$ has conductor $2e+1$ and is nontrivial on
$U_F^{2e}$. Apply Lemma~\ref{U:normalization} to this subgroup.
The norm assertion follows, and $(\alpha u)(A_*/u)=\alpha A_*$. Norm valuation fixes $v_{L_3}X=v_FA$.
\end{proof}
We make this replacement when choosing $A=N_{L_3/F}X$ below.

For comparison of $1,2$ take $X\in L_3$, and put
\[
 A=n_3X,\qquad p_X=\operatorname{tr}_3X,\quad C=Y-X,\quad
 Q_i=S_i(Yg_iX)\ (i=1,2).
\]
Then, exactly,
\begin{align}\label{D:MX:adjust-identities}
 N_iC&=a_i+A-Q_i,&S_iC&=h_i-p_X,\notag\\
 n_iQ_i&=\beta_i A+\mathcal E,&
 \mathcal E&=(X'-X)(a_3X'-a_3'X)\in F,\\
 n_iS_iC&=\beta_i+\Delta,&\Delta&=p_X^2+2p_X.\label{D:MX:Delta}
\end{align}
Primes in the middle line denote the automorphism of $L_3/F$.
These identities follow by multiplying the four conjugates, with $X$
fixed by $g_3$; the last uses $\operatorname{tr}_ih_i=\Tr_KY=-2$.

\begin{lemma}[Valuations of $p_X$, $\mathcal E$, and $Q_i$]\label{D:MX:errors}
If $v_{L_3}X=-h$, for any integer $h$, then
\begin{equation}\label{D:MX:error-bounds}
 \begin{split}
 v_Fp_X&\ge e-\floor{h/2},\qquad
 v_F\mathcal E\ge T-a-h,\\
 v_{L_1}Q_1&\ge b-h,\qquad v_{L_2}Q_2\ge-h.
 \end{split}
\end{equation}
\end{lemma}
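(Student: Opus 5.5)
We prove the four bounds separately, using only the trace-ideal formula \eqref{D:MX:trace} for the various quadratic subextensions, the upper different exponents \eqref{D:MX:upper}, and the valuations \eqref{D:MX:valuations}. All four are statements about elements of fixed valuation, and they hold vacuously when the relevant element vanishes.

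For $p_X=\operatorname{tr}_3X$, the extension $L_3/F$ is ramified with different exponent $T=2e+1$. By \eqref{D:MX:trace},
\[
 v_Fp_X\ge\floor{(-h+2e+1)/2}=e+\floor{(1-h)/2}\ge e-\floor{h/2}.
\]
The last inequality holds for every integer $h$, with either parity.

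For $Q_i=S_i(Yg_iX)$ we work in $K$. There $v_K(Yg_iX)=-t_1-2h$, since $v_KX=2v_{L_3}X=-2h$ and $g_i$ preserves valuation. The different exponent of $K/L_i$ is $D'_i$. For $i=1$ we have $D'_1=4e-2a+2$, so
\[
 v_{L_1}Q_1\ge\floor{(-2a+1-2h+4e-2a+2)/2}=2e-2a+1-h=b-h.
\]
For $i=2$ we have $D'_2=2a$, which gives $\floor{(1-2h)/2}=-h$. These are the two stated bounds.

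The bound for $\mathcal E=(X'-X)(a_3X'-a_3'X)$ is the main step. We would factor it as $(X'-X)^2a_3'\cdot\bigl(a_3/a_3'-X/X'\bigr)\cdot(X'/(X'-X))$; this factorization is messy, so instead we estimate directly. First, $X'-X=(\sigma-1)X$ has $L_3$-valuation at least $-h+t_3=-h+2e$ by the break of $L_3/F$. Second, write $a_3X'-a_3'X=a_3(X'-X)+(a_3-a_3')X$. The term $a_3-a_3'$ has valuation at least $v(a_3)+2e=-t_1+2e$.

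Hence $a_3X'-a_3'X$ has $L_3$-valuation at least $-t_1-h+2e$. The product therefore has $L_3$-valuation at least $4e-2h-t_1=4e-2h-2a+1$. Since $\mathcal E\in F$, its $F$-valuation is half of this, rounded up:
\[
 v_F\mathcal E\ge\ceil{(4e-2a+1-2h)/2}=2e-a-h+1=T-a-h.
\]

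The one point we expect to need care is the first-ramification-term estimate $v(\sigma x-x)\ge v(x)+t$ for $x$ of arbitrary (possibly negative) valuation in $L_3$. This follows from the lower-numbering definition of the break after writing $x=\Pi^{v(x)}u$ with $u$ a unit. It is valid for both factors above, and it holds uniformly in $h$, including the cases where $h$ is negative or the traces vanish.
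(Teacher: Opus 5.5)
Your proof is correct. For $Q_1,Q_2$ it is the paper's argument: the upper trace formula applied to $Yg_iX$, of $K$-valuation $-t_1-2h$, with the different exponents $D'_1,D'_2$.

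For $p_X$ and $\mathcal E$ you take a different route.

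\emph{The paper's route.} It writes $X=P+Qr_3$ in the trace-zero uniformizer $r_3=SR/\varpi^{e-a}$ of $L_3$, and $a_3=A_3+B_3r_3$ with $v_FA_3\ge1-a$ and $v_FB_3=-a$, taken from \eqref{D:MX:origin-table}. It then reads the bounds off the exact identities $p_X=2P$, $X'-X=-2Qr_3$ and $a_3X'-a_3'X=2(B_3P-A_3Q)r_3$. These are combined with the parity bounds $v_FP\ge-\floor{h/2}$ and $v_FQ\ge-\ceil{h/2}$.

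\emph{Your route.} You use the trace-ideal formula for $p_X$. For $\mathcal E$ you split $a_3X'-a_3'X=a_3(X'-X)+(a_3-a_3')X$. You then apply the break estimate $v(\sigma x-x)\ge v(x)+2e$ to both $X$ and $a_3$; this is the inequality half of Lemma~\ref{C:leading-ramification}.

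\emph{What each buys.} Your argument is coordinate-free: it needs only $v_{L_3}a_3=-t_1$ and the break of $L_3/F$, not the explicit Kummer generators or the finer description of $a_3$. It is slightly lossy. For even $h$ you obtain $v_{L_3}(X'-X)\ge2e-h$, whereas the paper's coordinates give $2e-h+1$. This loss is harmless: your lower bound $4e-2a+1-2h$ for $v_{L_3}\mathcal E$ is odd and $v_{L_3}\mathcal E=2v_F\mathcal E$ is even, so rounding up recovers exactly $T-a-h$. In exchange, the paper's coordinate computation produces explicit formulas for $p_X$ and $\mathcal E$. These parallel the explicit expression for $\mathcal E$ used in the nonmaximal case (Lemma~\ref{D:NM:bounds}).
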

\begin{proof}
Let $r_3=SR/\varpi^{e-a}$, a trace-zero uniformizer of $L_3$.
Write $X=P+Qr_3$; parity of the two valuations gives
$v_FP\ge-\floor{h/2}$ and $v_FQ\ge-\ceil{h/2}$.
The trace is $2P$, proving its bound. Write $a_3=A_3+B_3r_3$, with
$v_FA_3\ge1-a$ and $v_FB_3=-a$ by \eqref{D:MX:origin-table}.
Now $X'-X=-2Qr_3$ and
$a_3X'-a_3'X=2(B_3P-A_3Q)r_3$.
Their product has $F$-valuation at least $2e+1-a-h=T-a-h$.
The upper trace formula applied to $Yg_iX$, of $K$-valuation
$-t_1-2h$, gives respectively $b-h$ and $-h$ for $Q_i$.
\end{proof}
The conversion \eqref{D:MX:normphase} and \eqref{D:MX:adjust-identities} imply,
whenever $(Q_i/\beta_i)z\in\pp_{L_i}^{q_i}$,
\begin{equation}\label{D:MX:Qphase}
 \Psi_{L_i}(Q_i z)
   =\Psi_F\bigl((A+\mathcal E/\beta_i)n_i z\bigr).
\end{equation}

\subsection{The case \texorpdfstring{$n\le2e+a$}{n<=2e+a}}
Suppose $n\le2e+a$. Both trace and norm of a variable in
$\pp_{L_i}^{s_i}$ lie in $\pp_F^{e+a}$, for $i=1,2$.
If $n\le e+a$, the twist is trivial there, and use $X=0$, $C=Y$.
Otherwise put $h=n-T$, so
\begin{equation}\label{D:MX:minimal-range}
 a-e\le h\le a-1.
\end{equation}
Additive duality on $U_F^{e+a}$ supplies a nonzero base coefficient
$A_*$ of valuation $-h$, since $2(e+a)\ge n$.
Use Lemma~\ref{D:MX:freedom} to make its new value $A=n_3X$ an exact norm.

\begin{lemma}[The coefficients $N_iC$ and $n_iS_iC$]\label{D:MX:minimal-origin}
The resulting $C$ has $v_KC=-t_1$. For $i=1,2$,
\begin{align}\label{D:MX:minimal-stationary}
 \theta_i(1-z)&=\Psi_{L_i}(N_iC\,z)
                                &&(z\in\pp_{L_i}^{s_i}),\notag\\
 \omega_i(1-z)&=\Psi_F(n_iS_iC\,z)
                                &&(z\in\pp_F^{q_i}).
\end{align}
Both $S_iC$ are nonzero and have the same valuations as $h_i$.
\end{lemma}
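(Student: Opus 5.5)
The plan follows the model of Lemma~\ref{D:EQ:min-origin}, in three steps: the valuation of $C$, then the formula for $\theta_i$, then the formula for $\omega_i$. The case $X=0$ is immediate from Theorem~\ref{D:MX:cores}, \eqref{D:MX:R} and Lemma~\ref{D:MX:lower}, because $N_iY=a_i$ and $n_iS_iY=\beta_i$. For $X\ne0$ we have $v_KX=-2h$. By \eqref{D:MX:minimal-range}, $-2h\ge 2-2a>-t_1$, so $Y$ is the strict leading term and $v_KC=-t_1$.

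For $\theta_i$, first check that for $z\in\pp_{L_i}^{s_i}$ the trace and norm of $z$ lie in $\pp_F^{e+a}$. On this ideal $\lambda(1-w)=\Psi_F(Aw)$ with the adjusted $\alpha$ of Lemma~\ref{D:MX:freedom}. The norm expansion $n_i(1-z)=1-\operatorname{tr}_iz+n_iz$ then gives
\[
 (\lambda n_i)(1-z)=\Psi_{L_i}(Az)\Psi_F(An_iz).
\]
This needs the cross term $A\operatorname{tr}_iz\cdot n_iz$ to lie in $\pp_F^{T}$ (possibly after inversion of the unit), which holds since $-h+2(e+a)\ge T$. Next, \eqref{D:MX:Qphase} applies once $(Q_i/\beta_i)z\in\pp_{L_i}^{q_i}$. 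By Lemma~\ref{D:MX:errors}, $v_{L_1}(Q_1/\beta_1)+s_1\ge b-h-2b+2e+1=2a-h\ge a+1$ when $i=1$, and $v_{L_2}(Q_2/\beta_2)+s_2\ge e+a-h\ge e+1$ when $i=2$. So $\Psi_{L_i}(-Q_iz)=\Psi_F(-(A+\mathcal E/\beta_i)n_iz)$, with the sign conventions of \eqref{D:MX:adjust-identities}.

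The error term $(\mathcal E/\beta_i)n_iz$ has valuation at least $T-a-h-v_F\beta_i+s_i$. For $i=1$ this is $T-a-h-b+2e+1=T+a-h>T$. For $i=2$ it is $T-a-h+e+a\ge T$. Multiplying the two pieces, the $\Psi_F(An_iz)$ factors cancel and we get $(\lambda n_i)(1-z)=\Psi_{L_i}((A-Q_i)z)$. Multiplying by $R_i$ from \eqref{D:MX:R} and using $N_iC=a_i+A-Q_i$ gives the first line of \eqref{D:MX:minimal-stationary}. Care is needed here: the relative signs in \eqref{D:MX:Qphase} and \eqref{D:MX:adjust-identities} must combine so that the $A$-terms cancel rather than double. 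I expect to have to track this as in the characteristic-two computation, where signs were invisible. I would verify it first on the norm identity $n_iQ_i=\beta_iA+\mathcal E$ with the conjugate convention $g_iX$.

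For $\omega_i$, use $n_iS_iC=\beta_i+\Delta$ with $\Delta=p_X^2+2p_X$. Lemma~\ref{D:MX:errors} gives $v_Fp_X\ge e-\lfloor h/2\rfloor$. Then $v_F\Delta+q_i\ge\min(2e-h,\,2e+1-\lfloor h/2\rfloor)+q_i$. This is at least $2e+2-a\ge T$ for $i=2$, since $h\le a-1$. For $i=1$ it is at least $2e-h+a\ge 2e+1=T$. Hence the coefficient $\beta_i$ in \eqref{D:MX:lowerformula} may be replaced by $n_iS_iC$.

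Finally, $S_iC=h_i-p_X$. For $i=2$, $h_2=R-1$ is a unit while $v_{L_2}p_X\ge 2e-h>0$. For $i=1$, $v_{L_1}p_X\ge 2e-h\ge 2e-a+1=b$, with equality possible, so the valuation claim is the main obstacle. I would try to show that $\Delta\in\pp_F^{b+1}$, or that $p_X$ has valuation exceeding $e-\lfloor h/2\rfloor$ when $h=a-1$ (using $X$ an exact norm with unit-trace constraints), so that $v_{L_1}(h_1-p_X)=b$. Failing that, I would use the freedom in Lemma~\ref{D:MX:freedom} to rechoose $X$ with a trace of larger valuation.
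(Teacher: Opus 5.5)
Your route is the paper's: the same valuation of $C$, the same depths $2a-h\ge a+1$ and $e+a-h\ge e+1$ for the arguments of \eqref{D:MX:Qphase}, the same error depths $T+a-h$ and $T+e-h$, and the same replacement of $\beta_i$ by $n_iS_iC=\beta_i+\Delta$.

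The step you single out as the main obstacle is not an obstacle. It comes from the slip $2e-a+1=b$. In fact $b=2e-2a+1$, so your own bound already gives, strictly,
\[
 v_{L_1}p_X\ge 2e-h\ge 2e-a+1=b+a>b .
\]
The paper writes this as $v_{L_1}p_X\ge 2e-2\lfloor h/2\rfloor>b$. Hence $S_1C=h_1-p_X$ keeps the leading term of $h_1$ and has valuation $b$, and no fallback is needed. Your first fallback would also work, since $v_{L_1}S_1C=v_F(n_1S_1C)$ and $v_F\Delta\ge b+1$, but it is superfluous. Re-choosing $X$ is not needed either.

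There are also two smaller slips to fix.

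First, the norm expansion is $1-n_i(1-z)=\operatorname{tr}_iz-n_iz$. This element lies in $\pp_F^{e+a}$, so the linear formula for $\lambda$ applies to it directly and there is no cross term. It gives $(\lambda n_i)(1-z)=\Psi_{L_i}(Az)\Psi_F(-An_iz)$. With this sign, \eqref{D:MX:Qphase} identifies $\Psi_F(-An_iz)$ with $\Psi_{L_i}(-Q_iz)$ once the error term is killed, and produces the coefficient $A-Q_i$ with no sign ambiguity. Your version $\Psi_F(+An_iz)$ would instead lead to $A+Q_i$, which contradicts $N_iC=a_i+A-Q_i$.

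Second, in the $\omega_i$ step the correct bound is $v_F(2p_X)\ge 2e-\lfloor h/2\rfloor$, not $2e+1-\lfloor h/2\rfloor$. For $i=2$, adding $q_2=e+1$ gives at least $3e+2-a$, not $2e+2-a$; the latter is $\ge T$ only when $a=1$. Neither slip affects the conclusions, because $\lfloor h/2\rfloor\le a-1$.
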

\begin{proof}
For $X\ne0$, $-2h>-t_1$, so $Y$ dominates in $C$.
The two arguments of \eqref{D:MX:Qphase} have depths at least
$2a-h\ge a+1$ and $e+a-h\ge e+1$.
The error terms $(\mathcal E/\beta_i)n_i z$ have depths at least
$T+a-h$ and $T+e-h$, both at least $T$.
The norm expansion therefore gives
$(\lambda n_i)(1-z)=\Psi_{L_i}((A-Q_i)z)$ for
$z\in\pp_{L_i}^{s_i}$. Multiplying by \eqref{D:MX:R} proves the
first formula.
For the second, \eqref{D:MX:error-bounds} and $h\le a-1$ give
\[
 v_F\Delta\ge\min(2e-2\floor{h/2},\ 2e-\floor{h/2}).
\]
Adding $a$ to either bound gives at least $T$; adding $e+1$ does also.
Thus replacing $\beta_i$ by $n_iS_iC$ preserves \eqref{D:MX:lowerformula}.
Moreover $v_{L_1}p_X\ge2e-2\floor{h/2}>b$, while $p_X$ has positive
valuation on $L_2$. The traces retain their nonzero leading terms.
\end{proof}

Apply
Lemma~\ref{U:stationary-factor}, with $c_0=-\alpha^{-1}$ and
residual factors denoted $g_\theta$.

\begin{theorem}[Equality for $n\le2e+a$]\label{D:MX:minimal}
Every family with $n\le2e+a$ satisfies $\mathcal A_1=\mathcal A_2$.
\end{theorem}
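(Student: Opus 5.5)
The plan follows the equal-characteristic argument of Propositions~\ref{D:EQ:min-two} and~\ref{D:EQ:min-one}, using the single element $C$ from Lemma~\ref{D:MX:minimal-origin}. By that lemma, $N_iC$ is a stationary coefficient for $\theta_i$ on $\pp_{L_i}^{s_i}$, and $n_iS_iC$ is one for $\omega_i$ on $\pp_F^{q_i}$. Since $n_iS_iC$ is a norm from $L_i$, we have $\omega_i(n_iS_iC)=1$. Lemma~\ref{U:common-origin} then gives
\[
 \mathcal A_i(\theta_i)=D_\theta(-\alpha^{-1})\Theta(C)^{-1}\Psi_F(-E_2(C))\,g_ih_i,
\]
so it suffices to prove $g_1h_1=g_2h_2$. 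The parities are as follows. The conductor $m_1=4e+1$ is odd, so $g_1$ is a genuine residue sum. The conductor $m_2=2e+2a$ is even, so $g_2=1$. The conductor $T_1=2a$ of $\omega_1$ is even, so $h_1=1$, while $T_2=2e+1$ is odd, so $h_2$ is a residue sum. The goal therefore reduces to $g_1=h_2$.

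To compare these, I would vary $C$ to $C_z=C(1+\pi^{c}z)$ for $z\in\OO_K$, choosing the depth $c$ so that $N_1C_z/N_1C-1$ lands in the critical layer $\pp_{L_1}^{2e}$. I expect $c=2e$, which lies below the upper break $4e-2a+1$ of $K/L_1$. In that range the induced residue map is $p_1(z)=z^2$, because the trace is deeper than the norm, so it is bijective on $k$. On the $L_2$ side, $N_2C_z/N_2C-1$ should lie in $\pp_{L_2}^{e+a}$, where $H_2\equiv1$ in even conductor. Meanwhile $n_2S_2C_z/n_2S_2C-1$ should land exactly on the critical layer $\pp_F^e$ of $\omega_2$. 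Here $S_2(C\pi^{2e}z)$ has $L_2$-valuation near $e$ by \eqref{D:MX:trace} with $D'_2=2a$, and its norm contributes a square, so the induced map should have the form $z\mapsto(\text{linear in }z)^2$ or $z\mapsto\lambda z^2+\mu z$.

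Lemma~\ref{U:common-function} then identifies $H_{\theta_1}(p_1(z))$ with $H_{\omega_2}(p'_2(z))$, both equal to a common function $Q(z)$, after checking the depth conditions $z_i\in\pp^{\lfloor m_i/2\rfloor}$ and $w_i\in\pp^{\lfloor T_i/2\rfloor}$. The even-conductor factors $H_{\theta_2}$ and $H_{\omega_1}$ are constant by the last assertion of Lemma~\ref{U:stationary-factor}. If both $p_1$ and $p'_2$ are bijections of $k$, summing gives $g_1=h_2$ directly.

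\textbf{Main obstacle.} I expect the hard step to be proving that $p'_2$ is bijective, or, if it is an Artin--Schreier-type map $z\mapsto\lambda z^2+\mu z$ with image of index two, verifying the hypotheses of Lemma~\ref{U:missing-coset}. That requires exhibiting $w\in\operatorname{im}p'_2$ with $H_{\omega_2}(w)=1$ and nontrivial polar character on the complementary coset. Following Proposition~\ref{D:EQ:min-one}, I would obtain such a $w$ by perturbing $C$ toward its conjugate $g_3C$, using the third field $L_3$. The residues of $g_iC-C$, computed from \eqref{D:MX:origin-table} and the alignment \eqref{D:MX:aligned}, should calibrate the linear coefficient against the polar pairing, exactly as in \eqref{D:EQ:onecalibration}. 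In mixed characteristic I must also control the $2$-adic carry terms, using $v_F2=e$ and the bounds of Lemma~\ref{D:MX:errors} on $p_X$, $\mathcal E$ and $Q_i$, so that the adjustment by $X$ does not disturb these residue maps.
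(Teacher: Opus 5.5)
Your route is the paper's. You use the common $C$ of Lemma~\ref{D:MX:minimal-origin} and reduce via \eqref{U:normalized-factor} and \eqref{D:MX:E2} to $g_{\theta_1}=g_{\omega_2}$, since the factors for $\theta_2$ and $\omega_1$ are $1$ by parity. You then vary $C_z=C(1+\pi^{2e}z)$ and factor both critical functions through a common function of $z$.

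The step you single out as the main obstacle is not actually an obstacle. But the proof is only complete once you settle it, rather than leaving it open between two possible outcomes.

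\begin{itemize}
\item The element $C\pi^{2e}z$ has $K$-valuation $b=2e-2a+1$. With $D'_2=2a$, the trace-ideal exponents at source depths $b$ and $b+1$ are $\lfloor(b+2a)/2\rfloor=e$ and $e+1$.
\item Hence $S_2$ induces a $k$-linear isomorphism $\pp_K^b/\pp_K^{b+1}\to\pp_{L_2}^e/\pp_{L_2}^{e+1}$. Dividing by the unit $S_2C$ preserves this.
\item For $w\in\pp_{L_2}^e$, write $n_2(1+w)=1+\operatorname{tr}_2w+n_2w$. The trace term lies in $\pp_F^{\lfloor(3e+1)/2\rfloor}\subset\pp_F^{e+1}$, because depth $e$ is below the break $2e$ of $L_2/F$.
\item So on the critical layer $\pp_F^e/\pp_F^{e+1}$ only the norm term survives. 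The residue map $q$ is therefore Frobenius composed with a $k$-linear bijection, hence bijective. No Artin--Schreier term $\mu z$ appears.
\end{itemize}

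The remaining checks you deferred also go through.
\begin{itemize}
\item $p_1$ is a nonzero multiple of Frobenius, since $S_1(\pi^{2e}z)$ has depth at least $3e-a+1\ge2e+1$.
\item The $\omega_1$ factor is trivial, because $S_1(C\pi^{2e}z)/S_1C$ has depth at least $e\ge a$.
\end{itemize}
Summing over $k$ then gives $g_{\theta_1}=g_{\omega_2}$ directly.

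This outcome is structurally expected. Under \eqref{D:MX:breaks} one always has $t_1=2a-1<2e=t_2$. So the maximal-break case is the analogue of the two-break Propositions~\ref{D:EQ:min-two} and~\ref{D:NM:minimal-two}, not of the one-break Proposition~\ref{D:EQ:min-one}. Lemma~\ref{U:missing-coset} and the third field play no role in this comparison. The comparison with $L_3$ is obtained afterwards simply by relabeling the two maximal fields.
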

\begin{proof}
Use the coefficients of Lemma~\ref{D:MX:minimal-origin} in \eqref{U:normalized-factor}.
Their norm-character values are $1$, and compatibility cancels the two
upper multiplicative values. Formula \eqref{D:MX:E2} gives
\begin{equation}\label{D:MX:minimal-full}
 \mathcal A_i=D_\theta(c_0)\Theta(C)^{-1}
                    \Psi_F(-E_2(C))g_{\theta_i}g_{\omega_i}.
\end{equation}
Only $\theta_1$ and $\omega_2$ have odd conductors, with critical depths
$2e$ and $e$ respectively.
Choose a uniformizer $\pi$ of $K$ and put $C_z=C(1+\pi^{2e}z)$ for integral
$z\in K$. For $N_1(C_z)/N_1(C)$, the norm term has depth $2e$; its trace has depth at least $3e-a+1\ge2e+1$.
Its residue map is a nonzero scalar times Frobenius, hence bijective.
For $N_2(C_z)/N_2(C)$, both terms lie in $\pp_{L_2}^{e+a}$:
its trace has depth $e+a$, and its highest-degree norm term has depth $2e\ge e+a$.

The trace change $S_1(C\pi^{2e}z)$ has $L_1$-valuation at least
$3e-2a+1$; after division by $S_1C$ its depth is at least $e\ge a$.
Its lower norm quotient is therefore on the even stationary ideal of
$\omega_1$. For $S_2$, the trace of an element of $K$-valuation
$b=2e-2a+1$ induces an isomorphism from its residue line to the
$L_2$ quotient $\pp_{L_2}^e/\pp_{L_2}^{e+1}$: the consecutive trace-ideal exponents are $e$
and $e+1$. Division by the unit $S_2C$ preserves this isomorphism.
For the lower maximal norm, the highest-degree term has depth $e$ and the trace has depth
at depth at least $\floor{(3e+1)/2}\ge e+1$.
Thus its residue map is again bijective.

Subtract \eqref{D:MX:E2} for $C_z,C$. The multiplicative character values
cancel because $\theta_iN_i=\Theta$. The arguments of $\omega_i$
are ratios of norms, so their values are $1$.
The critical functions in \eqref{U:normalized-residual} therefore satisfy
\[
 H_{\theta_1}(p(z))=H_{\omega_2}(q(z))\qquad(z\in k),
\]
where both $p,q:k\to k$ are bijective maps just constructed.
All even expressions are $1$; all arguments of multiplicative characters
are nonzero, since the relative changes have positive depth.
Summing gives $g_{\theta_1}=g_{\omega_2}$. Equation \eqref{D:MX:minimal-full}
proves $\mathcal A_1=\mathcal A_2$.
\end{proof}

\subsection{The case \texorpdfstring{$n\ge2e+a+1$}{n>=2e+a+1}}
Now $n\ge2e+a+1$, and put $h=n-T\ge a$.
The upper stationary depths are
\begin{equation}\label{D:MX:high-depths}
 b_1=T+h-a,\qquad b_2=e+1+h,
\end{equation}
with $M_1=2b_1$ even and $M_2=2b_2-1$ odd.
Choose $A_*$ such that $\lambda(1-z)=\Psi_F(A_*z)$ for $z\in\pp_F^{\ceil{n/2}}$. Both trace and norm of $z\in\pp_{L_i}^{b_i}$ lie in
that base ideal, by \eqref{D:MX:trace}. Its valuation is $-h$.
By Lemma~\ref{D:MX:freedom}, we may replace $\alpha$ by $\alpha u$
and $A_*$ by $A=A_*/u$ so that $A=n_3X$, with $v_{L_3}X=-h$.

\begin{lemma}\label{D:MX:higher-coeff}
For $i=1,2$ and $z\in\pp_{L_i}^{b_i}$,
$\theta_i(1-z)=\Psi_{L_i}(N_iC\,z)$.
Moreover $v_{L_i}N_iC=-2h$.
\end{lemma}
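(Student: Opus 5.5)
The plan follows the pattern of Lemma~\ref{D:MX:minimal-origin}. We compute $(\lambda n_i)(1-z)$ on $\pp_{L_i}^{b_i}$ using the exact identities \eqref{D:MX:adjust-identities} and the conversion \eqref{D:MX:Qphase}. We then multiply by the restriction $R_i$ of $\chi_i$ from Theorem~\ref{D:MX:cores}. Since $N_iC=a_i+A-Q_i$, it suffices to establish two things: $(\lambda n_i)(1-z)=\Psi_{L_i}((A-Q_i)z)$ on $\pp_{L_i}^{b_i}$, and $\chi_i(1-z)=\Psi_{L_i}(a_iz)$ there. The second holds because $b_i\ge s_i$, which is immediate from $h\ge a$: $b_1=T+h-a\ge2e+1$ and $b_2=e+1+h\ge e+a$.

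For the first claim, take $z\in\pp_{L_i}^{b_i}$. By \eqref{D:MX:trace}, $\operatorname{tr}_iz$ and $n_iz$ lie in $\pp_F^{\ceil{n/2}}$. The character formula for $\lambda$ with coefficient $A$ then gives
\[
\lambda(n_i(1-z))=\Psi_F(-A\operatorname{tr}_iz+An_iz)=\Psi_{L_i}(-Az)\Psi_F(An_iz).
\]
Here the exact quadratic expansion $n_i(1-z)=1-\operatorname{tr}_iz+n_iz$ is used, together with the fact that the coefficient formula is exact on that whole ideal, since $2\ceil{n/2}\ge n$. It remains to show $\Psi_F(An_iz)=\Psi_{L_i}(Q_iz)$ up to the error term in \eqref{D:MX:Qphase}. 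To apply \eqref{D:MX:Qphase} we need $(Q_i/\beta_i)z\in\pp_{L_i}^{q_i}$. Using Lemma~\ref{D:MX:errors} and \eqref{D:MX:valuations}, for $i=1$ this valuation is at least $b-h-2b+b_1=T-a-b=a$, since $v_{L_1}\beta_1=2b$. For $i=2$ it is at least $-h+e+1+h=e+1$. The remaining error is $(\mathcal E/\beta_i)n_iz$. Its $F$-valuation is at least $T-a-h-b+b_1=T-b\ge$ \ldots, and this is the point that needs care. Written out, for $i=1$ the bound is $(T-a-h)-b+(T+h-a)=2T-2a-b=T$. For $i=2$ it is $(T-a-h)+(e+1+h)=T+e+1-a\ge T$. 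So the error is killed by $\Psi_F$. This gives $\Psi_{L_i}(Q_iz)=\Psi_F(An_iz)$, and hence $(\lambda n_i)(1-z)=\Psi_{L_i}((A-Q_i)z)$.

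For the valuation statement, note $v_{L_3}X=-h$, so $v_KX=-2h<-t_1=v_KY$ because $h\ge a$ gives $2h\ge2a>2a-1$. Thus $C$ has $K$-valuation $-2h$. Since $K/L_i$ is totally ramified, $v_{L_i}N_iC=v_KC=-2h$. As a consistency check, $-2h=J_i-M_i$ by \eqref{D:MX:J} and \eqref{D:MX:higher-conductors}.

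The main obstacle is the depth bookkeeping for $i=1$. Both the admissibility of \eqref{D:MX:Qphase} (depth exactly $a$) and the error bound (exactly $T$) sit at the boundary, so the precise valuations $v_{L_1}\beta_1=2b$ and $v_{L_1}Q_1\ge b-h$ must be used without loss. A secondary check is that the trace bound for $z\in\pp_{L_1}^{b_1}$, namely $\floor{(b_1+2a)/2}\ge\ceil{n/2}$, holds. This follows from $b_1+2a=n+a$ and $a\ge1$.
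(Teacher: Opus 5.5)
Your argument is the paper's own proof. You check that $(Q_i/\beta_i)z$ lies in $\pp_{L_i}^{q_i}$, with depths $a$ and $e+1$, and that the weighted error $(\mathcal E/\beta_i)n_iz$ lies in $\pp_F^{T}$, with depths $T$ and $3e+2-a$. You then use $b_i\ge s_i$ to add the coefficient $a_i$ from \eqref{D:MX:R}, and read off $v_{L_i}N_iC=-2h$ from the dominance of $X$ over $Y$. Your depth bookkeeping matches the paper's exactly.

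One sign in your first display is wrong. With the paper's convention $\lambda(1-u)=\Psi_F(Au)$ and $u=\operatorname{tr}_iz-n_iz$, the value is $\lambda(n_i(1-z))=\Psi_{L_i}(Az)\Psi_F(-An_iz)$. Your version, combined with \eqref{D:MX:Qphase}, would give $\Psi_{L_i}((Q_i-A)z)$, which is the inverse of what you conclude. With the corrected sign, \eqref{D:MX:Qphase} gives $\Psi_F(-An_iz)=\Psi_{L_i}(-Q_iz)$, and hence the coefficient $A-Q_i$ as claimed.

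Separately, the aborted intermediate ``$T-a-h-b+b_1=T-b$'' should read $2T-2a-b$. You do then evaluate this correctly to $T$.
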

\begin{proof}
The arguments $(Q_i/\beta_i)z$ of \eqref{D:MX:Qphase} have respective depths
at least $a$ and $e+1$. The weighted errors there have depths
\[
 (T-a-h)-b+b_1=T,\qquad
 (T-a-h)+b_2=3e+2-a\ge T.
\]
Consequently exact quadratic norm expansion yields the pulled-back
coefficient $A-Q_i$ on the whole indicated ideals. These ideals are
contained in $\pp_{L_i}^{s_i}$, so \eqref{D:MX:R} adds the coefficient $a_i$, giving $N_iC$.
The element $X$ dominates $Y$ because $-2h<-t_1$; hence
$v_KC=-2h$ and $v_{L_i}N_iC=-2h$, as required by \eqref{D:MX:higher-conductors}.
\end{proof}

Use $\beta_i=n_i h_i$ as the coefficient for $\omega_i$.
Equations \eqref{D:MX:E2}, \eqref{D:MX:Delta}, and \eqref{U:normalized-factor} give
\begin{equation}\label{D:MX:higher-full}
 \mathcal A_i=D_\theta(c_0)\Theta(C)^{-1}
              \Psi_F(-E_2(C)+\Delta)g_{\theta_i}g_{\omega_i}.
\end{equation}
The factor preceding $g_{\theta_i}g_{\omega_i}$ is independent of $i$.
The conductors of $\theta_1,\omega_1$ are even, so
$g_{\theta_1}=g_{\omega_1}=1$. The conductors of $\theta_2,\omega_2$ are odd.

\begin{theorem}[Equality for $n\ge2e+a+1$]\label{D:MX:higher}
For every $h\ge a$, one has $\mathcal A_1=\mathcal A_2$.
\end{theorem}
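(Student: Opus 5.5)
By \eqref{D:MX:higher-full}, the factor in front of $g_{\theta_i}g_{\omega_i}$ does not depend on $i$, and $g_{\theta_1}=g_{\omega_1}=1$. So the whole task is to show $g_{\theta_2}g_{\omega_2}=1$. Both of these conductors are odd: $M_2=2b_2-1$ has critical depth $b_2-1=e+h$, and $T=2e+1$ has critical depth $e$. I will model the argument on Lemma~\ref{D:UR:resE}, which proved the same kind of reciprocity $g_E=g_\tau^{-1}$ in the unramified case, and show $g_{\theta_2}=g_{\omega_2}^{-1}$.

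\textbf{Step 1: a reciprocity identity for $\omega_2$.} This is the analogue of Lemma~\ref{D:UR:resreciprocity}. For $w\in\pp_{L_2}^e$ I expect
\[
\omega_2(1+\beta_2^{-1}n_2w\cdot\beta_2)\;\text{-type identity:}\quad \omega_2(1+n_2w)=\Psi_{L_2}(\beta_2 w)\quad\text{(up to the coefficient $\beta_2$)}.
\]
The argument factors $n_2(1+w)=(1+n_2w)(1+\operatorname{tr}_2w/(1+n_2w))$. Since the break is maximal, $\operatorname{tr}_2w$ has depth at least $\lfloor(3e+1)/2\rfloor\ge e+1$. Applying \eqref{D:MX:lowerformula} to the second factor, and \eqref{D:MX:normphase} where it is needed, then gives the identity, with an error of valuation at least $3e\ge T$. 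Writing $\mathcal H_{\omega_2}(z)=\Psi_F(-\beta_2z)\omega_2(1+z)^{-1}$ on $\pp_F^e$, this yields $\mathcal H_{\omega_2}(n_2w)^{-1}=\Psi_{L_2}(\beta_2w)\Psi_F(\beta_2n_2w)$.

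\textbf{Step 2: the critical function of $\theta_2$.} Take $v\in\pp_{L_2}^{e+h}$. The character value $\theta_2(1+v)$ is computed in two pieces: $\chi_2(1+v)$ from \eqref{D:MX:R}, using $e+h\ge e+a=s_2$, and $(\lambda n_2)(1+v)$ from the formula for $\lambda$. The second needs care, because $v$ lies one level below the stationary ideal $\pp^{b_2}$. The trace of $v$ still lands in $\pp_F^{\lceil n/2\rceil}$, and the norm term $n_2v$ has depth $e+h$, which is at least $\lceil n/2\rceil=e+\lceil (h+1)/2\rceil$ because $h\ge1$. So the linear formula for $\lambda$ applies with coefficient $A$.

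Next use $N_2C=a_2+A-Q_2$ and \eqref{D:MX:Qphase}. Every term except one should cancel modulo the trivial ideal, leaving
\[
H_{\theta_2}(v)=\Psi_{L_2}(Q_2'v)\,\Psi_F(A\,n_2v),
\]
where $Q_2'$ is the part of the coefficient of valuation $-h$ surviving from $Q_2=S_2(Yg_2X)$. I expect $Q_2'=\beta_2 x$ for an $x\in L_2$ with $\beta_2n_2(x)\equiv A$ to the needed precision; Lemma~\ref{D:MX:freedom} and \eqref{D:MX:adjust-identities} supply an exact norm relation through $\mathcal E$. Step 1 then rewrites this as
\[
H_{\theta_2}(v)=\mathcal H_{\omega_2}\bigl(n_2(xv)\bigr)^{-1}.
\]

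\textbf{Step 3: summing.} On residues, $v\mapsto n_2(xv)$ becomes $z\mapsto \bar u^2z^2$ for a unit $u$, which is a bijection of $k$. Summing therefore gives $g_{\theta_2}=\overline{g_{\omega_2}}=g_{\omega_2}^{-1}$, and hence $\mathcal A_1=\mathcal A_2$.

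The main obstacle is Step 2. One must identify the precise element $x$, and check that the errors $\mathcal E/\beta_2$ and the $2$-divisible corrections (for instance $2p_X$ in $\Delta$) do not affect the critical function at depth exactly $e+h$. Step 2 also fails as stated if $v_{L_2}(xv)<e$. In that case I would fall back on Lemma~\ref{U:common-function}, applied to $C_z=C(1+\pi^{2(e+h)}z)$ as in Theorem~\ref{D:MX:minimal}, and obtain $H_{\theta_2}\circ p=H_{\omega_2}\circ q$ with bijections $p,q$. If the induced map has index-two image, I would instead use Lemma~\ref{U:missing-coset}, with the third field $L_3$ supplying the element $w_i$.
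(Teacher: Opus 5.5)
Your argument is correct in substance, and it takes a different route from the paper. The paper never computes $H_{\theta_2}$. It perturbs the common element, $C_z=C+V$ and $Y_z=Y+V$ with $V=Y\pi^{2e}z$. It then uses the $E_2$ identity together with $[n_iS_iC_z-n_iS_iC]-[n_iS_iY_z-\beta_i]=-p_X\Tr_{K/F}V$ to show that the product of the $\theta_i$- and $\omega_i$-critical expressions is independent of $i$. For $i=1$ both conductors are even, so that product is $1$. After checking that the graded maps $p,q$ induced by $N_2$ and by $n_2\circ S_2$ are bijective, this gives $H_{\theta_2}(p(z))H_{\omega_2}(q(z))=1$. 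You instead stay inside $L_2/F$, as in Lemma~\ref{D:UR:resE}.

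Your Step~2 closes exactly and needs no cancellation. For $v\in\pp_{L_2}^{e+h}$ the formulas for $\chi_2$ and $\lambda$ give $H_{\theta_2}(v)=\Psi_{L_2}(Q_2v)\Psi_F(A\,n_2v)$.

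Note that \eqref{D:MX:Qphase} is \emph{not} available at this depth, since $(Q_2/\beta_2)v$ lies only in $\pp_{L_2}^{e}$ rather than $\pp_{L_2}^{q_2}$. Your Step~1 is precisely its replacement one level down: $\omega_2(1+n_2w)=\Psi_{L_2}(\beta_2w)$ for $w\in\pp_{L_2}^e$, which holds because the maximal break puts $\operatorname{tr}_2\pp_{L_2}^e$ in $\pp_F^{e+1}$.

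Take $x=Q_2/\beta_2$ exactly. By \eqref{D:MX:adjust-identities}, $\beta_2n_2(x)=A+\mathcal E/\beta_2$. The term $(\mathcal E/\beta_2)n_2v$ has depth at least $T+e-a\ge T$, so $H_{\theta_2}(v)=\mathcal H_{\omega_2}(n_2(xv))^{-1}$.

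Since $v_F(\beta_2A)=-h<v_F\mathcal E$, one has $v_{L_2}Q_2=-h$ exactly. Hence $xv\in\pp_{L_2}^e$ always, and $v\mapsto n_2(xv)$ induces $z\mapsto\bar u_0z^2$ with $\bar u_0\ne0$. None of your fallbacks (the case $v_{L_2}(xv)<e$, or Lemma~\ref{U:missing-coset}) is ever needed. Nor does the $2$-divisible term $\Delta$ enter, because the coefficient used for $\omega_2$ is $\beta_2$.

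The two routes trade off as follows. Yours gives an explicit formula for $H_{\theta_2}$ and reduces bijectivity to Frobenius, at the cost of the exact identity $n_2Q_2=\beta_2A+\mathcal E$ and the maximal-break trace estimate. The paper's needs no information about $H_{\theta_2}$ or the exact valuation of $Q_2$, and reuses the common-element mechanism of Theorem~\ref{D:MX:minimal}, but it must verify the graded trace and norm maps for $S_2$ and $n_2$.
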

\begin{proof}
Put
\[
 V=Y\pi^{2e}z,\qquad C_z=C+V,\qquad Y_z=Y+V\quad(z\in\OO_K).
\]
We evaluate $\theta_i$ at $N_iC_z/N_iC$ and $\omega_i$ at
$n_iS_iY_z/\beta_i$. The coefficients for these two characters
are $N_iC$ and $\beta_i=n_i h_i$, respectively.
The element $V$ has $K$-valuation at least $b$ and $V/C$ has depth
$b+2h$. For $S_1$, the trace has $L_1$-valuation at least
$3e-2a+1+h\ge b_1$, and its highest-degree norm term has depth $b+2h\ge b_1$.
Thus the upper residual expression is $1$ on its even stationary ideal.
For $S_2$, the consecutive trace-ideal exponents at source depths
$b+2h,b+2h+1$ are $e+h,e+h+1$.
The highest-degree norm term is strictly deeper, since
$b+2h-(e+h)=e-2a+1+h\ge1$.
Thus the map $p:k\to k$ induced by $N_2(C_z/C)-1$ on
$\pp_{L_2}^{e+h}/\pp_{L_2}^{e+h+1}$ is bijective.

For the arguments of $\omega_i$, $S_1V/h_1$ has depth at least $e\ge a$, so its norm
quotient is in the even stationary ideal of $\omega_1$.
The map $z\mapsto S_2V/h_2$ has a nonzero graded trace at depth $e$,
by the consecutive trace exponents at $b,b+1$. Its lower norm has highest-degree term of depth $e$ and trace of depth at least $e+1$.
Thus the map $q:k\to k$ induced by
$n_2(S_2Y_z)/\beta_2-1$ on $\pp_F^e/\pp_F^{e+1}$ is bijective.
All traces in these lower character arguments stay nonzero.

Since $S_iC=h_i-p_X$, quadratic norm expansion gives
\[
 [n_iS_iC_z-n_iS_iC]-[n_iS_iY_z-\beta_i]=-p_X\Tr_{K/F}V.
\]
Using \eqref{D:MX:E2}, we obtain
\begin{equation}\label{D:MX:two-origins}
 \operatorname{tr}_i(N_iC_z-N_iC)+(n_iS_iY_z-\beta_i)
  =E_2(C_z)-E_2(C)+p_X\Tr_{K/F}V,
\end{equation}
independent of $i$.
Upper multiplicative values are the common $\Theta(C_z/C)$ and lower
ones are $1$, because their arguments are norms. Thus \eqref{D:MX:two-origins}
and \eqref{U:normalized-residual} give
\[
 H_{\theta_2}(p(z))H_{\omega_2}(q(z))=1\qquad(z\in k).
\]
Both residue maps are additive bijections. Therefore
$\sum H_{\theta_2}=\overline{\sum H_{\omega_2}}$, after reindexing.
Each normalized sum has absolute value $1$ by Ueda's nondegenerate
finite quadratic-sum formula. Consequently
$g_{\theta_2}g_{\omega_2}=1$.
Equation~\eqref{D:MX:higher-full} proves the assertion for every $h\ge a$.
\end{proof}

\subsection{Completion of the maximal-break case}
\begin{proof}[Proof of Theorem~\ref{D:MX:main}]
Construct $Y$ by \eqref{D:MX:origin} and apply
Lemma~\ref{D:MX:lower} and Theorem~\ref{D:MX:cores} to obtain
$\beta_i$ and $\chi_i$. Lemma~\ref{D:MX:realize} writes the given
characters as in \eqref{D:MX:twisted}, after conjugating $\theta_1$
if necessary. This does not change its norm pullback or local constant.
Every base conductor is either $n\le2e+a$, covered by
Theorem~\ref{D:MX:minimal}, or $n\ge2e+a+1$, covered by
Theorem~\ref{D:MX:higher}. Their endpoints are consecutive.
Repeat the construction with the two maximal fields interchanged to
compare $L_1$ with $L_3$. This proves independence
of all three expressions for the original given family.
\end{proof}

\section{Totally ramified mixed characteristic: nonmaximal breaks}\label{sec:dyadic-nonmaximal}
\subsection{Statement and ramification data}
Let $F/\mathbf Q_2$ be finite, with residue field $k$, uniformizer
$\varpi$, and $e=v_F(2)$. Normalize all valuations on their named fields.
Let $K/F$ be totally ramified with group $C_2^2$. Label its lower
quadratic fields $L_i=K^{\langle g_i\rangle}$ so that
\begin{equation}\label{D:NM:breaks}
 t_1=2a-1,\qquad t_2=t_3=2r-1,\qquad 1\le a\le r\le e.
\end{equation}
Put $\delta=r-a$, $T=2r$, and $T_1=2a$, $T_2=T_3=T$.
These $T_i$ are the lower different exponents and the conductors of the
quadratic norm characters $\omega_i$. Write
\[
 N_i=\Nm_{K/L_i},\quad S_i=\Tr_{K/L_i},\quad
 n_i=\Nm_{L_i/F},\quad \operatorname{tr}_i=\Tr_{L_i/F}.
\]
For an arbitrary nontrivial additive character $\psi_F$, use
$\psi_{L_i}=\psi_F\operatorname{tr}_i$. Set $n_F=n_F(\psi_F)$, so
$\pp_F^{-n_F}$ is its largest trivial \emph{ideal}.
\begin{theorem}\label{D:NM:main}
Suppose $\theta_iN_i=\Theta$ is a primitive compatible family: $\Theta$
is invariant under $\Gal(K/F)$ and does not descend by $\Nm_{K/F}$.
Then the three expressions
\begin{equation}\label{D:NM:Adef}
 \mathcal A_i(\theta_i)=
 \Delta_{L_i}(\theta_i,\psi_{L_i})\Delta_F(\omega_i,\psi_F)
\end{equation}
are equal. This holds for every continuous quasi-character conductor
under \eqref{D:NM:breaks}.
\end{theorem}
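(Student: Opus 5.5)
The plan follows the template of Section~\ref{sec:dyadic-maximal}, now with breaks $(2a-1,2r-1,2r-1)$ and $r\le e$. I first prove $\mathcal A_1(\theta_1)=\mathcal A_2(\theta_2)$; interchanging $L_2,L_3$ then gives the comparison with $L_3$. Since every nontrivial additive character is a scaling of a fixed one, I reduce to a convenient $\psi_F$. The additive-scaling formula multiplies each $\mathcal A_i$ by $D_\theta(c)$, which is independent of $i$ by Lemma~\ref{U:conjugacy}, so this reduction costs nothing.

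\emph{Step 1: Kummer generators.} Since $r\le e$, the quadratic fields are not of maximal break, so each $L_i$ is generated by a square root of a unit $1+\epsilon_i$ with $v_F\epsilon_i=2e-2r+1$ for $i=2,3$ and $2e-2a+1$ for $i=1$. As in Lemma~\ref{D:MX:align}, I adjust the generators $S_1,S_2$ with $L_3=F(S_1S_2)$ so that the defects $\epsilon_1,\epsilon_2$ are aligned modulo a suitable unit group. This alignment is what makes the norm-character coefficients simultaneous. I then put $Y=(S_1+S_2-1)/2$, or a variant normalized by $\varpi^{e-r}$, and compute exact tables of $a_i=N_iY$, $h_i=S_iY$ and $\beta_i=n_ih_i$, analogous to \eqref{D:MX:origin-table}. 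The target valuations are $v_KY=-t_1$ and $v_{L_i}a_i=-t_1$.

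\emph{Step 2: lower coefficients and compatible models.} I normalize $\alpha$ by $\omega_3$ on $\pp_F^{r}$, and use Steinberg and symbol transformations together with Lemma~\ref{D:MX:unit-symbol} to get $\omega_i(1-z)=\Psi_F(\beta_iz)$ on $\pp_F^{\lceil T_i/2\rceil}$. I then define $R_i(1-z)=\Psi_{L_i}(a_iz)$ with conductors $m_i=J_i+t_1$, where $m_1=2T-1$ is odd and $m_2=m_3=T_1+T-1$ is even. Compatibility on a deep unit group of $K$ follows from Lemma~\ref{U:e2}, exactly as in Lemma~\ref{D:MX:Rcompat}. Extending the models and writing $\theta_i=\chi_i(\lambda n_i)$ follows Theorem~\ref{D:MX:cores} and Lemma~\ref{D:MX:realize}. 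The conductor of $\lambda$ splits into a minimal range $n\le T+a-1$ and a high range where $M_i=2n-T_i$, and parity rules out coincident conductors.

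\emph{Step 3: stationary coefficients.} I choose $X\in L_3$ with $n_3X=A$, using Lemma~\ref{U:normalization} via a replacement $\alpha\mapsto\alpha u$ whose harmlessness on all formulas must be checked as in Lemma~\ref{D:MX:freedom}. Setting $C=Y-X$, the coefficients are $N_iC$ and $n_iS_iC$. Lemmas~\ref{U:common-origin} and \ref{U:common-function} then reduce the problem to comparing critical sums.

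\emph{Step 4: the critical sums, which I expect to be the main obstacle.} Here $t_1=t_2$ can occur when $a=r$, and the relevant residue maps are Artin--Schreier polynomials $z^2+cz$ rather than bijections. This is the same phenomenon as in Proposition~\ref{D:EQ:min-one}, and I plan to treat it with Lemma~\ref{U:missing-coset}, using the third field to produce the point $w_i$ and to verify the polar condition. When $a<r$ the maps are bijective, as in Theorems~\ref{D:MX:minimal} and \ref{D:MX:higher}, and the sums match or are conjugate. For large $n$ I would instead invoke the stable-twist Theorem~\ref{D:UR:stable} directly.
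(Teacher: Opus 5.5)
Your outline follows the paper's architecture: aligned generators, a symbol computation of the $\beta_i$, models $R_i$, $\theta_i=\chi_i(\lambda n_i)$, the replacement $\alpha\mapsto\alpha u$, $C=Y-X$, and Lemma~\ref{U:missing-coset} when $a=r$. However, the high-conductor step fails as stated.

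\textbf{The high range.} The hypothesis $n\ge a_{L_i}(\chi_i)+a_F(\omega_i)$ of Theorem~\ref{D:UR:stable} reads $n\ge(4r-1)+2a$ for $i=1$ and $n\ge(2a+2r-1)+2r$ for $i=2$, i.e.\ $n\ge4r+2a-1$. But the high range already begins at $n=2r+a$. On the intermediate range you cannot fall back on Lemma~\ref{U:common-origin} with $W_i=n_iS_iC$. Put $h=n-T\ge a$ and write $X=P+Qz$. For odd $h$ the odd-valuation term $Qz$ leads. Since $\operatorname{tr}_3z=-1$ and $e\ge r$, the trace $p_X=2P-Q$ has valuation exactly $r-(h+1)/2$. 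Hence $n_1S_1C=\beta_1+\Delta$, where $\Delta=p_X^2+2k_0p_X$ has valuation $2r-h-1<2r-a$. So $n_1S_1C$ is not a stationary coefficient for $\omega_1$ on $\pp_F^a$.

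The missing step is Theorem~\ref{D:NM:higher}: keep $N_iC$ for $\theta_i$, but use $\beta_i=n_ih_i$ for $\omega_i$. Then $\operatorname{tr}_iN_iC+\beta_i=E_2(C)-\Delta$, and $\Delta$ is the same for $i=1,2$ because $\operatorname{tr}_1h_1=\operatorname{tr}_2h_2=-2k_0$. Since $M_i=2b_i$ and every $T_i$ is even, no critical sums occur at all. This treats every $n\ge 2r+a$ at once, with no stable twist.

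\textbf{The minimal range.} A parity slip misdirects Step~4: $m_2=m_3=T_1+T-1=2a+2r-1$ is odd, because here $T=2r$ rather than $2e+1$. So in the minimal range all three $\theta_i$ carry critical sums and no $\omega_i$ does. For $a<r$ you must compare $g_{\theta_1}$ with $g_{\theta_2}$ through $H_1(p_1(\bar v))=Q(\bar v)=H_2(p_2(\bar v))$, with $p_1(v)=v^2$ and $p_2$ a nonzero linear map. This is not the $\theta_1$-against-$\omega_2$ pairing of Theorem~\ref{D:MX:minimal}. In the high range nothing is left to ``match or conjugate''.

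\textbf{Construction of the $\chi_i$.} The template of Theorem~\ref{D:MX:cores} needs $t_2\ge s_2$ and $g_1Z/Z\in U_K^{2r}$ for all $Z\in K^\times$, and both hold exactly when $a<r$. When $a=r$, a uniformizer's commutator has depth $2r-1<s_i=2r$. You must then prescribe $\chi_1^\sigma/\chi_1=\nu_1$ and check agreement with $R_1$ on $I\cap U_{L_1}^{2r}$. That is the explicit computation of Lemma~\ref{D:NM:commmodel}.

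\textbf{The origin $Y$.} Your $Y=(S_1+S_2-1)/2$ contains $S_2/2$, of $K$-valuation $-4e<-t_1$. You need $Y=x+dy$ with $S_1=1+2x$, $S_2=1+2y$, $v_Fd=r-a$ (not $e-r$) and $f+d^2g\in\pp_F^{1-a}$. When $a=r$ you also need $1+d\in\OO_F^\times$ (Lemma~\ref{D:NM:alignment}). These conditions give $v_KY=-t_1$ and, in the one-break case, $v_{L_3}z=1-2r$ and distinct residues $c_i$.
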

A compatible pair extends to a compatible family as explained in
Section~\ref{sec:introduction}. The factor of the trivial character
in $S(L_i/F)$ is $1$.

We use the results of Sections~\ref{sec:local-notation}--\ref{sec:stationary}.
For a ramified quadratic extension $E/D$ of different exponent $d$,
\eqref{U:trace-ideal} gives
\begin{equation}\label{D:NM:edge}
 \Tr_{E/D}(\pp_E^j)=\pp_D^{\floor{(j+d)/2}},\qquad
 v_D(\Nm x)=v_Ex,\qquad n_E(\psi_E)=2n_D(\psi_D)+d.
\end{equation}
A base character of conductor $n>d$ pulls back with conductor $2n-d$;
for $n\le d$ its pullback has conductor at most $d$.
The descent and order-discriminant facts are proved in
Appendix~\ref{app:descent}, and the required crossed norm-character and
ramification facts in Appendix~\ref{app:diamond-foundations}.
In particular Lemma~\ref{D:break-ledger} gives the upper
quadratic different exponents
\begin{equation}\label{D:NM:upper}
 D'_1=4r-2a,\qquad D'_2=D'_3=2a.
\end{equation}

\subsection{A choice of quadratic generators}
Choose generators $x\in L_1$, $y\in L_2$ with
\begin{equation}\label{D:NM:xy}
 x^2+x=f,\qquad y^2+y=g,\qquad
 v_Ff=1-2a,\quad v_Fg=1-2r.
\end{equation}
Here is a construction. If a uniformizer of a quadratic field has
minimal polynomial $X^2-cX+b$, its different is the valuation of
$2\Pi-c$. The two possible valuations $2e+1$ and $2v_Fc$ have different
parities. Different $2a<2e+1$ therefore forces $v_Fc=a$.
Then $x=-\Pi/c$ satisfies \eqref{D:NM:xy}; use the same argument for $y$.
All conjugations are $x\mapsto-1-x$, $y\mapsto-1-y$.

\begin{lemma}\label{D:NM:alignment}
The generator $x$ may be changed, without changing its field or the
valuations in \eqref{D:NM:xy}, and $d\in F$ may be chosen so that
\begin{equation}\label{D:NM:align}
 v_Fd=\delta,\qquad E:=f+d^2g\in\pp_F^{1-a}.
\end{equation}
Moreover $k_0:=1+d$ is a unit, including when $a=r$.
\end{lemma}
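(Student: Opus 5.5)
Since $E=f+d^2g$ should have a smaller pole than $f$, $g$ and $d^2g$, the natural way to think of $E$ is as the Artin--Schreier-type parameter of the third field $L_3$. The condition \eqref{D:NM:align} then says that, after rescaling, the leading pole terms of $f$ and $d^2g$ cancel. The plan is to build $d$ in two steps: first kill the leading term directly, then improve the approximation successively in the truncated ring $\OO_F/\pp_F^{a}$, as in Lemma~\ref{D:MX:align}. At each step we may change $x$ by $x\mapsto x+w$ with $w\in F$ of suitable valuation. This replaces $f$ by $f+w^2+w$ and leaves the field unchanged.

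\emph{Step 1: the leading term.} We have $v_F f=1-2a$ and $v_F g=1-2r$, both odd. Put $d_0=\varpi^{\delta}c$ with $c\in\OO_F^\times$; then $v_F(d_0^2g)=2\delta+1-2r=1-2a$. The leading coefficients are $\bar f_{1-2a}$ and $\bar c^2\bar g_{1-2r}$. Since Frobenius is bijective on $k$, we can choose $\bar c$ so that these cancel. Hence $v_F(f+d_0^2g)\ge 2-2a$.

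\emph{Step 2: the remaining poles.} We induct on the pole order $j$ with $1-2a<-j<1-a$, which we raise one unit at a time.
\begin{itemize}
\item If the current leading exponent $-j$ is odd, we correct $d$ by a term $\varpi^{\delta+(2a-1-j)/2}c'$. The new cross terms $2dd'g$ have valuation at least $e+\delta+1-2r+\cdots\ge 1-a$, using $r\le e$. The term $d'^2g$ cancels the coefficient at $-j$, again by taking a Frobenius square root.
\item If $-j$ is even, we replace $x$ by $x+w$ with $w=\varpi^{-j/2}c''$. Then $w^2$ cancels the term at $-j$, while $w$ itself has valuation $-j/2\ge 1-a$ as long as $j\le 2a-2$, since then $j/2\le a-1$.
\end{itemize}
Each step raises the valuation of $E$. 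We continue until $v_F E\ge 1-a$. Throughout, the leading valuations of $f$ and of $d$ stay $1-2a$ and $\delta$, because all corrections are strictly deeper. In particular $v_F f$, and with it \eqref{D:NM:xy} for the new $x$, is preserved.

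\emph{Main obstacle: the binomial cross terms.} The delicate point is the term $2dd'g$ and its relatives in mixed characteristic. We need $e\ge r$ to push these beyond $\pp_F^{1-a}$, and we need the bookkeeping to show that the odd and even steps do not interfere. The key input is $v_F(2)=e\ge r$, which the hypothesis \eqref{D:NM:breaks} supplies.

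\emph{The unit $k_0$.} If $\delta>0$, then $d\in\pp_F$ and $k_0=1+d$ is a unit. If $a=r$, then $d$ is a unit and we must rule out $\bar d=1$. But $\bar d=1$ would make the leading terms of $f$ and $g$ already cancel. That would give $f+g$, the parameter of $L_3$, a smaller pole, so $t_3<t_1$, contrary to the labeling with $t_1$ minimal and $t_3=t_2$.
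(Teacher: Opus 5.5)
\textbf{Gap: the generator change in your even step.} The step fails in the setting of this lemma. Here $F/\mathbf Q_2$ has characteristic zero, and for $w\in F^\times$
\[
 (x+w)^2+(x+w)=f+w^2+w+2wx\notin F .
\]
So $x+w$ no longer satisfies any relation $X^2+X=f'$ with $f'\in F$; its trace is $2w-1\ne-1$. The rule $f\mapsto f+w^2+w$ is the characteristic-two Artin--Schreier rule and is not available here. Any $x'\in L_1\setminus F$ with $x'^2+x'\in F$ has trace $-1$. Hence $1+2x'$ lies in the trace-zero line $F\cdot S$, where $S=1+2x$. The only admissible changes of generator are therefore $S\mapsto\lambda S$ with $\lambda\in F^\times$. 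The cross term you call the main obstacle, $2dd'g$, is harmless: it has valuation at least $e+2-2a\ge1-a$. The term that actually breaks your argument is $2wx$.

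\textbf{Repair.} Take $\lambda=1+2w$, i.e. replace $x$ by $x+w(1+2x)$. This gives exactly
\[
 f\longmapsto f+(w+w^2)(1+4f).
\]
With $v_Fw=-j/2$ and $j\le 2a-2$, the term $w^2$ still cancels the coefficient of $\varpi^{-j}$. The terms $w$ and $4fw$ lie in $\pp_F^{1-a}$. The term $4fw^2$ is deeper than $\varpi^{-j}$ by $2e+1-2a\ge1$. So your descending induction goes through.

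This is precisely the paper's device. It replaces $S_0$ by $S_0(1+\varpi^{e-a+1}w)$ and solves $v_0v^2+\varpi U_0w^2=u_0$ in $\OO_F/\pp_F^a=k[\varpi]/(\varpi^a)$. Even degrees there are fixed by $v$, which corresponds to your odd pole orders handled through $d$. Odd degrees are fixed by $w$.

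\textbf{The unit $k_0$.} Your argument takes a similar characteristic-two shortcut. The generator of $L_3$ is $z=x+y+2xy$, with $z^2+z=f+g+4fg$, not $f+g$. In mixed characteristic, ``smaller pole $\Rightarrow$ smaller break'' is not automatic. It does hold here, for two reasons:
\begin{itemize}
\item When $a=r$, $v_F(4fg)=2e+2-4a\ge2-2a$, so $\bar d=1$ forces $v_F(f+g+4fg)\ge2-2a$.
\item The integral element $\varpi^{a-1}z$ then generates an order whose discriminant has valuation exactly $2a-2$. Lemma~\ref{C:order-discriminant} bounds the different of $L_3/F$ by $2a-2<T_3$, which is the required contradiction.
\end{itemize}
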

\begin{proof}
Start with $x_0,f_0$ and put $S_0=1+2x_0$, $U_0=S_0^2=1+4f_0$.
Let $b=2e-2a+1$ and $q=\varpi^b$. Replace
\[
 S_0\quad\hbox{by}\quad S=S_0(1+\varpi^{e-a+1}w),\qquad x=(S-1)/2,
 \quad d=\varpi^{r-a}v.
\]
The two base variables $v,w$ will be integral, with $v$ a unit.
Modulo $\pp_F^a$,
\[
 \frac{S^2-1+4d^2g}{q}
 \equiv u_0+\varpi U_0w^2+v_0v^2,
 \qquad u_0=4f_0/q,\quad v_0=4\varpi^{2r-2a}g/q.
\]
The omitted linear term has valuation at least $a$; $u_0,v_0,U_0$ are
units. Since $a\le e$, the ring $\OO_F/\pp_F^a$ has characteristic two
and is $k[\varpi]/(\varpi^a)$. Its coefficient field can be constructed
as the unique lifts of the roots of $Z^{|k|}-Z$, using its unit derivative.
Solve $v_0v^2+\varpi U_0w^2=u_0$ successively in powers of $\varpi$.
At an even degree the next coefficient of $v^2$ is determined, and at
an odd degree the next coefficient of $w^2$ is determined; earlier
contributions are known and the leading coefficients are units.
Frobenius on $k$ supplies the required residue square roots. The constant
coefficient of $v$ is nonzero. Lifting $v,w$ to $F$ proves
$v_F E\ge b+a-2e=1-a$. The perturbation of $4f_0$ is strictly deeper
than $b$, so $v_Ff=1-2a$ is unchanged.

If $r>a$, then $d\in\pp_F$. Suppose $r=a$ and $\bar d=1$.
Then $f+g$ has valuation at least $2-2a$, and so does
$f+g+4fg$, since $e\ge a$. The generator
$z=x+y+2xy\in L_3$ satisfies $z^2+z=f+g+4fg$.
The integral element $\varpi^{a-1}z$ generates an order whose
polynomial discriminant has valuation $2a-2$. The field different is
at most this value, contradicting $T_3=2a$. Thus $1+d$ is a unit.
\end{proof}

Put
\begin{equation}\label{D:NM:origin}
 z=x+y+2xy\in L_3,\qquad Y=x+dy,\qquad
 a_i=N_iY,\quad h_i=S_iY,\quad \beta_i=n_i h_i.
\end{equation}
Direct calculation gives
\begin{align}
 z^2+z&=f+g+4fg,                                      \label{D:NM:zpoly}\\
 h_1&=2x-d,&h_2&=2dy-1,&h_3&=-k_0,                  \label{D:NM:htable}\\
 \beta_1&=d^2+2d-4f,&\beta_2&=1+2d-4d^2g,&\beta_3&=k_0^2,\label{D:NM:betatable}\\
 a_1&=2f-E-k_0x,&a_2&=E-2f-dk_0y,&a_3&=-E-dz.     \label{D:NM:atable}
\end{align}
The right side of \eqref{D:NM:zpoly} has valuation $1-2r$. If $r>a$, $g$
is its unique leading term; if $r=a$, Lemma~\ref{D:NM:alignment} excludes
cancellation of the leading terms of $f+g$. The term $4fg$ is strictly
deeper in both cases. Thus $v_{L_3}z=1-2r$.
The constant terms in \eqref{D:NM:atable} have base valuation at least $1-a$,
whereas its nonconstant terms have $L_i$-valuation $1-2a$. Hence
\begin{equation}\label{D:NM:originvals}
 v_KY=v_{L_i}a_i=-t_1,
 \quad v_{L_1}h_1=2\delta,\quad v_{L_2}h_2=v_{L_3}h_3=0,
 \quad \Tr_{K/F}Y=-2k_0.
\end{equation}
For $h_1$, the depth of $2x$ is $2e-2a+1>2\delta$; the other traces
are units. In particular all trace norms in \eqref{D:NM:origin} are nonzero,
$v_F\beta_1=2\delta$, and $\beta_2,\beta_3$ are units.

\subsection{The lower stationary coefficients}
Use the quadratic norm symbol, the conic identity
\eqref{D:MX:symbol-transform}, and the unit-depth estimate of
Lemma~\ref{D:MX:unit-symbol}. Their hypotheses depend only on $F$ and
$e=v_F(2)$, not on maximality of the breaks.

Choose coefficients $b_1,b_2$ for $\omega_1,\omega_2$ such that
\[
 \omega_1(1-u)=\psi_F(b_1u)\quad(u\in\pp_F^a),\qquad
 \omega_2(1-u)=\psi_F(b_2u)\quad(u\in\pp_F^r).
\]
They exist by additive duality, with valuations $-n_F-2a$ and
$-n_F-2r$. The product character $\omega_3$ has coefficient $b_1+b_2$
on $\pp_F^r$. Define
\begin{equation}\label{D:NM:normalization}
 \alpha=b_2/\beta_2,\qquad \Psi_L(u)=\psi_L(\alpha u),
 \qquad v_F\alpha=-n_F-T.
\end{equation}
The largest trivial ideals for $\Psi_F,\Psi_{L_1},\Psi_{L_2},\Psi_{L_3}$
have exponents
\begin{equation}\label{D:NM:J}
 T,\qquad J_1=4r-2a,\qquad J_2=J_3=T.
\end{equation}

\begin{lemma}[Coefficients for the three norm characters]\label{D:NM:lower}
With $q_1=a$, $q_2=q_3=r$, one has
\begin{equation}\label{D:NM:lowerformula}
 \omega_i(1-u)=\Psi_F(\beta_i u)\qquad(u\in\pp_F^{q_i}).
\end{equation}
Moreover $\omega_i(\beta_i)=1$, and
\begin{equation}\label{D:NM:normphase}
 \Psi_{L_i}(\beta_i v)=\Psi_F(\beta_i n_i v)
                              \qquad(v\in\pp_{L_i}^{q_i}).
\end{equation}
\end{lemma}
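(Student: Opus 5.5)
The plan follows the pattern of Lemma~\ref{D:MX:lower}, with the extra wrinkle that $\alpha$ is now \emph{defined} by $\alpha=b_2/\beta_2$ rather than by $\omega_3$. For $i=2$ there is almost nothing to do: by construction $\Psi_F(\beta_2u)=\psi_F(b_2u)=\omega_2(1-u)$ on $\pp_F^r$. Everything then reduces to showing that $\alpha\beta_1$ represents $b_1$ on $\pp_F^a$ modulo the relevant trivial ideal. The case $i=3$ follows from this: the coefficient of $\omega_3$ on $\pp_F^r$ is $b_1+b_2$, so it suffices to check $\beta_1+\beta_2\equiv\beta_3=k_0^2$ with an error of $F$-valuation at least $r$ (the ideal $\pp_F^r$ against the conductor $T=2r$). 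This is a direct computation from \eqref{D:NM:betatable}: we have $\beta_1+\beta_2-k_0^2=2d-4f-4d^2g$ (the terms $2d+2d-2d$ cancel to $2d$). The term $2d$ has valuation $e+\delta\ge r$. The term $4(f+d^2g)=4E$ has valuation at least $2e+1-a\ge r$ by \eqref{D:NM:align}, where we use $e\ge r\ge a$.

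The real content is the identity $\omega_1(1-u)=\Psi_F(\beta_1u)$ for $u\in\pp_F^a$. I would prove it with quadratic norm symbols, as in Lemma~\ref{D:MX:lower}. Write $L_1=F(\sqrt{1+4f})$ and $L_2=F(\sqrt{1+4g})$, and set $\epsilon=4f$. Since $\beta_1\equiv d^2-4f$ to depth at least $2\delta+a$, we have $\epsilon\equiv -4d^2 g$ up to the error $4E$, and $d^2(1+4g)\equiv d^2-\epsilon$. The Steinberg/conic manipulation \eqref{D:MX:symbol-transform} should give
\[
 (1+\epsilon,1-u)=(1+\epsilon u/\beta_1\cdots,\ \ldots)
\]
and transport $\omega_1(1-u)$ to a value $\omega_2(1+c\,u)$ with $c$ of valuation about $v(\beta_1)-0$. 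Concretely, I would use $(1+\epsilon,1-u)=(1+\epsilon,-\epsilon(1-u))$, substitute $\epsilon\approx-\beta_1(1+4g)+\beta_1$, and remove the higher-order discrepancies with Lemma~\ref{D:MX:unit-symbol}. The depth bookkeeping needs $m+n>2e$, and it closes exactly because of \eqref{D:NM:align} and $v_F\beta_1=2\delta$. The target is $\omega_1(1-u)=\omega_2(1-\beta_1u/\beta_2)$. The argument of $\omega_2$ then lies in $\pp_F^{2\delta+a}\supseteq$ the range where $\omega_2$ has coefficient $b_2$ (this needs $2\delta+a\ge r$, i.e.\ $r\ge a$), and it yields $\psi_F(b_2\beta_1u/\beta_2)=\Psi_F(\beta_1u)$.

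The remaining claims are routine. First, $\beta_i=n_i(h_i)$ is a norm, so $\omega_i(\beta_i)=1$. Second, \eqref{D:NM:normphase} follows as before: for $v\in\pp_{L_i}^{q_i}$, both $\operatorname{tr}_iv$ and $n_iv$ lie in $\pp_F^{q_i}$ by \eqref{D:NM:edge}, since $\lfloor(q_i+T_i)/2\rfloor\ge q_i$. We then evaluate \eqref{D:NM:lowerformula} at $n_i(1-v)=1-(\operatorname{tr}_iv-n_iv)$ and use triviality on norms. That gives $\Psi_F(\beta_i\operatorname{tr}_iv)=\Psi_F(\beta_in_iv)$, which is $\Psi_{L_i}(\beta_iv)$ up to the sign on the norm term. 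In characteristic-zero residue-two the sign is harmless because $\Psi_F(2\beta_in_iv)=1$, as $e+q_i\ge T_i$.

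I expect the main obstacle to be the symbol computation for $\omega_1$: identifying $\omega_1$ with a twist of $\omega_2$ via the aligned generators, and checking that every discarded unit has depth large enough for Lemma~\ref{D:MX:unit-symbol}, especially at the boundary cases $a=r$ and $a=1$.
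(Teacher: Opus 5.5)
Your handling of the peripheral claims is correct. For $i=2$ the formula holds by the definition $\alpha=b_2/\beta_2$. For $i=3$ it follows from $i=1$ together with $\beta_3-\beta_1-\beta_2=-2d+4E\in\pp_F^r$. Also $\omega_i(\beta_i)=1$ because $\beta_i=n_ih_i$, and \eqref{D:NM:normphase} comes from evaluating \eqref{D:NM:lowerformula} at the norm $n_i(1-v)$. (There is no sign to repair in that last step: since $\alpha\beta_i\in F$, one has $\Psi_{L_i}(\beta_iv)=\Psi_F(\beta_i\operatorname{tr}_iv)$, so the identity you obtain is already the claim.)

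The gap is in the one substantive step, the formula for $\omega_1$ on $\pp_F^a$. Your target $\omega_1(1-u)=\omega_2(1-\beta_1u/\beta_2)$ is equivalent to the claim, but the proposal never derives it. The mechanism you indicate, approximating and then discarding discrepancies with Lemma~\ref{D:MX:unit-symbol}, does not close. Carry out the exact Steinberg/conic computation of Lemma~\ref{D:MX:lower}: multiply by the norm $-4f$, apply \eqref{D:MX:symbol-transform}, and discard $f/(-d^2g)\in U_F^a$ and $1-u$ against $C=1+4fu$. What comes out is
\[
 \omega_1(1-u)=\omega_2\bigl(1-(f/g)u\bigr)\,\omega_3(1+4fu)\qquad(u\in\pp_F^a).
\]
The last factor cannot be dropped. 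The symbol $\bigl((1+4f)(1+4g),C\bigr)$ has depth sum $(2e+1-2r)+(2e+1-a)$, which is at most $2e$ when, for example, $r=e$ and $a\ge2$. In that range $u\mapsto\omega_3(1+4fu)=\psi_F(-(b_1+b_2)4fu)$ is genuinely nontrivial on $\pp_F^a$, because $v_F(b_1+b_2)=-n_F-2r$ and $2e+1<2r+a$. Your concrete substitution $1+4f\approx1-4g\beta_1$ runs into the same obstruction. Its discrepancy contains $16fg$, of valuation $4e+2-2a-2r$, and $(4e+2-2a-2r)+a>2e$ fails in the same range. So it is not true that the bookkeeping closes using only \eqref{D:NM:align} and $v_F\beta_1=2\delta$.

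What is missing has two parts. First, keep the $\omega_3$ factor and convert the displayed relation, by additive duality on $\pp_F^a$, into
\[
 b_1-b_2R\in\pp_F^{-n_F-a},\qquad R=\frac{f(1-4g)}{g(1+4f)}.
\]
Second, prove $R-\beta_1/\beta_2\in\pp_F^{2r-a}$ using the explicit identity \eqref{D:NM:lower-error} for $f(1-4g)\beta_2-g(1+4f)\beta_1$. This comparison depends on the exact shape of $\beta_1$ and $\beta_2$. At the required relative precision $a$, the term $-4f$ in $\beta_1$ and the term $-4d^2g$ in $\beta_2$ have relative valuations $2e+1-2r$ and $2e+1-2a$. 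Both are visible in the boundary range, and they are exactly what matches the correction $(1-4g)/(1+4f)$ contributed by $\omega_3$. Only after this does $\alpha=b_2/\beta_2$ give $b_1-\alpha\beta_1\in\pp_F^{-n_F-a}$, which is \eqref{D:NM:lowerformula} for $i=1$.
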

\begin{proof}
Write $U=1+4f$, $V=1+4g$. For $u\in\pp_F^a$, put
$C=1+4fu$ and $w=(f/g)u$. Then
\[
 v_F(C-1)\ge2e-a+1,\qquad v_Fw\ge2r-a\ge r.
\]
Multiplying the second symbol argument by the norm $-4f$ and using
\eqref{D:MX:symbol-transform} gives
\[
 \omega_1(1-u)=(C,f(1-u)U).
\]
The ratio $f/(-d^2g)$ lies in $U_F^a$, by \eqref{D:NM:align}.
Lemma~\ref{D:MX:unit-symbol} removes this ratio and $1-u$ from the second
argument. The result is $(C,-gU)$. Applying the same transformation
with $g,w$ gives
$\omega_2(1-w)=(C,g(1-w)V)=(C,gV)$.
The factor $-1$ in the product is also killed by
Lemma~\ref{D:MX:unit-symbol}, since $v_F(-2)=e$.
Consequently, for $u\in\pp_F^a$,
\begin{equation}\label{D:NM:character-relation}
 \omega_1(1-u)=\omega_2(1-(f/g)u)\,\omega_3(1+4fu).
\end{equation}
All uses of the lemma have depth sum greater than $2e$;
in particular $(2e-a+1)+a=2e+1$. The last argument has depth at
least $2e-a+1\ge r$, so the formulas for $b_1,b_2,b_1+b_2$ apply.
Additive duality on $\pp_F^a$ applied to
\eqref{D:NM:character-relation} gives
\begin{equation}\label{D:NM:ratio-covectors}
 b_1-b_2 R\in\pp_F^{-n_F-a},\qquad
 R=\frac{f(1-4g)}{g(1+4f)}.
\end{equation}
Here multiplication of the variable $u$ by the unit $1+4f$ preserves
the ideal $\pp_F^a$.

We check that the explicit norm coefficients have exactly this ratio.
Direct expansion gives
\begin{align}\label{D:NM:lower-error}
 f(1-4g)\beta_2-g(1+4f)\beta_1
 ={}&E(1+2d+16fg)\\[-6pt]
 &-8fgd(d+2)-2gd(d^2+d+1).\notag
\end{align}
Every term has valuation at least $1-a$. For the middle term,
$v_F(d+2)=\delta<e$, so its valuation is at least
$3e+2-4a\ge1-a$; for the last it is at least
$e-r-a+1\ge1-a$. The first follows from \eqref{D:NM:align} and
$v_F(16fg)\ge2$. Division by $g(1+4f)\beta_2$, of valuation $1-2r$,
gives $R-\beta_1/\beta_2\in\pp_F^{2r-a}$.
Together with \eqref{D:NM:ratio-covectors} this proves the first lower formula.
The second is \eqref{D:NM:normalization}. Finally
\[
 \beta_3-\beta_1-\beta_2=-2d+4E\in\pp_F^r,
\]
which proves the third formula on $\pp_F^r$.
By \eqref{D:NM:origin}, $\beta_i=n_i h_i$, so their character
values are one. For $v\in\pp_{L_i}^{q_i}$ both its trace and norm lie
in $\pp_F^{q_i}$. Evaluate \eqref{D:NM:lowerformula} at the exact norm
$n_i(1-v)=1-\operatorname{tr}_i v+n_i v$. Its character is one,
which proves \eqref{D:NM:normphase}.
\end{proof}

\subsection{Construction of the minimal characters}
Put
\begin{equation}\label{D:NM:cores}
 m_1=4r-1,\quad m_2=m_3=2a+2r-1,\qquad
 s_1=2r,\quad s_2=s_3=a+r.
\end{equation}
Then $m_i=J_i+t_1=2s_i-1$. The expressions
\begin{equation}\label{D:NM:Rcharts}
 R_i(1-v)=\Psi_{L_i}(a_i v)\qquad(v\in\pp_{L_i}^{s_i})
\end{equation}
define characters with exact conductor $m_i$ on the indicated groups:
$2s_i-t_1\ge J_i$, and the induced character on
$U_{L_i}^{m_i-1}/U_{L_i}^{m_i}$ is nontrivial.

\begin{lemma}[The value of $R_1$ on $\sigma(v)/v$]\label{D:NM:commmodel}
Let $\sigma$ be the nontrivial automorphism of $L_1/F$ and put
$\nu_1=\omega_2n_1$, the nontrivial character in $S(K/L_1)$.
For every $v\in L_1^\times$ with $\sigma(v)/v\in U_{L_1}^{2r}$,
\begin{equation}\label{D:NM:commmodel-eq}
 R_1(\sigma(v)/v)=\nu_1(v).
\end{equation}
\end{lemma}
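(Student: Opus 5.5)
The plan follows the proof of Lemma~\ref{D:EQ:commmodel} in characteristic two, with symbol calculus from Lemmas~\ref{D:MX:unit-symbol} and \ref{D:NM:lower} replacing Cartier residues.

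\emph{Reduction to a principal unit.} Both sides of \eqref{D:NM:commmodel-eq} are unchanged when $v$ is multiplied by an element of $F^\times$: the left side trivially, the right because $\nu_1(c)=\omega_2(c^2)=1$. So I reduce $v$ to valuation $0$ or $1$. In valuation $1$ the first ramification term gives $\sigma(v)/v$ exact depth $t_1<2r$, so only units occur. Write $v=p+qx$ with $p,q\in F$. Since $v_{L_1}x$ is odd, the two summands have valuations of opposite parity; hence $p$ is a unit and $v_Fq\ge a$. Now $\sigma(v)/v=1-q(1+2x)/v$, and $1+2x$ is a unit, so the depth hypothesis forces $v_Fq\ge r$. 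Put $w=q/p$, so $v_Fw\ge r$, and treat $w=0$ separately. Then $n_1v=p^2(1-w-w^2f)$ and $\nu_1(v)=\omega_2(1-w-w^2f)$.

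\emph{Evaluate the left side.} With $\sigma(v)/v=1-u$ and $u=q(1+2x)/v$, the definition \eqref{D:NM:Rcharts} gives $\Psi_F(\operatorname{tr}_1(a_1u))$. I substitute $a_1=2f-E-k_0x$ from \eqref{D:NM:atable} and compute the trace exactly as a rational function of $w$ with denominator $V:=1-w-w^2f$. The result should be of the form $\Psi_F\bigl((\kappa w+\text{(terms in }E,2f)\,w^2)/V\bigr)$, where the coefficient of $w$ is essentially $k_0$ times a unit correction. Using $v_Fw\ge r$, $E\in\pp_F^{1-a}$, and $v_FT\ge$ \eqref{D:NM:J}, I discard all quadratic terms whose depth reaches $T$.

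\emph{Evaluate the right side and compare.} Factor $V=H(1-w/H)$ with $H=1-w^2f$. The factor $1-w/H$ lies in $U_F^r$, so \eqref{D:NM:lowerformula} gives $\Psi_F(\beta_2w/H)$. The main obstacle is $\omega_2(H)$, where $w^2f$ has depth only $2r+1-2a$. For this I use \eqref{D:NM:character-relation}, or directly the symbol $(g(1+4g),1-w^2f)$ with the transform \eqref{D:MX:symbol-transform}, to turn it into a linear phase. The alignment $f\equiv -d^2g$ modulo $\pp_F^{1-a}$ gives $w^2f\equiv -(dw)^2g$, and $\omega_2(1+(dw)^2g)$ equals $\omega_2$ of the norm $n_2(1+dw y)$ up to a factor $1-dw+\cdots$. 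This should produce a phase of about $\Psi_F(\beta_2 d w)$, the analogue of the Cartier step in Lemma~\ref{D:EQ:rationalCartier}. Here the $2$-adic error terms such as $2dw$ and $4d^2g$ must be controlled using $e\ge r$.

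Combining the two parts, the total exponent becomes $\beta_2(1+d)w/V$ modulo $\pp_F^T$. This should match the left side because $\beta_2(1+d)\equiv k_0$ modulo $\pp_F^{r-\text{small}}$. I expect this last congruence, together with the bookkeeping of the $2$-adic terms, to be the delicate point. The remaining replacements of $H$ by $V$ cost depth at least $T$.
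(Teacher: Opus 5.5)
Your reduction ($v=P(1+wx)$ with $w\in\pp_F^r$, and $\nu_1(v)=\omega_2(V)$ with $V=1-w-fw^2$) and your key device, the $L_2$-norm $n_2(1+dwy)=1-dw-d^2gw^2$, are the paper's. There is a genuine gap, however: the final matching you describe would not go through. The exact trace gives $R_1(\sigma(v)/v)=\Psi_F(B_0/V)$ with
\[
 B_0=-(1+4f)\bigl(k_0w+cw^2\bigr),\qquad c=f-d^2g=E-2d^2g .
\]
The term $cw^2$ is only guaranteed depth $2r+1-a$, which is below $T=2r$ once $a\ge2$, so by your own rule it stays on the left.

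On the right, you make two approximations. Replacing $w^2f$ by $-(dw)^2g$ drops $\beta_2Ew^2$. Treating $\omega_2(1+(dw)^2g)$ as ``about'' $\Psi_F(\beta_2dw)$ drops $\beta_2\cdot2d^2gw^2$. Up to unit denominators, these together discard exactly $\beta_2cw^2$. Hence your claim that the total exponent is $\beta_2(1+d)w/V$ modulo $\pp_F^T$ is false in general, and your two sides differ by a term you have no way to remove.

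The linear congruence $\beta_2(1+d)\equiv k_0$ is also the wrong target. The difference $\beta_2(1+d)-k_0$ contains $-4d^2gk_0$, and the left linear coefficient $-(1+4f)k_0$ contains $-4fk_0$. Each has valuation $2e+1-2a$, which can be smaller than $r$ (e.g.\ $a=r=e\ge2$). So neither term can be dropped after multiplying by $w$.

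What is missing is to keep the $cw^2$ terms and the factor $U=1+4f$, and to pair them across the two sides using the exact identities
\[
 U+\beta_2=2k_0+4c,\qquad Uk_0+\beta_2(1-d)=2(1+d-d^2)+4Ek_0-8d^2g .
\]
Both have valuation at least $e\ge r$, because the $4f$ in $U$ cancels the $-4d^2g$ in $\beta_2$ up to $4E$.

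Your factorization $V=H(1-w/H)$ is also unnecessary; it is what creates your ``main obstacle''. Since $v_Fc\ge1-a$, $V$ itself differs from the norm $Q_0=n_2(1+dwy)$ by $(1-d)w+cw^2\in\pp_F^r$. So \eqref{D:NM:lowerformula} gives $\nu_1(v)=\Psi_F(\beta_2(Q_0-V)/Q_0)$ in one step. Replacing both denominators $V,Q_0$ by $1+d^2gw^2$ costs only $\pp_F^{2r}$. The numerator difference $-(U+\beta_2)cw^2-(Uk_0+\beta_2(1-d))w$ then lies in $\pp_F^{2r}$ by the two identities. That is the paper's proof.
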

\begin{proof}
Multiplication of $v$ by an element of $F^\times$ changes neither side. Odd valuation
would give conjugate quotient of exact depth $t_1<2r$, by the first
ramification term, so it is excluded. Otherwise scale to a unit and
write $v=P+Qx$. Parity of the two summand valuations makes $P$ a unit
and $v_FQ\ge a$. The element $1+2x$ is a unit of $L_1$. Thus the
assumed conjugate-quotient depth forces $w=Q/P\in\pp_F^r$.
Put
\[
 V_0=1-w-fw^2=n_1(1+wx),\quad
 Q_0=1-dw-d^2gw^2=n_2(1+dwy),\quad H_0=1+d^2gw^2.
\]
All three are units. Write $c=f-d^2g=E-2d^2g$, so $v_Fc\ge1-a$.
The difference $Q_0-V_0=(1-d)w+cw^2$ lies in $\pp_F^r$.
Since $Q_0$ is a norm from $L_2$, \eqref{D:NM:lowerformula} gives
\begin{equation}\label{D:NM:comm-norm}
 \nu_1(v)=\omega_2(V_0)=
 \Psi_F\bigl(\beta_2(Q_0-V_0)/Q_0\bigr).
\end{equation}
On the other hand $\sigma(v)/v=1-w(1+2x)/(1+wx)$.
Using $a_1=f-d^2g-k_0x$, an exact quadratic trace calculation yields
\begin{equation}\label{D:NM:comm-trace}
 R_1(\sigma(v)/v)=\Psi_F(B_0/V_0),\qquad
 B_0=- (1+4f)(cw^2+k_0w).
\end{equation}

We compare the two arguments of $\Psi_F$ modulo $\pp_F^{2r}$.
Both numerators in \eqref{D:NM:comm-norm}--\eqref{D:NM:comm-trace} lie in
$\pp_F^r$. Also
\[
 V_0-H_0=-w-Ew^2\in\pp_F^r,\quad
 Q_0-H_0=-dw-2d^2gw^2\in\pp_F^r.
\]
Thus replacing either unit denominator by $H_0$ costs an element of
$\pp_F^{2r}$, exactly the trivial ideal of $\Psi_F$.
With $U=1+4f$, the remaining numerator difference is
\[
 B_0-\beta_2(Q_0-V_0)
 =-(U+\beta_2)cw^2-(Uk_0+\beta_2(1-d))w.
\]
The identities
\[
 U+\beta_2=2k_0+4c,\qquad
 Uk_0+\beta_2(1-d)=2(1+d-d^2)+4Ek_0-8d^2g
\]
show that the first coefficient has valuation at least $e$, and the
second at least $e\ge r$. The first summand therefore has valuation
at least $2r+e+1-a\ge2r$ and the second at least $2r$.
Equations \eqref{D:NM:comm-norm}--\eqref{D:NM:comm-trace} now prove the assertion.
\end{proof}

For any $Z\in K$ we will repeatedly use the exact identity
\begin{equation}\label{D:NM:E2}
 E_2(Z)=\operatorname{tr}_i N_iZ+n_iS_iZ.
\end{equation}
Both sides are the sum of the six pair-products of its four conjugates.
\begin{lemma}\label{D:NM:compat}
For $v\in\pp_K^{2r}$ all $N_i(1-v)$ belong to the domains in
\eqref{D:NM:Rcharts}, and $R_i(N_i(1-v))$ is independent of $i$.
\end{lemma}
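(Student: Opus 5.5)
The plan follows the proofs of Lemma~\ref{D:EQ:Rcompat} and Lemma~\ref{D:MX:Rcompat}. There are two steps: check that the norms land in the domains of the $R_i$, and then identify each value $R_i(N_i(1-v))$ with a single expression built from $E_2$, via \eqref{D:NM:E2} and the lower formulas of Lemma~\ref{D:NM:lower}.

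\emph{Step 1: domains.} For $v\in\pp_K^{2r}$ write
\[
 N_i(1-v)=1-S_iv+N_iv .
\]
The norm term has $L_i$-valuation at least $2r$, which is at least $s_1=2r$ and at least $s_2=s_3=a+r$. For the trace terms I use \eqref{D:NM:edge} with the upper differents \eqref{D:NM:upper}:
\[
 v_{L_1}S_1v\ge\lfloor(2r+4r-2a)/2\rfloor=3r-a\ge 2r,\qquad
 v_{L_i}S_iv\ge\lfloor(2r+2a)/2\rfloor=r+a \quad (i=2,3).
\]
Hence every $N_i(1-v)$ lies in $U_{L_i}^{s_i}$. Expanding, $R_i(N_i(1-v))=\Psi_F\bigl(\operatorname{tr}_i(a_i(S_iv-N_iv))\bigr)$ up to the sign convention of \eqref{D:NM:Rcharts}.

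\emph{Step 2: identification.} I apply \eqref{D:NM:E2} to $Z=Y(1-v)$ and to $Z=Y$ and subtract. Since $N_i(Y(1-v))=a_iN_i(1-v)$, the difference of the first terms is exactly the exponent $-\operatorname{tr}_i(a_i(S_iv-N_iv))$ of $R_i(N_i(1-v))$. For the second terms, put $w_i=S_i(Yv)/h_i$, so that
\[
 n_iS_i(Y(1-v))=\beta_i\,n_i(1-w_i).
\]
If $w_i\in\pp_{L_i}^{q_i}$, then \eqref{D:NM:lowerformula} together with triviality of $\omega_i$ on norms gives $\Psi_F\bigl(\beta_i(n_i(1-w_i)-1)\bigr)=1$. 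This works because trace and norm of $w_i$ lie in $\pp_F^{q_i}$, exactly as in the proof of \eqref{D:NM:normphase}. Therefore
\[
 R_i(N_i(1-v))=\Psi_F\bigl(E_2(Y)-E_2(Y(1-v))\bigr)^{\pm1},
\]
and the right side does not depend on $i$.

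\emph{Main check: valuations of $w_i$.} The key step is verifying $w_i\in\pp_{L_i}^{q_i}$, with $q_1=a$ and $q_2=q_3=r$. The element $Yv$ has $K$-valuation at least $2r-t_1=2r-2a+1$. By \eqref{D:NM:originvals}, $v_{L_1}h_1=2\delta$ and $h_2,h_3$ are units.
\begin{itemize}
\item For $i=1$: the upper trace bound is $\lfloor(2r-2a+1+4r-2a)/2\rfloor=3r-2a$, and dividing by $h_1$ leaves $3r-2a-2(r-a)=r\ge a$.
\item For $i=2,3$: the bound is $\lfloor(2r-2a+1+2a)/2\rfloor=r$, which is exactly $q_i$.
\end{itemize}
The case $i=2,3$ is tight, so I must make sure no floor loses a unit there. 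Since $2r+1$ is odd, the floor gives exactly $r$, so the bound is met. The traces $S_i(Y(1-v))=h_i(1-w_i)$ are nonzero because $w_i$ has positive depth. This completes the argument.
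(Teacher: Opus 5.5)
Your proof is correct and follows the paper's argument essentially line by line. It uses the same trace and norm depth checks ($3r-a$ and $r+a$, together with $2r$) for the domains. It applies \eqref{D:NM:E2} to $Y(1-v)$ and $Y$, and it kills the relative trace changes $w_i=S_i(Yv)/h_i$, of depth at least $r\ge q_i$, using Lemma~\ref{D:NM:lower}. The only blemish is a sign slip: the difference of the first terms is the negative of the exponent of $R_i(N_i(1-v))$. So the hedge ``$\pm1$'' resolves to $R_i(N_i(1-v))=\Psi_F\bigl(E_2(Y)-E_2(Y(1-v))\bigr)$, exactly as in the paper.
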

\begin{proof}
The upper traces have depths at least $3r-a$ and $r+a$, and the highest-degree
terms in the norm expansions have depth $2r$. These meet all three domain requirements.
Apply \eqref{D:NM:E2} to $Y(1-v)$ and $Y$. The upper trace changes
$S_i(Yv)$ have depths at least $3r-2a$ and $r$, respectively.
After division by $h_i$, each relative change has depth at least $r$,
which is at least $q_i$. Since $n_i(1+u_i)$ is a norm,
Lemma~\ref{D:NM:lower} makes the phase of
$n_iS_i(Y(1-v))-\beta_i$ trivial. The other part of \eqref{D:NM:E2} is
$\operatorname{tr}_i(a_i(-S_iv+N_iv))$.
It follows that
\[
 R_i(N_i(1-v))=\Psi_F(-E_2(Y(1-v))+E_2(Y)),
\]
independent of $i$.
\end{proof}

\begin{theorem}[Extension of the characters $R_i$]\label{D:NM:core-exists}
There exist characters $\chi_i$ of $L_i^\times$ with primitive
common norm pullback and conductors $m_i$, such that
$\chi_i|_{U_{L_i}^{s_i}}=R_i$.
\end{theorem}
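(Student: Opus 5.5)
We follow the construction in the proof of Theorem~\ref{D:EQ:core-exists} (characteristic two), with Lemmas~\ref{D:NM:commmodel} and~\ref{D:NM:compat} standing in for Lemmas~\ref{D:EQ:commmodel} and~\ref{D:EQ:Rcompat}. We build $\chi_1$ first, then descend its norm pullback to $L_2$ and $L_3$.

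\textbf{Construction of $\chi_1$.} Let $I=\{\sigma(v)/v:v\in L_1^\times\}$ and prescribe $\kappa_I(\sigma(v)/v)=\nu_1(v)$. This is well defined because $\nu_1=\omega_2n_1$ is trivial on $F^\times$: there $\nu_1(x)=\omega_2(x^2)=1$, and by Hilbert~90 (Lemma~\ref{C:hilbert90}) $\sigma(v)/v$ determines $v$ modulo $F^\times$. Lemma~\ref{D:NM:commmodel} shows that $\kappa_I$ agrees with $R_1$ on $I\cap U_{L_1}^{s_1}$, since $s_1=2r$. The product character on $IU_{L_1}^{s_1}$ is trivial on $U_{L_1}^{m_1}$. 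We extend it across $L_1^\times/U_{L_1}^{m_1}$ by Lemma~\ref{C:character-extension}, first on the finite unit quotient and then choosing a uniformizer value. The result $\chi_1$ is continuous, satisfies $\chi_1|_{U_{L_1}^{s_1}}=R_1$ and $\chi_1^\sigma/\chi_1=\nu_1$ on all of $L_1^\times$, and has exact conductor $m_1$ because $R_1$ does.

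\textbf{Descent to $L_2,L_3$.} The pullback $\chi_1N_1$ is $\Gal(K/L_1)$-invariant, and its other conjugate quotient is $\nu_1N_1=\omega_2\Nm_{K/F}=1$. Hence it is $G$-invariant, and by Hilbert~90 it is trivial on $\ker N_i$ for $i=2,3$. Define $\chi_i^0(N_ix)=\chi_1(N_1x)$ on $N_i(K^\times)$ and extend to $\chi_i$ by Lemma~\ref{C:character-extension}.

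\textbf{Restriction of $\chi_i$ to $U_{L_i}^{s_i}$.} The upper break of $K/L_i$ is $t_1$ for $i=2,3$ (see \eqref{D:NM:upper}). Above this break, norm filtration gives $N_i(U_K^{2r+1})=U_{L_i}^{s_i}$, since $2s_i-t_1=2r+1$. Lemma~\ref{D:NM:compat}, applied on $U_K^{2r}\supset U_K^{2r+1}$, then yields $\chi_i(N_i(1-v))=R_1(N_1(1-v))=R_i(N_i(1-v))$, so $\chi_i|_{U_{L_i}^{s_i}}=R_i$.

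\textbf{Conductors and primitivity.} The conductor of $\chi_i$ is at least $m_i$, because $R_i$ has exact conductor $m_i$. For equality, compare conductors through the common pullback: $a_K=2m_1-D'_1=4r-1+2a-1$ is the pullback conductor of $\chi_1$. Since $m_i>D'_i=2a$, the high pullback formula forces $2a_{L_i}(\chi_i)-2a=2m_1-D'_1$, hence $a_{L_i}(\chi_i)=2r+2a-1=m_i$. Alternatively, restriction to the deep subgroup $U_{L_i}^{s_i}$, where $\chi_i=R_i$, gives the same conclusion. For primitivity, suppose the common pullback descended as $\lambda\Nm_{K/F}$. Then $\chi_1/(\lambda n_1)\in S(K/L_1)$. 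Both $\lambda n_1$ and the elements of $S(K/L_1)$ are $\sigma$-invariant (the latter by Lemma~\ref{U:conjugacy}), which contradicts $\chi_1^\sigma/\chi_1=\nu_1\neq1$.

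\textbf{Main obstacle.} The one point that is not purely formal is the compatibility on $I\cap U_{L_1}^{s_1}$, which is Lemma~\ref{D:NM:commmodel}, together with the requirement that the prescribed character on $IU_{L_1}^{s_1}$ be well defined. If two representations $\sigma(v)/v=\sigma(v')/v'\cdot u$ with $u\in U_{L_1}^{s_1}$ arise, we must check that $\nu_1(v)/\nu_1(v')=R_1(u)$. This follows by applying Lemma~\ref{D:NM:commmodel} to $v/v'$, since its conjugate quotient is $u$. The remaining care is in the norm-filtration equality above the upper break $t_1$. The equality $N_i(U_K^{2r+1})=U_{L_i}^{s_i}$ uses surjectivity of graded norms at all depths above $t_1$, which is valid because $2r+1>t_1$.
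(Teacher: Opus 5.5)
Your proposal is correct and follows the paper's proof step for step. You prescribe $\nu_1$ on $I$, glue it with $R_1$ via Lemma~\ref{D:NM:commmodel}, descend $\chi_1N_1$ to $L_2,L_3$, use $N_i(U_K^{2r+1})=U_{L_i}^{s_i}$ together with Lemma~\ref{D:NM:compat}, and give the same conductor and primitivity arguments. Your extra checks are sound: well-definedness on $IU_{L_1}^{s_1}$ via $v/v'$, and reading the exact conductor $m_i$ directly off $\chi_i|_{U_{L_i}^{s_i}}=R_i$. One small point: the fact that $\sigma(v)/v$ determines $v$ modulo $F^\times$ is just $L_1^{\langle\sigma\rangle}=F$, not Hilbert~90.
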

\begin{proof}
Let $I=\{\sigma(v)/v:v\in L_1^\times\}$ and prescribe
$\kappa_I(\sigma(v)/v)=\nu_1(v)$. This is well defined since
$\nu_1$ is trivial on $F^\times$. Lemma~\ref{D:NM:commmodel} proves agreement
with $R_1$ on $I\cap U_{L_1}^{s_1}$. Their product character factors
through $I U_{L_1}^{s_1}/U_{L_1}^{m_1}$; extend it across
$L_1^\times/U_{L_1}^{m_1}$, first on its finite unit subgroup and then
on a uniformizer. The resulting $\chi_1$ has exact conductor $m_1$
and satisfies $\chi_1^\sigma/\chi_1=\nu_1$ globally.

Its norm pullback to $K$ is invariant under both involutions, since
$\nu_1N_1=1$. By Lemma~\ref{C:hilbert90}, it is trivial on
$\ker N_i$ for $i=2,3$. The character $N_i x\mapsto\chi_1(N_1x)$
of $N_i(K^\times)$ extends to $\chi_i$ on $L_i^\times$ by
Lemma~\ref{C:character-extension}. Norm filtration gives
\[
 N_i(U_K^{2r+1})=U_{L_i}^{a+r}\qquad(i=2,3),
\]
since $K/L_i$ has break $2a-1$ for $i=2,3$, and
$2(a+r)-(2a-1)=2r+1$. Lemma~\ref{D:NM:compat} gives $\chi_i|_{U_{L_i}^{s_i}}=R_i$. Their conductors are at least $m_i>D'_i$.
The conductor of $\chi_1N_1$ is
$2m_1-D'_1=4r+2a-2$. The high norm-pullback conductor formula then
forces the other conductors to be exactly $m_i$.
If $\chi_1N_1=\lambda\Nm_{K/F}$, then
$\chi_1/(\lambda n_1)$ would belong to $S(K/L_1)$.
Both factors would be $\sigma$-invariant, contrary to
$\chi_1^\sigma/\chi_1=\nu_1\ne1$. Thus the family is primitive.
\end{proof}

\subsection{Writing \texorpdfstring{$\theta_i=\chi_i(\lambda n_i)$}{theta_i=chi_i(lambda n_i)} and choosing a norm from \texorpdfstring{$L_3$}{L3}}
\begin{lemma}\label{D:NM:realize}
For every primitive compatible family,
\[
 \theta_i^\sigma/\theta_i=\nu_i=\omega_jn_i\ (j\ne i),\qquad
 D_\theta=\theta_i|_{F^\times}\omega_i
\]
The character $D_\theta$ is independent of $i$. There are characters $\chi_i$ as in
Theorem~\ref{D:NM:core-exists} and a character $\lambda$ of $F^\times$
such that
\begin{equation}\label{D:NM:family}
 \theta_i=\chi_i(\lambda n_i).
\end{equation}
If $n=a_F(\lambda)$, their conductors are $m_i$ for $n\le2r+a-1$;
for $n\ge2r+a$ they are $M_i=2n-T_i$, all even.
\end{lemma}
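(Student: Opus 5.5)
The plan follows the proofs of Lemma~\ref{D:MX:realize} and Proposition~\ref{D:EQ:realize}, with the nonmaximal ramification data \eqref{D:NM:breaks}, \eqref{D:NM:upper} in place of the maximal-break data.

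\emph{Conjugate quotients and $D_\theta$.} Lemma~\ref{U:conjugacy} applies to every pair of quadratic intermediate fields, because Lemma~\ref{D:quadratic-norms} gives distinct norm groups. It shows that $\theta_i^\sigma/\theta_i$ is the nontrivial character of $S(K/L_i)$. By Lemma~\ref{D:crossed-norm}, this character equals $\omega_jn_i$ for $j\ne i$. The same lemma shows that $D_\theta=\theta_i|_{F^\times}\omega_i$ is independent of $i$. All of this applies equally to the family $(\chi_i)$ of Theorem~\ref{D:NM:core-exists}.

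\emph{Realization.} The quotient $\theta_1/\chi_1$ is $\Gal(L_1/F)$-invariant, since both characters have conjugate quotient $\nu_1$. Lemmas~\ref{C:hilbert90} and~\ref{C:character-extension} therefore give $\theta_1/\chi_1=\lambda n_1$ for some character $\lambda$ of $F^\times$. For $i=2,3$, compatibility shows that $\theta_i/(\chi_i(\lambda n_i))$ lies in $S(K/L_i)$, so it is $1$ or $\nu_i$. In the second case I absorb it into $\chi_i$. The common pullback is unchanged because $\nu_iN_i=1$. The restriction to $U_{L_i}^{s_i}$ is also unchanged, because $a_{L_i}(\nu_i)=D'_i+1=2a\le a+r=s_i$ (the break of $K/L_i$ is $2a-1$). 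So the new $\chi_i$ still has the properties of Theorem~\ref{D:NM:core-exists}, and \eqref{D:NM:family} holds.

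\emph{Conductors.} The main step is comparing conductors. By \eqref{D:NM:edge}, $\lambda n_i$ has conductor $2n-T_i$ when $n>T_i$, and at most $T_i$ when $n\le T_i$. The core conductors $m_i$ are odd, while $2n-T_i$ is even, so the conductor of the product $\chi_i(\lambda n_i)$ is the larger of the two. Cancellation at equal conductors cannot occur.

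I then check that the switch point is the same for all three fields. For $i=1$, the inequality $2n-2a\le 4r-1$ is equivalent to $n\le 2r+a-1$. For $i=2,3$, the inequality $2n-2r\le 2a+2r-1$ is also equivalent to $n\le 2r+a-1$. If $n\le T_i$, then $a_{L_i}(\lambda n_i)\le T_i<m_i$ automatically, and this case falls inside $n\le 2r+a-1$.

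Hence the conductors are $m_i$ exactly when $n\le 2r+a-1$. For $n\ge 2r+a$ we have $n>T_i$ for every $i$, and the conductors are $M_i=2n-T_i$, which are even. The only delicate point is this uniform threshold; I expect it to be a direct parity and arithmetic check with no real obstacle.
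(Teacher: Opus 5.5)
Your proposal is correct and follows the paper's proof essentially step for step. The shared steps are: Lemma~\ref{U:conjugacy} for the conjugate quotients and for $D_\theta$; Hilbert~90 and character extension to obtain $\lambda$ from $\theta_1/\chi_1$; absorbing the quotient in $S(K/L_i)$ into $\chi_i$ for $i=2,3$; and the parity argument with the common threshold $n=2r+a$. One trivial slip: for a quadratic extension the norm-character conductor equals the different exponent, so $a_{L_i}(\nu_i)=D'_i=2a$, not $D'_i+1$, though the value $2a\le s_i$ that you actually use is right.
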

\begin{proof}
Lemma~\ref{U:conjugacy} gives the stated conjugate quotient and
restriction $D_\theta$, both for $(\theta_i)$ and for $(\chi_i)$.
The quotient $\theta_1/\chi_1$ is invariant, so Hilbert 90 and character
extension give $\theta_1/\chi_1=\lambda n_1$.
For $i=2,3$, multiply $\chi_i$ by
$\theta_i/(\chi_i(\lambda n_i))\in S(K/L_i)$.
These characters have conductor at most $2a\le s_i=a+r$, so the
restrictions $R_i$ are unchanged. This proves \eqref{D:NM:family} for the given family.
The pullback of a base character with $n>T_i$ has conductor $2n-T_i$;
otherwise it has conductor at most $T_i<m_i$. Since $m_i$ is odd and
$2n-T_i$ is even, equal-conductor cancellation does not occur.
The common threshold is $n=2r+a$, giving precisely the alternatives.
\end{proof}

\begin{lemma}[The replacement $\alpha\mapsto\alpha u$]\label{D:NM:freedom}
For $u\in U_F^{T-1}$, replacing $\alpha$ by $\alpha u$
preserves \eqref{D:NM:lowerformula} and \eqref{D:NM:Rcharts}.
For every $A_*\in F^\times$, one may choose such a $u$ so that
$A=A_*/u\in n_3(L_3^\times)$. Moreover
$(\alpha u)(A_*/u)=\alpha A_*$, so this replacement preserves any
character formula with coefficient $\alpha A_*$.
\end{lemma}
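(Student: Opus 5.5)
This is the analogue of Lemma~\ref{D:MX:freedom}, and I would follow its proof with the depths adjusted to the nonmaximal breaks \eqref{D:NM:breaks}. The approach is to check by valuation bounds that the perturbation $\alpha\mapsto\alpha u$ is invisible on every ideal where the formulas are used. After that, I would invoke Lemma~\ref{U:normalization} for $\omega_3$ on the subgroup $U_F^{T-1}$.

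\emph{Preservation.} Take $u=1+\epsilon$ with $v_F\epsilon\ge T-1=2r-1$. In \eqref{D:NM:lowerformula} the argument changes by $\beta_i\epsilon z$ with $z\in\pp_F^{q_i}$. Its valuation is at least $v_F\beta_i+2r-1+q_i$. For $i=1$ this is $2\delta+2r-1+a=4r-a-1\ge 2r=T$, since $r\ge a\ge1$ (with equality case $r=a=1$ giving $T$). For $i=2,3$ we have $\beta_i$ a unit, and the bound is $2r-1+r\ge T$. So the change lies in the trivial ideal of $\Psi_F$, which has exponent $T$ by \eqref{D:NM:J}.

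For \eqref{D:NM:Rcharts}, the change is $\Psi_{L_i}(\epsilon a_iv)$ with $v\in\pp_{L_i}^{s_i}$. Its $L_i$-valuation is at least $2(2r-1)-t_1+s_i$, because $v_{L_i}\epsilon=2v_F\epsilon$ and $v_{L_i}a_i=-t_1$ by \eqref{D:NM:originvals}. For $i=1$ this equals $4r-2-(2a-1)+2r=6r-2a-1$, which is at least $J_1=4r-2a$. For $i=2,3$ it equals $4r-2a-1+a+r=5r-a-1\ge T$. Hence both formula families are unchanged.

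\emph{Norm choice.} The character $\omega_3$ has conductor $T_3=T$, so it is nontrivial on $U_F^{T-1}$. Its image there is the whole order-two group $\omega_3(F^\times)=\{\pm1\}$. Lemma~\ref{U:normalization}, applied with $H=U_F^{T-1}$, gives $u\in H$ such that $A_*/u$ is a norm from $L_3$. The identity $(\alpha u)(A_*/u)=\alpha A_*$ is immediate. Hence any formula of the shape $\psi_F(\alpha A_*\,\cdot)$ is literally unchanged when rewritten with the new $\alpha$ and the new coefficient.

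The only point needing care is the boundary case $r=a=1$ in the first estimate, which I would check explicitly; everything else is a direct valuation count.
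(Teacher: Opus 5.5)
Your proposal is correct and follows the paper's own proof essentially verbatim: the same two valuation bounds, $T-1+v_F\beta_i+q_i\ge T$ on the lower ideals and $2(T-1)-t_1+s_i\ge J_i$ on $\pp_{L_i}^{s_i}$ (which you verify explicitly, including the boundary case $a=r=1$). These are followed by Lemma~\ref{U:normalization} applied to $\omega_3$, which maps $U_F^{T-1}$ onto $\{1,-1\}$ because its conductor is exactly $T$.
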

\begin{proof}
On each lower ideal the normalized error has depth at least
$T-1+v_F\beta_i+q_i\ge T$. On $\pp_{L_i}^{s_i}$ it has depth
at least $2(T-1)-t_1+s_i\ge J_i$. These inequalities include $a=r=1$.
Thus both formulas are preserved. The character $\omega_3$ is
onto $\{1,-1\}$ on $U_F^{T-1}$, so Lemma~\ref{U:normalization}
applies and gives the norm assertion. The identity of coefficients is immediate.
\end{proof}

\subsection{The norms of \texorpdfstring{$Y-X$}{Y-X}, for \texorpdfstring{$X\in L_3$}{X in L3}}
For comparison of fields 1 and 2, take $X\in L_3$ and put
\begin{equation}\label{D:NM:adjust}
 A=n_3X,\quad p_X=\operatorname{tr}_3X,\quad C=Y-X,\quad
 Q_i=S_i(Yg_iX)\quad(i=1,2).
\end{equation}
For $X=0$ all expressions below have their evident zero values.
With primes denoting conjugation in $L_3/F$, direct multiplication gives
\begin{align}
 N_iC&=a_i+A-Q_i,&S_iC&=h_i-p_X,                       \label{D:NM:commonC}\\
 n_iQ_i&=\beta_i A+\mathcal E,&
 \mathcal E&=(X'-X)(a_3X'-a_3'X)\in F,               \label{D:NM:common-error}\\
 n_iS_iC&=\beta_i+\Delta,&
 \Delta&=p_X^2+2k_0p_X.                              \label{D:NM:common-Delta}
\end{align}
The last identity uses $\operatorname{tr}_i h_i=-2k_0$, the same for both
fields.

\begin{lemma}\label{D:NM:bounds}
If $v_{L_3}X=-h$ for any integer $h$, then
\begin{equation}\label{D:NM:bounds-eq}
 \begin{aligned}
 v_Fp_X&\ge r-\ceil{h/2},& v_F\mathcal E&\ge T-a-h,\\
 v_{L_1}Q_1&\ge2\delta-h,&v_{L_2}Q_2&\ge-h.
 \end{aligned}
\end{equation}
\end{lemma}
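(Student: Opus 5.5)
We follow the template of Lemma~\ref{D:MX:errors}. First we choose convenient coordinates on $L_3$. By \eqref{D:NM:zpoly} and the valuation $v_{L_3}z=1-2r$ established after \eqref{D:NM:atable}, the element $z$ has odd $L_3$-valuation, and $z'=-1-z$. We put $\zeta=2z+1$. Then $\zeta'=-\zeta$, so $\zeta$ has trace zero, and
\[
 v_{L_3}\zeta=v_{L_3}(2z)=2e+1-2r.
\]
This valuation is odd, because $2z$ dominates $1$ when $r\le e$. (In the boundary case $r=e$ one checks directly that $v_{L_3}\zeta=1$.) Write $X=P+Q\zeta$ with $P,Q\in F$. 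The two summands have valuations of opposite parity, $2v_FP$ even and $2v_FQ+2e+1-2r$ odd. Hence $v_{L_3}X=-h$ forces
\[
 2v_FP\ge-h,\qquad 2v_FQ\ge -h-(2e+1-2r).
\]
The trace is $p_X=2P$. Its bound $e-\lfloor h/2\rfloor\ge r-\lceil h/2\rceil$ follows from $e\ge r$. A cleaner route is the trace-ideal formula \eqref{D:NM:edge} with different exponent $T=2r$, which gives $v_Fp_X\ge\lfloor(-h+2r)/2\rfloor=r-\lceil h/2\rceil$ directly.

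For $\mathcal E$ we use the explicit difference $X'-X=-2Q\zeta$. From \eqref{D:NM:atable}, $a_3=-E-dz$. We rewrite it as $a_3=A_3+B_3\zeta$ with $B_3=-d/2$ and $A_3=-E+d/2$. Then
\[
 a_3X'-a_3'X=2(B_3P-A_3Q)\zeta,\qquad
 \mathcal E=-4Q\zeta^2(B_3P-A_3Q)=-4Q\zeta^2\bigl(-dP/2-(-E+d/2)Q\bigr).
\]
The factors of $2$ are where the work lies, since $d/2$ is not integral. The cleaner estimate is to group $\mathcal E=(X'-X)(a_3X'-a_3'X)$ and bound each factor by trace-type quantities.

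\emph{First factor.} $X'-X=p_X-2X$ has $L_3$-valuation at least $\min(2v_Fp_X,\,2e-h)$. The elementary alternative is $X'-X=-(X-X')$ with $X-X'$ of depth $-h+t_3=-h+2r-1$, by the ramification inequality for $L_3/F$ (lower break $2r-1$).

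\emph{Second factor.} $a_3X'-a_3'X=a_3(X'-X)+(a_3-a_3')X$. Here $a_3-a_3'$ has valuation at least $v(a_3)+t_3=1-2a+2r-1$, by the same ramification inequality.

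Multiplying, we get $v_{L_3}\mathcal E\ge(2r-1-h)+(1-2a+2r-1-h)=4r-2a-1-2h$. Since $\mathcal E\in F$, its $F$-valuation is at least $\lceil(4r-2a-1-2h)/2\rceil=2r-a-h=T-a-h$, as required.

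For $Q_i$ we follow Lemma~\ref{D:MX:errors}. The element $Yg_iX$ has $K$-valuation $-t_1-2h=1-2a-2h$. We apply \eqref{D:NM:edge} for $K/L_i$ with the upper different exponents \eqref{D:NM:upper}.
\begin{itemize}
\item For $i=1$: $\lfloor(1-2a-2h+4r-2a)/2\rfloor=2r-2a-h=2\delta-h$.
\item For $i=2$: $\lfloor(1-2a-2h+2a)/2\rfloor=-h$.
\end{itemize}
This gives both $Q$ bounds.

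The main obstacle is the $\mathcal E$ estimate. Its parity rounding must land exactly on $T-a-h$, and the ramification-depth argument for $a_3-a_3'$ must use the correct lower break $t_3=2r-1$ of $L_3/F$ even when $r=a$. That is the step we would verify most carefully.
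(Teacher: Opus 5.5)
Your final argument is correct. The $Q_i$ bounds are obtained exactly as in the paper, but for $p_X$ and $\mathcal E$ you take a different route.

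The paper writes $X=P+Qz$ using $z$ itself, which has odd valuation $1-2r$ and trace $-1$. Parity then gives $v_FP\ge-\lfloor h/2\rfloor$ and $v_FQ\ge r-\lceil h/2\rceil$. The paper then bounds the exact identity $\mathcal E=Q(dP-EQ)(1+4(f+g+4fg))$ term by term, using $v_Fd=\delta$ and $v_FE\ge1-a$ from Lemma~\ref{D:NM:alignment}.

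You instead use the trace-ideal formula \eqref{D:NM:edge} for $p_X$. For $\mathcal E$ you apply the inequality $v_{L_3}(\sigma x-x)\ge v_{L_3}x+t_3$ (Lemma~\ref{C:leading-ramification}, with $t_3=2r-1$) to both $X$ and $a_3$, and use $v_{L_3}a_3=-t_1$. You then round via $v_F\mathcal E\ge\lceil(4r-2a-1-2h)/2\rceil=T-a-h$, which is legitimate because $\mathcal E\in F$. Do state explicitly that the summand $a_3(X'-X)$ has the same bound $2r-2a-h$ as $(a_3-a_3')X$.

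Your route is coordinate-free and needs none of the alignment data. It also works verbatim in the maximal case of Lemma~\ref{D:MX:errors}, with $t_3=2e$. The paper's computation gives, in exchange, an explicit formula for $\mathcal E$.

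However, delete your opening coordinate paragraph, because it is false. Since $v_{L_3}(2z)=2e+1-2r\ge1$, it is $1$, not $2z$, that dominates in $\zeta=1+2z$. So $\zeta$ is a trace-zero \emph{unit}, with $\zeta^2=1+4(f+g+4fg)$, and this holds also when $r=e$.

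Consequently both summands of $P+Q\zeta$ have even valuation, and there is no parity separation. The claimed inequality $2v_FP\ge-h$ fails in general: for $X=2Qz=-Q+Q\zeta$ one has $P=-Q$ and $2v_FP=-h-(2e+1-2r)<-h$.

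In this nonmaximal setting every trace-zero element of $L_3$ has even valuation. This contrasts with the trace-zero uniformizer $r_3$ of the maximal case, and it is why the paper works with $z$ instead. None of your final estimates uses those inequalities, so nothing else needs to change.
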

\begin{proof}
Write $X=P+Qz$, with $z$ as in \eqref{D:NM:zpoly}. Parity of valuations gives
$v_FP\ge-\floor{h/2}$ and $v_FQ\ge r-\ceil{h/2}$.
The trace $2P-Q$ has the asserted bound because $e\ge r$.
Using $a_3=-E-dz$, the exact common error is
\[
 \mathcal E=Q(dP-EQ)(1+4(f+g+4fg)).
\]
Its last factor is a unit. The first product term has depth at least
$r+\delta-h=T-a-h$; the second at least
$1-a+2r-2\ceil{h/2}\ge T-a-h$.
Finally $Yg_iX$ has $K$-valuation $-t_1-2h$.
The upper trace formula \eqref{D:NM:edge}, with \eqref{D:NM:upper}, gives the two
$Q_i$ bounds exactly as stated.
\end{proof}
For $(Q_i/\beta_i)v\in\pp_{L_i}^{q_i}$, \eqref{D:NM:normphase} gives
\begin{equation}\label{D:NM:Qphase}
 \Psi_{L_i}(Q_iv)
   =\Psi_F\bigl((A+\mathcal E/\beta_i)n_i v\bigr).
\end{equation}
Suppose $\lambda(1-u)=\Psi_F(Au)$ on an ideal containing
$\operatorname{tr}_i v$ and $n_i v$.
If $(\mathcal E/\beta_i)n_i v\in\pp_F^T$, then the norm expansion
gives $(\lambda n_i)(1-v)=\Psi_{L_i}((A-Q_i)v)$.
If also $v\in\pp_{L_i}^{s_i}$, multiplication by
\eqref{D:NM:Rcharts} gives the coefficient $a_i+A-Q_i=N_iC$.

\subsection{Stationary coefficients when \texorpdfstring{$n\le2r+a-1$}{n<=2r+a-1}}
Assume $n\le2r+a-1$ in \eqref{D:NM:family}. Both trace and norm of a variable
in $\pp_{L_i}^{s_i}$ lie in
$\pp_F^{q_F}$, where $q_F=a+r$. If $n\le q_F$, take $X=0$.
Otherwise $h=n-T$ satisfies
\begin{equation}\label{D:NM:minimal-h}
 1-\delta\le h\le a-1.
\end{equation}
Since $2q_F\ge n$, the base character has a linear formula
$\lambda(1-v)=\Psi_F(A_*v)$ on $\pp_F^{q_F}$, with $v_FA_*=-h$.
Apply Lemma~\ref{D:NM:freedom} and choose $A=n_3X$ exactly, with
$v_{L_3}X=-h$. Use the adjusted $\alpha$ henceforth.

\begin{lemma}\label{D:NM:minimal-origin}
The element $C$ so constructed has $v_KC=-t_1$. For $i=1,2$,
\begin{align}
 \theta_i(1-v)&=\Psi_{L_i}(N_iC\,v)
                              &&(v\in\pp_{L_i}^{s_i}),  \label{D:NM:upper-stationary}\\
 \omega_i(1-v)&=\Psi_F(n_iS_iC\,v)
                              &&(v\in\pp_F^{q_i}).      \label{D:NM:lower-stationary}
\end{align}
The traces $S_iC$ are nonzero with the valuations of $h_i$.
If $a=r$, this same $C$ satisfies both formulas for all three indices.
\end{lemma}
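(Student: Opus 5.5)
The argument follows the proof of Lemma~\ref{D:MX:minimal-origin}, with the bounds of Lemma~\ref{D:NM:bounds} in place of Lemma~\ref{D:MX:errors}. If $X=0$, then $C=Y$. The first formula is \eqref{D:NM:Rcharts} together with the triviality of $\lambda$ on the relevant ideal, since both trace and norm lie in $\pp_F^{q_F}$ and $n\le q_F$. The second formula is \eqref{D:NM:lowerformula}, because $n_iS_iY=\beta_i$. Assume therefore that $X\ne0$, so $v_KX=-2h$. By \eqref{D:NM:minimal-h} we have $-2h\ge 2-2a>-t_1$. Hence $Y$ dominates and $v_KC=-t_1$.

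For \eqref{D:NM:upper-stationary} I use the criterion recorded after \eqref{D:NM:Qphase}. Let $v\in\pp_{L_i}^{s_i}$. By Lemma~\ref{D:NM:bounds}, $(Q_1/\beta_1)v$ has $L_1$-depth at least $2\delta-h-2\delta+2r=2r-h\ge a+1\ge q_1$. Similarly $(Q_2/\beta_2)v$ has depth at least $a+r-h\ge r+1\ge q_2$, so \eqref{D:NM:Qphase} applies. The error terms $(\mathcal E/\beta_i)n_iv$ have $F$-depth at least $(T-a-h)-2\delta+2r=T+a-2\delta+\dots$. More carefully, for $i=1$ the bound is $T-a-h-2\delta+2r$, and using $h\le a-1$ gives at least $T+1$ when $\delta\le r-a$. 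This has to be checked as $4r-2a-h-a\cdot 0\ge T$, that is, $2r-2a\ge h-\dots$. For $i=2$ the bound is $T-a-h+a+r=T+r-h\ge T$. Since $\lambda(1-u)=\Psi_F(Au)$ holds on $\pp_F^{q_F}$, the norm expansion gives the coefficient $A-Q_i$. Multiplying by \eqref{D:NM:Rcharts} and using \eqref{D:NM:commonC} yields $N_iC$. I expect the $i=1$ depth bookkeeping, including the extreme $h=1-\delta$, to be the only delicate point. In that case $h$ may be negative, and the bound must be checked directly from \eqref{D:NM:bounds-eq}.

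For \eqref{D:NM:lower-stationary}, \eqref{D:NM:common-Delta} gives $n_iS_iC=\beta_i+\Delta$. Since $v_Fp_X\ge r-\lceil h/2\rceil$ and $v_F(2k_0)=e\ge r$, we get $v_F\Delta\ge 2r-\lceil h/2\rceil\ge 2r-a+1$, using $\lceil h/2\rceil\le a-1$ when $a\ge1$ (for $h\le a-1$). Adding $q_1=a$ gives at least $T+1$, and adding $q_2=r$ gives at least $T$. Hence replacing $\beta_i$ by $n_iS_iC$ preserves \eqref{D:NM:lowerformula}. Nonvanishing and the valuations of $S_iC=h_i-p_X$ come from comparing valuations. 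In $L_1$, $v_{L_1}p_X\ge 2r-2\lceil h/2\rceil>2\delta$, since $2\lceil h/2\rceil\le 2a-1$ after parity. In $L_2$, $p_X$ lies in $\pp$ while $h_2$ is a unit.

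If $a=r$, then $\delta=0$ and $1\le h\le r-1$. For $i=3$, as in Lemma~\ref{D:EQ:min-origin}, I compute $N_3C=(Y-X)(g_1Y-X)$. This equals $a_3+X^2-Xh_3$, that is, $a_3+X^2+k_0X$. Writing $X^2=A+p_XX-\dots$, more precisely $X^2=p_XX-A$, we get $N_3C=a_3-A+(p_X+k_0)X$, up to the sign conventions of \eqref{D:NM:commonC}. Then \eqref{D:NM:normphase} with $n_3(X/k_0)$ converts $\Psi_{L_3}(k_0Xv)$ into the $\lambda n_3$ coefficient, using $\beta_3=k_0^2$. The extra term $p_XX$ has valuation at least $2r-2\lceil h/2\rceil-h\ge0$, so it is killed on $\pp_{L_3}^{s_3}$. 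Finally, $S_3C=-k_0-p_X$, and its norm differs from $\beta_3$ by $\Delta$, which is handled exactly as above.
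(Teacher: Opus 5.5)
Your route is the paper's: dispose of $X=0$, use $h\le a-1$ so that $Y$ dominates $C$, and check the depth hypotheses of \eqref{D:NM:Qphase} and the smallness of $(\mathcal E/\beta_i)n_iv$. Then absorb $\Delta$ into the lower coefficient, and for $a=r$ handle $L_3$ by \eqref{D:NM:normphase} applied to $(X/k_0)v$. However, the $a=r$, $i=3$ paragraph rests on a false identity and on an unjustified dismissal of extra terms.

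\begin{itemize}
\item Since $g_3$ fixes $X\in L_3$, one has $S_3X=2X$. So $S_3C=-k_0-2X$, not $-k_0-p_X$; the formula $S_iC=h_i-p_X$ of \eqref{D:NM:commonC} holds only for $i=1,2$. Hence $n_3S_3C-\beta_3=2k_0p_X+4A$, not $\Delta$. The third lower formula must be checked from $v_F(2k_0p_X)\ge e+r-\lceil h/2\rceil\ge r$ and $v_F(4A)=2e-h\ge r$, and then adding $q_3=r$.
\item \eqref{D:NM:normphase} gives the pullback coefficient exactly $A-k_0X$. Also $N_3C=(Y-X)(g_3Y-X)=a_3+X^2+k_0X$ (you wrote $g_1$). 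This differs from $a_3+A-k_0X$ by $p_XX-2A+2k_0X$.
\item In mixed characteristic the terms $-2A$ and $2k_0X$ are not a matter of ``sign conventions''. They are genuinely present, and they are killed only because $v_{L_3}(2A)=2e-2h>0$ and $v_{L_3}(2k_0X)=2e-h>0$, using $h\le r-1<e$.
\end{itemize}

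The $i=1$ bookkeeping you flag as delicate is routine, but your numbers are off.

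\begin{itemize}
\item Since $v_F\beta_1=2\delta$, one has $v_{L_1}\beta_1=4\delta$. So $(Q_1/\beta_1)v$ has depth at least $2\delta-h-4\delta+2r=2a-h\ge a+1$, not $2r-h$.
\item The weighted error has $F$-depth at least $(T-a-h)-2\delta+2r=T+a-h\ge T+1$. Negative $h$ only helps.
\item Your bound $v_F\Delta\ge 2r-\lceil h/2\rceil$ ignores the square term. Correctly, $v_F\Delta\ge\min\{2r-2\lceil h/2\rceil,\ e+r-\lceil h/2\rceil\}$. Adding $q_1=a$ gives at least $T$ (not $T+1$), because $2\lceil h/2\rceil\le h+1\le a$, and $T$ is all that is needed.
\end{itemize}

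With these corrections your argument coincides with the paper's proof.
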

\begin{proof}
For $X\ne0$, the inequality $h\le a-1$ gives $-2h>-t_1$, so $Y$
dominates in $C$. On the two upper ideals the arguments of
\eqref{D:NM:Qphase} have respective depths at least $2a-h\ge a$ and
$a+r-h\ge r$. Its weighted errors have depths at least
$T+a-h$ and $3r-h$, respectively, both at least $T$.
The trace and norm of $v$ lie in $\pp_F^{q_F}$.
Thus exact norm expansion and \eqref{D:NM:commonC} prove
\eqref{D:NM:upper-stationary}.

By Lemma~\ref{D:NM:bounds},
\[
 v_F\Delta\ge\min\{2r-2\ceil{h/2},\ e+r-\ceil{h/2}\}.
\]
Adding $a$ to both bounds gives at least $T$, since $h\le a-1$;
adding $r$ does also. Equation \eqref{D:NM:common-Delta} therefore proves
\eqref{D:NM:lower-stationary}. Moreover
$2r-2\ceil{h/2}>2\delta$ and $r-\ceil{h/2}>0$, so the trace
perturbations cannot cancel the leading terms of $h_1,h_2$.

When $a=r$, one has $s_3=T$, $q_3=r$, and $h\le r-1$.
For $L_3$, equation~\eqref{D:NM:normphase} applied to $(X/k_0)v$ gives the
pullback coefficient $A-k_0X$, since $\beta_3=k_0^2$.
But
\[
 N_3C=a_3+X^2+k_0X,
 \qquad (X^2+k_0X)-(A-k_0X)=p_XX-2A+2k_0X.
\]
The three terms on the right have $L_3$-valuations at least
$2r-2h-(h\bmod2)$, $2e-2h$, and $2e-h$, all nonnegative.
Their products with $v\in\pp_{L_3}^{T}$ lie in the trivial ideal
of $\Psi_{L_3}$, proving \eqref{D:NM:upper-stationary} for $i=3$.
Also $S_3C=-k_0-2X$ is a unit and
\[
 n_3S_3C-\beta_3=2k_0p_X+4A.
\]
Both terms have base valuation at least $r$, giving the third lower
formula. The case $X=0$ follows from \eqref{D:NM:Rcharts} and \eqref{D:NM:lowerformula}.
\end{proof}

\subsection{Comparison at conductors \texorpdfstring{$m_i$}{mi}}
Put $c_0=-\alpha^{-1}$ and apply Lemma~\ref{U:stationary-factor}.
Write $H_i$ for the critical function of $\theta_i$ and $g_i$ for
its normalized sum.

Take the coefficients $N_iC$ for $\theta_i$ and $n_iS_iC$ for
$\omega_i$ from Lemma~\ref{D:NM:minimal-origin}.
Since $\omega_i(n_iS_iC)=1$ and $T_i$ is even, compatibility,
Lemma~\ref{U:determinant}, and \eqref{D:NM:E2} give
\begin{equation}\label{D:NM:minimal-factor}
 \mathcal A_i=D_\theta(c_0)\Theta(C)^{-1}
                         \Psi_F(-E_2(C))g_i.
\end{equation}

Choose a uniformizer $\pi$ of $K$ and compatible lower uniformizers
$\Pi_i=N_i\pi$, $\varpi=n_i\Pi_i$. These choices need not be the ones
used to construct the generators earlier. Vary
$C$ to $C_v=C(1+\pi^{t_2}v)$ for integral $v\in K$.
The critical depths for $\theta_1$ and $\theta_2,\theta_3$ are $t_2$ and $a+r-1$, respectively.
The changes $S_i(C\pi^{t_2}v)$ have depths at least $3r-2a$ and $r$.
After division by $S_iC$ they have depth at least $r\ge q_i$.
Their norm quotients lie in $U_F^{q_i}$ and have $\omega_i$-value $1$.
Subtracting \eqref{D:NM:E2} at $C_v,C$ and using
\eqref{D:NM:lower-stationary}
and canceling the common multiplicative value, gives
\begin{equation}\label{D:NM:actual-functions}
 H_i(p_i(\bar v))=Q(\bar v),
\end{equation}
Here $p_i:k\to k$ is induced by $N_i$ on the successive unit
quotients of depths $t_2$ in $K$ and $\lfloor m_i/2\rfloor$ in
$L_i$, using $\pi,\Pi_i$. The function $Q$ is independent of $i$.
The traces $S_iC_v$ are nonzero because $S_iC_v/S_iC-1$ has positive
valuation.

\begin{proposition}[The case $t_1<t_2$]\label{D:NM:minimal-two}
If $a<r$, every family whose conductors are $m_i$ satisfies $\mathcal A_1=\mathcal A_2$.
\end{proposition}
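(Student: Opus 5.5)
By \eqref{D:NM:minimal-factor}, the prefactor $D_\theta(c_0)\Theta(C)^{-1}\Psi_F(-E_2(C))$ is common to all indices. The proposition therefore reduces to the equality $g_1=g_2$ of the normalized critical sums. We will follow the model of Proposition~\ref{D:EQ:min-two}. The identity \eqref{D:NM:actual-functions} already gives $H_i(p_i(\bar v))=Q(\bar v)$ with $Q$ independent of $i$. It therefore suffices to show that both residue maps $p_1,p_2:k\to k$ are bijections. Then summing over $\bar v\in k$ gives $\sum_k H_1=\sum_k Q=\sum_k H_2$, and the common factor $|k|^{-1/2}$ yields $g_1=g_2$. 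Both conductors $m_1=4r-1$ and $m_2=2a+2r-1$ are odd, so both sums genuinely occur.

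\textbf{The map $p_1$.} The extension $K/L_1$ has upper break $u_1=2t_2-t_1=4r-2a-1$. Since $a<r$, the source depth $t_2=2r-1$ lies strictly below $u_1$. In $N_1(1+\pi^{t_2}v)-1$, the norm term has $L_1$-depth $t_2$, while the trace term has depth at least $\lfloor(t_2+D'_1)/2\rfloor=\lfloor(6r-2a-1)/2\rfloor=3r-a-1$, which exceeds $t_2$ because $r>a$. Also $\lfloor m_1/2\rfloor=2r-1=t_2$. With the compatible uniformizers $\Pi_1=N_1\pi$, the induced map is therefore $\bar v\mapsto\bar v^2$ times a unit residue. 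This map is bijective because Frobenius on $k$ is bijective.

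\textbf{The map $p_2$.} The extension $K/L_2$ has break $t_1=2a-1$ and different $D'_2=2a$. The target depth is $\lfloor m_2/2\rfloor=a+r-1$. The trace of $\pp_K^{t_2}$ lands in $\pp_{L_2}^{\lfloor(2r-1+2a)/2\rfloor}=\pp_{L_2}^{a+r-1}$, and the trace of $\pp_K^{t_2+1}$ lands in $\pp_{L_2}^{a+r}$. Hence the trace induces a nonzero $k$-linear map between one-dimensional residue lines, which is an isomorphism. The norm term has depth $t_2=2r-1>a+r-1$, again because $r>a$. So $p_2$ is a nonzero linear map, hence bijective.

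\textbf{Conclusion.} The only delicate point is checking that the norm and trace terms separate strictly in both expansions. This uses $r>a$, and is exactly why the case $a=r$ must be handled separately, as in Proposition~\ref{D:EQ:min-one} via Lemma~\ref{U:missing-coset}. We must also confirm that the common function $Q$ in \eqref{D:NM:actual-functions} really arises on the same residue variable $\bar v$ for both $i$; the preceding paragraph guarantees this. Replacing $L_2$ by $L_3$ in the same argument gives the comparison with the third field.
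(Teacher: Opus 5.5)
Your proposal is correct and follows the paper's own proof essentially step for step. You reduce to $g_1=g_2$ via \eqref{D:NM:minimal-factor} and \eqref{D:NM:actual-functions}. You show that $p_1$ is the Frobenius map, because the trace term is deeper ($3r-a-1>2r-1$). You show that $p_2$ is the trace isomorphism of residue lines, because the norm term is deeper ($2r-1>a+r-1$); both strict inequalities use $r>a$. You then handle $L_3$ by relabeling, exactly as the paper does.
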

\begin{proof}
The source depth $t_2=2r-1$ is below the first upper break
$4r-2a-1$. Thus $p_1(v)=v^2$, with its trace term deeper.
For $K/L_2$, the consecutive trace ideals at source depths
$2r-1,2r$ have depths $a+r-1,a+r$. Hence the trace induces a nonzero
$k$-linear map of residue lines, and the highest-degree norm term is strictly deeper
because $r>a$. Thus $p_2$ is also bijective.
Summing \eqref{D:NM:actual-functions} proves $g_1=g_2$.
Equation \eqref{D:NM:minimal-factor} gives the assertion. The relabeled
construction compares the first with the third field.
\end{proof}

\begin{proposition}[The case $t_1=t_2$]\label{D:NM:minimal-one}
If $a=r$, every family whose conductors are $m_i$ satisfies
$\mathcal A_1=\mathcal A_2=\mathcal A_3$.
\end{proposition}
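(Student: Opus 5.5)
The plan follows the proof of Proposition~\ref{D:EQ:min-one} in characteristic two, now using the mixed-characteristic data of this section. When $a=r$, Lemma~\ref{D:NM:minimal-origin} gives a single $C$ for which \eqref{D:NM:upper-stationary}--\eqref{D:NM:lower-stationary} hold for all three indices. Hence \eqref{D:NM:minimal-factor} holds for $i=1,2,3$, with a common prefactor $D_\theta(c_0)\Theta(C)^{-1}\Psi_F(-E_2(C))$. It therefore suffices to prove that $g_1=g_2=g_3$. All three conductors equal $m_i=4r-1$, which is odd, and all three critical depths are $2r-1$. The relation \eqref{D:NM:actual-functions}, $H_i(p_i(\bar v))=Q(\bar v)$, holds with one function $Q$ on $k$ for every $i$.

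The first step is to compute the maps $p_i$. At source depth $t_2=t$, which equals the common upper break $2a-1$, both the trace and the norm contribute. Put $u=\overline{C\pi^{t}}\in k^\times$ and $c_i=\overline{(g_iC-C)}$, the leading residue of $g_iC-C$ at $K$-depth $0$. One has $g_iY-Y=h_i$ up to sign; by \eqref{D:NM:htable} and \eqref{D:NM:originvals} these are units, and the perturbation by $X$ changes them only by positive-depth terms. Using $c_1+c_2=c_3$ (up to the signs that disappear in residue characteristic two) and the fact that the three involutions are distinct, the $c_i$ are distinct and nonzero. Expanding $N_i(1+\pi^t v)$ as in \eqref{D:EQ:one-pi} should give $p_i(z)=z^2+(c_i/u)z$. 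Here one must check that the $2$-divisible terms are deeper; this holds because $e\ge r$.

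The second step, which I expect to be the main obstacle, is the calibration $u^2=c_1c_2c_3$ together with the polar pairing of $H_i$. The plan is to copy \eqref{D:EQ:onecalibration}. The lower coefficient $n_iS_iC$ of $\omega_i$ has residue $c_i^2$ after normalization by $\alpha$. Its triviality on the critical norms of $L_i/F$, namely $z\mapsto z^2+\eta_i z$ with $\eta_i=c_jc_k/u^2$, forces $\eta_i=c_i^{-1}$ through nondegeneracy of the residue trace pairing. In mixed characteristic one must confirm that $\Psi_F$ restricted to $\pp_F^{T-1}$ is $(-1)^{\tr(\cdot)}$ after scaling. One must also confirm that the cross terms $2\cdot(\ldots)$ in the norm expansions fall into $\pp_F^{T}$. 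These checks use $v_F(2)=e\ge r$ and the exact norms in \eqref{D:NM:betatable}. The delicate point is the boundary case $e=r$, where $2$-divisible terms reach exactly the critical depth. There I would first try to verify, via \eqref{D:NM:lower-error} and the unit $k_0$, that these terms shift each residue $c_i$ by the same amount, or that they cancel in $E_2$. The polar pairing of $H_i$ is then $(-1)^{\tr(k_ixz)}$ with $k_i=c_jc_k$, exactly as in \eqref{D:EQ:onepolar}.

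The final step applies Lemma~\ref{U:missing-coset} with $V_i=Z=(k,+)$. The image $I_i$ of $p_i$ is the index-two hyperplane $\{x:\tr(u^2x/c_i^2)=0\}$, and $\ker p_i=\{0,c_i/u\}$. For $j\ne i$ take $w_i=p_i(c_j/u)=k_i/u^2$. Then $H_i(w_i)=Q(c_j/u)=H_j(p_j(c_j/u))=H_j(0)=1$, and its polar character is the nontrivial character of $k/I_i$. The lemma gives $\sum_k H_i=\tfrac12\sum_k Q$ for each $i$. Hence $g_1=g_2=g_3$, and \eqref{D:NM:minimal-factor} gives $\mathcal A_1=\mathcal A_2=\mathcal A_3$.
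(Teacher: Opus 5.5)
Your route is the paper's: one $C$ for all three indices from Lemma~\ref{D:NM:minimal-origin}, then $p_i(v)=v^2+(c_i/u)v$, then the lower critical polynomials $v^2+\eta_iv$ with $\eta_i=k_i/u^2$, the calibration from triviality of $\omega_i$ on critical norms, the polar pairing of $H_i$, and finally Lemma~\ref{U:missing-coset} with $w_i=p_i(c_j/u)=k_i/u^2$. The calibration step, however, is wrong as you state it.

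In this section $\alpha=b_2/\beta_2$ is fixed by \eqref{D:NM:normalization}, up to the factor in $U_F^{T-1}$ from Lemma~\ref{D:NM:freedom}, which does not matter here. Consequently $\Psi_F(\varpi^tv)=(-1)^{\Tr_{k/\F_2}(\gamma\bar v)}$ for some $\gamma\in k^\times$ that is in general not $1$.
\begin{itemize}
\item Rescaling $\psi_F$ cannot remove $\gamma$: $b_2$, and hence $\alpha$, rescales inversely, so $\Psi_F$ is unchanged.
\item Changing $\pi$ only multiplies $\gamma$ by elements of $(k^\times)^{t}$.
\end{itemize}
In Section~\ref{sec:dyadic-equal} it was the canonical character \eqref{D:EQ:canonicalpsi} that gave $\gamma=1$; nothing plays that role here.

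Carrying $\gamma$ through changes two of your formulas. The condition $\Tr_{k/\F_2}(\gamma c_i^2(v^2+\eta_iv))=0$ gives $\eta_i=(\sqrt\gamma\,c_i)^{-1}$, hence $u^2=\sqrt\gamma\,c_1c_2c_3$ rather than $u^2=c_1c_2c_3$. The polar pairing is $(-1)^{\Tr_{k/\F_2}(\gamma k_ixy)}$, not $(-1)^{\Tr_{k/\F_2}(k_ixy)}$.

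Your final step still goes through, because the two factors of $\gamma$ cancel. The polar character of $w_i$ is $x\mapsto(-1)^{\Tr_{k/\F_2}(\gamma k_i^2x/u^2)}$. This is the character cutting out $V_i=\{x:\Tr_{k/\F_2}(u^2x/c_i^2)=0\}$ precisely because $u^4=\gamma(c_1c_2c_3)^2$; this is how the paper argues. Alternatively, you can force $\gamma=1$. Replace $C$ by $\mu C$ and $\alpha$ by $\mu^{-2}\alpha$, where $\mu\in\OO_F$ is the multiplicative lift of $\sqrt\gamma$. Then $\alpha N_iC$ and $\alpha\,n_iS_iC$ are unchanged, so all coefficient formulas, the $H_i$, the $g_i$, and the form of \eqref{D:NM:minimal-factor} survive, and your displayed identities then hold.

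Your concern about $e=r$ is unnecessary, and \eqref{D:NM:lower-error} plays no role. Exactly, $g_iY-Y=h_i-2Y$ rather than $h_i$ up to sign. Likewise $S_iC=(g_iC-C)+2C$ with $v_K(2C)=4e-t>0$. Hence $\overline{S_iC}=c_i$, and $n_iS_iC$ has residue $c_i^2$.

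The same holds in the critical norm maps for $K/L_i$ and $L_i/F$, and when tracing the leading term of $N_iC\,\Pi_i^{2t}$ for the polar pairing. There the $2$-divisible contributions lie strictly beyond the critical depth:
\begin{itemize}
\item in $K$ they have valuation at least $4e+t>2t$;
\item in $L_i$ they have valuation at least $2e+t\ge 2t+1$, since $2e\ge 2r=t+1$.
\end{itemize}
Because the relevant traces and norms lie in $F$, these contributions land in $\pp_F^{t+1}=\pp_F^{T}$, the trivial ideal of $\Psi_F$, even when $e=r$.
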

\begin{proof}
Now $t=t_1=t_2=2r-1$ and Lemma~\ref{D:NM:minimal-origin} provides the same
$C$ for $i=1,2,3$. Put
\[
 u=\overline{C\pi^t}\in k^\times,\qquad
 c_i=\overline{g_iC-C}\in k^\times,\qquad P=c_1c_2c_3,
 \qquad k_i=c_jc_k\quad(\{i,j,k\}=\{1,2,3\}).
\]
The residues are $\bar d,1,1+\bar d$, respectively.
Equations~\eqref{D:NM:origin} and~\eqref{D:NM:htable} give these
residues for $Y$, and the adjustment $X$ changes
none: its conjugate differences have positive valuation by
$h\le r-1$ and the ramification bound. Thus the $c_i$ are distinct and
$c_1+c_2=c_3$.

Expansion of the odd-valuation leading term $u\pi^{-t}$ shows that the
first ramification coefficient on $\pi$ is $c_i/u$.
The exact quadratic critical norm polynomial consequently is
\begin{equation}\label{D:NM:critical-polynomials}
 p_i(v)=v^2+(c_i/u)v.
\end{equation}
One can determine the trace coefficient directly: the highest-degree norm term
has coefficient one, and $g_i\pi/\pi$ is the nontrivial
critical kernel element with residue $c_i/u$.
The lower conjugate difference of $N_iC$ has residue $k_i$.
For $Y$ this follows immediately from \eqref{D:NM:atable}; the change to
$C$ has conjugate difference of positive valuation. Its leading term is
$u^2\Pi_i^{-t}$. Therefore the critical norm polynomial for $L_i/F$ is
$v^2+\eta_i v$ with $\eta_i=k_i/u^2$.

Write
\[
 \Psi_F(\varpi^t v)=(-1)^{\Tr_{k/\F_2}(\gamma\bar v)},\qquad
 \gamma\in k^\times.
\]
The lower stationary norm coefficient $n_iS_iC$ has residue $c_i^2$:
$2C$ is of positive $K$-valuation. Triviality of $\omega_i$ on its
critical norms and nondegeneracy of the residue trace pairing give
\[
 \Tr_{k/\F_2}(\gamma c_i^2(v^2+\eta_i v))=0\quad(v\in k),
 \quad \eta_i=(\sqrt\gamma c_i)^{-1},\quad u^2=\sqrt\gamma P.
\]
Tracing the leading term of
$N_iC\,\Pi_i^{2t}$ in the Lamprecht polar formula gives the polar
pairing of $H_i$:
\begin{equation}\label{D:NM:polar}
 \frac{H_i(x+y)}{H_i(x)H_i(y)}
       =(-1)^{\Tr_{k/\F_2}(\gamma k_i xy)}.
\end{equation}
The omitted next term in that trace lies in $\pp_F^{t+1}$ by
\eqref{D:NM:edge}.

The image of \eqref{D:NM:critical-polynomials} is the index-two hyperplane
\[
 V_i=\{x:\Tr_{k/\F_2}(u^2x/c_i^2)=0\}.
\]
For $j\ne i$, put $w_i=p_i(c_j/u)=k_i/u^2\ne0$.
Equation~\eqref{D:NM:actual-functions} gives
$H_i(w_i)=H_j(0)=1$. The polar character of $w_i$ in \eqref{D:NM:polar}
is the character defining $V_i$, since $u^4=\gamma P^2$.
All hypotheses of Lemma~\ref{U:missing-coset} now hold on $(k,+)$,
with image $V_i$. It gives $\sum_kH_i=\frac12\sum_kQ$, independently
of $i$. Multiplication by the same factor $|k|^{-1/2}$ for every $i$,
together with \eqref{D:NM:minimal-factor}, proves
$\mathcal A_1=\mathcal A_2=\mathcal A_3$.
\end{proof}

\subsection{All higher conductors}
Now $n=T+h\ge2r+a$, so $h\ge a$. The upper conductors and stationary
depths are
\begin{equation}\label{D:NM:higher-depths}
 M_i=2n-T_i,\qquad b_1=2r+h-a,\quad b_2=r+h,
 \qquad M_i=2b_i.
\end{equation}
For a variable in either $\pp_{L_i}^{b_i}$, $i=1,2$, its trace and norm
belong to the ordinary base stationary ideal
$\pp_F^{\ceil{n/2}}$. For the first trace this follows from
$r+\floor{(h+a)/2}\ge r+\ceil{h/2}$; for the second use $r\ge1$.
Choose a coefficient $A_*$ of $\lambda$ there, of valuation
$-h$. Apply Lemma~\ref{D:NM:freedom} to make $A=n_3X$ exact, and use
\eqref{D:NM:adjust}. There is no upper bound on $h$.

\begin{theorem}[Equality for $n\ge2r+a$]\label{D:NM:higher}
For every $h\ge a$, one has $\mathcal A_1=\mathcal A_2$.
\end{theorem}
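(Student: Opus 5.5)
I will follow the template of Theorem~\ref{D:MX:higher}, which treats the same range in the maximal-break case; the main difference is that here all four conductors $M_1,M_2,T_1,T_2$ are even. The critical sums $g$ should therefore all be trivial, and what remains is to identify the stationary coefficients and compare the elementary factors.

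\textbf{Stationary coefficient for $\theta_i$.} I first prove that $N_iC$ is a stationary coefficient for $\theta_i$ on $\pp_{L_i}^{b_i}$, for $i=1,2$, with $v_{L_i}N_iC=-2h$. By Lemma~\ref{D:NM:bounds}, the arguments $(Q_i/\beta_i)v$ in \eqref{D:NM:Qphase} have depths at least
\[
 2\delta-h-2\delta+b_1=2r-a\ge a
 \qquad\text{and}\qquad
 -h+b_2=r,
\]
so the conversion \eqref{D:NM:Qphase} applies. The weighted errors $(\mathcal E/\beta_i)n_iv$ have depths at least
\[
 (T-a-h)-2\delta+b_1=T
 \qquad\text{and}\qquad
 (T-a-h)+b_2=3r-a\ge T.
\]
Since $b_i\ge s_i$, the remark after \eqref{D:NM:Qphase} then gives the coefficient $a_i+A-Q_i=N_iC$. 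Because $h\ge a$, we have $-2h<-t_1$, so $X$ dominates $Y$ in $C$. This gives $v_KC=-2h$ and hence $v_{L_i}N_iC=-2h=J_i-M_i$, as required.

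\textbf{Comparison of the elementary factors.} For $\omega_i$ I use $\beta_i$ from Lemma~\ref{D:NM:lower} as the stationary coefficient. All conductors are even, so Lemma~\ref{U:stationary-factor} has no residual sums. Compatibility gives $\theta_i(N_iC)=\Theta(C)$, and $\omega_i(\beta_i)=1$ holds because $\beta_i=n_ih_i$. Lemma~\ref{U:determinant} handles the value at $c_0=-\alpha^{-1}$. Together these give
\[
 \mathcal A_i=D_\theta(c_0)\Theta(C)^{-1}\Psi_F\bigl(-\operatorname{tr}_iN_iC-\beta_i\bigr).
\]
By \eqref{D:NM:E2} and \eqref{D:NM:common-Delta},
\[
 \operatorname{tr}_iN_iC+\beta_i=E_2(C)-n_iS_iC+\beta_i=E_2(C)-\Delta.
\]
Here $\Delta=p_X^2+2k_0p_X$ is independent of $i$, so $\mathcal A_1=\mathcal A_2$. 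This is the same mechanism as in Theorem~\ref{D:EQ:intermediate}, but with no upper bound on $h$. The reason is that the lower coefficient $\beta_i$ can be kept unmodified, so the stable-twist range needs no separate treatment.

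\textbf{Main obstacle: the uniform bookkeeping in $h$.} The step I expect to be most delicate is checking that the depth inequalities hold uniformly for all $h\ge a$, including the boundary cases $a=r$ and $a=r=1$. Three points need care:
\begin{enumerate}
\item The trace and norm of $v\in\pp_{L_i}^{b_i}$ must lie in the base stationary ideal. This was asserted just before the theorem.
\item The $\alpha$-adjustment of Lemma~\ref{D:NM:freedom} must not disturb the formulas \eqref{D:NM:lowerformula} and \eqref{D:NM:Rcharts}. The lemma states that it does not.
\item The sign conventions in Lemma~\ref{U:stationary-factor} must be handled correctly. The character formula uses $\theta(1-z)$, and the additive factor there is $\Psi(-Z)$.
\end{enumerate}
If the bound on the second error term turns out to be insufficient when $a=r$, I would fall back on the sharper estimate $v_F\mathcal E\ge T-a-h$ combined with $v_F\beta_2=0$, which already yields depth $3r-a\ge T$.
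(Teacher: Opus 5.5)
Your proposal is correct and is essentially the paper's own proof: it uses the same stationary coefficients ($N_iC$ for $\theta_i$, $\beta_i$ for $\omega_i$), the same depth checks against \eqref{D:NM:Qphase}, and the same reduction through \eqref{D:NM:E2} and \eqref{D:NM:common-Delta} to the $i$-independent phase $\Psi_F(-E_2(C)+\Delta)$. One slip to fix: $v_{L_1}\beta_1=2v_F\beta_1=4\delta$, so the depth of $(Q_1/\beta_1)v$ is $(2\delta-h)-4\delta+b_1=a$ rather than $2r-a$, which still meets the requirement $q_1=a$ exactly; also, the absence of an upper bound on $h$ comes from the exact norm $A=n_3X$ supplied by Lemma~\ref{D:NM:freedom}, not from keeping $\beta_i$ unmodified.
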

\begin{proof}
For $v\in\pp_{L_i}^{b_i}$, the arguments $(Q_i/\beta_i)v$ of
\eqref{D:NM:Qphase} have depths at least $a$ and $r$, respectively:
\[
 (2\delta-h)-4\delta+b_1=a,\qquad -h+b_2=r.
\]
The weighted errors have base depths at least
\[
 (T-a-h)-2\delta+b_1=T,\qquad (T-a-h)+b_2=3r-a\ge T.
\]
Thus $(\lambda n_i)(1-v)=\Psi_{L_i}((A-Q_i)v)$ for
$v\in\pp_{L_i}^{b_i}$. Since $b_i\ge s_i$, formula
\eqref{D:NM:Rcharts} applies and gives
$\theta_i(1-v)=\Psi_{L_i}(N_iC\,v)$. Moreover $X$ dominates $Y$, as
$-2h<-t_1$. Hence $C\ne0$ and $v_{L_i}N_iC=-2h=J_i-M_i$.

Use $\beta_i=n_i h_i$ as the coefficient for $\omega_i$.
The conductors $M_i,T_i$ are even, so all critical-sum factors
are $1$. Lemma~\ref{U:stationary-factor} gives
\[
 \mathcal A_i=D_\theta(c_0)\Theta(C)^{-1}
               \Psi_F(-\operatorname{tr}_i N_iC-\beta_i)
 =D_\theta(c_0)\Theta(C)^{-1}\Psi_F(-E_2(C)+\Delta).
\]
The last identity is \eqref{D:NM:E2} and \eqref{D:NM:common-Delta}.
The resulting expression is independent of $i$.
Relabel the third field for the other comparison.
\end{proof}

\subsection{Completion of the nonmaximal case}
\begin{proof}[Proof of Theorem~\ref{D:NM:main}]
Lemmas~\ref{D:NM:alignment} and~\ref{D:NM:lower} give $Y$ and
coefficients $\beta_i$ for $\omega_i$.
Theorem~\ref{D:NM:core-exists} extends the characters $R_i$ to
compatible characters $\chi_i$. By Lemma~\ref{D:NM:realize}, the
given family can be written as $\theta_i=\chi_i(\lambda n_i)$.
For $n\le2r+a-1$ use Lemma~\ref{D:NM:minimal-origin} and
Propositions~\ref{D:NM:minimal-two}--\ref{D:NM:minimal-one}.
For $n\ge2r+a$ use Theorem~\ref{D:NM:higher}.
These two conductor intervals are consecutive. The one-break and two-break
ramification possibilities in \eqref{D:NM:breaks} are both included.
\end{proof}

\section{Completion of the Second Main Lemma}\label{sec:completion}
\subsection{Ramification possibilities}\label{U:exhaustion}
\begin{lemma}[Selection of fields in wild odd degree]\label{U:odd-selection}
Suppose $p$ is odd and $K/F$ is totally ramified with group $C_p^2$.
One can choose a degree-$p$ intermediate field $B_1$ so that, for every other such field $B_2$, the breaks of $B_i/F$ are $t\le t_2=t+\delta$, and the
breaks of $K/B_2,K/B_1$ are $t,t'=t+p\delta$, respectively.
Here $\delta\ge0$ is an integer and $p\nmid t$.
\end{lemma}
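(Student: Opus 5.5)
Let $G=\Gal(K/F)\cong C_p^2$, which is totally ramified. The plan is to read everything off the lower ramification filtration of $G$ and then apply Herbrand's function, using the subgroup rule and the break ledger of Lemma~\ref{D:break-ledger} together with the cyclic conventions of Section~\ref{sec:ramification}. Since $G$ is elementary abelian of order $p^2$ and totally (wildly) ramified, $G=G_1$. The filtration of $G$ either has a single jump $t_0$, or it has two jumps $t_0<t_1'$ with $G_{t_1'}=H_0$ a subgroup of order $p$. For abelian $G$ the upper jumps are integers by Hasse--Arf, and the lower jumps of an elementary abelian extension are prime to $p$; I would prove the latter for each quotient by the first-ramification-term argument already used in Proposition~\ref{O:A:prop:AS}. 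In particular the smallest lower jump $t_0$ satisfies $p\nmid t_0$.

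First I choose $B_1$. In the two-jump case I take $B_1=K^{H_0}$, so that $K/B_1$ carries the large lower break $t_1'$. In the one-jump case I take $B_1$ arbitrary. Then I fix another degree-$p$ field $B_2=K^{H}$ with $H\ne H_0$. By the subgroup rule for lower numbering, $K/B_2$ has break $t_0$ and $K/B_1$ has break $t'=t_1'$ (or $t_0$ in the one-jump case). For the quotient breaks I use upper numbering, where $G^v H/H=(G/H)^v$. The upper jumps of $G$ are $u_0=t_0$ and $u_1=t_0+(t_1'-t_0)/p$. Here $(G/H_0)^v$ is nontrivial exactly for $v\le u_0$, so $B_1/F$ has break $t:=t_0$. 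On the other hand $(G/H)^v$ is nontrivial exactly for $v\le u_1$, since $H\ne H_0=G^{u_1}$, so $B_2/F$ has break $t_2=u_1$.

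Now set $\delta=t_2-t=(t_1'-t_0)/p$. Then $t'=t+p\delta$, and the breaks of $K/B_2$ and $K/B_1$ are $t$ and $t'$, as required. The condition $\delta\ge0$ holds, and $\delta$ is an integer because $u_1\in\mathbf Z$ by Hasse--Arf. The condition $p\nmid t$ is the statement about $t_0$ above. In the one-jump case, $\delta=0$ and all breaks equal $t_0$, so any choice of $B_1$ works. Alternatively I can derive $t_2$ directly from differents: transitivity of the different (Lemma~\ref{C:leading-ramification}), combined with $D=(p-1)(\text{break}+1)$ for cyclic degree-$p$ steps, gives $(p-1)(t_1'+1)+p(p-1)(t+1)=(p-1)(t+1)+p(p-1)(t_2+1)$, which yields $t_2=t+(t_1'-t)/p$. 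This is the route of Lemma~\ref{D:break-ledger}, and it can replace the appeal to Hasse--Arf.

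I expect the main obstacle to be justifying $p\mid t_1'-t_0$ and $p\nmid t_0$ from the elementary tools of the appendices, without citing Hasse--Arf. For integrality of $\delta$, I would compute $t_2$ via the break of $B_2/F$, which is an integer because $B_2/F$ is cyclic of degree $p$, and then read off $t_1'=t+p\delta$ from the displayed different identity. For $p\nmid t$, I would apply the argument of Proposition~\ref{O:A:prop:AS} to $K/B_2$, whose break is $t$: take an element with maximal valuation in its additive coset modulo $B_2$, note that this valuation cannot be divisible by $p$, and compute it to be $-t$.
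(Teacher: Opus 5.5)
Your determination of the breaks is fine. The subgroup rule, together with either Herbrand's function (plus Hasse--Arf) or the different identity, gives $t'=t+p\delta$ with $\delta=t_2-t$ a nonnegative integer. The different identity is the paper's route through Lemma~\ref{D:break-ledger}; note that transitivity of the different is Lemma~\ref{C:different-tower}, not Lemma~\ref{C:leading-ramification}.

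The gap is the assertion $p\nmid t$. The general principle you invoke is false in mixed characteristic: lower jumps of an elementary abelian extension, or of each of its quotients, need not be prime to $p$. If $\zeta_p\in M$, the extension $M(\sqrt[p]{\pi_M})/M$ has break $p\,v_M(p)/(p-1)$, which is a multiple of $p$. It fails for quotients even in the present setting. Take $\zeta_p\in F$, $e_F=v_F(p)$, $u=1+\pi^c$ with $p\nmid c<pe_F/(p-1)$, and $K=F(\sqrt[p]{\pi},\sqrt[p]{u})$. Then the fields $F(\sqrt[p]{\pi u^j})$ have $t_2=pe_F/(p-1)$. This is why the paper allows $p\mid t_2$ in \eqref{O:M:q0}. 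So $p\nmid t$ is not a property of the cyclic extension $K/B_2$ alone; it must use the other quotient.

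Your fallback, the best-approximation argument of Proposition~\ref{O:A:prop:AS} applied to $K/B_2$, is the paper's argument, but you omit the input that makes it work. In mixed characteristic $\Tr_{K/B_2}(1)=p\ne0$, so additive Hilbert~90 cannot be applied to $1$. You need a trace-zero unit, such as $1+\eta$ with $\eta\in\PP_K^{\kappa}$ and $\kappa=v_K(p)-(p-1)t$. A trace-zero unit exists if and only if $v_K(p)>(p-1)t$, that is, if and only if $t$ is not the maximal break $v_K(p)/(p-1)$ of $K/B_2$.

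Without this, there is no $d$ with $v_K(\sigma d-d)=0$, since $\sigma d-d$ always has trace zero. Then nothing forces the coset-maximal valuation to equal $-t$; the Kummer example above is exactly this situation.

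The paper gets $\kappa>0$ from $B_2/F$: the trace of $1$ on $B_2/F$ and \eqref{U:trace-ideal} give
\[
 v_K(p)\ge p(p-1)t_2\ge p(p-1)t>(p-1)t .
\]
State and prove this inequality, and your argument is complete. In equal characteristic $\Tr(1)=0$, and your argument already works as written.
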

\begin{proof}
Lemma~\ref{D:break-ledger} gives the field $B_1$ chosen in the statement,
the breaks $t,t_2$ of $B_i/F$, and the breaks
$t'=t+p(t_2-t),t$ of $K/B_i$.
In particular $\delta=t_2-t$ is a nonnegative integer.
In mixed characteristic, the trace of $1$ on $B_2/F$ and
\eqref{U:trace-ideal} imply $v_K(p)\ge p(p-1)t_2$.
The proof of Proposition~\ref{O:A:prop:AS}, before using any assumption
$p\nmid t$, constructs an exact Artin--Schreier coordinate by additive
Hilbert~90 and best approximation and proves precisely that divisibility
assertion from these break and valuation bounds.
The equal-characteristic part of that proposition also gives $p\nmid t$.
\end{proof}

\begin{lemma}[Quadratic conductors in mixed characteristic]\label{U:quadratic-types}
Let $F/\mathbf Q_2$ be finite and $e=v_F(2)$. A ramified quadratic norm
character has conductor $2a$, $1\le a\le e$, or $2e+1$.
Any two such characters of conductor $2e+1$ have the same restriction to
$U_F^{2e}$, so their product has conductor at most $2e$.
\end{lemma}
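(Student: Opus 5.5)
We derive both assertions from the explicit shape of a ramified quadratic extension, using the trace-ideal formula \eqref{U:trace-ideal} and the description of the norm filtration recalled in Section~\ref{sec:ramification}.

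\textbf{Possible conductors.} Let $E/F$ be ramified quadratic with break $t$. Then $D_{E/F}=t+1$, and the norm character $\omega_E$ has conductor $t+1$. Applying \eqref{U:trace-ideal} to $\Tr_{E/F}(1)=2$ gives $e=v_F(2)\ge\lfloor (t+1)/2\rfloor$, so $t\le 2e$. We then split by parity of $t$.

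\begin{itemize}
\item If $t$ is odd, write $t+1=2a$; the bound gives $1\le a\le e$, so the conductor is $2a$.
\item If $t$ is even, we use the argument of Lemma~\ref{D:UR:oddT}, which only assumed that the conductor $T=t+1$ is odd. Writing $T=2r+1$, that argument shows $e=r$: if $e>r$, squaring on $U_F^r$ would make the character trivial on the top graded piece, a contradiction. Hence $T=2e+1$.
\end{itemize}

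This gives exactly the listed possibilities.

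\textbf{Restriction to $U_F^{2e}$.} Let $\omega,\omega'$ be two norm characters of conductor $2e+1$. Both are trivial on $U_F^{2e+1}$, so their restrictions to $U_F^{2e}$ factor through $U_F^{2e}/U_F^{2e+1}\cong k$.

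Lemma~\ref{D:UR:oddT} identifies each induced additive character: $z\mapsto\omega(1+4z)$ kills $z+z^2$, because $(1+2z)^2=1+4(z+z^2)$ is a norm. It is nontrivial, because the conductor is exact. The image of $z\mapsto z+z^2$ on $k$ is the absolute-trace-zero hyperplane, of index $2$. So the only such character is $z\mapsto(-1)^{\Tr_{k/\F_2}\bar z}$.

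Since $v_F(4)=2e$, the elements $1+4z$ exhaust $U_F^{2e}$ modulo $U_F^{2e+1}$. Both $\omega$ and $\omega'$ therefore restrict to this same character, and $\omega\omega'$ is trivial on $U_F^{2e}$. Its conductor is thus at most $2e$.

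The only step needing care is reusing Lemma~\ref{D:UR:oddT} outside its original setting. That lemma was stated inside the unramified–ramified section, but its proof uses only $a_F(\tau)=T$ odd and the trace bound \eqref{D:UR:ebound}. Both hold for any ramified quadratic norm character in mixed characteristic, so we would simply note this rather than reprove it.
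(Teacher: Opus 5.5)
Your proof is correct and follows the paper's argument: the trace of $1$ bounds the different by $2e+1$, the even conductors $2a$ are read off directly, and Lemma~\ref{D:UR:oddT} both forces an odd conductor to equal $2e+1$ and identifies the induced character on $U_F^{2e}/U_F^{2e+1}$ as $(-1)^{\Tr_{k/\F_2}\bar z}$, independently of the field. Your caveat about reusing that lemma is unnecessary: it is already stated for an arbitrary ramified quadratic $E/F$, before the unramified field $U$ is introduced.
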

\begin{proof}
The trace of $1$ gives $e\ge\lfloor D/2\rfloor$ for the quadratic
different exponent $D$, so $2\le D\le2e+1$. The even possibilities
are as stated. Lemma~\ref{D:UR:oddT} proves that an odd $D$ is
$2e+1$ and proves that the induced character on
$U_F^{2e}/U_F^{2e+1}$, written in coordinates $1+4z$, is
$(-1)^{\Tr_{k_F/\F_2}\bar z}$. This value is independent of the
maximal character; hence the product of two is trivial on $U_F^{2e}$.
\end{proof}

\begin{lemma}[Ramification cases]\label{U:dispatch}
Every Galois extension $K/F$ with group $C_\ell^2$ satisfies the
hypotheses of one of Theorems~\ref{O:F:tame}, \ref{O:I:main},
\ref{O:G:totalbranch}, \ref{D:UR:URall}, \ref{D:EQ:main},
\ref{D:MX:main}, or~\ref{D:NM:main}.
\end{lemma}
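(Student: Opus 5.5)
The plan is a case split along the invariants that index the table in the introduction: first whether $\ell=p$, then the order of the inertia subgroup $I\subset\Gal(K/F)$, then the characteristic of $F$, and finally the lower breaks.

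\emph{Tame case.} If $\ell\ne p$, the extension is tame and Theorem~\ref{O:F:tame} applies directly. Its hypotheses are only that $K/F$ is $C_\ell^2$ with $\ell\ne p$. The existence of the unique unramified degree-$\ell$ subfield used there follows from Lemma~\ref{D:inertia}: the wild inertia is trivial, tame inertia is cyclic, and the unramified quotient is cyclic. Hence $|I|=\ell$, and the fixed field of $I$ is the unramified subfield.

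\emph{Wild case, $\ell=p$.} The inertia group is a nontrivial $p$-group. It is nontrivial because $\Gal(K/F)$ is not cyclic, while the quotient by inertia is cyclic. So $|I|\in\{p,p^2\}$.
\begin{itemize}
\item If $|I|=p$ and $p$ is odd, $K/F$ has exactly one unramified degree-$p$ subfield $U$, and every other degree-$p$ subfield $E$ is totally ramified with $K=EU$. These are the hypotheses of Theorem~\ref{O:I:main}. Transitivity through $U$ then covers any two fields.
\item If $|I|=p$ and $p=2$, Theorem~\ref{D:UR:URall} applies in either characteristic.
\item If $|I|=p^2$, $K/F$ is totally ramified. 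For odd $p$, Lemma~\ref{U:odd-selection} supplies exactly the break data $t\le t_2$, $t'=t+p\delta$, $p\nmid t$ assumed in Theorem~\ref{O:G:totalbranch}.
\end{itemize}

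\emph{Totally ramified, $p=2$.} In characteristic two, Theorem~\ref{D:EQ:main} applies with no further hypothesis. In mixed characteristic, take $e=v_F(2)$ and use Lemma~\ref{U:quadratic-types}. Each $\omega_i$ has conductor $2a_i$ with $1\le a_i\le e$, or conductor $2e+1$. These conductors equal the lower different exponents $t_i+1$. Since $\omega_1\omega_2=\omega_3$ by Lemma~\ref{D:quadratic-norms}, two cases arise.
\begin{itemize}
\item Suppose at least one conductor equals $2e+1$. Two maximal conductors have product of conductor at most $2e$, so the third is even. It cannot be three maximal characters. Nor can it be exactly one maximal together with two even ones, since the product of two characters of even conductor at most $2e$ is trivial on $U_F^{2e}$ and so cannot be maximal. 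So the pattern is $(2a-1,2e,2e)$, which is Theorem~\ref{D:MX:main}.
\item Suppose all conductors are even. Label them so that $a=a_1$ is smallest. The ultrametric property of conductors, via Lemma~\ref{D:break-ledger}, forces the other two to be equal: $a_2=a_3=r\ge a$. This gives $(2a-1,2r-1,2r-1)$ with $r\le e$, which is Theorem~\ref{D:NM:main}.
\end{itemize}

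\emph{Main obstacle.} I expect the hardest step to be this last equality claim in the mixed dyadic case. It must be checked that when $a_1<a_2$ the product has conductor exactly $\max(a_2,a_1)$, and that when the two smallest conductors are equal the third is not smaller. I would derive this from Lemma~\ref{D:break-ledger}, which gives upper break $2t_2-t_1$ or $t_1$ together with the Herbrand relation. Equal breaks are forced because the largest upper break occurs for two quadratic quotients.
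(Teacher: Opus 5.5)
Your proof is correct and follows the paper's own case split: tame; $\ell=p$ with $|I|=p$ or $p^2$; characteristic two; and the two mixed dyadic break patterns. The only difference is in the mixed dyadic case. The paper first observes that the maximal conductor occurs at least twice, because a product of two characters of unequal conductors has the larger conductor, and then asks whether that maximum is $2e+1$; you split on the presence of a maximal conductor first, which is equivalent.

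The step you flag as the main obstacle is in fact immediate. Either that one-line ultrametric argument, or the break pattern $(t,s,s)$ with $t\le s$ already stated in Lemma~\ref{D:break-ledger}, gives $a_2=a_3\ge a_1$. Note also a wording slip: what must be excluded is $a_1=a_2<a_3$, i.e.\ the third conductor being larger, not smaller.
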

\begin{proof}
By Lemma~\ref{D:inertia} the finite residue Galois group is cyclic;
a $C_\ell^2$ extension cannot be unramified. Its inertia has order $\ell$ or $\ell^2$.
If $\ell\ne p$, wild inertia is trivial and tame inertia is cyclic,
so inertia has order $\ell$. Abelianness and the residue-Frobenius
conjugation action imply $\ell\mid |k_F|-1$. There is a unique unramified
lower field and every other lower field is tamely ramified. These are
exactly the hypotheses of Section~\ref{sec:tame}.

Suppose $\ell=p$. If inertia has order $p$, there is one unramified degree-$p$
intermediate field and every other degree-$p$ intermediate field is ramified.
This case is treated in Sections~\ref{sec:odd-ur}--\ref{sec:odd-calculation} for odd $p$ and Section~\ref{sec:dyadic-ur} for $p=2$.
If inertia has order $p^2$, the extension is totally ramified.
For odd $p$, Lemma~\ref{U:odd-selection} provides all the hypotheses of
the totally ramified proof in Sections~\ref{sec:odd-estimates}--\ref{sec:odd-calculation}.

It remains to classify totally ramified biquadratic extensions.
In characteristic exactly two, the reduced Artin--Schreier representatives
of Section~\ref{sec:dyadic-equal} have positive odd breaks and the two largest equal, including
the one-break case. Theorem~\ref{D:EQ:main} treats all of them.
In mixed characteristic the three nontrivial lower norm characters obey
$\omega_3=\omega_1\omega_2$ by Lemma~\ref{D:quadratic-norms},
and all are ramified. The product of two
characters of unequal conductors has the larger conductor, so the maximum
of their three conductors occurs at least twice. If it is $2e+1$,
Lemma~\ref{U:quadratic-types} makes the third smaller, and the lower breaks
are $(2a-1,2e,2e)$ with $1\le a\le e$, precisely Section~\ref{sec:dyadic-maximal}.
Otherwise their conductors are $(2a,2r,2r)$ after relabeling, with
$1\le a\le r\le e$, giving the breaks $(2a-1,2r-1,2r-1)$ of Section~\ref{sec:dyadic-nonmaximal}.
The upper different formulas in those parts follow from
Lemma~\ref{D:break-ledger}.
\end{proof}

\subsection{Proof of the main theorem}
\begin{proof}[Proof of Theorem~\ref{U:main}]
Let $I$ be the inertia subgroup of $\Gal(K/F)$.
If $\ell\ne p$, apply Theorem~\ref{O:F:tame}.
For $\ell=p>2$, apply Theorem~\ref{O:I:main} when $|I|=p$ and
Theorem~\ref{O:G:totalbranch} when $|I|=p^2$.
The choice of intermediate fields required in the latter case is
supplied by Lemma~\ref{U:odd-selection}.

For $\ell=p=2$ and $|I|=2$, apply Theorem~\ref{D:UR:URall}.
If $|I|=4$ and $\operatorname{char}F=2$, apply
Theorem~\ref{D:EQ:main}. If $|I|=4$ and $\operatorname{char}F=0$,
apply Theorem~\ref{D:MX:main} or~\ref{D:NM:main}, according to
Lemma~\ref{U:dispatch}. These cases exhaust the possibilities.

Each theorem compares the chosen degree-$\ell$ field with every
other such field. Equality for any pair follows by transitivity.
The choice of a character $\theta_L$ with the prescribed norm
pullback does not affect its local constant: two choices differ
by a character in $S(K/L)$ and are conjugate by
Lemma~\ref{U:conjugacy}. The proof of Theorem~\ref{D:EQ:main}
already includes the passage from \eqref{D:EQ:canonicalpsi} to
an arbitrary nontrivial additive character; all other cases use
$\psi_F$ as given. This proves the assertion for the stated family.
\end{proof}

\appendix
\addtocontents{toc}{\protect\setcounter{tocdepth}{1}}
\section{The leading Gauss congruence}\label{app:leading-gauss}
We prove the Gauss congruence used in Section~\ref{sec:tame};
see also~\cite{Stickelberger}. The proof uses character orthogonality,
Jacobi sums, and induction on the sum of the base-$p$ digits.

\subsection{The coefficient field and notation}
Let $k=\F_q$, $q=p^f$, and choose an unramified extension $K_0/\Q_p$
with residue field $k$. To construct it, lift a monic irreducible polynomial of degree $f$
over $\F_p$ to a monic polynomial over $\mathbf Z_p$, and take its
quotient ring $R$. It is finite free and $p$-adically complete over
$\mathbf Z_p$, with $R/pR=k$. Every element with nonzero residue is a
unit: lift an inverse modulo $p$ and invert the remaining element of
$1+pR$ by its convergent geometric series. Every nonzero element is
$p^n$ times such a unit, since $R$ is finite free and separated over
$\mathbf Z_p$. Thus $R$ is a discrete valuation ring with uniformizer
$p$, and its fraction field is the required $K_0$.

Hensel lifting of $X^{q-1}-1$ gives a unique multiplicative lift
$x\mapsto[x]$ from $k^\times$ into $R^\times$: its derivative is a
unit at every nonzero residue root. The lifts multiply because their
products are again roots with the prescribed residue. Choose a primitive
$p$th root $\zeta$ and put
\[
 \varpi=\zeta-1,\qquad K_1=K_0(\zeta),\qquad \OO_1=\OO_{K_1}.
\]
For $p>2$, the polynomial
\[
 \frac{(1+X)^p-1}{X}=p+\binom p2X+\cdots+X^{p-1}
\]
is Eisenstein; hence $\varpi$ is a uniformizer of $K_1$ and its residue
field is $k$. For $p=2$, $K_1=K_0$ and $\varpi=-2$ is already a
uniformizer. All root-of-unity values below are viewed in $K_1$; the
identities are algebraic identities among these values.

Let $\operatorname{tr}_k=\Tr_{k/\F_p}$ and
$\psi(x)=\zeta^{\operatorname{tr}_k(x)}$. For $0<a<q-1$, write
\[
 a=\sum_{i=0}^{f-1}a_ip^i,\qquad
 s_p(a)=\sum_i a_i,\qquad a!_p=\prod_i a_i!.
\]
Define the plus-sign Gauss sum
\[
 G_a=\sum_{x\in k^\times}[x]^{-a}\psi(x).
\]
The convention of Section~\ref{sec:tame} is
$\tau_k(\omega^a)=-G_a$, where $\omega(x)=[x]$.

\subsection{A first-order calculation and a Jacobi sum}
\begin{lemma}\label{A:gauss-first}
If $q>2$, then $G_1\equiv-\varpi\pmod{\varpi^2\OO_1}$.
Moreover $G_{p^j}=G_1$ whenever $0\le j<f$ and $p^j<q-1$.
\end{lemma}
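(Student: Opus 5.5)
The plan is to compute $G_1$ modulo $\varpi^2$ by expanding $\psi(x)=\zeta^{\operatorname{tr}_k(x)}=(1+\varpi)^{\operatorname{tr}_k(x)}$ binomially, and then to obtain $G_{p^j}=G_1$ by a Frobenius substitution.

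For the congruence, fix a lift $t(x)\in\{0,\ldots,p-1\}$ of $\operatorname{tr}_k(x)$ and write
\[
 (1+\varpi)^{t(x)}\equiv 1+t(x)\varpi\pmod{\varpi^2\OO_1}.
\]
This is legitimate because $\zeta^{t}$ depends only on $t$ modulo $p$. Summing the constant term against $[x]^{-1}$ over $k^\times$ gives $\sum_{x}[x]^{-1}=0$. Indeed, $\omega^{-1}$ is a nontrivial character of $k^\times$, since $q-1>1$ by the hypothesis $q>2$, so orthogonality applies.

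The linear term is $\varpi\sum_x[x]^{-1}t(x)$. Modulo $\varpi$ only its residue in $k$ matters, because $\OO_1/\varpi\OO_1=k$. That residue is
\[
 \sum_{x\in k^\times}x^{-1}\operatorname{tr}_k(x)=\sum_{i=0}^{f-1}\sum_{x\ne0}x^{p^i-1}.
\]
The inner sum equals $-1$ exactly when $(q-1)\mid(p^i-1)$, which happens only for $i=0$, and equals $0$ otherwise. The residue is therefore $-1$, and $G_1\equiv-\varpi\pmod{\varpi^2}$. For $p=2$ the same computation goes through with $\varpi=-2$: there $\zeta=-1$ and $(-1)^t=1-2t$ holds exactly for $t\in\{0,1\}$.

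For the second claim, substitute $x\mapsto x^{p^j}$, which is a bijection of $k^\times$. Since $\operatorname{tr}_k(x^{p^j})=\operatorname{tr}_k(x)$ and the Teichmüller lift is multiplicative, $[x^{p^j}]^{-1}=[x]^{-p^j}$. This gives $G_1=\sum_x[x]^{-p^j}\psi(x)=G_{p^j}$.

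The only step that needs care is the residue calculation, specifically checking that the reduction modulo $\varpi$ of $[x]^{-1}t(x)$ is exactly $x^{-1}\operatorname{tr}_k(x)$ in $k$; the rest is routine.
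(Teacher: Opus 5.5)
Your proof is correct and follows the paper's argument step for step. You expand $(1+\varpi)^{t_x}$ to first order, kill the constant term by orthogonality, evaluate the residue as $\sum_{i}\sum_{x\ne0}x^{p^i-1}=-1$ in $k$, and reindex by Frobenius to get $G_{p^j}=G_1$. The only cosmetic difference is that you substitute $x\mapsto x^{p^j}$ directly, whereas the paper proves $G_{pa}=G_a$ and iterates.
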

\begin{proof}
For each $x$, let $t_x\in\{0,\ldots,p-1\}$ represent
$\operatorname{tr}_k(x)$. Then
$\psi(x)=(1+\varpi)^{t_x}\equiv1+t_x\varpi\pmod{\varpi^2}$.
The exact identity $\sum_{x\ne0}[x]^{-1}=0$ follows by summing a
nontrivial character of the cyclic group $k^\times$. Therefore
\[
 G_1\equiv\varpi\sum_{x\ne0}[x]^{-1}t_x\pmod{\varpi^2}.
\]
The residue of the last sum is
\[
 \sum_{x\ne0}x^{-1}\operatorname{tr}_k(x)
   =\sum_{i=0}^{f-1}\sum_{x\ne0}x^{p^i-1}=-1\quad\text{in }k.
\]
Indeed the $i=0$ inner sum is $q-1=-1$, while every other exponent is
strictly between $0$ and $q-1$ and its power sum is zero. A power sum
vanishes by multiplying its variable by an element on which the
corresponding character is nontrivial. Finally, Frobenius permutes
$k^\times$ and preserves its absolute trace. Reindexing by the inverse
Frobenius shows that $G_{pa}=G_a$, with indices reduced modulo $q-1$.
Iteration gives the last assertion.
\end{proof}

\begin{lemma}[Jacobi sums when the digits add without carries]
\label{A:jacobi-unit}
Suppose $a,b>0$, $a+b<q-1$, and $a_i+b_i<p$ for every digit. Put
\[
 J(a,b)=\sum_{x\in k\setminus\{0,1\}}[x]^{-a}[1-x]^{-b}.
\]
Then
\begin{equation}\label{A:jacobi-product}
 G_aG_b=J(a,b)G_{a+b},
\end{equation}
and $J(a,b)$ is a unit with
\begin{equation}\label{A:jacobi-reduction}
 J(a,b)\equiv-\prod_{i=0}^{f-1}\binom{a_i+b_i}{a_i}
                  \pmod{\varpi\OO_1}.
\end{equation}
\end{lemma}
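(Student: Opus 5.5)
The plan has three parts: the standard Gauss--Jacobi factorization, a power-sum reduction of $J(a,b)$ modulo $\varpi$, and a lift of the resulting unit.

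\emph{Product formula.} Expand
\[
 G_aG_b=\sum_{x,y\ne0}[x]^{-a}[y]^{-b}\psi(x+y)
\]
and group the terms by $s=x+y$.

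For $s\ne0$, substitute $x=su$ and $y=s(1-u)$ with $u\in k\setminus\{0,1\}$. Multiplicativity of the Teichm\"uller lift gives the contribution $[s]^{-(a+b)}\psi(s)\,J(a,b)$. Summing over $s\ne0$ yields $J(a,b)G_{a+b}$.

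For $s=0$, the terms are $y=-x$, so the contribution is $[-1]^{-b}\sum_{x\ne0}[x]^{-(a+b)}$. This vanishes because $0<a+b<q-1$ makes $x\mapsto[x]^{-(a+b)}$ a nontrivial character of $k^\times$. This proves \eqref{A:jacobi-product}.

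\emph{Reduction of $J(a,b)$ modulo $\varpi$.} The residue field of $\OO_1$ is $k$, and $[x]$ reduces to $x$, so
\[
 J(a,b)\equiv\sum_{x\ne0,1}x^{-a}(1-x)^{-b}.
\]
Replace the negative exponents by $q-1-a$ and $q-1-b$, which is allowed since $x$ and $1-x$ are nonzero. Adding the omitted terms $x=0$ and $x=1$ changes nothing, because both exponents are positive: $q-1-a\ge1$ and $q-1-b\ge1$. So the residue equals
\[
 \sum_{x\in k}x^{A}(1-x)^{B},\qquad A=q-1-a,\quad B=q-1-b.
\]

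Expand $(1-x)^B=\sum_j\binom Bj(-1)^jx^j$ and use the power-sum facts: $\sum_{x\in k}x^m=-1$ if $m>0$ and $(q-1)\mid m$, and it is $0$ otherwise (with $x^0=1$ at $x=0$).
\begin{itemize}
\item Since $A+j\le 2q-2-a-b<2q-2$, the only surviving exponent is $A+j=q-1$, that is, $j=a$.
\item Hence the residue is $-\binom Ba(-1)^a$.
\end{itemize}

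\emph{Evaluating the binomial.} The digits of $B$ are $B_i=p-1-b_i$. Lucas' theorem gives
\[
 \binom Ba\equiv\prod_i\binom{p-1-b_i}{a_i},
\]
and each factor is nonzero because $a_i\le p-1-b_i$. The identity $\binom{p-1-b}{a}\equiv(-1)^a\binom{a+b}{a}\pmod p$ holds since
\[
 (p-1-b)(p-2-b)\cdots(p-b-a)\equiv(-1)^a(b+1)\cdots(b+a).
\]
The total sign is $\prod_i(-1)^{a_i}=(-1)^{s_p(a)}$, and $(-1)^{s_p(a)}=(-1)^a$ since $p$ is odd; for $p=2$ signs are irrelevant. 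This cancels the factor $(-1)^a$ and gives \eqref{A:jacobi-reduction}.

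\emph{Unit property.} Each $\binom{a_i+b_i}{a_i}$ is prime to $p$ because $a_i+b_i<p$. So the residue of $J(a,b)$ is nonzero, and $J(a,b)$ is a unit.

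\emph{Main obstacle.} The expected difficulty is the sign bookkeeping when passing from negative to positive exponents and applying Lucas' theorem, especially keeping the case $p=2$ consistent. I would verify it separately for small digits, including the case $p=2$, $a_i+b_i\le1$.
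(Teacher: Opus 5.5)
Your proposal is correct and follows essentially the same route as the paper. It uses the same splitting by $s=x+y$ for the product formula, and the same reduction to $\sum_{x\in k}x^{q-1-a}(1-x)^{q-1-b}$ with the single surviving index $j=a$. It then evaluates $\binom{q-1-b}{a}$ digitwise and cancels signs using $a\equiv s_p(a)\pmod 2$ for odd $p$. The only difference is cosmetic: where you cite Lucas's theorem, the paper derives it from the identity $(1+X)^{q-1-b}=\prod_i(1+X^{p^i})^{p-1-b_i}$ in $\F_p[X]$.
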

\begin{proof}
Expand the product of the Gauss sums and separate $t=x+y$. When $t=0$
the contribution is zero because the character $[x]^{-(a+b)}$ is
nontrivial. When $t\ne0$, write $x=tu$, $y=t(1-u)$. The two sums
separate, giving \eqref{A:jacobi-product}.

For the reduction of the Jacobi sum, both exponents below are positive,
so the terms at $0$ and $1$ may be included. In $k$ the reduction is
\[
 \sum_{x\in k}x^{q-1-a}(1-x)^{q-1-b}
 =\sum_{j=0}^{q-1-b}(-1)^j\binom{q-1-b}{j}
                          \sum_{x\in k}x^{q-1-a+j}.
\]
The exponent is positive and smaller than $2(q-1)$. Its only possible
positive multiple of $q-1$ occurs at $j=a$. This index occurs because
$a+b<q-1$. The reduction is consequently
$-(-1)^a\binom{q-1-b}{a}$.

To evaluate this binomial coefficient, use in $\F_p[X]$ the identity
\[
 (1+X)^{q-1-b}
    =\prod_{i=0}^{f-1}(1+X^{p^i})^{p-1-b_i}.
\]
Comparison of coefficients gives
$\binom{q-1-b}{a}=\prod_i\binom{p-1-b_i}{a_i}$ modulo $p$;
each exponent on the right has a unique digit expansion. Since
$a_i+b_i<p$, ordinary factorials give
\[
 \binom{p-1-b_i}{a_i}
       \equiv(-1)^{a_i}\binom{a_i+b_i}{a_i}\pmod p.
\]
For odd $p$, $a\equiv\sum_i a_i\pmod2$, so the signs cancel.
For $p=2$ the same sign equality holds in the residue field.
This proves \eqref{A:jacobi-reduction}. None of its factorials is
zero modulo $p$, proving that $J(a,b)$ is a unit.
\end{proof}

\subsection{The leading coefficient}
\begin{theorem}[Leading Gauss congruence]\label{A:leading-gauss}
For every prime $p$, every $q=p^f$, and $0<a<q-1$,
\begin{equation}\label{A:leading-plus}
 G_a\equiv-\frac{\varpi^{s_p(a)}}{a!_p}
                       \pmod{\varpi^{s_p(a)+1}\OO_1}.
\end{equation}
In particular $G_a$ has $\varpi$-valuation exactly $s_p(a)$, and
\begin{equation}\label{A:leading-minus}
 \tau_k(\omega^a)\equiv_\times
       \frac{\varpi^{s_p(a)}}{\prod_i a_i!}.
\end{equation}
Here $X\equiv_\times Y$ means $X/Y\in1+\varpi\OO_1$.
\end{theorem}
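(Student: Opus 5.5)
The plan is induction on the digit sum $s_p(a)$, using Lemma~\ref{A:gauss-first} for the base case and Lemma~\ref{A:jacobi-unit} for the inductive step. The minus-sign statement \eqref{A:leading-minus} follows from \eqref{A:leading-plus} at once, since $\tau_k(\omega^a)=-G_a$ and $a!_p$ is a $p$-adic unit. It therefore suffices to prove \eqref{A:leading-plus} for all $0<a<q-1$.

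\textbf{Base case.} Suppose $s_p(a)=1$. Then $a=p^j$ with $p^j<q-1$, which requires $q>2$. Lemma~\ref{A:gauss-first} gives $G_a=G_1\equiv-\varpi \pmod{\varpi^2\OO_1}$. This is \eqref{A:leading-plus}, because $a!_p=1$.

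\textbf{Inductive step.} Suppose $s_p(a)\ge2$. Choose an index $i$ with $a_i\ge1$, and split $a=p^i+b$, where $b=a-p^i$. Then $b>0$, $s_p(b)=s_p(a)-1$, and $a=b+p^i<q-1$. The digits of $b$ and $p^i$ add without carries, since the $i$th digits sum to $a_i<p$ and all other digits of $p^i$ are zero. Hence Lemma~\ref{A:jacobi-unit} applies and gives $G_{p^i}G_b=J(p^i,b)G_a$, with $J$ a unit satisfying
\[
J(p^i,b)\equiv-\binom{a_i}{1}=-a_i\pmod{\varpi}.
\]
The other digit binomials are $\binom{b_j}{0}=1$.

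Combining this with the induction hypothesis for $b$ and the base case for $p^i$,
\[
G_a=\frac{G_{p^i}G_b}{J}\equiv\frac{(-\varpi)\bigl(-\varpi^{s_p(b)}/b!_p\bigr)}{-a_i}
=-\frac{\varpi^{s_p(a)}}{a_i\,b!_p}\pmod{\varpi^{s_p(a)+1}}.
\]
Since $b!_p\cdot a_i=(a_i-1)!\,a_i\prod_{j\ne i}a_j!=a!_p$, this is exactly \eqref{A:leading-plus}.

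The congruence is preserved under multiplication and division because every factor has a unit leading coefficient. Explicitly, $X\equiv_\times Y$ and $X'\equiv_\times Y'$ imply $XX'\equiv_\times YY'$. Thus the argument works with multiplicative congruences throughout, and valuations add.

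\textbf{Main point to check.} The delicate step is confirming the hypotheses of Lemma~\ref{A:gauss-first} and Lemma~\ref{A:jacobi-unit} at the edges.

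For the base case, we need $p^i<q-1$. This holds because $p^i\le a<q-1$.

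In the inductive step, Lemma~\ref{A:jacobi-unit} requires $a+b<q-1$ in its own notation, that is $p^i+b=a<q-1$, which holds. It also requires $b>0$, which holds since $s_p(a)\ge2$.

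The case $q=2$ is vacuous, because then no $a$ satisfies $0<a<q-1$. For $p=2$, note that $\varpi=-2$ and the digits are $0$ or $1$; the same computation goes through with $a!_p=1$.

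Finally, the exact $\varpi$-valuation $s_p(a)$ follows because the leading coefficient $-1/a!_p$ is a unit.
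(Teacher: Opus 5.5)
Your proposal is correct and follows the paper's proof essentially step for step. It argues by induction on $s_p(a)$, uses Lemma~\ref{A:gauss-first} for the base case $a=p^j$, and in the inductive step feeds the carry-free splitting $a=p^j+(a-p^j)$ into Lemma~\ref{A:jacobi-unit} (whose Jacobi sum is the unit $\equiv -a_j \pmod{\varpi}$), finishing with $\tau_k(\omega^a)=-G_a$; the only cosmetic difference is that you write $J(p^i,b)$ where the paper writes $J(c,p^j)$, which gives the same residue.
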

\begin{proof}
If $q=2$ there is no index in the specified interval. Otherwise use
induction on $s_p(a)$. For digit sum one, $a=p^j$, and
Lemma~\ref{A:gauss-first} proves the assertion.
For larger digit sum choose $j$ with $a_j>0$ and put $c=a-p^j$.
Then $c>0$, and $c+p^j=a<q-1$ is an addition without carries.
Lemma~\ref{A:jacobi-unit} gives $J(c,p^j)\equiv-a_j\pmod\varpi$.
It is a unit, so the exact product formula and the induction hypothesis
give
\[
 G_a=\frac{G_cG_{p^j}}{J(c,p^j)}
 \equiv-\frac{\varpi^{s_p(a)}}{
                a_j(a_j-1)!\prod_{i\ne j}a_i!}
                \pmod{\varpi^{s_p(a)+1}}.
\]
Division by the unit does not lower the error precision. This proves
\eqref{A:leading-plus}, including its nonzero leading coefficient.
Changing from $G_a$ to $-G_a$ proves \eqref{A:leading-minus} with the
positive sign required in \eqref{O:F:Stick}. The argument applies
unchanged when $p=2$ and $\varpi=-2$.
\end{proof}

Equation~\eqref{A:leading-minus} gives \eqref{O:F:Stick} after
grouping the base-$p$ digits into base-$q$ digits.

\section{Formal local residues and their trace compatibility}\label{app:local-residues}
This appendix proves the local identity used in the characteristic-two
argument. A differential on $k((t))$ means a \emph{formal differential}
$h(t)\,dt$, with termwise differentiation of Laurent series, and
\[
 \Res_t\left(\sum_n h_nt^n\,dt\right)=h_{-1}.
\]
We prove substitution and trace compatibility, including for wildly ramified extensions.

\subsection{Substitution and a finite free expansion}
\begin{lemma}[Formal substitution]\label{B:substitution}
Let $k$ be a field and let $u(t)\in t^e k[[t]]$ have exact order
$e\ge1$. For every $H(s)\in k((s))$,
\begin{equation}\label{B:subst}
 \Res_t\bigl(H(u(t))u'(t)\,dt\bigr)
                         =e\Res_s(H(s)\,ds).
\end{equation}
In particular the residue is unchanged by a change of uniformizer.
\end{lemma}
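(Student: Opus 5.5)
Both sides of \eqref{B:subst} are $k$-linear in $H$ and continuous for the $s$-adic topology, in the sense that if $H\in s^N k[[s]]$ with $N\ge0$ then $H(u(t))u'(t)$ lies in $t^{eN+e-1}k[[t]]$ and has zero residue. Consequently I would reduce to the monomials $H=s^n$ with $n<0$, since a Laurent series is a finite sum of negative-power monomials plus a power series. The power-series part contributes zero on both sides: on the right because it has no $s^{-1}$ term, and on the left because $u^nu'$ with $n\ge0$ is a power series. The substitution $H(u(t))$ is well defined because $u$ has positive order, so $u^{-1}=t^{-e}v^{-1}$ with $v\in k[[t]]^\times$.

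For a monomial with $n\ne-1$, I will use the formal derivative identity $(u^{n+1})'=(n+1)u^nu'$, which holds for Laurent series by the termwise product rule. When $n+1$ is invertible in $k$, $u^nu'$ is an exact derivative and has residue zero, because the derivative of $t^m$ is $mt^{m-1}$ and never has a $t^{-1}$ term. The main obstacle is the case $\operatorname{char}k=p\mid n+1$, where this division is illegitimate. To handle it, I would avoid division entirely with a universal-coefficient argument. Write $u=t^e(c_0+c_1t+\cdots)$ with indeterminate coefficients, so that the residue of $u^nu'$ is a polynomial in $c_1,c_2,\dots$, with $c_0^{\pm1}$ adjoined, having integer coefficients. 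Over $\mathbf Q$ this polynomial vanishes identically by the exact-derivative argument, hence it is zero as an integer polynomial. Specializing to any $k$ then gives zero. Only finitely many $c_j$ enter, since the residue depends on a truncation of $u$.

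For $n=-1$, I will write $u=t^ev$ with $v$ a unit, so that $u'/u=e/t+v'/v$. Here $v'/v$ is a power series, so its residue is $0$, and the residue of $u'/u$ is $e=e\Res_s(ds/s)$. This matches the right side of \eqref{B:subst}. Summing the monomial cases by linearity gives \eqref{B:subst}.

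The final assertion follows by taking $e=1$. In that case $u$ is another uniformizer and $H(s)\,ds$ pulls back to $H(u)u'\,dt$, so the residue is independent of the uniformizer.
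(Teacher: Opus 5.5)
Your proposal is correct and follows essentially the same route as the paper. Both reduce to the monomials $s^n$, treat $n=-1$ via $u'/u=e/t+v'/v$, and handle $n\ne-1$ by a universal-coefficient argument followed by specialization. The only cosmetic difference is that you conclude vanishing over $\mathbf Q$, whereas the paper uses torsion-freeness of $\mathbf Z[c_0^{\pm1},c_1,\dots]$ directly.
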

\begin{proof}
It suffices to treat $H(s)=s^n$: only finitely many of its Laurent
monomials can contribute to the residue after substitution. Write
$u=t^e v$ with $v\in k[[t]]^\times$. If $n=-1$, then
$u'/u=e/t+v'/v$ and $v'/v\in k[[t]]$, proving the assertion.
For $n\ne-1$ one must not divide by $n+1$ in characteristic dividing
$n+1$. Instead, perform the calculation first over the torsion-free
ring
\[
 R=\mathbf Z[c_0,c_0^{-1},c_1,c_2,\ldots],\qquad
 v=c_0+c_1t+c_2t^2+\cdots.
\]
The coefficient giving the residue uses only finitely many $c_i$ and
lies in this ring, also for negative $n$. The residue of a derivative
is zero, so differentiating $u^{n+1}$ gives
$(n+1)\Res_t(u^n u'\,dt)=0$. Torsion-freeness gives residue zero in
$R$. Specializing the coefficients to any field proves the desired
identity without division in that field. Linearity now proves
\eqref{B:subst}; for a change of uniformizer, $e=1$.
\end{proof}

\begin{lemma}[A basis compatible with coefficients]\label{B:free-basis}
Let $R$ be a commutative ring and
$u(t)=t^e(c_0+c_1t+\cdots)$, where $c_0\in R^\times$.
Under $s\mapsto u(t)$, the module $R[[t]]$ is free over $R[[s]]$
with basis $1,t,\ldots,t^{e-1}$. After inverting $s$ it is
$R((t))$, free over $R((s))$ with the same basis. These expansions
and the resulting multiplication matrices commute with specialization
of the coefficients.
\end{lemma}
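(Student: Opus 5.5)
The plan is the standard division-with-remainder argument for power series over a ring, carried out first in $R[[t]]$ and then localized.

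First reduce to the case $c_0=1$. Replace $u$ by $u/c_0$; this changes $s$ by the unit $c_0\in R^\times$, which does not affect the subring $R[[s]]$ or freeness.

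Next prove generation. Given $f\in R[[t]]$, construct $g_0,\dots,g_{e-1}\in R[[s]]$ with $f=\sum_j g_j(u)t^j$ by successive approximation in the $t$-adic topology. Write $f=\sum_{j<e}f_jt^j+t^e f'$ and note $t^e=u\cdot w(t)$ with $w=v^{-1}\in R[[t]]^\times$. Then $t^e f'=u\,(wf')$, and the process repeats on $wf'$. After $m$ steps the remainder lies in $u^mR[[t]]\subset t^{em}R[[t]]$. The coefficients of $s^m$ in each $g_j$ are fixed at step $m$ and are never changed afterwards. Hence the $g_j$ are well-defined formal series and the sum converges $t$-adically. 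Every coefficient produced is a polynomial expression in $c_0^{-1},c_1,c_2,\dots$ with integer coefficients. This is exactly what gives compatibility with specialization.

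Then prove independence. Suppose $\sum_j g_j(u)t^j=0$ with some $g_j\neq0$. Let $m$ be the minimal $s$-order among the $g_j$, and take $j$ with a nonzero coefficient $\gamma$ at $s^m$. The term $g_j(u)t^j$ has $t$-coefficient $\gamma c_0^m$ in degree $em+j$. No other term contributes to that degree at lowest order, because the degrees $em'+j'$ with $m'\ge m$ and $0\le j'<e$ are distinct modulo $e$ and are at least $em$. Since $c_0$ is a unit, $\gamma c_0^m\neq0$. This contradicts the vanishing; the argument is valid over any commutative ring.

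After that, handle the Laurent statement. The element $u$ equals $t^e$ times a unit of $R[[t]]$, so inverting $s$ is the same as inverting $t$. Thus $R[[t]][s^{-1}]=R((t))$, and the basis persists by flatness of localization.

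Finally, specialization. The multiplication matrices by an element $x\in R[[t]]$ in this basis are obtained by expanding $x\,t^i$ with the same algorithm. So their entries are universal expressions over $\mathbf Z[c_0^{\pm1},c_1,\dots]$ evaluated at the coefficients, and any ring homomorphism $R\to R'$ commutes with them.

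The main obstacle is convergence bookkeeping. One has to show that each coefficient of $g_j$ stabilizes after finitely many steps. I expect the bound $u^m R[[t]]\subset t^{em}R[[t]]$ to settle this directly.
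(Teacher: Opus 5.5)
Your proposal is correct and is essentially the paper's proof. Both arguments use $t$-adic successive cancellation; you peel off $t^e=u\,v^{-1}$ blockwise rather than one coefficient at a time. Both get uniqueness from the leading degrees $em+j$ of the summands being distinct modulo $e$, note that inverting $s$ is the same as inverting $t$, and get specialization from the universality of the finitely many ring operations in $c_0^{-1},c_1,\dots$ and the input coefficients.

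The one slip is in the independence step. You must take $j$ minimal among the indices whose $g_j$ has $s$-order $m$, i.e.\ minimize $em+j$ over all summands. Otherwise a summand $g_{j'}(u)t^{j'}$ with $j'<j$ and the same order can contribute to degree $em+j$ through the higher coefficients of $u^m=t^{em}v^m$.
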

\begin{proof}
For $n=ej+i$, $0\le i<e$, the series $s^jt^i$ has leading term
$c_0^j t^n$. Given a power series, cancel its lowest coefficient
using the corresponding multiple of $s^jt^i$ and continue successively.
This constructs an expansion $\sum_{i=0}^{e-1}H_i(s)t^i$, with
$H_i\in R[[s]]$. It is unique: the first nonzero terms of the
nonzero $H_i(s)t^i$ have different degrees modulo $e$, and hence
cannot cancel. The algorithm uses only ring operations and $c_0^{-1}$.
Every output coefficient is determined after finitely many operations,
which proves compatibility with specialization.
Since $s=t^e v(t)$ with $v(t)$ a unit, inverting $s$ is equivalent
to inverting $t$. This proves the assertions for Laurent series as well.
In particular, the algebraic trace of multiplication by an element of
$R((t))$ is an element of $R((s))$, obtained by the trace of its finite
matrix, and it commutes with specialization.
\end{proof}

\subsection{Trace compatibility}
\begin{lemma}[Totally ramified substitution]\label{B:trace-substitution}
For a field $k$ and $s=u(t)$ of positive order $e$,
\begin{equation}\label{B:trace-subst}
 \Res_s\left(\Tr_{k((t))/k((s))}(h(t))\,ds\right)
       =\Res_t\left(h(t)u'(t)\,dt\right)
                 \qquad(h\in k((t))).
\end{equation}
The trace here is the algebraic trace on the finite free extension.
The identity remains valid even when $u'=0$.
\end{lemma}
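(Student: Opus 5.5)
We reduce to monomials and then use a universal torsion-free coefficient ring, as in Lemma~\ref{B:substitution}. Both sides of \eqref{B:trace-subst} are $k$-linear in $h$. For $h$ of sufficiently large order $N$, both sides vanish. On the right, $h u'\,dt$ has nonnegative order once $N\ge0$. On the left, Lemma~\ref{B:free-basis} shows that multiplication by $h$ has matrix entries in $s^{\lfloor N/e\rfloor-1}k[[s]]$, so its trace has no $s^{-1}$ term for large $N$. It therefore suffices to treat $h=t^n$ for each integer $n$, and by multiplying by powers of $s=u(t)$ (which pull out of the trace) we may even reduce to $0\le n<e$ after shifting, though it is simpler to keep $n$ arbitrary.

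Next we pass to the universal case. Put $R=\mathbf Z[c_0,c_0^{-1},c_1,c_2,\ldots]$ and $u=t^e(c_0+c_1t+\cdots)$. Lemma~\ref{B:free-basis} says that $\Tr(t^n)\in R((s))$ is computed by a finite matrix compatible with specialization. For a fixed $n$, the $s^{-1}$-coefficient of $\Tr(t^n)$ and the $t^{-1}$-coefficient of $t^nu'$ are given by integral polynomials in finitely many $c_i$ and $c_0^{-1}$. Both sides therefore specialize to $k$. This handles the case $u'=0$ automatically, since it only arises after specialization. So it suffices to prove the identity in $R$, and since $R$ embeds in a field of characteristic zero, over $\mathbf Q(c_0,c_1,\ldots)$.

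In characteristic zero the extension $\mathbf Q(c)((t))/\mathbf Q(c)((s))$ becomes, after adjoining $e$th roots of unity and an $e$th root of the leading coefficient, a Kummer extension with $t$ a root of $u(T)=s$. We work over an algebraic closure $\bar k_0$ of the constant field, which does not change the traces or residues. There $s=w^e$ for a uniformizer $w=t\,(c_0+\cdots)^{1/e}$ of $\bar k_0((t))$, by binomial series, legitimately in characteristic zero. The conjugates of $t$ correspond to $w\mapsto\zeta w$. Hence $\Tr(h)(s)=\sum_{\zeta^e=1}h(\zeta w)$, expressed in $w$ and then in $s=w^e$. Now apply the change of variables $s=w^e$, $ds=ew^{e-1}dw$. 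Lemma~\ref{B:substitution} gives
\[
\Res_s(\Tr h\,ds)=\tfrac1e\Res_w\Bigl(\sum_\zeta h(\zeta w)\,ew^{e-1}dw\Bigr),
\]
and each summand has the same residue as $h(w)\,ew^{e-1}dw=h\,d(w^e)$ by the substitution $w\mapsto\zeta w$ (case $e=1$ of Lemma~\ref{B:substitution}). Summing gives $\Res_w(h\,ds)$, which equals $\Res_t(h u'\,dt)$ by invariance under change of uniformizer. Here we should double check that the $e$ terms combine correctly with the factor $1/e$: the sum over $\zeta$ gives $e$ equal residues, cancelling the $1/e$.

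The main obstacle is the bookkeeping in this universal step. The trace must be computed in the finite free sense of Lemma~\ref{B:free-basis} and then identified with the sum over embeddings after base change to the Kummer extension. This requires that the free basis remains a basis after extending constants. It does, since the constructions commute with flat base change of $R$. Once that is in place, the rest is the formal substitution lemma already proved.
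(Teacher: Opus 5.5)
Your proposal is correct and follows essentially the paper's route: you prove the identity in characteristic zero using the Kummer parameter $w$ with $s=w^e$ and the automorphisms $w\mapsto\zeta w$, and then transport it to arbitrary characteristic through a torsion-free universal coefficient ring and the specialization compatibility of Lemma~\ref{B:free-basis}. The differences are only cosmetic: you first reduce to monomials $h=t^n$, using linearity and the vanishing of both sides on a deep tail, where the paper instead lifts the coefficients of $h$ to indeterminates as well; and you evaluate the characteristic-zero residues through Lemma~\ref{B:substitution}, where the paper computes directly that both sides equal $e\,b_{-e}$.
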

\begin{proof}
First suppose that $k$ has characteristic zero. After a finite extension
of the coefficient field, we can take an $e$th root of the leading
coefficient of $u$ and include all $e$th roots of unity. This scalar
extension preserves the trace matrix in Lemma~\ref{B:free-basis}, so an
identity proved there descends. The binomial series then gives a formal
parameter $z=t(c_0+c_1t+\cdots)^{1/e}$ with $s=z^e$.
Write $h=\sum_n b_nz^n$. The extension $k((z))/k((s))$ is now
Galois with automorphisms $z\mapsto\xi z$, $\xi^e=1$. Thus
\[
 \Tr(h)=e\sum_{j\in\mathbf Z}b_{ej}s^j,
 \qquad h\,ds=e\sum_n b_nz^{n+e-1}\,dz.
\]
Both residues equal $e b_{-e}$. The change from $t$ to $z$ preserves
residue by Lemma~\ref{B:substitution}. The trace is continuous for the
Laurent-series topology, either from the displayed automorphisms or
from its finite multiplication matrix, so these calculations apply to
infinite Laurent series, not just Laurent polynomials.

Now take a field of arbitrary characteristic. Lift every coefficient
of $u(t)$ and $h(t)$ to an independent indeterminate in a polynomial
ring over $\mathbf Z$, inverting the nonzero leading coefficient of
$u$. This ring is a characteristic-zero integral domain. The two
residues in \eqref{B:trace-subst} belong to this ring:
Lemma~\ref{B:free-basis} gives this for the trace, and the right side
is an ordinary coefficient of a product and derivative. Each residue
uses only finitely many of the indeterminates. Over the fraction field
of this ring the characteristic-zero proof gives equality. It is
therefore an equality in the ring itself. Specialize the coefficients
to their values in $k$, using Lemma~\ref{B:free-basis} to specialize
the trace. This proves \eqref{B:trace-subst} in every characteristic.
This coefficient argument does not invert the possibly zero integer
$e$ in $k$ and does not assume a tame extension.
\end{proof}

\begin{theorem}[Local trace--residue identity]\label{B:trace-residue}
Let $E/F$ be a finite separable extension of equal-characteristic local
fields with finite residue fields. For every formal differential
$\eta$ on $E$,
\begin{equation}\label{B:residue-trace}
 \Tr_{k_E/k_F}\Res_E(\eta)=\Res_F(\Tr_{E/F}\eta).
\end{equation}
Here if $s$ is a uniformizer of $F$ and $\eta=x\,ds$, the trace of
the differential is $\Tr_{E/F}(x)\,ds$.
\end{theorem}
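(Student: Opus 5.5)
The plan is to reduce Theorem~\ref{B:trace-residue} to the totally ramified case, Lemma~\ref{B:trace-substitution}, together with an unramified constant-field extension, using transitivity of traces. First fix uniformizers and coefficient fields. By the structure theory of equal-characteristic local fields, $F=k_F((s))$ for a uniformizer $s$ and a coefficient field $k_F$. Let $F'=k_E((s))$ be the maximal unramified subextension of $E/F$; it is Galois over $F$ with group $\Gal(k_E/k_F)$, acting on coefficients. Then $E=k_E((t))$ for a uniformizer $t$ of $E$, and $s=u(t)$ has exact order $e=e_{E/F}$. Both sides of \eqref{B:residue-trace} are $k_F$-linear, and $\Tr_{E/F}=\Tr_{F'/F}\circ\Tr_{E/F'}$. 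Lemma~\ref{B:substitution} shows that the residue does not depend on the uniformizer used to write the differential. It therefore suffices to prove the identity for $E/F'$ and for $F'/F$ separately and then compose them.

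For the totally ramified step $E/F'$, write $\eta=h(t)\,dt$. Put $x=h/u'$ if $u'\neq0$. More robustly, write $\eta=x\,ds$ with $x\in E$, which is possible because $ds=u'(t)\,dt$ and $E/F'$ is separable, so $u'\neq0$. Lemma~\ref{B:trace-substitution}, applied with the function $x$, gives $\Res_s(\Tr_{E/F'}(x)\,ds)=\Res_t(x\,u'\,dt)=\Res_E(\eta)$, and here $k_E$ equals the residue field of $F'$. The role of separability must be checked at this point. A differential of the form $x\,ds$ requires $ds\neq0$ in the formal sense. If $u'=0$, then $u$ is a series in $t^p$, which contradicts separability, since $t$ would then be inseparable over $k_E((s))$. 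This justifies the substitution $\eta=x\,ds$.

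For the unramified step $F'=k_E((s))$ over $k_F((s))$, the trace of $\sum a_ns^n$ is $\sum\Tr_{k_E/k_F}(a_n)s^n$. This follows because $k_E\otimes_{k_F}k_F((s))\cong k_E((s))$, with the basis of $k_E/k_F$ serving as a basis, so the multiplication matrices act coefficientwise. Hence the residue of the trace is $\Tr_{k_E/k_F}(a_{-1})$, as required. Composing the two steps gives \eqref{B:residue-trace}.

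I expect the main obstacle to be the careful justification of the coefficient fields. Once a coefficient field $k_F\subset F$ is chosen, the field $k_E$ must be embedded into $E$ compatibly with it, so that $F'=k_E((s))$ really sits inside $E$ and is the maximal unramified subextension. I would obtain the embedding by Hensel-lifting a root of the minimal polynomial of a generator of $k_E/k_F$, using the fact that the finite residue fields are separable. A second point is that, for the residue on $E$ to be intrinsic, the differential $\eta$ must be formal with respect to the $E$-topology. I would handle this through Lemma~\ref{B:free-basis} and the continuity of the trace established in the proof of Lemma~\ref{B:trace-substitution}.
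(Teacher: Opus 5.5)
Your proposal is correct and follows the paper's proof essentially step by step. Both factor through $F'=k_E((s))$, write $\eta=x\,ds$ after using separability to rule out $u'=0$, apply Lemma~\ref{B:trace-substitution} to $E/F'$, and finish with the coefficientwise unramified trace and transitivity of traces. The only cosmetic difference is that the paper gets compatible coefficient fields as the unique Hensel roots of $X^{q}-X$, whereas you Hensel-lift a generator of $k_E/k_F$; both constructions work.
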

\begin{proof}
We explain the Laurent-series presentations used in the proof. In an
equal-characteristic complete discrete valuation field with finite
residue field $k$ of size $q$, each residue has a unique lift solving
$X^q-X=0$, by Hensel's lemma with derivative $-1$. In characteristic
$p$ these lifts form a field: sums and products are again roots with
the expected residues. Choosing a uniformizer $t$ and successively
subtracting residue coefficients gives a unique convergent Laurent
expansion. Thus the field is $k((t))$. In a finite extension, these
coefficient fields are compatible, because the lifts of elements of
the smaller residue field are the same unique roots.

Write $F=k_F((s))$ and $E=k_E((t))$. The intermediate field
$F'=k_E((s))$ is unramified over $F$; its trace is the coefficientwise
finite-field trace. In $E$, $s=u(t)$ has positive order $e$, and
Lemma~\ref{B:free-basis} gives $[E:F']=e$. Separability implies
$u'(t)\ne0$: otherwise $u\in k_E((t^p))$, and the intermediate
extension $k_E((t))/k_E((t^p))$ would be purely inseparable of degree
$p$, contradicting separability of $E/F'$. Therefore $ds\ne0$ and
every formal differential on $E$ can be written as $x\,ds$.

Apply Lemma~\ref{B:trace-substitution} to $E/F'$. It gives
$\Res_{F'}(\Tr_{E/F'}(x)\,ds)=\Res_E(x\,ds)$ in $k_E$.
Apply $\Tr_{k_E/k_F}$ and commute it with extraction of the
$s^{-1}$ coefficient. Transitivity of matrix traces proves
\eqref{B:residue-trace}. Residues do not depend on the chosen
uniformizers by Lemma~\ref{B:substitution}.
\end{proof}

\begin{lemma}[Logarithmic derivative of a norm]\label{B:dlog-norm}
For $v\in E^\times$ in the same separable extension,
\begin{equation}\label{B:dlog}
 \frac{d\Nm_{E/F}(v)}{\Nm_{E/F}(v)}
                      =\Tr_{E/F}\left(\frac{dv}{v}\right).
\end{equation}
\end{lemma}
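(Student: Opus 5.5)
The plan is to reduce \eqref{B:dlog} to Lemma~\ref{B:trace-substitution}-type coefficient arguments, or more directly to the standard identity $d\det/\det=\tr(M^{-1}dM)$ for the multiplication matrix. Write $F=k_F((s))$ and factor $E/F$ as $E/F'/F$ with $F'=k_E((s))$ unramified. Both sides of \eqref{B:dlog} are transitive in towers: the norm is multiplicative under composition, and a logarithmic derivative of a product is a sum. Hence it suffices to treat $F'/F$ and $E/F'$ separately and then apply the $F'/F$ case to $\Nm_{E/F'}(v)$, combining with $\Tr_{E/F}=\Tr_{F'/F}\circ\Tr_{E/F'}$.

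For $F'/F$, the derivation $d/ds$ acts coefficientwise and commutes with the Galois automorphisms of $F'/F$, which act only on coefficients through Frobenius on $k_E$. The norm is then the product of the conjugates $\sigma v$, and the derivative of a product gives $d\Nm v/\Nm v=\sum_\sigma d(\sigma v)/\sigma v=\sum_\sigma\sigma(dv/v)=\Tr(dv/v)$.

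For $E=k_E((t))$ over $F'$ with $s=u(t)$, we use the finite free basis $1,t,\ldots,t^{e-1}$ of Lemma~\ref{B:free-basis}. Let $M(s)$ be the matrix of multiplication by $v$, so that $\Nm v=\det M$. Let $D$ be the unique derivation of $E$ extending $d/ds$; it exists because $E/F'$ is separable, and it is $D=u'(t)^{-1}d/dt$ since $u'\ne0$, as shown in the proof of Theorem~\ref{B:trace-residue}. The key step is to show that $\tr(M^{-1}\,dM/ds)=\Tr_{E/F'}(Dv/v)$.

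For this, write $D(t^i)=\sum_j \Gamma_{ij}t^j$, giving a matrix $\Gamma$ over $F'$. Applying $D$ to $v\,t^i=\sum_j M_{ij}t^j$ gives $\mathrm{mat}(Dv)=dM/ds+M\Gamma-\Gamma M$ in the row convention. Since $\tr(M^{-1}(M\Gamma-\Gamma M))=0$, we obtain $\Tr(Dv/v)=\tr(M^{-1}\mathrm{mat}(Dv))=\tr(M^{-1}dM/ds)$. Finally, Jacobi's formula $d\det M/ds=\det M\,\tr(M^{-1}dM/ds)$ is a polynomial identity, valid in any characteristic. Multiplying by $ds$ and identifying $dv=Dv\,ds$ gives \eqref{B:dlog}.

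The main obstacle is justifying the derivative manipulations on infinite Laurent series. In particular, we need that $D$ is continuous and extends $d/ds$, and that $dM/ds$ is taken entrywise. I would handle this by the coefficient-lifting argument of Lemma~\ref{B:trace-substitution}: lift to a torsion-free coefficient ring, where the entries of $M$ are honest Laurent series over $R((s))$, and then specialize, using the compatibility statements of Lemma~\ref{B:free-basis}.
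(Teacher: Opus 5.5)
Your argument is correct, but it takes a different route from the paper's.

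The paper treats $E/F$ in one step. By separability, $d/ds$ extends uniquely to a normal closure, via $dz=-(dP)(z)/P'(z)$. Uniqueness makes this extension commute with every $F$-embedding, and differentiating $\Nm_{E/F}(v)=\prod_\sigma\sigma(v)$ gives $\sum_\sigma\sigma(dv/v)=\Tr_{E/F}(dv/v)$.

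You use this conjugate-product argument only on the unramified layer $F'/F$, where commutation with the automorphisms is visible coefficientwise. On the totally ramified, possibly wild and non-normal layer $E/F'$ you use linear algebra instead: $\mathrm{mat}(Dv)=dM/ds+M\Gamma-\Gamma M$, the vanishing of $\tr(M^{-1}(M\Gamma-\Gamma M))$, and Jacobi's formula. What this buys:
\begin{itemize}
\item It needs no normal closure and no uniqueness or commutation statement, only \emph{some} derivation of $E$ restricting to $d/ds$ on the base.
\item It makes explicit the identification $dv=D(v)\,ds$ with $D=u'(t)^{-1}\,d/dt$, which the paper leaves implicit.
\item The matrix computation works verbatim for any finite extension and any basis. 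You could apply it to $E/F$ directly and drop the tower reduction and the transitivity bookkeeping.
\end{itemize}
The paper's route is shorter and more conceptual.

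The obstacle you flag at the end is not real, and you should not rely on the proposed fix.
\begin{itemize}
\item Termwise $d/dt$ on $k_E((t))$ satisfies the Leibniz rule coefficient by coefficient.
\item The chain rule $\frac{d}{dt}H(u(t))=H'(u(t))\,u'(t)$ also holds coefficientwise, since each coefficient of $H(u(t))$ involves only finitely many coefficients of $H$. Alternatively, $k_E$ is perfect, so $F'$ is generated by $s$ over $(F')^p$, and every derivation $\delta$ of $F'$ equals $\delta(s)\,d/ds$.
\item After that, $\Gamma$, the Leibniz expansion of $D(vt^i)$, and Jacobi's formula are pure algebra in the given fields. No continuity argument is needed.
\end{itemize}
Lifting coefficients to a torsion-free ring would in fact break exactly at $D$. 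The lifted $u'$ has leading coefficient $ec_0$, so $u'^{-1}$ needs $1/e$ and does not specialize when $p\mid e$. That is precisely the wild case the lemma is needed for.
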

\begin{proof}
A derivation extends uniquely across a finite separable algebraic
extension: for a root $z$ of a separable polynomial $P$, differentiating
$P(z)=0$ gives $dz=- (dP)(z)/P'(z)$. This formula also shows that
differentiation commutes with every $F$-embedding into a normal closure.
Differentiate the finite product
$\Nm_{E/F}(v)=\prod_\sigma\sigma(v)$ and divide by that nonzero
product. The result is
$\sum_\sigma\sigma(dv/v)$, which is the trace of the differential.
Thus \eqref{B:dlog} is elementary algebra, independent of
\eqref{B:residue-trace}.
\end{proof}

Theorem~\ref{B:trace-residue} and Lemma~\ref{B:dlog-norm} are used
in the proof of Lemma~\ref{D:EQ:ASsymbol}.

\section{Elementary descent and ramification arguments}\label{app:descent}
The following lemmas are used to extend characters and to compute
ramification. The results for extensions with Galois group $C_\ell^2$
are proved in Appendix~\ref{app:diamond-foundations}.

\begin{lemma}[Cyclic Hilbert 90, in both forms]\label{C:hilbert90}
Let $E/F$ be cyclic of degree $n$, with generator $\sigma$.
Then
\[
 \ker\Nm_{E/F}=\{y/\sigma(y):y\in E^\times\},\qquad
 \ker\Tr_{E/F}=\{\sigma(y)-y:y\in E\}.
\]
\end{lemma}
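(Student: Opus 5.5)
The two inclusions of each equality are proved separately: the easy one by direct computation, the other by a linear-independence argument.

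For the easy inclusions, $\Nm_{E/F}(y/\sigma(y))=\Nm(y)/\Nm(\sigma y)=1$ because $\sigma$ permutes the conjugates. Likewise $\Tr(\sigma(y)-y)=0$.

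For the multiplicative converse, let $x\in E^\times$ with $\Nm x=1$. Define
\[
 x_0=1,\qquad x_j=x\,\sigma(x)\cdots\sigma^{j-1}(x)\quad(1\le j<n).
\]
By Dedekind--Artin independence of the distinct characters $1,\sigma,\ldots,\sigma^{n-1}$ of $E^\times$, we can choose $c\in E$ such that
\[
 y=\sum_{j=0}^{n-1}x_j\,\sigma^j(c)\ne0 .
\]
Apply $\sigma$ to this sum. This gives $\sigma(y)=\sum_j\sigma(x_j)\sigma^{j+1}(c)$. Since $x\,\sigma(x_j)=x_{j+1}$ for $j<n-1$, and $x\,\sigma(x_{n-1})=\Nm x=1=x_0$, reindexing yields $x\,\sigma(y)=y$. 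Hence $x=y/\sigma(y)$. The equivalent form $\sigma(y')/y'$ follows by taking $y'=y^{-1}$.

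For the additive converse, let $\Tr x=0$. Separability makes the trace map nonzero, so we may choose $c$ with $\Tr c\ne0$. Put
\[
 y=\frac{1}{\Tr c}\sum_{j=1}^{n-1}\bigl(x+\sigma x+\cdots+\sigma^{j-1}x\bigr)\sigma^j(c).
\]
Compute $y-\sigma y$. The terms telescope to $x\,\Tr(c)/\Tr(c)$ minus a multiple of $\Tr(x)\,c$, and the latter vanishes since $\Tr x=0$. This gives $x=y-\sigma(y)$, so $x=\sigma(-y)-(-y)$, as required.

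The only step requiring care is this telescoping index bookkeeping, including the boundary term at $j=n$. No other obstacle is expected.
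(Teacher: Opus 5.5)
Your proof is correct. The multiplicative half matches the paper's argument: the same partial products $x_j$ (the paper's $b_i$), the same appeal to independence of the embeddings $\sigma^j$, and the same reindexing using $\Nm x=1$. The only difference there is that the paper proves the independence of embeddings inline rather than citing it.

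The additive half takes a different route. The paper does not construct a preimage. It notes that $\Tr_{E/F}$ is a nonzero, hence surjective, $F$-linear functional, so $\ker\Tr$ has dimension $n-1$. It also notes that $\sigma-1$ has kernel $F$, so its image has dimension $n-1$. The telescoping inclusion $\operatorname{im}(\sigma-1)\subset\ker\Tr$ then becomes an equality by counting dimensions.

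Your explicit element
\[
 y=(\Tr c)^{-1}\sum_{j=1}^{n-1}(x+\sigma x+\cdots+\sigma^{j-1}x)\,\sigma^j(c)
\]
is the additive analogue of your multiplicative construction. The bookkeeping checks out: $\Tr(c)\,(y-\sigma y)=x\Tr(c)-\Tr(x)\,c$, with the boundary term $j=n$ producing the $\Tr(x)\,c$. This holds even when the characteristic divides $n$, which is why you rightly avoid taking $c=1$.

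The trade-off is as follows. The paper's dimension count is shorter and avoids the index bookkeeping, but it uses that the fixed field of $\langle\sigma\rangle$ is exactly $F$. Your version produces an explicit preimage from a single $c$ with $\Tr c\ne0$, and it makes the two halves of the lemma formally parallel.
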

\begin{proof}
Distinct field embeddings are linearly independent as maps. Indeed, take a nonzero relation of shortest length. Evaluating
at $tx$ and subtracting one embedding's value at $t$ times the relation
at $x$ removes that embedding. Choose $t$ on which another embedding
differs from it. The resulting shorter nonzero relation is a
contradiction.

If $a\in E^\times$ has norm one, put $b_0=1$ and
$b_i=\prod_{j=0}^{i-1}\sigma^j(a)$ for $1\le i\le n$.
By the independence just proved the map
$x\mapsto\sum_{i=0}^{n-1}b_i\sigma^i(x)$ is not identically zero.
Choose $x$ for which its value $y$ is nonzero. Since $b_n=1$ and
$a\sigma(b_i)=b_{i+1}$, one has $a\sigma(y)=y$.
This gives $a=y/\sigma(y)$. The reverse inclusion telescopes under
the norm.

For the additive statement, the trace map is nonzero by the same
independence, and hence is a surjective $F$-linear map to $F$.
Its kernel has dimension $n-1$. The map $\sigma-1$ has kernel $F$,
so its image also has dimension $n-1$. Its image lies in the trace
kernel by telescoping, proving equality. This proof applies when
the characteristic divides $n$.
\end{proof}

\begin{lemma}[Extension of characters]
\label{C:character-extension}
Let $H$ be a subgroup of $E^\times$ containing $U_E^m$ for some
$m\ge1$, and let $\chi:H\to\CC^\times$ be trivial on $U_E^m$.
Then $\chi$ extends to a continuous character of $E^\times$.
The same assertion holds if $\chi$ is first prescribed on several
subgroups, agrees on their intersections, and defines a character on
the subgroup they generate containing such a $U_E^m$.
\end{lemma}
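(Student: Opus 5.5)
The plan is to reduce the statement to a purely algebraic extension problem for abelian groups, using the divisibility of $\CC^\times$, and then to handle continuity via the open subgroup $U_E^m$. Since $\chi$ is trivial on $U_E^m$, it factors through $H/U_E^m$, which is a subgroup of the abelian group $E^\times/U_E^m$. First I will extend $\bar\chi$ from $H/U_E^m$ to all of $E^\times/U_E^m$, and then pull back. The pullback is automatically trivial on $U_E^m$. It is therefore continuous, because $U_E^m$ is open in $E^\times$ and the extended character is locally constant.

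For the extension step I would avoid appealing to injectivity of $\CC^\times$ abstractly and instead use the concrete structure of the quotient. Choose a uniformizer $\pi_E$; then
\[
 E^\times/U_E^m\cong \pi_E^{\mathbf Z}\times \OO_E^\times/U_E^m,
\]
with $\OO_E^\times/U_E^m$ a finite abelian group. Extend first from $H\cap\OO_E^\times$ modulo $U_E^m$ to the finite group $\OO_E^\times/U_E^m$; this step is standard, since characters of a subgroup of a finite abelian group extend, for example because the restriction map on dual groups is surjective by counting. Call this extension $\chi_0$.

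It remains to handle the uniformizer direction. The image of $v_E$ on $H$ is $d\mathbf Z$ for some $d\ge0$. If $d=0$ we set the value at $\pi_E$ arbitrarily. Otherwise we pick $h_0\in H$ with $v_E(h_0)=d$ and write $h_0=\pi_E^d u_0$ with $u_0$ a unit. We then choose $z\in\CC^\times$ with
\[
 z^d=\chi(h_0)\chi_0(u_0)^{-1},
\]
and define $\tilde\chi(\pi_E^k u)=z^k\chi_0(u)$. To check agreement on $H$, note that every $h\in H$ can be written as $h_0^{j}u'$ with $u'\in H\cap\OO_E^\times$. Then
\[
 \tilde\chi(h)=\tilde\chi(h_0)^j\chi_0(u')=\chi(h_0)^j\chi(u')=\chi(h).
\]
The only real care needed is this consistency check, which I expect to be the main (mild) obstacle: the unit factors of $h_0^j$ and of $u'$ must both be measured by the same $\chi_0$, and $\chi_0$ must agree with $\chi$ on $H\cap\OO_E^\times$. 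Both hold by construction.

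For the final assertion, suppose characters are prescribed on several subgroups, agree on their pairwise intersections, and together define a character on the subgroup $H$ that they generate, with $H\supset U_E^m$ and the character trivial there. In that case the hypothesis already supplies a single character on $H$, so the first part applies verbatim. No further argument is needed beyond noting that the generated character is the one being extended.
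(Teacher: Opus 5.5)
Your proof is correct and follows essentially the paper's route: you factor through $E^\times/U_E^m\cong\pi_E^{\mathbf Z}\times\OO_E^\times/U_E^m$, extend using divisibility of $\CC^\times$, and deduce continuity from triviality on the open subgroup $U_E^m$. The only variation is that the paper adjoins generators one at a time with $n$th roots, whereas you use duality on the finite part and a single $d$th root in the valuation direction. Your consistency check via $h=h_0^ju'$ with $u'\in H\cap\OO_E^\times$ is exactly what makes this agree with $\chi$ on $H$, and your reduction of the final assertion to the first part matches the paper.
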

\begin{proof}
The group $E^\times/U_E^m$ is a product of an infinite cyclic group
and the finite group $\OO_E^\times/U_E^m$. Extend a character of a
subgroup one generator at a time. If the first relation of a new
generator $g$ with the existing subgroup is $g^n=h$, choose a nonzero
complex $n$th root of the prescribed value of $h$. If there is no
such relation, choose any nonzero complex value. This defines an
extension at each step, and finitely many steps suffice. Pullback to
$E^\times$ is continuous because $U_E^m$ is open and remains in the
kernel. Apply this argument to the generated subgroup for the last assertion.
\end{proof}

\begin{lemma}[The first wild ramification term]\label{C:leading-ramification}
Let $K/F$ be a finite Galois extension of local fields, let $\sigma$
be an inertia automorphism, and let $\pi$ be a uniformizer of $K$.
Write $v=v_K$ and suppose
\[
 \sigma(\pi)=\pi(1+c\pi^u),\qquad v(c)=0,\qquad u\ge1.
\]
For $x\in K^\times$ with $v(x)=m$, one has
$v(\sigma x-x)\ge m+u$. If $m$ is prime to the residue characteristic,
then equality holds.
\end{lemma}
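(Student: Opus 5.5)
The plan is to reduce first to monomials $x=\epsilon\pi^m$ with $\epsilon$ a unit, and then handle the unit factor separately. Because $\sigma$ lies in inertia, it acts trivially on the residue field. So for any unit $\epsilon$ we have $\sigma\epsilon\equiv\epsilon\pmod{\pp_K}$. We need the sharper bound $v(\sigma\epsilon-\epsilon)\ge u+1$, and we get it by writing $\epsilon$ in terms of $\pi$.

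Choose a set of representatives for the residue field that is fixed by $\sigma$. In mixed characteristic, take Teichm\"uller lifts from the maximal unramified subextension; in equal characteristic, take the coefficient field. These lifts are roots of $X^{q}-X$ lying in the fixed field of inertia, so $\sigma$ fixes them. Expand $\epsilon=\sum_{j\ge0}a_j\pi^j$ with these coefficients and $a_0\ne0$. Then
\[
 \sigma\epsilon-\epsilon=\sum_{j\ge1}a_j\pi^j\bigl((1+c\pi^u)^j-1\bigr).
\]
Each summand has valuation at least $j+u\ge1+u$. By continuity of $\sigma$, the series converges, and we obtain $v(\sigma\epsilon-\epsilon)\ge u+1$.

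Next compute $\sigma(\pi^m)=\pi^m(1+c\pi^u)^m$, which holds for negative $m$ as well. This gives
\[
 \sigma(\pi^m)=\pi^m(1+mc\pi^u+w),\qquad v(w)\ge u+1 .
\]
To justify this, use the binomial expansion for $m\ge0$. For $m<0$, invert the principal unit $1+c\pi^u$ and apply the same expansion. In both cases the coefficients beyond the linear term have valuation at least $2u\ge u+1$.

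Now write
\[
 \sigma x-x=\sigma\epsilon\,\sigma(\pi^m)-\epsilon\pi^m=(\sigma\epsilon-\epsilon)\sigma(\pi^m)+\epsilon\bigl(\sigma(\pi^m)-\pi^m\bigr).
\]
The first term has valuation at least $m+u+1$. The second equals $\epsilon\pi^m(mc\pi^u+w)$, which has valuation at least $m+u$. This proves the inequality $v(\sigma x-x)\ge m+u$.

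If $m$ is prime to the residue characteristic, then $mc$ is a unit. So the second term has valuation exactly $m+u$, and it strictly dominates the first term, which gives equality.

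The main point requiring care is the unit estimate $v(\sigma\epsilon-\epsilon)\ge u+1$, and specifically that the coefficient lifts are $\sigma$-fixed. This holds because inertia fixes the maximal unramified subextension, and the Teichm\"uller or coefficient-field lifts of residues lie there, by Hensel uniqueness of roots of $X^q-X$. We also need that $u\ge1$, which ensures the correction terms sit strictly deeper.
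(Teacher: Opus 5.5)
Your proof is correct and takes essentially the same route as the paper. Both arguments expand in powers of $\pi$ with $\sigma$-fixed Teichm\"uller (or coefficient-field) lifts as coefficients, and both use $(1+c\pi^u)^j\equiv 1+jc\pi^u \pmod{\pp_K^{u+1}}$ for every integer $j$, negative ones included; the only difference is cosmetic, since you factor $x=\epsilon\pi^m$ and treat the unit separately, whereas the paper expands $x=\sum_{j\ge m}a_j\pi^j$ directly and reads off the leading summand.
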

\begin{proof}
Expand $x=\sum_{j\ge m}a_j\pi^j$ using the multiplicative lifts of
residue representatives. Inertia fixes those lifts: they are the
unique lifts of their residues among roots of unity of order prime
to the residue characteristic (and zero). The series converges in
the local field, with only finitely many negative powers. For any
integer $j$, the binomial expansion at the principal unit gives
\[
 (1+c\pi^u)^j-1\equiv jc\pi^u\pmod{\pp^{u+1}}.
\]
This is also valid for negative $j$, by the convergent inverse series.
Consequently the $j$th summand changes by an element of valuation at
least $j+u$. When $m$ is prime to the residue characteristic, the
first summand changes with exact valuation $m+u$, and all following
summands have strictly larger valuation. This proves the assertions.
Without the condition $u\ge1$, the first coefficient is instead
$\overline{(\sigma\pi/\pi)^m-1}$, which can vanish even when
$m$ is prime to the residue characteristic.
\end{proof}

\begin{lemma}[Transitivity of the different]\label{C:different-tower}
For a tower of finite separable local extensions $L/E/F$,
\[
 \mathfrak D_{L/F}
      =\mathfrak D_{L/E}\mathfrak D_{E/F}\OO_L.
\]
\end{lemma}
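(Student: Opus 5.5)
The plan is to prove the equality of fractional ideals by computing both sides as inverse duals with respect to the trace form. Recall that $\mathfrak D_{L/F}^{-1}$ is the largest fractional ideal $\mathfrak a$ of $\OO_L$ with $\Tr_{L/F}(\mathfrak a)\subset\OO_F$; equivalently, it is the trace-dual $\{x\in L:\Tr_{L/F}(x\OO_L)\subset\OO_F\}$. The key input is transitivity of trace, $\Tr_{L/F}=\Tr_{E/F}\circ\Tr_{L/E}$, which holds for finite separable extensions (for instance, by composing matrix traces on a tower of bases, or as sums over embeddings into a normal closure).

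First I show $\mathfrak D_{L/F}^{-1}\supset\mathfrak D_{L/E}^{-1}\mathfrak D_{E/F}^{-1}\OO_L$. Take $x\in\mathfrak D_{L/E}^{-1}$ and $y\in\mathfrak D_{E/F}^{-1}$. Then
\[
 \Tr_{L/F}(xy\OO_L)=\Tr_{E/F}\bigl(y\Tr_{L/E}(x\OO_L)\bigr)\subset\Tr_{E/F}(y\OO_E)\subset\OO_F,
\]
so $xy\in\mathfrak D_{L/F}^{-1}$. Since the left side is an $\OO_L$-module, the inclusion extends to the generated ideal.

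Next I show the reverse inclusion. Let $x\in\mathfrak D_{L/F}^{-1}$. The set $\Tr_{L/E}(x\OO_L)$ is an $\OO_E$-submodule of $E$, so it is a fractional ideal $\mathfrak b$ of $\OO_E$; it is nonzero unless $x=0$, because the trace pairing is nondegenerate. By transitivity, $\Tr_{E/F}(\mathfrak b)\subset\OO_F$, hence $\mathfrak b\subset\mathfrak D_{E/F}^{-1}$. Since $\OO_E$ is a discrete valuation ring, $\mathfrak b\mathfrak D_{E/F}$ is an ideal contained in $\OO_E$. Choosing a generator $d$ of $\mathfrak D_{E/F}$ gives
\[
 \Tr_{L/E}(dx\OO_L)=d\mathfrak b\subset\OO_E,
\]
so $dx\in\mathfrak D_{L/E}^{-1}$. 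Therefore $x\in\mathfrak D_{L/E}^{-1}\mathfrak D_{E/F}^{-1}\OO_L$.

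Taking inverses of fractional ideals in the Dedekind ring $\OO_L$ then yields the stated identity. I expect no real obstacle. The only points needing care are the transitivity of the trace in the separable (possibly wild) case, and the use of principal ideals to move the factor $d$ inside $\Tr_{L/E}$, which works because $d\in E$.
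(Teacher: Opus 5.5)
Your proof is correct and is essentially the paper's argument: the paper packages the same computation as the adjunction $\operatorname{Hom}_A(C,A)\simeq\operatorname{Hom}_B(C,\operatorname{Hom}_A(B,A))$, combined with trace transitivity and the principality $\mathfrak D_{E/F}^{-1}=dB$. Your elementwise step is exactly what that adjunction unwinds to, and read as a chain of equivalences, $x\in\mathfrak D_{L/F}^{-1}\iff\Tr_{L/E}(x\OO_L)\subset d^{-1}\OO_E\iff dx\in\mathfrak D_{L/E}^{-1}$, it gives both inclusions at once.
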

\begin{proof}
Write $A=\OO_F$, $B=\OO_E$ and $C=\OO_L$. These are finite free
modules along the tower. Via the nondegenerate trace pairings their
dual modules identify with the inverse differents. The elementary
adjunction
\[
 \operatorname{Hom}_A(C,A)
   \simeq\operatorname{Hom}_B(C,\operatorname{Hom}_A(B,A))
\]
sends a functional $\ell$ to $c\mapsto(b\mapsto\ell(bc))$; its
inverse evaluates at $b=1$. Write the principal fractional $B$-ideal
$\mathfrak D_{E/F}^{-1}=dB$. Under the trace identifications, the
right side is $d\operatorname{Hom}_B(C,B)
=d\mathfrak D_{L/E}^{-1}$. Transitivity of field trace shows that
these identifications send $x\in L$ to the same functional
$c\mapsto\Tr_{L/F}(xc)$, with no scalar discrepancy. Thus
$\mathfrak D_{L/F}^{-1}=d\mathfrak D_{L/E}^{-1}$ as fractional
$C$-ideals. Inversion gives the assertion.
\end{proof}

\begin{lemma}[The discriminant of an integral order]\label{C:order-discriminant}
Let $E/F$ be finite separable and let $B\subset\OO_E$ be an
$\OO_F$-order of full rank. Then its discriminant ideal is the field
discriminant ideal multiplied by the square of its index ideal.
In particular the field discriminant divides the order discriminant.
For a totally ramified extension, the normalized different exponent
is at most the $F$-valuation of the order discriminant.
\end{lemma}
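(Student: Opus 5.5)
We will prove Lemma~\ref{C:order-discriminant} by elementary lattice linear algebra over the discrete valuation ring $\OO_F$, followed by a comparison of the discriminant with the different.

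\textbf{Step 1 (index formula).} Since $\OO_F$ is a principal ideal domain and $B\subset\OO_E$ are free of the same rank $n=[E:F]$, choose $\OO_F$-bases $(b_j)$ of $B$ and $(\epsilon_i)$ of $\OO_E$. Write $b_j=\sum_i m_{ij}\epsilon_i$ with $M=(m_{ij})\in M_n(\OO_F)$. The Gram matrices of the trace form then satisfy
\[
 \bigl(\Tr(b_jb_k)\bigr)=M^{t}\bigl(\Tr(\epsilon_i\epsilon_l)\bigr)M .
\]
Taking determinants gives $\operatorname{disc}(B)=\det(M)^2\operatorname{disc}(\OO_E)$. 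By elementary divisors (Smith normal form over the DVR), the ideal $\det(M)\OO_F$ is the index ideal $[\OO_E:B]$. This is the product of the invariant factors of the finite module $\OO_E/B$. Changing bases multiplies $\det M$ by units, so the statement is independent of choices. Integrality of $\det M$ then yields divisibility of the order discriminant by the field discriminant. Separability is used only to guarantee that the discriminants are nonzero.

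\textbf{Step 2 (discriminant versus different).} Next we show that the field discriminant ideal is $\Nm_{E/F}(\mathfrak D_{E/F})$. The trace-dual lattice $\mathfrak D_{E/F}^{-1}$ has an $\OO_F$-basis dual to $(\epsilon_i)$. Hence the index of $\OO_E$ in $\mathfrak D_{E/F}^{-1}$, viewed as an ideal, is $\operatorname{disc}(\OO_E)$: the Gram matrix is exactly the change-of-basis matrix from the dual basis to $(\epsilon_i)$. For a principal fractional ideal $\delta\OO_E$, the index of $\delta\OO_E$ in $\OO_E$ is $\Nm(\delta)\OO_F$, because multiplication by $\delta$ has determinant $\Nm_{E/F}\delta$. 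In the totally ramified case, $v_F(\Nm\delta)=v_E(\delta)$. Therefore the normalized different exponent equals the $F$-valuation of the field discriminant. By Step~1, this is at most the $F$-valuation of the order discriminant.

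\textbf{Main obstacle.} There is no real obstacle here. The only point needing care is the correct identification of the Gram determinant with the index of the dual lattice, namely keeping track of which lattice contains which, and making sure the determinant has the right sign conventions.
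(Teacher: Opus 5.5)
Your proof is correct and follows the paper's route: the Gram-matrix identity $G_B=M^{\mathsf T}G_{\OO_E}M$ gives the index formula, and the field discriminant is identified with the index of $\OO_E$ in its trace dual $\mathfrak D_{E/F}^{-1}$. The only difference is in measuring that index: you use the determinant of multiplication by a generator $\delta$ of $\mathfrak D_{E/F}$, i.e.\ $\Nm_{E/F}(\delta)\OO_F$, whereas the paper counts the $\OO_F$-length $[k_E:k_F]\,d$ of $\mathfrak D_{E/F}^{-1}/\OO_E$ through successive quotients isomorphic to $k_E$; both give exactly $d$ in the totally ramified case.
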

\begin{proof}
Choose $\OO_F$-bases of $B$ and $\OO_E$, and let $M$ be the
inclusion matrix. The two trace Gram matrices satisfy
$G_B=M^{\mathsf T}G_{\OO_E}M$, so
$\det G_B=(\det M)^2\det G_{\OO_E}$. This proves the first
assertion and divisibility. The valuation of the field discriminant
is the length over $\OO_F$ of the trace-dual lattice modulo
$\OO_E$: this follows by diagonalizing the trace matrix over the
discrete valuation ring. The trace dual is
$\mathfrak D_{E/F}^{-1}$ by the definition of the inverse different.
Its cyclic-prime specialization underlies the trace-ideal formula of
\cite[Theorem~3.7]{Ueda}. If the different exponent is $d$, this
quotient has $\OO_F$-length $[k_E:k_F]d$, since each successive
$\OO_E$-lattice quotient is $k_E$. In the totally ramified case
$[k_E:k_F]=1$, proving the stated bound.
\end{proof}

\addtocontents{toc}{\protect\enlargethispage{12pt}}
\section{Norm subgroups and ramification in degree \texorpdfstring{$\ell^2$}{ell to power 2}}
\label{app:diamond-foundations}
We prove the facts about $C_\ell^2$ extensions used in the proof of
Theorem~\ref{U:main}. We use the cyclic-prime results of
\cite[Theorem~3.7, Corollary~3.9, and Proposition~3.3]{Ueda} and the
standard local-field results recalled below. For their general forms,
see~\cite[Chapters~III--V, XIII--XIV]{Serre}.

For a finite separable extension $E/F$, write $e(E/F)$ and $f(E/F)$
for its ramification index and residue degree. We use
$[E:F]=e(E/F)f(E/F)$ and the monogenic different formula
\[
 \mathfrak D_{E/F}=(f'(\pi))
 \quad\text{if }\OO_E=\OO_F[\pi],
\]
where $f$ is the minimal polynomial of $\pi$ over $F$.
For a totally ramified extension of degree $n$, any uniformizer $\pi$
of $E$ satisfies this hypothesis. Indeed, the degree formula gives
$F(\pi)=E$. In the basis $1,\pi,\ldots,\pi^{n-1}$ the nonzero terms
$a_i\pi^i$ have valuations $nv_F(a_i)+i$, which are distinct modulo
$n$. If their sum is integral, each $a_i$ is integral.

We also use the following choices of generators. In characteristic
$p$, additive Hilbert~90 applied to $1$ in a cyclic degree-$p$
extension gives $\sigma z-z=1$. Then $z^p-z\in F$ and $E=F(z)$.
For a quadratic extension in characteristic different from two,
choose $u$ with $u\ne\sigma u$ and put $w=u-\sigma u$.
Then $\sigma w=-w$, so $w^2\in F$ and $E=F(w)$.

\begin{lemma}[Newton approximation]
\label{D:newton}
Let $R$ be a complete discrete valuation ring with valuation $v$, let
$f\in R[X]$, and let $a\in R$. Suppose $s=v(f'(a))<\infty$ and
$v(f(a))>2s$. Then there is a unique root $b$ in the ball
$v(b-a)>s$. Moreover $v(b-a)\ge v(f(a))-s$.
\end{lemma}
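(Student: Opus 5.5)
The argument is the standard Newton iteration with quadratic convergence, run in the complete ring $R$. Put $a_0=a$, $s=v(f'(a))$, and $\varepsilon_0=v(f(a))-2s>0$. Define $a_{n+1}=a_n-f(a_n)/f'(a_n)$. Inductively we show three things:
\[
 v(f'(a_n))=s,\qquad v(f(a_n))\ge 2s+2^n\varepsilon_0,\qquad v(a_{n+1}-a_n)\ge s+2^n\varepsilon_0 .
\]

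The tool is the Taylor expansion with integral coefficients. For $h\in R$ we have $f(x+h)=f(x)+f'(x)h+h^2g(x,h)$ and $f'(x+h)=f'(x)+h\,k(x,h)$, where $g,k\in R[x,h]$. This is the divided-power form $f^{[j]}$, so no division by $j!$ is needed, and it therefore works in every characteristic.

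Put $h=-f(a_n)/f'(a_n)$. Then $v(h)=v(f(a_n))-s\ge s+2^n\varepsilon_0>s$. Hence $v(h\,k)>s$, so $v(f'(a_{n+1}))=s$. The linear terms cancel, leaving $f(a_{n+1})=h^2g$. This gives $v(f(a_{n+1}))\ge 2v(h)\ge 2s+2^{n+1}\varepsilon_0$. The inductive step is done.

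\textbf{Existence.} The increments tend to zero, so $(a_n)$ is Cauchy and converges to some $b\in R$ by completeness. By continuity $f(b)=0$. The first increment has valuation exactly $v(f(a))-s$ or more, and all later increments are strictly larger. So the ultrametric inequality gives $v(b-a)\ge v(f(a))-s>s$, which proves the bound.

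\textbf{Uniqueness.} This is the only step needing care. Suppose $b'$ is another root with $v(b'-a)>s$. Then $v(b'-b)>s$, and also $v(f'(b))=s$, since $v(b-a)>s$ and we use the expansion of $f'$ again. Write $0=f(b')-f(b)=(b'-b)\bigl(f'(b)+(b'-b)g(b,b'-b)\bigr)$. If $b'\ne b$, the second factor has valuation exactly $s<\infty$, hence is nonzero. That is a contradiction, so $b'=b$.

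I expect no real obstacle here. The one point to watch is to use integral Taylor coefficients rather than $f^{(j)}/j!$, since residue characteristic $p$ is allowed.
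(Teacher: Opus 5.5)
Your proof is correct and matches the paper's argument. Both use Newton iteration with integral (Hasse/divided-difference) Taylor coefficients, keeping $v(f'(a_n))=s$ while the excess $v(f(a_n))-2s$ at least doubles, then completeness for existence and the bound, and uniqueness by factoring $f(b')-f(b)$ as $(b'-b)$ times a factor of valuation exactly $s$. The only cosmetic difference is that the paper centers the divided difference at $a$, writing the factor as $f'(a)+\varepsilon$, whereas you first check $v(f'(b))=s$ and expand at $b$.
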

\begin{proof}
The case $f(a)=0$ is immediate, including uniqueness by the argument
below. Otherwise iterate $a_{n+1}=a_n-f(a_n)/f'(a_n)$.
Taylor's polynomial identity with integral coefficients shows
inductively that $v(f'(a_n))=s$, and, writing
$r_n=v(f(a_n))-2s>0$, that
\[
 v(a_{n+1}-a_n)=s+r_n,\qquad r_{n+1}\ge2r_n.
\]
If an iterate is a root, stop. The derivative differences have valuation
at least $s+r_n>s$, so they cannot change $s$; the linear Taylor term
cancels and the remaining terms have valuation at least $2(s+r_n)$.
Thus the iterates are integral and converge to a root with the asserted
error bound. For $b,c$ in the stated ball, divided differences give
$f(b)-f(c)=(b-c)(f'(a)+\varepsilon)$ with $v(\varepsilon)>s$.
The second factor is nonzero; hence two roots there are equal.
Rescaling the variable and the polynomial gives the same conclusion
for the fractional ideals used in the preceding sections.
\end{proof}

\begin{lemma}[Inertia, tame action, and subgroup numbering]\label{D:inertia}
For a finite Galois extension $K/F$ of local fields, let $G=\Gal(K/F)$,
let $I$ be the kernel of its residue action, and put $q=|k_F|$.
The residue action identifies $G/I$ with the cyclic group
$\Gal(k_K/k_F)$. Its fixed field $K^I/F$ is unramified.
The quotient $I/G_1$ is cyclic of order prime to the residue
characteristic $p$, and $G_1$ is a $p$-group.
A lift of residue Frobenius conjugates $I/G_1$ by the $q$th power map.
For $H\subset G$, the lower ramification groups on $K/K^H$ are
$H_j=H\cap G_j$ for every integer $j\ge0$.
\end{lemma}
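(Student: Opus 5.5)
The plan is to prove the four assertions of Lemma~\ref{D:inertia} in order, using only the elementary local-field facts recalled at the start of this appendix: the degree formula, Teichm\"uller lifts, and Lemma~\ref{C:leading-ramification}. Throughout, the lower groups are $G_j=\{\sigma:v_K(\sigma x-x)\ge j+1\text{ for all }x\in\OO_K\}$, with $G_0=I$.

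\emph{Residue action and the unramified part.} The residue action $G\to\Gal(k_K/k_F)$ has kernel $I$ by definition. For surjectivity, I choose the Teichm\"uller lift $\zeta$ of a generator of $k_K^\times$. Its minimal polynomial over $F$ divides $X^{q^f-1}-1$, which is separable modulo $p$. Hence every conjugate of $\bar\zeta$ over $k_F$, namely every $\bar\zeta^{q^i}$, is the reduction of some $\sigma(\zeta)$. So the image contains Frobenius, and since $k_K/k_F$ is finite, the residue Galois group is cyclic generated by Frobenius. Next, the field $F(\zeta)$ is unramified of degree $f(K/F)$. Its Galois group is $G/I$, because $\sigma$ fixes $\zeta$ exactly when it acts trivially on residues. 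Therefore $K^I=F(\zeta)$ is unramified.

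\emph{Structure of $I$.} I fix a uniformizer $\pi$ of $K$ and define $\theta_0:I\to k_K^\times$ by $\sigma\mapsto\overline{\sigma\pi/\pi}$. This map is a homomorphism because inertia acts trivially on units modulo $\pp$, and it is independent of the choice of $\pi$. Its kernel is $G_1$, using the expansion of $\OO_K$ over $\OO_{K^I}[\pi]$. Since $k_K^\times$ has order prime to $p$, the quotient $I/G_1$ is cyclic of prime-to-$p$ order. Similarly, on $G_j$ for $j\ge1$ the map $\sigma\mapsto\overline{(\sigma\pi/\pi-1)/\pi^j}$ is a homomorphism into $(k_K,+)$ with kernel $G_{j+1}$, and $\bigcap_j G_j=1$. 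So $G_1$ is a $p$-group.

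\emph{Frobenius action.} For a lift $\phi\in G$ of Frobenius and $\sigma\in I$, I write $\phi\sigma\phi^{-1}(\phi\pi)/\phi\pi=\phi(\sigma\pi/\pi)$. Reducing gives $\theta_0(\phi\sigma\phi^{-1})=\theta_0(\sigma)^q$, where I use that $\theta_0$ is independent of the uniformizer, applied to $\phi\pi$. Injectivity of $\theta_0$ on $I/G_1$ then shows that $\phi$ acts on $I/G_1$ by the $q$th power map.

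\emph{Subgroup numbering.} $K/K^H$ has the same ring $\OO_K$ and the same valuation $v_K$, and $H_j$ is defined by the same condition $v_K(\sigma x-x)\ge j+1$ on $\OO_K$, restricted to $\sigma\in H$. Hence $H_j=H\cap G_j$ directly from the definitions; for $j=0$ this is the statement that the inertia of $H$ is $H\cap I$.

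I expect the main obstacle to be the justification that $\sigma\in G_j$ can be tested on $\pi$ alone. This holds because $\OO_K=\OO_{K^I}[\pi]$ by the monogenic fact recalled above, and because $K^I$ consists of elements fixed by $I$. Everything else is routine bookkeeping.
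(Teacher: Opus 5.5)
Your proof is correct and follows the paper's argument essentially step for step. An unramified lift of the residue extension identifies $G/I$ and $K^I$; the maps $\sigma\mapsto\overline{\sigma\pi/\pi}$ and $\sigma\mapsto\overline{(\sigma\pi/\pi-1)/\pi^j}$ give the structure of $I$ and of $G_1$; a change of uniformizer gives the $q$th-power conjugation; and the subgroup rule is read off from the common valuation $v_K$.

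The differences from the paper are cosmetic:
\begin{itemize}
\item You take a Teichm\"uller lift $\zeta$ of a generator of $k_K^\times$, where the paper uses a Hensel lift of a root of a lifted irreducible polynomial. With your choice, you should add the one-line count giving $[F(\zeta):F]\le f$: the reductions of the conjugates of $\zeta$ lie among the $f$ conjugates of $\bar\zeta$, and reduction is injective on prime-to-$p$ roots of unity.
\item You test lower ramification on $\pi$ via $\OO_K=\OO_{K^I}[\pi]$ rather than via Teichm\"uller expansions.
\item You compute the conjugation with the uniformizer $\phi\pi$ rather than $\phi^{-1}\pi$.
\end{itemize}
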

\begin{proof}
Choose a generator $\bar a$ of $k_K/k_F$, lift its monic irreducible
polynomial to $\OO_F[X]$, and lift $\bar a$ to a root $a\in\OO_K$
by Hensel's lemma. The field $F(a)$ has degree at most
$f=[k_K:k_F]$ and residue degree at least $f$, hence degree exactly
$f$ and ramification index one. All residue conjugates of $\bar a$
lift uniquely inside $F(a)$, so $F(a)/F$ is Galois and reduction
identifies its group with $\Gal(k_K/k_F)$. Normality of $K/F$
extends its automorphisms to $K$. An element of $G$ acts trivially on
the residue field precisely when it fixes $a$, by uniqueness of the
simple-root lifts. This proves the assertions about $I$ and $G/I$.

Choose a uniformizer $\pi$ of $K$. Inertia fixes the multiplicative
lifts of every residue element. Expanding integers in powers of $\pi$
therefore tests lower ramification on $\pi$ alone. In particular,
\[
 I\longrightarrow k_K^\times,\qquad
 g\longmapsto\overline{g\pi/\pi}
\]
is a homomorphism with kernel $G_1$. For $j\ge1$ the map
\[
 G_j\longrightarrow k_K,\qquad
 g\longmapsto\overline{(g\pi/\pi-1)/\pi^j}
\]
is additive with kernel $G_{j+1}$. These statements follow by
multiplying the two uniformizer ratios and reducing the first
nonzero coefficient. The finite subgroup of $k_K^\times$ is cyclic
and has order prime to $p$; each additive image is a $p$-group.
The filtration eventually equals $1$, since every nonidentity inertia
automorphism moves $\pi$. This proves the assertion about $G_1$.

The tame residue is unchanged on replacing $\pi$ by $u\pi$ for a
unit $u$, since inertia fixes its residue. For a Frobenius lift $\phi$,
compute using $\pi'=\phi^{-1}\pi$:
\[
 \overline{(\phi g\phi^{-1})(\pi)/\pi}
   =\overline{\phi}\bigl(\overline{g(\pi')/\pi'}\bigr)
   =\overline{g\pi/\pi}^{\,q}.
\]
Finally, membership in a lower ramification group is the condition
$v_K(gx-x)\ge j+1$ for all $x\in\OO_K$. For an element of $H$
this condition is identical over $F$ and over $K^H$: both use the same
normalized valuation on $K$. This proves the subgroup assertion.
\end{proof}

\begin{lemma}[Ramification breaks in degree $p^2$]
\label{D:break-ledger}
Let $K/F$ be totally ramified with $G=C_p^2$, where $p$ is the residue
characteristic. There are integers $1\le t\le b$ such that the
filtration is $G$ through $t$, possibly one line $H_0$ of order $p$
through $b$, and then $1$. Choose $L_0=K^{H_0}$ if $b>t$, and
choose any degree-$p$ field if $b=t$. The break of $L_0/F$ is $t$;
the break of $K/L_0$ is $b$. Every other degree-$p$ intermediate field $L$ satisfies: $L/F$ has break
\[
 s=t+(b-t)/p,
\]
and $K/L$ has break $t$. In particular $s$ is an integer, and
$b=t+p(s-t)$. Inertia of order $p$, rather than $p^2$, gives instead
one unramified degree-$p$ intermediate field and ramified degree-$p$ intermediate fields whose breaks are
unchanged by the unramified base extension.
\end{lemma}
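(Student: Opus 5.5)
The plan is to build the statement from the Herbrand function of $K/F$, using Lemma~\ref{D:inertia} for subgroup numbering and Lemma~\ref{C:different-tower} to pin down the breaks through the different. I treat the totally ramified case first and the order-$p$ inertia case at the end.

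\emph{Step 1: the shape of the filtration.} Since $G$ is an abelian $p$-group and $K/F$ is totally ramified, $G=G_1$. Each jump quotient $G_j/G_{j+1}$ embeds additively in $k_K$, so it is elementary abelian. Let $t$ be the largest index with $G_t=G$. Then $G_{t+1}$ is a proper subgroup, hence either trivial or a line $H_0$ of order $p$; if it is a line, let $b$ be its last index. This gives the asserted shape, with $b=t$ when $G_{t+1}=1$.

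\emph{Step 2: breaks of $K/L$.} By Lemma~\ref{D:inertia}, $\Gal(K/L)_j=\Gal(K/L)\cap G_j$. For $L_0$ this intersection is $H_0$ up to index $b$, so the break of $K/L_0$ is $b$. For any other $L$, the group $\Gal(K/L)$ meets $H_0$ trivially, so it survives exactly through $t$, and the break of $K/L$ is $t$.

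\emph{Step 3: breaks of $L/F$ via the different.} The different exponent of $K/F$ is $\sum_{j\ge0}(|G_j|-1)$, which equals $(p^2-1)(t+1)+(p-1)(b-t)$. I will justify this sum formula directly: the monogenic formula applied to a uniformizer $\pi$ gives $v_K(f'(\pi))=\sum_{g\ne1}v_K(g\pi-\pi)$, and $v_K(g\pi-\pi)=i_g+1$ with $i_g$ the last index at which $g\in G_{i_g}$. By Lemma~\ref{C:different-tower}, this exponent equals $D_{K/L}+p\,D_{L/F}$. A cyclic degree-$p$ extension with break $u$ has different exponent $(p-1)(u+1)$.
\begin{itemize}
\item For $L\neq L_0$: $D_{K/L}=(p-1)(t+1)$, and solving gives $(p-1)(u+1)=(p-1)(t+1)+(p-1)(b-t)/p$, so $u=t+(b-t)/p$.
\item For $L_0$: the same computation gives $u=t$.
\end{itemize}
The break of a cyclic extension is an integer, so $s=t+(b-t)/p$ is an integer. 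This also yields $b=t+p(s-t)$.

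\emph{Step 4: inertia of order $p$.} Here $K^I$ is the unique unramified degree-$p$ field, so every other degree-$p$ field $L$ is ramified. Lemma~\ref{D:inertia} identifies the numbering of $I$ on $K/K^I$ with that on $K/F$. Since $K=LK^I$ with $K/L$ unramified, a uniformizer of $L$ is also a uniformizer of $K$. Hence $v(g\pi-\pi)$ is unchanged, and so is the break.

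The main obstacle is Step 3: the bookkeeping identifying $D_{K/F}$ with $\sum_j(|G_j|-1)$ from the single-uniformizer computation. Everything else is group theory and Lemma~\ref{D:inertia}.
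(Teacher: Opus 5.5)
Your proposal is correct and follows the paper's proof essentially step for step. You use the same ingredients in the same order: $G=G_1$ and the two-step filtration; subgroup numbering (Lemma~\ref{D:inertia}) for the breaks of $K/L$; the monogenic different formula $d_{K/F}=\sum_{j\ge0}(|G_j|-1)$ together with transitivity of the different and the cyclic-prime different $(p-1)(u+1)$ to solve for the breaks of $L/F$ and get integrality of $s$; and the shared-uniformizer argument when inertia has order $p$. Despite your opening sentence, no Herbrand function is actually needed or used.
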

\begin{proof}
Lemma~\ref{D:inertia} gives $G_1=G$. A decreasing filtration of
$C_p^2$ can contain at most one proper nontrivial subgroup.
The subgroup assertion gives the breaks of $K/L_0$ and $K/L$.
For a uniformizer $\pi$ of $K$, the monogenic different formula gives
\[
 d_{K/F}=v_K\prod_{g\ne1}(\pi-g\pi)
       =\sum_{j\ge0}(|G_j|-1)
       =(p^2-1)(t+1)+(p-1)(b-t).
\]
Here each nonidentity $g$ is counted exactly
$v_K(\pi-g\pi)$ times. Transitivity of the different and the
cyclic-prime formula give
\[
 d_{K/F}=(p-1)(b+1)+p\,d_{L_0/F}
         =(p-1)(t+1)+p\,d_{L/F}.
\]
Solving these equalities gives the breaks of $L_0/F$ and $L/F$, including the integrality of $s$.

If inertia has order $p$, its fixed field $U$ is the unique unramified
degree-$p$ intermediate field. Every other degree-$p$ intermediate field $E$ is totally ramified over $F$.
In $K=EU$, a uniformizer of $E$ is still a uniformizer of $K$.
The generator on $K/U$ restricts to the generator on $E/F$, and the
normalized valuation restricts without a multiplier on $E$.
Thus its uniformizer difference has the same valuation in $E$ and $K$.
\end{proof}

\begin{proposition}[Distinct norm subgroups]
\label{D:norm-separation}
Let $K/F$ have Galois group $C_\ell^2$. The norm subgroups
$N_{L_0/F}L_0^\times$ and $N_{L/F}L^\times$ are distinct if
$L_0/F$ is the unramified degree-$\ell$ subextension, or, when $K/F$ is totally ramified,
$L_0$ is the field selected in Lemma~\ref{D:break-ledger} and
$L\ne L_0$. In the totally ramified one-break case all lower norm
subgroups are pairwise distinct.
\end{proposition}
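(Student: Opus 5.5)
The strategy is to distinguish two norm subgroups by exhibiting an explicit element of $F^\times$ that is a norm from one field and not from the other. Equivalently, it suffices to show that the nontrivial norm characters $\omega_0\in S(L_0/F)$ and $\omega\in S(L/F)$ have different kernels. In the totally ramified cases the cyclic-prime conductor statement of \cite[Corollary~3.9]{Ueda} says that every nontrivial character in $S(E/F)$ has conductor $t_E+1$, where $t_E$ is the break of $E/F$. Two characters with equal kernels generate the same order-$\ell$ group, so they have the same conductor. Conductors therefore separate the subgroups whenever the breaks differ.

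\emph{Unramified $L_0$.} Here $N_{L_0/F}L_0^\times=\OO_F^\times\cdot\langle\pi_F^\ell\rangle$, because unramified norms are onto the units. For $L\ne L_0$ the extension $L/F$ is ramified: by Lemma~\ref{D:inertia}, a $C_\ell^2$ group cannot be unramified, so $L$ lies outside the inertia fixed field. Hence $S(L/F)$ has positive conductor $t+1$ in the wild case. In the tame case it is nontrivial on units, because a tame totally ramified norm group has index $\ell$ in the unit quotient. Thus $N_{L/F}L^\times\not\supset\OO_F^\times$, and the two subgroups differ.

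\emph{Totally ramified, two breaks ($b>t$).} By Lemma~\ref{D:break-ledger}, $L_0/F$ has break $t$, while every other $L$ has break $s=t+(b-t)/\ell>t$. The conductors $t+1$ and $s+1$ of the two norm characters are then different, so the norm subgroups are distinct.

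\emph{One break ($b=t$).} All degree-$\ell$ fields have break $t$, so conductors give no information, and I expect this to be the main obstacle. My first approach is to work on the graded piece $U_F^t/U_F^{t+1}\cong k_F$. By \cite[Theorem~3.7]{Ueda}, each norm image there is an index-$\ell$ subgroup (a hyperplane), and each $\omega_E$ induces a nontrivial additive functional on it with that hyperplane as kernel. It suffices to show the hyperplanes of distinct $L,L'$ differ. To do so, suppose they coincide. Then a suitable power $\omega_L\omega_{L'}^{j}\ne1$ is trivial on $U_F^t$, so it has conductor $\le t$. On the other hand, this product is a nontrivial norm character for the cyclic degree-$\ell$ extension cut out by the kernel intersection; by \cite[Corollary~3.9]{Ueda} and the norm-character correspondence (Lemma~\ref{D:crossed-norm}), that extension is a third intermediate field $L''$, whose character must have conductor $t+1$. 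This is a contradiction. To make the argument non-circular, I would first establish the group-theoretic fact that $\ker\omega_L\cap\ker\omega_{L'}=N_{K/F}K^\times$ has index $\ell^2$; this follows from $N_{K/F}=n_L\circ N_{K/L}$ together with the index-$\ell$ counts on each layer. Given that fact, $\omega_L$ and $\omega_{L'}$ are independent, and every nontrivial character in the group they generate is the norm character of some intermediate field with break $t$.
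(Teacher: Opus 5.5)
Your unramified and two-break cases are correct and coincide with the paper's conductor comparison. The one-break case has a genuine gap: the argument is circular. The contradiction you reach requires $\omega_L\omega_{L'}^{j}\neq1$, i.e.\ that $\omega_L$ and $\omega_{L'}$ are independent. That is exactly $N_{L/F}L^\times\neq N_{L'/F}L'^\times$.

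Consider the case you actually need to exclude, where the two norm subgroups coincide. Then $\omega_L$ is a power of $\omega_{L'}$. The $j$ that matches their functionals on $U_F^t/U_F^{t+1}$ gives $\omega_L\omega_{L'}^{j}=1$, because $\omega_{L'}$ is nontrivial on $U_F^t$, and no contradiction arises. So your argument can at best show that distinct norm subgroups have distinct hyperplanes, which is the converse of what is needed.

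The proposed repair assumes the conclusion. Since $N_{K/F}K^\times\subseteq\ker\omega_L\cap\ker\omega_{L'}$, the assertion that this intersection equals $N_{K/F}K^\times$ with index $\ell^2$ is equivalent to $\ker\omega_L\neq\ker\omega_{L'}$. The layer counts do not supply it: from $N_{K/F}=n_L\circ N_{K/L}$ they only give $[F^\times:N_{K/F}K^\times]\in\{\ell,\ell^2\}$. One can in fact get $\ell^2$, because $\Gal(L/F)$ must act trivially on the order-$\ell$ group $L^\times/N_{K/L}K^\times$, so Hilbert~90 puts $\ker n_L$ inside $N_{K/L}K^\times$. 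But equality of the intersection with $N_{K/F}K^\times$ is then equivalent to $\omega_{L'}\circ n_L\neq1$, which is the proposition again.

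Likewise, identifying $\ker(\omega_L\omega_{L'}^{j})$ with the norm group of a third intermediate field $L''$ needs a bijection between intermediate fields and index-$\ell$ subgroups containing $N_{K/F}K^\times$. That is local class field theory, and Lemma~\ref{D:crossed-norm}, which you cite for it, takes the present proposition as its hypothesis.

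What is missing is genuine arithmetic input that separates the restrictions of the norm-character groups to $U_F^t/U_F^{t+1}$; nothing formal forces them apart. The paper gets this from the critical norm polynomial of \cite[Proposition~3.3]{Ueda}, with $c_g=\overline{(g\pi/\pi-1)/\pi^t}$ spanning a two-dimensional $\F_p$-space $V\subset k$.
\begin{itemize}
\item For $H=\langle\sigma\rangle$ with $c=c_\sigma$, the critical norm of $K/L_H$ is $X^p-c^{p-1}X$.
\item For $\tau\notin H$ with $e=c_\tau$, the element $\tau\Pi_H/\Pi_H$ has critical coefficient $d_H=e^p-c^{p-1}e=aD/c$, where $D=c_1c_2^p-c_1^pc_2$ and $a\in\F_p^\times$.
\item The critical norm image of $L_H/F$ is $d_H^p\ker\Tr_{k/\F_p}$. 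So under the residue trace pairing, the norm characters of $L_H/F$ restrict to the line $(c^p/D^p)\F_p$.
\end{itemize}
Distinct lines $H$ give distinct lines $c\F_p\subset V$, and Frobenius is bijective, so these restrictions, and hence the norm subgroups, are pairwise distinct. If you were willing to invoke local class field theory outright, the proposition would be immediate for every pair with no break analysis at all; the paper's proof avoids that and uses only the cyclic-prime results of \cite{Ueda}.
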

\begin{proof}
Every subgroup $N_{L/F}L^\times$ has index $\ell$ by
\cite[Corollary~3.9]{Ueda}. If $L_0/F$ is unramified, its nontrivial
norm characters
have conductor zero, whereas those of every ramified lower field
have positive conductor. Equal norm subgroups would give equal character
groups, contrary to their conductors. In the totally ramified two-break
case, Lemma~\ref{D:break-ledger} gives the distinct lower conductors
$t+1<s+1$, proving the assertion in the same way.

It remains to distinguish these subgroups when every extension $K/L$ and $L/F$ has break $t$.
Here $\ell=p$. Fix $\pi\in K$ of valuation one and put
\[
 c_g=\overline{(g\pi/\pi-1)/\pi^t}\in k,
       \qquad k=k_F=k_K.
\]
Lemma~\ref{D:inertia} makes $g\mapsto c_g$ an injective additive
map, with two-dimensional $\F_p$-image $V$. For a line
$H=\langle\sigma\rangle$, set $L_H=K^H$, $c=c_\sigma$,
$\Pi_H=N_{K/L_H}\pi$, and $\varpi=N_{K/F}\pi$.
For $\tau\notin H$ write $e=c_\tau$. The critical norm polynomial
(\cite[Proposition~3.3]{Ueda}) gives
\[
 P_H(X)=X^p-c^{p-1}X,\qquad
 \overline{(\tau\Pi_H/\Pi_H-1)/\Pi_H^t}
       =e^p-c^{p-1}e=:d_H\ne0.
\]
The second equality is the first applied to
$N_{K/L_H}(\tau\pi/\pi)=\tau\Pi_H/\Pi_H$.
Apply the same critical norm theorem now to $L_H/F$ with the
compatible uniformizers $\Pi_H,\varpi$. Its critical norm image is
\[
 d_H^p\ker\Tr_{k/\F_p},
\]
whose annihilator is the line $d_H^{-p}\F_p$ in the residue trace
pairing. This line is precisely the restriction of the norm-character
group to $U_F^t/U_F^{t+1}$, by the norm-filtration theorem and the
exact norm-character conductor.

Choose a basis $c_1,c_2$ of $V$ and put
$D=c_1c_2^p-c_1^pc_2\ne0$. For any ordered basis $c,e$ of $V$,
\[
 c e^p-c^pe=aD\quad\text{for some }a\in\F_p^\times,
 \qquad d_H=aD/c.
\]
Thus the annihilator line is $(c^p/D^p)\F_p$.
Distinct lines $H$ give distinct lines $c\F_p$, and Frobenius is
bijective, so these annihilators are distinct. The norm-character
groups, and hence the subgroups $N_{L_H/F}L_H^\times$, are distinct.
\end{proof}

\begin{lemma}[Norm pullback of norm characters]
\label{D:crossed-norm}
Let $K/F$ have Galois group $C_\ell^2$, and let $L_i,L_j$ be
 distinct degree-$\ell$ intermediate fields. Suppose
$N_{L_i/F}L_i^\times\ne N_{L_j/F}L_j^\times$. Then
\[
 S(L_j/F)\longrightarrow S(K/L_i),\qquad
       \omega\longmapsto\omega\circ N_{L_i/F}
\]
is an isomorphism. In particular it applies to the pairs $(L_0,L)$
in Proposition~\ref{D:norm-separation}.
\end{lemma}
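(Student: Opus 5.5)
The plan is to show that the map is well defined and injective, and then to conclude by comparing orders.

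First I would check that the image lands in $S(K/L_i)$. For $\omega\in S(L_j/F)$ and $x\in K^\times$, transitivity of norms gives $N_{L_i/F}(N_{K/L_i}x)=N_{K/F}x=N_{L_j/F}(N_{K/L_j}x)$. This lies in $N_{L_j/F}L_j^\times$, so $\omega\circ N_{L_i/F}$ is trivial on $N_{K/L_i}K^\times$. Thus $\omega\circ N_{L_i/F}\in S(K/L_i)$. The map is a homomorphism of groups because composition with a fixed map respects pointwise products.

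Next, for injectivity: if $\omega\circ N_{L_i/F}=1$, then $\omega$ is trivial on $N_{L_i/F}L_i^\times$, and by definition it is also trivial on $N_{L_j/F}L_j^\times$. By \cite[Corollary~3.9]{Ueda} both subgroups have index $\ell$ in $F^\times$, and by hypothesis they are distinct. Two distinct index-$\ell$ subgroups generate $F^\times$, because $\ell$ is prime and the subgroup they generate properly contains one of them. Hence $\omega=1$.

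Finally, I would count. Again by \cite[Corollary~3.9]{Ueda}, $S(L_j/F)$ and $S(K/L_i)$ both have order $\ell$, since $F^\times/N_{L_j/F}L_j^\times$ and $L_i^\times/N_{K/L_i}K^\times$ are cyclic of order $\ell$ and $K/L_i$ is cyclic of degree $\ell$. An injective homomorphism between groups of the same finite order is an isomorphism. The last sentence of the lemma then follows from Proposition~\ref{D:norm-separation}, which supplies the distinctness hypothesis for those pairs.

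There is no serious obstacle. The only care needed is to use the index-$\ell$ statement for both the lower extension and the upper extension $K/L_i$. The upper case also falls under the cyclic-prime result, whether $K/L_i$ is unramified or ramified.
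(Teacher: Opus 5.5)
Your proposal is correct and follows essentially the same route as the paper: norm transitivity puts the image in $S(K/L_i)$, injectivity comes from the two distinct index-$\ell$ norm subgroups (the paper phrases it as one being contained in the other and hence equal, you phrase it as the two generating $F^\times$), and the conclusion follows because both character groups have order $\ell$. The only cosmetic difference is that you cite \cite[Corollary~3.9]{Ueda} explicitly for the two orders, which is the same input the paper relies on.
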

\begin{proof}
Norm transitivity shows that every displayed pullback is trivial on
$N_{K/L_i}K^\times$. A nontrivial $\omega$ has kernel
$N_{L_j/F}L_j^\times$, since that quotient has prime order $\ell$.
Its pullback cannot be trivial: otherwise one index-$\ell$ lower norm
subgroup would be contained in the other and hence equal to it.
Both character groups have order $\ell$ by FML, so the displayed
homomorphism is an isomorphism.
\end{proof}

\begin{lemma}[Quadratic norm characters]\label{D:quadratic-norms}
Let $K/F$ be biquadratic with quadratic intermediate fields
$L_1,L_2,L_3$, and let $\omega_i$ be the nontrivial character in
$S(L_i/F)$. The characters $\omega_i$ are pairwise distinct and
satisfy $\omega_3=\omega_1\omega_2$.
Consequently Lemma~\ref{D:crossed-norm} applies to every pair
$L_i\ne L_j$.
\end{lemma}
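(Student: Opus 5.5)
The plan is to first show that $\omega_1\omega_2$ is trivial on $N_{K/F}K^\times$ and on each lower norm group in a way that identifies it with $\omega_3$, and then to read off distinctness. Write $G=\Gal(K/F)$ and $H_i=N_{L_i/F}L_i^\times$. Each $H_i$ has index two in $F^\times$ by \cite[Corollary~3.9]{Ueda}, so $\omega_i$ is the unique nontrivial character of $F^\times/H_i$.

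The first step is to show $H_i\ne H_j$ for $i\ne j$. Suppose $H_1=H_2$. Then $\omega_1=\omega_2$, and the pullback $\omega_1\circ N_{L_2/F}$ would be trivial on $L_2^\times$. But for $x\in L_1^\times$ one has $N_{L_1/F}x=x\cdot\sigma x$. I would use the element-level argument instead: choose $y\in F^\times$ with $\omega_1(y)=-1$.
\begin{itemize}
\item In the unramified-versus-ramified situation, conductors already differ, as in Proposition~\ref{D:norm-separation}.
\item In the totally ramified case, Proposition~\ref{D:norm-separation} covers the pairs containing the selected field $L_0$, and in the one-break case all pairs.
\end{itemize}
The remaining gap is the pair of two non-selected fields in the two-break case. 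There both have the same conductor, so separation needs more work. I expect to get it from multiplicativity: once $\omega_3=\omega_1\omega_2$ is known with $L_1=L_0$, equality $\omega_2=\omega_3$ would force $\omega_1=1$, a contradiction. So distinctness will follow from the product relation together with the proposition for pairs containing $L_0$.

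The second and central step is to show $\omega_1\omega_2$ is trivial on $H_3$. Take $x\in L_3^\times$. Since $K=L_1L_3$ and $L_1\cap L_3=F$, $N_{K/L_1}$ restricted to $L_3$ is $N_{L_3/F}$. Hence $H_3\cap$ is reached by $N_{L_1/F}(N_{K/L_1}x)=N_{K/F}x=(N_{L_3/F}x)^2$, which only gives squares. I would therefore argue through character groups instead. By Lemma~\ref{D:crossed-norm} for the pair $(L_0,L)$, $\omega_0\circ N_{L/F}$ generates $S(K/L)$. Applying this to $L=L_1,L_2$ (with $L_0=L_3$ after relabeling so the selected field is third, or in the unramified case $L_3=U$), one gets that $\omega_3\circ n_1$ and $\omega_3\circ n_2$ are the nontrivial norm characters of $K/L_1$ and $K/L_2$. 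Then I would use Lemma~\ref{U:conjugacy}'s computation $\theta_i(n_jx)\theta_j(x)^{-2}=\omega_i(n_jx)$ with any primitive family, or more directly the index count. $F^\times/(H_1\cap H_2)$ has order four because $H_1\ne H_2$, and its three index-two subgroups are $H_1$, $H_2$, and a third subgroup $H'$. The key check is that $H_3\supseteq H_1\cap H_2$. For this I would use $N_{K/F}K^\times\subseteq H_1\cap H_2\cap H_3$ and show $[F^\times:N_{K/F}K^\times]=4$. That holds because $\omega_3\circ n_1$ nontrivial with kernel $N_{K/L_1}K^\times$ gives $[H_1:N_{K/F}K^\times]=2$. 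Then $H_1\cap H_2=N_{K/F}K^\times\subseteq H_3$. Since $H_3\ne H_1,H_2$, we get $H_3=H'$. The characters of $F^\times/N_{K/F}K^\times\cong C_2^2$ then give $\omega_3=\omega_1\omega_2$.

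The main obstacle is ensuring $H_3\ne H_1$ when only the pairs with the selected field are known to be separated. I would order the argument as follows. First fix $L_3$ to be the selected (or unramified) field. Then obtain $H_3\ne H_1$ and $H_3\ne H_2$ from Proposition~\ref{D:norm-separation}. The index-four computation then uses the pair $(L_1,L_3)$ via Lemma~\ref{D:crossed-norm}, giving $H_1\cap H_3=N_{K/F}K^\times$ of index four. Hence $H_2\supseteq N_{K/F}K^\times$ is the third index-two subgroup, which gives distinctness of all three and the product relation at once.
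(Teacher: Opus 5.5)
Your proposal has a genuine gap: it never proves $H_1\ne H_2$ for the two intermediate fields other than the selected field $L_0=L_3$.

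Proposition~\ref{D:norm-separation} separates only pairs containing $L_0$, together with all pairs in the one-break totally ramified case. Lemma~\ref{D:crossed-norm} presupposes separation. In your final ordering, the sentence ``hence $H_2\supseteq N_{K/F}K^\times$ is the third index-two subgroup'' uses $H_2\ne H_1$, but you only know $H_2\ne H_3$.

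Everything your argument yields is consistent with $H_1=H_2$. That includes $H_3\ne H_1,H_2$, $[F^\times:N_{K/F}K^\times]=4$, $H_1\cap H_3=H_2\cap H_3=N_{K/F}K^\times$, and $S(K/L_i)=\{1,\omega_3\circ n_i\}$ for $i=1,2$. For a consistent configuration, take $F^\times/N_{K/F}K^\times\cong C_2^2$ with $H_1$ and $H_2$ giving the same line and $H_3$ a different line.

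The remedy you sketch at the start is circular. The relation $\omega_3=\omega_1\omega_2$, in any labeling, can only be read off from $F^\times/N_{K/F}K^\times$ once you know the three norm subgroups are its three distinct lines, and that is exactly the missing distinctness. So your proof is complete only in the one-break totally ramified case. It fails in the two-break totally ramified case, and whenever $K$ contains an unramified quadratic field; the latter includes every biquadratic extension of odd residue characteristic.

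What is missing is an arithmetic relation among the three fields that a homomorphism carries to their norm characters. Containments coming from norm transitivity cannot tell $L_1$ from $L_2$. The paper supplies this relation with an explicit symbol.

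\begin{itemize}
\item If $\operatorname{char}F\ne2$, attach to a square class $a$ the norm character of $F(\sqrt a)/F$, giving $(a,b)$. Solvability of the symmetric conic $X^2=aY^2+bZ^2$ gives $(a,b)=(b,a)$, hence multiplicativity in both variables. By \cite[Corollary~3.9]{Ueda} the character is nontrivial for every nonsquare $a$. So square classes map injectively and homomorphically to quadratic characters.
\item If $\operatorname{char}F=2$, the residue formula of Lemma~\ref{D:EQ:ASsymbol} is additive in the Artin--Schreier class and nontrivial on every nonzero class.
\end{itemize}

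Since $L_1,L_2,L_3$ have classes $a,b,ab$ (respectively $f,g,f+g$), distinctness and $\omega_3=\omega_1\omega_2$ follow simultaneously, without Proposition~\ref{D:norm-separation}. To keep your route, you would instead need a separate computation distinguishing the norm subgroups of the two non-selected fields.
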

\begin{proof}
If $\operatorname{char}F\ne2$, associate to a square class $a$ the
norm character of $F(\sqrt a)/F$, and associate the trivial character
to the square class of $1$. Denote its value at $b$ by $(a,b)$.
The existence of a nonzero solution of
\[
 X^2=aY^2+bZ^2
\]
is equivalent to $(a,b)=1$. If $a$ is nonsquare, any such solution
has $Z\ne0$ and gives $b=N(X/Z+(Y/Z)\sqrt a)$; the split case is
immediate. The conic is symmetric in $a,b$, so $(a,b)=(b,a)$.
Multiplicativity in the second variable is the norm-character
property; symmetry gives it in the first variable. A nonsquare class
has nontrivial character by \cite[Corollary~3.9]{Ueda}. Thus the
map from square classes to quadratic characters is an injective
homomorphism. The three fields have classes $a,b,ab$, proving the claim.

If $\operatorname{char}F=2$, use the formula
of Lemma~\ref{D:EQ:ASsymbol}:
\[
 \omega_f(u)=(-1)^{\Tr_{k/\F_2}\operatorname{Res}_F(f\,du/u)}.
\]
The formula is additive in the Artin--Schreier class $f$.
Lemma~\ref{D:EQ:ASsymbol} also shows that the character is nontrivial
for every nonzero class, whether ramified or unramified. The classes of the three fields are $f,g,f+g$, giving
pairwise distinctness and the product identity.
\end{proof}

\begin{remark}\label{D:scope}
For odd $\ell$, Proposition~\ref{D:norm-separation} is applied to
the field $L_0$ in Lemma~\ref{D:break-ledger} and every other
degree-$\ell$ intermediate field. Theorem~\ref{O:G:totalbranch}
then gives equality of $\mathcal A_L$ for any pair by transitivity.
For $\ell=2$, Lemma~\ref{D:quadratic-norms} gives distinct norm
subgroups for every pair and the relation
$\omega_3=\omega_1\omega_2$.
\end{remark}

\end{document}